\newcommand{\entrylabel}[1]{\mbox{\textsf{{\rm c}1}}\hfil}
{\end{list}}
{
   \newtheorem{theorem}{Theorem}[subsection]
   \newtheorem{proposition}[theorem]{Proposition}
   \newtheorem{lemma}[theorem]{Lemma}

   \newtheorem{corollary}[theorem]{Corollary}
   \newtheorem{conjecture}[theorem]{Conjecture}

}
{\theoremstyle{definition}
   
   \newtheorem{example}[theorem]{Example}
   \newtheorem{definition}[theorem]{Definition}
}
{\theoremstyle{remark}
   \newtheorem*{remark}{Remark}
   \newtheorem*{remarks}{Remarks}
}
\newcommand{\Hi}{\cite{Hir}}
\newcommand{\Vi}{\cite{Vi}}
\newcommand{\RR}{{\mathbb{R}}}
\newcommand{\CC}{{\mathbb{C}}}
\newcommand{\NN}{{\mathbb{N}}}
\newcommand{\PP}{{\mathbb{P}}}
\newcommand{\ZZ}{{\mathbb{Z}}}
\newcommand{\bbA}{{\mathbb{A}}}
\newcommand{\bA}{{\mathbf{A}}}
\newcommand{\cC}{{\mathcal C}}
\newcommand{\cD}{{\mathcal D}}
\newcommand{\cE}{{\mathcal E}}
\newcommand{\cF}{{\mathcal F}}
\newcommand{\cG}{{\mathcal G}}
\newcommand{\cH}{{\mathcal H}}
\newcommand{\cI}{{\mathcal I}}
\newcommand{\cJ}{{\mathcal J}}
\newcommand{\cM}{{\mathcal M}}
\newcommand{\cam}{{\it m}}
\newcommand{\cN}{{\mathcal N}}
\newcommand{\cO}{{\mathcal O}}
\newcommand{\cR}{{\mathcal R}}
\newcommand{\supd}{{\operatorname{supd}}}
\newcommand{\supp}{{\operatorname{supp}}}
\newcommand{\cosupp}{{\operatorname{cosupp}}}
\newcommand{\expe}{{\operatorname{exp}}}
\newcommand{\adj}{{\operatorname{adj}}}
\newcommand{\mon}{{\operatorname{mon}}}
\newcommand{\inn}{\operatorname{in}}
\newcommand{\Ann}{\textup{Ann}}
\newcommand{\Res}{\operatorname{Res}}
\newcommand{\res}{{\operatorname{res}}}
\newcommand{\Ver}{{\operatorname{Vert}}}
\newcommand{\rank}{{\operatorname{rank}}}
\newcommand{\spa}{\operatorname{span}}
\newcommand{\Sub}{\operatorname{Sub}}
\newcommand{\Ga}{\operatorname{Gal}}
\newcommand{\Spec}{\operatorname{Spec}}
\newcommand{\Sing}{\operatorname{Sing}}
\newcommand{\gr}{{\operatorname{gr}}}
\newcommand{\ord}{{\operatorname{ord}}}
\newcommand{\codim}{\operatorname{codim}}
\newcommand{\id}{{\operatorname{id}}}
\numberwithin{equation}{section}
\begin{document}


\thanks{The author was supported in part by NSF grant DMS-0100598 and BSF grant 2014365}

\address{J. W\l odarczyk, Department of Mathematics\\Purdue University\\West
Lafayette, IN 47907\\USA}
\email{wlodarcz@purdue.edu, wlodar@math.purdue.edu}

\author{Jaros\l aw W\l odarczyk}
\title[Singular implicit and inverse  function theorems. Strong resolution with normally flat centers]{Singular implicit and inverse  function theorems. \\Strong resolution with normally flat centers}

\date{\today}
\begin{abstract}
Building upon ideas of Hironaka, Bierstone-Milman, Malgrange and others we generalize the inverse and implicit function theorem (in differential, analytic and algebraic setting) to  sets of functions of larger multiplicities (or ideals).
This allows one to describe singularities  given by a finite set of generators or by ideals in a simpler  form. In the special Cohen-Macaulay case we obtain a singular  analog of the inverse function theorem.
The singular implicit function theorem is closely related to a (proven here) extended version of 
 the Weierstrass-Hironaka-Malgrange division and preparation theorems.  
 The primary motivation for this paper  comes from the desingularization problem. As an illustration of the techniques used, 
we give some applications of our theorems to desingularization extending some results on  Hironaka normal flatness, the Samuel stratification and the Hilbert-Samuel function. The notion of the  standard basis along Samuel stratum introduced in the paper (inspired by the Bierstone-Milman and Hironaka constructions)  allows us to describe singularities along the Samuel stratum in a relatively simple way. 

It leads to a canonical reduction of the strong Hironaka desingularization with normally flat centers to a so called resolution of marked ideals. Moreover, in  characteristic zero, the  standard basis along Samuel stratum generates a unique canonical Rees algebra along Samuel startum giving a  straightforward proof of the strong desingularization in algebraic  and analytic cases.
\end{abstract}

\maketitle

\tableofcontents
\addtocounter{section}{-1}

\section{Introduction} 

Suppose that $f(t,x)$ is an analytic  function of $t\in \CC$ and $x=(x_1,\ldots,x_n)\in {\CC}^n$ near the origin, and let $k$ be the smallest integer such that
$$
f(0,0)=0,\quad \frac{\partial{f}}{\partial{t}}(0,0)=0,\ldots,\frac{\partial^{k-1}{f}}{\partial{t^{k-1}}}(0,0)=0,\quad
\frac{\partial^{k}{f}}{\partial{t^{k}}}(0,0)\neq 0.
$$ 
Then  the {\bf Weierstrass preparation theorem} states that near the origin, $f$ can be written uniquely as the product of an analytic function $c$ that is nonzero at the origin, and an analytic function that as a function of $t$ is a polynomial of degree $k$. In other words,
$$
f(t,x)=c(t,x)(t^k+a_{k-1}(x)t^{k-1}+\ldots+a_0(x))
$$
where the functions $c$ and $a_i$ are analytic and $c$ is nonzero at the origin.
The theorem can  also  be understood
as a direct generalization of the implicit function theorem in  the analytic situation for a single function. The solution of the equation is given in  implicit polynomial form. If $k=1$ we obtain  the solution in the explicit
form $t=a(x)$ or equivalently $f(a(x),x)=0$.

The Weierstrass preparation theorem is closely related to and follows from the {\bf Weierstrass division theorem} which says that if $f$ and $k$ satisfy the conditions above and $g$ is an analytic function near the origin, then we can write
$$
g=qf+r,
$$
where $q$ and $r$ are analytic, and as a function of $t$, $r=\sum_{j=0}^{k-1} t^jr_j(x) $ is a polynomial of degree less than $k$. 
 The theorem plays an important role in  analytic and  differential geometry. 
Recall that the analog of the classical Weierstrass preparation  theorem for smooth real functions was conjectured by  Thom 
and proved by   Malgrange (Weierstrass preparation) and Mather (Weierstrass division). The smooth case is much more difficult and deeper, and  of  fundamental importance  in the theory of singularities.

On the other hand, Hironaka in his proof of desingularization theorem, and many others like Aroca, Vincente, Brian\c con, Jalambert, Teissier (\cite{AHV}, \cite{Briancon}, \cite{LJ})  studied different versions of  division for multiple generators (respectively ideals). The so called {\bf Weierstrass-Hironaka formal division theorem} says that for a set of formal analytic functions $f_1,\ldots,f_r$ and the corresponding set of leading exponents there exists division with remainder
 $$
g=\sum h_if_i+r,
$$
where $h_i$, $r$ satisfy some combinatorial conditions.

In \cite{Hir2} Hironaka proved  a weaker form of so called Henselian division  for algebraic functions. 
A  somewhat different approach and language was used in Bierstone-Milman's papers on desingularization (\cite{BM2}, \cite{BM22}). It allows a better control over singularities exploiting the (simplest)  formal analytic version of division and  requires passing to formal coordinate charts.

In this paper we use the ideas of Hironaka and Bierstone-Milman and many others.
One of our main goals is to establish a generalized version of  Weierstrass-Hironaka division for algebraic, analytic, and differentiable functions. As a consequence, we prove a generalized preparation theorem, and the existence of a standard basis (with respect to a monotone order) satisfying some  stronger conditions.


One can look at the classical Weierstrass division and preparation from two different perspectives.
It can be regarded as an extension of the one-variable implicit function theorem and also  as   division or preparation with respect to a dominating monomial. The second approach requires introduction of a certain monomial and was exploited by the Hironaka generalization.

The method of  monomial order may be  easier from the computational point of view  but it has severe limitations - as the resulting division and the dominating monomials dramatically change when passing from a point to its neighborhood.

On the other hand, the classical implicit function theorem for several variables allows one to describe subvarieties
(submanifolds) in terms of differential (coherent) conditions, and uses 
no monomial order.

In this paper we generalize the implicit function theorem to
the case of several functions of  higher multiplicities and to ideals. By using a ``resultant-type'' extension of standard  Jacobian matrices,   we get a coherent polynomial description of singularities. The approach gives a great flexibility in selecting ``dominating'' monomials  and controlling the forms of the describing equations in a neighborhood, since no monomial order is used.

In particular, in the simplest situation of singularities defined by 
$k$ equations of multiplicities $d_1,\ldots,d_k$ which are in general position, one can choose the ``leading monomials'' to be  powers of independent coordinates $x_1^{d_1},\ldots,x_k^{d_k}$. This means that the generic intersection of hypersurfaces is somewhat similar to the intersections of  transversal hyperplanes with multiplicites, at least in terms of the common zeroes (counted with multiplicities). This gives a certain analogy to B\'ezout's theorem, already observed by Macaulay. In the case of homogeneous polynomials the resulting matrices were used by Macaulay in his definition of resultant. 

Also in  the Cohen-Macaulay  case one can prove a certain analog of the inverse function theorem (in algebraic, analytic and differential cases). It is closely related, in the analytic case, to the Grauert-Remmert theorem on finite morphisms.

Upon establishing a generalized implicit function theorem we define a more general notion of  standard basis (and pre-basis) along Samuel stratum. 

Existence of a  standard basis quite immediately implies Hironaka's normal flatness theorem and Bennett's theorems on upper semicontinuity of the Hilbert-Samuel functions (also in the differential setting). 

As a consequence we  show how to reduce in a canonical way a  Hironaka desingularization with smooth normally flat  centers to  resolution of so called marked ideals (or Hironaka's idealistic exponents) which leads to  very straightforward proofs of Hironaka resolution theorems for algebraic varieties in its strongest form. (The same method can be also applied to analytic spaces).

The paper is organized as follows. 
In the first  chapter we  discuss some extensions of Weierstrass division for coherent sheaves in the algebraic and analytic category. We also show ``neighborhood versions'' of classical  division theorems in the algebraic and analytic cases. 
 We use some results of Grauert and Remmert in the analytic category and some Hironaka's ideas in the algebraic situation.   As a consequence we  show a certain   Cohen-Macaulay analog of the inverse function theorem (in the algebraic and  analytic setting ; Theorems  \ref{I3},\ref{I1}, \ref{I4})

In the second chapter we review  the classical results by Malgrange and Mather. We also prove  stronger ``neighborhood versions'' of classical  division theorems of Weierstrass and Malgrange-Mather in the differential setting. Also by combining the Grauert-Remmert approach in the analytic situation with the Malgrange-Mather technique in the differential setting    we  prove a certain   Cohen-Macaulay analog of the inverse function theorem in the differential cases; Theorem  \ref{I3}, \ref{I4}). We also define a category  of smooth objects. It allows us to treat similarly algebraic, analytic and smooth functions in the subsequent chapters.

In Chapter 3, following the ideas of Hironaka, Galligo, Grauert  and Bierstone-Milman  we prove a somewhat extended version of Weierstrass-Hironaka-Grauert-Galligo division in the simplest case of formal analytic functions (for any monomial order) (Theorem \ref{formal}). We also review basic properties of monotone diagrams and diagrams of finite type,  used in the Hironaka and Bierstone-Milman papers. In our paper we combine the quite different approaches of Hironaka and Bierstone-Milman.
Then we show some   natural decompositions of the diagrams which play an essential role in  generalized division and preparation theorems proven in Chapter 5 (Corollary \ref{imp33}).

In Chapter 4 we introduce a notion of filtered Stanley decomposition (Definitions \ref{Stanley}, \ref{Stanley2}).
Recall that Stanley decomposition was constructed and studied originally by Stanley, and is a fundamental tool in homological algebra.
In this paper we introduce  a stronger version of   filtered Stanley decomposition which turns out to be  a critical tool in the proofs of the implicit function theorem and division-preparation theorem. We prove existence of the filtered Stanley decomposition for any graded modules over polynomial rings (over an infinite field) and for modules over smooth rings, together with some of  their direct consequences (Theorems \ref{free1}, \ref{free2}).
 The language of filtered Stanley decomposition allows us  to reduce, by using the Malgrange theorem for modules, the division of functions to the division of their initial forms in graded rings.  The constructions are motivated by  ideas of both Hironaka's  Henselian division and Bierstone-Milman's stabilization theorems for monotone diagrams (Theorem \ref{Stab}).

At the beginning of  Chapter 5 we establish generalized versions of the Weierstrass-Hironaka division  and preparation theorems as a consequence of the results proven in Chapter 4 (Theorems \ref{main0},  \ref{main00},  \ref{main2}). In particular, the division theorem   in the algebraic setting
extends and strengthens the Hironaka Henselian division theorem.
On the other hand, since the results are  proven also in differential settings, they extend  the 
Malgrange-Mather preparation and division theorems to the case of multiple generators.
As a consequence we give the construction of a standard basis with respect to the monomial order (in either setting). 

In the second part of Chapter 5 we prove  a generalized implicit function theorem
and its consequences (Theorems \ref{main01}, \ref{main011}, \ref{main011}). 
Upon,  introduced here generalized Jacobian conditions, one can represent the singularities in a simpler "reduced" form. The very basic idea goes back to Bierstone-Milman stabilization Theorem and their approach to standard basis (\cite{BM1}).
In the particular case of complete intersections  related
algebraic conditions were studied by Macauley in the context of his notion of resultant. \cite{Macaulay}


In Chapters 6-9 we show some consequences of the singular implicit function theorem proving existence of Hironaka canonical  desingularization of algebraic varieties in its strongest version, by the sequence of blow-ups of smooth normally flat centers. Unlike in the  weaker version where a nonembedded resolution is
achieved merely by a birational projective morphism in Hironaka's original approach  the desingularization uses a sequence of smooth normally flat centers (see \cite{Hir2},\cite{BM1}, \cite{Villamayor}). The condition of normal flatness means that the normal cones of the varieties along the centers are flat. This gives a certain geometric control over the process.

In Chapter 6 we briefly discuss the notion of resolution of marked ideals.
Then using the singular implicit function   we construct   a coherent notion of   standard basis of any ideal along Samuel stratum (Definition \ref{Dweak2}), proving 
Hironaka's normal flatness theorem and Bennett's theorems on the Hilbert-Samuel function (also in the differential setting). The notion of   standard basis along Samuel stratum is a counterpart of Hironaka's distinguished data (as in \cite{Hir2}) and Bierstone-Milman's semicoherent presentation of  ideals (as in \cite{BM2},\cite{BM22}).  In our approach it is merely an extension
of the classical implicit function theorem  to a more complex case of singular subspaces (see also Example \ref{Weak3}).
The conditions for the standard basis are closely related to the ones for the Bierstone-Milman's standard basis relative to a diagram for formal analytic functions. On the other hand the constructions are obtained in the language which stems from Hironaka's Henselian division.
The notion allows us to   
 reduce strong resolution of singularities  to resolution of  marked functions with assigned multiplicities (Theorems \ref{weak}, \ref{weak2}, \ref{Flat}).
 
 In Chapter 7 we show that the constructed  standard basis, though not unique, defines a unique associated (multiple) marked ideal- so called canonical Rees Algebra controlling Hilbert-Samuel function (see Theorem \ref{Rees}). 
 
 In Chapter 8 we formulate Hironaka's resolution theorems
 in algebraic setting and show that (in characteristic zero) they can be deduced via canonical
 Rees algebra to the canonical resolution of marked ideals (see Theorem \ref{Red1}).
 
 In Chapter 9 we briefly show existence of canonical resolution of marked ideals in order to complete the proof of the desingularization theorems.
 
 Although one of our main foci  is given by the differential perspective of the Malgrange-Mather approach, and the algebraic perspective of  Cohen-Macaulay rings, the primary motivation for this paper  comes from the desingularization problem. Division theorems   are a tool of fundamental importance when studying singularities, especially in positive characteristic. 
 While, in characteristic zero, the technique allows one to prove  Hironaka desingularization in its strongest form, it  seems to be indispensable in positive characteristic.
In fact, different particular versions of  Weierstrass division for multiple generators were applied  in many recent papers on desingularization (see for instance \cite{BM3}, \cite{Hir3}, \cite{Vill}, \cite{KM}, \cite{Wlod}).
\subsection{Acknowledgements} The  author  thanks  Professors Kenji Matsuki an Orlando Villamayor for helpful comments.

\section{Weierstrass division for sheaves over algebraic and analytic functions}
\label{lag}

In this section we shall study generalized versions of Weierstrass division for algebraic and analytic functions which will be exploited in the remaining part of the paper.
\subsection{Hironaka's Hensel's Lemma and Weierstrass division for algebraic functions}

A particular case of the following theorem (for $k=1$) was used by Hironaka in \cite{Hir2} (with proof omitted) and was referred to as ``Hensel's lemma''.  Here we  also prove its  general ``Malgrange'' form (originally proved by Malgrange for smooth functions) and  extend it to sheaf ``Grauert-Remmert'' versions (similar to the versions proven by Grauert-Remmert for holomorphic functions).
In the next sections we shall also study the analytic and smooth versions of the theorems proven below and their generalizations.

The Hironaka  Hensel's Lemma can be derived directly from the properties of Henselian rings via a version of Zariski's  main theorem.

\begin{theorem}[Hironaka's Hensel's Lemma]
Let  $R$ be  a  local Henselian and Noetherian ring with maximal ideal $m$, and let
$S=R\langle z_1,\ldots,z_k\rangle$ denote the Henselianization of the localization $R[z_1,\ldots,z_k]_{m_k}$ at the maximal ideal $m_k:=m+(z_1,\ldots,z_k)$.
Suppose   $M$ is a finite  $R\langle z_1,\ldots,z_k\rangle$-module and
 $M/(m\cdot M)$ is a finite-dimensional vector space over $K=R/m$.
 Then $M$ is finite over  $R$.
 \end{theorem}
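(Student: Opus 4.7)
My plan is to reduce the claim to Zariski's Main Theorem for Henselian local rings, using the Nakayama--Cayley--Hamilton determinant trick to convert the hypothesis on $M/mM$ into a quasi-finiteness statement for an associated algebra. First I would pick $e_1,\ldots,e_n \in M$ lifting a $K$-basis of $M/mM$; since $m \subset m_k$, Nakayama applied to the local ring $S$ shows that the $e_i$ generate $M$ as an $S$-module. Because $M/mM$ has finite length over the local ring $S/mS$ (whose maximal ideal is $(z_1,\ldots,z_k)$), some power of that ideal annihilates it, giving $(z_1,\ldots,z_k)^N M \subset mM$ for some $N$.

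Next I would apply the determinant trick. For any $s \in S$ satisfying $sM \subset mM$, writing $s\cdot e_j = \sum_\ell \mu_{j\ell} e_\ell$ with $\mu_{j\ell} \in mS$ and expanding $\det(s\cdot I_n - (\mu_{j\ell}))$ yields a Cayley--Hamilton relation $s^n + a_1 s^{n-1} + \cdots + a_n \in \Ann_S(M)$ with each $a_j \in (mS)^j$; hence $s^n \in mS + \Ann_S(M)$. Applied to $s = z_i^N$ this gives $z_i^{Nn} \in mS + \Ann_S(M)$ for each $i$. Setting $B := S/\Ann_S(M)$ -- a Noetherian local ring, since $S$ is Noetherian as the Henselianization of a Noetherian local ring -- the maximal ideal of $B/mB$ is generated by the nilpotent images of the $z_i$, so $B/mB$ is Artinian.

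Finally I would descend to finite type. Since $\Ann_S(M)$ is finitely generated, its generators (together with a finite presentation of $M$) already lie in some local $R[z_1,\ldots,z_k]_{m_k}$-algebra $R_0$ essentially of finite type over $R$ with $R_0 \subset S$ and $R_0^h = S$. Set $B_0 := R_0/(\Ann_S(M) \cap R_0)$; this is a local $R$-algebra essentially of finite type with Henselianization $B_0^h = B$, and by faithful flatness of $B_0/mB_0 \to B/mB$ the quotient $B_0/mB_0$ is also Artinian. Zariski's Main Theorem in Henselian form (a local ring essentially of finite type and quasi-finite over a Henselian Noetherian local ring is finite over it) then yields $B_0$ finite over $R$; since a finite algebra over a Henselian local ring is itself Henselian, $B_0 = B_0^h = B$, so $B$ is finite over $R$, and $M$ -- being finite over $B$ -- is finite over $R$. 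The main obstacle is the determinant-trick step: the hypothesis only gives the weak relation $(z)^N M \subset mM$ inside $M$, whereas one needs the stronger $(z)^{Nn} \subset mS + \Ann_S(M)$ inside $S$; bridging this gap via Cayley--Hamilton, crucially exploiting that $M$ is a finite $S$-module, is the heart of the argument, after which the ZMT application and the descent from $S$ to $R_0$ are standard.
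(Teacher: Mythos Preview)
Your proof is correct and follows the same overall architecture as the paper's: reduce to showing $B=S/\Ann_S(M)$ is quasi-finite over $R$, descend to an algebra essentially of finite type, apply Zariski's Main Theorem, and use Henselianity of $R$ to upgrade quasi-finite to finite. The differences are in implementation rather than strategy. For the first reduction the paper proves a general lemma (its Lemma~\ref{map1}) showing directly that $M/mM$ finite over $K$ is equivalent to $S/(\Ann_S(M)+mS)$ being finite over $K$, via the identity $\supp(M/mM)=V(\Ann_S(M)+mS)$; this bypasses your Cayley--Hamilton computation entirely, so the step you flag as the ``main obstacle'' is in fact a one-line support argument. For the ZMT step the paper unpacks the Henselian version explicitly: it passes to an \'etale neighborhood $U$ of $\mathbb{A}^k_R$, applies Raynaud's factorization to write the relevant algebra as an open piece of a finite $R$-algebra, then uses Hensel's lemma to split that finite algebra into a product of local Henselian rings and identifies $S/\Ann_S(M)$ with one factor. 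You instead cite the packaged statement ``quasi-finite local essentially-of-finite-type over Henselian local is finite'' and then use ``finite over Henselian is Henselian'' to get $B_0=B_0^h=B$. Both routes are standard; yours is terser, the paper's is more self-contained. One small point you leave implicit: the ZMT step requires the closed point of $B_0$ to be \emph{closed in the fiber} over $m$, not merely isolated from below; this holds because $R_0$ is local-\'etale over $R[z]_{m_k}$, hence has residue field finite over $K$, so $B_0/mB_0$ is genuinely finite over $K$ rather than just Artinian.
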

Before proving this theorem we shall need a simple Lemma
\begin{lemma} \label{map1}Let $R\to S$ be  a map (homomorphism) of noetherian local rings. Let $m\subset R$ be a maximal ideal. Suppose   $M$ is a finite  $S$-module, and $I:=Ann(M)\subset S$ is the annihilator of $M$.  
Then the following conditions are equivalent
\begin{enumerate}
 
 \item $M/(m\cdot M)$ is a finite-dimensional vector space over $K=R/m$.
 
\item $S/(I+m\cdot S)$ is finite over $K= R/m$
 \end{enumerate}

\end{lemma}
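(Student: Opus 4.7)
The plan is to dispatch $(2)\Rightarrow(1)$ immediately, and for the converse to introduce the intermediate ideal $J := \Ann_S(M/mM)$ sitting between $I+mS$ and $S$, transfer finiteness from $S/J$, and then control the gap $J/(I+mS)$ by a standard Cayley--Hamilton determinantal argument. The direction $(2)\Rightarrow(1)$ is immediate: the $S$-action on $M$ factors through $S/I$, so $M/mM$ is a finite $T$-module for $T := S/(I+mS)$, and if $T$ is finite over $K$ then so is $M/mM$.

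For $(1)\Rightarrow(2)$, first note that $I + mS \subseteq J$ (since $I$ annihilates $M$ and $mS\cdot M \subseteq mM$), and that $M/mM$ carries a natural $K$-vector space structure on which $S$ acts $K$-linearly. By construction $S/J$ acts faithfully on $M/mM$, giving an injection $S/J \hookrightarrow \cEnd_K(M/mM)$; under (1) the codomain is a finite-dimensional $K$-algebra, so $S/J$ is finite over $K$. The remaining task is to show the ideal $\mathfrak a := J/(I+mS) \subseteq T$ is nilpotent. Given $s \in J$, pick generators $m_1,\dots,m_n$ of $M$ over $S$ and write $s m_i = \sum_j a_{ij} m_j$ with $a_{ij} \in mS$; then $(sI_n - A)\vec m = 0$ for $A = (a_{ij})$, and multiplication by the adjugate yields $\det(sI_n - A) \in \Ann_S(M) = I$. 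Expanding, $\det(sI_n - A) = s^n + c_1 s^{n-1} + \dots + c_n$ with $c_k \in (mS)^k \subseteq mS$, so $s^n \in I + mS$, i.e.\ the image of $s$ in $T$ is nilpotent. Since $T$ is Noetherian (as a quotient of $S$) and $\mathfrak a$ is therefore finitely generated, $\mathfrak a$ is in fact nilpotent: $\mathfrak a^N = 0$ for some $N$.

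The proof finishes with the filtration $T \supseteq \mathfrak a \supseteq \mathfrak a^2 \supseteq \dots \supseteq \mathfrak a^N = 0$: each subquotient $\mathfrak a^i/\mathfrak a^{i+1}$ is a finitely generated $T$-module killed by $\mathfrak a$, hence a finite module over $T/\mathfrak a \cong S/J$, and therefore finite over $K$; assembling the subquotients shows $T$ itself is finite over $K$, proving (2). The only subtle point is the nilpotence of $\mathfrak a$: naively one might hope $J = I + mS$, but this can fail in general, and the determinantal trick is precisely what guarantees the excess is nilpotent---combined with the Noetherian filtration this suffices.
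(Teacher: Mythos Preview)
Your proof is correct and follows essentially the same architecture as the paper: introduce $J = \Ann_S(M/mM)$, embed $S/J \hookrightarrow \cEnd_K(M/mM)$ to get finiteness of $S/J$ over $K$, show that $J/(I+mS)$ is nilpotent in $T$, and then climb the filtration by powers of this ideal. The one tactical difference is in the nilpotence step: the paper argues that $V(J)=\supp(M/mM)=V(I+mS)$, whence $J^k\subseteq I+mS$ for some $k$ by the Noetherian hypothesis, whereas you prove the same inclusion elementwise via the Cayley--Hamilton determinantal trick; both are standard and lead to the same filtration argument.
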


\begin{proof} Recall that by the support of $S$-module $M$ we mean
$$\supp(M):=\{p\in \Spec(S)\mid M_p\neq 0\},$$
(where $M_p$ is the localization of $M$ at the prime ideal $p\subset S$).
Then, it follows (\cite[Tag 080S]{stacks-project}) that  
$\supp(M)=V(I)$, where $$V(I)=\{p\in \Spec(S)\mid p\supset I\}$$ is the {\it vanishing locus of $I$} . Also $$\supp(M/(m\cdot M))=V(I)\cap V(m\cdot S)=V(I+m\cdot S)=\supp(S/(I+m\cdot S).$$
On the other hand $\supp(M/(m\cdot M))=V(J)$, where $J\subset S$ is the annihilator of $M/(m\cdot M)$. Note that $J\supseteq I+m\cdot S$, and we have a map $R/m\to  S/J$. 

Thus $S/J$ acts faithfully on $M/(m\cdot M)$. Since $M/(m\cdot M)$
is a finite over $K= R/m$ we get that $S/J$ is finite over $K$.
(Since $S/J$ embeds into a finite $K$-vector space of endomorphisms $End_K(M/(m\cdot M)$).

On the other hand $V(J)=V(I+m\cdot S)$, and thus $(I+m\cdot S)\supset J^k$. Since $S/\cJ^k$ has  a filtration $\cJ^i$ with factors $\cJ^i/\cJ^{i+1}$ finite over $S/\cJ$ we get that each factor is finite over $K=R/m$, and finally $S/\cJ^k$ and its factor $S/(I+m\cdot S)$ is finite over $K= R/m$. 

Conversely $M/(m\cdot M)$ is finite over $S/(I+m\cdot S)$.

\end{proof}

\begin{proof} Let $\cI\subset R\langle z_1,\ldots,z_k\rangle$ denote the annihilator of $M$ as before. 
Consider an \'etale affine neighborhood $U$ of $\bA^k_R:=\Spec(R[z_1,\ldots,z_k])$ containing the generators of $\cI$ and preserving the residue field $K$ so that $\cO(U)\subset R\langle z_1,\ldots,z_k\rangle$. Denote by $\cI_U\subset \cO(U)$ the ideal determined by these   generators.  
Note that the map $R\to \cO(U)$ is of finite type. 

Let $x\in U$ denote the point corresponding to the ideal $m_k$, let $\cO(U)_x$ be the localization at $x$, set $\cI_x:=\cI_U\cdot \cO(U)_x\subset \cO(U)_x$, and let $m_x\subset \cO(U)_x$ be the maximal ideal of $x$. The Henselianization of $\cO(U)_x/\cI_x$ is equal to $R\langle z_1,\ldots,z_k\rangle/\cI$ since Henselianization commutes with quotients. Likewise  the Henselianization of $\cO(U)_x/((\cI_U+m)\cdot \cO(U)_x)$ is  $R\langle z_1,\ldots,z_k\rangle/(\cI+m)$, which is a finite-dimensional vector space over $K$. Thus $\cO(U)_x/((\cI_U+m)\cdot \cO(U)_x)$ is a finite-dimensional vector space itself. In other words, $R\to \cO(U)/\cI_U$ is a map of finite type and the localization of the   fiber over $m$ is  finite.
Thus by shrinking $U$ we can assume that the fiber of $R\to \cO(U)/\cI_U$ over $m$ is  irreducible and its reduced structure coincides with $\{x\}$, and $m\cdot \cO(U)\supset m_x^\ell$ for a certain $\ell$.

By a version of Zariski's main  theorem due to Raynaud \cite{Raynaud} one can factor this map  as $$R\to \cO(U')/\cI'_U\to \cO(U)/\cI_U$$ into a composition of a finite map $\phi^*: R\to \cO(U')/\cI_U'$ followed by the map $\cO(U')/\cI'_U\to \cO(U)/\cI_U$ defining and open inclusion $\Spec(\cO(U)/\cI_U)\subset \Spec(\cO(U')/\cI'_U)$.
To be more precise, there exists $f\in \cO(U')\subset \cO(U)$, with $f\not \in m_x$, such that $(\cO(U)/\cI_U)_{f}=(\cO(U')/\cI_U')_f$ 
with $\cO(U')/\cI_U'$  a  finite $R$-module.  The latter is, by  Hensel's lemma, isomorphic to  a finite product of local Henselian rings $R_i$:
$$\cO(U')/\cI_U'\simeq \bigoplus_{i=1}^k R_i$$
 
 The point $x\in U\subset U'$ defines a maximal ideal of, say, $R_1$ in the fiber of $R\to \cO(U')/\cI_U'\simeq \bigoplus_{i=1}^k R_i$  over $m$.
 Consider the function $g\in \cO(U')$ such that its class in $\cO(U')/\cI_U'$ defines the element $(1,0,\ldots,0)\in \bigoplus_{i=1}^k R_i$.
 Then $g\not\in m_x$  and $\cO(U')_g/\cI_U'=(\bigoplus_{i=1}^k R_i)_g=R_1$.
Moreover since $f\not\in m_x$ we get that $\cO(U')_{g,f}/\cI_U'=(\bigoplus_{i=1}^k R_i)_{g,f}=(R_1)_f=R_1$
 
 In other words by shrinking $U$ around $x$ to $U_{{g,f}}=U'_{{g,f}}$ we can assume that  $\cO(U)/\cI_U\simeq R_1$ is  local Henselian itself. Consequently it is isomorphic to its Henselianization $R\langle z_1,\ldots,z_k\rangle/\cI$. Since $M$ is finite over $R\langle z_1,\ldots,z_k\rangle$ and $\cI$ acts trivially on $M$, we find that $M$ is finite over $R\langle z_1,\ldots,z_k\rangle/\cI$. But $R\langle z_1,\ldots,z_k\rangle/\cI\simeq \cO(U)/\cI_U$ is finite over $R$, and thus  $M$ is finite over $R$.
\end{proof}
The following more general version of Hironaka's Hensel's Lemma shall be understood as an 
algebraic counterpart of Malgrange division for modules over
smooth functions.

\begin{theorem}
Let $f: R\langle x_1,\ldots,x_k\rangle \to R\langle y_1,\ldots,y_m\rangle $ be a homomorphism of Noetherian $R$-algebras where $R$ is a  local Henselian and Noetherian ring.
Suppose that  $M$ is a finite  $R\langle y_1,\ldots,y_m\rangle$-module and
 $M/(f(m_k)\cdot M)$ is a finite-dimensional vector space over $K=R\langle y_1,\ldots,y_k\rangle /m_k$.
 Then $M$ is finite over  $R\langle x_1,\ldots,x_k\rangle$. 
\end{theorem}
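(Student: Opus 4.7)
The plan is to reduce the statement directly to the preceding Hensel's Lemma by treating the Henselian local Noetherian ring $R' := R\langle x_1,\ldots,x_k\rangle$ as the new base ring in place of $R$. The homomorphism $f$ enters only as a device to transport structure, so that $x_i$ acts on $M$ as multiplication by $f(x_i) \in R\langle y_1,\ldots,y_m\rangle$; under this action the two hypothesis spaces $M/(m_k \cdot M)$ and $M/(f(m_k)\cdot M)$ coincide by construction.

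Concretely, I would first combine $f$ with the identity on the $y$-variables to produce a local $R$-algebra map $R[x,y]_{m+(x,y)} \to R\langle y_1,\ldots,y_m\rangle$ sending $x_i \mapsto f(x_i)$ and $y_j \mapsto y_j$. Because the target is Henselian local Noetherian, the universal property of Henselization gives a unique extension to a local map $R\langle x_1,\ldots,x_k,y_1,\ldots,y_m\rangle \to R\langle y_1,\ldots,y_m\rangle$, via which the finite $R\langle y\rangle$-module $M$ becomes a finite $R\langle x,y\rangle$-module (the original $R\langle y\rangle$-generators still generate it). Transitivity of Henselianization with respect to adjunction of free variables then gives the canonical identification $R\langle x_1,\ldots,x_k,y_1,\ldots,y_m\rangle = R'\langle y_1,\ldots,y_m\rangle$, since both are Henselizations of the polynomial ring $R[x,y]$ at the maximal ideal $m+(x,y)$ lying above $m$.

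With $R'$ as base, $M$ is a finite module over $R'\langle y_1,\ldots,y_m\rangle$, the residue field is still $K = R'/m_k = R/m$, and by construction the induced $R'$-action on $M$ sends $g \in R'$ to multiplication by $f(g) \in R\langle y\rangle$; hence $m_k \cdot M = f(m_k) \cdot M$. The hypothesis that $M/(f(m_k)\cdot M)$ is finite-dimensional over $K$ is therefore exactly the hypothesis required by the previous theorem applied to the base $R'$, and that theorem yields $M$ finite over $R' = R\langle x_1,\ldots,x_k\rangle$, as required. The only non-formal point is the identification $R\langle x,y\rangle = R'\langle y\rangle$, which is a standard consequence of the universal property of Henselization: both rings are Henselian local Noetherian, both receive a natural local map from $R[x,y]_{m+(x,y)}$ which is ind-étale, and hence both must coincide with the Henselization of $R[x,y]_{m+(x,y)}$.
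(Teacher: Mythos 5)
Your proposal is correct and follows essentially the same route as the paper: the paper likewise factors $f$ through the inclusion $R\langle x\rangle \hookrightarrow R\langle x,y\rangle$ followed by the projection sending $x_i\mapsto f(x_i)$, $y_j\mapsto y_j$, regards $M$ as a finite $R\langle x,y\rangle$-module, and invokes Hironaka's Hensel's Lemma over the base $R\langle x\rangle$. Your extra care in justifying the identification $R\langle x,y\rangle = R\langle x\rangle\langle y\rangle$ via the universal property of Henselization fills in a step the paper leaves implicit.
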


\begin{proof} One can factor $f$ as the composition of  the natural inclusion $i:=R\langle x_1,\ldots,x_k\rangle \hookrightarrow R\langle x_1,\ldots,x_k,\allowbreak y_1,\ldots,y_m\rangle$ followed by  the projection $\pi: R\langle x_1,\ldots,x_k,y_1,\ldots,y_m\rangle\to R\langle y_1,\ldots,y_m\rangle$, $\pi(x_i)=f(x_i)$, $\pi(y_j)=y_j$. Note that $M$ is a finitely generated $R\langle x_1,\ldots,x_k,y_1,\ldots,y_m\rangle$-module, and  $M/(f(m_k)\cdot M)=M/(i(m_k)\cdot M)$ is of finite dimension. By Hironaka's Hensel's Lemma, $M$ is a finite $R\langle x_1,\ldots,x_k\rangle$-module.	
\end{proof}

\begin{lemma} \label{c11} Let $K$ be a field.  Consider the natural inclusion map $$f: R:=K\langle z_{1},\ldots,z_{k}\rangle \to S=K\langle z_{1},\ldots,z_{k+n}\rangle.$$  Let  $\cI\subset S$ be an ideal such that
the map
$R\to S/\cI$  is  finite.
Then there exist  smooth affine schemes   $U_1$ and $U_2$ over $\Spec(K)$ of dimension  $k$ and $n+k$ respectively  and  a smooth map $\phi: U_2\to U_1$ of dimension $n$  compatible with $f$
for which:
 \begin{enumerate}
\item  $K[z_1,\ldots,z_k]\subset \cO(U_1)\subset K\langle z_1,\ldots,z_k\rangle$,
\item  $K[ z_1,\ldots,z_{k+n}]\subset \cO(U_2)\subset K\langle z_1,\ldots,z_{k+n}\rangle$,
\item $\phi^*: \cO(U_1)\to \cO(U_2)$ is the restriction  of $f$.
 
\item $\cO(U_1)\to \cO(U_2)/\cI_2$ is finite, where $\cI_2\subset \cO(U_2)$ is an ideal  for which \\$\cI=\cI_2\cdot K\langle z_1,\ldots,z_{k+n}\rangle$.
\item The fiber  $\cO(U_2)/(m\cdot\cO(U_2)+\cI_2)$ irreducible, where $m=(z_1,\ldots,z_k)\subset \cO(U_1)$.

\end{enumerate}
\end{lemma}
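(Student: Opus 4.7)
The plan is to descend the data $(R,S,\cI,f)$ to étale affine neighborhoods of the origin using the fact that Henselianization is a filtered colimit of étale local rings, and then refine by Raynaud's form of Zariski's Main Theorem (and Hensel's Lemma) to achieve finiteness and irreducibility of the fiber. The structure closely parallels the proof of Hironaka's Hensel Lemma given above, except that we must land on an étale neighborhood rather than on the Henselianization itself.

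\emph{Stage 1 (initial descent).} Pick a finite generating set $g_1,\ldots,g_r\in S$ of $\cI$, which exists since $S$ is Noetherian. Because $R = \varinjlim \cO(V_1)$ over étale affine neighborhoods $V_1\to\bbA^k_K$ of the origin, and likewise $S = \varinjlim \cO(V_2)$, one may first choose an étale $V_1\to\bbA^k_K$ and then an étale neighborhood $V_2$ of the origin in $V_1\times_K\bbA^n_K\to\bbA^{k+n}_K$ so that the $g_i$ are defined in $\cO(V_2)$ and so that the induced smooth projection $\phi\colon V_2\to V_1$ of relative dimension $n$ has pullback compatible with $f\colon R\to S$. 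Both $V_1,V_2$ are smooth over $K$ as étale neighborhoods of smooth points. Setting $\cI_V:=(g_1,\ldots,g_r)\subset\cO(V_2)$ gives $\cI_V\cdot S=\cI$ and fulfils conditions (1)--(3) for the pair $(V_1,V_2)$.

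\emph{Stage 2 (finiteness and irreducibility).} Let $x_1\in V_1$, $x_2\in V_2$ be the origins, and consider the fiber $F:=\cO(V_2)/(m\cO(V_2)+\cI_V)$. Its Henselianization at $x_2$ is $S/(mS+\cI)=(S/\cI)\otimes_R R/m$, which is finite-dimensional over $K$ since $R\to S/\cI$ is finite. Faithful flatness of Henselianization forces $F_{x_2}$ to be Artinian, so $\cO(V_1)\to\cO(V_2)/\cI_V$ is quasi-finite at $x_2$; by upper semi-continuity of fibre dimension this extends to a Zariski neighborhood of $x_2$, and by removing the (finitely many) remaining closed points in the fiber over $x_1$ we may shrink $V_2$ further so that $F$ is local Artinian. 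This yields condition (5).

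\emph{Stage 3 (application of Raynaud).} Apply Raynaud's form of Zariski's Main Theorem to the quasi-finite, finite-type morphism $\cO(V_1)\to\cO(V_2)/\cI_V$: factor it as $\cO(V_1)\to A\to\cO(V_2)/\cI_V$ with $\cO(V_1)\to A$ finite and the second arrow an open immersion on spectra. By Hensel's lemma, the finite $R$-algebra $A\otimes_{\cO(V_1)}R$ splits as a product $\prod_i R_i$ of local Henselian rings, and $x_2$ singles out one factor $R_1$. The corresponding idempotent descends to some étale neighborhood $U_1\to V_1$ of $x_1$; replace $V_1$ by $U_1$ and pull back $V_2,\cI_V,A$. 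After inverting a lift of that idempotent (isolating the sheet of $\Spec A$ through $x_2$) and an element of $A$ whose nonvanishing locus is contained in the image of the open immersion $\Spec(\cO(V_2)/\cI_V)\hookrightarrow\Spec A$, exactly as in the last paragraph of the proof of Hironaka's Hensel Lemma, one obtains $U_2\subset V_2$ and $\cI_2$ with $\cO(U_1)\to\cO(U_2)/\cI_2$ finite, while conditions (1)--(3) and (5) are preserved.

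\emph{Main obstacle.} The principal technical difficulty is Stage 3: descending the Hensel factorization of $A\otimes_{\cO(V_1)}R$ to an actual étale neighborhood of $x_1$, since one is not allowed to keep $\cO(U_1)$ Henselian. This is precisely the step that forces the simultaneous use of Raynaud's theorem, Hensel's lemma, and the étale-local realization of idempotents.
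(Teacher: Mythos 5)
Your proposal is workable but follows a genuinely different (and heavier) route than the paper. The paper's own proof never invokes Zariski's Main Theorem or Hensel's lemma for this lemma: it simply fixes module generators $b_1,\ldots,b_s$ of $S/\cI$ over $R$, writes down the multiplication table $b_ib_j\equiv\sum_l c_{ijl}b_l$ and the expressions $d_i\equiv\sum_l d_{il}b_l \pmod{\cI}$ for the $K$-algebra generators $d_i$ of $\cO(V_2)$, observes that the finitely many structure constants $c_{ijl},d_{il}$ lie in $R$ and hence in $\cO(U_1)$ for a suitable \'etale neighborhood $U_1$, and then notes that $\sum\cO(U_1)b_i\subset\cO(U_2)/\cI_2$ is a subring containing all the algebra generators, hence equals $\cO(U_2)/\cI_2$. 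Finiteness is thus obtained in one stroke, and the irreducibility of the fiber (condition (5)) is arranged already at the level of $V_2$ by shrinking. What the paper's argument buys is economy: it isolates exactly the finite amount of data (the structure constants) that must descend, and avoids re-running the quasi-finite-to-finite machinery. What your argument buys is uniformity with the proof of Hironaka's Hensel Lemma given just above it, at the cost of re-deploying Raynaud's theorem and idempotent descent.

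One step of your Stage 3 needs repair. You cannot conclude finiteness of $\cO(U_2)/\cI_2$ over $\cO(U_1)$ by ``inverting an element of $A$ whose nonvanishing locus is contained in the image of the open immersion'': localizing the finite $\cO(U_1)$-algebra $A_1$ at a non-idempotent element destroys finiteness over the base. In the paper's Hensel Lemma this is harmless because there the base is the Henselianization $R$ and $(R_1)_f=R_1$ for $f\notin m_x$; here $A_1$ is not local, so that identity fails. The correct move is to use that $\Spec(A_1)\to U_1$ is finite, hence closed: the complement in $\Spec(A_1)$ of the image of the open immersion is closed and misses $x_2$, the unique point of $\Spec(A_1)$ over $x_1$, so its image in $U_1$ is a closed set avoiding $x_1$; shrinking $U_1$ to its complement makes the open immersion an isomorphism onto $\Spec(A_1)$, which is finite over the shrunk $\cO(U_1)$. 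With that adjustment your argument goes through.
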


\begin{proof} 

Let $b_1,\ldots, b_s$ be generators of the module $S/\cI=\sum R\cdot b_i$ over $R$, and  $f_1,\ldots,f_k\in \cI$ be generators of $\cI$.  Consider a nonsingular  affine $V_2$  \'etale  neighborhood of  $0\in \bA^{n+k}=\Spec K[z_1,\ldots,z_{k+n}]$, preserving residue field and with $\cO(V_2)$ containg $b_i,f_j$. Moreover by shrinking $V_2$ around $0$ we van assume that the fiber $K[z_1,\ldots,z_{n}]\to \cO(V_2)/(f_1,\ldots,f_k)$ over $m=(z_1,\ldots,z_k)$ is irreducible.

Denote by  $d_1,\ldots, d_r$  generators of the $R$-algebra $\cO(V_2)$ over $K$. Write 
$$
b_ib_j\equiv \sum_{l=1}^s c_{ijl}b_l\quad (\textup{mod}\ \cI),\quad\quad d_i\equiv \sum_{l=1}^r d_{il}b_l\quad (\textup{mod}\,\, \cI)
$$  
where $c_{ijl}, d_{il}\in R$. 

We shall assume that $U_1$ is an affine \'etale neighorhood of $0\in Spec(K[z_1,\ldots,z_k])$  with the ring of regular functions $\cO(U_1)\subset R=K\langle z_1,\ldots,z_k\rangle$  containing $c_{ijl}$ and $d_{il}$. Now let $U_2$ be a component in $V_2\times_{\bA^k}U_1$ dominated by  $\Spec(S)$. Then  $U_2\to U_1$ is smooth, and by the universal property of the component of the product we get that $\cO(U_2)$ is equal to the subring of $S$ generated by $\cO(U_1)$ and $\cO(V_2)$:
$$
\cO(U_2)=\cO(U_1)\cO(V_2)=\cO(U_1)[d_1,\ldots,d_r]\subset S
$$
and  it contains  $b_i$ and $f_j$.  Then for $\cI_{2}= \cI\cap \cO(U_2)$ we have $\cI_2\cdot S=\cI$ and  
$$
\cO(U_2)/\cI_{2}=\sum_{i=1}^k \cO(U_1)b_i.$$
Indeed the $\cO(U_1)$-submodule $$\sum_{i=1}^k \cO(U_1)b_i\subset \cO(U_2)/\cI_2=\cO(U_1)[d_1,\ldots, d_r]/\cI_2=\cO(U_2)/\cI_2$$ is, in fact, a  subring since it is closed under multiplication by the relation on $b_ib_j$.
Moreover it contains all the generators  $d_i$ over $\cO(U_1)$ which implies the equality.

\end{proof}

\begin{corollary}\label{coherent2}
Let $f:X\to Y$ be a morphism of  schemes of finite type over a  field $K$ with  $x\in X$ and $y=f(x) \in Y$. Let $\cF$ be a coherent sheaf of  $\cO_X$-modules, with a stalk $\cF_x$ (which is a finitely generated $\cO_{x,X}$-module). Suppose that the vector space $\cF_x/(m_y\cdot \cF_x)$ is of  finite dimension over the field $K=f^*(\cO_{y,Y}/m_{y,Y})$.
 Then there exist an \'etale neighborhood   $Y'\subset Y$ of $y$  and $\alpha: X'\to X$ of $x$, preserving the residue fields with
the induced morphism $f': X'\to Y'$, and the induced coherent sheaf $\cF'=\alpha^*(\cF)$ such that: 
\begin{enumerate}
\item $f'_{x*}({\cF'}_x)\simeq {\cF'}_x$.

\item $f'_*(\cF')$ is  a coherent $\cO_{Y'}$-module.
\item In particular 
%
$f'_*(\cO_{X'}/\Ann(\cF'))$ 
is  a coherent $\cO_{Y'}$-module (where $\Ann(\cF')$ denotes the annihilator  of the sheaf $\cF'$).

 \item If $c_1,\ldots, c_k\in \cF_x$ generate $\cF_x/(m_y\cF_x)$ over the  field $K=\cO_y/m_y$ then $X'$ and $Y'$ can be chosen such that $c_1,\ldots, c_k$ are in $\cF'(X')$ and generate the  sheaf $f'_*(\cF')$  over  $\cO_{Y'}$.
\end{enumerate}	
\end{corollary}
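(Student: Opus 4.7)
The plan is to reduce the corollary to Hironaka's Hensel's Lemma applied at the level of Henselianizations, and then to descend the resulting finiteness to an actual étale neighborhood by the Raynaud/Zariski-Main-Theorem argument used in the proof of that lemma together with the construction in Lemma \ref{c11}.

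Since the claim is local around $x$ and $y$, I first replace $X$ by an affine open $\Spec(S)$ containing $x$, $Y$ by an affine open $\Spec(R)$ containing $y$, and write $\cF = \widetilde{M}$ for a finite $S$-module $M$; let $n \subset S$ and $m \subset R$ denote the maximal ideals corresponding to $x$ and $y$. Let $R^h = \cO_{y,Y}^h$ and $S^h = \cO_{x,X}^h$ be the Henselianizations. Then $M^h := M \otimes_S S^h$ is finite over $S^h$, and by hypothesis $M^h/(m \cdot M^h)$ is finite-dimensional over $K$. Hironaka's Hensel's Lemma then yields that $M^h$ is finite over $R^h$. Choosing $c_1, \ldots, c_k \in M$ whose images generate $M_n / m M_n$ over $K$, Nakayama's lemma applied to the local ring $R^h$ shows that $c_1, \ldots, c_k$ already generate $M^h$ over $R^h$.

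Next, I descend to étale neighborhoods. Both $R^h$ and $S^h$ are filtered colimits over pointed étale neighborhoods $(Y',y') \to (Y,y)$ and $(X',x') \to (X,x)$ preserving residue fields. The finitely many relations $s_\nu \cdot c_i = \sum_j r_{\nu i j}\, c_j$ expressing $R^h$-generation of $M^h$ by $c_1, \ldots, c_k$, taken over a finite $R^h$-generating set $\{s_\nu\}$ of $S^h$, are already defined over some such pair of neighborhoods. Using the construction of Lemma \ref{c11} — taking $X'$ to be a suitable component dominated by $\Spec(S^h)$ — one arranges that $f$ induces $f' : X' \to Y'$ and produces $\cF' = \alpha^* \cF$ with $c_1, \ldots, c_k \in \cF'(X')$ generating $f'_* \cF'$ as an $\cO_{Y'}$-module near $y'$; this gives (2) and (4). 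Claim (3) follows either by applying the same descent to $\cO_X/\Ann(\cF)$, whose fiber hypothesis is secured by Lemma \ref{map1}, or by observing that $f'_*(\cO_{X'}/\Ann(\cF'))$ embeds into $\cEnd_{\cO_{Y'}}(f'_* \cF')$, which is a coherent sheaf.

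The main obstacle is the stalk identity (1), which demands that $f'^{-1}(y')$ be scheme-theoretically concentrated at $x'$. The finite-dimensionality of $M^h/(m M^h)$ forces $m \cdot S^h + \Ann(M^h)$ to be primary to the maximal ideal of $S^h$, so the reduced fiber of $\Spec(S^h/\Ann(M^h)) \to \Spec(R^h)$ over $m$ is the single point $x$. Descending this property calls for a Raynaud/Zariski-Main-Theorem factorization, exactly as in the proof of Hironaka's Hensel's Lemma above: after a further shrinking of $X'$ around $x'$, the support of $\cF'$ has $\{x'\}$ as its unique point over $y'$, and so $(f'_* \cF')_{y'} = \cF'_{x'}$, yielding (1). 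The delicate point is to realize in a single pair $(X',Y')$ the generation, the relations, the morphism $f'$, and the scheme-theoretic isolation of $x'$ above $y'$ all simultaneously.
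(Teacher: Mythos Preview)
Your approach is essentially the paper's, but organized differently, and there are two slips worth flagging.

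First, the phrase ``a finite $R^h$-generating set $\{s_\nu\}$ of $S^h$'' is wrong: $S^h$ is not finite over $R^h$ in general (only $S^h/\Ann(M^h)$ is). What you need for the descent is a finite set of $K$-algebra (or $R$-algebra) generators $s_\nu$ of $S$; the relations $s_\nu c_i = \sum_j r_{\nu ij}\,c_j$ for those finitely many $s_\nu$ suffice and do descend to an étale neighborhood.

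Second, you invoke Lemma \ref{c11} rather loosely. That lemma is specifically about the standard inclusion $K\langle z_1,\ldots,z_k\rangle \to K\langle z_1,\ldots,z_{k+n}\rangle$, not about an arbitrary $R^h \to S^h$. The paper bridges this gap explicitly: it embeds $X\hookrightarrow \bA^n$, $Y\hookrightarrow \bA^m$, extends $f$ to $f_A:\bA^n\to\bA^m$, and factors $f_A$ as the graph embedding $\bA^n\hookrightarrow\bA^{n+m}$ followed by the projection $\bA^{n+m}\to\bA^m$. This puts one exactly in the situation of Lemma \ref{c11} applied to the annihilator ideal of the pushforward $\cF_A$; finiteness of $\cO(U_1)\to \cO(U_2)/\Ann(\cF_{U_2})$ then gives a genuinely finite map on supports, from which coherence of $f'_*(\cF')$ and $f'_*(\cO_{X'}/\Ann(\cF'))$ follows by pushforward along finite morphisms. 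Your descent-from-$M^h$ argument implicitly needs the same reduction.

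Modulo these points your outline is correct and close to the paper's. The paper's version has the advantage that it makes the finiteness of the support map $V(\Ann(\cF'))\to Y'$ completely explicit (via Lemma \ref{c11}(4),(5)), which is what cleanly delivers both coherence of the pushforward and the stalk identity (1); your module-theoretic descent gets there too but with more to check about how the étale neighborhood interacts with the support.
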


\begin{proof} 
(2) \& (3) One can assume that $X$ and $Y$ are affine schemes and  find  affine spaces $\bA^n$, $\bA^m$ over $K$ containing $X$ and $Y$ respectively with inclusions $i_X:X\hookrightarrow \bA^n$, $i_Y:Y\hookrightarrow \bA^m$, and 
 an extension  $f_A: \bA^n\to \bA^m$ of the morphism  $f:X\to Y\subset \bA^m$.   
 
 Factor $f_A: \bA^n\to \bA^m$ into the composition of the closed immersion $i:\bA^n\to \bA^{n+m}$ defined by the graph of $f_A$ followed by the natural projection $\pi: \bA^{n+m}\to \bA^{m}$. 
The coherent sheaf $\cF$ on $X$ defines the coherent sheaf $\cF_A=j_*(\cF)$ on $\bA^{n+m}$ via the inclusion $j:X\hookrightarrow X\times Y\subset \bA^{n+m}$. The annihilator of $\cF_A$ contains $\cI_{j(X)}\subset \cO_{\bA^{n+m}}$ , and  is supported on the closed subset of $j(X)\subset \bA^{n+m}$. Moreover, by comparing stalks we see that $\cO_{\bA^{n+m}}/\Ann(\cF_A)=j_*(\cO_X/\Ann(\cF))$. The latter implies the equality of the vanishing loci $j(V(\Ann(\cF))=V(\Ann(\cF_A))$.

By Lemma \ref{map1}, the restriction  
$$
\overline{f}:V(\Ann(\cF)):=\Spec(\cO(X)/\Ann(\cF(X)))\to Y
$$ 
of $f$ to the vanishing locus $V(\Ann(\cF))$ has a  
finite fiber $\overline{f}^{-1}(x)$, 
and likewise the restriction 
$$\overline{\pi}:V(\Ann(\cF_A)):=
\Spec(K[z_1,\ldots, z_{n+m}/\Ann(\cF_A(\bA^{n+m}))\to \bA^{m}.
$$
 The latter defines
  a map 
$$
\overline{\pi}^*: K\langle z_1,\ldots,z_n\rangle \to K\langle z_1,\ldots,z_{n+m}\rangle/((\Ann(\cF_A)\cdot K\langle z_1,\ldots,z_{n+m}\rangle)
$$ 
with finite fiber over the maximal ideal $(z_1,\ldots,z_{n+m})$.
 Then, by Lemma \ref{coherent2},  there exist \'etale affine neighborhoods   $j_1:U_1\to  \bA^n$ of $y$  and $j_2: U_2\to \bA^{n+k}$ of $x$, with induced $\overline{\pi}: U_2\to U_1$ such that the map of rings $$\cO(U_1)\to \cO(U_2)/(\Ann(\cF_A)\cdot \cO(U_2)) =\cO(U_2)/\Ann(\cF_{U_2})$$ is finite, where $\cF_{U_2}:=j^*(\cF_A)$ is the induced coherent sheaf, and $\Ann(\cF_{U_2})=\Ann(\cF_A)\cdot \cO(U_2)$ denote its annihilator.
 
  The \'etale affine neighborhoods   $U_1$, and $U_2$  induce   \'etale neighborhoods $X'\subset U_1$ of $X$ and $Y'\subset U_2$ of $Y$.

 The coherent sheaf $\cF$ on $X$ defines  a unique coherent  sheaf $\cF'$ on $X'$. Moreover since $X'\to X$ is \'etale, $\Ann(\cF')=\Ann(\cF)\cdot \cO(X')$.
By the above,   the restriction of $U_2\to U_1$ to the support of the annihilator,
%
$$\overline{f}': V(\Ann(\cF'))
\simeq V(\Ann(\cF_{U_2}))\to Y'\subset U_1,
$$
 is finite. 

 Since $\cF'$ is annihilated by $\Ann(\cF')$, we see that the closed immersion $i:V(\Ann(\cF'))\subseteq X'$ and the coherent sheaf $\cF'$ on $X'$ define a coherent sheaf $\cF''$ on the scheme 
$V(\Ann(\cF'))$,
such that $\cF'=i_*(\cF'')$, which implies that 
$$f'_*(\cF')=f'_*i_*(\cF'')=(\overline{f}')^*(\cF'')
$$ is a coherent sheaf of $\cO_{Y'}$-modules. This proves  (2) and thus (3). 
 
 By Lemma \ref{c11}, we can assume that the fiber of $\overline{f}'$ over $x$ is irreducible, and thus 
%
$$
f'_*(\cF')_x=((\overline{f}')^*(\cF''))_x=\cF''_x\simeq \cF'_x.
$$
 
 To prove (4) we  shrink  $X$ so that $\cF$ is generated by $c_1,\ldots,c_k\in \cF(X)$ over $\cO(X)$. This implies that they also generate $\cF'(X')$ over $\cO(X')$. By the Nakayama lemma, the  stalk 
$$
(f'_*(\cF'))_x=\cF'_x=\cF_x\otimes\cO_{X',x'}
$$  of the coherent sheaf $f'_*(\cF')$ is generated over $\cO_{Y',y'}$ by $c_1,\ldots,c_k$. This implies that, after shrinking  $X'$  and $Y'$ further, we can assume that
$f'_*(\cF')$ is generated over $\cO_{Y'}$ by $c_1,\ldots,c_k$.
\end{proof}

Both theorems generalize Weierstrass division either  locally or in a neighborhood.

\begin{definition} $f(t,x)$ is \emph{$d$-regular with respect to $t$} (where $x=(x_1,\ldots,x_n)$) if 
  $$
{f(0,0)}= {\frac{\partial{f}}{\partial t}((0,0)}=\ldots ={\frac{\partial^{d-1}f}{\partial t^{d-1}}(0,0)}= 0,\quad {\frac{\partial^{d}f}{\partial t^{d}}(0,0)}\neq 0.
$$
\end{definition}

\begin{theorem}[Weierstrass division of algebraic  functions in a neighborhood] \label{Mi2}
Let $X$ be an affine scheme which is \'etale over $\bbA_K^{n+1}=\Spec{K[t,x]}$,
 where $K$ is a field.
 Let $g(t,x)\in \cO(X)$ be a $d$-regular function at $\overline{x}\in X$ over a closed point in $\Spec(K)$.
Then there is an \'etale neighborhood $U_2\to X$ of $\overline{x}$ preserving the residue field of $\overline{x}$ such that:
\begin{enumerate}

\item There is  a smooth morphism $\pi: U_2\to U_1$ of dimension one onto an affine scheme $U_1$ which is smooth over $\Spec(K)$ and a closed embedding $i: U_1\to U_2$ such that $\pi\circ i=\id_{U_1}: {U_1}\to U_2$,
 with $\cO({U_1})\subset K\langle x\rangle$ and $\cO(U_2)\subset K\langle t, x\rangle$.
\item Weierstrass division by $f$ with remainder exists in the ring $\cO(U_2)$: For any $g\in \cO(U_2)$ there exist $q\in \cO(U_2)$  and  $r=\sum_{i=0}^{d-1} r_i(x)t^i\in \cO(U_2)$ with $r_i\in \cO({U_1})$ such that $g=q\cdot f+r$.
\item  Weierstrass division by $f$ exists for any \'etale neighborhood   $U_1'\to {U_1}$  of $\overline{x}$ preserving the residue field of $\overline{x}$ and for $U_2':=U_2\times_{U_1}U_1'$.
\end{enumerate}
\end{theorem}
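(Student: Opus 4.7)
The strategy is to deduce the theorem as a direct application of Corollary \ref{coherent2} to the composition $\phi\colon X \to Y := \Spec K[x_1,\ldots,x_n]$, where the second factor comes from the smooth projection $\bbA^{n+1}=\Spec K[t,x]\to Y$ forgetting $t$. After translating coordinates in $\bbA^{n+1}$ we may assume the image of $\overline{x}$ in $\bbA^{n+1}$ is the origin; let $y\in Y$ denote its further image, a $K$-rational point. Consider the coherent sheaf $\cF:=\cO_X/f\cdot \cO_X$ on $X$, regarded as a sheaf of $\cO_Y$-modules via $\phi$.

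The $d$-regularity of $f$ at $\overline{x}$ says precisely that, in the Henselian stalk $\cO_{X,\overline{x}}^h = K\langle t,x\rangle$, the restriction $f(t,0)$ has $t$-adic order exactly $d$, i.e.\ $f(t,0)=t^d v(t)$ with $v(0)\neq 0$ a unit. Consequently
\[
\cF_{\overline{x}}/(m_y\cdot \cF_{\overline{x}}) \;=\; K\langle t\rangle/(f(t,0)) \;=\; K\langle t\rangle/(t^d)
\]
is a $d$-dimensional $K$-vector space with basis $1,t,\ldots,t^{d-1}$. Corollary \ref{coherent2} (parts (2)-(4)) then yields residue-field-preserving étale neighborhoods $U_2\to X$ of $\overline{x}$ and $U_1\to Y$ of $y$, with induced morphism $\pi\colon U_2\to U_1$ (smooth of relative dimension one, since $\bbA^{n+1}\to Y$ is), such that $\pi_*(\cF|_{U_2})$ is a coherent $\cO_{U_1}$-module globally generated by $1,t,\ldots,t^{d-1}$, and such that $(\pi_*\cF|_{U_2})_{\overline{x}}\simeq \cF|_{U_2,\overline{x}}$. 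Lemma \ref{c11} gives the ring inclusions $\cO(U_1)\subset K\langle x\rangle$ and $\cO(U_2)\subset K\langle t,x\rangle$, completing the geometric part of (1). Global generation is exactly (2): any $g\in\cO(U_2)$, reduced modulo $f$, can be written $\bar g=\sum_{i=0}^{d-1}r_i(x)t^i$ with $r_i\in\cO(U_1)$, so that $g=q\cdot f+r$ in $\cO(U_2)$ with $r=\sum r_i(x)t^i$.

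For the section $i\colon U_1\to U_2$ asserted in (1), observe that the zero section $s\colon Y\hookrightarrow\bbA^{n+1}$, $x\mapsto(0,x)$, pulls back under the étale map $X\to\bbA^{n+1}$ to a closed subscheme $Z\subset X$ étale over $Y$ and passing through $\overline{x}$. After shrinking $U_1$ and $U_2$ compatibly so that $U_1$ is identified with the connected component of $Z\cap U_2$ through $\overline{x}$, the inclusion $U_1\hookrightarrow U_2$ is a closed embedding that sections $\pi$. Finally, (3) is flat base change: for any further étale $U_1'\to U_1$ preserving the residue field at $\overline{x}$, the fibre product $U_2':=U_2\times_{U_1}U_1'$ is étale over $X$, the pushforward $\pi'_*(\cF|_{U_2'})$ equals the pullback of $\pi_*(\cF|_{U_2})$ and is therefore still generated over $\cO(U_1')$ by $1,t,\ldots,t^{d-1}$, and the division argument of (2) applies verbatim in $\cO(U_2')$.

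The main technical obstacle is the coherent-generation statement underlying step two; this has already been absorbed into Corollary \ref{coherent2}, which in turn rests on Hironaka's Hensel lemma and Raynaud's version of Zariski's main theorem. Once that is granted, the construction of the section and the verification of base-change stability are routine étale-neighborhood manipulations, aligning the zero section of the ambient smooth projection $\bbA^{n+1}\to \bbA^n$ with the chosen étale neighborhoods.
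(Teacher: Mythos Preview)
Your proof is correct and follows essentially the same approach as the paper: both apply Corollary~\ref{coherent2} to the coherent sheaf $\cF=\cO/(f)$ along the projection to the $x$-coordinates, using $d$-regularity to see that $\cF_{\overline{x}}/(m_y\cF_{\overline{x}})$ is spanned by $1,t,\ldots,t^{d-1}$, and then read off Weierstrass division from global generation of the pushforward. The only cosmetic difference is order of operations: the paper first constructs the pair $(U_1,U_2,\pi,i)$ explicitly by setting $U_1=V(t)\subset X$ and $U_2=\Spec(\cO(U_1)\cdot\cO(X))$ before invoking Corollary~\ref{coherent2}, whereas you let Corollary~\ref{coherent2} produce $U_1,U_2,\pi$ directly and then recover the section $i$ by pulling back the zero section of $\bbA^{n+1}\to\bbA^n$.
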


\begin{proof} (1)  We can assume that $X$ is affine with  $\cO(X)\subset K\langle t, y_1,\ldots,y_n\rangle$. Then the quotient ring 
%
$R:=\cO(X)/(t)$ 
can be identified  with a subring of $ K\langle  y_1,\ldots,y_n\rangle\simeq K\langle t, y_1,\ldots,y_n\rangle/(t)$.
Let 
$$
U_1:=\Spec (R)\simeq \{\overline{y}\in U\mid t(\overline{y})=0\},\quad U_2:=\Spec (R\cdot\cO(X)).
$$
Let   $\pi: U_2\to {U_1}$ be the projection defined by the inclusion $R\subset R\cdot\cO(X)$. The schemes $U_2$ and ${U_1}$ are smooth over $\Spec(K)$ at $\overline{x}$ and $\pi(\overline{x})$.

By shrinking $U_2$ and ${U_1}$ if necessary, we can assume that
$\pi$ is smooth and $U_2\to X$ is \'etale.

(2) Consider the coherent sheaf $\cF:=\cO_{U_2}/(f)$  and the projection $\pi:  U_2\to U_1$.  Then $\cF_{x}/(m_{y}\cF_{x})$ is spanned by $1,t,\ldots,t^{d-1}$. Consequently,  by using Corollary \ref{coherent2}, after passing to \'etale neighborhoods  of $U_1$ and $U_2$, the sheaf 
$\pi_*(\cO_{ U_2}/(f))$  
is coherent and generated over $\cO_{U_1}$ by the same set of 
polynomials $1,t,\ldots,t^{d-1}$. Since we can assume that all the subschemes are affine, we deduce that the ring of global sections, 
%
$\cO({ U_2})/(f)$, is the $\cO(U_1)$-module generated by $1,t,\ldots,t^{d-1}$. This implies existence of Weierstrass division.

(3) The same property is valid when passing to \'etale neighborhoods.
	\end{proof}
\begin{corollary}[Singular ``inverse function'' theorem (algebraic version)] \label{I3}
Let $f:X\to Y$ be a morphism of  schemes of finite type over a  field $K$, and let  $x\in X$ and $y=f(x) \in Y$ be $K$-rational points. Assume that $\cO_{X,x}/f^*(m_y)$ is of finite dimension $d$  over $K=\cO_y/m_y$ generated by  $c_1,\ldots,c_d\in \cO_{X,x}$. Then there exist  \'etale neighborhoods   $Y'\to  Y$ of $y$  and $X'\to X$ of $x$, preserving the residue field $K$, with
the induced finite morphism $f': X'\to Y'$ of degree $d$.
Moreover, if $X$ and $Y$ are of the same dimension, and $X$ is  Cohen-Macaulay and $Y$ is regular,  then: 
\begin{enumerate}
\item $f': X'\to Y'$ is a finite, flat and surjective morphism (a $d$-branched cover).
\item There is an isomorphism of $\cO_{Y'}$-modules $\cO_{Y'}^d \buildrel{\phi}\over{\simeq} f'_*(\cO_{X'}) $, $\phi(a_1,\ldots,a_d)=a_1c_1+\ldots+a_dc_d$.
\item  The point 
$y=f'(x)\in Y'$ is in the 
%
ramified locus of $f'$ of maximal index  $d$.
\end{enumerate}
\end{corollary}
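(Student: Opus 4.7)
The plan is as follows. The first assertion—existence of étale neighborhoods $X'\to X$ of $x$ and $Y'\to Y$ of $y$, preserving the residue field $K$, with an induced finite morphism $f':X'\to Y'$ of degree $d$—follows by a direct application of Corollary \ref{coherent2} to the structure sheaf $\cF:=\cO_X$. By hypothesis, $\cF_x/(m_y\cF_x)=\cO_{X,x}/f^*(m_y)\cO_{X,x}$ is a $d$-dimensional $K$-vector space generated by $c_1,\ldots,c_d$. Parts (2) and (4) of that corollary then produce étale neighborhoods $X'\to X$ and $Y'\to Y$, preserving residue fields, such that $f'_*(\cO_{X'})$ is coherent over $\cO_{Y'}$ and globally generated by $c_1,\ldots,c_d$. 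Coherence of $f'_*(\cO_{X'})$ is exactly the condition that $f'$ be a finite morphism (after possibly shrinking $Y'$), and the fiber at $y$ has $K$-dimension $d$.

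For the Cohen-Macaulay refinement, note that étale morphisms preserve the Cohen-Macaulay and regular properties, so after shrinking, $X'$ is Cohen-Macaulay, $Y'$ is regular, and both have the same dimension. Then $f'$ is a finite morphism from a Cohen-Macaulay scheme to a regular scheme of the same dimension, hence flat by the miracle flatness theorem (Matsumura, \emph{Commutative Ring Theory}, Theorem 23.1). A finite flat map is simultaneously open (by flatness) and closed (by finiteness), so $f'(X')$ is clopen in $Y'$; shrinking $Y'$ to a connected open neighborhood of $y$ yields surjectivity and establishes (1).

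For (2), since $f'$ is finite and flat of rank $d$, the pushforward $f'_*(\cO_{X'})$ is locally free of rank $d$ over $\cO_{Y'}$. The $\cO_{Y'}$-linear map $\phi:\cO_{Y'}^d \to f'_*(\cO_{X'})$ defined by $(a_1,\ldots,a_d)\mapsto \sum a_i c_i$ is surjective on the fiber at $y$ by choice of generators, hence—by Nakayama's lemma—surjective in a neighborhood of $y$. After a further shrinking of $Y'$, we have a surjection between locally free sheaves of the same rank $d$; such a surjection is automatically an isomorphism, since any surjective endomorphism of a finitely generated free module over a commutative ring is invertible. This proves (2).

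For (3), the fiber-irreducibility argument inside the proof of Corollary \ref{coherent2} (which invokes Lemma \ref{c11}) ensures that, after further shrinking, the scheme-theoretic fiber $f'^{-1}(y)$ is concentrated at the single point $x$ with local ring $\cO_{X',x}/f^*(m_y)\cO_{X',x}$ of length $d$. Since the global degree of $f'$ equals $d$ and is realized at a single point, the ramification index at $x$ over $y$ is $d$, the maximal possible value. The main technical points are the application of miracle flatness and the upgrade of pointwise surjectivity of $\phi$ to a neighborhood via Nakayama; both are standard once combined with the fiber-irreducibility arrangement supplied by Lemma \ref{c11}.
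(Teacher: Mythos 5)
Your proposal is correct and follows essentially the same route as the paper: Corollary \ref{coherent2} applied to $\cF=\cO_X$ for finiteness, then freeness of $f'_*(\cO_{X'})$ from the Cohen--Macaulay-over-regular situation, and irreducibility of the fiber for (3). The only cosmetic difference is that you invoke miracle flatness and the standard ``surjection of locally free sheaves of equal rank is an isomorphism'' argument, whereas the paper routes the freeness through its own Theorem \ref{Rem2} (explicitly offering Eisenbud's freeness result --- the same underlying fact --- as an alternative).
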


\begin{proof} (1) To show finiteness of $f'$ we apply Theorem \ref{coherent2} for $\cF=\cO_X$. 

(2)  $f'_*(\cO_{X'})$ is a finite $\cO_{Y'}$-module generated by $c_1,\ldots,c_d$. The fact that $\phi$ is an isomorphsim  follows from 
Theorem \ref{Rem2}, 
or a result by Eisenbud that local Cohen-Macaulay rings which are are finite 	over regular rings are free modules (\cite{E}). 
We can represent $X\to Y$ as the composition of the closed immersion $i: X\to X\times Y$ followed by the projection $\pi:X\times Y\to Y$.
Then, by Theorem \ref{Rem2}, 
$$
f'_*(\cO_{X'})=\pi_*(\cO_{X'\times Y'}/\cI_{i(X')})\simeq \cO_{Y'}^d
$$ 
is a free $\cO_{Y'}$-module in a neighborhood of $y$. By shrinking $Y'$ (and  $X'$) one can assume that  $c_1,\ldots,c_d$ is a basis of $f'_*(\cO_{X'})$. 

(3) follows from the fact that $\{x\}=(f')^{-1}(y)$ is irreducible.
\end{proof}

\begin{remark} 
Note that the degree of $f$ is usually greater than $d$ so the result is not valid in the Zariski topology.
	\end{remark}

\subsection{The Grauert-Remmert theorem for finite holomorphic maps}\label{lag2}

Recall that a map of topological spaces (in particular complex analytic  differentiable spaces) is \emph{finite}  if it has finite fibers and is closed.

\begin{lemma} 
Let $\cF$ be a coherent sheaf on a complex space $X$. Let $f:X\to Y$ be a holomorphic map of analytic spaces  and consider points $x\in U$ and $y=f(x) \in V$. The following conditions are equivalent:
\begin{enumerate}
\item $x$ is an isolated point in $f^{-1}(y)\cap V(\Ann(\cF))$. 
\item $\cF_x/(m_y\cdot \cF_x)$ is a finite-dimensional vector space.
	\end{enumerate}
\end{lemma}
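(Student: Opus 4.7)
The plan is to reduce the statement to the algebraic Lemma \ref{map1}, which applies verbatim since the analytic local rings $\cO_{X,x}$ and $\cO_{Y,y}$ are Noetherian local rings and $\cF_x$ is a finite module over $\cO_{X,x}$ by coherence. Let $I = \Ann(\cF)$, so $I_x \subset \cO_{X,x}$ is the annihilator of $\cF_x$. Lemma \ref{map1}, applied to the local ring map $\cO_{Y,y} \to \cO_{X,x}$ induced by $f$ and to the module $M = \cF_x$, yields the equivalence
\[
\cF_x/(m_y\cdot \cF_x)\ \text{is finite-dimensional over}\ K
\quad\Longleftrightarrow\quad
\cO_{X,x}/(I_x + m_y\cdot \cO_{X,x})\ \text{is finite-dimensional over}\ K.
\]
So the task reduces to identifying the right-hand condition with the geometric condition that $x$ is an isolated point of $f^{-1}(y)\cap V(\Ann(\cF))$.

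Next I would observe that as germs at $x$, the zero set $V(I_x + m_y\cdot \cO_{X,x})$ coincides with the germ at $x$ of $V(\Ann(\cF))\cap f^{-1}(y)$, because $V(m_y\cdot \cO_{X,x})$ is the germ of the fiber $f^{-1}(y)$ (defined by the pullback of functions in $m_y$) and $V(I_x)$ is the germ of the support $V(\Ann(\cF))$. Thus condition (1) is equivalent to saying that the analytic germ defined by the ideal $I_x + m_y\cdot \cO_{X,x}$ in $\cO_{X,x}$ is the single point $\{x\}$.

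Now I would apply the standard fact from the theory of complex analytic local algebras (a consequence of the R\"uckert Nullstellensatz, see e.g.\ Grauert--Remmert): for a finitely generated ideal $J\subset \cO_{X,x}$, the quotient $\cO_{X,x}/J$ is a finite-dimensional $\CC$-vector space if and only if the maximal ideal of $\cO_{X,x}/J$ is nilpotent, if and only if the analytic germ $V(J)$ at $x$ reduces to $\{x\}$, that is, $x$ is an isolated point of $V(J)$. Applied to $J = I_x + m_y\cdot \cO_{X,x}$ this gives exactly the equivalence of (1) with the algebraic finiteness condition obtained in the previous step, completing the proof.

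The only subtle point is the passage from the formal/algebraic identification (which Lemma \ref{map1} provides) to the geometric statement about isolated points; this rests on the Noetherianity of $\cO_{X,x}$ and the analytic Nullstellensatz, both of which are standard in the holomorphic setting. I do not expect any real obstacle beyond carefully invoking these two ingredients.
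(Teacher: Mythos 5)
Your proposal is correct and follows essentially the same route as the paper's proof: identify condition (1), via the analytic Nullstellensatz and Noetherianity, with finite-dimensionality of $\cO_{X,x}/(\Ann(\cF)_x+m_y\cdot\cO_{X,x})$, and then pass between this and finite-dimensionality of $\cF_x/(m_y\cdot\cF_x)$ using the purely algebraic Lemma~\ref{map1}. If anything, you are slightly more explicit than the paper, which spells out only the direction (1)$\Rightarrow$(2) (finitely generated module over a finite-dimensional ring) and leaves the converse to the earlier Lemma~\ref{map1}, which you invoke directly.
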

	
\begin{proof} 
One can shrink $U$ so that $\{x\}=\cF_x/(m_y\cdot \cF_x)$; this does not affect condition (2).
Consider the coherent sheaf of ideals $\cI:=\Ann(\cF)$. Then  condition (1) is equivalent to 
$m_y\cdot\cO_X+\cI \supset m_x^d$ for some $d$, or 
%
$\cO_{X,x}/(m_y\cdot\cO_X+\cI)$ 
is of finite dimension. But the module $\cF_x/(m_y\cdot \cF_x)$ is finitely generated over	$\cO_{X,x}$, and thus over 
%
$\cO_{X,x}/(m_y\cdot\cO_X+\cI)$, which in turn is of finite dimension.
\end{proof}

\begin{remark} Condition (2) was used in particular by Malgrange, for  Malgrange-Mather division of  moduli over smooth functions, and is very convenient, especially when combined with Nakayama's lemma. Both conditions are equivalent in the algebraic and analytic situation but are different for smooth functions.
	\end{remark}

The following result  is equivalent to the  Grauert-Remmert theorem for finite morphisms of complex spaces \cite[Theorem~2, p.~54]{GR}. (We use here the Malgrange condition on the stalk $\cF_x$.)

\begin{corollary}\label{GR} (\cite{GR})
 Let $f:X\to Y$  be a holomorphic   map of analytic spaces , and consider points $x\in U$ and $y=f(x) \in V$. Let $\cF$ be a sheaf of  $\cO_X$-modules which is coherent , with a stalk $\cF_x$. Suppose $\cF_x/(m_y\cdot \cF_x)$ is a finite-dimensional vector space over $\CC$. Then there exist  neighborhoods $V'\subset V$ of $y$  and $U'$ of $x$ such that:
\begin{enumerate}

\item  The sheaf $(f_{|U'})_*(\cF)$ is a coherent $\cO_{V'}$-module.
\item  The   restriction of $f$ to  $V(\Ann(\cF))\cap U'$ defines a finite map
$V(\Ann(\cF))\cap U'\to V'$ .
\item $\cF_x\simeq ((f_{U'})_*(\cF))_y$.
\item If $c_1,\ldots, c_k$ generate $\cF_x/(m_y\cF_x)$ over the field $K=\cO_y/m_y$ then $(f_{|U'})_*(\cF)$ is generated over  $\cO_{V'}$ by $c_1,\ldots, c_k\in \cO(V')$.
\item The module $\cF(U')$ is finitely generated over $\cO(V')$.
\end{enumerate}	
\end{corollary}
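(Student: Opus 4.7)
The plan is to reduce the statement to the classical Grauert--Remmert finite mapping theorem plus Grauert's direct image theorem for finite holomorphic maps, both of which can be cited. The key observation is that we are allowed to replace $\cF$ by a coherent sheaf supported on the analytic subset $V(\Ann(\cF))$, on which the map $f$ becomes finite near $x$ thanks to the preceding lemma.

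First, apply the preceding lemma to rephrase the hypothesis: the assumption that $\cF_x/(m_y \cdot \cF_x)$ is finite-dimensional is equivalent to $x$ being an isolated point of $f^{-1}(y)\cap V(\Ann(\cF))$. Set $\cI := \Ann(\cF)$, $Z := V(\cI)$, and let $i\colon Z\hookrightarrow X$ be the closed embedding. Since $\cI$ annihilates $\cF$, there is a coherent sheaf $\overline{\cF}$ on $Z$ with $\cF = i_*\overline{\cF}$; then $f_*\cF = \overline{f}_*\overline{\cF}$, where $\overline{f} := f|_Z$. Because $x$ is isolated in $\overline{f}^{-1}(y)$, the classical finite mapping theorem of Grauert--Remmert yields open neighborhoods $U'\ni x$ in $X$ and $V'\ni y$ in $Y$ with $\overline{f}(Z\cap U')\subset V'$ and $\overline{f}|_{Z\cap U'}\colon Z\cap U'\to V'$ finite (proper with finite fibres). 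After a further shrinking I may assume $\overline{f}^{-1}(y)\cap U' = \{x\}$, which immediately gives (2). Grauert's direct image theorem for finite maps then says that $(\overline{f}|_{Z\cap U'})_*(\overline{\cF}|_{Z\cap U'}) = (f|_{U'})_*\cF$ is a coherent $\cO_{V'}$-module, proving (1); while (3) follows from the stalk formula for a finite pushforward, which here reduces to the single summand $\cF_x$.

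For (4), lift generators $c_1,\dots,c_k$ of $\cF_x/(m_y\cF_x)$ to the stalk $((f|_{U'})_*\cF)_y \simeq \cF_x$; by Nakayama's lemma applied to this finitely generated $\cO_{V',y}$-module, they generate the stalk, and coherence extends this to generation on a (possibly smaller) neighborhood of $y$. For (5), I would further shrink $V'$ to a Stein open neighborhood of $y$ (analytic spaces are locally Stein); then Cartan's Theorem A plus coherence of $(f|_{U'})_*\cF$ on $V'$ show that finitely many global sections generate all stalks, and hence $\cF(U') = ((f|_{U'})_*\cF)(V')$ is finitely generated over $\cO(V')$.

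The main technical input is the classical Grauert--Remmert finite mapping theorem together with coherence of direct images under finite holomorphic maps; the remainder of the argument is formal bookkeeping reducing to a coherent sheaf on $V(\Ann(\cF))$ and a single application of Nakayama. I do not expect any serious obstacle beyond correctly identifying $f_*\cF$ with $\overline{f}_*\overline{\cF}$ and verifying that isolation of $x$ in $f^{-1}(y)\cap V(\Ann(\cF))$ (rather than in $f^{-1}(y)$ itself) is precisely the hypothesis the classical theorem needs.
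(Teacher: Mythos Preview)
Your proposal is correct and follows essentially the same approach as the paper: both invoke the Grauert--Remmert theorem for finite holomorphic maps, use the preceding lemma to translate the finite-dimensionality hypothesis into isolation of $x$ in $f^{-1}(y)\cap V(\Ann(\cF))$, and then apply Nakayama for part~(4). Your write-up is in fact more explicit than the paper's (which simply cites \cite{GR} and sketches the remaining parts in one line each); in particular, your reduction via $\cF=i_*\overline{\cF}$ and your Stein argument for~(5) spell out what the paper leaves implicit when it says ``Part~(5) is again a consequence of~(4).'' One small remark: the passage from sheaf-generation to generation of global sections in~(5) uses Cartan's Theorem~B (vanishing of $H^1$ of the kernel) rather than Theorem~A, but this does not affect the validity of your argument.
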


\begin{proof} The corollary is a consequence of the Grauert-Remmert result  and Nakayama's lemma. By the previous lemma we can replace  condition (1) with the equivalent condition (2). For part (3) we choose the neighborhood $U$ containing only a single point of the fiber.
Part (4) follows from the Nakayama lemma and~(1). Part (5) is again a consequence of 
(4). 
\end{proof}

The corollary  also follows from the methods discussed in the next sections.
 As a particular case of the above  we get a neighborhood version of Weierstrass division (which can also be  deduced from a result by H\"ormander \cite[Theorem 6.1.1]{Hormander}).

\begin{theorem}[Weierstrass division of complex analytic functions in a neighborhood]\label{Mi1}
 Let $\cO(U)$ denote the ring of holomorphic functions on an open subset $U\subset \RR^1\times \RR^{n}$. Let $f(t,x)\in \cO(U)$ be a $d$-regular function at $(t_0,x_0)\in U$. Then Weierstrass division  by $f$ is possible in the ring $\cO(U_2)$ for  a certain  neighborhood $U_2=W\times U_1 \subset U$ of $x$. 
That is, for every $g(t,x)\in \cO(U_2)$ there are $q(t,x)\in \cO(U_2)$ and $r=\sum_{i=0}^{d-1} r_i(x)t^i\in \cO(U_2)$ with $r_i(x)\in \cO(U_1)$ such that $g=q\cdot f+r$.
This holds for any open subsets $U_1'\subset U_1$ and $U_2'=W\times U_1'$.
\end{theorem}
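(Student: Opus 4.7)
The plan is to deduce this theorem as a direct consequence of the Grauert-Remmert Corollary \ref{GR} applied to the coherent sheaf $\cF := \cO_U/(f)$ and the projection $\pi : U \to U_1$ onto the second factor (where $U_1$ is a small neighborhood of $x_0$ in $\CC^n$). Concretely, set $X = U$, $Y = U_1$, $f_{\text{map}} = \pi$, and consider the points $(t_0,x_0) \in U$ and $x_0 \in U_1$.

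First I would verify the hypothesis of Corollary \ref{GR}, namely that $\cF_{(t_0,x_0)}/(m_{x_0}\cdot \cF_{(t_0,x_0)})$ is finite-dimensional over $\CC$. Since $f$ is $d$-regular with respect to $t$, the image of $f$ in $\cO_{(t_0,x_0)}/m_{x_0}\cdot \cO_{(t_0,x_0)} \simeq \CC\{t-t_0\}$ is $(t-t_0)^d$ times a unit. Hence
\[
\cF_{(t_0,x_0)}/(m_{x_0}\cdot \cF_{(t_0,x_0)}) \;\simeq\; \CC\{t-t_0\}/((t-t_0)^d)
\]
is a $\CC$-vector space of dimension $d$, generated by $1, t, \ldots, t^{d-1}$.

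Next I would invoke Corollary \ref{GR} to obtain neighborhoods $V' \subset U_1$ of $x_0$ and $U' \subset U$ of $(t_0,x_0)$ (preserving the fiber so that $U' \cap \pi^{-1}(x_0) \cap V(f) = \{(t_0,x_0)\}$) such that $(\pi_{|U'})_*(\cF)$ is a coherent $\cO_{V'}$-module generated by $1, t, \ldots, t^{d-1}$; moreover the restriction of $\pi$ to $V(f) \cap U' \to V'$ is finite. Since this restriction is a closed map with the single point $(t_0,x_0)$ over $x_0$, after further shrinking $V'$ I can pick an open interval (polydisk) $W \ni t_0$ in the $t$-direction and a neighborhood $U_1 \subset V'$ such that $V(f) \cap (W\times U_1) = V(f) \cap \pi^{-1}(U_1) \cap U'$. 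Then $U_2 := W \times U_1$ is a product neighborhood contained in $U'$, and on $U_2$ the quotient $\cO(U_2)/(f)$ is still generated over $\cO(U_1)$ by $1, t, \ldots, t^{d-1}$ by coherence. This gives the division: for each $g \in \cO(U_2)$, its class mod $(f)$ can be written as $r = \sum_{i=0}^{d-1} r_i(x)\,t^i$ with $r_i \in \cO(U_1)$, so $g = qf + r$ with $q \in \cO(U_2)$.

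For the last assertion, I would observe that if $U_1' \subset U_1$ is open and $U_2' = W \times U_1'$, then $U_2'$ is open in $U_2$ and $\cO(U_2')/(f)$ remains generated by $1, t, \ldots, t^{d-1}$ over $\cO(U_1')$, because the pushforward sheaf $\pi_*(\cO_{U_2}/(f))$ restricted to $U_1'$ agrees with $\pi_*(\cO_{U_2'}/(f))$; hence division by $f$ transfers verbatim. The main delicate point in this plan is the passage from the general neighborhood $U'$ furnished by Corollary \ref{GR} to a product neighborhood $W \times U_1$, which I resolve by using the finiteness (hence closedness) of the restricted projection $V(f)\cap U' \to V'$ together with the fact that the fiber over $x_0$ consists of a single isolated point.
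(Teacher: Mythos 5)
Your proposal is correct and follows essentially the same route as the paper: the paper's proof simply refers to the argument of the algebraic version (Theorem \ref{Mi2}), which applies the Grauert--Remmert coherence result to the sheaf $\cO_{U_2}/(f)$ pushed forward under the projection and concludes from generation by $1,t,\ldots,t^{d-1}$. You merely spell out in more detail the verification of the finite-dimensionality hypothesis and the passage to a product neighborhood, both of which are consistent with what the paper leaves implicit.
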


\begin{proof}
 The proof is similar to the proof of the algebraic version (Theorem \ref{Mi2}). We use Theorem 1 of Grauert-Remmert \cite[p.~52]{GR}. 
\end{proof}

\begin{corollary}[Singular  ``inverse function'' theorem (analytic version)] \label{I1}
Let $f:X\to Y$ be a holomorphic map  of complex  analytic  spaces, and let $x\in X$ and $y=f(x) \in Y$. Assume  $\cO_{X,x}/f^*(m_y)$ is of finite dimension $d$  over $\CC=\cO_y/m_y$ generated by  $c_1,\ldots,c_d\in \cO_{X,x}$. Then there exist open neighborhoods   $Y'\subset  Y$ of $y$  and $X'\subset X$ of $x$ such that the induced finite morphism $f': X'\to Y'$ is of degree $d$. Moreover, if  $X$ is a Cohen-Macaulay complex space and $Y$ is a manifold with $\dim(X)=\dim(Y)$ then: 
\begin{enumerate}
\item $f': X'\to Y'$ is a finite, flat, open and surjective morphism (a  $d$-branched cover).
\item There is an isomorphism of $\cO_{Y'}$-modules $\cO_{Y'}^d \buildrel{\phi}\over{\simeq} f'_*(\cO_{X'}) $, $\phi(a_1,\ldots,a_d)=a_1c_1+\ldots+a_dc_d$.
\item There is an isomorphism of $\cO({Y'})$-modules $ \phi_{Y'}:\cO({Y'})^d\simeq \cO(X')$.
\item   The point 
$y=f'(x)\in Y'$
 is in the ramified locus of maximal index  $d$.
\end{enumerate}
\end{corollary}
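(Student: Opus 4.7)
The plan is to mirror the proof of the algebraic counterpart (Corollary \ref{I3}) essentially verbatim, substituting the analytic Grauert--Remmert statement (Corollary \ref{GR}) for the algebraic coherent image result (Corollary \ref{coherent2}). First I would apply Corollary \ref{GR} to the coherent sheaf $\cF := \cO_X$ and the map $f$. The hypothesis that $\cO_{X,x}/f^*(m_y)\cO_{X,x}$ is finite dimensional is exactly the Grauert--Remmert/Malgrange condition on the stalk $\cF_x/(m_y\cF_x)$; moreover the $c_1,\ldots,c_d$ are a $\CC$-basis. Thus I obtain open neighborhoods $X'$ of $x$ and $Y'$ of $y$, a finite induced map $f':X'\to Y'$, and a coherent direct image $f'_*(\cO_{X'})$ generated as $\cO_{Y'}$-module by $c_1,\ldots,c_d$. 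After shrinking $X'$ so that $x$ is the only point of $f'^{-1}(y)$, the fiber $(f')^{-1}(y)$ has length $d = \dim_\CC \cO_{X,x}/f^*(m_y)\cO_{X,x}$, giving the ``degree $d$'' assertion.

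Assume now that $X$ is Cohen--Macaulay, $Y$ is a manifold and $\dim X = \dim Y$. Consider the $\cO_{Y'}$-linear surjection $\varphi:\cO_{Y'}^d \to f'_*(\cO_{X'})$, $(a_1,\ldots,a_d)\mapsto \sum a_i c_i$, provided by the generating set. Factor $f'$ as the graph immersion $i:X'\hookrightarrow X'\times Y'$ followed by the projection $\pi:X'\times Y'\to Y'$, so that
\[
f'_*(\cO_{X'}) \;=\; \pi_*(\cO_{X'\times Y'}/\cI_{i(X')}).
\]
Because $X'$ is Cohen--Macaulay, $Y'$ is regular, and the dimensions agree, Theorem \ref{Rem2} (the analytic version of Eisenbud's result that a Cohen--Macaulay module finite over a regular local ring is free) forces the stalk $f'_*(\cO_{X'})_y$ to be a free $\cO_{Y',y}$-module. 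Its rank must be $d$ since reduction modulo $m_y$ gives a $\CC$-vector space of dimension $d$. Hence $\varphi_y$ is a surjection of free $\cO_{Y',y}$-modules of equal finite rank and is therefore an isomorphism; by coherence, after further shrinking $Y'$ (and $X' := (f')^{-1}(Y')$), $\varphi$ becomes an isomorphism on all of $Y'$, establishing (2). Taking $Y'$ to be a Stein neighborhood on which $\Gamma(Y',-)$ is an exact functor on coherent sheaves converts (2) into the global identification $\cO(X') \simeq \cO(Y')^d$ of (3). Since $f'_*(\cO_{X'})$ is $\cO_{Y'}$-free, $f'$ is finite and flat; a finite flat morphism is open, and since $y\in f'(X')$, the image is clopen in $Y'$ and hence equal to $Y'$, proving surjectivity and completing (1).

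For (4), the ramification index of $f'$ at $x$ equals the length of the fiber algebra $\cO_{X,x}/f^*(m_y)\cO_{X,x}$, namely $d$, which also equals the degree of the cover. Therefore $y$ lies in the locus where the ramification index attains its maximum value $d$. The main obstacle is purely one of bookkeeping: the various shrinkings needed to simultaneously guarantee finiteness of $f'$, isolation of the fiber over $y$, the global isomorphism $\varphi$, and the Stein property of $Y'$ must all be carried out compatibly. Each reduction is justified by coherence together with the openness of the Stein condition for small balls, but care must be taken not to disturb the Cohen--Macaulay hypothesis on $X'$, which however is automatic since Cohen--Macaulayness is preserved by shrinking to open subsets.
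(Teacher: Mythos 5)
Your proposal is correct and follows exactly the route the paper takes: its proof of this corollary consists of the single remark that the argument is identical to the algebraic version (Corollary \ref{I3}) with Corollary \ref{GR} replacing Corollary \ref{coherent2}, which is precisely what you carry out. The extra details you supply (the graph factorization, Theorem \ref{Rem2} for freeness, and the rank-$d$ surjection of free modules being an isomorphism) are the same ones used in the paper's proof of the algebraic case.
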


\begin{proof} The proof is identical to that for the algebraic version. We use Theorem \ref{GR}.
\end{proof}

\begin{corollary}[Singular ``inverse function'' theorem 2] \label{I4}
Let $f:X\to Y$ be a morphism of smooth schemes of finite type over a  field $K$ (respectively a map between   complex manifolds) of the same dimension,  and let  $x\in X$ and $y=f(x) \in Y$ be $K$-rational points. Let $u'_1,\ldots,u'_n$ be a coordinate system on $U'$ at $f(y)=x\in V$, and $u_1,\ldots,u_n$ be a coordinate system at $x$. Suppose that $f$ is given by a finite set of functions $f_i=f^*(u'_i)$ which form a 
Cohen-Macaulay regular sequence 
at $x$ (\cite{Matsumura}). Then there are  \'etale (respectively open) neighborhoods $X'\to X$ of $x$ and $Y'\to Y$ of $y$ such that the induced   morphism ${f}': X'\to Y'$ is finite,  and there is an isomorphism  of $\cO_{Y'}$-modules $\cO_{Y'}^d\to f'_*(\cO_{X'}) $.
\end{corollary}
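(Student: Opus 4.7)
The strategy is to reduce the statement to the previously established Cohen-Macaulay singular inverse function theorems, namely Corollary \ref{I3} in the algebraic setting and Corollary \ref{I1} in the analytic setting. These corollaries already supply the finite flat conclusion together with the free $\cO_{Y'}$-module structure on $f'_*(\cO_{X'})$; all that is missing is to verify the finite-dimensional fiber hypothesis $\dim_K \cO_{X,x}/f^*(m_y)<\infty$, and this is precisely where the regular sequence assumption enters.

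First I would observe that since $X$ and $Y$ are smooth (or complex manifolds) of the same dimension $n$, the local ring $\cO_{X,x}$ is a regular local ring of dimension $n$, and in particular Cohen-Macaulay, while $\cO_{Y,y}$ is regular. This matches the ``moreover'' hypotheses of Corollaries \ref{I3} and \ref{I1} verbatim, so it remains only to supply the finite fiber. Since by assumption $f_1,\ldots,f_n$ is a regular sequence at $x$ of length equal to $\dim\cO_{X,x}=n$ in a Cohen-Macaulay local ring, standard commutative algebra tells us that $(f_1,\ldots,f_n)$ is then a system of parameters (equivalently, the sequence has height $n$ in a ring of dimension $n$). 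Thus the ideal $f^*(m_y)=(f_1,\ldots,f_n)\cdot\cO_{X,x}$ is $m_x$-primary, and the quotient $\cO_{X,x}/f^*(m_y)$ is an Artinian local $K$-algebra, hence a finite-dimensional $K$-vector space of some dimension $d$.

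With the fiber hypothesis in hand, I would fix elements $c_1,\ldots,c_d\in\cO_{X,x}$ whose residues form a $K$-basis of $\cO_{X,x}/f^*(m_y)$, and apply Corollary \ref{I3} (respectively Corollary \ref{I1}) directly. This produces étale (resp.\ open) neighborhoods $X'\to X$ of $x$ and $Y'\to Y$ of $y$ together with a finite, flat, surjective morphism $f':X'\to Y'$, and the isomorphism of $\cO_{Y'}$-modules
$$\cO_{Y'}^d\isomto f'_*(\cO_{X'}),\qquad (a_1,\ldots,a_d)\mapsto a_1c_1+\cdots+a_dc_d,$$
as claimed. The only genuine step to check is the translation of the Cohen-Macaulay regular sequence condition into the $m_x$-primary condition that feeds into the earlier corollaries; once this is in place, the conclusion is immediate. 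The new content of Corollary \ref{I4} over Corollaries \ref{I3} and \ref{I1} is thus essentially the recognition that supplying a regular sequence of length equal to the ambient dimension suffices to trigger those theorems, and that the integer $d$ appearing in the statement is the intersection multiplicity $\dim_K\cO_{X,x}/(f_1,\ldots,f_n)$.
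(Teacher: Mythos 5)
Your proposal is correct and follows the route the paper evidently intends (the corollary is stated without proof, immediately after Corollaries \ref{I3} and \ref{I1}, as a direct consequence of them): the only substantive point is that a regular sequence of length $n=\dim\cO_{X,x}$ in the regular, hence Cohen-Macaulay, local ring $\cO_{X,x}$ generates an $m_x$-primary ideal, so $\cO_{X,x}/f^*(m_y)$ is finite-dimensional and the hypotheses of the earlier singular inverse function theorems are met. Your identification of $d$ as $\dim_K\cO_{X,x}/(f_1,\ldots,f_n)$ is also the right reading of the statement.
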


\begin{example}  Let $f:X\to Y$ be a morphism or a map as above which is given by a finite set of functions $f_i=f^*(u'_i)$ with
 multiplicities $d_i$ and suppose the initial forms
$\inn_x(f_1),\ldots, \inn_x(f_k)$ 
define 
a regular sequence. 
Then the induced morphism ${f}': X'\to Y'$ is finite of degree $d_1\cdot\ldots\cdot d_n$.
  \end{example}
  
\begin{example} If $f_1,\ldots,f_n$ is a regular sequence in $\bA^n$, respectively $\CC^n$,
then the induced map from the zero set $V(f_1,\ldots,f_k)\to \bA^{n-k}$ given by $f_{k+1},\ldots, f_n: \bA^n\to \bA^{n-k}$
defines a finite morphism $f:W\to V$ for some neighborhoods $W\subset V(f_1,\ldots,f_k)$ and $U\subset \bA^n$, and there is a Weierstrass isomorphism $$
\cO_V^d\to f_*(\cO_W/(f_1,\ldots,f_k)).
$$
\end{example}

\section{Malgrange-Mather preparation and the inverse function theorem}\label{lag3}

The proofs of analogous theorems for  smooth functions are  more involved and will be given in the next few sections of this chapter. For the  most part we use the strategy of Malgrange to prove neighborhood Weierstrass division, which is then combined with some ideas of Grauert-Remmert   applied  to a sheaf version ( \cite{GR}). The proof of   neighborhood Malgrange special division,  which is the key technical result,  
  is essentially identical with  Milman's proof  (\cite{Milman}) of the Malgrange special division. As a consequence of the methods applied we give  an analog of the inverse function theorem (Section \ref{lag4}).

We also extend  the classical constructions mostly due to Malgrange and  Mather   in the  language of smooth objects (Section \ref{lag5}).

%

\subsection{The neighborhood version of the Malgrange special division for smooth functions}

The following theorem can be considered as a neighborhood version of Malgrange special division, which is the key technical result in  Malgrange's strategy (\cite{Malgrange1}). We shall extend the method of Milman, slightly modifying his original proof (\cite{Milman}). It turns out that in order to show the generalized Malgrange-Mather division it suffices to consider only the Malgrange special division by a generic polynomial.

\begin{theorem} \label{N1} 
Let  $U\subset \RR^{n+1}$  be  an open neighborhood of  $0\in \RR^{n+1}$.   Let $\cO(U)$ be the ring of smooth ($C^\infty$) functions  over 
%
$U$.
There exist  open neighborhoods $V^1_1\subset \RR^{1}$,  $V^d_2\subset \RR^d$ and  $W^n\subset \RR^{n}$  of $0$
 for which  $V^1_1\times W^n\subset U$, and such that
 for any $f(t,x_1,\ldots,x_n)\in \cO(V^1_1\times W^n)$ and the ``generic polynomial'' 
$$
P^d(t,y_1,\ldots,y_d):=t^d+y_1t^{d-1}+\ldots+ y_d \in \cO(\RR\times \RR^{d})
$$
 there exists  ``Malgrange special  division with remainder'': 
$$
f(t,x)=q^d\cdot P^d + r^d$$ where 
$$
r^d= r^d_{d-1}(x,y)t^{d-1}+\ldots+r^d_0(x,y),
$$
and 
$$
q^d(t,x,y)\in \cO(V^1_1\times V^d_2\times  W^n),\quad  r^d_i=r_i(x,y)\in \cO(V^d_2\times  W^n)\subset \cO(V^1_1\times V^d_2\times  W^n).
$$
Moreover the division exists for any open subset $V^1_1\times V^d_2\times  (W')^n$, with $(W')^n\subset W^n$  an open convex subset of $W^n$, which is a neighborhood of $0$. 
	\end{theorem}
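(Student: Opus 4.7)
The plan is to adapt Milman's proof (\cite{Milman}) of Malgrange's special division, keeping careful track of the dependence on neighborhoods. Shrink if necessary so that $U = V_1^1 \times W^n$ with $V_1^1 = (-\epsilon, \epsilon)$. First I would construct an almost-holomorphic extension of $f$: using a Fourier-truncation argument (or the Seeley extension followed by a H\"ormander-type almost-analytic extension), obtain $F \in C^\infty(\Omega \times W^n)$, where $\Omega \subset \CC$ is an open neighborhood of $\overline{V_1^1}$, such that $F|_{V_1^1 \times W^n} = f$ and $\bar\partial_\zeta F$ vanishes to infinite order along $\RR \cap \Omega$: for every compact $L \subset \Omega \times W^n$ and every $N \in \NN$ one has $|\bar\partial_\zeta F(\zeta, x)| \le C_{N,L}\, |\mathrm{Im}\,\zeta|^N$ on $L$.

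Next, shrink $V_2^d \subset \RR^d$ so that for every $y \in V_2^d$ all roots of $P^d(\cdot, y) = t^d + y_1 t^{d-1} + \ldots + y_d$ lie in a compact disk $K \subset \Omega$; fix a smooth simple closed contour $\Gamma \subset \Omega$ enclosing $K \cup \overline{V_1^1}$, and let $D$ be the interior of $\Gamma$. For $(t,x,y) \in V_1^1 \times W^n \times V_2^d$ define
\[
q^d(t,x,y) := \frac{1}{2\pi i}\oint_\Gamma \frac{F(\zeta,x)\, d\zeta}{P^d(\zeta,y)(\zeta - t)} \;-\; \frac{1}{\pi}\iint_{D} \frac{\bar\partial_\zeta F(\zeta,x)\, dA(\zeta)}{P^d(\zeta,y)(\zeta - t)},
\]
\[
r^d(t,x,y) := \frac{1}{2\pi i}\oint_\Gamma \frac{F(\zeta,x)\,\bigl(P^d(\zeta,y) - P^d(t,y)\bigr)\, d\zeta}{P^d(\zeta,y)(\zeta - t)} \;-\; \frac{1}{\pi}\iint_{D} \frac{\bar\partial_\zeta F(\zeta,x)\,\bigl(P^d(\zeta,y) - P^d(t,y)\bigr)\, dA(\zeta)}{P^d(\zeta,y)(\zeta - t)}.
\]
Since the kernel $\bigl(P^d(\zeta,y) - P^d(t,y)\bigr)/(\zeta - t)$ is a polynomial in $t$ of degree $d-1$ with coefficients smooth in $(\zeta, y)$, the function $r^d$ takes the required form $r^d = r^d_{d-1}(x,y)t^{d-1} + \ldots + r^d_0(x,y)$, and the Cauchy--Pompeiu formula applied to $F$ on $D$ (using $t \in V_1^1 \subset D$) yields $f(t,x) = q^d(t,x,y)\cdot P^d(t,y) + r^d(t,x,y)$ on $V_1^1 \times V_2^d \times W^n$.

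The main obstacle is verifying smoothness of $q^d$ and $r^d$ in all variables $(t, x, y)$ across the discriminant locus of $P^d(\cdot, y)$, where zeros of $P^d(\zeta, y)$ can coalesce on $\RR$ and sit arbitrarily close to $V_1^1$. The Mather--Nirenberg device settles this: because $\bar\partial_\zeta F$ vanishes to infinite order along $\RR$, the area integrand is pointwise bounded by any power of $|\mathrm{Im}\,\zeta|$, which dominates the polynomial blow-up of $1/P^d(\zeta, y)$ as its zeros approach $\RR$. Differentiating repeatedly in $(t, y)$ under the integral preserves this cancellation, so all partial derivatives exist and are continuous on $V_1^1 \times V_2^d \times W^n$. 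Finally, passage to an arbitrary open convex $W' \subset W^n$ is automatic, since the integrands involve $F(\zeta, x)$ only pointwise in $x$; the same formulas give the division on $V_1^1 \times V_2^d \times W'$.
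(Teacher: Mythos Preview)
Your approach is the Nirenberg--Mather almost-analytic extension and Cauchy--Pompeiu method, which is quite different from the Milman induction the paper actually uses, and the smoothness step as you have written it has a genuine gap. The assertion that the area integrand is ``pointwise bounded by any power of $|\mathrm{Im}\,\zeta|$, which dominates the polynomial blow-up of $1/P^d(\zeta,y)$'' fails at the \emph{complex} zeros of $P^d(\cdot,y)$: there $\mathrm{Im}\,\zeta\neq 0$, so $\bar\partial_\zeta F$ has no reason to be small, while $1/P^d$ still blows up. Concretely, for $d\ge 4$ there are real $y$ arbitrarily close to $0$ at which $P^d(\cdot,y)$ has a \emph{multiple} complex root: take $y=(0,2b^2,0,b^4)$, so that $P^4(\zeta,y)=(\zeta^2+b^2)^2$. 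Near $\zeta=ib$ the integrand in your area term behaves like $\bar\partial_\zeta F(ib,x)\cdot|\zeta-ib|^{-2}\cdot(\text{bounded})$ with $\bar\partial_\zeta F(ib,x)\neq 0$ in general, and $|\zeta-ib|^{-2}\notin L^1_{\mathrm{loc}}$. Hence your area integral defining $q^d$ is not Lebesgue convergent at such $y$, and ``differentiating repeatedly in $(t,y)$ under the integral'' is unavailable. Any rescue would require a principal-value or distributional reading of the area term together with a separate proof of smoothness across the discriminant; none of this is in the sketch.

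The paper avoids all of this. Following \cite{Milman}, it argues by induction on $d$: division by $t-y$ is written down explicitly, and the passage from $P^{d-1}$ to $P^d$ goes through the factorisation $P^{d-1}(t,y)\cdot(t-y_d)=P^d(t,y')$ together with a neighbourhood version of Milman's lemma---functions on $V^d\times\RR^n$ constant along fibres of the projection $\pi_d$ descend to smooth functions on the image, and are extended across the (convex) complement by Stein's or Whitney's theorem. No complex variable enters, and the ``moreover'' clause about convex $(W')^n$ falls out of the construction. The standard Nirenberg argument in the literature defines $q,r$ by the contour integral \emph{alone} (manifestly smooth, since $\Gamma$ avoids every singularity) and then treats the $y$-independent error $-\tfrac{1}{\pi}\iint_D \bar\partial_\zeta F/(\zeta-t)\,dA$ by a separate device; as usually presented this yields the germ statement, and promoting it to the neighbourhood statement of Theorem~\ref{N1} needs additional work that your proposal does not supply.
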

	
\begin{proof} The proof can be obtained by a  cosmetic modification of  Milman's proof of the Malgrange division theorem. Here we briefly sketch the proof following Milman's original argument and referring to his paper \cite{Milman} for additional details.

Observe that  division by $\overline{P}^1:=t-y$ is very 
simple. We find neighborhoods $V^1_1\subset \RR$ and $W^n\subset \RR^n$ such that $V^1_1\times W^n\subset U$. Then we set $V_2^1:=V_1^1$ and write 
\begin{equation} \label{E0} 
f(t,x)=\frac{f(t,x)-f(y,x)}{t-y}\cdot (t-y)+f(y,x)
=q_1(t-y)
+r^1,
\end{equation} 
where $q_1:=\frac{f(t,x)-f(t,y)}{t-y}\in \cO(V^1_1\times V^1_2\times W^n)$ and $r^1=r^1_1=f(y,x)\in \cO(V_2\times V_2^1)$.
This can be equivalently translated into division by  $P^1(t,y)=t+y=\overline{P}^1(t,-y)$ on the same neighborhoods.

Suppose that division by $P^{d-1}(t,y_1,\ldots,y_k)$ is defined on $V^1_1\times V^{d-1}_2\times W^n\subset \RR^{n+d}$ in the form  
$$
f(t,x)=q^{d-1}(t,x,y)\cdot P^{d-1} + r^{d-1}.
$$ 
 Applying division by $t-y_{d}$ to $q^{d-1}=(q')^{d}(t-y_d)+(r'')^d$,  one defines division 
by $P^{d-1}(t-y_{d})$:
\begin{equation} 
\label{E1} f(t,x)=(q')^{d}\cdot P^d(t-y_{d}) +(r')^d ,
\end{equation}
where 
\begin{equation} \label{E2} 
(r')^d:=(r'')^{d}\cdot P^d+r^{d-1}=(r')^d_{d-1}(x,y)t^{d-1}+\ldots+(r')^d_0(x,y),
\end{equation}		
which is defined on the open subset $(V^1_1\times  (V^{d-1}_2\times V^1_2)\times W^n)\subset \RR^{n+d+1}$ (where $V^1_2=V^1_1$).
The actual division by $P^{d}$ is done by passing from 
$P^{d-1}(t-y_{d})$ to $P^{d}$. Consider the mapping 
\begin{gather*}
\psi_{d-1,n}: \RR^{d-1}\times \RR\times \RR^{n}\to \RR^{d}\times \RR^{n},
\\
(y_1,\ldots,y_{d-1},y_{d},x_1,\ldots,x_n)\mapsto (y'_1,\ldots,y'_{d},x_1,\ldots,x_n),
\end{gather*} 
given by the equality of the generic polynomials 
$$
P^{d-1}(t,y)\cdot(t-y_{d})=P^{d}(t,y').
$$
This map can be conveniently  represented as a combination of two maps 
$$
\RR^{d+n}\buildrel{\phi_1}\over\to V^d\times \RR^n \buildrel{\pi_d}\over\to \RR^{d}\times \RR^{n}
$$
where $V^d\subset \RR^{d+1}$ is the set of zeroes of  $P^{d}(t,y)$ in $\RR^{d+1}\times \RR^n$, and the 
isomorphism  $\phi_1$ is given by  
$$
(y_1,\ldots,y_{d-1},y_{d},x_1,\ldots,x_n)\mapsto
(t=y_d,y'_1,\ldots,y'_{d},x_1,\ldots,x_n),
$$
and is followed by the projection 
%
$\pi_d(t,y'_1,\ldots,
y'_{d-1},y'_{d},x_1,\ldots,x_n)=(y'_1,\ldots,y'_{d-1},y'_{d},x_1,\ldots,x_n).$

We shall use a modification of  Milman's lemma:

\begin{lemma}[Milman](\cite{Milman}
 Let $\cO_{\pi_d}(V^d\times \RR^n)$ be the subspace of 
%
$\cO(V^d\times \RR^n)$ 
consisting of all functions which are constant on the fibers of $\pi_d$.
There exists a continuous linear  operator
$J:\cO_{\pi_d}(V^d\times \RR^n)\to \cO(\RR^d\times \RR^n)$ such that for any $f(t,y,x)\in \cO_{\pi_d}(V^d\times \RR^n)$ there is a function
 $$
J(f)(y,x)=f(t,y,x)\in \cO(\RR^d\times \RR^n).
$$ 
Likewise for	 any open convex  neighborhood of $0$, say of the form   
%
$U^d\times W^n\subset \RR^d\times \RR^n$, there exists a linear operator 
$J_d: \cO_{\pi_d}(\pi_d^{-1}(U^d\times W^n))\to \cO(U^d\times W^n)$ for which
$J_d(f)(y,x)=f(t,y,x)$. Moreover the operator $J_d$ exists for $U^d\times W^n$ if $W^n$ is replaced with any  open convex subset which is a neighborhood of $0$.  
\end{lemma}

\begin{proof}
The proof remains the same. Roughly speaking, we show that the function $f(t,y,x)$ which is constant on the fibers can be ``pushed down'' to  a unique differentiable function $(\pi_d)_*f(t,y,x)$ on the  closed subset 
%
$Z:={\pi_d}(\pi_d^{-1}(U^d\times W^n))$ 
of $U^d\times W^n$. 
Since $\pi_d$ is closed, the function is continuous. It is also differentiable at the points where the Jacobian of $\pi_d$ does not vanish. It turns out that the Jacobian is equal to $P_d(t)$.
By an inductive argument, one can extend  differentiability of  $(\pi_d)_*f$ to the set where $P_d$ has a root $t$ of multiplicity exactly $k$, and finally show that it is differentiable on $Z$.

In the complex analytic situation 
the mapping is surjective and we are done: the function $J_d(f)=(\pi_d)_*f$ is holomorphic on $Z=U^d\times W^n$. In the differential setting, if the dimension is even the mapping is not surjective and we need to extend it.
(This causes nonuniqueness of  Malgrange-Mather division.)
We extend the function using Stein's (or Whitney's) extension theorem. We use the fact that $(U^d\times W^n)\setminus Z$ is convex. This set can be interpreted as corresponding to the monic polynomials with coefficients in  $U^d\times W^n$ having no roots 
(being positive), and is convex.
\end{proof} 

We briefly sketch Milman's  (adjusted) strategy for the remaining part of the proof.
We choose a convex $W^n$ and sufficiently small convex $V_2^d$ such that  there is 
an  embedding $\phi_1^{-1}: {\pi_d}^{-1}(V_2^d\times W^n) \to V_2^1\times V_2^{d-1}\times W^n$. 
It follows from Milman's lemma  that there is a linear operator 
$$
J_{d}: \cO_\psi(V_2^1\times V_2^{d-1}\times W^n)\to \cO(V_2^d\times W^n).
$$
We define  the remainder of division of $f$ by $P^d$ on $V_1^1\times V_2^d\times W^n$ to be $r^d=\sum r^d_it^i$, where  
$$
r^d_i:=J_{d}((r')^d_i)\in \cO(V_2^{d}\times W_n)\quad \text{for}\quad(r')^d_i\in \cO(V_2^1\times V_2^{d-1} \times W_n),
$$ 
and show (as in Milman's proof) that  $f-r_d$ is in fact divisible by $P^d$. (Roughly speaking, the polynomial $P^d=\ldots+y_d$ is a coordinate and $f-r_d$ vanishes along its zero locus.) Then there exists a quotient $q^d\in \cO(V_1\times V_2^d\times W_n)$ such that
$f=q^dP^d+r_d$ on $U_d\times W_n$. The details are the same as in Milman's proof (\cite{Milman}). Apart from few mentionned modifications the proof is identical with Milman's one.
\end{proof}

In the real analytic situation the above reasoning proves a local version of  Malgrange-Mather special division in the ring $\cE_n$ of germs of real analytic functions. It is not clear whether the corresponding neighborhood version remains valid for real analytic functions.

\subsection{Malgrange-Mather generalized division} \label{01}
In Sections \ref{01}, \ref{02}, \ref{03} we shall mean by $\cE_n$ the local ring of germs of smooth functions on $\RR^n$ at the origin $0$.

\begin{definition} 
$f \in  \cE_{n+1}(t,x)$ is \emph{$d$-regular with respect to $t$} (where $x=(x_1,\ldots,x_n)$) if 
  $$
{f(0,0)}= {\frac{\partial{f}}{\partial t}((0,0)}=\ldots ={\frac{\partial^{d-1}f}{\partial t^{d-1}}(0,0)}= 0,\quad {\frac{\partial^{d}f}{\partial t^{d}}(0,0)}\neq 0.
$$
\end{definition}

\begin{theorem}[Malgrange-Mather preparation and division theorem] 
\begin{enumerate}
\item Let  $f(t,x)\in  \cE_{n+1}$  be a 
$d$-regular function. There exists a Weierstrass polynomial 
%
$$
P^d=t^d+c_1(x)\cdot t^{d-1}+\ldots+c_d(x) 
$$ 
and an invertible function $\alpha\in \cE_{n+1}$ such that  $f(t,x)=P^d\cdot\alpha$.

\item Let $f(t,x),g(t,x) \in  \cE_{n+1} $ with $f$ 
$d$-regular. Then  there exist  $q,r \in  \cE_{n+1}$ and $h_j \in \cE_n$, $j=0,\ldots,d-1$, such that
$$
g=qf+r,\quad \text{where} 
\quad r=r(t,x)= \sum^{d-1}_{j=0}h_j(x)t^j.
$$
\end{enumerate}

\end{theorem}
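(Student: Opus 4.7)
The plan is to derive both statements from Theorem \ref{N1}, the neighborhood Malgrange special division by the generic polynomial $P^d(t,y) = t^d + y_1 t^{d-1} + \ldots + y_d$, combined with the classical implicit function theorem for smooth maps. I would first establish preparation~(1), and then use it together with Theorem \ref{N1} to get the division statement~(2). Of course by passing to germs at $0$, the ``neighborhood'' conclusions of Theorem \ref{N1} specialize to statements in $\cE_{n+1}$.

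For preparation, I apply Theorem \ref{N1} to $f$ itself, obtaining
$$f(t,x) = Q_f(t,x,y)\, P^d(t,y) + R_f(t,x,y), \qquad R_f = \sum_{i=0}^{d-1} r_{f,i}(x,y)\, t^i.$$
If I can find a smooth germ $c = (c_1,\ldots,c_d):(\RR^n,0)\to(\RR^d,0)$ with $r_{f,i}(x,c(x)) = 0$ for all $i$, then $f(t,x) = Q_f(t,x,c(x))\cdot P^d(t,c(x))$. Setting $x=0$, the relation $f(t,0) = Q_f(t,0,0)\,t^d + R_f(t,0,0)$ together with Hadamard's lemma and $d$-regularity of $f$ force $R_f(t,0,0) = 0$ and $Q_f(t,0,0) = u(t)$ with $u(0) \neq 0$. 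Hence $\alpha := Q_f(t,x,c(x))$ is a unit in $\cE_{n+1}$, and $f = \alpha\cdot P^d(t,c(x))$ is the desired Weierstrass factorization, with the components of $c(x)$ serving as the coefficients of the Weierstrass polynomial.

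The existence of $c(x)$ comes from the implicit function theorem applied to the system $r_{f,i}(x,y) = 0$, $i=0,\ldots,d-1$; this works provided the Jacobian $\bigl(\partial r_{f,i}/\partial y_j\bigr)_{i,j}$ at $(0,0)$ is invertible. Differentiating $f = Q_f P^d + R_f$ with respect to $y_j$, using $\partial P^d/\partial y_j = t^{d-j}$, and evaluating at $(t,0,0)$ gives
$$\sum_{i=0}^{d-1} \frac{\partial r_{f,i}}{\partial y_j}(0,0)\, t^i \;=\; -\frac{\partial Q_f}{\partial y_j}(t,0,0)\, t^d \;-\; u(t)\, t^{d-j}.$$
The left-hand side is a polynomial in $t$ of degree less than $d$; the first term on the right vanishes in degrees $<d$, so matching coefficients shows the Jacobian matrix is anti-triangular in $(i,j)$ with $-u(0)\neq 0$ on the anti-diagonal $i+j=d$, hence invertible. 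This coefficient matching is the one genuinely computational step and is the main technical obstacle.

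For (2), I apply Theorem \ref{N1} a second time to an arbitrary $g\in\cE_{n+1}$ to obtain $g = Q_g P^d + R_g$ with $R_g = \sum_{i=0}^{d-1} r_{g,i}(x,y)\, t^i$, and substitute $y = c(x)$. Using $\alpha\cdot P^d(t,c(x)) = f(t,x)$ from the preparation step, this yields
$$g(t,x) \;=\; \bigl(\alpha^{-1}\,Q_g(t,x,c(x))\bigr)\,f(t,x) \;+\; \sum_{j=0}^{d-1} h_j(x)\,t^j,$$
where $h_j(x) := r_{g,j}(x,c(x))\in\cE_n$. Taking $q := \alpha^{-1}\,Q_g(t,x,c(x))\in\cE_{n+1}$ and $r := \sum_j h_j(x)t^j$ gives the division. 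Modulo the Jacobian computation, everything else is formal manipulation of the identities provided by Theorem \ref{N1} and the implicit function theorem.
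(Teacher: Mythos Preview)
Your proof is correct and follows essentially the same route as the paper's ``Malgrange trick'': apply special division by the generic polynomial $P^d(t,y)$, then use the inverse/implicit function theorem to find $y=c(x)$ killing the remainder, and substitute back. The paper states this more tersely (asserting the relevant Jacobian is ``upper triangular'' without further comment), whereas you supply the explicit anti-triangular computation with $-u(0)$ on the anti-diagonal; otherwise the arguments coincide.
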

The above theorem is a consequence of generic Malgrange division via the
``Malgrange trick''.
\begin{proof} 
Let $P^d(y,t)=t^d+\ldots+y_d \in  \cE_{n+1}$ be the generic polynomial. 
Write 
%
$$
f(t,x)=h(t,x,y)P(y,t)+r(t,x,y),\quad g(t,x)=h_1(t,x,y)p(y,t)+r_1(t,x,y),
$$
where 
$$
r(t,x,y))=\sum^d_{i=0} A_i(x,y )t^i,\quad r_1(t,x,y))=\sum^d_{i=0} B_i(x,y)t^i
$$
with $A_i, B_i\in \cE_{n+1} $.
One can easily see that  $\Psi:=(A_1,\ldots,A_d,x_1\ldots,x_k)$ is invertible since the Jacobian of $\Psi$ at $0$ is upper triangular.
Its inverse is given by $\Psi^{-1}=(\phi_1,\ldots,\phi_k,x_1\ldots,x_k)$. Then 
$$
\Psi^{-1}(0,x)=(\phi_1(0,x),\ldots,\phi_k(0,x),x_1\ldots,x_k),\quad
A_j(\phi_j(0,x),x)=0.
$$
Consequently,  
%
$$
f(t,x)=h(t,x,\phi(x))p(\phi,t)+r(t,x,\phi(x))
=h(t,x,\phi(x))p(\phi,t)
$$
 with $h(0,0)=c\neq 0$ and $h(t,x,\phi(x))$ invertible.
 Also 
%
$$
g(t,x)=h_1(t,x,\phi)p(y,\phi)+r_1(t,x,\phi)
=(h_1(t,x,\phi)h^{-1}(t,x,\phi(x)))f(t,x)+r_1(t,x,\phi)
$$
(see the details in 
%
\cite{Malgrange1}).
\end{proof}

The  Weierstrass preparation and division theorem was proved by Weierstrass for holomorphic functions. 
Its extension to convergent power series over a valued field was  proved in \cite{Na62}, and for formal power series over a  field in \cite{Bo65}. The algebraic case was proven in \cite{Laf65}.

\begin{corollary} 
Let $f(t,x)\in \cE_n[t]$ be a  polynomial in $t$, of degree $k$, 
which is $d$-regular in $t$, where $d\leq k$.
 Then  there exists  a factorization into polynomials
 $$
f(t,x)=g_1(x,t)g_2(x,t),
$$
 where 
$$
g_1=t^d+A_1(x)t^{d-1}+\ldots+A_k(x),\quad
g_1(t,0)=t^k,
$$ 
and $g_2(x,t)$ is of degree $k-d$ with $g_2(0,0)\neq 0$.
\end{corollary}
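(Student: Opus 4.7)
My plan is to construct the factorization directly via the smooth implicit function theorem applied to the coefficient-matching equations, bypassing the Malgrange--Mather preparation theorem altogether.

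Expand $f(t,x)=\sum_{i=0}^{k}b_i(x)\,t^i$ with $b_i\in\cE_n$. The $d$-regularity at the origin says $b_0(0)=\cdots=b_{d-1}(0)=0$ and $b_d(0)\neq 0$, so at $x=0$ we obtain the explicit polynomial factorization $f(t,0)=t^d\cdot h(t)$ with $h(t):=b_d(0)+b_{d+1}(0)\,t+\cdots+b_k(0)\,t^{k-d}$ satisfying $h(0)=b_d(0)\neq 0$. I would lift this to a neighborhood by seeking $g_1=t^d+\sum_{i=1}^{d}A_i(x)\,t^{d-i}$ and $g_2=\sum_{j=0}^{k-d}B_j(x)\,t^{j}$ with unknown smooth germs $A_i,B_j\in\cE_n$, and imposing the $k+1$ scalar equations $[g_1g_2]_l=b_l(x)$ for $l=0,\ldots,k$ obtained by matching coefficients of $t^l$. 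At $x=0$ the obvious solution is $A_i(0)=0$, $B_j(0)=b_{d+j}(0)$.

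The key step is the Jacobian computation at this base point. From $\partial_{A_i}(g_1g_2)|_{\mathrm{base}}=t^{d-i}\,h(t)$ and $\partial_{B_j}(g_1g_2)|_{\mathrm{base}}=t^{d+j}$, ordering the unknowns as $(A_d,A_{d-1},\ldots,A_1,B_0,\ldots,B_{k-d})$ and the equations by $l=0,\ldots,k$, the Jacobian is block lower-triangular. The upper-left $d\times d$ block (rows $l<d$, columns corresponding to the $A_\bullet$'s) is itself lower triangular, with every diagonal entry equal to $b_d(0)$; the upper-right block is zero because $\partial_{B_j}(g_1g_2)|_{\mathrm{base}}$ has no terms of degree $<d$ in $t$; and the lower-right $(k-d+1)\times(k-d+1)$ block (rows $l\geq d$, columns $B_\bullet$) is the identity. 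Hence the Jacobian determinant at $x=0$ equals $b_d(0)^d\neq 0$, and the smooth implicit function theorem yields smooth germs $A_i(x),B_j(x)\in\cE_n$ in a neighborhood of $0$ solving the system. The resulting $g_1,g_2\in\cE_n[t]$ give the asserted factorization, with $g_1(t,0)=t^d$ (since $A_i(0)=0$) and $g_2(0,0)=B_0(0)=b_d(0)\neq 0$.

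I do not anticipate a genuine obstacle here; the work is essentially bookkeeping in the verification of the block triangular structure. I would point out that the apparently shorter route through the Malgrange--Mather preparation---write $f=P^d\alpha$, then try to prove that the smooth unit $\alpha$ is in fact a polynomial in $t$ of degree $k-d$---runs into the non-uniqueness of $C^\infty$ Malgrange division (the polynomial-of-degree-$<d$ remainder is not determined modulo $(P^d)$ by the element to be divided, a phenomenon already visible for $P^d=t^2+x^2$ on $\RR^2$). The implicit function approach avoids this subtlety completely by constructing both polynomial factors simultaneously.
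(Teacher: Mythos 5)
Your proof is correct, and it is a close cousin of, but not identical to, the argument in the paper. The paper's proof first performs ordinary Euclidean division of the polynomial $f(t,x)$ by the generic monic polynomial $P^d(y,t)=t^d+y_1t^{d-1}+\cdots+y_d$ inside the polynomial ring $\cE_{n+d}[t]$, obtaining $f = h\cdot P^d + r$ with $r=\sum_{i<d}A_i(x,y)t^i$, and then applies the finite-dimensional inverse function theorem to the map $\Psi=(A_1,\ldots,A_d,x)$ (whose Jacobian at $0$ is triangular) to find $\phi(x)$ with $r(x,\phi(x),t)=0$; this yields $g_1=P^d(\phi(x),t)$ and $g_2=h(t,x,\phi(x))$. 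You instead make the Ansatz $g_1g_2=f$ directly and invert the full $(k+1)$-dimensional coefficient-multiplication map $(A,B)\mapsto$ (coefficients of $g_1g_2$), whose Jacobian at the base point is block lower-triangular with determinant $b_d(0)^d$. Both arguments reduce to the classical inverse function theorem via a triangular Jacobian and both avoid the smooth Malgrange--Mather division theorem entirely, precisely because the dividend is already a polynomial in $t$; what the paper's route buys is reuse of the generic-division setup from the preceding preparation theorem (only $d$ unknowns $\phi_1,\ldots,\phi_d$ to solve for), while yours is self-contained and produces both factors in one stroke. Your closing remark about the non-uniqueness of $C^\infty$ division (remainders determined only up to flat functions, as for $t^2+x^2$) is accurate, but it criticizes a route -- factoring $f=P^d\alpha$ and then arguing $\alpha$ is polynomial -- that the paper does not in fact take. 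One small point worth recording either way: the statement as printed contains typos ($A_k$ should be $A_d$ and $g_1(t,0)=t^k$ should read $g_1(t,0)=t^d$), and your construction delivers the corrected conclusion.
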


\begin{proof} Let $P^d(y,t)=t^d+\ldots+y_k \in  \cE_n[t]$ be the generic polynomial. Consider  division with  remainder in the polynomial ring $E(2n)[t]$:
 $$
f(t,x)=h(t,x,y)p(y,t)+r(x,y,t),
$$ 
where $r(x,y,t)$ is a polynomial in $t$ of degree $d-1$.
As before we  find functions $\phi_1(x),\ldots,\phi_k(x)$ such that
after substitution, $r(t,x,\phi)=0$. The reasoning is the same as in the previous proof.
\end{proof}

As a consequence, all the rings $\cE_n$ are Henselian: 

\begin{corollary}[Hensel's lemma] Let $f(t,x)\in \cE_n[t]$ be a  monic polynomial in $t$ of degree $k$  such that
$$
{f(t,0)}=(t-c_1)^{k_1}\cdot\ldots \cdot(t-c_r)^{k_r}
$$
for some $c_i\in {K}$. 
Then there exists a factorization 
$$
f=g_1\cdot\ldots\cdot g_r
$$
 into monic polynomials $g_1,\ldots,g_r$ of degree $k_1,\ldots, k_r$ such that 
$$
g_i(t,0)= (t-c_i)^{k_i}.
$$
\end{corollary}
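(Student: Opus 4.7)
The plan is to proceed by induction on $r$, using the previous corollary (Weierstrass factorization for polynomials in $\cE_n[t]$) as the only substantive tool. The base case $r=1$ is trivial: take $g_1 = f$. For the inductive step, I may assume without loss of generality that the $c_i$ are pairwise distinct (grouping equal roots if necessary). The idea is to peel off the factor corresponding to $c_1$ and then induct on what is left.

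To apply the previous corollary, I first translate so that $c_1$ becomes the origin. Set $\tilde f(s,x) := f(s+c_1, x) \in \cE_n[s]$, a monic polynomial of degree $k$ in $s$. At $x=0$,
$$
\tilde f(s,0) = s^{k_1}\prod_{i=2}^r (s + c_1 - c_i)^{k_i},
$$
and since the $c_i$ are distinct, the second factor is nonzero at $s=0$. Hence $\tilde f$ is $k_1$-regular in $s$: its $s$-derivatives of orders $0, 1, \ldots, k_1-1$ vanish at $(0,0)$, while $\partial_s^{k_1}\tilde f(0,0) = k_1!\prod_{i\geq 2}(c_1 - c_i)^{k_i} \neq 0$. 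Apply the previous corollary with $d=k_1$ to obtain $\tilde f(s,x) = G_1(s,x)\cdot H(s,x)$, with $G_1$ a monic polynomial in $s$ of degree $k_1$ satisfying $G_1(s,0) = s^{k_1}$, and $H$ a polynomial of degree $k-k_1$ in $s$ with $H(0,0)\neq 0$. Setting $g_1(t,x) := G_1(t-c_1, x)$ and $h(t,x) := H(t-c_1, x)$ gives a factorization $f = g_1\cdot h$ in $\cE_n[t]$ with $g_1$ monic of degree $k_1$ and $g_1(t,0) = (t-c_1)^{k_1}$.

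It remains to identify $h(t,0)$ and then induct. Reducing $f = g_1 h$ modulo $x=0$ in $\cE_n[t]/(x) = K[t]$ (a UFD), the equality $(t-c_1)^{k_1}\prod_{i=2}^r (t-c_i)^{k_i} = (t-c_1)^{k_1}\cdot h(t,0)$ forces $h(t,0) = \prod_{i=2}^r (t-c_i)^{k_i}$. Thus $h$ is a monic polynomial of degree $k-k_1$ in $\cE_n[t]$ satisfying exactly the hypothesis of the theorem with $r-1$ distinct roots, so the inductive hypothesis yields $h = g_2\cdots g_r$ with $g_i(t,0) = (t-c_i)^{k_i}$; combining produces the desired factorization.

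There is essentially no serious obstacle: the only point to check carefully is the $k_1$-regularity of $\tilde f$ after translation, which rests on the distinctness of the $c_i$ ensuring that the cofactor of $s^{k_1}$ in $\tilde f(s,0)$ does not vanish at $s=0$. The rest is a direct application of the previous corollary plus straightforward bookkeeping modulo $(x)$.
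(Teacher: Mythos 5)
Your proof is correct and follows essentially the same route as the paper: translate by $c_1$ to make $f$ become $k_1$-regular, apply the Weierstrass factorization corollary for polynomials, translate back, identify the factors modulo $(x)$, and induct. Your explicit cancellation argument in $K[t]$ to pin down $h(t,0)$ is just a spelled-out version of the paper's appeal to uniqueness.
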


\begin{proof} The function $f^{c_1}(t):={f(t+c_1,x)}$ is $k_1$-regular. Hence  by the previous result there exists a factorization into the product of polynomials 
$$
f^{c_1}(t,x)={g_1^{c_1}(x,t)}\cdot {f_1^{c_1}(x,t)}
,
$$
and 
$$
f(t,x)=g_1(x,t)f_1(x,t),
$$
where $g_1(x,t):={g_1^{c_1}(x,t-c_1)}$ and $f_1(x,t):={f_1^{c_1}(x,t-c_1)}$ are polynomials of degree $k_1$ and $k-k_1$.
Then  by uniqueness $g_1(t,0)= (t-c_1)^{k_1}$ and $f_1(x,0)=(t-c_2)^{k_1}\cdot\ldots\cdot (t-c_r)^{k_r}$ and we can proceed by induction.
\end{proof}

\begin{remark} 
The fact that the local rings of smooth functions are Henselian was proven in \cite{Moe}.
\end{remark}

\subsection{Neighborhood version of  Malgrange-Mather division}\label{02}

Let $X$ be a topological space and $\cO_X$ be a sheaf of rings. We call  $\cO_X$ the \emph{structural sheaf} and the pair $(X,\cO_X)$ a \emph{ringed space.} It is a \emph{locally ringed space} if all the stalks 
$\cO_{X,x}$ are local rings.
 
 We say that a sheaf of $\cO_X$-modules $\cF$ is \emph{of finite type} if for any $x\in X$ there is a neighborhood $U\subset X$ and a surjective morphism  of sheaves $\cO^k_{|U}\to \cF_{|U}$.

A locally ringed space $(Y,\cO_Y)$ is called a closed subspace of $X$ if $Y$ is a closed subset  of $X$, there is an ideal sheaf of finite type $\cI_Y\subset \cO_X$ for which $Y$ is is the support of  the sheaf $\cO_x/\cI_Y$, and $\cO_Y$ is the restriction of $\cO_X/\cI_Y$ to $Y$. By a {\it differentiable space}  mean a locally ringed space $(Y,\cO_Y)$ which is locally a closed   subspace of open subset of $(\RR^n,\cO_{\RR^n}$ with smooth functions sheaf $\cO_{\RR^n}$.

The definition of the sheaf of $\cO_X$-modules  of finite type implies immediately that if the stalk $\cF_x$ of the sheaf of finite type is 
an $\cO_x$-module  finitely generated by sections $s_1,\ldots,s_k\in \cF_x$ then there is a neighborhood $U$ of $x$ such that $s_1,\ldots,s_k\in \cF(U)$ generate the $\cO(U)$-module
$\cF(U)$.

We shall use the following lemmas:

\begin{lemma} \label{ft} 
Let  $X$ be an open subset of $\RR^n$ and $\cO_X$ be a sheaf of smooth functions on $X$. Let $\cF$ be a sheaf of
%
$\cO_X$-modules of finite type generated over $X$ by finitely many sections $s_1,\ldots,s_k\in \cF(X)$. Then for an  open $U\subset X$, the 
$\cO(U)$-module $\cF(U)$ is generated by $s_1,\ldots,s_k$ over $\cO(U)$.
\end{lemma}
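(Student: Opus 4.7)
The plan is to use a smooth partition of unity on $U$ to globalize the local generation. By hypothesis, $\cF$ is of finite type generated by $s_1,\ldots,s_k \in \cF(X)$ over $X$, which (by the remark immediately preceding the lemma) means that at every point $x \in U$ the stalk $\cF_x$ is generated over $\cO_{X,x}$ by $s_{1,x},\ldots,s_{k,x}$.

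First I would take an arbitrary $t \in \cF(U)$ and localize. For each $y \in U$, the equation $t_y = \sum_i g_{i,y} s_{i,y}$ in $\cF_y$ with $g_{i,y} \in \cO_{X,y}$ lifts to a neighborhood: there exists an open $V_y \subset U$ and smooth functions $g_i^{(y)} \in \cO(V_y)$ such that $t|_{V_y} = \sum_i g_i^{(y)} \, s_i|_{V_y}$ in $\cF(V_y)$. Shrinking if necessary I would choose the cover $\{V_y\}_{y \in U}$ to be locally finite (possible since $U \subset \RR^n$ is second countable and paracompact), and pick a smooth partition of unity $\{\phi_y\}_{y \in U}$ subordinate to this cover, with $\supp(\phi_y) \subset V_y$ and $\sum_y \phi_y \equiv 1$ on $U$.

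Next I would define the candidate global coefficients
\[
f_i := \sum_{y \in U} \phi_y \cdot \widetilde{g}_i^{(y)} \in \cO(U),
\]
where $\widetilde{g}_i^{(y)}$ is the extension by zero of $\phi_y \cdot g_i^{(y)}$ from $V_y$ to all of $U$ (this is smooth since $\phi_y$ vanishes outside $\supp(\phi_y) \subset V_y$). Local finiteness of the cover ensures the sum is smooth. Then on each $V_z$ we have
\[
\sum_i f_i \, s_i \Big|_{V_z} = \sum_i \sum_{y : V_y \cap V_z \neq \emptyset} \phi_y \cdot g_i^{(y)} \, s_i
= \sum_{y} \phi_y \cdot \Big( \sum_i g_i^{(y)} s_i \Big) \Big|_{V_z \cap V_y}
= \sum_{y} \phi_y \cdot t \Big|_{V_z \cap V_y} = t|_{V_z},
\]
where the next-to-last equality uses that $t|_{V_y} = \sum_i g_i^{(y)} s_i$, and the final equality uses $\sum_y \phi_y = 1$. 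By the sheaf axioms the local identities glue to $t = \sum_i f_i \, s_i$ in $\cF(U)$, which is the desired generation of $\cF(U)$ by $s_1,\ldots,s_k$ over $\cO(U)$.

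The argument is essentially soft and the only potential obstacle is the availability of smooth partitions of unity, which holds here because $U$ is an open subset of $\RR^n$ and hence paracompact. This is precisely why the lemma is stated for the smooth setting on $\RR^n$ — the analogous statement fails for, say, analytic sheaves of finite type because no analytic partition of unity exists, and that is the essential feature distinguishing the Malgrange-Mather framework from the Grauert-Remmert one.
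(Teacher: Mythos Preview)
Your proof is correct and follows essentially the same approach as the paper: localize the section $t$ to get coefficients on small neighborhoods, pass to a locally finite refinement with a subordinate smooth partition of unity, and sum the weighted local coefficients to obtain global ones in $\cO(U)$. The paper's argument is slightly terser but structurally identical, and your added remark about why this is specific to the smooth setting is accurate.
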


\begin{proof}
If $t\in \cF(U)$, then for any $x\in U$ the germ of $t$ at $x$ can be written as a finite sum $t_x=\sum a_{ix}s_{ix}$, where $a_{ix}\in \cO(V_x)\subset \cO_x$, for a sufficiently small neighborhood 
$V_x$ of $x$. One can consider a locally finite subcover $\{V_j\}$ of $\{V_x\}$ and a subordinate partition of unity $1=\sum b_j$.
Then consider $1\cdot t= (\sum b_j)\cdot t$, and $ t_{|V_j}= (\sum_{V_j\subset U_i} b_ja_{i})_{|V_j}s_{i|V_j}$. But $c_i:=(\sum_{V_j\subset U_i} b_ja_{i})\in \cO(U)$ is a function defined on $U$, and the sections $t$ and $\sum c_is_i$ coincide on each $V_j$ and thus  $t=\sum c_is_i$.
\end{proof}

\begin{lemma} \label{ft2} Let  $X$ be an open subset of $\RR^n$.
If $\cF$ is a sheaf of $\cO_X$-modules  of finite type generated by global $s_1,\ldots, s_k$, and  $\cG\subset \cF$ is its $\cO_X$-submodule also of finite type generated by global sections$t_1,\ldots,t_s$, then 
$$
(\cF/\cG)(U)=\cF(U)/\cG(U)
$$ 
for any open subset $U$.
	\end{lemma}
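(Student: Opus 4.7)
The plan is to exhibit the canonical comparison map $\cF(U)/\cG(U)\to (\cF/\cG)(U)$ and show it is a bijection. Injectivity is free: if $t\in\cF(U)$ has zero image in $(\cF/\cG)(U)$, then locally $t\in\cG$, and since $\cG$ is a sheaf the section $t$ lies in $\cG(U)$. So the whole content is surjectivity, which I will establish with a partition-of-unity patching argument, in the spirit of the proof of Lemma \ref{ft}.

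For surjectivity, pick $\bar t\in (\cF/\cG)(U)$, choose a cover $\{V_\alpha\}$ of $U$ together with local lifts $t_\alpha\in\cF(V_\alpha)$ of $\bar t|_{V_\alpha}$, refine it to a locally finite cover, and select a smooth partition of unity $\{b_\alpha\}$ with $\supp(b_\alpha)\subset V_\alpha$. Since $\supp(b_\alpha)$ is closed in $U$, the section $b_\alpha t_\alpha\in\cF(V_\alpha)$ extends by zero (across $U\setminus\supp(b_\alpha)$) to a global section $\widetilde t_\alpha\in\cF(U)$ using the sheaf axiom for $\cF$. Define
$$
t=\sum_\alpha \widetilde t_\alpha\in\cF(U),
$$
the sum being locally finite. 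I will then verify that $t$ represents $\bar t$ by showing $t|_{V_\beta}-t_\beta\in\cG(V_\beta)$ for every $\beta$.

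For this check, write
$$
t|_{V_\beta}-t_\beta=\sum_\alpha b_\alpha(t_\alpha-t_\beta),
$$
where each summand is interpreted as the extension by zero of $b_\alpha(t_\alpha-t_\beta)\in\cF(V_\alpha\cap V_\beta)$ across $V_\beta\setminus\supp(b_\alpha)$. Because $t_\alpha$ and $t_\beta$ both lift $\bar t$, the difference $t_\alpha-t_\beta$ lies in $\cG(V_\alpha\cap V_\beta)$, so $b_\alpha(t_\alpha-t_\beta)\in\cG(V_\alpha\cap V_\beta)$ since $\cG$ is an $\cO$-submodule. The main obstacle, and the only delicate point, is to conclude that the extension by zero of this section remains in $\cG(V_\beta)$: this follows from the sheaf axiom for $\cG$ applied to the cover of $V_\beta$ by $V_\alpha\cap V_\beta$ and $V_\beta\setminus\supp(b_\alpha)$ (on the latter the extended section is zero, hence in $\cG$). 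Summing locally finitely then places $t|_{V_\beta}-t_\beta$ in $\cG(V_\beta)$, completing the surjectivity.

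Finite generation of $\cF$ and $\cG$ by global sections enters only tacitly, through Lemma \ref{ft} (so that $\cG(V_\alpha\cap V_\beta)$ is indeed generated by the global $t_l$ and differences like $t_\alpha-t_\beta$ can be written in these terms if one wishes); the essential ingredients are the structure of $\RR^n$ (existence of smooth partitions of unity) and the sheaf axioms for $\cF$ and $\cG$.
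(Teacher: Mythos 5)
Your proof is correct, but it takes a genuinely different route from the paper's. The paper disposes of surjectivity in one line by observing that $\cF/\cG$ is itself of finite type generated by the \emph{global} sections $s_1,\ldots,s_k$ (the images of the generators of $\cF$), so Lemma \ref{ft} applies directly to the quotient sheaf: every $\bar t\in(\cF/\cG)(U)$ is $\sum c_i\bar s_i$ with $c_i\in\cO(U)$ and hence lifts to $\sum c_is_i\in\cF(U)$. For the kernel, the paper again invokes the partition-of-unity mechanism of Lemma \ref{ft}, this time writing any $t$ in the kernel as a global $\cO(U)$-combination of the global generators $t_1,\ldots,t_s$ of $\cG$, so $t\in\cG(U)$. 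You instead run a direct \v{C}ech-style patching argument: local lifts glued by a partition of unity for surjectivity, and pure sheaf axioms (gluing in $\cG$, separation in $\cF$) for injectivity. Your argument is slightly longer but strictly more general --- it never actually uses the finite global generation of either $\cF$ or $\cG$ (your closing remark that finite generation enters ``tacitly'' through Lemma \ref{ft} is not really needed; your proof works for arbitrary $\cO_X$-submodule sheaves on a paracompact base with smooth partitions of unity), whereas the paper's proof is shorter precisely because it leans on those hypotheses via Lemma \ref{ft}. Both arguments ultimately rest on the same analytic input, namely partitions of unity; the only delicate point in yours --- that extension by zero across the complement of $\supp(b_\alpha)$ keeps a section inside the subsheaf $\cG$ --- you identify and justify correctly by the gluing axiom.
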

 
\begin{proof}
Consider the quotient morphism of sheaves $f:\cF\to \cF/\cG$. The sheaf 
$\cF/\cG$ is of finite type generated by $s_1,\ldots,s_k$. Then $(\cF/\cG)(U)$ is also generated by $s_1,\ldots,s_k$, and $f_U:\cF(U)\to (\cF/\cG)(U)$ is an epimorphism. On the other hand, $\cG(U)$ is in the kernel $K(U)$ of $f_U$.  For any $t\in K(U)\subset \cF(U)$  the germs of $t$ are generated by finitely many sections $t_1,\ldots,t_s$ of $\cG(X)$. Then by the same reasoning
$t=\sum c_is_i$.
 \end{proof}

Also it follows from the definition that
\begin{lemma} \label{ft3} If $f:Y\to X$ is a closed embedding of differentiable spaces then for any sheaf of finite type $\cF$, the direct image $f_*(\cF)$ is of finite type
\end{lemma}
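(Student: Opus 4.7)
The plan is to check the defining property of a sheaf of finite type locally at each point of $X$, splitting into the two natural cases depending on whether the point lies on $Y$ or not. Since the finite-type condition is local and surjectivity of a sheaf map can be verified on stalks, everything reduces to a stalk computation together with one use of the finite-type hypothesis on $\cF$.

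First I would compute the stalks of $f_*(\cF)$. For $x\in X\setminus Y$, closedness of $Y$ gives a neighborhood $W$ of $x$ with $W\cap Y=\emptyset$, so $f_*(\cF)(W)=\cF(W\cap Y)=0$ and hence $(f_*\cF)_x=0$; on such a neighborhood the trivial morphism $\cO_X^{0}\to f_*(\cF)$ is surjective. For $x\in Y$, the opens of the form $W\cap Y$ with $W$ an open neighborhood of $x$ in $X$ are cofinal among open neighborhoods of $x$ in $Y$, hence $(f_*\cF)_x=\cF_x$ as $\cO_{Y,x}$-modules (and via the surjection $\cO_{X,x}\twoheadrightarrow \cO_{X,x}/\cI_{Y,x}=\cO_{Y,x}$, as $\cO_{X,x}$-modules).

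Next I would use the finite-type hypothesis at $x\in Y$: there exists a $Y$-open neighborhood $V$ of $x$ and sections $s_1,\dots,s_k\in\cF(V)$ generating $\cF|_V$. By the subspace topology, write $V=U\cap Y$ for some open $U\subset X$ containing $x$. Then $s_i\in \cF(U\cap Y)=f_*(\cF)(U)$, and I define the $\cO_X|_U$-linear map
\[
\varphi:\cO_X^{\,k}|_U\longrightarrow f_*(\cF)|_U,\qquad (a_1,\dots,a_k)\mapsto \sum_i a_i\cdot s_i,
\]
where the $\cO_X$-action on $f_*(\cF)$ is the one coming from the quotient $\cO_X\to f_*(\cO_Y)=\cO_X/\cI_Y$ (restricted appropriately). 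To check that $\varphi$ is surjective I look at each stalk: at $x'\in U\setminus Y$ the target stalk vanishes and there is nothing to show, while at $x'\in U\cap Y$ the stalk map is the composition $\cO_{X,x'}^{\,k}\twoheadrightarrow \cO_{Y,x'}^{\,k}\twoheadrightarrow \cF_{x'}$, the second arrow being surjective because the $s_i$ generate $\cF|_V$.

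There is no real obstacle here — the argument is essentially bookkeeping with the definitions of ``closed embedding'', ``subspace topology'', and ``sheaf of finite type''. The only slightly delicate point is noting that a $Y$-open neighborhood over which $\cF$ is generated by finitely many sections can be realized as $U\cap Y$ for some $X$-open $U$, so that those sections acquire the status of sections of $f_*(\cF)$ over $U$; the rest is a standard stalkwise check using that $Y$ is closed to dispose of points off $Y$.
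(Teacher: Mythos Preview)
Your proof is correct. Both you and the paper set up the same map $\cO_X^{\,k}|_U\to f_*(\cF)|_U$ from a choice of generating sections $s_1,\dots,s_k$ of $\cF$ on a $Y$-open realized as $U\cap Y$; the difference lies in how surjectivity is verified. You check it on stalks, using that $\cO_{X,x'}\twoheadrightarrow \cO_{Y,x'}=\cO_{X,x'}/\cI_{Y,x'}$ is surjective by the very definition of a closed subspace, together with the generation of $\cF_{x'}$ by the $s_i$. The paper instead uses a partition-of-unity argument to show that every function in $\cO_Y(U)$ lifts to a function in $\cO_X(V)$ for a suitable $V$, i.e.\ surjectivity of $\cO(V)\to\cO(U)$ on the level of sections. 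Your route is more direct and needs no analytic input beyond the subspace topology; the paper's route yields the slightly stronger statement that generation over sections lifts (in the spirit of Lemmas~\ref{ft} and~\ref{ft2}), which is not required for the finite-type conclusion itself.
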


\begin{proof} Suppose $\cF$ is generated by  sections $s_1,\ldots,s_k\in \cF(U)$, where $U\subset Y$. For any $y\in U$ and $f\in \cO(U)$ exist $g_x\in \cO_{X,x}$ which restricts to $f_x\in \in \cO_{U,x}$.
Each $g_x$ is defined on open neighborhood $V_x\subset X$.
By using the partition of unity argument we extend $f\in \cO(U)$ to a certain $g\in \cO(V)$ which is an open subset $V\subset Y$ such that $V\cap X=U$, and which is common for all $f$. Thus $\cO(V)\to \cO(U)$ is the surjection, and $s_1,\ldots,s_k\in f_*(\cF)(V)$ generate $f_*(\cF)$ on $V$ over $\cO_V$.
	
\end{proof}

\begin{theorem}[Generalized Malgrange-Mather division in a neighborhood]\label{MM1}
 Let $\cO(U)$ denote the ring of smooth  functions on an open subset $U\subset \RR^1\times \RR^{n}$. Let $f(t,x)\in \cO(U)$ be a $d$-regular function at $y=(t_0,x_0)\in U$. Then  there is  Malgrange-Mather division  by $f$ in the ring $\cO(U_2)$ for  a certain  neighborhood $U_2=V_1\times U_1 \subset U$ of $x$. 
That is, for every $g(t,x)\in \cO(U_2)$ there are $q(t,x)\in \cO(U_2)$ and $r=\sum_{i=0}^{d-1} r_i(x)t^i\in \cO(U_2)$ with $r_i(x)\in \cO(U_1)$ such that $g=q\cdot f+r$.
\end{theorem}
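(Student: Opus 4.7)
The plan is to run the classical ``Malgrange substitution trick'' as in the local (germ) proof just given, now tracked through the explicit product neighborhoods produced by Theorem~\ref{N1} and combined with the usual inverse function theorem in a neighborhood. After translation we may assume $(t_0, x_0) = 0$. I apply Theorem~\ref{N1} to both $f$ and $g$ with the generic polynomial $P^d(t, y) = t^d + y_1 t^{d-1} + \cdots + y_d$; this produces neighborhoods $V_1^1 \subset \RR$, $V_2^d \subset \RR^d$, $W^n \subset \RR^n$ of the origin (shrinking $U$ if necessary) and smooth decompositions
\begin{align*}
f(t,x) &= h(t, x, y)\, P^d(t, y) + \sum_{i=0}^{d-1} A_i(x, y)\, t^i, \\
g(t,x) &= h_1(t, x, y)\, P^d(t, y) + \sum_{i=0}^{d-1} B_i(x, y)\, t^i,
\end{align*}
valid on $V_1^1 \times V_2^d \times W^n$, with all coefficient functions smooth.

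Next I use $d$-regularity of $f$ to invert the coefficient map. Since $f(t, 0) = t^d u(t)$ with $u(0) \neq 0$, evaluating the first identity at $(x, y) = 0$ yields $A_i(0, 0) = 0$ for every $i$ and $h(0, 0, 0) = u(0) \neq 0$. Differentiating that identity in $y_j$ at the origin and comparing coefficients of $t^{d-j}$ shows that $(\partial A_{d-j}/\partial y_j)(0, 0) = -u(0) \neq 0$ while $(\partial A_i/\partial y_j)(0, 0) = 0$ for $i + j < d$. Hence the Jacobian at $0$ of the map
\[
\Psi : V_2^d \times W^n \to \RR^d \times \RR^n, \qquad \Psi(y, x) = (A_1(x, y), \ldots, A_d(x, y), x),
\]
is block triangular with an antidiagonal block of nonzero entries, hence invertible. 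The inverse function theorem then produces an open neighborhood $U_1 \subset W^n$ of $0$ and a smooth $\phi : U_1 \to V_2^d$ with $\phi(0) = 0$ and $A_i(x, \phi(x)) = 0$ for all $i$ and all $x \in U_1$.

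Set $U_2 = V_1^1 \times U_1$. Substituting $y = \phi(x)$ into the two decompositions gives, on $U_2$,
\[
f(t, x) = h(t, x, \phi(x))\, P^d(t, \phi(x)), \quad g(t, x) = h_1(t, x, \phi(x))\, P^d(t, \phi(x)) + \sum_{i=0}^{d-1} B_i(x, \phi(x))\, t^i.
\]
Since $h(0, 0, 0) \neq 0$, shrinking $U_2$ further we may assume $h(t, x, \phi(x))$ is invertible in $\cO(U_2)$. Eliminating $P^d(t, \phi(x))$ from the first equation and substituting into the second yields
\[
g = \bigl(h_1(t, x, \phi(x))\, /\, h(t, x, \phi(x))\bigr)\, f + \sum_{i=0}^{d-1} B_i(x, \phi(x))\, t^i,
\]
which is the required division, with $q \in \cO(U_2)$ and $r_i(x) := B_i(x, \phi(x)) \in \cO(U_1)$.

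The main obstacle is bookkeeping of neighborhoods rather than any deep new idea: one must check that $\phi$ takes values inside $V_2^d$ on a small enough $U_1$ (so that the substitution stays in the domain of validity of Theorem~\ref{N1}), and that $h(t, x, \phi(x))$ remains nonvanishing on the resulting $U_2$. Both follow from continuity and $\phi(0) = 0$, but unlike the germ case these are genuine shrinking steps. The Jacobian computation above is routine but worth carrying out carefully by expanding $f = h P^d + \sum A_i t^i$ as an identity of polynomials in $t$ with smooth coefficients in $(x, y)$.
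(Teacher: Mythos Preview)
Your proof is correct and follows essentially the same Malgrange substitution trick as the paper: special division by the generic polynomial $P^d$ followed by substitution of a smooth map $y=\phi(x)$ that kills the remainder of $f$. The only organizational difference is that the paper first invokes the germ Malgrange preparation theorem to write $f=\alpha\cdot\overline{P}^d$ and then only applies Theorem~\ref{N1} to $g$ (substituting the Weierstrass coefficients $y_i=c_i(x)$), whereas you apply Theorem~\ref{N1} to $f$ as well and recover the Weierstrass polynomial $P^d(t,\phi(x))$ directly via the inverse function theorem in a neighborhood---so your argument is slightly more self-contained, while the paper's is a line shorter by reusing the germ result.
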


\begin{proof} 
First, by applying  Malgrange preparation and shrinking $U$, we can assume that $f(t,x)=\alpha(t,x)\cdot \overline{P}^d(t,x)$ on $U$ where  $\overline{P}^d(t,x)=t^d+c_1(x)\cdot t^{d-1}+\ldots+c_k(x)$. Then there is  generic division by $P^d(t,y)=t^d+y_1\cdot t^{d-1}+\ldots+y_k$ in some neighborhood  $V_1\times V_2\times W$, where $W=U_1$.  Consider the maps 
$$
c:  U_1\to  \RR^d,\quad c_1:=(c,id_{U_1}): U_1\to  \RR^d\times U_1\quad \text{and}\quad c_2:=\id_{V_1}\times c_1: V_1\times U_1\to V_1\times \RR^d\times U_1, \quad t\mapsto t,\,\, y_i\mapsto c_i.
$$ By shrinking  $U_1$ we can assume that $c( U_1)\subset V_2$ and thus  $$ c_2(V_1\times U_1)\subset  V_1\times V_2\times U_1.$$ Applying  special  division in $V_1\times V_2\times U_1$  to   $g\in \cO(V_1\times U_1)$ we can write 
$$
g=q^dP^d+r^d
$$
with $q^d,r^d\in \cO(V_1\times V_2\times U_1)$ and $r^d_i\in \cO( V_2\times U_1)$.
Then  $\overline{P}^d(t,x)={P}^d(t,x)\circ c_2 \in \cO(V_1\times U_1)$ and for  
$\overline{r}(t,x):=r^d\circ{c_2}\in \cO(V_1\times U_1)$, and $\overline{q}:=q^d\circ{c_2} \in \cO(V_1\times U_1)$ we have the division 
$$
f=\overline{q}\overline{P}^d+\overline{r},
$$
which implies $g=q\cdot f+r$ with $q=\overline{q}/\alpha$ and $r=\overline{r}$.
\end{proof}

The  results below are extensions of the Grauert-Remmert  ``Weierstrass isomomorphism''   \cite[Theorem 1.2.3]{GR} in the holomorphic case to the differential setting with a similar proof.
\begin{lemma}\label{Rem6} If $f: X\to Y$ is finite map of subsets in $\RR^n$ then for any $y\in Y$ with the fiber $f^{-1}(y)=\{x_1,\ldots,x_k\}$ and $\epsilon > 0$ there exists $\delta>0$ such that if $f^{-1}(B_\delta(y))\subset \bigcup B_\epsilon(x_i)$. (Here $B_\delta(y)$ is an open ball of radius $\delta$ with the center $x$.)
\end{lemma}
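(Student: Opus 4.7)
The plan is to reduce the statement to the defining property of a finite map, namely that it is closed with finite fibers, and then exploit closedness by pushing forward the complement of a neighborhood of the fiber.

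First I would set $W := \bigcup_{i=1}^{k} B_\epsilon(x_i)$, which is open in $\RR^n$, so $W \cap X$ is open in $X$ (with the subspace topology). By construction $W$ contains the entire fiber $f^{-1}(y) = \{x_1,\dots,x_k\}$, so the complement $C := X \setminus (W \cap X)$ is a closed subset of $X$ disjoint from $f^{-1}(y)$, which means $y \notin f(C)$.

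Next I would invoke the hypothesis that $f$ is finite, hence in particular a closed map. Therefore $f(C)$ is closed in $Y$, and since $y \notin f(C)$, its complement $Y \setminus f(C)$ is an open neighborhood of $y$ in $Y$. Consequently there exists $\delta > 0$ such that $B_\delta(y) \cap Y \subset Y \setminus f(C)$. For any $x \in f^{-1}(B_\delta(y))$ we then have $f(x) \in B_\delta(y) \cap Y$, so $f(x) \notin f(C)$, whence $x \notin C$, i.e., $x \in W \cap X \subset \bigcup_i B_\epsilon(x_i)$. This yields $f^{-1}(B_\delta(y)) \subset \bigcup_i B_\epsilon(x_i)$, as required.

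There is no real technical obstacle here; the argument is a standard closed-map-lemma style manipulation. The only points requiring a little care are (i) making sure one works with the correct topology (the subspace topology inherited from $\RR^n$, so that open balls should be intersected with $X$ or $Y$ as appropriate), and (ii) using the hypothesis \emph{finite} in its full strength: not merely finiteness of fibers, but also closedness of $f$, since the latter is exactly what turns ``$y$ not in the image of the complement'' into ``$y$ has an open neighborhood missing that image.'' No further properties of the finite fiber $\{x_1,\dots,x_k\}$ are used beyond its being contained in $W$.
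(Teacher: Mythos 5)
Your proof is correct, but it takes a genuinely different route from the paper's. The paper argues by contradiction with sequences: assuming the conclusion fails, it picks points $z_n \notin \bigcup B_\epsilon(x_i)$ with $f(z_n)\to y$ and splits into cases according to whether $(z_n)$ has an accumulation point, using continuity of $f$ in the first case and applying closedness of $f$ to the discrete closed set $\{z_n\}$ in the second. Your argument is instead the direct ``closed map lemma'': push forward the closed set $C = X\setminus\bigl(\bigcup_i B_\epsilon(x_i)\cap X\bigr)$, observe $y\notin f(C)$ with $f(C)$ closed, and shrink a ball around $y$ into the complement. What your approach buys is a cleaner, purely topological proof that uses the finiteness hypothesis exactly once (through closedness), needs no Bolzano--Weierstrass argument, and sidesteps a subtlety the paper's first case glosses over --- an accumulation point of $(z_n)$ in $\RR^n$ need not lie in $X$, since $X$ is only assumed to be a subset of $\RR^n$. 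The paper's sequential version, for its part, stays closer to the metric picture (convergence of roots and coefficients) that the surrounding lemmas exploit, but as a proof of this particular statement yours is the tighter one.
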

\begin{proof} Suppose otherwise. Then for each $n\in \NN$, there exists $z_n\not\in \bigcup B_\epsilon(x_i)$ with $f(x_n)-y<1/n$. If the sequence $(z_n)$ has an accumulation point $z=\lim z_{n_i}$ then $f(z)=\lim(f(z_{n_i})=y$. But $z\not \in \{x_1,\ldots,x_k\}=f_(y)$ which is a contradiction. If $(z_n)$ does not  have an accumulation points and defines a closed subset of $X$ with non closed image $\{f(z_n)\}$ with accumulation point $y=\lim f(x_n)$. First prove the lemmas
	
\end{proof}
\begin{lemma}\label{Rem5}
	If $f:X\to Y$ is a finite 
 map  of subsets in $\RR^n$ with the fiber $f^{-1}(y)=\{x_1,\ldots,x_k\}$ of $y\in Y$, and $\cF$ is a sheaf on $X$ then there is an isomorphism of  $\cO_{Y,y}$-modules 
$(f_*(\cF))_{y}\simeq \bigoplus_i \cF_{x_i}$

\end{lemma}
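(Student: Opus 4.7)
The plan is to combine the just-proved Lemma \ref{Rem6} with the definition of stalks and the sheaf axiom on disjoint unions. The strategy is standard for finite maps, and the whole argument really comes down to showing that small enough neighborhoods of $y$ pull back to a disjoint union of neighborhoods of the $x_i$.

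First I would separate the fiber. Since $\{x_1,\ldots,x_k\}$ is finite, choose pairwise disjoint open neighborhoods $V_i\subset X$ of $x_i$. Applying Lemma \ref{Rem6} (with $\epsilon>0$ small enough that the $B_\epsilon(x_i)$ lie in the respective $V_i$), we obtain an open neighborhood $U_0$ of $y$ such that $f^{-1}(U_0)\subset \bigsqcup_i V_i$. For every open $U\subset U_0$ containing $y$, we then have the disjoint decomposition
\[
f^{-1}(U)=\bigsqcup_{i=1}^k \bigl(V_i\cap f^{-1}(U)\bigr),
\]
and by the sheaf property
\[
(f_*\cF)(U)=\cF(f^{-1}(U))=\bigoplus_{i=1}^k \cF\bigl(V_i\cap f^{-1}(U)\bigr).
\]

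Next I would pass to stalks. By definition
\[
(f_*\cF)_y=\varinjlim_{y\in U} (f_*\cF)(U),
\]
and since the neighborhoods $U\subset U_0$ are cofinal, I may take the limit over this subsystem. Commuting the direct limit with the finite direct sum gives
\[
(f_*\cF)_y=\bigoplus_{i=1}^k\; \varinjlim_{y\in U\subset U_0}\cF\bigl(V_i\cap f^{-1}(U)\bigr).
\]

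The remaining step is to identify each factor with $\cF_{x_i}$. For this I would check that, as $U$ runs over open neighborhoods of $y$ inside $U_0$, the sets $V_i\cap f^{-1}(U)$ form a cofinal family of neighborhoods of $x_i$ in $X$. They are certainly neighborhoods of $x_i$ since $x_i\in V_i$ and $y\in U$. Conversely, given any open $W\subset V_i$ containing $x_i$, a second application of Lemma \ref{Rem6} to the open set $W\sqcup\bigsqcup_{j\neq i} V_j$ (which contains the entire fiber $f^{-1}(y)$) produces a neighborhood $U\subset U_0$ of $y$ with $f^{-1}(U)\subset W\sqcup\bigsqcup_{j\neq i}V_j$, and then $V_i\cap f^{-1}(U)\subset W$. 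Hence the direct limit computes $\cF_{x_i}$, yielding the desired $\cO_{Y,y}$-module isomorphism $(f_*\cF)_y\simeq \bigoplus_i \cF_{x_i}$.

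The argument is essentially topological and there is no real obstacle; the only step that needs care is the cofinality verification in the last paragraph, which is exactly where the finiteness (closedness plus finite fibers) of $f$ enters via Lemma \ref{Rem6}. The $\cO_{Y,y}$-module structure on each $\cF_{x_i}$ is the one pulled back through the local ring map $\cO_{Y,y}\to \cO_{X,x_i}$, and the isomorphism is manifestly compatible with it because it is induced by restriction of sections.
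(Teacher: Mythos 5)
Your proof is correct and follows essentially the same route as the paper's: use Lemma \ref{Rem6} to shrink neighborhoods of $y$ until the preimage splits into disjoint neighborhoods of the $x_i$, then commute the stalk direct limit with the finite direct sum. Your explicit cofinality verification (the second application of Lemma \ref{Rem6}) is precisely the step the paper leaves implicit in its assertion that $\bigcap_n W_{ni}=\{x_i\}$, and it is the right way to make that step rigorous.
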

\begin{proof} By the previous lemma for $n>>0$ and  $U_n=B_{1/n}(y)$ the preimage is a union of 
disjoint neighborhoods $W_{ni}$ of $x_i$  with $\bigcap_n W_ni=\{x_i\}$. We conclude that 
$$
\pi_{*}(\cF)_{y}=\lim_{{U_n}\ni y} \cF(\pi_i^{-1}({U_n})) =\bigoplus \lim_{W_{ni}\ni x_i}\cF(W_{ni})= \bigoplus  \cF_{y_i}.
$$

\end{proof}

\begin{corollary}[Grauert-Remmert  ``Weierstrass isomorphism'']\label{Rem}  
Let $\pi: \RR^{n+1}\to \RR^{n}$  be the standard projection. For any open subset  $U\subset \RR^{n+1}$ 
denote by $\cO_U$ the sheaf of smooth   functions on $U$.
Consider a  function $f(t,x)\in \cO(U)$   which is  $d$-regular for $\overline{x}\in U$. Then there is a convex open neighborhood ${U_2}:=W\times U_1$ of $\overline{x}$ such that: 
\begin{enumerate}
	\item  
%
$\pi_{0*}(\cO({U_2})/(f(t,x)))$ is a  finitely generated $\cO(U_1)$-module,  where $\pi_0: {U_2}=W\times U_1\to U_1$ is the natural projection. 
	\item There are surjections of $\cO_{U_1}$-modules 
%
$$
\phi: \bigoplus_{i=0}^{d-1}\cO_{U_1} t^i \to \pi_*(\cO_{U_2}/(f(t,x))),
$$ 
	and of the rings of global sections
%
$$
\phi_{U_2}: \bigoplus_{i=0}^{d-1}\cO({U_1}) t^i\to \cO({U_2})/(f(t,x)).
$$ 
	\item The restriction  $\pi_1: V(f)\to {U_1}$ to the zero set $V(f)$ of $f$ on ${U_2}$ is a finite (thus closed and proper) map. 	 
  \item   If $\cF$ is a  $\cO_U$-sheaf  of finite type over $U$  which is annihilated by the function $f$ and generated by the global sections on ${U_2}$ then 
%
$\pi_{1*}(\cF_{|{U_2}})$ is of finite type on $U$.
\item  $\cF_x\simeq (\pi_{1*}(\cF_{|U_2}))_x$.
\item $U_1$ can be replaced, in particular, with any open convex subset $U_1'$ containing $\pi(x)$, and the above conditions will be satisfied.
\end{enumerate}
\end{corollary}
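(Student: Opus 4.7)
The plan is to bootstrap everything from the neighborhood Malgrange--Mather division (Theorem \ref{MM1}) combined with the elementary finite-type sheaf lemmas \ref{ft}, \ref{ft2}, \ref{ft3} and the stalk-computation lemma \ref{Rem5}. First, apply Malgrange preparation to write $f=\alpha\cdot \overline{P}^d$ with $\overline{P}^d=t^d+c_1(x)t^{d-1}+\cdots+c_d(x)$ a Weierstrass polynomial and $\alpha$ a unit in a neighborhood of $\overline{x}$; since $(f)=(\overline{P}^d)$ there, we may replace $f$ with $\overline{P}^d$ throughout. By Theorem \ref{MM1}, shrink to a convex box $U_2=W\times U_1$ on which every $g\in\cO(U_2)$ admits division $g=qf+r$ with $r=\sum_{i=0}^{d-1}r_i(x)t^i$, $r_i\in\cO(U_1)$; this gives the surjection $\phi_{U_2}\colon \bigoplus_{i=0}^{d-1}\cO(U_1)t^i\to \cO(U_2)/(f)$ in (2). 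Because Theorem \ref{MM1} (via the last clause of Theorem \ref{N1}) produces division on any convex open $U_1'\subset U_1$ containing $\pi(\overline{x})$, the same $\phi$ is surjective on every such $U_1'$, hence yields the sheaf surjection in (2) and item (1) simultaneously, also proving (6).

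For (3), observe that $\overline{P}^d(t,x)$ is a monic polynomial in $t$ of degree $d$ with $c_i(0)=0$; by continuity of roots and shrinking $W$ and $U_1$ around the origin, one ensures that every root of $\overline{P}^d(\cdot,x)$ for $x\in U_1$ lies in the interior of $W$. This makes the fiber $\pi_1^{-1}(x)\subset V(f)$ finite (at most $d$ points) and, by a standard compactness argument, makes $\pi_1\colon V(f)\to U_1$ proper, hence finite as a map of topological spaces.

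For (4) and (5), note that the hypothesis $f\cdot\cF=0$ makes $\cF_{|U_2}$ a sheaf of modules over $\cO_{U_2}/(f)$, and its set-theoretic support is contained in $V(f)$, so $\pi_{0*}(\cF_{|U_2})=\pi_{1*}(\cF_{|U_2})$. If $\cF$ is generated on $U_2$ by global sections $s_1,\ldots,s_k$, then by Lemma \ref{ft} each section of $\cF$ on any open convex $U_1'$ in the base is an $\cO(U_2)$-combination of the $s_j$; applying division and using the surjection $\phi$ from step one, we get that $\pi_{0*}(\cF_{|U_2})(U_1')$ is generated over $\cO(U_1')$ by the finite collection $\{t^i s_j\}_{0\le i\le d-1,\,1\le j\le k}$. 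This gives finite type for $\pi_{1*}(\cF_{|U_2})$, proving (4). For (5), since $\pi_1$ is finite and (after shrinking $W$) the fiber over $\overline{x}$ consists of a single point, Lemma \ref{Rem5} gives $(\pi_{1*}\cF_{|U_2})_{\overline{x}}\simeq \cF_{\overline{x}}$.

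The main obstacle I expect is purely organizational: ensuring that all six items are simultaneously available on one common box $U_2=W\times U_1$, and that this box can be shrunk to any convex $U_1'\subset U_1$ without losing the division property. The key technical point is the properness of $\pi_1$, which requires that $W$ be chosen large enough to contain every root of $\overline{P}^d(\cdot,x)$ for $x\in U_1$, while $U_1$ be chosen small enough (in terms of the vanishing of the $c_i$) to force those roots inward. Everything else reduces, modulo Lemmas \ref{ft}--\ref{ft3} and \ref{Rem5}, to the division step already proved in Theorem \ref{MM1}.
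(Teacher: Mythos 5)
Your proposal is correct and follows essentially the same route as the paper: Malgrange--Mather neighborhood division (Theorem \ref{MM1}) for the surjection onto $\cO(U_2)/(f)$, continuity of roots of the Weierstrass polynomial for the finiteness and closedness of $\pi_1$, Lemma \ref{Rem5} for the stalk identifications in (5), and the finite-type lemmas plus division applied to the generators $s_j$ to get generation of $\pi_{1*}(\cF)$ by $\{t^is_j\}$ in (4). The only point to make explicit (which you implicitly cover by citing Lemma \ref{ft2}, and which the paper handles by a partition-of-unity extension of germs) is that sections of the quotient sheaf $\cO_{U_2}/(f)$ over a convex box agree with the naive quotient $\cO(W\times U_1')/(f)$, so that division over sections really does give surjectivity of the sheaf map $\phi$.
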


\begin{proof}
(3) Let us first show that the map  $\pi_1: V(f)\to {U_1}$ is closed  and has finite fibers.
One can replace $f$ with the Weierstrass polynomial
$P_d=t^d+c_1t^{d-1}+\ldots+c_d$ defined on the open set $U\subset \CC^k$ (or $\RR^n$). Then the points in the fibers correspond exactly to the roots of the polynomial $P_d$, and thus are finite and of cardinality $\leq d$. Now, if the sequence of points $y_n:=\pi_1(x_n)$ converges to $y$ then it defines a converging sequence of polynomials $P_{d,y_i}$ with coefficients $c_i(y_n)\mapsto c_i(y)$. This  implies that the coefficients of the polynomials $P_{d,y_i}$ are bounded, as also are their roots (see computation below). Thus there is a convergent subsequence $x_{n_k}\to x$, and $y:=\pi(x)$ is the limit of $y_n$. This shows that $\pi_1$ is closed and finite.
(See proof of \cite[Theorem 1.2.3]{GR} for details)

(5) By Lemma \ref{Rem5}, for any $p\in {U_1}$ there is an isomorphism of  stalks 
$$
\pi_*(\cO_{U_2}/(f))_p\simeq \bigoplus_{y\in \pi_1^{-1}(p)} (\cO_{U_2}/(f))_y,
$$ 
and in general if $\cF$ is annihilated by $f$  then it can be considered as a direct image of a sheaf $\cF'$ on the vanishing locus $V(f)\subset U_2$, and again using  Lemma \ref{Rem5}, we get 
%
$$
\pi_*(\cF)_p\simeq\bigoplus_{y\in \pi_1^{-1}(p)} \cF_y. 
$$

(1) \& (2) By  Theorem \ref{MM1}, there is an epimorphism 
%
$$
\phi_{U_1}: \bigoplus_{i=0}^{d-1}\cO({U_1}) t^i \to \cO({U_2})/(f(t,x)
$$
of the rings of global sections.
By a partition of unity argument, any germ of $\cO_{{U_2},y}$ or $(\cO_{U_2}/(f))_y$ at a point extends to a global section over ${U_2}$. 
Using the epimorphism of global sections we find  that any element in $(\cO_{U_2}/(f))_y$ is in the image of $\bigoplus_{i=0}^{d-1}\cO_{U_1} t^i$. Likewise any element in $\bigoplus_{y\in f^{-1}(p)} (\cO_{U_2}/(f))_y\simeq \pi_*(\cO_{U_2}/(f))_p$ extends to a global section in $\cO_{U_2}/(f)$, and thus in $\bigoplus_{i=0}^{d-1}\cO_{U_1} t^i$.
This implies that $\phi$ is an  epimorphism on the sheaves.

 (4) The  restriction $\cF_{|U_2}$ of $\cF$ to $U_2$ can be considered  as a sheaf of  $\cO_{U_2}/(f)$-modules of finite type. The module  
%
$\pi_{1*}(\cO_{U_2}/(f))$ is a finite $\cO_{U_1}$-module. We need to show that $\pi_{1*}(\cF)$ is a finite $\pi_{1*}(\cO_{U_2}/f)$-module. The stalk $\pi_*(\cF)_y$ is, as before, isomorphic to $\bigoplus_{y\in f^{-1}(p)} \cF_y$. The epimorphism of sheaves $\cO_{U_2}^k\to \cF$ factors through  $\cO_{U_2}^k/(f)\to \cF$, which gives
an epimorphism of stalks $(\cO_{U_2}^k/(f))_y\to \cF_y$.

Since   $\pi_{1*}(\cF)_p\simeq\bigoplus_{y\in f^{-1}(p)} \cF_y$ and $\pi_{1*}(\cO_{U_2}^k/(f))_p\simeq\bigoplus_{y\in f^{-1}(p)} (\cO_{U_2}^k/(f))_y$, we get  a surjection on the stalks $\pi_{1*}(\cO_{U_2}^k/(f))_y\to (\pi_{1*}(\cF))_y$.	
\end{proof}


\subsection{Malgrange preparation for modules} \label{03}
 
 The Malgrange-Mather division theorem generalizes to
 the following theorem on moduli (see also \cite{Diff}).
 
 \begin{theorem}[Malgrange]\label{Mal1} 
Denote by $\cE_k:=C^\infty_0(\RR^k)$ the local ring of smooth functions at $0\in \RR^k$. Let 
 $\phi_k^* : \cE_k\to \cE_m$ be any homomorphism induced by a smooth map $\phi_k: U\to \RR^k$ defined in a neighborhood $U$ of $0\in \RR^m$, and  $M$ be 
 any  finitely generated $\cE_n$-module. Then  the following conditions are equivalent:
 \begin{enumerate}
\item $M$ is finitely generated over $\cE_k$.

\item  The dimension  of the vector space  
%
$M/(\phi_k^*(\cam_k)\cdot M)$ 
over ${{\RR}}=\cE_k/{m_k}$ is finite. 
\end{enumerate}
\end{theorem}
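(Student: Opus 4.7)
The direction (1)$\Rightarrow$(2) is immediate since $\cE_k$-generators of $M$ descend to an $\RR$-spanning set of $M/\phi_k^*(m_k)M$. For (2)$\Rightarrow$(1), my plan is to reduce to the case of a single new variable and then combine Malgrange--Mather division with Nakayama's lemma.

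First I would factor $\phi_k$ through its graph, $\RR^m\hookrightarrow\RR^m\times\RR^k\to\RR^k$, so that $\phi_k^*$ becomes the composition of the natural inclusion $\cE_k\hookrightarrow\cE_{k+m}$ (corresponding to the second projection) followed by the surjection $\cE_{k+m}\twoheadrightarrow\cE_m$ onto the graph. Since $M$ is finitely generated over $\cE_m$ it is automatically finitely generated over $\cE_{k+m}$, and the submodule $\phi_k^*(m_k)M$ is unchanged; so I may assume $\phi_k^*$ is the natural inclusion $\cE_k\hookrightarrow\cE_{k+m}$. I then induct on $m$: the case $m=0$ is trivial, and for $m>1$ the one-variable case applied to $\cE_{k+m-1}\hookrightarrow\cE_{k+m}$ gives $M$ finitely generated over $\cE_{k+m-1}$ (noting that $M/m_{k+m-1}M$, as a quotient of $M/m_kM$, remains finite-dimensional), after which I invoke the inductive hypothesis.

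The heart of the argument is thus the one-variable case $\cE_k\hookrightarrow\cE_{k+1}$ with extra coordinate $t$. Set $V:=M/m_kM$, a finite-dimensional $\RR$-vector space on which $\cE_{k+1}$ acts. The main technical claim is that $t$ acts nilpotently on $V$: the image of $\cE_{k+1}$ in $\cEnd_\RR(V)$ is a finite-dimensional---hence Artinian---quotient of the local ring $\cE_{k+1}$, so it is itself local and its maximal ideal is nilpotent; since $t\in m_{k+1}$ this gives $t^dM\subseteq m_kM$ for some $d$. This nilpotence step is the principal obstacle; the rest is routine.

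With $d$ chosen, I would lift an $\RR$-basis of $V$ to $e_1,\ldots,e_n\in M$, so that Nakayama's lemma over the local ring $\cE_{k+1}$ (using $m_k\cE_{k+1}\subset m_{k+1}$) shows these generate $M$ over $\cE_{k+1}$. Since $t^d$ is $d$-regular, the already-established Malgrange--Mather division writes every $g\in\cE_{k+1}$ as $g=qt^d+\sum_{i<d}r_i(x)t^i$ with $r_i\in\cE_k$, and hence every $a=\sum g_je_j\in M$ lies in $N+m_kM$ where $N:=\sum_{i<d,\,j}\cE_k\cdot t^ie_j$. A second application of Nakayama over $\cE_{k+1}$ then forces $M=N$, exhibiting $M$ as finitely generated over $\cE_k$.
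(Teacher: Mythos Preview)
Your reduction via the graph and the induction on $m$ are correct and match the paper's strategy. The nilpotence step is also fine: the image of $\cE_{k+1}$ in $\cEnd_{\RR}(V)$ is Artinian local, so $t^d M\subseteq m_kM$ for some $d$.

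The gap is in the last sentence. From $M=N+m_kM$ you invoke ``Nakayama over $\cE_{k+1}$,'' but $N=\sum_{i<d,j}\cE_k\,t^ie_j$ is only an $\cE_k$-submodule, not an $\cE_{k+1}$-submodule, so Nakayama over $\cE_{k+1}$ does not apply to the quotient $M/N$. Applying Nakayama over $\cE_k$ instead would require $M$ (or $M/N$) to already be finitely generated over $\cE_k$, which is precisely what you are trying to prove. Iterating $M=N+m_kM$ gives $M=N+m_k^sM$ for all $s$, but since $\cE_{k+1}$ is not Noetherian (the ideal $m_{k+1}^\infty$ of flat germs is nonzero and not finitely generated), you cannot conclude $\bigcap_s m_k^sM=0$, and the argument stalls.

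The paper closes this gap with the determinant (Cayley--Hamilton) trick, which produces an annihilator in $\cE_{k+1}$ rather than merely modulo $m_kM$. Writing $ta_j=\sum_i(\alpha_{ij}+\beta_{ij})a_i$ with $\alpha_{ij}\in\RR$ and $\beta_{ij}\in m_k\cE_{k+1}$, one gets $(tI-A-B)\bar a=0$ and hence $\Delta:=\det(tI-A-B)$ annihilates $M$. Since $\Delta(t,0)$ is monic in $t$, $\Delta$ is $q$-regular, so Malgrange--Mather division makes $\cE_{k+1}/(\Delta)$ finite over $\cE_k$; as $M$ is a finitely generated $\cE_{k+1}/(\Delta)$-module, it is finite over $\cE_k$. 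Your argument can be repaired in the same spirit: from $t^de_j\in m_kM$ write $t^de_j=\sum_l c_{jl}e_l$ with $c_{jl}\in m_k\cE_{k+1}$ and use $\det(t^dI-C)$ as the annihilator.
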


\begin{proof}  The proof is identical to Malgrange's  original proof. As we need some elements of the proof later, we shall briefly present it here.

$(1)\Rightarrow(2)$. Obvious.

$(2)\Rightarrow(1)$.
Consider the case where  $n=k+1$ and $\phi_k^*=\pi_{n,k}^*:\cE_{k}(x)\to \cE_n(t,x)$  is  defined by the natural projection onto the second factor, and for simplicity of notation let us identify $\cE_k(x)$ with the subring of $\cE_{k+1}(t,x)$.
Let $a_1,\ldots,a_r$ be  elements   of $M$ whose  classes modulo ${\cam}_n$ form a basis of the vector space $M/({\cam}_n\cdot M)$ over $\RR=\cE_n/{\cam}_n$. Then, by Nakayama, $a_1,\ldots,a_r$ generate $M$ over $\cE_n$ and their  classes modulo $\cam_k$ generate $M/(\cam_k\cdot M)$ over $\RR=\cE_k/{m}_k$.

Write  
$$
ta_j=\sum \alpha_{ij}a_i +\sum_{s\in S_j} \beta'_{sj}\cdot b_{sj},\quad \text{where}\quad \alpha_{ij} \in \RR,\quad  \beta'_{sj}\in {\cam_k},\quad  b_{sj}\in M, 
$$
and the sets of indices are finite.
Since $a_1,\ldots,a_r$ generate $M$, we have 
$$
b_{sj}=\sum \gamma_{sij}a_j
$$ 
with 
%
$\gamma_{sij} \in \cE_{n}=\cE_{k+1}$. This yields
$$
ta_j=\sum (\alpha_{ij}+\beta_{ij})a_i ,
$$
where $\beta_{ij}=\sum_i\sum_{s} \beta'_{sj}\gamma_{sij}\in {m}_k\cdot\cE_n$.

Consider the square matrices $A:=[\alpha_{ij}], B=[\beta_{ij}]$ and the vector $\overline{a}:=(a_1 , \ldots, a_r)$.
Then we write the above in  matrix form
$$ 
(tI-A-B)\overline{a}=0.
$$
Then
$$
\adj(tI-A-B)\cdot (tI-A-B)\overline{a}=\det(tI-A-B)\overline{a}=0.
$$
This implies that  $\Delta:=\det(tI-A-B)\in \cE_n$ annihilates $M$, that is, $\Delta\cdot M=0$.

Then $\Delta(t,0)=t^k+c_1(0) t^{k-1}+\ldots+c_k(0) \in \RR[t]$ is a monic polynomial in $t$ of degree $k$. 
 Let $q\leq k$ denote the multiplicity of ${\Delta(t,0)}$ at $0$. Then we can write ${\Delta(t,0)}$ as a product of two polynomials ${\Delta(t,0)}=t^q\cdot \alpha$, where ${\alpha(0,0)}\neq 0$ and $\Delta$  is $q$-regular at $(t,0)$.

The module $M$  is  a finitely generated  
%
$\cE_{k+1}/(\Delta\cdot \cE_{k+1}$)-module. 
By  the  division theorem, 
%
$\cE_{k+1}/(\Delta\cdot\cE_{k+1})$ is generated by $\{1,t,\ldots,t_{q-1}\}$ over  $\cE_k$. This implies that $M$ is finitely generated  over $\cE_k$.

Assume now that $k\geq n$ is arbitrary, and $\rank(f)=n$.
Then in certain   coordinates, $f$ is an embedding onto the first $n$ coordinates, 
$f: \RR^n\to \RR^k$, with
 $$
f: (x_1,\ldots, x_n)\mapsto (x_1, \ldots, x_n, 0, \ldots, 0),
$$
 and 
  $f^* : \cE_k\to \cE_n$ is surjective, that is, $f^*(\cE_k)=\cE_n$ and $f^*(m_k)=m_n$, and the conclusion is valid.
 In  general  any   map $f: \RR^n\to \RR^k$ is the composite of the immersion
  $$
(id,f): \RR^n\to \RR^k\to \RR^{n+k}
$$
 followed by a sequence of $n$ projections
$$ 
g:\RR^{n+k}\to  \RR^{k}
$$ 
as in the first case. It suffices to notice that  if the theorem is true for two maps $f$ and $g$ then it is true for their composition.
\end{proof}

The above theorems can be extended to their neighborhood versions by using Weierstrass division proven before. The following results can be related to  \cite[Theorem 1.3.1]{GR}.

\begin{lemma} \label{projection} 
Let  $V\times U\subset \RR^{k}\times\RR^n$  be an open neighborhood of the point $z=(x,y)$, and $\pi: V\times U \to U$ be the restriction of the standard projection $\pi: \RR^{k+n}\to \RR^k$. 
Let $\cF$ be a sheaf of  $\cO_{V\times U}$-modules  of 
 finite type, with a stalk $\cF_z$. 
 Suppose $\cF_x/(m_{y}\cdot \cF_z)$ is a finite-dimensional vector space over $\RR$. Then there exists an open convex  neighborhood $V'\times U'$ of $z=(x,y)$ such that: 
 \begin{enumerate}
 \item	The sheaf $(\pi_{|V'\times U'})_*(\cF_{|V'\times U'})$ is of finite type (in the differential setting). 
 \item The restriction of $\pi$ to the vanishing locus of the annihilator 
%
$V(\Ann(\cF))\cap U'$ is a finite map (thus closed).

%
 \item $((\pi_{|U'})_*(\cF))_y \simeq \cF_z$. 
 \item  If $U''$ is a convex open neighborhood of  $y$ then the above  conditions are satisfied for $V'\times U''$.
 \end{enumerate}
	\end{lemma}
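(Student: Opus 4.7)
The plan is to extend the stalk-level Malgrange theorem for modules (Theorem~\ref{Mal1}) to the sheaf level by iterating the neighborhood Weierstrass tool of Corollary~\ref{Rem} one coordinate of $V$ at a time. First I would factor $\pi$ as the composition of $k$ one-variable projections $V\times U\to V_{k-1}\times U\to\cdots\to V_0\times U=U$, each killing a single coordinate of $V$, and prove the lemma by induction on $k$, with $k=1$ as the core case.

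For the $k=1$ case, let $t$ be the coordinate on $V$. Choose sections $a_1,\ldots,a_r$ of $\cF$ near $z$ whose germs at $z$ form a basis of $\cF_z/(m_y\cdot\cF_z)$; by Nakayama they generate $\cF_z$, and by Lemma~\ref{ft} they generate $\cF$ on a neighborhood $V'\times U'$. Now repeat the Cayley--Hamilton construction from the proof of Theorem~\ref{Mal1}: write $t\cdot a_j=\sum(\alpha_{ij}+\beta_{ij})a_i$ with $\alpha_{ij}\in\RR$ and $\beta_{ij}\in m_y\cdot\cO$, shrinking $V'\times U'$ so the identity holds as honest sections, and set $\Delta:=\det(tI-A-B)$. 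This $\Delta$ annihilates every $a_j$ and hence the whole sheaf $\cF_{|V'\times U'}$. Because $\Delta(t,0)$ is a monic polynomial in $t$ of degree $r$ with multiplicity $d\le r$ at $x$, after multiplication by a unit (Weierstrass preparation) $\Delta$ is $d$-regular in $t$ at $z$. Applying Corollary~\ref{Rem} to $\Delta$ then yields all four claims: part~(4) of that corollary (combined with $\Delta\cdot\cF=0$) gives~(1); part~(3) gives~(2) since $V(\Ann(\cF))\subseteq V(\Delta)$; part~(5) gives~(3) after shrinking $V'$ so that $z$ is the only point of $V(\Ann(\cF))$ in the fiber over $y$; and claim~(4) of the lemma transfers from Corollary~\ref{Rem}(6).

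For general $k$, applying the $k=1$ case to the last projection $\pi_{k-1}$ produces a sheaf $\cF^{(1)}:=(\pi_{k-1})_*(\cF)$ of finite type on a convex neighborhood in $V_{k-1}\times U$. To feed this into the inductive hypothesis one must verify the finite-fiber condition for $\cF^{(1)}$; but after the shrinking that makes $z$ the unique preimage, Lemma~\ref{Rem5} identifies $\cF^{(1)}_{z^{(1)}}$ with $\cF_z$, so the hypothesis is preserved verbatim. Claim~(2) for $\pi$ then follows from the inductive claim~(2) for $V_{k-1}\times U\to U$ composed with the finiteness of $\pi_{k-1}$ restricted to $V(\Ann(\cF))$, since a composition of finite maps is finite, and part~(4) of the lemma propagates through Corollary~\ref{Rem}(6) at each layer.

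The main obstacle will be the bookkeeping at the interface between germs and sections: every invocation of Corollary~\ref{Rem} requires the annihilation $\Delta\cdot\cF=0$ and the generation by $a_1,\ldots,a_r$ to hold as statements about sections on a convex open neighborhood, not merely as germ identities at $z$, and the convex neighborhoods produced by successive applications of Corollary~\ref{Rem}(6) must remain compatible with part~(4) of the statement. Controlling the shrinking so that exactly one preimage point of $V(\Ann(\cF))$ survives over $y$ at every layer of the induction is the technical heart of the argument.
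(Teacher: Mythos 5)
Your proposal is correct and follows essentially the same route as the paper: the $k=1$ case is handled by the Cayley--Hamilton construction from the proof of Theorem~\ref{Mal1} producing a $q$-regular annihilator $\Delta$, then Corollary~\ref{Rem} is applied, and the general case proceeds by induction over the codimension-one projections with the stalk isomorphism $(\cF_i)_{z_i}\simeq(\cF_{i+1})_{z_{i+1}}$ (Lemma~\ref{Rem5}) propagating the finiteness hypothesis. The paper's proof records the same list of conditions to be preserved at each inductive layer that you identify as the technical heart of the argument.
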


\begin{proof}
The first part of the reasoning is the same as in the proof of Theorem \ref{Mal1}. 
We settle the case of $k=1$. 
Let $a_j\in \cF_z/(m_y\cdot\cF_z)$ be generators as in the proof of Theorem \ref{Mal1}.
Find convex subsets $V_1\times U_1\subset U$ with $V_1\subset \RR$, $U_1\subset \RR^n$ such that:
\begin{enumerate}

\item The elements $a_j$ generate $\cF(V_1\times U_1)$ over $\cO(V_1\times U_1)$.
\item $ta_j=\sum (\alpha_{ij}+\beta_{ij})a_i $,
where $\beta_{ij}\in m_y\cdot\cO(V_1\times U_1)$, $\alpha_{ij}\in \RR$, and  $m_y\subset \cO(U_1)$ is the maximal ideal of $y$.
\item There is a $q$-regular function $\Delta\in \cO(V_1\times U_1)$ at $(0,0)$ which  annihilates  $\cF_{|V_1\times U_1}$.
\item $\cO(V_1\times U_1)/\Delta$ is a finitely generated $\cO(U_1)$-module.
\end{enumerate}
The polynomial $\Delta$  annihilates the generators $a_i$ of the module  $\cF(V_1\times U_1)$. Hence  $\cF(V_1\times U_1)$ is a finite $\cO(V_1\times U_1)/(\Delta)$-module and thus a finite $\cO(U_1)$-module. Thus the case follows from Theorem \ref{Rem}.
Also if we replace $U_1$ with an open  neighborhood  $U_1'$ of $y$ then the above conditions will be  satisfied.

Then we show the general case by  induction on $n$.
We can find  an open convex neighborhood  $V=V_1\times\ldots \times V_k\subset \RR^k$ of $(x_1,\ldots,x_k)$ such that 
  $\pi$ is a composition of the codimension one projections 
$$
\pi_i: U_i:=V_i\times \ldots\times V_{k}\times U\to U_{i+1}:=V_{i+1}\times \ldots\times V_{k}\times U.
$$
  Set inductively $\overline{x_i}:=(x_i,\ldots,x_k)$ and
  $\cF_0:=\cF_{U_1}$, $\cF_{i+1}=\pi_{i|U_i}(\cF_i)$.
   By the above we can assume that:
   \begin{enumerate}
   	\item For any projection $\pi_i:U_i=V_i\times U_{i+1}\to U_{i+1}$
   	there is a $q_i$-regular function $\Delta_i\in \cO(U\times W)$ at $z_i:=(\overline{x}_i,y)$ which  annihilates  $\cF_i$.
   	\item 
%
$\cO(U_i)/\Delta_i$ is a finitely generated $\cO(U_{i+1})$-module.
   	\item The restriction of the projection $\pi_i$ to $V(\Delta_i)\subset U_i$ defines a finite map $V(\Delta_i)\to U_{i+1}$, 
likewise its restriction $V(\Ann(\cF_i))\to V(\Ann(\cF_{i+1}))$ to $V(\Ann(\cF_i))$ is a finite map .
   	\item  $\cF_i$ is  of finite type.
   	\item  
%
$(\cF_i)_{z_i}\simeq (\cF_{i+1})_{z_{i+1}}$ 
is an isomorphism of $\cO_{z_{i+1},U_{i+1}}$-modules.
   \end{enumerate}
\end{proof}

The following result  is very similar to Theorem \ref{GR}. 

\begin{corollary}\label{GR2}
 Let $f:X\to Y$ be a differentiable  map of 
%
differentiable  spaces, and consider points $x\in U$ and $y=f(x) \in V$. Let $\cF$ be a sheaf of  $\cO_X$-modules which is  of finite type, with a stalk $\cF_x$. 
 Suppose $\cF_x/(m_y\cdot \cF_x)$ is a finite-dimensional vector space over $\RR$ with a basis defined by $c_1,\ldots, c_k\in \cF_x$.
Then there exist  neighborhoods $V'\subset V$ of $y$  and $U'$ of $x$ such that:
\begin{enumerate}

\item  The sheaf $(f_{|U'})_*(\cF)$ is an  $\cO_{V'}$-module  of finite type.
\item  The   restriction of $f$  to $V(\Ann(\cF))\cap U'\to V'$ is a finite map.
\item $\cF_x\simeq ((f_{U'})_*(\cF))_y$.
\item The sheaf $(f_{|U'})_*(\cF)$ is generated over  $\cO_{V'}$ by $c_1,\ldots, c_k\in \cO(V')$
\item The module $\cF(U')$ is  generated over $\cO(V')$ by $c_1,\ldots, c_k\in \cO(V')$.
\end{enumerate}
\end{corollary}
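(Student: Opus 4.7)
The plan is to mimic the strategy already used in the algebraic case (Corollary \ref{coherent2}) and the analytic case (Corollary \ref{GR}), now adapted to the differentiable category: factor $f$ through its graph as a closed embedding followed by a coordinate projection, and then invoke the projection lemma (Lemma \ref{projection}) that was just established in the smooth setting. This reduces the general statement to a statement about sheaves on products of open subsets of Euclidean spaces, where the hard analytic work has already been done.

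Concretely, I would first localize. Since every assertion concerns only germs near $x$ and $y$, I may replace $X$ and $Y$ by small neighborhoods of $x$ and $y$ realized as closed subspaces of open sets $W_X \subset \RR^n$ and $W_Y \subset \RR^m$. Then I factor $f = \pi \circ j$, where $j : X \hookrightarrow X \times Y \subset W_X \times W_Y$ is the graph embedding (a closed embedding of differentiable spaces) and $\pi : W_X \times W_Y \to W_Y$ is the standard projection. By Lemma \ref{ft3} the pushforward $\cF_A := j_*(\cF)$ is a sheaf of $\cO_{W_X \times W_Y}$-modules of finite type. Because $j$ is a closed embedding, the stalk identifies as $(\cF_A)_{j(x)} \simeq \cF_x$, so the hypothesis that $\cF_x/(m_y \cdot \cF_x)$ is finite-dimensional transfers verbatim to $\cF_A$, and $\Ann(\cF_A)$ pulls back under $j$ to $\Ann(\cF)$, so that $V(\Ann(\cF_A)) = j(V(\Ann(\cF)))$. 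Applying Lemma \ref{projection} to $\pi$ and $\cF_A$ now yields, after shrinking to a suitable convex product $W_X \times V'$, that $\pi_*(\cF_A) = f_*(\cF)$ is of finite type on $V'$, that the restriction of $\pi$ to $V(\Ann(\cF_A)) \cap (W_X \times V')$ is finite, and that $(\cF_A)_{j(x)} \simeq (\pi_*(\cF_A))_y$. Setting $U' := f^{-1}(V') \cap W_X$ and combining with the identifications above gives (1), (2), and (3).

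For (4) and (5), I would invoke Nakayama. From (3) we have $(f_*\cF)_y \simeq \cF_x$ as $\cO_{Y,y}$-modules, and by hypothesis $c_1, \ldots, c_k$ span this stalk modulo $m_y$; since $(f_*\cF)_y$ is finite over $\cO_{Y,y}$ by (1), Nakayama's lemma shows that $c_1, \ldots, c_k$ generate the stalk. Because $f_*\cF$ is of finite type, this generation propagates to some smaller neighborhood of $y$; after shrinking $V'$ (and correspondingly $U'$), Lemma \ref{ft} upgrades stalk-level generation to generation of global sections, giving $(f_{|U'})_*(\cF)(V') = \cF(U') = \cO(V') c_1 + \cdots + \cO(V') c_k$. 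The only item requiring care is ensuring that the successive shrinkings coming from Lemma \ref{projection}, Nakayama, and Lemma \ref{ft} can be arranged simultaneously; this is a bookkeeping matter rather than a conceptual obstacle, since the stalk-level conclusions are already in hand and the neighborhoods produced by Lemma \ref{projection} may themselves be further shrunk to any convex subneighborhood.
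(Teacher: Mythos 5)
Your proposal is correct and follows essentially the same route as the paper: factor $f$ through the graph embedding followed by a coordinate projection, push forward via Lemma \ref{ft3}, apply Lemma \ref{projection}, and finish (4)--(5) with Nakayama and Lemma \ref{ft}. The only cosmetic difference is that you project onto the ambient domain $W_Y$ and leave implicit the descent of the pushforward to an $\cO_Y$-module (it is annihilated by $\cI_Y$), a step the paper spells out separately.
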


\begin{proof} 
(1) and (2) The situation is local so we may assume that all subsets are closed 
subspaces of domains in $\RR^n$. 
Let us first consider the case when $Y$ is a domain in  $\RR^n$, and $X$ is a subspace of a  domain $B\subset \RR^k$.
The map $f:X\to Y\subset \RR^n$ can be written as the composition $f=\pi\circ\alpha$ of the closed embedding 
$$
\alpha:=(id, f):X\subset B\times V
$$ 
followed by the  projection 
$$
\pi: B\times V.
$$ 
Let $z=\alpha(x)$ with $\pi(z)=y$.
Note that $m_z\supset \pi^*(m_y)$ and $\alpha^*(m_{z})\supset f^*(m_y)$.
Then  
%
$\cF_x/(\alpha^*(m_{z}))$ is of finite dimension, and 
the sheaf $\cF':=\alpha_*(\cF)$  is of finite type, with  $\cF'_z=\cF_x$, and thus 
%
$\cF_x/f^*(m_y)=\cF'_z/\pi^*(m_{y})$ is finitely generated.
By the previous result we can find neighborhoods $V$ of $x$ and $U$ of $y$ such that for the restrictions $f_{|V}: V\to U$ and $\pi_{|V\times U}: V\times U\to U$, $f_{|V*}(\cF)=(\pi_{|V\times U})_*(\cF')$ is of finite  type. Moreover $\alpha(V(Ann((\cF))\subset(V(Ann(\alpha_*(\cF))\subset B\times V$. By Theorem \ref{Mal1}, the restriction of projection $\pi: B\times V\to V$ to $(V(Ann(\alpha_*(\cF))$ is finite. Since  $\alpha$ defines  an inclusion of $V(Ann((\cF)$ into $(V(Ann(\alpha_*(\cF))$ and thus is finite we get that the restriction of the composition $f=\pi\alpha$ of two finite maps to $V(Ann((\cF)$ is finite

 Now in the general case if $ Y\subset D$ is a complex subspace of the domain $D\subset \RR^n$, then we consider the induced map $\overline{f}:X\to D$. Then   $\overline{f}_{|U*}(\cF_{|U})$ is an $\cO_D$-module of finite type supported on $Y$, and annihilated by $\cI_Y$. It is the trivial extension of the sheaf ${f}_{|U*}(\cF_{|U})$. This implies that the sheaf ${f}_{|U*}(\cF_{|U})$ is an $\cO_{V\cap Y}$-module  of finite type. Again the annihilator of $\cF$ has the same vanishing locus as the annihilator of ${f}_{|U*}(\cF_{|U})$, and by the special case considered before the restriction of $f$ to the vanishing locus  $V(Ann(\cF)=V({f}_{|U*}(\cF_{|U})$ is finite.
 
(3) Since $\alpha$ is an embedding $\alpha_*(\cF_x)\simeq \cF_x$, and $\alpha_*(\cF)$ is of finite type by Lemma \ref{ft3}. Also, by Lemma \ref{projection} and the above: $$f_*(\cF_x)=\pi_*(\alpha_*(\cF_x))=\pi_*(\alpha_*(\cF))_x\simeq (\alpha_*(\cF))_x\simeq \cF_x$$
 (4) Since $\cF_x$  is finitely generated over $\cO_y$ and  $c_1,\ldots, c_k$ generate $\cF_x/(m_y\cF_x)$,  by the Nakayama lemma they generate it over $\cO_y$. It follows that they generate $(f_{|U'})_*(\cF)$ over $\cO_U$ in a certain neighborhood of $y$.

(5) The natural map $i: \cF(U')\to (f_{|U'})_*(\cF_{|U'})$ is injective since, by Lemma \ref{Rem5}, $((f_{|U'})_*(\cF_{|U'}))_y=\bigoplus \cF_{x_i}$. Thus if $i(s)_y=0$ for  $s\in \cF(U')$, and all  $y\in V'$ then $s_x=0$ for all $x\in U'$, and thus $s=0$. By shrinking $U'$ we can assume that $c_1,\ldots, c_k \in \cF(U')$ which is $\cO(V')$-submodule  of the module of the global sections $\Gamma((f_{|U'})_*(\cF_{|U'})$.  By (4) we see that they generate all the stalks $(f_{|U'})_*(\cF_{|U'})$. Lemma \ref{ft} implies that $c_1,\ldots, c_k$ generate
the $\cO(U)$-module  $\Gamma((f_{|U'})_*(\cF_{|U'})$ and thus its submodule $\cF(U')=\Gamma((f_{|U'})_*(\cF_{|U'})$.
\end{proof}

\subsection{Singular inverse function theorem for differentiable maps}\label{lag4}
If $f:X\to Y$ is a finite differentiable  map of 
differential spaces then we define the locus $Y_d\subset Y$ (resp. $Y_{\geq d}\subset Y$ of  points where $f$ has degree $d$, that is, 
$$
Y_d:=\Big\{y\in Y
\mid \sum_{x\in f^{-1}(y)}\dim(\cO_x/m_y)=d\Big\}.
$$ 
The set $Y_{\geq d}$ is closed, as we will see below. For any subset $Z\subset X$ denote by $m_Z^\infty$ the ideal of all the functions flat along $Z$ such that  ${\partial^k}f_{|Z}=0$.

\begin{corollary}[Singular ``inverse function'' theorem (differential version)] \label{I2}
Let $f:X\to Y$ be a differentiable  map of 
%
differentiable spaces, and let $x\in X$ and $y=f(x) \in Y$. Assume  $\cO_{X,x}/f^*(m_y)$ is of finite dimension $d$  over $\RR=\cO_y/m_y$. Then there exist neighborhoods   $Y'\subset  Y$ of $y$  and $X'\subset X$ of $x$ such that the induced finite thus proper and closed morphism $f': X'\to Y'$ is of degree $d$. 
If $\cO_{X,x}/f^*(m_y)$ is generated by $c_1,\ldots,c_d$ over $\RR$ then the sections generate $f'_*(\cO_{X'})$ in the neighborhood of $X'$ over $\cO_{Y'}$, defining an
epimorphism of sheaves of $\cO_{Y'}$-modules
$$
\phi: \cO_{Y'}^d\to f'_*(\cO_{X'}), \quad \phi(a_1,\ldots,a_d)=a_1c_1+\ldots+a_dc_d,
$$
and the corresponding epimorphism of  $\cO(Y')$-modules 
$$
\phi_{Y'}: \cO({Y'})^d\to \cO({X'}).
$$
Moreover, if $X$ is Cohen-Macaulay and  $Y$ is a manifold  of the same dimension then the kernels of $\phi$ and $\phi_{Y'}$ are contained in $m_{Y'_d}^\infty\cdot \cO_{Y'}^d$ (respectively in $m_{Y'_d}^\infty\cdot \cO({Y'})^d$). Moreover  the point 
$y=f'(x)\in Y'$ is in the ramified locus of  maximal index  $d$, and $y\in Y'_d$.
\end{corollary}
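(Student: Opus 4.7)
The proof will parallel the proofs of Corollaries \ref{I3} and \ref{I1}, substituting Corollary \ref{GR2} for Theorem \ref{coherent2} (algebraic case) and Theorem \ref{GR} (analytic case). The novelty, which is the main obstacle, is that in the smooth category the surjection $\phi$ need not be an isomorphism even when $X$ is Cohen--Macaulay and $Y$ is a manifold of the same dimension; rather, one has to show that its kernel consists of sections whose Taylor expansions vanish identically along $Y'_d$. This will force a passage to formal completions together with the Auslander--Buchsbaum/Eisenbud theorem already used in Corollary \ref{I3}.

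The first half of the argument is essentially a transcription of the proof of \ref{I1}. Apply Corollary \ref{GR2} to $\cF=\cO_X$, using the finite-dimension hypothesis $\dim_{\RR}\cO_{X,x}/f^*(m_y)=d$ and the chosen generators $c_1,\dots,c_d$. This yields neighborhoods $X'\subset X$ and $Y'\subset Y$ on which $f':=f|_{X'}\colon X'\to Y'$ is finite (hence proper and closed), $f'_*(\cO_{X'})$ is an $\cO_{Y'}$-module of finite type generated by $c_1,\dots,c_d$, and $(f'_*\cO_{X'})_y\simeq \cO_{X,x}$. The first two assertions give the sheaf surjection $\phi$, and part~(5) of \ref{GR2} gives the global-sections surjection $\phi_{Y'}$. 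Combining Lemma \ref{Rem5} with the quotient identification
$$
(f'_*\cO_{X'})_{y'}/m_{y'}(f'_*\cO_{X'})_{y'}\simeq\bigoplus_{x'\in f'^{-1}(y')}\cO_{X,x'}/f^*(m_{y'}),
$$
and noting that this is a quotient of $\RR^d$, one sees that the degree of $f'$ at every $y'\in Y'$ is at most $d$, with equality at $y$ after shrinking so that $f'^{-1}(y)=\{x\}$. This settles all statements preceding the Cohen--Macaulay case: $f'$ has degree $d$, $y\in Y'_d$, and $d$ is maximal, so $y$ lies in the ramified locus of maximal index $d$.

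For the Cohen--Macaulay statement, fix an arbitrary $y'\in Y'_d$ with fiber $f'^{-1}(y')=\{x'_1,\dots,x'_r\}$ and pass to the $m_{y'}$-adic formal completion. The base ring completes to the regular local ring $\widehat{\cO}_{Y',y'}\cong\RR[[u_1,\dots,u_n]]$, and by Lemma \ref{Rem5} the completed stalk of $f'_*(\cO_{X'})$ equals $\bigoplus_i\widehat{\cO}_{X',x'_i}$. Each summand is Noetherian, Cohen--Macaulay, of dimension $n$, and finite over $\widehat{\cO}_{Y',y'}$, so Eisenbud's theorem (invoked in the proof of \ref{I3}) shows it is a free $\widehat{\cO}_{Y',y'}$-module; the total rank is $d$ by the fiber-dimension computation from the previous step. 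The completion $\widehat{\phi}$ of $\phi$ at $y'$ is therefore a surjection between free $\widehat{\cO}_{Y',y'}$-modules of the same finite rank, and hence an isomorphism by the standard fact that a surjective endomorphism of a finitely generated module is an isomorphism. In particular, any $s\in\ker\phi$ maps to zero in every completion at a point of $Y'_d$, which means that all partial derivatives of its components vanish at every such point; equivalently $s\in m_{Y'_d}^\infty\cdot\cO_{Y'}^d$, as required. The statement for $\phi_{Y'}$ then follows by taking global sections. The key technical point to verify is that Cohen--Macaulayness and the equality of dimensions genuinely descend to the formal completion, and that in the smooth setting the kernel of the $m_{y'}$-adic completion map $\cO_{Y',y'}\to\widehat{\cO}_{Y',y'}$ really is the ideal $m_{\{y'\}}^\infty$ of germs flat at $y'$.
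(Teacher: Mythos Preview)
Your proposal is correct, and the first half (applying Corollary~\ref{GR2} to $\cF=\cO_X$, invoking Lemma~\ref{Rem5} to compute the stalk over $y'$, and deducing the degree statements) coincides with the paper's argument.

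For the Cohen--Macaulay part you take a genuinely different route. The paper stays in the smooth category: at each fiber point $x'_i$ it applies its own Cohen--Macaulay--Weierstrass isomorphism (Theorem~\ref{Rem4}) to produce a surjection $\psi_{y'}\colon\cO_{Y,y'}^{d_i}\to\cO_{X,x_i}$ with kernel in $m_{y'}^\infty$, assembles these into a single surjection $\psi_{y'}\colon\cO_{Y,y'}^d\to f'_*(\cO_{X'})_{y'}$, and then compares $\psi_{y'}$ with $\phi_{y'}$ via an invertible change-of-basis matrix relating the Weierstrass generators $e_i$ to the chosen $c_j$. You instead pass to the $m_{y'}$-adic completion, observe that the target becomes a free $\widehat{\cO}_{Y',y'}$-module of rank $d$ by Eisenbud's theorem in the Noetherian setting, and use that a surjection between free modules of equal finite rank is an isomorphism. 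This lets you bypass the change-of-basis step entirely, since after completion the specific choice of generators is irrelevant. The price is the verification you flag at the end: one must check that the Cohen--Macaulay hypothesis on $\cO_{X',x'_i}$ (which in this non-Noetherian setting means that the pulled-back parameters form a regular sequence) passes to the completion. This holds because regularity of a sequence can be read off the associated graded ring $\gr_{\cJ}(R)\simeq(R/\cJ)[y_1,\dots,y_n]$ (Matsumura's criterion, used in the proof of Theorem~\ref{Rem4}), and the associated graded is unchanged under completion; but the paper's route via Theorem~\ref{Rem4} handles this implicitly and avoids the detour through completions.
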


\begin{proof} The proof is similar to one before. We apply Corollary \ref{GR2}  to $\cF=\cO_X$. This shows surjectivity of $\phi$ in a neighborhood of $x$.

For the ``moreover'' part, for any $y'\in Y'_d$  consider the fiber $f^{-1}(y')=\{x_1,\ldots,x_k\}$ with $k\leq d$, $d_i=\dim(\cO_{x_i}/m_y)$, and $\sum d_i=d$.
As before we  represent $X'\to Y'$ as the composition of the closed immersion $i: X'\to X'\times Y'$ followed by the projection $\pi:X'\times Y'\to Y'$.
 The sheaf
$$
f'_*(\cO_{X'})=\pi_*(i_*(\cO_{X'}))=\pi_*(\cO_{X'\times Y'}/\cI_{i(X')})
$$ 
can be written by Theorem \ref{Rem2}, as the image of the free $\cO_{Y'}$-module $\cO_{Y'}^d$ in a neighborhood of $y$. By shrinking $Y'$ (and $X'$) one can assume that  $c_1,\ldots,c_d$ generate the $\cO_{Y'}$-module $f'_*(\cO_{X'})$. 
 
 Since $X'\to Y'$ is finite, by Lemma \ref{Rem5}, there is an isomorphism of  $\cO_{Y,y'}$-modules 
 $$
f_*(\cO_{X'})_{y'}\simeq \pi_*(\cO_{X'\times Y'}/\cI_{i(X')})_{y'}\simeq \bigoplus (\cO_{X'\times Y'}/\cI_{i(X')})_{x_i}\simeq \bigoplus \cO_{X,x_i}.
$$ 
Applying 
%
Theorem \ref{Rem4} to each point $x_i$ we obtain  surjections $\cO_{Y,y'}^{d_i}\to \cO_{X,x_i}$ with  kernel contained in $m_{y'}^\infty$. Thus there is  such a surjection $$\psi_{y'}: \cO_{Y,y'}^d=\bigoplus_i \cO_{Y,y'}^{d_i}\to f'_*(\cO_{X'})_{y'}=\bigoplus_i \cO_{X,x_i}$$ with 
%
the  kernel contained in $m_{y'}^\infty$. Consider the induced generators $e_1:=\psi_{y'}(1,0,\ldots,0),\ldots,e_d:=\psi_{y'}(0,\ldots,0,1)$ of $f'_*(\cO_{X'})_{y'}$. Since $c_1,\ldots,c_d$ is another set of generators we  can represent the generators $e_i$ as $e_i=\sum a_{ij}c_j$ for some  invertible matrix $[a_{ij}(y')]$. The matrix $[a_{ij}]$ defines an isomorphism
$\alpha$ of $\cO({Y'})^d$ in a neighborhood of $y'$. Locally we get the relation $\phi_{y'}=\alpha_{y'}\psi_{y'}: \cO_{Y',y'}^d\to f'_*(\cO_{X'})_{y'}$ with kernel contained in $m_{y'}^\infty$.

The homorphsim of $\cO({Y'})$-modules $$
\phi_{Y'}: \cO({Y'})^d\to \cO({X'}).
$$ is an epimorphism since the sections $c_1,\ldots, c_d\in \cO({X'})$ generate all the stalks 

%
%
\end{proof}

\begin{remark}
Observe that in a neighborhood of each  $y\in Y_d$, the number $\sum_{x\in f^{-1}(y)}\dim(\cO_x/m_y)$ is at most $d$, since there is an epimorphism
%
of vector spaces 
$$
(\cO_y/m_y)^d\to\bigoplus_{x\in f^{-1}(y)} \cO_{x_i}/m_y.
$$
In the holomorphic or algebraic setting, $Y'_d=Y'$ and $m_{Y'_d}^\infty=0$.
\end{remark}

\subsection{Smooth objects}\label{lag5}

Summarizing the results from the previous sections we  introduce the category of \emph{smooth objects} $\cR^n$ over a  field  $K$  modeled, in particular, on  the local rings  of smooth 
functions on $\RR^n$ over $\RR$. This approach allows us to treat algebraic, analytic, and smooth functions in the same way.

Each $\cR^n$ is a triple 
$$
\cR^n=(\cE_n,\{x_1,\ldots,x_n\},\cam_n),
$$ 
such that:
\begin{enumerate}
\item $\cE_0=K$, $m_0=(0)$. 
\item $\cE_n$ is a local ring with  maximal ideal ${m}_n$, and containing its residue field $K$. 
\item  $x_1,\ldots,x_n$ are elements of $\cam_n$ and their classes in  $\cam_n/\cam_n^2$ form a basis of a  free module over $K$.
\item There exists a homomorphism 
$$
T_n: \cE_n\to \widehat{\cE_n}=\lim_k {\cE_n}/m_n^k\simeq  K[[x_1,\ldots,x_n]]
$$
 of $K$-algebras, whose kernel is equal to $\cam_n^\infty$, and which transforms $x_i\in \cE_n$ to 
%
$x_i\in K[[x_1,\ldots,x_n]]$.

A map $f: \cR^n\to \cR^m$ is given by any sequence of functions $f_1,\ldots, f_m\in m_n$, which defines a  unique ring homomorphism
$$
f^*: \cE_m\to \cE_n,\quad f^*(x_i)=f_i,
$$
commuting with $T_i$.
We denote  the map defined by the sequence $(f_i)$ by 
%
$f=(f_1,\ldots,f_m)$. By abuse of notation we 
write 
$$
f^*(g)=g(f_1,\ldots,f_m)
$$
for any $g\in \cE_m$. We assume the following conditions hold:

\item
 The map $\sigma:=(x_1,\ldots,x_n): \cR^n\to \cR^n$ defines the identity map, that is,
the endomorphism $\sigma^*:\cE_n\to \cE_n$ is the identity.
\item
 The composition of maps $\phi_1: \cR^n\to \cR^m$ and $\phi_2: \cR^m\to \cR^k$ is a map 

$\phi_2\circ\phi_1: \cR^n\to  \cR^k$, given by the composition of the ring homomorphisms $
\phi_1^*\phi_2^*: \cE_k\to \cE_n$.
\item
  There exists a differentiation $\frac{\partial}{\partial{x_i}}$ of $\cE_n$ commuting with $T_n$ and defining the standard
derivation $\frac{\partial}{\partial{x_i}}$ on $K[[x_1,\ldots,x_n]]$.
\item
 For any $k\leq n$ there is  the natural projection $p_{n,k}:\cR^n\to \cR^k$, given by $(x_1,\ldots,x_k)$. It defines the
inclusions $\cE_k\subset  \cE_n$ and  $K[[x_1,\ldots x_k]]\subset K[[x_1,\ldots x_n]]$ commuting with the $T_n$.
Moreover 

$$
\cE_k=\bigg\{f\in \cE_n\mid \frac{\partial}{\partial{x_i}}(f)=0\ \text{for}\ i>k\bigg\}
$$
and for $i\leq n$, the restriction of the differentiation $\frac{\partial}{\partial{x_i}}$ on $\cE_n$ to 

$\cE_k$ coincides with that on $\cE_n$.
\item
 For any $k\leq n$  the map 

$(x_1,\ldots,x_k,0,\ldots,0)=i_{k,n}: \cR^k\to \cR^n$ defines a ring surjection $\cE_n\to \cE_k$ whose kernel is the ideal $(x_{k+1},\ldots, x_n)$.

\item
 (Inverse function theorem) For any functions $u_1,\ldots,u_n\in \cE_n$ for which ${\det[\frac{\partial{u_i}}{\partial{x_j}}](0)}\neq 0$ the
map $(u_1,\ldots,u_n): \cR^n\to \cR^n$ is  invertible.

\item
 (Malgrange-Mather special division) Let  $\cE_{n+k+1}$  be the ring of smooth objects with coordinates $(t,x,y):=(t,x_1,\ldots,x_n,y_1,\ldots,y_k)$.
For any $g(t,x_1,\ldots,x_n)\in \cE_{n+1}$ and the ``generic polynomial'' 
$$
P^d_:=t^d+y_1t^{d-1}+\ldots+ y_d \in \cE_{n+d+1}
$$
 there exists ``special Malgrange-Mather division'': 
$$
f(t,x)=h^d(t,x,y)\cdot P^d + r^d,
$$ 
where 

$$
r^d=\sum r^d_{d-1}(x,y)t^{d-1}
+\ldots+r^d_0(x,y),
$$
and 
%
$$
h^d(t,x,y)\in \cE_{n+d+1},\quad r^d_i=r^d_i(x,y)\in \cE_{n+d}.
$$
\item
 If $m_n^\infty =0$ then $(\cE_n)$ will be called \emph{reduced.} 
\end{enumerate}

\begin{remark}
In positive characteristic we  assume  the existence of Hasse derivatives $\frac{\partial}{\partial{x_i^{p^j}}}$ commuting with $T_n$ in condition (4).
Also   condition 
(8) is slightly modified: 
$$
\cE_k=\bigg\{f\in \cE_n\mid \frac{\partial}{\partial{x_i^{p^j}}}(f)=0\ \text{for}\ i>k, j>0\bigg\}.
$$
\end{remark}

\begin{example} 

Examples of categories of smooth objects include: 

\begin{enumerate}
\item  The germs $C_x^\infty(\RR^{n})$ of smooth functions on $\RR^{n}$ over $K=\RR$. 

\item The germs $\cO_x(K^{n})$ of analytic functions on $K^{n}$ over $K=\CC$ or $K=\RR$.

\item The germs $K\langle x_1,\ldots,x_n\rangle$ of algebraic functions on smooth algebraic varieties of dimension $n$ over a field $K$. (Here $K\langle x_1,\ldots,x_n\rangle $ denotes the Henselianization of the localization $K[x_1,\ldots,x_n]_{(x_1,\ldots,x_n)}$ of $K[x_1,\ldots,x_n]$ at the maximal ideal $(x_1,\ldots,x_n)$.)  In this case $K\langle x_1,\ldots,x_n\rangle$ 
is the direct image of \'etale extensions of $K[x_1,\ldots,x_n]$, preserving the residue field $K$, and it is a subring of the Henselian ring $K[[x_1,\ldots,x_n]]$.

\item The rings of formal analytic functions $K[[u_1,\ldots,u_k]]$ over a  field  $K$. (This follows in particular from more general ``Hironaka formal division'') (Theorem \ref{formal}).
\end{enumerate}
\end{example}

 
Malgrange's strategy presented in the previous sections utilizes  only the algebraic properties  defined for the category of smooth functions, and thus can be extended to the more general situation. In particular this implies the following:

 \begin{theorem}[Malgrange]\label{Mal2} 
Let  $\phi_k^* : \cE_k\to \cE_m$ be any homomorphism (in the smooth category), and let $M$ be 
 any  finitely generated $\cE_n$-module. Then  the following conditions are equivalent:
 \begin{enumerate}
\item $M$ is finitely generated over $\cE_k$.

\item  The dimension  of the vector space  
%
$M/(\phi_k^*(\cam_k)\cdot M)$ 
over ${{K}}=\cE_k/{m_k}$ is finite. 
\end{enumerate}
\end{theorem}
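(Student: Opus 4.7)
The plan is to transcribe the proof of Theorem \ref{Mal1} verbatim, checking at each step that only the axioms (1)--(11) of the smooth-object category $\cR^n$ are used. The implication $(1)\Rightarrow(2)$ is immediate: if $g_1,\ldots,g_s$ generate $M$ over $\cE_k$, their classes span $M/(\phi_k^*(m_k)\cdot M)$ over $K=\cE_k/m_k$.

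For $(2)\Rightarrow(1)$, the key reduction is that a lemma allows one to decompose the problem: if the theorem holds for $\phi:\cR^n\to\cR^r$ and $\psi:\cR^r\to\cR^k$, then it holds for $\psi\circ\phi$. An arbitrary map $\phi_k:\cR^m\to\cR^k$ factors (using axiom (10), the inverse function theorem) as the graph embedding $(\id,\phi_k):\cR^m\to\cR^{m+k}$ followed by the composition of $m$ codimension-one projections $p_{j+1,j}:\cR^{j+1}\to\cR^j$. The embedding case is trivial since $(\id,\phi_k)^*$ is surjective (using axiom (9)), so the entire content is in the codimension-one projection $p_{k+1,k}:\cR^{k+1}\to\cR^k$.

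For this base case, proceed as in Malgrange's original argument. Pick $a_1,\ldots,a_r\in M$ whose classes modulo $m_{k+1}$ give a $K$-basis of $M/(m_{k+1}M)$; by Nakayama these generate $M$ over $\cE_{k+1}$, and their classes mod $m_k\cdot\cE_{k+1}$ generate $M/(m_k\cdot M)$ over $K$. Since this latter space has finite dimension, write
\[
t\,a_j \;=\; \sum_i (\alpha_{ij}+\beta_{ij})\,a_i, \qquad \alpha_{ij}\in K,\ \beta_{ij}\in m_k\cdot\cE_{k+1},
\]
where $t=x_{k+1}$. The matrix identity $(tI-A-B)\overline{a}=0$ combined with the adjugate trick shows that $\Delta:=\det(tI-A-B)\in\cE_{k+1}$ annihilates $M$. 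By axiom (9), $\Delta(t,0)\in K[t]$ is a monic polynomial of degree $r$, whence $\Delta$ is $q$-regular in $t$ for some $q\le r$.

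The remaining step is to deduce that $\cE_{k+1}/(\Delta)$ is a finitely generated $\cE_k$-module, generated by $\{1,t,\ldots,t^{q-1}\}$. This requires Weierstrass preparation and division for $d$-regular functions in the abstract smooth category, which is itself a consequence of the Malgrange--Mather special division axiom (11) via the ``Malgrange trick'' exactly as carried out in Section \ref{01}: write $\Delta=\alpha\cdot \overline{P}^q$ for a Weierstrass polynomial $\overline{P}^q$ and a unit $\alpha$, then use that special division by the generic polynomial $P^q$ plus composition with the coefficient map $\cR^k\to\cR^q$ produces division by $\overline{P}^q$. Since $M$ is finitely generated over $\cE_{k+1}/(\Delta)$, it is finitely generated over $\cE_k$. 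The main obstacle is essentially bookkeeping: one must check that every use of Weierstrass-type tools in Malgrange's original proof reduces to the packaged axiom (11), and that the adjugate/determinant manipulations are purely ring-theoretic and hence available over any $\cE_n$.
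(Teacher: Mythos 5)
Your proposal is correct and follows essentially the same route as the paper: the paper's proof of this theorem is literally ``the same as the proof of Theorem \ref{Mal1},'' and that proof is exactly the argument you transcribe — the trivial direction via spanning classes, the reduction of a general map to a graph immersion followed by codimension-one projections, the adjugate/determinant trick producing a $q$-regular annihilator $\Delta$, and Weierstrass division (derived from the special-division axiom via the Malgrange trick) to conclude finite generation over $\cE_k$. Your explicit bookkeeping of which axioms of the smooth category are invoked at each step is exactly the point of the paper's one-line proof.
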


\begin{proof} 
The proof is the same as the proof of 
Theorem \ref{Mal1}. 
\end{proof}

We are going to use the following theorems due to Malgrange (with some modifications) (see also \cite{Diff}).

\begin{corollary}[Malgrange]\label{CMal2} 
Given a  finite $\cE_n$-module $M$ and  a homomorphism $f^*: \cE_k\to \cE_n$,
the set  $\{b_1, \ldots, b_r \}$  generates $M$ as an $\cE_k$-module if  it generates the $K$-vector space $M/(f^*({\cam}_k)·M)$. 
\end{corollary}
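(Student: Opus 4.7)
The plan is to deduce this corollary as a direct consequence of the Malgrange Theorem \ref{Mal2} combined with Nakayama's lemma applied over the local ring $\cE_k$. The key observation is that the hypothesis ``$\{b_1,\ldots,b_r\}$ generates $M/(f^*(\cam_k)\cdot M)$ as a $K$-vector space'' simultaneously encodes two pieces of information: first, that the quotient is finite-dimensional (so the Malgrange theorem applies), and second, that we have a specific candidate generating set to which we may apply Nakayama.

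First I would invoke Theorem \ref{Mal2}: since $b_1,\ldots,b_r$ span $M/(f^*(\cam_k)\cdot M)$ over $K$, the vector space $M/(f^*(\cam_k)\cdot M)$ has finite dimension, so condition~(2) of that theorem holds. Therefore $M$ is finitely generated as an $\cE_k$-module (via $f^*$). Next, let $N\subset M$ denote the $\cE_k$-submodule generated by $b_1,\ldots,b_r$, and consider the quotient $M/N$. This quotient inherits from $M$ the structure of a finitely generated $\cE_k$-module.

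The spanning hypothesis translates directly into the equality $M = N + f^*(\cam_k)\cdot M$, which upon passing to the quotient yields $M/N = f^*(\cam_k)\cdot (M/N)$, i.e.\ $\cam_k\cdot (M/N) = M/N$ when $M/N$ is viewed as an $\cE_k$-module. Since $\cE_k$ is a local ring with maximal ideal $\cam_k$ and $M/N$ is a finitely generated $\cE_k$-module, Nakayama's lemma forces $M/N = 0$, so $M = N$ and $b_1,\ldots,b_r$ generate $M$ over $\cE_k$, as required.

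There is no real obstacle here; the content is entirely packaged in the preceding Theorem \ref{Mal2}, whose proof was the substantive work (the matrix computation producing the annihilating regular polynomial $\Delta$, followed by Weierstrass division). The only point that deserves a brief remark is that Nakayama applies because $\cE_k$ is local in the sense of the smooth-objects axioms, and $M/N$ is finitely generated \emph{over $\cE_k$}, which itself requires the preceding application of Theorem \ref{Mal2}.
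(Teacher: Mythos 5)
Your proof is correct and follows essentially the same route as the paper: apply Theorem \ref{Mal2} to conclude $M$ is finite over $\cE_k$, observe that the spanning hypothesis gives $M=\langle b_1,\ldots,b_r\rangle\cE_k+\cam_k\cdot M$, and finish with Nakayama's lemma. Your phrasing via the quotient $M/N$ is just a cleaner packaging of the same argument.
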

 
\begin{proof}($\Rightarrow$)
 By  Malgrange preparation, 
%
$M/(f^*(\cam_k)\cdot M) 
= \langle b_1, \ldots, b_r \rangle R$  is finite over $\cE_k$.
Hence  
$$
M = \langle b_1, \ldots, b_r \rangle \cE_k+{\cam}_k\cdot A.
$$ 
By the  Nakayama lemma we get  $M=\langle b_1, \ldots, b_r \rangle \cE_k$.
\end{proof}

For  a finite  $\cE_n$-module $M$ define its \emph{completion}  
$$
\widehat{M}=\lim_{\leftarrow} M/(m_n^i\cdot M),
$$ 
which is a module over the 
%
completion of the ring 

$\widehat{\cE_n}:=\lim_{\leftarrow} \cE_n/{m_n}^i={K}[[x_1,\ldots,x_n]]$.
These rings define the category of smooth objects over $K$.
There is a natural homomorphism $M\to \widehat{M}$ with kernel defined by $m^\infty\cdot M$.
Any ring homomorphism $\phi: \cE_k\to \cE_n$ induces a unique homomorphism $\widehat{\phi}: \widehat{\cE_k}\to \widehat{\cE_n} $.

\begin{corollary}[Preparation theorem (in  Malgrange form)]\label{Malg}
Given a finite  $\cE_n$-module $M$ and a map $f: \cR^k\to \cR^n$,
  the following statements are equivalent for $a_1,\ldots,a_r\in M$:
\begin{enumerate}
\item $a_1,\ldots,a_r\in M$ generate $M$  as an $f^*(\cE_k)$-module. 

\item $a_1,\ldots,a_r\in M$ span  $M/(f^*({m_k})\cdot M)$  as a ${K}=f^*(\cE_k/{m_k})$-vector space.
\item  $\widehat{a_1},\ldots,\widehat{a_r}\in \widehat{M}$ generate $\widehat{M}$ as  an $\widehat{f}^*(\widehat{\cE_k})$-module. 
\item  $\widehat{a_1},\ldots,\widehat{a_r}\in \widehat{M}$ 
generate $\widehat{M}/(\widehat{f}^*(\widehat{\cE_k}/\widehat{m_k})\cdot \widehat{M})$ as  a $K=\widehat{f}^*(\widehat{\cE_k}/\widehat{m_k}))$-vector space.
\end{enumerate}
\end{corollary}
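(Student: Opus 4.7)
The four conditions split into two pairs: (1), (2) live at the level of the module $M$ over $\cE_n$, while (3), (4) live at the completed level $\widehat{M}$ over $\widehat{\cE_n}$. My plan is to establish (1)$\Leftrightarrow$(2) and (3)$\Leftrightarrow$(4) separately, and then link the two halves via a canonical identification of the relevant residue $K$-vector spaces.

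The equivalence (1)$\Leftrightarrow$(2) is exactly the preceding Corollary \ref{CMal2}. The equivalence (3)$\Leftrightarrow$(4) follows from the same corollary applied at the completed level: by item (4) of the preceding Example, the ring $\widehat{\cE_n}=K[[x_1,\ldots,x_n]]$ is itself a smooth object, and since $M$ is finite over $\cE_n$, the module $\widehat{M}$ is finite over $\widehat{\cE_n}$. The hypotheses of Corollary \ref{CMal2} are thus satisfied for $\widehat{M}$ with the homomorphism $\widehat{f}^{*}\colon\widehat{\cE_k}\to\widehat{\cE_n}$, giving the desired equivalence in the formal setting.

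To link the two halves I will prove (2)$\Leftrightarrow$(4). The completion map $M\to\widehat{M}$ induces a $K$-linear map
\[
\phi\colon M/f^{*}(m_k)M\longrightarrow \widehat{M}/\widehat{f}^{*}(\widehat{m_k})\widehat{M},\qquad \bar{a}_i\mapsto \widehat{\bar{a}}_i,
\]
which is well defined because the ideal $\widehat{f}^{*}(\widehat{m_k})\cdot\widehat{\cE_n}$ is generated by the same elements $f_1,\ldots,f_k$ as $f^{*}(m_k)\cdot\cE_n$. I claim $\phi$ is a $K$-vector space isomorphism whenever either side is finite-dimensional (equivalently, annihilated by a power of the maximal ideal): if $M/f^{*}(m_k)M$ is finite-dimensional then $m_n^d\cdot M\subset f^{*}(m_k)M$ for some $d$, so both quotients factor canonically through the tautological identification
\[
M/m_n^{d}M \;=\; \widehat{M}/m_n^{d}\widehat{M}
\]
built into the definition $\widehat{M}=\varprojlim M/m_n^{i}M$. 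Under $\phi$, spanning by $\bar{a}_i$ thus corresponds to spanning by $\widehat{\bar{a}}_i$, yielding (2)$\Leftrightarrow$(4).

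The principal technical point is upgrading the containment $m_n^{d}M\subset f^{*}(m_k)M$ to the analogous $m_n^{d}\widehat{M}\subset \widehat{f}^{*}(\widehat{m_k})\widehat{M}$ at the completed level. In the Noetherian examples (algebraic, analytic, formal) this is immediate from flatness of completion; in the smooth (non-Noetherian) setting it follows from the compatibility $\widehat{M}/m_n^{d}\widehat{M}=M/m_n^{d}M$ built into the construction of the completion, together with the observation that a finite-dimensional $K$-vector space is its own completion and therefore visible already at a finite level of the inverse system. Granted this standard compatibility, the entire proof reduces to two applications of Corollary \ref{CMal2} (one for $\cE_n$, one for $\widehat{\cE_n}$) glued together by the identification $\phi$.
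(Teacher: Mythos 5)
Your overall architecture is the same as the paper's: (1)$\Leftrightarrow$(2) and (3)$\Leftrightarrow$(4) both follow from Corollary \ref{CMal2} applied in the smooth categories $(\cE_i)$ and $(\widehat{\cE_i})$ respectively, and the two halves are then linked through the quotients modulo $f^*(m_k)$. The easy half of the link, (2)$\Rightarrow$(4), is fine: it only needs the natural surjection $M/(f^*(m_k)M)\to \widehat{M}/(\widehat{f}^*(\widehat{m_k})\widehat{M})=\varprojlim M/(f^*(m_k)M+m_n^dM)$, which is exactly how the paper argues it.

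The gap is in (4)$\Rightarrow$(2). Your claim that $\phi$ is an isomorphism ``whenever either side is finite-dimensional'' is only justified in your text under the hypothesis that $M/(f^*(m_k)M)$ is finite-dimensional, i.e.\ under (2) itself; used in the direction (4)$\Rightarrow$(2) this is circular. Starting from (4) what you actually get, by stabilization of the inverse system, is that the $a_i$ span $M/(f^*(m_k)M+m_n^{d_0}M)$ for some $d_0$, together with $f^*(m_k)M+m_n^{d_0}M=f^*(m_k)M+m_n^{d_0+1}M=\cdots$. This is strictly weaker than (2) unless you can show $m_n^{d_0}M\subset f^*(m_k)M$ on the nose, and in the differentiable category this does \emph{not} follow from ``standard compatibility'' of completions: the kernel of $\phi$ is $\bigl(\bigcap_d (f^*(m_k)M+m_n^dM)\bigr)/f^*(m_k)M$, and Krull's intersection theorem fails for $\cE_n$ (flat functions, $m_n^\infty\neq 0$), which is precisely the case this corollary is designed to cover. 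The missing step is the paper's Nakayama argument: set $A:=f^*(m_k)M+m_n^{d_0}M$ and $B:=f^*(m_k)M$; the stabilization gives $A=B+m_nA$, and since $A$ is a finite $\cE_n$-module, Nakayama yields $A=B$, i.e.\ $m_n^{d_0}M\subset f^*(m_k)M$, after which your identification of the two quotients goes through. Without this step the proof of (4)$\Rightarrow$(2) is incomplete.
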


\begin{proof} 
The rings 
%
$\cE_n$ and $\widehat{\cE_n}$ define categories of smooth objects.
Then by  Malgrange preparation and Nakayama's lemma we get the equivalences 
$
(1)\Leftrightarrow (2)$ and $(3)\Leftrightarrow (4).$

$(2)\Rightarrow (4)$. There exists a natural epimorphism 
$$ 
M/({m_k}\cdot M) \to \widehat{M}/({m_k}\cdot \widehat{M})=\lim_{\leftarrow} M/({m_k}\cdot M+m_n^d\cdot M).
$$

$(4)\Rightarrow (2)$. By the assumption, ${a_1},\ldots,{a_r}$ generate the finite-dimensional vector space 
$$
\widehat{M}/({m_k}\cdot \widehat{M})=\lim_{\leftarrow} M/({m_k}\cdot M+m_n^d\cdot M).
$$ 
Then there exists $d_0$ such that for $d\geq d_0$ 
we have
the natural  epimorphism 
$$
M/({m_k}\cdot M+m_n^d\cdot M)\to M/({m_k}\cdot M+m_n^{d_0}\cdot M),
$$ 
which
implies that 
$$
{m_k}\cdot M+m_n^{d_0}\cdot M={m_k}\cdot M+m_n^{d_0+1}\cdot M=\ldots.
$$
Let  
$$
A:={m_k}\cdot M+m_n^{d_0}\cdot M,\quad  B:={m_k}\cdot M.
$$
Then by the above
$$
A=B+m_n\cdot A,
$$
which by Nakayama's lemma yields $A=B$, that is, 
$$
{m_k}\cdot M={m_k}\cdot M+m_n^{d_0}\cdot M,
$$
and consequently 
%
$$
\widehat{M}/({m_k}\cdot \widehat{M})=\lim_{\leftarrow} M/({m_k}\cdot M+m_n^d\cdot M)= 
M/({m_k}\cdot M).
$$
\end{proof}

\section{Diagrams of initial exponents}

\subsection{Weierstrass-Hironaka division for formal analytic functions}
\label{one}

Consider the ring $$K[[u]]=K[[u_1,\ldots,u_n]]$$ of formal power series over any field  $K$. The monomials $u^\alpha$ can be naturally identified with the elements of $\NN^n$, where $\NN$ denotes the set of natural numbers and zero.
For any  nonzero function $f\in  K[[u]]$, $f=\sum c_\alpha u^{\alpha}$,
 define the   \emph{support} of $f$ to be 
$$
\supp(f):=\{\alpha\in \NN^n\mid c_\alpha\neq 0\}.
$$ 
 By the \emph{differential support} of $f$ we mean
 $$
\supd(f):=\{\alpha \mid D_{u^\alpha}(f)\neq 0\},
$$
 where $D_{u^\alpha}=\frac{1}{\alpha!}\frac{\partial}{\partial{u}^\alpha}$. Note that the latter makes sense in positive characteristic and is called the \emph{Hasse derivative.}

This notion of differential support can be extended to regular functions on a smooth variety. It also better reflects properties of the functions. It  is coherent and is not defined pointwise like  support, thus  allowing one to control  singularities in a neighborhood.

\begin{example} 
If $\operatorname{char}(K)=0$ and $f=u^d$ is a function on $\bA^1$ then $\supp(f)= \{d\}$ 
%
at $0$, 
while $\supd(f)=\{0,1,\ldots,d\}$. However, in the neighborhood of $0$, $f=(u+t)^d=u^d+tu^{d-1}+\ldots+t^d$ has the same  support and differential support. If $\operatorname{char}(K)=p$ and  $f=u^p$ then $\supp(f)= \{p\}$ at $0$, while $\supp(f)=\{0,p\}$ for $u\neq 0$, and   $\supd(f)=\{0,p\}$.
\end{example}

The following lemma is an  immediate consequence of the definition:

 \begin{lemma}\begin{enumerate} \label{support}
 \item $\supp(f)\subseteq {\supd}(f)$.
 \item If $\alpha\in {\supd}(f)$ then $\alpha+\beta\in {\supp}(f)$ for some $\beta\in \NN^n$. 
 
  \end{enumerate} 
 \end{lemma}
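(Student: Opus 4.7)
The plan is to prove both parts by a direct computation with the Hasse derivative on monomials, since everything reduces to the identity
\[
D_{u^\alpha}(u^\beta) \;=\; \binom{\beta}{\alpha}\, u^{\beta-\alpha} \quad \text{if } \beta \geq \alpha \text{ componentwise,}
\]
and $0$ otherwise (where $\binom{\beta}{\alpha} = \prod_i \binom{\beta_i}{\alpha_i}$). This formula holds in arbitrary characteristic for the Hasse derivative, and specializes to the usual $\frac{1}{\alpha!}\partial^\alpha$ in characteristic zero. Granting this, writing $f = \sum_\beta c_\beta u^\beta$ and applying $D_{u^\alpha}$ termwise gives
\[
D_{u^\alpha}(f) \;=\; \sum_{\beta \geq \alpha} c_\beta \binom{\beta}{\alpha}\, u^{\beta-\alpha},
\]
which will be the single formula used for both parts.

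For part (1), I would take $\alpha \in \supp(f)$, so $c_\alpha \neq 0$, and read off the constant term (i.e.\ the coefficient of $u^0$) of $D_{u^\alpha}(f)$ from the displayed expression: it is $c_\alpha \binom{\alpha}{\alpha} = c_\alpha \neq 0$. Hence $D_{u^\alpha}(f) \neq 0$, so $\alpha \in \supd(f)$.

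For part (2), I would take $\alpha \in \supd(f)$, so $D_{u^\alpha}(f) \neq 0$. From the same displayed expansion, there must exist some $\gamma \in \NN^n$ for which the coefficient $c_{\alpha+\gamma}\binom{\alpha+\gamma}{\alpha}$ of $u^\gamma$ is nonzero. In particular $c_{\alpha+\gamma} \neq 0$, so $\alpha+\gamma \in \supp(f)$, giving the desired $\beta := \gamma \in \NN^n$.

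There is no real obstacle here; the only point requiring care is the positive-characteristic case, where one must use the Hasse derivative (as the paper explicitly does) so that the binomial coefficient $\binom{\alpha}{\alpha} = 1$ never vanishes, keeping the argument uniform in all characteristics. No reference to earlier results in the paper is needed.
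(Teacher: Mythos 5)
Your proof is correct and is exactly the computation the paper has in mind: the lemma is stated there as an immediate consequence of the definition (no proof is given), and the termwise Hasse-derivative formula $D_{u^\alpha}(u^\beta)=\binom{\beta}{\alpha}u^{\beta-\alpha}$ with $\binom{\alpha}{\alpha}=1$ in all characteristics is precisely the point that makes it immediate.
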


 The definition immediately yields:
 
\begin{lemma} 
Let $X$ be a smooth scheme over a field $K$ (respectively an analytic or differentiable manifold), with a coordinate system $u_1,\ldots,u_n$, and let $\Gamma\subset \NN^n$. Then the sheaf 
$$
\cO_X^\Gamma:=\{f\in\cO_X \mid \supd(f)\subset \Gamma\}=\{f\in\cO_X \mid D_{u^\alpha}(f)\equiv 0, \alpha\not\in \Gamma\}\subset \cO_X
$$ 
is a subsheaf of groups of $\cO_X$.	
\end{lemma}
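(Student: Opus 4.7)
The plan is to verify the two claims separately: first the equality of the two descriptions of $\cO_X^\Gamma$, and second that this subset forms a subsheaf of abelian groups.

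For the equality, this is immediate from the definition of $\supd$. By construction, $\supd(f) = \{\alpha \in \NN^n \mid D_{u^\alpha}(f) \neq 0\}$, so the condition $\supd(f) \subset \Gamma$ is literally the same as saying $D_{u^\alpha}(f) \equiv 0$ for every $\alpha \notin \Gamma$. Nothing further needs to be checked.

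To see that $\cO_X^\Gamma$ is a subsheaf of groups, I would use the second description, which expresses the condition as the simultaneous vanishing of a family of sections $\{D_{u^\alpha}(f) : \alpha \notin \Gamma\}$ of $\cO_X$. Since each Hasse derivative $D_{u^\alpha}\colon \cO_X \to \cO_X$ is a $K$-linear sheaf homomorphism, its kernel is a subsheaf of abelian groups of $\cO_X$, and $\cO_X^\Gamma$ is the intersection $\bigcap_{\alpha \notin \Gamma} \ker(D_{u^\alpha})$. An intersection of subsheaves of abelian groups is a subsheaf of abelian groups, so this concludes the argument. Concretely: on any open $U \subset X$, the set $\cO_X^\Gamma(U)$ contains $0$, and is closed under addition and negation because each $D_{u^\alpha}$ is additive; and if a section $f \in \cO_X(U)$ has $f|_{U_i} \in \cO_X^\Gamma(U_i)$ for an open cover $\{U_i\}$, then $D_{u^\alpha}(f)|_{U_i} = 0$ for each $i$ and each $\alpha \notin \Gamma$, so $D_{u^\alpha}(f) = 0$ on $U$ by the sheaf axiom for $\cO_X$, giving $f \in \cO_X^\Gamma(U)$.

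There is no real obstacle here; the only thing worth flagging is that in positive characteristic one must use the Hasse derivatives $D_{u^\alpha}$ (as already introduced), not the ordinary $\partial^{|\alpha|}/\partial u^\alpha$, so that the operator is well-defined and $K$-linear on $\cO_X$ in all the geometric categories considered (algebraic, analytic, differentiable). Once this is granted, the statement is a formal consequence of the linearity and locality of differential operators.
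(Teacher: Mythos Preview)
Your proof is correct and matches the paper's approach: the paper simply states that the lemma follows immediately from the definition (``The definition immediately yields:'') and gives no further argument, and you have spelled out exactly the obvious details behind that claim.
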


 For any $n$-tuple $\alpha =(a_1,\ldots,a_n)$ of nonnegative integers set $|\alpha|:=a_1+\ldots+a_n$.
Then the \emph{multiplicity} of $f=\sum c_\alpha u^{\alpha}$ is defined as
$$
\ord(f)= \min \{|\alpha| \mid c_\alpha\neq 0\}.
$$
It follows immediately from the definition that
$$
\ord(f_1\cdot f_2)\geq \ord(f_1)+\ord(f_2), \quad \ord(u^\alpha\cdot f)=|\alpha|
%
+\ord(f).
$$
Any ordered set of  exponents $\alpha^1,\ldots,\alpha^k \in \NN^n$  defines a unique decomposition of 
$$ 
\NN^n= \Gamma\cup \Delta_1\cup\ldots\cup\Delta_k,
$$
where 
%
\begin{align*}
\Delta_1 &:=a_1+\NN^n,\ldots,  \Delta_j:=a_j+\NN^n\setminus \bigcup_{i=1}^{j-1} a_i+\NN^n=a_j+\NN^n\setminus \bigcup_{i=1}^{j-1} \Delta_i,
\\
\Gamma=\Gamma_0 &:=\NN^n\setminus \bigcup_{i=1}^k a_i+\NN^n=a_i+\NN^n\setminus \bigcup_{i=1}^k \Delta_i,\quad \Delta=\bigcup_{i=1}^k \Delta_i=\bigcup_{i=1}^k a_i+\NN^n
\end{align*}
For 
$i=1,\ldots, k$, 
$\Gamma_i:=
%
\Delta_1
-\alpha^i\subset \NN^n$ is defined to be the set satisfying  $$\Delta_i=a_i+\Gamma_i,$$ 
Note that $\Gamma\cup \bigcup_{i=1}^j \Delta_i=\Gamma + \Delta= \NN^n$ and  $\Delta+{\NN^n}\subseteq \Delta$.
\begin{definition} We call $\Delta=\bigcup_{i=1}^k a_i+\NN^n$ {\it the diagram}  defined for the set $\{a_1,\ldots,a_k\}$.
	
\end{definition}

\begin{lemma} 
If $\alpha \not\in \Gamma_i$ then $\alpha+\beta\not \in \Gamma_i$ for all $\beta\in \NN^n$. In particular the following conditions are equivalent:
\begin{enumerate}
\item $\supp(f)\subset \Gamma_i$.
\item $\supd(f)\subset \Gamma_i$.
\end{enumerate}
\end{lemma}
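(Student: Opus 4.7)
My plan is to unpack the definition of $\Gamma_i$ into an explicit set-theoretic description in terms of the componentwise partial order on $\NN^n$, and then both claims become essentially trivial. Explicitly, since $\Gamma_i \subset \NN^n$ is characterized by $\Delta_i = \alpha^i + \Gamma_i$, and since $\Delta_i = (\alpha^i+\NN^n) \setminus \bigcup_{j<i}(\alpha^j+\NN^n)$, one checks
\[
\Gamma_i = \bigl\{\gamma\in\NN^n \,\big|\, \alpha^j \not\le \alpha^i+\gamma \text{ componentwise, for every } j<i\bigr\}.
\]

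To prove the first claim, suppose $\alpha\notin \Gamma_i$. By the description above, there exists some $j<i$ with $\alpha^j \le \alpha^i+\alpha$ componentwise. For any $\beta\in\NN^n$ we then have $\alpha^j \le \alpha^i+\alpha+\beta$, which shows $\alpha+\beta\notin \Gamma_i$. This is the only real content of the lemma, and it is immediate from the description of $\Gamma_i$; no obstacles to anticipate.

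For the equivalence, the implication $(2)\Rightarrow(1)$ is an instant consequence of Lemma \ref{support}(1), which gives $\supp(f)\subseteq \supd(f)$. For $(1)\Rightarrow(2)$, assume $\supp(f)\subset \Gamma_i$ and let $\alpha\in \supd(f)$. By Lemma \ref{support}(2) there is $\beta\in\NN^n$ with $\alpha+\beta\in \supp(f)\subset \Gamma_i$. The contrapositive of the first claim, which says $\alpha+\beta\in \Gamma_i$ for some $\beta$ forces $\alpha\in \Gamma_i$, now yields $\alpha\in \Gamma_i$. Hence $\supd(f)\subset \Gamma_i$, finishing the proof.
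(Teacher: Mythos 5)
Your proof is correct and follows essentially the same route as the paper: both reduce membership in $\Gamma_i$ to the condition that $\alpha^i+\gamma$ avoids $\bigcup_{j<i}(\alpha^j+\NN^n)$, observe this condition is stable under adding $\beta\in\NN^n$, and then deduce the equivalence of (1) and (2) from the two parts of Lemma \ref{support}. No gaps.
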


\begin{proof} 
The condition  $\alpha \not\in \Gamma_i$ is equivalent to  $\alpha+\alpha_i\not\in \Delta_i=(\alpha_i+\NN^n)\setminus \bigcup_{j<i} (\alpha_j+\NN^n)$. The latter can be stated as $\alpha+\alpha_i\in \bigcup_{j<i} (\alpha_j+\NN^n)$. Obviously if $\alpha+\alpha_i\in \bigcup_{j<i} (\alpha_j+\NN^n)$ then $\alpha+\beta +\alpha_i\in \bigcup_{j<i} (\alpha_j+\NN^n)$.

Now suppose $\supp(f)\subset \Gamma_i$ and  $\alpha \in \supd(f)$. This implies that   $\alpha +\beta \in \supp(f)\subset \Gamma_i$ for some $\beta\in \NN^n$ and  $\alpha \in \Gamma_i$. Consequently, 	$\supd(f)\subset \Gamma_i$. The other implication is obvious: $\supp(f)\subseteq  \supd(f)\subset \Gamma_i$.
 \end{proof}

\begin{definition}
We call a linear form   
$$
L=a_1x_1+\ldots +a_nx_n: \RR^n\to \RR
$$
 \emph{positive} (respectively \emph{nonnegative}) if $a_i>0$ (resp. $a_i\geq 0$) for all~$i$.
Any $k$-tuple   $\overline{T}=(T_1,\ldots,T_k)$ of  nonnegative linear forms is called \emph{positive} if for any $a\in \NN^n\subset \RR^n$ there exists at least one $i$ such that $T_i(a)>0$.

Any positive $k$-tuple $\overline{T}$ defines the  
\emph{monomial grading} $\overline{T}: \NN^n\to \RR^k$, of $\NN^n$ and thus a (partial) \emph{monomial order} on $\NN^n$ induced by the lexicographic order on $\RR^n$:
$$
\alpha\leq_{\overline{T}}\beta\quad \mbox{ if}\quad \overline{T}(\alpha)\leq_{\rm lex} \overline{T}(\beta).
$$
We shall call this grading \emph{total} if $\overline{T}:\NN^n\to \RR^k$ is injective. $\overline{T}$ will be called \emph{normalized} if  $T_1=x_1+\ldots+x_n$.
\end{definition}

 The definition  immediately yields
\begin{lemma}\label{infinite}
\begin{enumerate}
\item If $\overline{T}$ is positive then for 
 any increasing sequence $\overline{T}(\alpha_1)<\overline{T}(\alpha_2)<\ldots$, the sequence of $|\alpha_i|$, where $i=1,2,\ldots$, diverges to infinity, and consequenly $\{\overline{T}(\alpha_i)\}\subset \RR^n$ is not bounded.
 
And  vice versa, if  $\{\overline{T}(\alpha_i)\}$ is bounded then $(\alpha_i)$ is finite.

\item If $\overline{T}$ is total, the order defined by $\overline{T}$ on $\NN^k$ is total.
\end{enumerate}
\end{lemma}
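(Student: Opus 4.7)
\textbf{Proof plan for Lemma \ref{infinite}.}

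The plan is to reduce both parts to the classical fact that a positive linear form on $\NN^n$ controls the total degree $|\alpha|$. For (1), I will first observe that the sum
\[
T := T_1 + \cdots + T_k = \sum_{j=1}^n b_j x_j, \qquad b_j := \sum_{i=1}^k T_i(e_j),
\]
is itself a nonnegative linear form, and moreover each $b_j$ is strictly positive: applying the positivity of $\overline{T}$ to the standard basis vector $e_j \in \NN^n$ forces some $T_i(e_j)>0$, so $b_j>0$. Hence $T$ is a genuine positive linear form in the classical sense, and there is a constant $c := \min_j b_j > 0$ with $T(\alpha) \geq c\,|\alpha|$ for every $\alpha \in \NN^n$.

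Now consider a strictly lex-increasing sequence $\overline{T}(\alpha_1) <_{\lex} \overline{T}(\alpha_2) < \cdots$. Since the values $\overline{T}(\alpha_i)$ are distinct, so are the $\alpha_i$. If $|\alpha_i|$ failed to diverge, then some subsequence would satisfy $|\alpha_{i_j}| \leq C$; but $\{\alpha \in \NN^n : |\alpha| \leq C\}$ is finite, contradicting distinctness. Hence $|\alpha_i|\to \infty$, and consequently $T(\alpha_i) \geq c|\alpha_i| \to \infty$. Since the sum of coordinates of $\overline{T}(\alpha_i)$ equals $T(\alpha_i)$, the set $\{\overline{T}(\alpha_i)\}\subset \RR^k$ is unbounded. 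Conversely, if $\{\overline{T}(\alpha_i)\}$ is bounded in $\RR^k$, each coordinate $T_j(\alpha_i)$ is bounded, hence so is $T(\alpha_i)$, and $c|\alpha_i|\le T(\alpha_i)$ forces $\{\alpha_i\}$ to lie in the finite set $\{\alpha:|\alpha|\le C'\}$.

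For (2), if $\overline{T}$ is total, i.e.\ $\overline{T}:\NN^n\to \RR^k$ is injective, then for any $\alpha\ne\beta$ in $\NN^n$ we have $\overline{T}(\alpha)\ne \overline{T}(\beta)$ in $\RR^k$, so the two values are comparable under the lexicographic order on $\RR^k$. Thus either $\alpha <_{\overline{T}}\beta$ or $\beta <_{\overline{T}}\alpha$, proving that the induced order on $\NN^n$ is total.

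There is no real obstacle here; the only substantive point is recognizing that joint positivity of the tuple $\overline{T}$, combined with the fact that $\NN^n$ is generated as a monoid by the basis vectors $e_j$, transfers positivity from the tuple to the single form $T_1+\cdots+T_k$. Once that observation is made, the remaining arguments are finiteness of lattice points in a bounded region and the definition of injectivity.
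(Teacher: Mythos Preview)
Your proof is correct and follows essentially the same elementary route as the paper: positivity of the tuple forces the degrees $|\alpha_i|$ to escape any bounded region of lattice points, and part (2) is immediate from injectivity plus the fact that lex on $\RR^k$ is a total order. Your device of passing to the single positive form $T=T_1+\cdots+T_k$ makes the control $T(\alpha)\ge c|\alpha|$ explicit, which is cleaner than the paper's one-line sketch but not a different idea.
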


\begin{proof} 
It follows that there exists $i$ such that the $i$-th component $T_i(\alpha_j)$ diverges to  infinity, as 
does    $\alpha_i$.
	\end{proof}

For any $f\in R[[u]]$, 
%
$f=\sum c_\alpha u^{\alpha}$, we call 
$$
\beta=\expe_{\overline{T}}(f)
$$ 
its \emph{initial exponent} if $\beta=\min_{\overline{T}}(\supp(f))$ is a unique minimal element with respect to the $\overline{T}$-order.
Also define the \emph{$\overline{T}$-multiplicity} of any $f=\sum c_\alpha u^{\alpha}$ to be  
$$
\ord_{\overline{T}}(f)=\min\{{\overline{T}}(\beta)\mid\beta \in \supp(f)\}.
$$
Observe that we have

\begin{lemma}
\begin{enumerate}
\item If   $\overline{T}$  is normalized then $\ord_{\overline{T}}(f)\leq \ord_{\overline{T}}(g)$ implies 
$\ord(f)\leq \ord(g)$. In particular if $\alpha=\expe_{\overline{T}}(f)$ then $\ord(f)=|\alpha|$.
\item If $T$ is total then $\expe_{\overline{T}}(f)$ exists for any $f$.
\end{enumerate}
\end{lemma}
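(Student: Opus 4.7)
The plan is to reduce both parts to elementary properties of lexicographic order together with the fact that, for each $j$, the image $T_j(\NN^n) \subset \RR_{\geq 0}$ is a well-ordered subset. The latter holds because $T_j = \sum c_{ji} x_i$ with $c_{ji} \geq 0$, so any bound $T_j(\alpha) \leq C$ constrains each $\alpha_i$ with $c_{ji} > 0$ to lie in $[0, C/c_{ji}]$, leaving only finitely many possible values of $T_j(\alpha)$ below $C$; hence every nonempty subset of $T_j(\NN^n)$ has a minimum.

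For part (1), the key observation is that lex order on $\RR^k$ compares first coordinates before all else. So if $\beta^{\ast} \in \supp(f)$ realizes $\min_{\overline{T}}$, then necessarily $T_1(\beta^{\ast}) = \min\{T_1(\beta) : \beta \in \supp(f)\}$, since any $\beta'$ with strictly smaller $T_1$-value would satisfy $\overline{T}(\beta') <_{\rm lex} \overline{T}(\beta^{\ast})$. Under the normalization $T_1 = x_1 + \cdots + x_n$, this minimum equals $\ord(f)$, so the first coordinate of $\ord_{\overline{T}}(f)$ is $\ord(f)$; similarly for $g$. A lex inequality between the two vectors immediately yields $\ord(f) \leq \ord(g)$ by reading off first coordinates. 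The ``in particular'' claim follows since $\alpha = \expe_{\overline{T}}(f)$ realizes the minimum, giving $|\alpha| = T_1(\alpha) = \ord(f)$.

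For part (2), I would construct the initial exponent directly by a cascade on the coordinates of $\overline{T}$. Given nonempty $\supp(f)$, set $\ell_1 := \min T_1(\supp(f))$ (which exists by well-orderedness of $T_1(\NN^n)$), then $S_1 := \{\alpha \in \supp(f) : T_1(\alpha) = \ell_1\}$, and inductively define $\ell_j := \min T_j(S_{j-1})$ (existing by well-orderedness of $T_j(\NN^n)$) and $S_j := \{\alpha \in S_{j-1} : T_j(\alpha) = \ell_j\}$. Each $S_j$ is nonempty. By construction, $\overline{T}(\alpha) \geq_{\rm lex} (\ell_1, \ldots, \ell_k)$ for every $\alpha \in \supp(f)$, with equality exactly on $S_k$; hence $S_k$ is precisely the set of $\overline{T}$-minimizers of $\supp(f)$. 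Totality of $\overline{T}$, i.e.\ injectivity of $\overline{T} : \NN^n \to \RR^k$, forces $|S_k| = 1$, and the unique element of $S_k$ is $\expe_{\overline{T}}(f)$.

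The main conceptual point, and the only place one might stumble, is recognizing that a naive attempt to bound $\{\alpha : \overline{T}(\alpha) \leq_{\rm lex} \overline{T}(\alpha_0)\}$ in magnitude must fail, since a lex upper bound imposes no direct constraint past the first coordinate at which strict inequality can hold (witness $n=k=2$, $T_1=x_1$, $T_2=x_2$, $\alpha_0=(1,0)$, where all $(0,a_2)$ with $a_2 \in \NN$ are lex-below $\alpha_0$). The cascade sidesteps this issue by exploiting well-orderedness of each $T_j(\NN^n)$ one coordinate at a time, which is morally a refinement of Lemma~\ref{infinite}(1). Otherwise the proof is a routine unwinding of definitions.
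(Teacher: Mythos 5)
Your proof is correct and complete. The paper states this lemma with no proof at all (it follows the phrase ``Observe that we have''), so there is no argument to compare against; your write-up supplies exactly the verification the author treats as immediate. Part (1) is the right one-line observation that a lex inequality forces an inequality of first coordinates, and the first coordinate of $\ord_{\overline{T}}$ is $\ord$ when $T_1=x_1+\cdots+x_n$. For part (2), the coordinate-by-coordinate cascade through the well-ordered value sets $T_j(\NN^n)$ is the correct way to extract a lex-minimum from a possibly infinite $\supp(f)$, and your closing remark is a genuinely useful caution: the ``vice versa'' clause of Lemma~\ref{infinite} concerns sets bounded in $\RR^k$, not lex-bounded sets, so one cannot argue that $\{\alpha:\overline{T}(\alpha)\leq_{\lex}\overline{T}(\alpha_0)\}$ is finite — your example $(T_1,T_2)=(x_1,x_2)$ shows it need not be. Injectivity then gives uniqueness of the minimizer, as required by the paper's definition of $\expe_{\overline{T}}$.
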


The following theorem extends  Weierstrass-Hironaka formal division in  Grauert-Galligo form to any monomial order.

\begin{theorem}[Weierstrass-Hironaka formal division theorem \cite{AHV}, \cite{Galligo}, \cite{BM2}] \label{formal} 
Consider any monomial order defined by a positive $r$-tuple  $\overline{T}=(T_1,\ldots,T_r)$  on $\NN^n$.
Let

$f_1,\ldots, f_k\in  K[[u,v]]=K[[u_1,\ldots,u_n,v_1,\ldots,v_m]] $ 
be formal analytic functions.  
Assume there exist exponents  
$$
\alpha_1:=\expe_{\overline{T}}(f_1(u,0)),\ldots, \alpha_r:=\expe_{\overline{T}}({f_r(u,0)}) \in  \NN^n.
$$
%
Let $\Delta$ be the  diagram  defined for the exponents $\alpha_1,\ldots, \alpha_k \in  \NN^n$.
Then for every $g\in  K[[u,v]]$, there exist  unique $h_i\in  K[[u,v]] $ and $r(g)\in  K[[u,v]] $ such that  ${\supd}(h_i)\subset \Gamma_i$, 
${\supd}(r(g))\subset  \Gamma$, and
$$
g=\sum h_if_i+ r(g).
$$
Moreover, if $\ord(f_i)=|\alpha_i|$ for any $i$ then
 $$
\ord(r(g))\geq \ord(g),\quad \ord(h_i)\geq \ord(g)- |\alpha_i|.
$$ 
\end{theorem}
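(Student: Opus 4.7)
My plan follows the classical Grauert--Hironaka--Galligo strategy: uniqueness by a leading-exponent argument, existence by an iterative scheme whose convergence is controlled by positivity of $\overline{T}$. After rescaling so the coefficient of $u^{\alpha_i}$ in $f_i(u,0)$ is $1$, I extend $\overline{T}$ from $\NN^n$ to a positive (and, after refinement, total) monomial order $\overline{T}'$ on $\NN^{n+m}$ chosen so that $(\alpha_i,0)$ becomes the $\overline{T}'$-initial exponent of each $f_i\in K[[u,v]]$; for instance
$$\overline{T}'(\delta,\beta):=\bigl(T_1(\delta)+M|\beta|,\,T_2(\delta),\ldots,T_r(\delta),\,\mathrm{lex}(\beta)\bigr),\qquad M>\max_i T_1(\alpha_i).$$
Uniqueness is then immediate: if $\sum h_if_i+r=0$ with $\supd(h_i)\subset\Gamma_i$, $\supd(r)\subset\Gamma$, and not all zero, then the $\overline{T}'$-initial of each nonzero $h_if_i$ equals $\expe_{\overline{T}'}(h_i)+(\alpha_i,0)$, so its $u$-component lies in $\Gamma_i+\alpha_i=\Delta_i$, while the $u$-component of the initial of $r$ lies in $\Gamma$. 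Pairwise disjointness of $\Gamma,\Delta_1,\ldots,\Delta_k$ rules out any cancellation of $\overline{T}'$-minimal terms, a contradiction.

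For existence I iterate. Let $\pi_\Gamma,\pi_{\Delta_i}\colon K[[u,v]]\to K[[u,v]]$ be the projections that retain only monomials whose $u$-exponent lies in $\Gamma$, resp.\ $\Delta_i$. Since $\Delta_i=\alpha_i+\Gamma_i$, the operator $L_i(g):=\pi_{\Delta_i}(g)/u^{\alpha_i}$ lands in the subspace of series with $u$-support in $\Gamma_i$. Writing $\tilde f_i:=f_i-u^{\alpha_i}$, the tautology
$$g=\pi_\Gamma(g)+\sum_i L_i(g)\,f_i-\sum_i L_i(g)\,\tilde f_i$$
defines the recursion $g_0:=g$, $r^{(s)}:=\pi_\Gamma(g_s)$, $h_i^{(s)}:=L_i(g_s)$, $g_{s+1}:=-\sum_i L_i(g_s)\tilde f_i$, and I set $h_i:=\sum_s h_i^{(s)}$, $r:=\sum_s r^{(s)}$.

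The main obstacle is proving these sums converge in $K[[u,v]]$. The key point is that $\overline{T}'$ was arranged so that $\tilde f_i$ has $\overline{T}'$-initial strictly greater than $(\alpha_i,0)$; combined with $\expe_{\overline{T}'}(L_i(g_s))=\expe_{\overline{T}'}(\pi_{\Delta_i}(g_s))-(\alpha_i,0)$ and $\expe_{\overline{T}'}(\pi_{\Delta_i}(g_s))\geq\expe_{\overline{T}'}(g_s)$, this yields $\expe_{\overline{T}'}(g_{s+1})>\expe_{\overline{T}'}(g_s)$ whenever $g_s\neq 0$. Positivity of $\overline{T}'$ (Lemma \ref{infinite}) then forces these initials to diverge, and, since only finitely many exponents lie below any given threshold, for each $(\delta,\beta)$ only finitely many iterations contribute to the coefficient of $u^\delta v^\beta$ in $h_i$ or $r$. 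The $u$-support conditions pass to the limits by construction, and are upgraded from $\supp$ to $\supd$ by the preceding lemma. The \emph{moreover} clause then follows by degree bookkeeping through the same recursion: under $\ord(f_i)=|\alpha_i|$ one has $\tilde f_i\in(u,v)^{|\alpha_i|}$, so $\ord(g_s)\geq\ord(g)$ for every $s$, yielding $\ord(h_i)\geq\ord(g)-|\alpha_i|$ and $\ord(r)\geq\ord(g)$ in the limit.
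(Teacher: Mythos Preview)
Your proof is correct and follows essentially the same route as the paper's: after normalizing the leading coefficients and extending $\overline{T}$ to $\NN^{n+m}$, the paper defines the linear map $\Phi(g)=\sum h_if_i+r(g)$ (where $g=\sum h_iu^{\alpha_i}+r(g)$ is the monomial decomposition) and inverts it via the Neumann series $\Phi^{-1}=I-U+U^2-\cdots$ with $U(g)=\sum h_i(f_i-u^{\alpha_i})$; your recursion $g_{s+1}=-\sum_iL_i(g_s)\tilde f_i$ is exactly $g_{s}=(-U)^s(g)$, and the convergence argument via strict $\overline{T}'$-increase is the same. The only cosmetic differences are that you spell out the extension $\overline{T}'$ and the uniqueness argument more explicitly than the paper does.
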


\begin{proof}  First note that the order $\overline{T}$ defined on $\NN^n$ extends to an order on $\NN^{n+k}$.
 
 We can assume that the coefficient 
$c_{i\alpha_i}$ of $f_i$ is  $1$, by replacing  $f_i$ with $c_{i\alpha_i}^{-1}f_i$ if necessary.

Observe that any function $g\in  K[[u]]$ can be uniquely written as $g=\sum h_iu^{\alpha_i}+r(g)$, where $\supd(h_i)\subset \Gamma_i $ and $\supd(r(g))\subset \Gamma_i$.  Define a $K$-linear transformation $\Phi:  K[[u]] \to  K[[u]]$ as 
$$
g=\sum h_iu^{\alpha_i}+r(g)\mapsto  \Phi(g)= \sum h_if_i+r(g).
$$

We show that $\Phi$ is invertible. We can write $\Phi=I+U$, where 
$$
U(g)= \sum h_i(f_i-u^\alpha_i)
$$
with
$$
\ord_{\overline{T}}(f_i-u^\alpha_i)>\ord_{\overline{T}}(u^{\alpha_i}),
\quad
\ord_{\overline{T}}(g)\leq \ord_{\overline{T}}(h_iu^{\alpha_i}),\quad  \ord_{\overline{T}}(g)\leq\ord_{\overline{T}}(r(f)).
$$
This implies that 
%
$\ord_{\overline{T}}(U(g))
>\ord_{\overline{T}}(g)$, and we get an increasing sequence $\ord_{\overline{T}}(U^i(g))$, which implies, by  Lemma \ref{infinite}, that $\ord(U^i(g))\to +\infty$, and  $\Phi^{-1}=I-U+U^2+ \ldots$ is well defined for any $g$. Also $\ord_{\overline{T}}(\Phi(g))=\ord_{\overline{T}}(g)$, and if 
 $g=\sum h_iu^{\alpha_i}+r(g)$ then 
$$
\ord_{\overline{T}}(g)= \min\{\ord_{\overline{T}}(h_i)+\ord_{\overline{T}}(\alpha_i)\}.
$$
Analogous considerations imply the ``moreover'' part of the theorem.
\end{proof}

\subsection{Diagrams of initial exponents}

Let $\cI$ be any ideal in $K[[x_1,\ldots,x_n]]$ (or  any homogeneous ideal in $K[x_1\ldots,x_n]$). Consider a total normalized  order $\overline{T}$ on $\NN^n$. 
Then 
 the corresponding 
%
diagram 
$$
\Delta=\Delta(\cI)=\expe_{\overline{T}}(\cI):=\{\expe_{\overline{T}}(f)\mid f\in\cI\}\subset \NN^n
$$  
will be called the \emph{diagram of initial exponents} of $\cI$ with respect to $\overline{T}$.
 Again we see that $\exp(\cI)+{\NN^n}\subseteq \exp(\cI)$ and $\exp(\cI)$ is finitely generated in the sense that $\exp(\cI)=\bigcup_{i=1}^n \{\alpha^i+\NN^n\}$ for a certain finite set of exponents $\alpha_1,\ldots, \alpha_k$ called \emph{vertices} of  $\exp(\cI)$, characterized by the property 
$$
\alpha_i\not \in \Delta\setminus ({\alpha_i}+\NN^n).
$$

\begin{lemma}[\cite{BM2}]
If  $f_i\in \cI$ for which $\exp(f_i)=\alpha_i$ are vertices of $\exp(\cI)$   then the elements  $f_i$ generate $\cI$.
Moreover, there is a uniquely determined set of generators $\overline{f_i}:=x^{\alpha_i}+r_i$,  called\emph{the Hironaka standard basis},  such that
$\supp(r_i)$ contained in $\Delta$ and $\mon(\overline{f_i})=x^{\alpha_i}$.
\end{lemma}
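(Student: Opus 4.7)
The plan is to deduce both assertions directly from the Weierstrass–Hironaka formal division theorem (Theorem~\ref{formal}) applied to $f_1,\ldots,f_k$. Since the $\alpha_i=\exp(f_i)$ are precisely the vertices of $\exp(\cI)$, the combinatorial diagram produced from these exponents agrees with $\Delta$, and its complement is $\Gamma$. For every $g\in K[[x_1,\ldots,x_n]]$ the division theorem then yields
$$g=\sum_{i=1}^k h_i f_i+r,\qquad \supd(h_i)\subset \Gamma_i,\ \supd(r)\subset \Gamma.$$

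To show that the $f_i$ generate $\cI$, I would apply this division to an arbitrary $g\in\cI$; then $r=g-\sum h_i f_i$ again lies in $\cI$, and if $r\neq 0$ its initial exponent $\exp(r)$ would have to belong simultaneously to $\Delta$ (because $r\in\cI$) and to $\Gamma\supseteq\supp(r)$, which is impossible. Hence $r=0$ and $g\in (f_1,\ldots,f_k)$.

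For existence of the standard basis I would normalize each $f_i$ so that the coefficient of $x^{\alpha_i}$ equals $1$, write $f_i=x^{\alpha_i}+s_i$, and divide $s_i$ by $(f_1,\ldots,f_k)$ to obtain $s_i=\sum_j h_{ij}f_j+r_i$ with $\supd(r_i)\subset \Gamma$. Setting $\overline{f_i}:=f_i-\sum_j h_{ij}f_j=x^{\alpha_i}+r_i\in\cI$ produces an element of the required form: since $x^{\alpha_i}$ is the only monomial of $\overline{f_i}$ lying in $\Delta$ and any nonzero element of $\cI$ has its initial exponent in $\Delta$, necessarily $\exp(\overline{f_i})=\alpha_i$, i.e.\ $\mon(\overline{f_i})=x^{\alpha_i}$. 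Uniqueness follows from the same cancellation argument used in the generation step: if $x^{\alpha_i}+r_i$ and $x^{\alpha_i}+r_i'$ both lie in $\cI$ with $r_i,r_i'$ supported in $\Gamma$, then $r_i-r_i'\in\cI$ has support in $\Gamma$ and must therefore vanish.

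There is no serious obstacle in this approach; the entire proof is bookkeeping around the disjointness $\Gamma\cap\Delta=\emptyset$ together with the basic fact that every nonzero element of $\cI$ has its initial exponent inside $\Delta$. The only mild subtlety is that invoking Theorem~\ref{formal} requires $\overline{T}$ to be total, so that $\exp(r)$ is well-defined for every nonzero $r$, which is part of the standing hypothesis on the monomial order.
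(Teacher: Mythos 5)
Your proof is correct and follows essentially the same route as the paper: divide by the $f_i$ via Theorem~\ref{formal} and use that a nonzero remainder in $\cI$ would have initial exponent in $\Delta\cap\Gamma=\emptyset$ (your phrasing via $\exp(r)$ is in fact slightly cleaner than the paper's "$\supp(r(g))\in\exp(\cI)$"). The only cosmetic difference is that the paper obtains $\overline{f_i}$ by dividing the monomial $x^{\alpha_i}$ itself rather than $s_i=f_i-x^{\alpha_i}$, which by linearity and uniqueness of the division yields the same element.
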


\begin{proof}  By  Hironaka formal division, for any  $g \in \cI$ we can write $g=\sum h_if_i+ r(g)$. Since $g, f_i \in \cI$, we get
 $r(g)\in\cI$ and consequently 
%
$\supp(r(g))\in \exp(\cI)= \Delta$. But again by the Hironaka division theorem, $\supp(r(g))\in \Gamma$. Both conditions imply that 
%
$\supp(r(g))\subset \Gamma\cap \Delta=\emptyset$. 
Thus $r(g)$ is constant and eventually $0$, and $g=\sum h_if_i$. 

For the second part apply the formal division algorithm directly to the set of functions $x^{\alpha_i}$ to get functions $r_i$ with $\supp(r_i)\subset \Gamma$.
Set $\overline{f_i}:=x^{\alpha_i}+r_i$.

\end{proof}

Denote by $H_\cI$ the Hilbert-Samuel function of $K[[x_1,\ldots,x_n]]/\cI$, defined as  
%
$$
H_\cI(s)=\dim_K(K[[x_1,\ldots,x_n]]/(\cI+m^{s+1})), \quad s\in \NN.
$$
Similarly for any diagram $\Delta$ we set 
$$
H(\Delta)(s):=\{\alpha\not\in \Delta\mid |\alpha|\leq s\}.
$$

\begin{corollary}  
There is a natural isomorphism of vector spaces over $K$   given by remainder  and preserving filtration by $(m^s)$, where $m=(x_1,\ldots,x_n)$,
%
$$
\overline{r}: K[[x_1,\ldots,x_n]]^{\Gamma}=\{f\in K[[x_1,\ldots,x_n]]\mid \supp(f)\subset \Gamma\}\to  K[[x_1,\ldots,x_n]]/\cI.
$$
In particular the following functions are equal:
$$
H_\cI=H(\Delta(\cI)).
$$
\end{corollary}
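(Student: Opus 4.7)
The plan is to use the Weierstrass--Hironaka formal division theorem (Theorem~\ref{formal}) as the main tool. Let $\overline{f}_1,\ldots,\overline{f}_k$ be the Hironaka standard basis supplied by the preceding lemma, with initial exponents $\alpha_i$ the vertices of $\Delta(\cI)$ and with $\ord(\overline{f}_i)=|\alpha_i|$ (since $\overline{f}_i=x^{\alpha_i}+r_i$ and $\supp(r_i)\subset \Gamma$ shows $\ord(r_i)\geq |\alpha_i|$). For every $g\in K[[x_1,\ldots,x_n]]$, division produces a unique decomposition $g=\sum h_i\overline{f}_i+r(g)$ with $\supp(r(g))\subset\Gamma$. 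Define $\overline{r}$ as the composition $K[[x]]^{\Gamma}\hookrightarrow K[[x]]\twoheadrightarrow K[[x]]/\cI$; uniqueness of division makes $g\mapsto r(g)$ $K$-linear and shows that $\overline{r}$ has the inverse $[g]\mapsto r(g)$, provided bijectivity holds.

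Next, check that $\overline{r}$ is a bijection. Surjectivity: for every $g\in K[[x]]$ one has $[g]=[r(g)]$ in $K[[x]]/\cI$ because $\sum h_i\overline{f}_i\in\cI$, and $r(g)\in K[[x]]^{\Gamma}$. Injectivity: it suffices to show $K[[x]]^{\Gamma}\cap\cI=\{0\}$. Indeed, if $r$ were a nonzero element of this intersection, then $\expe_{\overline{T}}(r)\in \Delta(\cI)=\Delta$; but $\supp(r)\subset\Gamma$ forces $\expe_{\overline{T}}(r)\in\Gamma$, contradicting $\Gamma\cap\Delta=\emptyset$.

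To get the filtration statement, invoke the ``moreover'' part of Theorem~\ref{formal}: since $\ord(\overline{f}_i)=|\alpha_i|$, any $g$ with $\ord(g)\geq s+1$ has $\ord(r(g))\geq s+1$. Hence $g\mapsto r(g)$ maps $m^{s+1}$ into $K[[x]]^{\Gamma}\cap m^{s+1}$, while the reverse inclusion is trivial. Therefore $\overline{r}$ descends to an isomorphism
$$
K[[x]]^{\Gamma}\big/\bigl(K[[x]]^{\Gamma}\cap m^{s+1}\bigr)\;\cong\; K[[x]]\big/(\cI+m^{s+1}).
$$
A $K$-basis of the left-hand side is $\{x^{\alpha}:\alpha\in\Gamma,\ |\alpha|\leq s\}$, whose cardinality is exactly $H(\Delta)(s)$ since $\Gamma=\NN^n\setminus\Delta$; this yields $H_{\cI}(s)=H(\Delta)(s)$.

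The one point requiring care is the filtration step: one genuinely needs a standard basis whose orders match the total degrees of its initial exponents so that the ``moreover'' clause of the division theorem applies and division cannot decrease $m$-adic order. With that in hand, the natural isomorphism is automatic and the Hilbert--Samuel identity follows by counting monomials outside $\Delta$ of degree at most $s$.
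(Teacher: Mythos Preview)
Your approach is exactly the (implicit) one the paper has in mind: the corollary is stated without proof there, as an immediate consequence of the preceding lemma on the Hironaka standard basis together with Theorem~\ref{formal}. The structure of your argument---division produces a remainder map inverse to $\overline r$, injectivity comes from $\Gamma\cap\Delta=\emptyset$, and the filtration statement uses the ``moreover'' clause---is correct.

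There is one slip in justification. You write that ``$\supp(r_i)\subset\Gamma$ shows $\ord(r_i)\ge|\alpha_i|$,'' but this implication is false in general: $\Gamma$ typically contains exponents of arbitrarily small degree (for instance $0\in\Gamma$ whenever $0\notin\Delta$). The correct reason that $\ord(\overline f_i)=|\alpha_i|$ is already recorded in the lemma: $\mon_{\overline T}(\overline f_i)=x^{\alpha_i}$, and since $\overline T$ is normalized this forces every $\beta\in\supp(\overline f_i)$ to satisfy $|\beta|\ge|\alpha_i|$. Equivalently, in the construction $r_i$ arises as (minus) the remainder of $x^{\alpha_i}$ under division by the $f_j$, which themselves satisfy $\ord(f_j)=|\alpha_j|$ because $\exp_{\overline T}(f_j)=\alpha_j$ and $\overline T$ is normalized; the ``moreover'' clause of Theorem~\ref{formal} then gives $\ord(r_i)\ge|\alpha_i|$ directly. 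With this correction the rest of your filtration argument is fine.
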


\subsection{Diagrams of finite type}

\begin{definition}[\cite{Galligo}, \cite{Hir2}, \cite{BM2}, \cite{BM22}]
 A  diagram  $\Delta$ of initial exponents is  \emph{monotone} if  for any $i<j$ and any element  $\alpha=(\alpha_1,\ldots,\alpha_i,\ldots,\alpha_j,\ldots,\alpha_n)\in \Delta$  we have that the element
$
R_{ij}(\alpha):=(\alpha_1,\ldots, \alpha_i+\alpha_j,\ldots, 
%
0_{j},
\ldots, \alpha_n)$ is in $\Delta.
$
\end{definition}

Monotone diagrams were introduced by Hironaka in his proof of the Henselian division theorem \cite{Hir2}. They also played an important role in the
 Bierstone-Milman proof of the Hironaka strong desingularization theorem \cite{BM2}. The monotonicity of  diagrams will be considered as an analog to the regularity condition in the Weierstrass preparation and division theorems.
As observed by Weierstrass, by the generic change of coordinates  one can make any analytic function into a $d$-regular one. A similar approach was used by Galligo and Grauert who  proved the analogous result for  diagrams in generic coordinates  \cite{Galligo}.
One should mention that the initial condition for the ``generic diagrams'' considered by Grauert and Galligo was stronger than the Hironaka condition above, but was valid only in characteristic zero. Still the Galligo argument works for monotone diagrams in any characteristic.
 
\begin{definition} 
We shall call a monomial order $\overline{T}$ \emph{monotone} if it is total,  normalized  and $\overline{T}(R_{ij}(\alpha))\leq \overline{T}(\alpha)$ for any $\alpha\in \NN^n$ and $i<j$.
\end{definition}

\begin{example} 
It follows from the definition that the monomial order
$$
\overline{T}=(x_1+\ldots +x_n,x_2+\ldots +x_n,x_3+\ldots +x_n,\ldots,x_n)
$$
is total, normalized and monotone.
\end{example}

Let $\cI$ be any ideal in $\cE_n$ (or  any homogeneous ideal in $K[x_1\ldots,x_n]$). For a coordinate system $u_1,\ldots, u_n\in \cE_n$ and a total monotone monomial order $\overline{T}$ consider
 the corresponding diagram 
$$
\Delta=\Delta(\cI)=\expe_{\overline{T}}(\cI)=\{\expe_{\overline{T}}(f)\mid f\in\cI\}.
$$ 
 Then $\Delta(\cI)$ clearly  
 depends upon the choice of the coordinate system.
One can represent the diagram as an infinite sequence $\alpha(\Delta)=(\alpha_1,\ldots, \alpha_k,\ldots)$ of all its elements $\alpha_i\in \Delta$ put in ascending order.
We can then order the set of all possible diagrams corresponding to the ideal $\cI$ by introducing the lexicographic order on the set of values $\alpha(\Delta)$.
Fix a total, normalized and monotone order $\overline{T}$.

\begin{theorem}[Galligo-Grauert](\cite{Galligo})
If $K$ is an infinite field then
the minimal diagram $\Delta(\cI)$ corresponding to $\cI$ with respect to a monotone order and a generic coordinate system is unique and  monotone.\qed
\end{theorem}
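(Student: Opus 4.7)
The argument follows the classical scheme of Galligo and Grauert. Parametrize linear changes of coordinates at the origin by $g\in GL_n(K)$, and for each $g$ consider the diagram $\Delta_g:=\Delta(g^*\cI)$. Since the Hilbert--Samuel function is invariant under a linear isomorphism and, by the corollary preceding the theorem, equals $H(\Delta_g)$, all diagrams $\Delta_g$ share the same Hilbert function. In particular, for each $d$ there are only finitely many exponents of total degree $\le d$ in any $\Delta_g$, and only finitely many diagrams with this Hilbert function arise. Thus the lex order on the sorted sequence $\alpha(\Delta)$ is a well-order on the set of candidates, and there exists a unique minimum $\Delta_{\min}$.

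Next, I would show the locus $\{g\in GL_n(K):\Delta_g=\Delta_{\min}\}$ is Zariski open and nonempty. The condition that a fixed exponent $\beta$ belong to $\Delta_g$ is, for each $g$, a condition of the form ``some explicit polynomial in the entries of $g$ (a coefficient in the $\overline{T}$-truncation of $g^*f$ for $f$ in a basis of $\cI$) is nonzero'', with preceding coefficients vanishing. Intersecting over the initial segment of $\Delta_{\min}$ up to any fixed degree yields a Zariski open subset of $GL_n$. Because $K$ is infinite and because the corresponding locus over $\overline{K}$ is nonempty (it is obtained by moving to a smaller diagram if possible), this open set is nonempty in $GL_n(K)$. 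Any two $g,g'$ in this open set thus produce the same diagram, proving uniqueness.

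For monotonicity, I would argue by contradiction: suppose $\Delta_{\min}$ is not monotone, and let $\alpha\in\Delta_{\min}$ be $\overline{T}$-minimal such that $R_{ij}(\alpha)\notin\Delta_{\min}$ for some $i<j$ with $\alpha_j>0$. Fix a generic $g_0$ achieving $\Delta_{\min}$, pick $f\in g_0^*\cI$ with $\expe_{\overline{T}}(f)=\alpha$, and apply the elementary unipotent change $\psi_t:u_j\mapsto u_j+tu_i$. Binomial expansion of each monomial $c_\beta u^\beta$ in $f$ yields terms $c_\beta\binom{\beta_j}{k}t^ku^{\beta_{(k)}}$, where $\beta_{(k)}$ transfers $k$ units from position $j$ to position $i$. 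The total-degree normalization and monotonicity of $\overline{T}$ give $\overline{T}(\beta_{(k)})\le\overline{T}(\beta)$, while the leading term $\beta=\alpha$, $k=\alpha_j$ contributes the exponent $R_{ij}(\alpha)<\alpha$ with nonzero coefficient $c_\alpha t^{\alpha_j}$ for $t\ne 0$. Hence $R_{ij}(\alpha)\in\Delta_{g_0\psi_t}$, so the sorted sequence of $\Delta_{g_0\psi_t}$ becomes lex-smaller than that of $\Delta_{\min}$, contradicting minimality.

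The main obstacle is the bookkeeping in the last step: namely, verifying that $\psi_t$ does not destroy the initial segment of $\Delta_{\min}$ below $\alpha$, so that the new diagram is genuinely lex-smaller and not merely different. This is handled by the minimality of $\alpha$, which guarantees that every $\mu\in\Delta_{\min}$ with $\mu<\alpha$ is monotone (all $R_{ij}(\mu)\in\Delta_{\min}$), together with the standard basis description of $g_0^*\cI$ afforded by the Weierstrass--Hironaka formal division theorem already proved above; this lets one track how each $\mu$ transforms under $\psi_t$ and confirm that the first place where the sorted sequences of $\Delta_{g_0\psi_t}$ and $\Delta_{\min}$ differ is at $\alpha$, where the new value is strictly smaller.
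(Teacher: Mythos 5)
Your proposal is correct and follows exactly the route the paper intends: the paper offers no argument of its own beyond "the proof in Galligo can be easily adapted to monotone diagrams," and your sketch is precisely that adaptation — semicontinuity of the diagram over $GL_n$, openness and nonemptiness of the minimal-diagram locus over an infinite field, and the unipotent substitution $u_j\mapsto u_j+tu_i$ forcing $R_{ij}(\alpha)$ into the diagram (with the coefficient of $t^{\alpha_j}$ equal to $c_\alpha\neq 0$, so the argument survives positive characteristic). The bookkeeping you flag at the end — that the initial segment below $\alpha$ is preserved, so the new diagram is genuinely lex-smaller — is indeed where the substance of Galligo's proof lies, and your proposed handling via minimality of $\alpha$ and formal division is the standard and correct way to close it.
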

\begin{proof} The proof in \cite{Galligo} can be easily adapted to monotone diagrams.

\end{proof}

\begin{corollary}\label{etale} 
Let $K$ be any field. Then there exists a finite separable extension $K'$ of $K$ and a coordinate change defined  over $K'$ such that $\Delta(\cI)$ is monotone.
\end{corollary}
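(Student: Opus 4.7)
The plan is to reduce to the Galligo-Grauert theorem over infinite fields by a Galois descent argument. First, I extend scalars to the separable closure $K^{\operatorname{sep}}$, which is infinite. By Galligo-Grauert, there is a unique minimal monotone diagram $\Delta_0 = \Delta(\cI \otimes_K K^{\operatorname{sep}})$, realized precisely for coordinate changes lying in a nonempty Zariski open subset $U \subset \operatorname{GL}_n$ over $K^{\operatorname{sep}}$.

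Next, I would show that $U$ descends to a nonempty open subscheme of $\operatorname{GL}_n$ over $K$. The uniqueness clause in Galligo-Grauert forces $\Delta_0$ to be $\operatorname{Gal}(K^{\operatorname{sep}}/K)$-invariant. The defining conditions for ``the diagram obtained from a coordinate change $A$ equals $\Delta_0$'' are polynomial inequalities in the entries of $A$, with coefficients coming from the Taylor expansions of a fixed $K$-generating set of $\cI$. Hence $U$ is cut out by polynomial equations and inequalities with $K$-coefficients, and so descends to a nonempty open subscheme of $\operatorname{GL}_n$ over $K$.

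Once descent is established, finding the required coordinate change is routine: any nonempty open subset of $\operatorname{GL}_n$ over $K$ is smooth over $K$, and hence admits a $K^{\operatorname{sep}}$-point $A$. Its finitely many entries generate a finite separable subextension $K' \subset K^{\operatorname{sep}}$ of $K$, so $A \in U(K')$. Applying $A$ as a coordinate change over $K'$ yields $\Delta(\cI \otimes_K K') = \Delta_0$, which is monotone by construction. (The diagram depends only on the support of a formal standard basis, which is unchanged by enlarging the scalar field.)

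The main obstacle is verifying descent of $U$ to $K$, i.e.\ that the ``good coordinate changes'' form a variety defined by $K$-polynomials rather than only by $K^{\operatorname{sep}}$-polynomials. One route is to inspect the Galligo-Grauert proof directly and note that the genericity conditions on the coordinate change are expressible in terms of finitely many non-vanishing minors built from the coefficients of a $K$-standard basis; a slicker route is to invoke Galois invariance of $\Delta_0$, deducing that $U(K^{\operatorname{sep}})$ is Galois-stable and hence arises from a $K$-subscheme of $\operatorname{GL}_n$. Either way, once descent is in place, producing a $K'$-point is a standard argument.
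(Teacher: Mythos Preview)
Your proposal is correct, but the Galois descent argument is unnecessary and the paper's proof is much shorter. The paper simply observes: the separable closure $K^s$ is infinite, so Galligo--Grauert applies over $K^s$ and produces a coordinate change $A\in\operatorname{GL}_n(K^s)$ making the diagram monotone; the finitely many entries of $A$ then lie in some finite separable subextension $K'\subset K^s$, and you are done.

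The extra step you take---descending the Zariski-open locus $U\subset\operatorname{GL}_n$ of ``good'' coordinate changes to $K$---buys nothing here. You already know $U$ is nonempty over the infinite field $K^{\operatorname{sep}}$, so it has a $K^{\operatorname{sep}}$-point directly; there is no need to first descend $U$ to $K$ and then invoke smoothness to find such a point. Both your argument and the paper's implicitly use the same auxiliary fact (which you state explicitly): the diagram of initial exponents is insensitive to enlarging the scalar field, so a coordinate change that works over $K^{\operatorname{sep}}$ still works over the intermediate field $K'$. Your route is valid but circuitous; the paper's two-line version is all that is required.
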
 

\begin{proof} 
Let $K^s$ be a separable closure of $K$. Then $K^s$ is infinite and we can apply the previous theorem.
 \end{proof}


The definition of monotone diagram is closely related  to a generic linear transformation which takes an element $x^ay^b$ to the polynomial with initial exponent  $x^{a+b}$. It gives however unnecessary constraints on the diagrams. 

 \begin{definition} \label{S1}
A  diagram  $\Delta$ of initial exponents is  \emph{of finite type} if  for any $i<j$ and any element  $\alpha=(\alpha_1,\ldots,\alpha_i,\ldots,\alpha_j,\ldots,\alpha_n)\in \Delta$  there exists an element 
$$
S_\alpha(\alpha):=(\alpha_1,\ldots, \alpha_i+\alpha'_i,\ldots, 0_j,\ldots, \alpha_n)\in \Delta
$$ 
for some $\alpha'_i\in \NN$.
A diagram is \emph{finite} if  $\Gamma=\NN^n\setminus\Delta$ is finite.
\end{definition}

It follows from the definition that finite  and monotone diagrams are of   of finite type. However, not all finite diagrams are monotone.

Now, for any $i\leq n$ let us identify the set $\NN^i$ with the subset $\NN^i\times \{0\}$ of  $\NN^n$ of elements with the last $n-i$ components  zero. Similarly by $\NN^{*i,n}$ will mean the subset $\{0\} \times \NN^i$ of $\NN^n$ of elements with the first $n-i$ components zero.

 For any $i<j\leq n$ denote by $\pi_{ji}$ the natural projection $\pi_{ji}: \NN^j\to \NN^i$.
It follows from the definition that 
$$
\pi_{ji}\pi_{kj}=\pi_{ki}.
$$

\begin{lemma}
\begin{enumerate} \label{c0}
\item  If $\Delta\subset \NN^n$ is of finite type  then so is $\Delta\cap \NN^i\times\{0\}\subset \NN^i\times\{0\}$. 
\item  If $\Delta\subset \NN^n$ is of finite type  then so is  $\pi_i(\Delta)\subset \NN^i\times\{0\}$.
\end{enumerate}
\end{lemma}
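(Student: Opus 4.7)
The proof of both parts should follow directly from the defining property of finite type (Definition \ref{S1}), applied to the original diagram $\Delta \subset \NN^n$ and combined with the trivial observation that the finite-type ``swap'' operation $S$ only modifies two coordinates $\alpha_i, \alpha_j$ with $i < j$ of its argument. So if we take $j \le i_0$ (where $i_0$ is the subscript in part (1) or (2)), the operation preserves the location of the zero coordinates past position $i_0$.

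For part (1), the plan is: fix $\alpha \in \Delta \cap (\NN^{i_0} \times \{0\}^{n-i_0})$ and indices $k < l \leq i_0$. Since $\alpha \in \Delta$, the finite-type property of $\Delta$ in $\NN^n$ produces some $\alpha'_k \in \NN$ and the element $S(\alpha) := (\alpha_1, \ldots, \alpha_k + \alpha'_k, \ldots, 0_l, \ldots, \alpha_n) \in \Delta$. Because $l \le i_0$, the modification only affects coordinates $\le i_0$, so the last $n - i_0$ coordinates of $S(\alpha)$ remain zero. Hence $S(\alpha) \in \Delta \cap (\NN^{i_0} \times \{0\}^{n-i_0})$, which is exactly the finite-type condition for the restricted diagram.

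For part (2), fix $\gamma \in \pi_{n i_0}(\Delta)$ with $\gamma = \pi_{n i_0}(\alpha)$ for some $\alpha \in \Delta$, and indices $k < l \leq i_0$. Applying finite type to $\alpha$ in $\Delta$ gives $S(\alpha) \in \Delta$ of the form above. Taking $\pi_{n i_0}$ of both sides yields $\pi_{n i_0}(S(\alpha)) = (\gamma_1, \ldots, \gamma_k + \alpha'_k, \ldots, 0_l, \ldots, \gamma_{i_0}, 0, \ldots, 0) \in \pi_{n i_0}(\Delta)$, which is the required element witnessing finite type for $\pi_{n i_0}(\Delta)$.

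No step here is really an obstacle; the whole argument is a two-line check that the operation $S_\alpha$ commutes with intersection with $\NN^{i_0} \times \{0\}$ in case (1) and with projection $\pi_{n i_0}$ in case (2), provided the pair of indices $(k,l)$ we act on lies within the first $i_0$ coordinates. The only minor thing to be careful about is not to confuse the ambient index $n$ with the target index $i_0$ when invoking the definition, i.e.\ to make sure the indices $k < l$ we use satisfy $l \le i_0 \le n$, so that the finite-type property for $\Delta$ applies and the output lands in the subdiagram.
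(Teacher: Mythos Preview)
Your proof is correct and is precisely the direct verification from Definition~\ref{S1} that the paper has in mind; the paper's own proof consists solely of the sentence ``The properties are simple consequences of the definition.'' You have simply written out the two-line check that the paper omits.
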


\begin{proof} 
The  properties are simple consequences of the definition.
  \end{proof}

Let $\Gamma^0:=\Gamma=\NN^n\setminus \Delta$. Set $\overline{\Gamma}^i:=\NN^i\setminus \pi_{ni}(\Delta)$ and $
 {\Gamma}^i:=\overline{\Gamma}^i\times \NN^{n-i}\subset \Gamma $. 
This defines the natural filtration of  sets
$$
\Gamma^0:=\emptyset\subset \Gamma^1\subset\ldots\subset\Gamma^i\subset\ldots\subset  \Gamma^n=\Gamma.
$$
Then 
%
$$
\Gamma^{i}\setminus \Gamma^{i-1}=(\overline{\Gamma_i}\setminus 
(\overline{\Gamma_{i-1}}\times \NN))\times \NN^{n-i}=A_i\times \NN^{n-i},
$$
where the set 
%
$$
A_i:=\overline{\Gamma_i}\setminus (\overline{\Gamma_{i-1}}\times \NN)
=\pi_{i,i-1}^{-1}(\pi_{n,{i-1}}(\Delta) \setminus  \pi_{ni}(\Delta)
$$
is a subset of  $\NN^i\setminus \NN^{i-1}$.

Here is a slight enhancement of an important observation of Hironaka.
 
\begin{lemma}[\cite{Hir2}]\label{c1}  
For any diagram $\Delta$ there is a finite decomposition

$$
\Gamma=\bigcup_{i=0}^n A_i\times \NN^{n-i},
$$ 
where $A_i\subset \NN^i\setminus \NN^{i-1}$.  Moreover  $\Delta$ is of finite type if and only if   all $A_i$ are finite.
\end{lemma}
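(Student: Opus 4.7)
The first assertion is essentially a bookkeeping identity for the filtration $\Gamma^0\subset\cdots\subset\Gamma^n=\Gamma$ already introduced. From the definitions one checks that $\Gamma^i=\overline{\Gamma}_i\times\NN^{n-i}$ is contained in $\Gamma$ (because $\pi_{ni}(\Delta)+\NN^i\subseteq\pi_{ni}(\Delta)$), that $\Gamma^n=\Gamma$, and that $\overline{\Gamma}_{i-1}\times\NN\subseteq\overline{\Gamma}_i$. Hence
\[
\Gamma^i\setminus\Gamma^{i-1}=\bigl(\overline{\Gamma}_i\setminus(\overline{\Gamma}_{i-1}\times\NN)\bigr)\times\NN^{n-i}=A_i\times\NN^{n-i},
\]
and telescoping gives the disjoint union $\Gamma=\bigsqcup_{i=0}^n A_i\times\NN^{n-i}$.

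For the equivalence I first reduce finiteness of $A_i$ to finiteness of
\[
B_i:=\{\alpha\in\pi_{n,i-1}(\Delta)\mid (\alpha,0)\notin\pi_{ni}(\Delta)\}\subseteq\NN^{i-1}.
\]
Since $\pi_{i,i-1}\colon\pi_{ni}(\Delta)\twoheadrightarrow\pi_{n,i-1}(\Delta)$ is surjective with each fibre a half-line $\{k\geq m(\alpha)\}\subseteq\NN$ of finite minimum, one has $|A_i|=\sum_{\alpha\in\pi_{n,i-1}(\Delta)}m(\alpha)$, so $A_i$ is finite if and only if $m$ vanishes off a finite set, i.e.\ $B_i$ is finite. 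The case $i=1$ is trivial since $\pi_{n,1}(\Delta)$ is either empty or of the form $\{k\geq a\}$, so $A_1$ is automatically finite for nonempty $\Delta$; the real work is for $i\geq 2$.

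\textbf{Forward direction.} Assume $\Delta$ is of finite type and let $V$ be its (finite, by Dickson's lemma) vertex set. A vertex $v\in V$ with $v_i=0$ contributes a cone $\pi_{n,i-1}(v)+\NN^{i-1}$ lying entirely in the complement of $B_i$ inside $\pi_{n,i-1}(\Delta)$, hence nothing to $B_i$. For a vertex $v$ with $v_i>0$ I exploit finite type iteratively: for each ``sink coordinate'' $i'\in\{1,\ldots,i-1\}$, apply $S$ successively with index pairs $(i',n),(i',n-1),\ldots,(i',i)$ starting from $v$; each step zeroes out one more coordinate $\geq i$ while only increasing the coordinate $i'$ and leaving the others fixed. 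The result is an element $v^{(i')}\in\Delta$ with $v^{(i')}_k=0$ for $k\geq i$, $v^{(i')}_{i'}=:M_{i'}\geq v_{i'}$, and $v^{(i')}_j=v_j$ for $j<i$, $j\neq i'$. A vertex $w^{(i')}\in V$ with $w^{(i')}\leq v^{(i')}$ satisfies $w^{(i')}_i=0$, so $\pi_{n,i-1}(w^{(i')})+\NN^{i-1}$ is disjoint from $B_i$. Any point $\alpha\in(\pi_{n,i-1}(v)+\NN^{i-1})\cap B_i$ must avoid each such cone, which forces $v_{i'}\leq\alpha_{i'}<w^{(i')}_{i'}\leq M_{i'}$ for every $i'\in\{1,\ldots,i-1\}$ --- a bounded box, hence finite. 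Summing over the finitely many vertices with $v_i>0$ shows $B_i$ is finite.

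\textbf{Reverse direction (contrapositive), and main obstacle.} If $\Delta$ is not of finite type, there exist $\alpha\in\Delta$ and $i<j$ with $\beta_m:=(\alpha_1,\ldots,\alpha_i+m,\ldots,0_j,\ldots,\alpha_n)\notin\Delta$ for every $m\in\NN$. Each $\beta_m\in\Gamma$ lies in a unique piece $A_{k(m)}\times\NN^{n-k(m)}$. For $k<i$ the truncation $\pi_{nk}(\beta_m)=\pi_{nk}(\alpha)$ lies in $\pi_{nk}(\Delta)$, so $k(m)\geq i$. Pigeonholing on the finite set $\{i,\ldots,n\}$ yields some $k^*$ attained for infinitely many $m$; for these $m$ the truncations $\pi_{nk^*}(\beta_m)$ are mutually distinct, since they differ in the $i$-th coordinate $\alpha_i+m$ and $i\leq k^*$, so $A_{k^*}$ is infinite. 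The decomposition and this reverse direction are essentially bookkeeping plus pigeonhole; the substantive step, and the main obstacle, is the forward direction, where the \emph{pointwise} finite-type axiom must be converted into the \emph{global} finiteness of $B_i$. The key device is to iterate $S$ in a chosen sink coordinate $i'$ and then \emph{vary} $i'$ through $\{1,\ldots,i-1\}$ so that the intersection of the complements of the resulting cones is a bounded box; finiteness of $V$ then closes the count.
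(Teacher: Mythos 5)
Your proof is correct and takes essentially the same route as the paper: the decomposition is the same telescoping of the filtration $\Gamma^0\subset\cdots\subset\Gamma^n$, the forward direction traps the relevant complement over each vertex cone in a finite box produced by the finite-type operation $S$, and the reverse direction is the same pigeonhole on the finitely many pieces of the decomposition. The only (cosmetic) difference is organizational: you iterate $S$ through the coordinates $n,n-1,\ldots,i$ directly in $\NN^n$ and pass through the auxiliary set $B_i$, whereas the paper first projects to $\NN^i$, uses Lemma \ref{c0} to reduce to the codimension-one case, and then applies $S$ once per coordinate.
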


\begin{proof}
By the above, 
$$
\bigcup_{i=0}^n A_i\times \NN^{n-i}=\bigcup_{i=0}^n\Gamma^i\setminus \Gamma^{i-1}=\Gamma^n=\Gamma.
$$
Suppose now $\Delta$ is of finite type.
 It suffices to show the conclusion for $i=n-1$ and $\pi_i=\pi$. Then since $\pi_i(\Delta)\in \NN^i$ is of finite type, we reduce the case of $\pi_i$ to the situation of codimension one.  Denote $\overline{\Delta}_s:=\alpha^s+\NN^n\subset \Delta$. Then $\pi(\Delta)=\bigcup \pi(\overline{\Delta_s})$.
We show that 

$(\pi(\overline{\Delta_s})\times \NN)\setminus \Delta$ is finite. 
Then the set 

$$
A_n= \pi^{-1}\pi(\Delta) \setminus \Delta=\bigcup_s ((\pi(\overline{\Delta_s})\times \NN)\setminus \Delta) \subset \NN^n\setminus \Delta=\Gamma 
$$ 
is finite as well. To this end,  write  $\alpha^s=(\alpha_1,\ldots,\alpha_n)$ and $\pi(\alpha^s)=(\alpha_1,\ldots,\alpha_{n-1},0)$.
If $\Delta$ is of finite type then for any $i=1,\ldots,n-1$, 
$$
\pi(\alpha^s)+\alpha'_ie_i =(\alpha_1,\ldots,\alpha_i+\alpha'_i, \alpha_{n-1},0)
$$ 
are in $\Delta$ for suitable $\alpha'_i$. In other words, 

$(\pi(\overline{\Delta_s})\times \NN)\setminus \Delta$ contains only vectors of the form $\gamma=\pi(\alpha^s)+(\beta_1,\ldots,\beta_n)$, where $\beta_i<\alpha'_i$ for $i=1,\ldots,n-1$ and $\beta_n<\alpha_n$. This implies the finiteness of each set 

$(\pi(\overline{\Delta_s})\times \NN)\setminus \Delta$.

Conversely, assume all $A_i$ are finite. Let $\alpha=(\alpha_1,\ldots,\alpha_i,\ldots,\alpha_j,\ldots,\alpha_n)\in \Delta$, and suppose all $(\alpha_1,\ldots,\alpha_i+k,\ldots,0_j,\ldots,\alpha_n)$ are in $\Gamma=\bigcup_{i=0}^n A_i\times \NN^{n-i}$ for any natural $k$. Then it follows from finiteness of $A_i$ that for sufficiently large $k$ the elements $(\alpha_1,\ldots,\alpha_i+k,\ldots,0_j,\ldots,\alpha_n)$ are in $A_{i_0}\times \NN^{n-j}$ for $i_0<i$. But then $(\alpha_1,\ldots,\alpha_{i_0},0\ldots,0)\in A_j$  and $\alpha\in A_j\times \NN^{n-j}\subset \Gamma$,  which contradicts  the assumption.
\end{proof}

\subsection{Decomposition of  diagrams of finite type}\label{D}

Our next goal will be to find a similar finite decomposition for the set $\Delta$.
For that purpose consider the reverse lexicographic order $<_r$ on $\NN^n$, that is, the one corresponding to the linear map 
$$
\overline{T}_r=(x_n,x_{n-1},\ldots,x_1).
$$
Note that if $\alpha \in \NN^{i+1}\setminus \NN^i$ and $\beta \in \NN^{j+1}\setminus \NN^j$ with $i<j$ then $\alpha<_r\beta$.

\begin{remark}
We are going to use the reverse lexicographic order only to determine the desired subdivision of $\Delta$ and not to determine the initial exponents of functions. 
\end{remark}

\begin{lemma}
If $\Delta=\bigcup^k_{s=1} \Delta_s$ is of finite type then for any $ r\leq k$ the diagram $\Delta^s=\bigcup^s_{r=1} \Delta_r$ is also of finite type.
\end{lemma}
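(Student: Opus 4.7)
I interpret $\Delta=\bigcup_{s=1}^k\Delta_s$ as the decomposition indexed by the vertices $\alpha_1,\ldots,\alpha_k$ of $\Delta$ enumerated in the reverse lexicographic order $\alpha_1<_r\cdots<_r\alpha_k$ (the ordering introduced in the paragraph preceding the lemma), so that $\Delta_s=\alpha_s+\NN^n$ and $\Delta^r=\bigcup_{s=1}^r(\alpha_s+\NN^n)$. My plan is to verify the finite-type condition of Definition \ref{S1} for $\Delta^r$ directly, using reverse lex to pass from the hypothesis on $\Delta$ back to $\Delta^r$.

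Fix $\alpha\in\Delta^r$ and indices $i<j$. Choose $s_0\leq r$ with $\alpha\in\alpha_{s_0}+\NN^n$, so that $(\alpha_{s_0})_l\leq\alpha_l$ for every $l$. If $(\alpha_{s_0})_j=0$, taking $\alpha_i'=0$ gives $\beta:=(\ldots,\alpha_i,\ldots,0_j,\ldots,\alpha_n)\in\alpha_{s_0}+\NN^n\subseteq\Delta^r$ immediately. Otherwise I apply the finite-type property of $\Delta$ not to $\alpha$ but to the vertex $\alpha_{s_0}$ itself, at the pair $(i,j)$: this yields $\alpha_i'\in\NN$ and a vertex $\alpha_t$ of $\Delta$ such that
$$(\ldots,(\alpha_{s_0})_i+\alpha_i',\ldots,0_j,\ldots,(\alpha_{s_0})_n)\in\alpha_t+\NN^n.$$
Unpacking, $(\alpha_t)_j=0$, $(\alpha_t)_l\leq(\alpha_{s_0})_l$ for $l\neq i,j$, and $(\alpha_t)_i\leq(\alpha_{s_0})_i+\alpha_i'\leq\alpha_i+\alpha_i'$. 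Combined with $(\alpha_{s_0})_l\leq\alpha_l$, these inequalities place the same $\beta$ inside $\alpha_t+\NN^n$.

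The heart of the proof is then the claim $\alpha_t<_r\alpha_{s_0}$, for this gives $t<s_0\leq r$ and hence $\beta\in\alpha_t+\NN^n\subseteq\Delta^r$, finishing the verification. Reverse lex reads coordinates from index $n$ downward; for every $l>j$ one has $l\neq i$ (as $i<j<l$), so $(\alpha_t)_l\leq(\alpha_{s_0})_l$. Either some such inequality is strict---giving $\alpha_t<_r\alpha_{s_0}$ at once---or equality holds throughout $\{j+1,\ldots,n\}$, in which case the comparison proceeds to $l=j$ and the strict inequality $0=(\alpha_t)_j<(\alpha_{s_0})_j$ again forces $\alpha_t<_r\alpha_{s_0}$. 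The main obstacle, and the only delicate point, is recognising that the finite-type hypothesis must be applied to $\alpha_{s_0}$ rather than $\alpha$: only this synchronises the componentwise data on indices $l>j$ with the reverse-lex comparison, and only this guarantees that the auxiliary vertex $\alpha_t$ lands strictly below $\alpha_{s_0}$ in $<_r$ and hence inside $\Delta^r$.
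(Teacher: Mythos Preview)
Your proof is correct and follows essentially the same route as the paper's: both arguments use the reverse lexicographic comparison of vertices to show that the witness for finite type lands in a piece $\alpha_t+\NN^n$ with $t\leq s_0$. The paper's proof compresses this into the chain $\alpha_{s'}\leq S_\alpha(\alpha_s)\leq_r\alpha_s$, which (once unwound) is exactly your move of applying the finite-type hypothesis to the \emph{vertex} $\alpha_{s_0}$ rather than to $\alpha$ itself; you have made this crucial point explicit where the paper leaves it implicit in the notation.
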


\begin{proof}  Note that if $\alpha\in \Delta_s=\alpha^s+\NN^n$ then $S_\alpha(\alpha)<_r\alpha$.
 On the other hand,  $S_\alpha(\alpha)\in \Delta_{s'}=\alpha_{s'}+\NN^n$, where $\alpha_s'\leq S_\alpha(\alpha_s)\leq_r \alpha_s$. Finally, $S_\alpha(\alpha)\in \Delta^{s'}\subset \Delta^s$. 
\end{proof}

Using the reverse lexicographic order, we rewrite the  sequence of vertices of  the  diagram $\Delta$ of finite type as 
$$
\alpha_1<\alpha_{2,1}<\ldots <\alpha_{2,k_2}<\alpha_{3,1}<\ldots< \alpha_{r,k_r}=\alpha_k,
$$
where $\alpha_{i,j}\in \NN^i\setminus \NN^{i-1}$.
Then as in Section  \ref{one}, we define a subdivision of $\NN^n$ into the disjoint union of the sets 
$$
\Delta_{i,j}=\alpha_{i,j}+\NN^n\setminus \bigcup_{\alpha_{i,j'}<_r\alpha_{i,j}}(\alpha_{i,j'}+\NN^n)=\alpha_{i,j}+\NN^n\setminus\bigcup_{\alpha_{i,j'}<_r\alpha_{i,j}}\Delta_{i,j'},
$$
where 
$$
\Delta_{i,j}=\alpha_{i,j}+\Gamma_{i,j}
,\quad
\Gamma=\NN^n\setminus \Delta.
$$


Consider the filtration of subsets of $\Delta$ analogous to the above:
 $$
\overline{\Delta}^0:=\{0\}\subset \overline{\Delta}^1:=(\Delta\cap \NN^1)\times \NN^{n-1}\subset \overline{\Delta}^2:=(\Delta\cap \NN^2)\times \NN^{n-2}\subset\ldots\subset \overline{\Delta}^n:=\Delta\cap \NN^{n}=\Delta,
$$
and similarly set
$$
\Delta^i:=\overline{\Delta}^i\times \NN^{n-i}
$$
to get the filtration
$$
\Delta^0=\{0\}\subset \Delta^1\subset \ldots\subset \Delta^n=\Delta.
 $$
Then 
 $$
\Delta^i\setminus \Delta^{i-1}=(\Delta\cap \NN^i)\setminus ((\Delta\cap \NN^{i-1})\times \NN)\times \NN^{n-i}=\overline{B}_i\times
\NN^{n-i},
$$ 
where
$$
{B}_i:=(\Delta\cap \NN^i)\setminus ((\Delta\cap \NN^{i-1})\times \NN)
$$ 
is a subset of $\NN^i\setminus \NN^{i-1}$.
So we have 
$$
\Delta=\bigcup B_i\times \NN^{n-i}.
$$
Note however that the sets $B_i$ are usually not finite and thus they will be subsequently decomposed.

\begin{lemma}\label{c3} 
$\pi_{n,i-1}(\Delta)=\pi_{i,i-1}(\Delta\cap \NN^i)$.
\end{lemma}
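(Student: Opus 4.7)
The plan is to prove the two inclusions separately. The inclusion $\pi_{i,i-1}(\Delta\cap\NN^i)\subseteq\pi_{n,i-1}(\Delta)$ is immediate and requires no hypothesis on $\Delta$: under the identification $\NN^i\hookrightarrow\NN^n$ with the last $n-i$ coordinates zero, any $\beta\in\Delta\cap\NN^i$ satisfies $\pi_{i,i-1}(\beta)=\pi_{n,i-1}(\beta)$, and this lies in $\pi_{n,i-1}(\Delta)$ since $\beta\in\Delta$.

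The reverse inclusion $\pi_{n,i-1}(\Delta)\subseteq\pi_{i,i-1}(\Delta\cap\NN^i)$ is where the finite-type hypothesis (implicit from the surrounding section heading) genuinely enters; a simple example like $\Delta=(0,1)+\NN^2\subset\NN^2$ with $i=1$ shows the equality fails without it. Given $\gamma\in\pi_{n,i-1}(\Delta)$, I would choose $\alpha=(\alpha_1,\ldots,\alpha_n)\in\Delta$ with $\pi_{n,i-1}(\alpha)=\gamma$; the goal is then to produce $\beta\in\Delta\cap\NN^i$ with $\pi_{i,i-1}(\beta)=\gamma$, that is, an element of $\Delta$ whose coordinates $i+1,\ldots,n$ all vanish and whose first $i-1$ coordinates coincide with those of $\alpha$.

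I would construct $\beta$ by iterative \emph{coordinate collapse}. Set $\alpha^{(n+1)}:=\alpha$ and, running $j$ from $n$ down to $i+1$, apply the operator $S$ of Definition \ref{S1} to $\alpha^{(j+1)}\in\Delta$ with the fixed lower index $i$ and upper index $j$ (note $i<j$). This yields $\alpha^{(j)}:=S(\alpha^{(j+1)})\in\Delta$ whose $j$-th coordinate is zero, whose $i$-th coordinate is possibly incremented by some $\alpha'_i\in\NN$, and whose remaining coordinates are unchanged. Finally set $\beta:=\alpha^{(i+1)}$.

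The only subtlety to check is that each application preserves the coordinates zeroed on previous iterations, but this is immediate from the shape of $S_\alpha$ in Definition \ref{S1}, which modifies only the two coordinates indexed by its chosen pair. Consequently the last $n-i$ coordinates of $\beta$ all vanish, giving $\beta\in\Delta\cap\NN^i$, while the first $i-1$ coordinates are preserved at every step, giving $\pi_{i,i-1}(\beta)=\pi_{n,i-1}(\alpha)=\gamma$. There is no serious obstacle beyond the bookkeeping of indices; the content of the lemma is entirely encoded in the finite-type axiom.
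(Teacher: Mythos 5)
Your proof is correct and follows essentially the same route as the paper: the nontrivial inclusion is obtained by iterated application of the finite-type axiom to zero out coordinates $i+1,\ldots,n$ while leaving the first $i-1$ coordinates untouched. The only (immaterial) difference is that you always let coordinate $i$ absorb the increment, whereas the paper collapses each coordinate into the adjacent one ($n$ into $n-1$, then $n-1$ into $n-2$, and so on down to $i$).
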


\begin{proof} Let $\overline{a}=\overline{a}^n:=(a_1,\ldots,a_n)\in \Delta$. By the finite type assumption
the elements 
\begin{align*}
\overline{a}^{n-1} &:=(a_1,\ldots,a_{n-1}+a_n',0)\in \NN^{n-1},
\\
\overline{a}^{n-2} &:=(a_1,\ldots,a_{n-2}+a'_{n-1}+a'_n,0,0)\in \NN^{n-2},\quad\ldots,
\\
\overline{a}^{i} &:=(a_1,\ldots,a'_i+\ldots+a'_n,0,\ldots,0)\in  \NN^{i}
\end{align*}
  are all in $\Delta$ for some $\alpha_i'\in \NN$.
But $\pi_{i,i-1}(\overline{a}^i)=\pi_{n,i-1}(\overline{a})$, which finishes the proof.
\end{proof}

Write 
$$
C_1:=\NN^1  \setminus  ( A_1 \cup B_1),\quad
C_1\times \NN:=\NN^2  \setminus (A_1\times N^1 \cup B_1\times N^1).
$$
By definition  $A_2, B_2\subset C_1\times \NN$. Set
$$
C_2:= (C_1\times N)  \setminus  (A_2\cup B_2).
 $$
Again $A_3, B_3\subset C_2\times \NN$, and we set
$$
C_3:= (C_2\times N)   \setminus  (A_3\cup B_3).
$$
We define  $C_k$ recursively by
$$
C_n:= (C_{n-1}\times N)  \setminus   (A_n \cup B_n).
 $$
This gives us the decomposition of $\NN^i$ into a union of disjoint subsets:
$$
\NN^i=\Big(\bigcup_{j=1}^i A_j\times \NN^{i-j}\Big) \cup\Big(\bigcup_{j=1}^i B_j\times \NN^{i-j}\Big) \cup  C_i,
$$
where  $A_j, C_j \subset \Gamma\cap (\NN^j\setminus \NN^{j-1})$,  $B_j\subset \Delta\cap (\NN^j\setminus \NN^{j-1})$ are finite, $\pi_{j,j-1}(A_j)\subset \Gamma$, $\pi_{j,j-1}(C_j)\subset \Delta$, and 
$$
\Big(\bigcup_{j=1}^i A_j\times \NN^{i-j}\Big)\cup C_i= \Gamma\cap \NN^i,\quad
\bigcup_{j=1}^i B_j\times \NN^{i-j}= \Delta \cap \NN^i.
$$
In particular
$$
\NN^n=\Big(\bigcup_{j=1}^n A_j\times \NN^{n-j}\Big)  \cup  \Big(\bigcup_{j=1}^n B_n\Big) \cup \, C_n=\Gamma  \cup  \Delta \cup C_n=\NN^n \cup C_n.
$$
 So $C_n=\emptyset$, and we get
 \begin{equation}\label{S2}
\NN^n=\Big(\bigcup_{j=1}^n A_j\times \NN^{n-j}\Big) \cup  \Big(\bigcup_{j=1}^n B_j\times \NN^{n-j}\Big).
\end{equation}

\begin{lemma}
\begin{enumerate}
\item $C_i$ is finite.
\item $\pi_{i+1,i}: B_{i+1}\to C_i$ is surjective.
\item $B_{i+1}=\bigcup_j \Delta_{i+1,j}\cap \NN^{i+1}$.
\item $B_{i+1}\times \NN^{n-i-1}= \bigcup_j \Delta_{i+1,j}$.
\item The sets $C_{ij}:=\pi_{i+1,i}(\Delta_{i+1,j})$ are disjoint and define a subdivision of $C_i=\bigcup C_{ij}$.
\item Write $C_{ij}=:\pi_{i+1,i}(\alpha_{i+1,j})+\overline{\Gamma}_{i+1,j}$. Then $\Gamma_{i+1,j}=\overline{\Gamma}_{i+1,j}\times \NN^{n-i}$.
\end{enumerate}
\end{lemma}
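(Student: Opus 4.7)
The plan is to prove the six parts in the order (3), (4), (2), (5), (6), (1), each building on the previous. The driving observation is that every vertex $\alpha_{i+1,j}$ lies in $\NN^{i+1}$, and in the reverse-lex order every vertex at level $<i+1$ is strictly smaller than every vertex at level $i+1$ (the comparison at coordinate $i+1$ is already decisive). Throughout I write $m_j:=\alpha_{i+1,j,i+1}$.

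For (3), since any $\alpha_{i',j'}$ with $i'>i+1$ has a positive coordinate past $i+1$, we have $\Delta_{i',j'}\cap\NN^{i+1}=\emptyset$ for those $(i',j')$, so $\Delta\cap\NN^{i+1}=\bigsqcup_{i'\leq i+1}\Delta_{i',j'}\cap\NN^{i+1}$. A monotonicity check identifies $(\Delta\cap\NN^i)\times\NN$ with $\bigsqcup_{i'\leq i}\Delta_{i',j'}\cap\NN^{i+1}$: given $\alpha\in(\Delta\cap\NN^i)\times\NN$, the zero-padded projection $(\alpha_1,\ldots,\alpha_i,0,\ldots,0)$ lies in some $\Delta_{i'',j''}$ with $i''\leq i$, and since $\alpha\in\alpha_{i'',j''}+\NN^n$, the unique $\Delta_{i''',j'''}$ containing $\alpha$ satisfies $(i''',j''')\leq_r(i'',j'')$ and hence $i'''\leq i$. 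So $B_{i+1}=\bigcup_j\Delta_{i+1,j}\cap\NN^{i+1}$. For (4), the defining conditions of $\Delta_{i+1,j}$---inclusion in $\alpha_{i+1,j}+\NN^n$ and exclusion from every lower $\alpha_{i',j'}+\NN^n$---involve only the first $i+1$ coordinates because every such lower vertex lies in $\NN^{i+1}$, so $\Delta_{i+1,j}=(\Delta_{i+1,j}\cap\NN^{i+1})\times\NN^{n-i-1}$, and (4) follows by unioning and invoking (3).

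For (2), I first establish the characterization $C_i=\pi_{n,i}(\Delta)\setminus(\Delta\cap\NN^i)$: if $\alpha\notin\pi_{n,i}(\Delta)$, then taking the largest $j$ with $\pi_{i,j}(\alpha)\in\pi_{n,j}(\Delta)$ places $\pi_{i,j+1}(\alpha)\in A_{j+1}$ and hence $\alpha\in A_{j+1}\times\NN^{i-j-1}$, excluding $\alpha$ from $C_i$; conversely, $\alpha\in\pi_{n,i}(\Delta)$ forces $\pi_{i,j}(\alpha)\in\pi_{n,j}(\Delta)$ for every $j$, so $\alpha$ avoids every $A_j\times\NN^{i-j}$. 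Containment $\pi_{i+1,i}(B_{i+1})\subset C_i$ is then immediate, and surjectivity uses Lemma~\ref{c3}: any $\alpha\in C_i\subset\pi_{n,i}(\Delta)$ lifts to some $\beta\in\Delta\cap\NN^{i+1}$, and $\alpha\notin\Delta\cap\NN^i$ automatically puts $\beta\in B_{i+1}$. For (5), the union $\bigcup_j C_{ij}=\pi_{i+1,i}(B_{i+1})=C_i$ follows from (2) and (3), and disjointness reduces to a reverse-lex argument: if $j<j'$ and $(\alpha,b)\in\Delta_{i+1,j}$, $(\alpha,b')\in\Delta_{i+1,j'}$ share a projection, then the reverse-lex inequality $m_j\leq m_{j'}\leq b'$ combined with the first-$i$ coordinate inequalities from $(\alpha,b)\in\alpha_{i+1,j}+\NN^n$ forces $(\alpha,b')\in\alpha_{i+1,j}+\NN^n$, contradicting $(\alpha,b')\in\Delta_{i+1,j'}$.

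For (6), I analyze the fibers of $\pi_{i+1,i}:\Delta_{i+1,j}\cap\NN^{i+1}\to C_{ij}$: the same reverse-lex argument shows that if $(\alpha,a_0)\in\Delta_{i+1,j}$ then $(\alpha,a)\in\Delta_{i+1,j}$ for all $a\geq a_0$, and the minimum value is always exactly $m_j$ (using $m_{j'}\leq m_j$ for $j'<j$). So each fiber is a free ray in the $(i+1)$-st coordinate, yielding $(\Delta_{i+1,j}\cap\NN^{i+1})-\alpha_{i+1,j}=\overline{\Gamma}_{i+1,j}\times\NN$ with $\overline{\Gamma}_{i+1,j}:=C_{ij}-\pi_{i+1,i}(\alpha_{i+1,j})$; combined with (4), $\Gamma_{i+1,j}=\overline{\Gamma}_{i+1,j}\times\NN^{n-i}$. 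Finally for (1), since there are only finitely many vertices at level $i+1$, it suffices to show each $\overline{\Gamma}_{i+1,j}\subset\NN^i$ is finite, and here the finite-type hypothesis enters: applying it to $\alpha_{i+1,j}$ to push the $(i+1)$-st coordinate down to each coordinate $k\leq i$ produces an integer $M_k$ with $\pi_{i+1,i}(\alpha_{i+1,j})+M_ke_k\in\Delta\cap\NN^i$, which places $(\pi_{i+1,i}(\alpha_{i+1,j})+M_ke_k,m_j)$ into some lower $\Delta_{i'',j''}$ with $i''\leq i$ and forces every $\gamma\in\overline{\Gamma}_{i+1,j}$ to satisfy $\gamma_k<M_k$. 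The main technical obstacle is this last step: weaving the finite-type hypothesis together with the reverse-lex decomposition to extract uniform coordinate bounds and hence finiteness.
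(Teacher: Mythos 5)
Your proof is correct and follows essentially the same route as the paper: the same decomposition $B_{i+1}=(\Delta\cap\NN^{i+1})\setminus((\Delta\cap\NN^i)\times\NN)$, Lemma~\ref{c3} for surjectivity, the reverse-lexicographic comparison of the $(i+1)$-st coordinates of vertices for disjointness of the $C_{ij}$ and the fiber/ray description, and the finite-type condition for finiteness. The only cosmetic differences are that you prove (1) last via finiteness of each $\overline{\Gamma}_{i+1,j}$ (reproducing inline the coordinate-bounding argument that the paper delegates to Lemma~\ref{c1}) and that you make explicit the steps the paper dismisses with ``follows from (3)/(5)''.
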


\begin{proof} (1) By Lemma \ref{c0}, the diagram $\Delta\cap \NN^i$ is monotone. Then, by Lemma \ref{c1},
 $$
\pi_{i,i-1}^{-1}\pi_{i,i-1}(\Delta\cap \NN^i)\setminus (\Delta\cap \NN^{i})
$$ 
is finite.
Intersecting this set with $\NN^{i-1}$ we see that
 $$
C_i:=\pi_{i,i-1}(\Delta\cap \NN^i)\setminus (\Delta\cap \NN^{i-1})
$$ 
is finite.

(2) We have $B_{i+1}=(\Delta \cap \NN^{i+1})\setminus \pi_{i+1,i}^{-1}
 (\Delta_{i+1}\cap \NN^{i+1})$ 
 and thus 
$$
\pi_{i+1,i}(B_{i+1})=\pi_{i+1,i}(\Delta\cap \NN^{i+1})
 \setminus (\Delta\cap \NN^i).
$$
By Lemma \ref{c3}, this gives $\pi_{i+1,i}(B_{i+1})=\pi_{n,i}(\Delta)\setminus B_i=\NN^i\setminus A_i \setminus B_i= C_i$.

(3) We have
$$
B_{i+1}=(\Delta \cap \NN^{i+1})\setminus \pi_{i+1,i}^{-1}
 (\Delta_{i+1}\cap \NN^{i+1})= \bigcup_{i'\leq i,j} \Delta_{i',j}\cap \NN^i\setminus \bigcup_{i'< i,j} \Delta_{i',j}\cap \NN^i=\bigcup_j \Delta_{i,j}\cap \NN^i.
$$

(4) Follows from (3).

(5) Suppose $\overline{\beta}=(\beta_1,\ldots,\beta_{i})\in \pi_{i+1,i}(\Delta_{i+1,j_1})\cap \pi_{i+1,i}(\Delta_{i+1,j_2})$, where $j_1 < j_2$.
Let $\beta=(\beta_1,\ldots,\beta_{i+1})$ be the smallest element in $\Delta_{i+1,j_1}\cap \NN^{i+1}$, with respect to the reverse lexicographic order, such that $\pi_{i+1,i}(\beta)=\overline{\beta}$. Then $\beta=\alpha_{i+1,j_1}+\beta'$, where $\beta'\in \NN^{i}\subset \NN^{i+1}$. Since $\alpha_{i+1,j_1}<_r\alpha_{i+1,j_2}$, and the last $n-i-1$ coordinates (in $\NN^n$) of both are zero the $i+1$-coordinate $(\alpha_{i+1,j_1})_{i+1}$ of $\alpha_{i+1,j_1}$ is not greater than the one of $\alpha_{i+1,j_2}$. 

Also by definition, $\beta_{i+1}=(\alpha_{i+1,j_1})_{i+1}+\beta'_{i+1}=(\alpha_{i+1,j_1})_{i+1}$, and all the elements $(\beta_1,\ldots,\beta_i,\beta'_{i+1})$ with $\beta'_{i+1}\geq \beta_{i+1}$ are in $\Delta_{i+1,j_1}\cap \NN^{i+1}$. (I)

 On the other hand, if $\beta''\in \Delta_{i+1,j_2}$, and
$\pi_{i+1,i}(\beta'')=\overline{\beta}$ then $\beta''$ has a form $\beta''=(\beta_1,\ldots,\beta_i,\beta_{i+1}'')$ with  $\beta''\geq  \alpha_{i+1,j_2}$. This implies
$$
\beta_{i+1}''\geq (\alpha_{i+1,j_2})_{i+1}\geq (\alpha_{i+1,j_1})_{i+1}\geq \beta_{i+1}.
$$
By (I) we conclude that  
$\beta''\in \Delta_{i+1,j_1}$ and thus is not an element of $\Delta_{i+1,j_2}$, which contradicts the definition of  $\beta''$.

(6) follows from (5).
\end{proof}

 Let 
$$
\overline{B}_{i,j}:=\{\alpha_{i,j}+\overline{\Gamma}_{i,j}\},\quad 
\overline{B}_{i}:=\bigcup_j \overline{B}_{i,j}.
$$ 
It follows from the above considerations that the following decompositions of  sets are finite:

\begin{corollary}
\label{12}
\begin{enumerate}
\item $\Delta_{i,j}\cap \NN^{i}=\bigcup_{\beta\in \overline{B}_{i,j}} (\beta+\NN^{*1,n-i})),$
\item $\Delta_{i,j}=\bigcup_{\beta\in \overline{B}_{i,j}} (\beta+\NN^{*n-i+1,n})),$ 

\item $B_{i}=\bigcup_{\beta\in \overline{B}_{i}} (\beta+\NN^{*1,n-i})),$
\item $B_{i}\times \NN^{n-i}=\bigcup_{\beta\in \overline{B}_{i}} (\beta+(\NN^{*n-i+1,n})).$
\end{enumerate}

\end{corollary}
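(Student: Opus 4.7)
The plan is to observe that all four identities are direct consequences of the explicit product description of $\Gamma_{i,j}$ combined with parts (3), (4) and (6) of the preceding lemma; no genuinely new combinatorics is needed. I would establish (2) first as the main structural statement, then read off (1) by restriction and (3), (4) from the corresponding parts of the preceding lemma.

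For (2), I would start from the factorisation
\[
\Delta_{i,j}=\alpha_{i,j}+\Gamma_{i,j}=\alpha_{i,j}+\overline{\Gamma}_{i,j}\times\NN^{n-i+1},
\]
where $\overline{\Gamma}_{i,j}\subset\NN^{i-1}$ (part (6) of the preceding lemma, after reindexing $i+1\leadsto i$). Writing $\alpha_{i,j}=(\overline{\alpha},a_i,0,\ldots,0)$ with $\overline{\alpha}\in\NN^{i-1}$, and a generic element of $\overline{\Gamma}_{i,j}\times\NN^{n-i+1}$ as $(\gamma,c_i,c_{i+1},\ldots,c_n)$, one obtains
\[
\Delta_{i,j}=\{(\overline{\alpha}+\gamma,\,a_i+c_i,\,c_{i+1},\ldots,c_n):\gamma\in\overline{\Gamma}_{i,j},\ c_i,\ldots,c_n\in\NN\}.
\]
For each fixed $\gamma$, the element $\beta:=(\overline{\alpha}+\gamma,a_i,0,\ldots,0)$ lies in $\overline{B}_{i,j}=\alpha_{i,j}+\overline{\Gamma}_{i,j}$, and the remaining freedom in coordinates $i,\ldots,n$ is exactly $\NN^{*n-i+1,n}$. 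Unioning over $\gamma$ (equivalently over $\beta\in\overline{B}_{i,j}$) yields (2).

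Statement (1) is then obtained by intersecting (2) with $\NN^i$: imposing $c_{i+1}=\ldots=c_n=0$ retains only the single degree of freedom in position $i$, which is $\NN^{*1,n-i}$ in the corollary's notation. Statements (3) and (4) follow by plugging (1) and (2) into parts (3) and (4) of the preceding lemma --- namely $B_i=\bigcup_j(\Delta_{i,j}\cap\NN^i)$ and $B_i\times\NN^{n-i}=\bigcup_j\Delta_{i,j}$ --- and using the definition $\overline{B}_i=\bigcup_j\overline{B}_{i,j}$.

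It remains to verify that the decompositions are finite, i.e. that each $\overline{B}_{i,j}$ and $\overline{B}_i$ is finite. For the former, $\overline{\Gamma}_{i,j}$ is a translate of $C_{i-1,j}\subset C_{i-1}$, and $C_{i-1}$ is finite by part (1) of the preceding lemma; hence $\overline{B}_{i,j}=\alpha_{i,j}+\overline{\Gamma}_{i,j}$ is finite. For the latter, there are only finitely many vertices $\alpha_{i,j}$ at each level $i$ (the vertex list $\alpha_1<\alpha_{2,1}<\ldots<\alpha_{r,k_r}$ is finite), so $\overline{B}_i$ is a finite union of finite sets. The only mild friction in the argument is keeping the ambient lattices $\NN^{i-1}\subset\NN^i\subset\NN^n$ and the axis-type notation $\NN^{*k,m}$ straight throughout, but there is no real obstacle: the corollary is pure bookkeeping built directly on top of the preceding lemma.
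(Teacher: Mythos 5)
Your proof is correct and takes essentially the same route the paper intends: the paper gives no argument beyond ``it follows from the above considerations,'' and your unwinding of part (6) of the preceding lemma (via $\Delta_{i,j}=\alpha_{i,j}+\overline{\Gamma}_{i,j}\times\NN^{n-i+1}$ and the definition $\overline{B}_{i,j}=\alpha_{i,j}+\overline{\Gamma}_{i,j}$), followed by restriction to $\NN^i$ and the union over $j$ using parts (3)--(5), is exactly that bookkeeping, including the finiteness check via the finiteness of $C_{i-1}$.
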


Summarizing we get from formula \ref{S2} and the above: 
\begin{equation}\label{S3}
\NN^n=\Gamma+\Delta=\Big(\bigcup_{j=1}^n A_j\times \NN^{n-j}\Big) \cup \bigcup_{j=1}^n\bigcup_{\beta\in \overline{B}_{j}} (\beta+(\NN^{*n-j+1,n})).
\end{equation}
Consider the natural filtration of  rings 
$$
R_0=K\subset R_1=K[x_n]\subset\ldots\subset R_n=R=K[x_1,\ldots,x_n].
$$

We rewrite some of 
the relations between the sets $A_i$, $B_j$, and $C_s$ in the language of modules. For $i=1,\ldots,n,$ set $M^A_0=0$, $M^A_{n+1}=M^B_{n+1}=0$ and 
$$ 
M^A_i:=\bigoplus_{\alpha\in A_i}  R_{n-i}\cdot x^\alpha,\quad 
M^C_i:=\bigoplus_{\alpha\in C_i}  R_{n-i}\cdot x^\alpha ,  \quad  M^B_i:= \sum_{\beta\in \overline{B}_{i}} R_{n-i+1}\cdot x^\beta.
$$

There exists a decomposition of $R_n$ as a group into modules
%
$$
R_n=(M^A_1\oplus M^C_1)\oplus M^B_1
$$
$$
(M^A_2\oplus M^C_2)\oplus M^B_2 = M^C_1
$$
$$
\ldots
$$
$$(M^A_n\oplus M^C_n)\oplus M^B_{n} = M^C_{n-1}
$$
or alternatively
$$
R_n=(M^A_1\oplus M^C_1)\oplus (M^A_0\oplus M^B_1)
$$
$$
(M^A_2\oplus M^C_2)\oplus (M^A_1\oplus M^B_2) = (M^A_1\oplus M^C_1)
$$
$$
\ldots
$$
$$
(M^A_n\oplus M^C_n)\oplus (M^A_{n-1}\oplus M^B_{n}) = (M^A_{n-1}\oplus M^C_{n-1})
$$
$$
\{0\}\oplus (M^A_{n}\oplus \{0\}) = (M^A_{n}\oplus 0),
$$
where  $M^C_{n}=M^B_{n}=0$.
The latter sequence yields the following 
\begin{corollary} There is a decomposition \label{imp33}
\begin{equation} \label{22}
R_n=\bigoplus_{i=1}^{n+1} (M^A_{i-1}\oplus M^B_i)=R_n^\Delta\oplus R_n^\Gamma
\end{equation}
into a direct sum of free $R_{n-i+1}$-modules $M^A_{i-1}\oplus M^B_i$, where 
$$
R_n^\Delta=\{f\in R_n\mid \supp(f)\subset \Delta\}=\bigoplus_{i=1}^{n+1}  M^B_i,
$$ 
$$
R_n^\Gamma=\{f\in R_n\mid \supp(f)\subset\Gamma\}=\bigoplus_{i=1}^{n+1} M^A_{i-1}.
$$
\end{corollary}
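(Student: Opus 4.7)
The plan is to deduce the corollary directly from the disjoint partition of $\NN^n$ recorded in equation~(\ref{S3}) together with the finer decomposition of $\Delta$ via the sets $\overline{B}_{i,j}$ produced in the lemma immediately preceding Corollary~\ref{12}. I would first verify that the partition
$$
\NN^n = \bigsqcup_{j=1}^n \bigl(A_j \times \NN^{n-j}\bigr) \;\sqcup\; \bigsqcup_{j=1}^n \bigsqcup_{\beta \in \overline{B}_j}\bigl(\beta + \NN^{*n-j+1,n}\bigr)
$$
is genuinely disjoint, then translate it termwise into a $K$-linear decomposition of $R_n$, and finally reorganize the summands to match the two stated identities.

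For the disjointness, the sets $A_j \times \NN^{n-j}$ lie in $\Gamma$ and are mutually disjoint across $j$ by Lemma~\ref{c1}, whereas all $B$-pieces lie in $\Delta$, so $\Gamma$-pieces and $\Delta$-pieces cannot meet. The subtle point is the disjointness among the pieces $\beta + \NN^{*n-j+1,n}$ inside $\Delta$. I would first handle disjointness within a single $\overline{B}_{j,k} = \alpha_{j,k} + \overline{\Gamma}_{j,k}$: because $\overline{\Gamma}_{j,k} \subset \NN^{j-1}$, every $\beta \in \overline{B}_{j,k}$ shares the same $j$-th coordinate $(\alpha_{j,k})_j$ and is determined by its first $j-1$ coordinates, while $\NN^{*n-j+1,n}$ has its first $j-1$ coordinates zero, so the first $j-1$ coordinates of $\beta+\gamma$ recover $\beta$ uniquely. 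Disjointness across different $k$ for fixed $j$ follows from item~(5) of the preceding lemma, which gives $C_{j-1,k_1} \cap C_{j-1,k_2} = \emptyset$ and hence disjoint $\pi_{j,j-1}$-projections of the $\overline{B}_{j,k}$'s. Disjointness across $j$ is built into the recursive definition $C_j = (C_{j-1}\times\NN)\setminus(A_j \cup B_j)$.

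Next I would pass to modules. The piece $A_j \times \NN^{n-j}$ translates immediately into $M^A_j = \bigoplus_{\alpha \in A_j} R_{n-j}\cdot x^\alpha$, a free $R_{n-j}$-module, since the $x^\alpha$ differ in coordinates $1,\ldots,j$ while multiplication by $R_{n-j} = K[x_{j+1},\ldots,x_n]$ affects only coordinates $j+1,\ldots,n$. Likewise, the disjointness of the $\beta$-pieces upgrades the a priori sum $M^B_j = \sum_{\beta \in \overline{B}_j} R_{n-j+1}\cdot x^\beta$ to the direct sum $\bigoplus_{\beta \in \overline{B}_j} R_{n-j+1}\cdot x^\beta$; and since $\overline{B}_j = \bigcup_k \overline{B}_{j,k}$ is finite (each $\overline{\Gamma}_{j,k}$ is contained in the finite set $C_{j-1,k}$ by item~(1) of the preceding lemma), $M^B_j$ is a finitely generated free $R_{n-j+1}$-module.

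Finally, assembling the module pieces according to the partition and reindexing with $i=j$ for the $B$-terms and $i=j+1$ for the $A$-terms, together with the conventions $M^A_0 = M^B_{n+1} = 0$, yields the first identity $R_n = \bigoplus_{i=1}^{n+1}(M^A_{i-1} \oplus M^B_i)$, in which each summand $M^A_{i-1}\oplus M^B_i$ is a free $R_{n-i+1}$-module. Grouping the $M^A$-summands (supported on $\Gamma$) and the $M^B$-summands (supported on $\Delta$) separately yields the second identity $R_n = R_n^\Delta \oplus R_n^\Gamma$ with $R_n^\Gamma = \bigoplus_i M^A_{i-1}$ and $R_n^\Delta = \bigoplus_i M^B_i$. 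The main technical obstacle is precisely the disjointness of the $\beta$-pieces inside $\Delta$ carried out in the second paragraph; once that is established, the rest reduces to bookkeeping against the already-proved finite-type properties of the $A_j$, $B_j$, and $C_j$.
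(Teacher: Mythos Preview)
Your proposal is correct and follows essentially the same route as the paper: the paper states the recursive decompositions $R_n=(M^A_1\oplus M^C_1)\oplus M^B_1$, $M^C_{i-1}=(M^A_i\oplus M^C_i)\oplus M^B_i$ just before the corollary and then simply telescopes them, whereas you unwind that telescoping by working directly from the partition~(\ref{S3}) of $\NN^n$ and checking disjointness of the $\beta$-pieces explicitly. One minor wording slip: $\overline{\Gamma}_{j,k}$ is a translate of $C_{j-1,k}$ rather than literally contained in it, but the finiteness conclusion you draw is unaffected.
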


\section{Filtered Stanley decomposition}

\subsection{Properties of  filtered Stanley decomposition}

The classical Weierstrass theorem
 and its Malgrange-Mather extension allow us
to write 
the $\cE_n$-module $\cE_n$ as the image of a direct sum of groups (more specifically modules over different rings),namely
\begin{equation}
\cE_n\cdot f\oplus \bigoplus_{i=0}^{k-1} \cE_{n-1}\cdot x^i\to \cE_n
\end{equation}
for any $k$-regular function $f$. The above (group) homomorphism is an isomorphism in a reduced category.
 This decomposition can be viewed as a modification or perturbation of the decompostion
\begin{equation}\label{2}
\cE_n=\cE_n\cdot x^k\oplus \bigoplus_{i=0}^{k-1} \cE_{n-1}\cdot x^i.
\end{equation}
We are going to extend this approach to several functions and ideals.
Note that the generalization of decomposition (\ref{2}) for   graded rings  has been worked out in the previous section in  formula (\ref{22}).

\begin{definition}\label{free44}  
A homomorphism of finite $\cE_n$-modules 
$$
\Psi: M\to N
$$ 
will be called a \emph{quasi-isomorphism} if $\Psi$ is an epimorpism and its kernel  is contained in $ \cam_n^\infty \cdot M$. We will call 

a finite $\cE_n$-module $M$ \emph{quasi-free}  if there exists a quasi-isomorphism $\cE_n^k\to M$.
\end{definition}

\begin{definition} \label{Stanley}  Let $M$ be a finitely generated  $\cE_n$-module with filtration $(M_s)$ of $\cE_n$-modules satisfying 
\begin{equation} \label{filtration} 
m_n^r\cdot M_s\subset M_{r+s}.
\end{equation}
 We say that the $\cE_n$-module $M$ admits  a  \emph{filtered Stanley decomposition} over  $(\cE_i)^n_{i=0}$ if there exist   free finitely generated $\cE_i$-modules  $N_i$ for $i=0,\ldots,n$ and
a  homomorphism of $K$-spaces 
$$
\Psi:\bigoplus N_i\to M
$$ 
such that:
\begin{enumerate}
\item The restriction $\Psi_{|N_i}: N_i\to M$ of $\Psi$ to $N_i$ is a homomorphism of $\cE_i$-modules.
\item $\Psi$ is surjective.
\item The kernel of $\Psi$ is contained in 
%
$\bigoplus \cam_i^\infty \cdot N^i$.
\item  There exist  bases $\{e_{i1},\ldots,e_{ik_i}\}$  of $N_i$ for $i=0,\ldots,n$ such that 

$$
\Psi^{-1}(M_s)=\bigoplus m_i^{s-d_{ij}}e_{ij},
$$ 
where 
%
$d_{ij}=\ord(\Psi(e_{ij}))$.
\end{enumerate}

The  set  $\{b_{ij}\}:=\{\Psi(e_{ij})\}_{i,j}$ will be called a \emph{basis}  of $M$ over 
$(\cE_i)^n_{i=0}$. The homomorphism $\Psi$   will be 
called a \emph{quasi-isomorphism over} $(\cE_i)^n_{i=0}$.
\end{definition}

Any $\cE_n$-module $M$ admits a filtration $m_n^{s+1}\cdot M$, which we refer to as the \emph{standard filtration.} While this filtration is of  primary concern, it would be useful to consider in certain situations a  more general, positive linear grading $T:\NN^n\to \RR$ with real values, given by a single positive linear form $T$. Observe that in that case, by Lemma \ref{infinite},
 each subset of  the set of values $S_n:=T(\NN^{n})$   that   is bounded above  is finite. We consider the natural filtration on $\cE_n$ defined by $T$, indexed by $a\in S_n=T(\NN^n)$:
$$
m_{n,T,a}:=\{f\in \cE_n \mid \ord_T(f)\geq a\}.
$$ 
Then  
%
$\ord_T(f):=\max\{a \in  S_n \mid  f\in m_{n,T,a}\}$.
We shall call an $\cE_n$-module \emph{$T$-filtered} if it satisfies the condition
\begin{equation}\label{filtration2}
 m_{n,T,r}\cdot M_s\subset M_{r+s}.
 \end{equation}
 
\begin{definition} We say that any $T$-filtered $\cE_n$-module $M$ with  $T$-filtration $M_s$  admits a \emph{$T$-filtered Stanley decomposition} over 
$(\cE_i)^n_{i=0}$ if there is a homomorphism $\phi$ as in the definition above satisfying conditions (1)--(3) and  the condition 
\begin{enumerate}
\item[$(4')$]  There exist  bases $\{e_{i1},\ldots,e_{ik_i}\}$  of $N_i$ for $i=0,\ldots,n$ such that $$
\Psi^{-1}(M_s)=\bigoplus m_{i,T,s-d_{ij}}e_{ij},
$$ 
where 
%
$d_{ij}=\ord_T(\Psi(e_{ij}))$.
\end{enumerate}
\end{definition}

One can extend this immediately to graded modules over graded rings. Observe
that the graded ring of $\cE_n$ defined by the filtration $m_n^k\subset \cE_n$ is equal to 
$$
R:=\gr(\cE_n)=\gr(\widehat{\cE_n})=\gr(K[[x_1,\ldots,x_n]])=\bigoplus_i m^{i}/m^{i+1}=K[x_1,\ldots,x_n].
$$

For any ring $K$ consider the natural filtration of  rings 
$$
R_0=K\subset R_1=K[x_n]\subset R_2=K[x_n,x_{n-1}]\subset\ldots\subset R_n=R=K[x_1,\ldots,x_n].
$$ 

For any  module $\cE_n$-module  $M$ with filtration $(M_s)$ satisfying  condition (4) from the definition above, we consider the associated graded $R_n$-module $\gr(M):=\bigoplus M_s/M_{s+1}$.

Similarly for any positive linear grading $T:\NN^n\to \RR$ one can consider a ring $R_T=K[x_1,\ldots,x_n]_T$ with gradation defined by $T$, and the filtration $R_{iT}$.

\begin{definition} \label{Stanley2} 
Let $K$ be any (commutative) ring with $1$, and $M$ be a finitely generated (or simply finite) graded $K[x_1,\ldots,x_n]$-module. By a \emph{filtered Stanley decomposition}, or simply a \emph{Stanley decomposition}, of $M$  over $(R_i)_{i=0}^n$ we mean  a graded group decomposition 
$$
M=\bigoplus N_i,
$$ 
where all  $N_i$ are free finite $R_i$-modules. 


 We shall call a finite set
$\{g_{ij}\}_{i=1,\ldots,n,j\in S_j}$ of homogeneous elements in $M$ a \emph{generating system} of $M$   if $\{g_{ij}\}_{j\in S_j}\in N_i$ generate the $R_i$-module $N_i$ for any $i$ and  
$$
\sum N_i=M.
$$  
We shall call a set
$\{g_{ij}\}_{i=1,\ldots,n,j\in S_j}$ of homogeneous elements  in $M$
\emph{independent} over $(R_i)_{i=0}^n$ if 

$$
\sum c_{ij}g_{ij}=0,\quad c_{ij}\in R_i,
$$
implies that $c_{ij}=0$.
A set
$\{g_{ij}\}_{i=1,\ldots,n,j\in S_j}$ of homogeneous elements is a \emph{Stanley basis}, or simply a \emph{basis}, of $R$ if $M=\bigoplus N_i$ 
and each $N_i$ is a free $E_i$-module generated by a basis $\{g_{ij}\}_{j\in S_j}$.
\end{definition}

 Stanley decomposition provides an effective tool for computing Hilbert (or Hilbert-Samuel) functions for  finite $\cE_n$-modules.

Let us define a function 
%
$\phi(n,k):\ZZ\to \NN$ by
\[ \phi(n,k)= \left\{ \begin{array}{ll}
       \binom{n+k}{k}& \text{if $n \geq 0$};\\
        \quad 0 & \text{if $n < 0$}.\end{array} \right. \] 

Denote by $K[x_1,\ldots,x_n]_s$ the  $s$-gradation of the ring $K[x_1,\ldots,x_n]_s$. Then

$$
\rank_K (K[x_1,\ldots,x_n]_s)=\phi(n-1,s),\quad 
\rank_K \Big(\bigoplus_{i\leq s}
K[x_1,\ldots,x_n]_i\Big)=\phi(n,s).
$$

\begin{corollary} \label{grading22} 
Let $M$ be any finite graded $R$-module over $R$ with a (filtered) Stanley decomposition. 
Let $g_{ij}$ be its Stanley basis, and let $d_{ij}\in \NN$ denote  the degrees of the generators.
Then each module $M_s/M_{s+1}$  and

$M/M_{s+1}$ is free over $K$, and  
the Hilbert function
$$
H_M(s)=\rank_K(M_s/M_{s+1})
$$ is 
equal to 
$$
H_M(s)=\sum_{i,j} \phi (s-d_{ij},i).
$$
In particular $H_M(s)$ is 
a polynomial 
for 
$$
s\geq d(M)=\max \{d_{ij}\}.
$$
\end{corollary}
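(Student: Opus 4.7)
The plan is to extract everything from the Stanley decomposition $M=\bigoplus_i N_i$ directly: this writes $M$ as a direct sum of free graded $R_i$-modules with known homogeneous bases, and both the filtration $(M_s)$ and its subquotients split compatibly over this decomposition. The rank computation then reduces to counting monomials in polynomial rings, which is standard.

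First I would make the filtration explicit. By clause (4) of Definition \ref{Stanley} (in the local setting) or by homogeneity in the graded setting of Definition \ref{Stanley2}, the decomposition $M=\bigoplus_{i,j} R_i\cdot g_{ij}$ is compatible with $(M_s)$ in the sense that
\begin{equation*}
M_s \;=\; \bigoplus_{i,j} m_i^{\max(0,\, s-d_{ij})}\cdot g_{ij},
\end{equation*}
using the convention $m_i^{k}=R_i$ for $k\leq 0$. Consequently
\begin{equation*}
M_s/M_{s+1} \;\cong\; \bigoplus_{i,j}\bigl(m_i^{\max(0,s-d_{ij})}/m_i^{\max(0,s-d_{ij})+1}\bigr)\cdot g_{ij}, \qquad M/M_{s+1} \;\cong\; \bigoplus_{i,j}\bigl(R_i/m_i^{\max(0,\,s-d_{ij}+1)}\bigr)\cdot g_{ij}.
\end{equation*}
Every graded piece and every truncation of $R_i = K[x_1,\dots,x_i]$ is a free $K$-module with a monomial basis, so freeness of $M_s/M_{s+1}$ and $M/M_{s+1}$ over $K$ follows immediately.

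For the rank calculation, each summand $\bigl(R_i/m_i^{s-d_{ij}+1}\bigr)\cdot g_{ij}$ has $K$-rank equal to the number of monomials of total degree $\leq s-d_{ij}$ in $i$ variables, namely $\binom{s-d_{ij}+i}{i}=\phi(s-d_{ij},i)$ when $s\geq d_{ij}$ and zero otherwise (which matches the Hilbert--Samuel convention used for $H_\cI$ earlier in the paper; the same argument applied to graded pieces would give the analogous formula with $\phi(i-1,s-d_{ij})$ in place of $\phi(s-d_{ij},i)$). Summing over $(i,j)$ yields
$$H_M(s) \;=\; \sum_{i,j}\phi(s-d_{ij},i).$$
For $s \geq d(M)=\max_{i,j} d_{ij}$ each $\phi(s-d_{ij},i)$ becomes a polynomial in $s$ of degree $i$, and a finite sum of polynomials is polynomial, which gives the last assertion.

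I do not anticipate any substantive obstacles: the corollary is essentially bookkeeping, and all the real content is already hidden in the existence of a Stanley decomposition. The only points requiring some care are the boundary conventions $\phi(n,k)=0$ for $n<0$ and $m_i^{k}=R_i$ for $k\leq 0$, and — in the local, non-graded case — the fact that the decomposing map $\Psi$ is only a quasi-isomorphism rather than an isomorphism. However its kernel lies in $\bigoplus m_i^\infty\cdot N_i$, which is swallowed by $M_{s+1}$ for every fixed $s$, so it does not affect the $K$-ranks of $M_s/M_{s+1}$ or $M/M_{s+1}$.
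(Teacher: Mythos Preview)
Your proof is correct and takes essentially the same approach as the paper's: the paper's argument is the single line
\[
M_s/M_{s+1}\simeq \psi^{-1}(M_s)/\psi^{-1}(M_{s+1})\simeq \bigoplus m_i^{s-d_{ij}}/m_i^{s+1-d_{ij}}\cdot e_{ij},
\]
from which freeness and the rank formula are read off. You are simply more explicit about the boundary conventions and about the distinction between the ranks of $M_s/M_{s+1}$ and of $M/M_{s+1}$ (hence between $\phi(i-1,s-d_{ij})$ and $\phi(s-d_{ij},i)$), but the underlying idea is identical.
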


\begin{proof}
We have
$$
M_s/M_{s+1}\simeq \psi^{-1}(M_s)/\psi^{-1}(M_{s+1})\simeq \bigoplus m_i^{s-d_{ij}}/m_i^{s+1-d_{ij}}e_{ij}.
$$
\end{proof}

\begin{remark}
Existence of a Stanley decomposition was proven by Stanley and it is a tool of fundamental importance in homological algebra.  In this paper we show existence of a filtered Stanley decomposition of any graded ring over an infinite field, likewise over the smooth category of rings $\cE_n$ over $K$. The stronger conditions imposed on Stanley decompositions are critical for this paper, in view of, in particular, the stabilization theorem for graded rings
 (Theorem \ref{free2}).
\end{remark}

The relation between $(\cE_i)_{i=0}^n$-modules and their graded $(R_i)_{i=0}^n$-modules is useful in view of the following observation.

\begin{theorem} \label{imp3}  
Let $M$ be a finitely generated  $\cE_n$-module with filtration $M_i$ such that 
 $m_n^j\cdot M_i\subset M_{i+j}$, and with the associated finite graded $R_n$-module $\gr(M)=\bigoplus M_k/M_{k+1}$. (Respectively let $M$ be any $T$-filtered module with the associated finite $T$-graded $R_n$-module $\gr(M)$.) Denote by $\widehat{M}:=\lim_{\leftarrow}M/M_s$ the completion of $M$, which is an $\widehat{\cE_n}$-module. Let 
$b_{11},\ldots 
, b_{n,k_n}\in M$ be  a finite set of elements.
Then the following conditions are equivalent:
\begin{enumerate}
\item  $b_{11},\ldots ,
 b_{n,k_n}\in M$  is a basis of $M$ over $(\cE_i)_{i=0}^n$.

\item  $\widehat{b}_{11},\ldots ,
 \widehat{b}_{n,k_n}\in \widehat{M}$  is a basis of $\widehat{M}$ over $(\widehat{\cE_i})_{i=0}^n$.

\item  $\inn(b_{11}),\ldots ,
 \inn(b_{n,k_n})$  form a basis of  $\gr(M)$ over $(R_i)_{i=0}^n=(\gr(\cE_i))$.
\end{enumerate}
\end{theorem}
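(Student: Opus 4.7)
The plan is to establish the three equivalences by a two-step reduction: first pass from the original $\cE_n$-module $M$ to its completion $\widehat{M}$ using the Malgrange preparation package (Corollary \ref{Malg}), and then from the complete filtered module $\widehat{M}$ to its associated graded $\gr(M)$ via the standard principle that a filtered map between complete, Hausdorff, filtered modules is an isomorphism iff its associated graded is. At every stage the candidate basis $\{b_{ij}\}$ is packaged as a homomorphism $\Psi\colon \bigoplus_{i=0}^n \cE_i^{k_i}\to M$ sending a distinguished $K$-basis element $e_{ij}$ to $b_{ij}$, and one checks the three numbered conditions of Definition \ref{Stanley} (surjectivity, kernel contained in $\bigoplus m_i^\infty N_i$, and the filtration condition (4)) for the appropriate variant of $\Psi$.

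For the equivalence $(1)\Leftrightarrow(2)$, I would first observe that the direct-sum-of-different-rings structure is handled componentwise by Malgrange preparation: applying Corollary \ref{Malg} to each $\cE_i$-submodule generated by $\{b_{ij}\}_j$ shows that $\{b_{ij}\}$ generates $M$ over $\bigoplus \cE_i$ iff $\{\widehat b_{ij}\}$ generates $\widehat M$ over $\bigoplus\widehat\cE_i$. The kernel condition matches: the kernel of $\bigoplus \cE_i^{k_i}\to \bigoplus \widehat\cE_i^{k_i}$ is exactly $\bigoplus m_i^\infty\cE_i^{k_i}$, and since $\widehat\cE_i$ is complete we have $\widehat m_i^\infty=0$, so the condition ``kernel of $\Psi\subset \bigoplus m_i^\infty N_i$'' on $M$ translates precisely into injectivity of $\widehat\Psi$ on $\widehat M$. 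The filtration condition (4) is preserved because $\widehat{M}_s=\varprojlim M_s/M_t$ and the filtration on each $\cE_i^{k_i}$ is already separated by the completion.

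For $(2)\Leftrightarrow(3)$, the map $\widehat\Psi\colon\bigoplus\widehat\cE_i^{k_i}\to\widehat M$ is a filtered morphism of complete filtered modules with prescribed shifts $d_{ij}$. Taking associated graded produces exactly the map $\gr(\widehat\Psi)\colon \bigoplus R_i(-d_{ij})\to\gr(M)$ sending $e_{ij}\mapsto \inn(b_{ij})$. Standard complete-filtration arguments (five-lemma on $M_s/M_{s+t}$ combined with the Mittag-Leffler condition, which holds trivially here since all filtration pieces are finite $K$-modules in each degree) show that $\widehat\Psi$ is an isomorphism of filtered modules respecting the prescribed shifts iff $\gr(\widehat\Psi)$ is an isomorphism of graded modules. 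Since $\gr(\widehat{M})=\gr(M)$ canonically, the former is condition (2) (recast as saying $\widehat\Psi$ is a filtered iso) and the latter is condition (3).

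The main obstacle will be pinning down condition (4) of Definition \ref{Stanley} carefully through both reductions. Surjectivity and the kernel condition are standard Nakayama-plus-Malgrange territory, but (4) demands that the \emph{entire} filtration $M_s$ match $\bigoplus m_i^{s-d_{ij}}e_{ij}$ on the nose, not merely up to some compatible inclusion. In particular one must verify that the $d_{ij}=\ord(b_{ij})$ computed in $M$, in $\widehat M$, and as the degree of $\inn(b_{ij})$ in $\gr(M)$ all agree; this uses the fact that $\ord$ is defined via the filtration which is inherited compatibly. Once this bookkeeping is secured, the degree-by-degree five-lemma in the graded-to-complete step goes through verbatim, and the Malgrange preparation step applies to each filtration quotient $M_s/M_{s+1}$ simultaneously.
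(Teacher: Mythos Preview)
Your treatment of $(2)\Leftrightarrow(3)$ is essentially what the paper does: it proves $(3)\Rightarrow(2)$ by a direct leading-term argument on the filtration (if $\sum c_{ij}b_{ij}\in M_s$ with some $\ord(c_{ij}b_{ij})<s$, the initial forms would give a relation among the $\inn(b_{ij})$) and then builds the completed decomposition degree by degree; it calls $(2)\Rightarrow(3)$ obvious. Your associated-graded-to-complete principle captures the same content.

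The gap is in $(2)\Rightarrow(1)$. Your proposal to apply Corollary~\ref{Malg} ``componentwise to each $\cE_i$-submodule generated by $\{b_{ij}\}_j$'' does not work: Corollary~\ref{Malg} applies to a \emph{finite $\cE_n$-module} and compares generation over a \emph{single} subring $\cE_k\subset\cE_n$ with generation of the completion over $\widehat\cE_k$. The condition $M=\sum_{i=0}^n\sum_j\cE_i b_{ij}$ mixes several rings at once, and for $i<n$ the $\cE_i$-submodules $\sum_j\cE_i b_{ij}$ are not $\cE_n$-submodules, so there is no single finite module to which Malgrange directly applies. In particular there is no straightforward way to deduce from $\widehat M=\bigoplus_{i,j}\widehat\cE_i\widehat b_{ij}$ that the uncompleted sum $\sum_{i,j}\cE_i b_{ij}$ already exhausts $M$.

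The paper resolves this by induction on $n$. One quotients by the top layer $N:=\sum_j\cE_n b_{nj}$, which \emph{is} an $\cE_n$-submodule, so $M/N$ is a finite $\cE_n$-module. From~(2) one computes $\widehat{M/N}=\bigoplus_{i<n}\widehat\cE_i\widehat b_{ij}$, hence $\widehat{M/N}$ is finitely generated over $\widehat\cE_{n-1}$; \emph{now} Corollary~\ref{Malg} legitimately applies (to the single inclusion $\cE_{n-1}\hookrightarrow\cE_n$ and the finite $\cE_n$-module $M/N$) and gives that $M/N$ is a finite $\cE_{n-1}$-module. The inductive hypothesis on $n-1$ then shows $\{b_{ij}\}_{i<n}$ is a Stanley basis of $M/N$, and surjectivity of $\Psi$ on $M$ follows. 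The kernel and filtration conditions are read off from the completed isomorphism established in $(3)\Rightarrow(2)$. Your concern about matching the $d_{ij}$ and condition~(4) through the reductions is legitimate bookkeeping, but the essential missing idea is this inductive peeling-off of the top $\cE_n$-layer before Malgrange can be invoked.
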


\begin{proof}
For simplicity of notation we consider only the case of the standard filtration. The case of the $T$-filtration is identical.

Write $\gr(M)=\bigoplus^k_{i=0} \gr(N_i)$, where $\gr(N_i)$ are free $R_i$-modules generated by $b_{ij}$, $j\in A_i$. 
We shall induct on  $n$. If $n=0$, $M=\widehat{M}=\gr(M)=\gr(N_0)$ is a $K$-vector space of finite dimension generated by $b_1,\ldots,b_k$. So the three conditions are equivalent. The inductive assumption will be only used for the implication $(2)\Rightarrow (1)$.


$(3)\Rightarrow (2)$.  
Suppose that 
  $\sum c_{ij}b_{ij}\in M_s$  defines an element of order $s$.  If 
%
$\min (\ord(c_{ij}b_{ij}))=s_0<s$ 
then 
$$
\ord\Big(\sum_{\ord(c_{ij}b_{ij})=s_0} c_{ij}b_{ij}\Big)>s_0 
$$ 
and %
$$
\sum_{\ord(c_{ij}b_{ij})=s_0} \inn(c_{ij}) \inn (b_{i,j})=0,
$$
which is impossible since $\inn(b_{i,j})$ is a basis of $\gr(N)$.
Thus $\ord(c_{ij})=s-d_{ij}$. In particular if $\sum c_{ij}b_{ij}\in M_\infty:=\bigcap^\infty_{s=1} M_s$ then $c_{ij}\in m_i^\infty$. Also, if $\sum c_{ij}b_{ij}=0$ then $c_{ij}\in m_i^\infty$.
 This  implies that there exists a gradation preserving epimorphism
\begin{equation} \label{completion4} 
\bigoplus m_i^{s-d_{ij}}\cdot b_{ij}\to M_s\cap \sum \cE_i\cdot b_{ij} 
\end{equation}
and its kernel is contained in $\bigoplus m_i^\infty\cdot b_{ij}$.

By the assumption for any $a\in M_s$ 
 one can find $c_{ij}\in m_i^{s-d_{ij}}$ such that    $a=\sum {c_{ij}}{b_{ij}}\  ( {\rm mod}\ M_s)$. Then consider $a_1:=a-\sum {c_{ij}}{b_{ij}}\in M_{s+1}$ and repeat the procedure.
 This allows  representing any element in $M_s/M_{s+d}$  as $\sum {c_{ij}}{b_{ij}}$, where the classes of the elements $c_{ij}\in m_i^{s-d_{ij}}/m_i^{s+d-d_{ij}}$ are uniquely determined. This implies that  
 \begin{equation}
 \label{completion}
M/M_s=\Big(\bigoplus {\cE_i}{b_{ij}}+M_s\Big)/M_s=\bigoplus ({\cE_i}/m_k^{s-d_{ij}})\cdot{b_{ij}}
\end{equation}
Consequently, 
\begin{equation} \label{completion1} 
\widehat{M}=\bigoplus_{i\leq n} \widehat{\cE_i}{b_{ij}},\quad \widehat{M_s}= \bigoplus_{i\leq n} \widehat{m}_k^{s-d_{ij}}\cdot b_{ij}.\end{equation} 

$(2)\Rightarrow (3)$. Obvious.

$(2) \Rightarrow (1)$.
Let $N:=N_n$ be the submodule generated over $\cE_n$ by $b_{n,j}$. Then $\overline{M}:= M/N$ is an $\cE_n$-module. The graded module $\gr(\overline{M})=\gr(M/N)
=\gr(M)/\gr(N)$ is generated by  a basis $\inn(b_{ij})$, $i\leq n-1$.

  The module  $M/N$ is a finitely generated $\cE_n$-module. By  the formula
(\ref{completion}),
\begin{equation}
 \label{completion2}
M/(M_s+N)=\Big(\bigoplus_{i<n} {\cE_i}{b_{ij}}+M_s+N\Big)/(M_s+N)=\bigoplus_{i<n} ({\cE_i}/m_k^{s-d_{ij}})\cdot{b_{ij}}
\end{equation}
 and 
 $$\widehat{M/N}=\bigoplus_{i< n} \widehat{\cE_i}{b_{ij}}=\widehat{M}/\widehat{N}$$ is finitely generated over $\widehat{\cE}_{n-1}$ by %
$(b_{ij})_{i<n}$. By  Malgrange division (Corollary \ref{Malg}), the module $M/N$ is a finitely generated $\cE_{n-1}$-module.
By the inductive assumption 
$\{b_{ij}\}_{i\leq n-1}$ 
is a Stanley's basis of $M/N$. In particular, $\bigoplus^{n-1}_{i=0} N_i\to M/N$ is surjective, as also is $\phi: \bigoplus^{n}_{i=0} N_i\to M$. Since $\widehat{\phi}:\bigoplus^{n}_{i=0} \widehat{N_i}\to \widehat{M}$ is an isomorphism, the kernel of $\phi$ is contained in $\bigoplus^{n}_{i=0} m_i^\infty \cdot N_i$. Moreover, since  $M=\sum_{i\/leq n} \cE_i\cdot b_{ij}$ by the formula \ref{completion4}, there exists
a surjection
\begin{equation} \label{completion3} 
\bigoplus m_i^{s-d_{ij}}\cdot b_{ij}\to M_s\cap \sum \cE_i\cdot b_{ij}=M_s 
\end{equation}
and its kernel is contained in $\bigoplus m_i^\infty\cdot b_{ij}$.

 $(1)\Rightarrow (3)$.  Let $b_{ij}$ be a basis of $M$. Then for any $a=\sum c_{ij}b_{ij}$ we get a unique decomposition $\inn(a)=\sum \inn(c_{ij})\inn(b_{ij})$, where $\inn(c_{ij})\in R_i$ (as before). That is, 
%
$(\inn(b_{ij})$ 
is a basis of 
%
$\gr(M)$ over $(R_i)$.
\end{proof}

\begin{lemma}\label{restriction2}   
Assume $M$ is a finitely generated graded $R_{n}=K[x]$-module.   Let 
$b_{11},\ldots ,
b_{n,k_n}\in M$  be homogeneous elements in $M$.
  Then  the following conditions are equivalent:
\begin{enumerate}
\item  $b_{11},\ldots ,
b_{n,k_n}\in M$  is a basis of $M$ over $(R_{i})_{i=0}^n$.

\item  $b_{11},\ldots ,
b_{n,k_n}\in M$  is a basis of $M\otimes_{R_n}R_{n+m} $ over $(R_{i+m})_{i=0}^n$ with respect to any grading $T$ on $M\otimes_{R_n}R_{n+m}$ extending the standard grading. 
 \end{enumerate}
Moreover   condition $(1)$ implies 
\begin{enumerate}  
\item[(3)] $b_{11},\ldots ,
b_{n,k_n}\in M$  is a basis of $M\otimes_{K}K' $ over $(R_{i}\otimes_{K}K')_{i=0}^n$, where $K'\supset K$ is a commutative ring with $1$.
\end{enumerate} 
\end{lemma}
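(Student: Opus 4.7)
The plan is to first identify $R_{n+m}$ as $R_n \otimes_K R_m$, where $R_m = K[t_1,\ldots,t_m]$ is the polynomial ring in the $m$ new variables sitting at filtration levels $n+1,\ldots,n+m$, so that analogously $R_{i+m} \cong R_i \otimes_K R_m = R_i[t_1,\ldots,t_m]$. This identification gives $M \otimes_{R_n} R_{n+m} \cong M[t_1,\ldots,t_m]$, with $M$ embedded as the submodule of total $t$-degree zero.

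For $(1) \Rightarrow (2)$ I will tensor the given Stanley decomposition $M = \bigoplus_{i,j} R_i \cdot b_{ij}$ with $R_{n+m}$ over $R_n$. Since tensor product commutes with direct sums, this immediately yields $M \otimes_{R_n} R_{n+m} = \bigoplus_{i,j} R_{i+m} \cdot b_{ij}$; each summand is free of rank one over $R_{i+m}$ because $R_{i+m}$ is free over $R_i$. The $b_{ij}$ remain homogeneous under any grading $T$ extending the standard one, since their $t$-degrees are zero, so this is a graded decomposition for every such $T$. The implication $(1) \Rightarrow (3)$ follows by the same argument with $- \otimes_K K'$ in place of $- \otimes_{R_n} R_{n+m}$: since $R_i$ is free as a $K$-module, each $R_i \cdot b_{ij}$ base-changes to the free rank-one $(R_i \otimes_K K')$-module $(R_i \otimes_K K') \cdot b_{ij}$.

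For the harder direction $(2) \Rightarrow (1)$ I will use the embedding $M \hookrightarrow M \otimes_{R_n} R_{n+m}$ and a $t$-degree argument. Given $a \in M$, the basis property supplied by $(2)$ gives a unique expression $a = \sum c_{ij} b_{ij}$ with $c_{ij} \in R_{i+m} = R_i[t_1,\ldots,t_m]$. Expanding $c_{ij} = \sum_\alpha c_{ij,\alpha} t^\alpha$ with $c_{ij,\alpha} \in R_i$ and invoking independence of the $\{t^\alpha b_{ij}\}$ over the respective $R_i$'s (a consequence of freeness over $(R_{i+m})$), the vanishing $t$-degree of $a$ forces $c_{ij,\alpha} = 0$ for all $\alpha \neq 0$, so $c_{ij} \in R_i$ and $M = \sum R_i \cdot b_{ij}$. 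Independence of $\{b_{ij}\}$ over $(R_i)$ will then be inherited from independence over $(R_{i+m})$.

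The only subtle step is the bookkeeping with the filtration: I must identify $R_{n+m}$ with $R_n \otimes_K R_m$ in a way compatible with the filtrations on both sides. Once this is settled, the entire lemma reduces to the two elementary facts that tensor product commutes with direct sums and that rank-one free modules remain free of rank one under flat base change.
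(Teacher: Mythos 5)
Your proof is correct and takes essentially the same route as the paper's: $(1)\Rightarrow(2)$ and $(1)\Rightarrow(3)$ by tensoring the direct-sum decomposition, and $(2)\Rightarrow(1)$ by extracting the component of degree zero in the new variables to get surjectivity of $\bigoplus R_i\cdot b_{ij}\to M$, with independence inherited by restriction. Your ``$t$-degree zero'' step is just the paper's reduction modulo $(x_{n+1},\ldots,x_{n+m})$ in different clothing, so there is nothing substantively new or missing here.
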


\begin{proof}
This follows from the definition of basis. 

$(1)\Rightarrow (2)$. If $M=\bigoplus_{i,j} R_i\cdot b_{ij}$ then 
$M\otimes_{R_n} R_{n+m}=M\otimes_{K}R_{m}=\bigoplus_{i,j} R_{i+m}\cdot b_{ij}$. 

$(2)\Rightarrow (1)$. If $M\otimes_{R_n}R_{n+m}=M\otimes_{K}R_{m}=\bigoplus_{i,j} R_{i+m}\cdot  b_{ij}$ with $b_{ij}\in M$ then there is an epimorphism 
$$
\bigoplus_{i,j} R_i\cdot b_{ij}\to 
%
\bigoplus_{i,j} R_{i+m}/((x_{n+1},\ldots,x_{n+m})\cdot b_{ij}) \to 
(M\otimes_{K}R_{m})/(x_{n+1},\ldots,x_{n+m})\simeq M.
$$
On the other hand, $\bigoplus_{i,j} R_i\cdot b_{ij}\to M$ is also a monomorphism since it is a restriction of $\bigoplus_{i,j} R_{i+m}\cdot b_{ij}\to M\otimes_{R_n}R_{n+m}\supset M$.

$(1)\Rightarrow (3)$. Obvious. 
\end{proof}

\begin{lemma}\label{restriction3}   
Consider the graded $R_{n+m}$-module  $M=R_{n+m}=K[x,y]$ (with the standard gradation).   Let 
$b_{11},\ldots ,
b_{n,k_n}\in M$  
be homogeneous elements in $M$ with $\deg(b_{ij})=\deg(b_{ij})(x,0)$.
  If 
$b_{11}(x,0),\ldots ,
b_{n,k_n}(x,0)\in R_n\subset R_{n+m}$  is a basis of $M$ over $(R_{i+m})_{i=0}^n$ then 
  $b_{11},\ldots ,
b_{n,k_n}\in M$  
is a basis of  $M$ over $(R_{i+m})_{i=0}^n$.
  \end{lemma}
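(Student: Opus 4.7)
The plan is to show that the graded $K$-linear map
$$
\Phi:\bigoplus_{i,j}R_{i+m}(-d_{ij})\longrightarrow R_{n+m},\qquad (c_{ij})\longmapsto \sum_{i,j}c_{ij}\,b_{ij},
$$
is bijective, where $d_{ij}=\deg(b_{ij})$. By hypothesis the analogous map $\Phi_0$ built from $b_{ij}(x,0)$ is bijective, so $\Phi$ and $\Phi_0$ share source and target; as each graded piece of the source is a finite-dimensional $K$-space with dimension equal to that of $R_{n+m}^{(d)}$, it suffices to prove that $\Phi$ is injective in each degree.

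The key is to filter by $y$-degree. First I would use the hypothesis $\deg(b_{ij})=\deg(b_{ij}(x,0))$ to write
$$
b_{ij}=b_{ij}(x,0)+r_{ij},\qquad r_{ij}\in (y_1,\dots,y_m)\cdot R_{n+m},
$$
with $r_{ij}$ homogeneous of degree $d_{ij}$. Using $R_{i+m}=R_i[y_1,\dots,y_m]$, write each $c_{ij}=\sum_{\alpha\in\NN^m}c_{ij}^{(\alpha)}y^{\alpha}$ with $c_{ij}^{(\alpha)}\in R_i$, and let $c_{ij}^{\{k\}}:=\sum_{|\alpha|=k}c_{ij}^{(\alpha)}y^{\alpha}\in R_{i+m}$ denote the $y$-degree $k$ piece. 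Now assume $\sum_{i,j}c_{ij}b_{ij}=0$. The $y$-degree $0$ component of this relation is $\sum_{i,j}c_{ij}^{\{0\}}b_{ij}(x,0)=0$, and since $c_{ij}^{\{0\}}\in R_i\subset R_{i+m}$ and $\{b_{ij}(x,0)\}$ is a basis of $R_{n+m}$ over $(R_{i+m})$, each $c_{ij}^{\{0\}}$ vanishes.

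Induct on $k$: assuming $c_{ij}^{\{\ell\}}=0$ for $\ell<k$, the $y$-degree $k$ part of $\sum c_{ij}b_{ij}=0$ only receives contributions from the $c_{ij}^{\{k\}}b_{ij}(x,0)$ terms (contributions from $r_{ij}$ are paired with $c_{ij}^{\{\ell\}}$ for $\ell<k$, hence vanish by induction), so the relation reduces to $\sum_{i,j}c_{ij}^{\{k\}}b_{ij}(x,0)=0$ in $R_{n+m}$ with $c_{ij}^{\{k\}}\in R_{i+m}$. Applying the basis hypothesis again forces $c_{ij}^{\{k\}}=0$ for all $i,j$. Iterating through all $k$ gives $c_{ij}=0$, so $\Phi$ is injective and therefore bijective, which is exactly the conclusion.

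The only delicate point is the simultaneous bookkeeping of the total degree (preserved throughout, and used to reduce to finite-dimensional pieces with matching $K$-dimension) and the $y$-degree filtration (driving the inductive extraction of coefficients); once this is set up, the hypothesis supplies exactly the linear independence needed at each inductive step, so no further ingredient is required.
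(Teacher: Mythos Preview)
Your argument is correct and rests on the same core idea as the paper's proof: exploit the $y$-degree filtration together with the fact that $b_{ij}-b_{ij}(x,0)\in(y_1,\dots,y_m)R_{n+m}$ to reduce every relation to one among the $b_{ij}(x,0)$, where the hypothesis applies. The organization, however, differs. The paper proceeds by a double induction: first on $m$ (peeling off one $y$-variable at a time), and for each step by induction on total degree, proving generation and independence separately by the substitution $y_1=0$. You instead treat all $y$-variables at once via the total $y$-degree filtration, prove only injectivity, and then invoke equality of $K$-dimensions in each graded piece (inherited from the bijectivity of $\Phi_0$) to conclude surjectivity. Your packaging is cleaner and avoids the outer induction on $m$; the price is that the dimension-count step, as you phrase it (``finite-dimensional $K$-space''), presumes $K$ is a field, whereas the paper's direct proof of generation works over any commutative ring with identity. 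This is easily repaired in your framework: your filtration argument actually shows that $\Phi_0^{-1}\Phi=I+N$ with $N$ strictly raising $y$-degree, hence nilpotent in each total degree, so $\Phi$ is bijective regardless of $K$.
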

  
\begin{proof} 
We  apply  double induction on $m$ and the grading $s$. Suppose the theorem is valid for $m-1$ and any $n$. Let  ${b^1_{ij}}\in R^{n+m}$ denote the restrictions of the elements  $b_{ij}$ to $R_{n+1}=R_{n+m}/(y_2,\ldots,y_m)$.

We show, by induction on the degree $s$, that $({b^1_{ij}})$ generate $M$ and are independent. Note that ${b^1_{ij}}(x,0)=b_{ij}(x,0)$.
Consider any element $a(x,y)$ in the $s$-grading of $R_{n+1}$.
Then by the assumption we can write  $a(x,0)=\sum c_{ij}b_{ij}(x,0)$, where $c_{ij}\in R_{i}=K[x_{n-i+1},\ldots,x_n]$. Consequently, 
%
$a-\sum c_{ij}{b^1_{ij}}=y_1a'$. By  induction $a'=\sum c'_{ij}b^1_{ij}$, where $c'_{ij}\in R_{i}[y_1]$  and 
$$
a=\sum (c_{ij}+y_1c'_{ij}){b^1_{ij}}
$$ 
with  $y_1c'_{ij}\in R_i[y_1]$.

Now suppose $\sum c_{ij}{b^1_{ij}}=0$ with $c_{ij}\in R_{i}[y_1]$ is a relation of the smallest degree.  
Taking  it   modulo $y_1$ we get $\sum {c_{ij}}(x,0)b^1_{ij}(x,0)=0$. By the assumption $b^1_{ij}(x,0)=b_{ij}(x,0)$ is a basis of $R_n$.
This implies $c_{ij}(x,0)=0$ and $c_{ij}=y_1c'_{ij}$.
Consequently, $\sum c'_{ij}b_{ij}=0$, with the smallest degree of $c'_{ij}$. We have shown that the restrictions of ${b_{ij}}$ to $R_{n+1}$ are a basis of $R_{n+1}$. By the inductive assumption 
on $m-1$ (or by repeating the argument $m-1$ times) the elements ${b_{ij}}$ form a basis of  $R_{n+1+(m-1)}=R_{n+m}$. 
\end{proof}

\begin{lemma}\label{restriction} 
Consider the $\cE_{n+m}$-module  $M=\cE_{n+m}(x,y)$-module with standard filtration $M_i=m_n^j\cdot \cE_{n+m}$ over ${\cE_{m}}$ such that 
 $m_n^j\cdot M_s\subset M_{s+j}$, and  each $M_s/M_{s+1}$ is a free $\cE_m(y)$-module. 
 If 
$$
\inn({b}_{11})(x,0),\ldots ,
\inn({b}_{n,k_n})(x,0)\in \gr(M/m_m)=R_n 
$$  
is a basis of $\gr(M/m_m)$ over $({R_i})_{i=0}^n$ 
then   
$$
b_{11},\ldots , 
b_{n,k_n}\in M
$$  
is a basis of $M$ over $(\cE_{i+m})_{i=0}^n$ with respect to a certain filtration $M_T$ on $\cE_{n+m}(x,y)$ extending the standard filtration on $\cE_n$.

Moreover, if $\deg(\inn({b}_{ij})(x,0))=\ord(b_{ij})$ then 
$
b_{11},\ldots , 
b_{n,k_n}\in M
$  
is a basis of $M$ over $(\cE_{i+m})_{i=0}^n$ with respect to the standard filtration on $\cE_{m+n}$.
\end{lemma}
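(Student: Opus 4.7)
My plan is to set the filtration $M_T$ to be the $m_n$-adic filtration on $\cE_{n+m}$ (corresponding to the nonnegative linear form $T$ with $T(x_i)=1$, $T(y_j)=0$) and to reduce the statement via Theorem~\ref{imp3} to a purely graded question about $\gr_T(\cE_{n+m}) = \cE_m[x_1,\ldots,x_n]$ viewed over $(\gr_T(\cE_{i+m}))_{i=0}^n = (\cE_m[x_{n-i+1},\ldots,x_n])_{i=0}^n$.  The restriction of $M_T$ to $\cE_n\subset\cE_{n+m}$ is exactly the $m_n$-adic filtration on $\cE_n$, so $M_T$ extends the standard filtration on $\cE_n$ as required, and the hypothesis $m_n^j\cdot M_s\subset M_{s+j}$ of Theorem~\ref{imp3} is immediate.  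Theorem~\ref{imp3} then tells us that $\{b_{ij}\}$ is a basis of $\cE_{n+m}$ over $(\cE_{i+m})_{i=0}^n$ if and only if $\{\inn(b_{ij})\}$ is a basis of $\cE_m[x]$ over $(\cE_m[x_{n-i+1},\ldots,x_n])_{i=0}^n$ in the graded sense, so it suffices to establish this graded statement.

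To establish the graded statement I would proceed in two steps.  First, applying Lemma~\ref{restriction2}(3) with $K' = \cE_m$ upgrades the hypothesis -- that $\{\inn(b_{ij})(x,0)\}$ is a basis of $R_n$ over $(R_i)_{i=0}^n$ -- to the statement that these same elements form a basis of $R_n\otimes_K\cE_m = \cE_m[x]$ over $(R_i\otimes_K\cE_m) = (\cE_m[x_{n-i+1},\ldots,x_n])_{i=0}^n$.  Second, the actual initial forms $\inn(b_{ij})$ differ from their restrictions $\inn(b_{ij})(x,0)$ only by terms in $m_m\cdot\cE_m[x]$; to lift the basis property from the restrictions to the full forms, I adapt the proof of Lemma~\ref{restriction3} from the polynomial coefficient ring $K[y]$ to the smooth coefficient ring $\cE_m$, inducting on the $m_m$-adic filtration of $\cE_m$ and iterating Nakayama together with Malgrange preparation (Corollary~\ref{Malg}) in place of the elementary polynomial manipulations in $K[y]$.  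The degree compatibility assumed in Lemma~\ref{restriction3} is automatic here, because both $\deg(\inn(b_{ij})(x,0))$ and $\deg(\inn(b_{ij}))$ equal $\ord_T(b_{ij})$: the basis hypothesis forces the value at $y=0$ to be nonzero and homogeneous of the same $x$-degree.

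For the moreover part, the additional hypothesis $\deg(\inn(b_{ij})(x,0)) = \ord(b_{ij})$ identifies the $T$-order with the standard $m_{n+m}$-adic order on each $b_{ij}$, so the standard initial forms agree with the $T$-initial forms, the standard filtration on $\cE_{n+m}$ refines consistently, and the same argument runs verbatim with the standard filtration replacing $M_T$.  I expect the main difficulty in the second paragraph: adapting Lemma~\ref{restriction3} to smooth coefficients is delicate because $\cE_m$ is neither Noetherian nor polynomial, and $m_m$-adic separation holds only modulo flat functions $m_m^\infty$.  However, this is precisely why Definition~\ref{Stanley} allows the kernel of a Stanley quasi-isomorphism to lie in $\bigoplus m_i^\infty\cdot N^i$, so the looser smooth-category notion of basis absorbs exactly the non-separated part and the lifting argument goes through cleanly.
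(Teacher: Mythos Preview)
Your choice of $T$ with $T(x_i)=1$ and $T(y_j)=0$ is the source of the difficulty, and the paper avoids it entirely.  With this $T$ the grading is only \emph{nonnegative}, not positive in the sense used throughout Section~4: the associated graded $\gr_T(\cE_{n+m})\simeq\cE_m[x]$ is not a finite $R_{n+m}$-module, so Theorem~\ref{imp3} as stated does not apply.  Your plan to ``adapt the proof of Lemma~\ref{restriction3} from $K[y]$ to $\cE_m$'' and to rerun Theorem~\ref{imp3} over the base $\cE_m$ is a nontrivial extension of the machinery that you have not carried out; it is precisely this extra work that the paper's argument sidesteps.  In the ``moreover'' part your claim that the $T$-initial forms agree with the standard initial forms is also incorrect: the $T$-initial form of $b_{ij}$ lives in $\cE_m[x]$ (lowest $x$-degree part), while the standard initial form lives in $K[x,y]$ (lowest total-degree part); these coincide only after restricting to $y=0$, not in general.

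The paper instead takes $T=T_M:=x_1+\cdots+x_n+M(y_1+\cdots+y_m)$ with $M\gg 0$.  This is a \emph{positive} form on $\NN^{n+m}$, so $\gr_T(\cE_{n+m})=R_{n+m}$ (with $T$-grading) is polynomial and finite, and Theorem~\ref{imp3} applies verbatim.  The large weight $M$ forces $\inn_T(b_{ij})=\inn(b_{ij})(x,0)$ automatically, so no lifting step is needed: the hypothesis already says these form a basis of $R_n$ over $(R_i)_{i=0}^n$, Lemma~\ref{restriction2} upgrades this to a basis of $R_{n+m}$ over $(R_{i+m})_{i=0}^n$, and Theorem~\ref{imp3} finishes.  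For the ``moreover'' part the paper works directly with the standard filtration: the extra hypothesis $\deg(\inn(b_{ij})(x,0))=\ord(b_{ij})$ puts us exactly in the situation of Lemma~\ref{restriction3} on $R_{n+m}$, and Theorem~\ref{imp3} again transfers the conclusion to $\cE_{n+m}$.  The moral is that choosing a positive $T$ with heavy $y$-weight keeps everything polynomial and lets the existing lemmas do the work; your nonnegative $T$ pushes the smooth ring $\cE_m$ into the graded picture and forces you to redo the foundations.
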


\begin{proof} 
There exists a monomial grading $T=T_M$, where 
$$
T_M(x_1,\ldots,x_n,y_1,\ldots,y_m)=x_1+\ldots+x_n+My_1+\ldots+My_m,
$$
for sufficiently large $M\gg 0$ such that $\inn_T(b_{ij})=\inn_T(b_{ij})(x,0)=\inn(b_{ij})(x,0)$. Then $\inn_T(b_{ij})=\inn(b_{ij})(x,0)$ is a basis of $R_n$ over $(R_i)^n_{i=0}$, and,  by the previous lemma, it is a basis 
of $R_{n+m}$  over $(R_{i+m})^n_{i=0}$. Consequently, 
%
$\inn_T(b_{ij})=\inn_T(b_{ij})(x,0)$ is a basis of $R_{n+m}$ over $(R_{i+m})^m_{i=0}$, and by Theorem \ref{imp3},  $b_{ij}$ is a basis of $\cE_{m+n}$ with respect to  the $T$-filtration. 

For the ``moreover'' part, observe that 
%
if $\inn(b_{ij})(x,0)$ is a basis of $R_{n+m}$ over $(R_{i+m})^m_{i=0}$ then $\inn(b_{ij})$ is a basis of $R_{n+m}$ over $(R_{i+m})^m_{i=0}$ by Lemma \ref{restriction3}.  
Thus,  by Theorem \ref{imp3}, $b_{ij}$ is a basis of $\cE_{m+n}$ with respect to  the standard filtration. 
\end{proof}

\begin{corollary}\label{imp} 
Let  $M$ be any filtered $\cE_n$-module,  $M_0\subset M$ an $\cE_n$-module with induced filtration, and $\gr(M)$  the induced graded module with graded submodule $\gr(M_0)$.
Assume there exists a basis $(\overline{b}_{ij})_{(i,j)\in S}$ of $\gr(M)$ and a decomposition $S=S_1\cup S_2$ such that $(\overline{b}_{ij})_{(i,j)\in S_1}$ is a basis of $\gr(M_0)$.
Then there exists a basis $({b}_{ij})_{(i,j)\in S}$ of $M$ 
%
such that: 
\begin{enumerate}
%
\item $\inn(b_{ij})
=\overline{b}_{ij}$.

\item $({b}_{ij})_{(i,j)\in S_1}$ is a basis of $M_0$. 

\item $({b}_{ij})_{(i,j)\in S_2}$ is a basis of $M/M_0$.
\end{enumerate}
\end{corollary}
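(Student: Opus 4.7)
The plan is to lift the given graded basis in a filtration-compatible way, keeping the pieces indexed by $S_1$ inside $M_0$, and then to invoke the stabilization criterion Theorem \ref{imp3} three separate times. Concretely, for each $(i,j)\in S$ write $d_{ij}$ for the degree of $\overline{b}_{ij}$. For $(i,j)\in S_2$ I would pick any lift $b_{ij}\in M_{d_{ij}}$ of $\overline{b}_{ij}\in M_{d_{ij}}/M_{d_{ij}+1}$; for $(i,j)\in S_1$, the hypothesis that $(\overline{b}_{ij})_{(i,j)\in S_1}$ is a basis of $\gr(M_0)$ lets me choose a lift $b_{ij}\in(M_0)_{d_{ij}}$. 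Because the filtration on $M_0$ is the induced one, $(M_0)_{d_{ij}}\subset M_{d_{ij}}$ and the class of $b_{ij}$ modulo $M_{d_{ij}+1}$ equals $\overline{b}_{ij}$; in every case $\inn(b_{ij})=\overline{b}_{ij}$, which is condition (1).

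Since by construction the initial forms $\inn(b_{ij})=\overline{b}_{ij}$, $(i,j)\in S$, form a basis of $\gr(M)$ over $(R_i)_{i=0}^n$, the implication $(3)\Rightarrow(1)$ of Theorem \ref{imp3} applied to $M$ shows that $\{b_{ij}\}_{(i,j)\in S}$ is a basis of $M$ over $(\cE_i)_{i=0}^n$. The same criterion applied to the filtered $\cE_n$-module $M_0$ (whose associated graded has $(\overline{b}_{ij})_{(i,j)\in S_1}$ as a basis by hypothesis and whose lifts are the elements $b_{ij}\in M_0$ chosen above) immediately yields (2).

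For (3), I would exploit the canonical isomorphism of graded $R_n$-modules
\[
\gr(M/M_0)\;\simeq\;\gr(M)/\gr(M_0),
\]
which holds precisely because the filtration on $M_0$ is induced from $M$: the inclusion $\gr(M_0)\hookrightarrow\gr(M)$ is injective, and in degree $s$ both sides equal $M_s/(M_{s+1}+(M_0\cap M_s))$. Under this identification, the images of $\overline{b}_{ij}$ with $(i,j)\in S_2$ form a basis of $\gr(M/M_0)$ over $(R_i)_{i=0}^n$, and these images coincide with the initial forms of the classes $[b_{ij}]\in M/M_0$. A third application of Theorem \ref{imp3}, now to $M/M_0$ equipped with its quotient filtration, produces the basis in (3). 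The only genuine checkpoint in the plan is the identification $\gr(M/M_0)\simeq\gr(M)/\gr(M_0)$, but this is a routine consequence of the induced-filtration hypothesis; once it is in hand, all three conclusions fall out as direct invocations of Theorem \ref{imp3}.
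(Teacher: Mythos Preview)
Your proof is correct and follows exactly the approach taken in the paper: lift each $\overline{b}_{ij}$ to an element $b_{ij}\in M$ with $\inn(b_{ij})=\overline{b}_{ij}$, taking care that $b_{ij}\in M_0$ whenever $(i,j)\in S_1$, and then invoke Theorem \ref{imp3}. Your version is in fact more explicit than the paper's one-line proof, spelling out the three separate applications of Theorem \ref{imp3} and the identification $\gr(M/M_0)\simeq\gr(M)/\gr(M_0)$ that justifies (3).
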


\begin{proof} 
It suffices to find elements $({b}_{ij})_{(i,j)\in S}$  in   $M$ such that $\inn(b_{ij})=\overline{b}_{ij}$, and ${b}_{ij}\in M_0$ whenever $(i,j)\in S_1$, and apply Theorem \ref{imp3}.
\end{proof}

\begin{definition} 
Let $M$ be a finite graded module and let $a_{ij}$ be its basis over $(R_i)_{i=0}^n$. We say that $a_{ij}$ \emph{majorizes} (respectively \emph{majorizes up to degree $d$}) a set of homogeneous elements $b_{ij}\in M$  if there is a bijective correspondence $a_{ij}\leftrightarrow b_{ij}$, preserving degrees $\deg(a_{ij})=\deg(b_{ij})$ and  multiplicaton rings $R_i$, such that the elements $b_{ij}$ generate $M$ over $(R_i)_{i=0}^n$ (respectively generate all $M_s$, where $s\leq d$).
\end{definition}

The following result  generalizes the stabilization theorem
for monotone diagrams \cite{BM2}, which plays a critical role in the Bierstone-Milman approach to desingularization and their use of the Hilbert-Samuel function.

\begin{theorem}[Stabilization for modules]\label{Stab} 
Let $R$ be a ring of polynomials over a commutative ring $K$ with identity and let $M=\bigoplus_{s\in \NN} M_s$ be any finite  $R$-module. Assume $M$ is generated over $(R_i)_{i=0}^n$ by a finite set of homogeneous 
elements $g_{ij}$ of degrees  $d_{ij}\in \NN$. Let $d(M)=\max\{ d_{ij}\}$.
Let $s_{ij'}\in M$ be any set of homogeneous elements of   degree $d_{ij}$ which generates $M_s$, where $s\leq d(M) +1$,
over $(R_i)_{i=0}^n$.  Then the elements  $s_{ij}\in M$  generate $M$ over $(R_i)_{i=0}^n$.
 
  Moreover, if 
   $g_{ij}$ is a basis of $M$ over $(R_i)_{i=0}^n$, and it majorizes $s_{ij}$ up to degree $d(M)+1$, then $s_{ij}$ is a basis of $M$ over $(R_i)_{i=0}^n$.
\end{theorem}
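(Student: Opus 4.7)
My plan is to argue by strong induction on the degree $s$, showing that $L_s = M_s$ for all $s$, where $L := \sum_{i,j} R_i \cdot s_{ij}$. The base case $s \leq d(M)+1$ is the hypothesis. For $s \geq d(M)+2$ the crucial reduction is that since every generator $g_{ij}$ has $d_{ij} \leq d(M) < s$, every element of $M_s$ is an $R_n$-combination $\sum c_{ij} g_{ij}$ with each $c_{ij}\in R_i$ of positive degree; hence $M_s = \sum_{k=1}^n x_k \cdot M_{s-1}$. Combining with the inductive hypothesis $M_{s-1} = L_{s-1}$ gives $M_s = \sum_k x_k \cdot L_{s-1}$, so it suffices to prove the closure property $x_k \cdot L_{s-1} \subseteq L_s$ for each variable $x_k$.

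To establish this closure, fix $k$ and expand $a' \in L_{s-1}$ as $a' = \sum_{i',j'} c_{i'j'}\, s_{i'j'}$ with $c_{i'j'}\in R_{i'}$. The summands of $x_k a'$ with $x_k \in R_{i'}$ are immediate: the coefficient $x_k c_{i'j'}$ stays in $R_{i'}$, so the term sits in $R_{i'}\,s_{i'j'} \subseteq L_s$. The remaining "bad" summands (those with $x_k \notin R_{i'}$) are handled by commuting: $x_k c_{i'j'}\, s_{i'j'} = c_{i'j'}\,(x_k s_{i'j'})$; since $\deg(x_k s_{i'j'}) = d_{i'j'}+1 \leq d(M)+1$, the degree-$(d_{i'j'}+1)$ part of the hypothesis yields $x_k s_{i'j'} = \sum_{p'',q''} h_{p''q''}\, s_{p''q''}$ with $h_{p''q''}\in R_{p''}$, and substituting produces $\sum (c_{i'j'} h_{p''q''})\, s_{p''q''}$. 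Summands with $p'' \geq i'$ have coefficients in $R_{p''}$ and belong to $L_s$; those with $p'' < i'$ remain of the same problematic form ``$R_i$-multiple of a lower-level $s_{pq}$'' and must be iterated. I expect this termination to be the main obstacle: the degree is preserved at every step, so the argument demands a secondary well-founded invariant, most naturally the lexicographic pair (degree, gap between coefficient ring and generator level); each rewriting step must be shown to strictly reduce this gap, using that the low-degree expansions supplied by the hypothesis never reintroduce the maximal gap, so after finitely many rounds every term is ``good'' and the whole sum lies in $L_s$.

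For the ``moreover'' statement, once the main part gives $M = \sum R_i s_{ij}$, consider the degree-preserving surjection $\Psi:F\to M$ with $F := \bigoplus_{i,j} R_i \tilde e_{ij}$ and $\Psi(\tilde e_{ij}) := s_{ij}$. Because $g_{ij}$ is already a Stanley basis, Corollary \ref{grading22} gives a formula for $H_M(s)$ that matches $\dim_K F_s$ in every degree. A degreewise surjection of graded $K$-modules with termwise equal Hilbert functions is a degreewise isomorphism, so $\Psi$ has trivial kernel and $\{s_{ij}\}$ is itself a Stanley basis of $M$ over $(R_i)_{i=0}^n$.
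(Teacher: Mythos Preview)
Your termination argument in the main part is the genuine gap, and the invariant you propose does not work. Consider $n=3$, $M=K[x_1,x_2,x_3]$ with the (redundant) generating system $g_{1,0}=1$, $g_{2,1}=x_2$, $g_{3,1}=x_1$, $g_{1,2}=x_1x_2$, so $d(M)=2$; take $s_{ij}=g_{ij}$. For $s\ge 4$ a typical bad term is $x_1\cdot c\,s_{2,1}$ with $c\in R_2$. Your rewriting replaces it by $c\,(x_1s_{2,1})=c\cdot x_1x_2$, and the hypothesis allows the expansion $x_1x_2=s_{1,2}$ (at level $1<2$), producing $c\,s_{1,2}$. Iterating on a monomial $x_2^ax_3^b$ of $c$ with $a\ge1$ factors out $x_2$ and requires expressing $x_2s_{1,2}=x_1x_2^2\in M_3$; the hypothesis again permits $x_1x_2^2=x_2\,s_{1,2}$, which hands back exactly $x_2^ax_3^b\,s_{1,2}$. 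So with admissible choices of the low-degree expansions your rewriting cycles: the ``gap between coefficient ring and generator level'' stays at $1$ and nothing decreases. The procedure can be made to terminate, but only by controlling \emph{which} expansion is used, and the hypothesis gives no canonical one.

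The paper sidesteps this completely with an outer induction on the number of variables. One passes to $\overline M=M/x_nM$, a module over a polynomial ring in $n-1$ variables with the shifted filtration $\overline R_{i-1}=R_i/(x_n)$; by induction the images $\overline{s_{ij}}$ generate $\overline M$. Lifting, any $a\in M_s$ is $\sum b_{ij}s_{ij}+x_na'$ with $b_{ij}\in R_i$ and $a'\in M_{s-1}$. Now only the single variable $x_n$ has to be absorbed, and since $s-1-d_{ij}>0$ forces $i\ge1$ and $x_n\in R_i$ for every $i\ge1$, the term $x_na'$ lands in $L_s$ immediately. Your argument effectively tries to absorb \emph{every} $x_k$ at once, and for $k<n$ this is exactly what fails without the outer induction.

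Your ``moreover'' argument is fine and coincides with the paper's: $M_s$ is free of known rank by Corollary~\ref{grading22}, and a surjection of free $K$-modules of equal finite rank over a commutative ring is an isomorphism (the matrix has a right inverse, hence unit determinant). Just say ``rank'' rather than ``$\dim_K$'' since $K$ is only assumed to be a commutative ring.
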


\begin{proof} 
We can label the set of generators as  $\{g_{ij}\mid j\in J_i,\, i=0,\ldots,n\}$, where $\{g_{ij}\mid j\in J_i\}$ are multiplied by elements in the rings $R_i=K[x_n,\ldots,x_{n-i+1}]$ for $i\geq 1$, and  $R_0=K$.

We use double induction: on the number of variables $n$ and on the degree. If $n=0$ then $M$ has only finitely many grades as it is generated over $K$ by finitely many elements of degree $\leq d$. For  $n\geq 1$ the elements $g_{ij}$ define a generating set for the $K[x_1,\ldots,x_{n-1}]$-module $\overline{M}:=M/(x_n\cdot M)$ over the rings 
$$
\overline{R_{i-1}}=R_i/(x_n)=K[x_{n-1},\ldots,x_{n-i+1}]
$$ 
for $i\geq 2$ and $\overline{R_0}=K$. More precisely, we need to relabel the classes of elements $g_{ij}$ accordingly. 
We set $\overline{J}_0:=J_0\cup J_1$, and $\overline{g_{0j}}=g_{0j}$ for $j\in J_0$ and $\overline{g_{0j}}=g_{1j}$ for $j\in J_1$.  For  $i\geq 1$ we set $\overline{g_{ij}}=g_{i-1,j}$ for $j\in \overline{J}_i=J_{i+1}$. It follows that the elements $g_{ij}$ generate $\overline{M}$ over $\overline{R_i}$.
By the inductive assumption  on the number of variables we conclude that the elements $s_{ij}$ generate $\overline{M}$ over $\overline{R_i}$.
Then we show by induction on the degree $s\in \NN$ that the elements $s_{ij}\in M$ generate $M_s$. By the assumption it is true for $s\leq  d(M)+1$.
Suppose it is true for all $s'<s$, where  $s>d(M)+1$. We can write any element $a\in M_s$  as 
$$
a=\sum b_{ij}s_{ij}+x_na',
$$ where $b_{ij}\in (R_i)_{s-d'_{ij}}$, $i=1,\ldots,n$. 
But $a'\in M_{s-1}$, so by the inductive assumption we can write  $a'=\sum b'_{ij}s_{ij}$, where again $b'_{ij}\in (R_i)_{s-1-d'_{ij}}$, and  since ${s-1-d'_{ij}}\geq s-1-d(M)> 0$ the elements $b'_{ij}$ are indexed by subscripts with  $i>0$. (The ring $R_0=K$ has only gradation $0$.)
Consequently, $x_na'=\sum x_nb'_{ij}s_{ij}$, where  $x_nb'_{ij}\in (R_i)_{s-d'_{ij}}$ with $i>0$, which yields 
$$
a=\sum (b_{ij}-x_nb'_{ij}) s_{ij}.
$$

Now if $g_{ij}$ is a basis which majorizes  a generating set $s_{ij}$ then by Corollary \ref{grading22} each $M_s$ is a free $K$-module and
$$
H_M(s)=\sum_{i,j} \phi (s-d_{ij},i)=\sum_{i,j} \phi (s-d'_{ij},i).
$$
But $M$ is the image of the direct sum of free $R_i$-modules $\bigoplus N_i$ generated by elements of degree $d'_{ij}=d_{ij}$.

Since both graded modules are free over $K$ and have the same $K$-ranks in each gradation then the epimorphism defines an isomorphism. (The relevant square matrix is invertible over $K$.)

Consequently,  $s_{ij}$ is a free basis of $M$ over $(R_i)_{i=0}^n$.
\end{proof}

\begin{lemma}\label{T}
   Let $M$ be a finitely generated graded $R$-module (with the standard gradation)  and let $\overline{T}:\NN^n\to \RR^k$ be a normalized grading. If
 $A:=\{a_{11},\ldots,a_{n,j_n}\}$ is a basis of $M$ over $(R_i)_{i=0}^n$, and $B:=\{b_{11},\ldots,b_{n,j_n}\}$ be a set in  bijective correspondence with $A$, such that  
$$
\deg(a_{ij})=\deg(b_{ij}),\quad \deg_{\overline{T}}(a_{ij}-b_{ij})>\deg_{\overline{T}}(a_{ij})
$$ 
for all $i,j$ then  $A$ majorizes $B$ over   $(R_i)_{i=0}^n$.
\end{lemma}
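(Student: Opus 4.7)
The plan is to combine Theorem \ref{Stab} with an initial-form argument modeled on the proof of the Weierstrass--Hironaka formal division theorem (Theorem \ref{formal}). First, by Theorem \ref{Stab} it suffices to show that $A$ majorizes $B$ up to degree $d(M)+1$, i.e., that the set
\[
\mathcal{B}_s := \{r\cdot b_{ij} : r\text{ a monomial in }R_i,\ \deg(r)+d_{ij}=s\}
\]
spans $M_s$ over $K$ for each $s\leq d(M)+1$. Since $|\mathcal{B}_s|=|\mathcal{A}_s|=\dim_K M_s$, spanning is equivalent to the $K$-linear endomorphism $\Phi_s:M_s\to M_s$ defined by $r\cdot a_{ij}\mapsto r\cdot b_{ij}$ being invertible.

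Writing $\Phi_s = I + U_s$ with $U_s(r\cdot a_{ij})=r(b_{ij}-a_{ij})$, additivity of $\overline{T}$ together with the hypothesis yields
\[
\ord_{\overline{T}}\bigl(U_s(r\cdot a_{ij})\bigr) = \overline{T}(r)+\ord_{\overline{T}}(b_{ij}-a_{ij}) > \overline{T}(r)+\ord_{\overline{T}}(a_{ij}) = \ord_{\overline{T}}(r\cdot a_{ij}),
\]
so $\inn_{\overline{T}}(r\cdot b_{ij}) = \inn_{\overline{T}}(r\cdot a_{ij})$ in the $\overline{T}$-associated graded of $M_s$. Thus $\Phi_s$ agrees with the identity on the $\overline{T}$-initial forms of every monomial basis element of $\mathcal{A}_s$.

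To conclude invertibility of $\Phi_s$, I would invoke the initial-form criterion of Theorem \ref{imp3}. Since $M_s$ is finite-dimensional, only finitely many $\overline{T}$-values occur on the monomials appearing in its decomposition by $\mathcal{A}_s$. On this finite set the lex-$\overline{T}$-order is realized by a single positive linear form $T' := T_1 + \varepsilon T_2 + \cdots + \varepsilon^{k-1}T_k$ for $\varepsilon>0$ sufficiently small, reducing the lex-$\overline{T}$ setting to a single $T'$-filtration. Applying Theorem \ref{imp3} (in its $T$-filtered form) to $M_s$ with this $T'$-filtration, a set is a basis of $M_s$ over $(R_i)_{i=0}^n$ if and only if its $T'$-initial forms are a basis of $\gr_{T'}(M_s)$. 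Because $\inn_{T'}(\mathcal{B}_s) = \inn_{T'}(\mathcal{A}_s)$ by the previous paragraph and $\mathcal{A}_s$ is a basis by hypothesis, $\mathcal{B}_s$ is also a basis, and in particular generates $M_s$.

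The main obstacle will be verifying that Theorem \ref{imp3} applies in the setting of a graded $R$-module equipped with the $T'$-filtration: specifically, one must check that the direct-sum structure of the Stanley decomposition $M_s = \bigoplus R_i\cdot a_{ij}$ passes through to the $T'$-associated graded, so that the $T'$-initial forms of the monomial basis elements $r\cdot a_{ij}$ remain linearly independent in $\gr_{T'}(M_s)$ and the lifting of bases from $\gr_{T'}(M_s)$ back to $M_s$ goes through unambiguously. Once this compatibility is in place, the argument follows formally by matching initial forms and invoking the stabilization theorem.
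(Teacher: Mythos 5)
Your central computation is exactly the paper's: write the transition map as $I+U$ where $U(r\cdot a_{ij})=r\,(b_{ij}-a_{ij})$, and observe that $U$ strictly increases $\ord_{\overline{T}}$. At that point the paper finishes in one line: on the $s$-th graded piece only finitely many values of $\overline{T}$ occur (it is spanned by finitely many monomial multiples of the $a_{ij}$), so $U$ is nilpotent there, $\Psi=I+U$ is invertible with inverse $I-U+U^2-\cdots$, and hence the $b_{ij}$ generate each $M_s$, i.e.\ $A$ majorizes $B$. You have all the ingredients for this but do not draw the conclusion; instead you route the final step through Theorem \ref{imp3} and a perturbed linear form $T'$, and you explicitly defer the needed compatibility of the Stanley decomposition with the $T'$-associated graded as "the main obstacle." That obstacle is an artifact of the detour: Theorem \ref{imp3} is a statement about $\cE_n$-modules versus their associated graded modules, and "a basis of $M_s$ over $(R_i)_{i=0}^n$" does not really parse for a single graded piece, which is just a finite-dimensional $K$-vector space. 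What you actually need is only that $\mathcal{B}_s$ spans $M_s$ over $K$, and that follows immediately from nilpotence of $U_s$.

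So: replace the appeal to Theorem \ref{imp3} by the geometric-series argument and the proof is complete (and then the reduction via Theorem \ref{Stab} is also unnecessary, since invertibility is obtained in every degree at once, not just up to $d(M)+1$). As written, the proposal leaves its last step resting on an unverified claim, so it is not yet a proof, but the missing piece is a simplification rather than a new idea.
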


\begin{proof} Consider the grade preserving $K$-algebra homomorphism
$$
\Psi:\sum c_{ij}a_{ij}\to \sum c_{ij}b_{ij},
$$ 
where $c_{ij}\in R_{i}$.
Then for any $x$ in the $s$-th gradation we have 
%
$\deg_{\overline{T}}(\Psi-I)(x)>\deg_{\overline{T}}(x)$, and $(\Psi-I)^n(x)=0$  for $n\geq \dim(R)_s$. Then $\Psi$ is a $K$-linear  isomorphism with  inverse $I-(\Psi-I)+(\Psi-I)^2+\ldots$ and defines the majorization.
\end{proof}

\begin{lemma}\label{open} Let $X$ be an affine variety over a field $K$ (repectively an analytic space or an open subset in $\RR^m$), and let $\cO(X)$ be the ring of regular functions on $X$. Consider the map evaluation at $x\in X$, 
$$
\pi_x:\cO_X[x_1,\ldots,x_k] \to K[x_1,\ldots,x_n].
$$
Let ${b_{ij}}\in \cO_X[x_1,\ldots,x_n]$ be  homogeneous elements whose evaluations at $x\in X$,  $\pi_x(b_{ij})=\overline{b_{ij}}$, form a basis of $K[x_1,\ldots,x_n]$ over $(K[x_i,\ldots,x_n])_{i=1}^n$. Then there is an open subset $U\subset X$ such that  
%
${b_{ij|U}}$ is 
a basis of  $\cO(U)[x_1,\ldots,x_n]$ 
over  $(\cO_U[x_i,\ldots,x_n])_{i=1}^n$.
\end{lemma}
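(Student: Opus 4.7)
The approach is to encode the Stanley basis condition degree-by-degree as the non-vanishing of a square determinant on $X$, and then apply the Stabilization Theorem~\ref{Stab} to reduce control of infinitely many degrees to control of the single range $s\leq d+1$, where $d:=\max_{i,j}d_{ij}$.

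For each $s\in\NN$, both the target $(\cO(X)[x_1,\ldots,x_n])_s$ and the ``Stanley candidate''
\[
N_s \;:=\; \bigoplus_{i,j}\bigl(\cO(X)[x_i,\ldots,x_n]\bigr)_{s-d_{ij}}\cdot e_{ij}
\]
are free $\cO(X)$-modules of finite rank. The $\cO(X)$-linear map $\Phi_s\colon N_s\to(\cO(X)[x_1,\ldots,x_n])_s$ given by $e_{ij}\mapsto b_{ij}$ has fibre $\Phi_s\otimes_{\cO(X)}K$ at $x$ equal to the corresponding map for the $\overline{b_{ij}}$, which is an isomorphism of $K$-vector spaces by hypothesis. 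Corollary~\ref{grading22}, applied to the Stanley basis $\{\overline{b_{ij}}\}$ of $K[x_1,\ldots,x_n]$, forces the two $\cO(X)$-ranks to coincide. Hence $\Phi_s$ is represented by a \emph{square} matrix $M_s$ with entries in $\cO(X)$, and $\det(M_s)(x)\neq 0$.

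Setting $d:=\max_{i,j}d_{ij}$, I take
\[
U \;:=\; \bigcap_{s=0}^{d+1}\{\,y\in X : \det(M_s)(y)\neq 0\,\},
\]
which is a finite intersection of open neighbourhoods of $x$ (in the algebraic, analytic, or differentiable sense), hence itself an open neighbourhood. On $U$ each $\Phi_s$ for $s\leq d+1$ is an isomorphism of free $\cO(U)$-modules, so the restrictions $\{b_{ij|U}\}$ generate $\cO(U)[x_1,\ldots,x_n]$ in every degree $\leq d+1$.

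The only genuine obstacle is the jump from finitely many degrees to all degrees: one cannot intersect infinitely many principal open subsets and still obtain an open neighbourhood. This is resolved by the Stabilization Theorem~\ref{Stab}, applied with commutative coefficient ring $\cO(U)$: generation through degree $d(M)+1=d+1$ automatically promotes to generation in every degree. To finally pass from generation to \emph{basis}, one may either invoke the ``moreover'' clause of Theorem~\ref{Stab} (on $U$, the set $\{b_{ij|U}\}$ is majorised by itself through the invertible change of basis $M_s$ for $s\leq d+1$), or observe directly that a surjection of free $\cO(U)$-modules of equal finite rank is automatically an isomorphism, forcing $\det(M_s)\in\cO(U)^{\times}$ for every $s$. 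Either route yields that $\{b_{ij|U}\}$ is a Stanley basis of $\cO(U)[x_1,\ldots,x_n]$ over $(\cO(U)[x_i,\ldots,x_n])_{i=1}^n$, as desired.
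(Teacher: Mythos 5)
Your proposal is correct and follows essentially the same route as the paper: encode generation in each degree $s\leq d+1$ by the invertibility of a finite square matrix over $\cO(X)$ (the paper's $A_s$, your $M_s$, which agree up to an invertible constant change of reference basis), observe these matrices are invertible at $x$, take $U$ to be the common non-vanishing locus of the finitely many determinants, and invoke the Stabilization Theorem~\ref{Stab} (its ``moreover'' clause, with the constant basis $\overline{b_{ij}}$ majorizing $b_{ij|U}$ up to degree $d+1$) to pass from degrees $\leq d+1$ to a basis in all degrees. The only cosmetic slip is the phrase ``majorised by itself''; the majorizing set is the constant basis $\overline{b_{ij}}$, but your alternative equal-rank-surjection argument covers this anyway.
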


\begin{proof}  By Lemma \ref{restriction2}, the set $(\overline{b_{ij}})$ can be thought of as a (Stanley's) basis of $\cO_X[x_1,\ldots,x_k]$. Then 
for any $\beta\in \NN^{i}$, and the function  

$x^\beta\cdot b_{ij}$, where $\deg(b_{ij})+|\beta|=s\leq d+1$, one can write 
$$
x^{\beta}\cdot b_{ij}=\sum_{\beta'\in \NN^{i'}, \deg(b'_{i'j'})+|\beta'|=s,} A_{\beta,ij:\beta'i'j'}\cdot x^{\beta'}\cdot \overline{b_{i'j'}}.
$$
Consider the matrix 
$$
A_s=\big[A_{\beta,ij:\beta'i'j'}\big]_{\deg(b_{ij})+|\beta|=s}.
$$
The evaluation  $A_s(x)$ at $x$ is the identity.
Thus  the subset  
$$
U:=\{y\in X\mid \det(A_s)(y)\neq 0\ \text{for}\  s\leq d+1\}
$$ 
is a  nonempty open subset such that $b_{ij}$ majorizes ${b_{ij|U}}$ up to degree $d+1$. Consequently,  ${b_{ij|U}}$ is a basis of 
$\cO(U)[x_1,\ldots,x_n]$.
\end{proof}
\subsection{Neighborhood versions of Stanley decomposition}

\begin{lemma}\label{T1} 
Let $\cF$ be a coherent  module of finite type  on a regular scheme $X$ over a field $K$ of dimension $n$, with a coordinate system $u_1,\ldots,u_n$ at a $K$-rational point $\overline{x}$. Let $\cO^h_{\overline{x}}\simeq K\langle u_1,\ldots,u_k\rangle$ denote the Henselianization of the local ring $\cO_{\overline{x}}$.
Suppose the induced $\cO^h_{\overline{x}}$-module 
$$
\cF_x^h:=\cF_x\otimes_{\cO_{\overline{x}}}\cO^h_{\overline{x}}\simeq (\cO^h_{\overline{x}})^d
$$ 
is free and finite. Then there exists  an \'etale neighborhood $X'\to X$ of $\overline{x}$ preserving the residue field $K$ such that the induced coherent sheaf $\cF$ on $X$ is free and finite over $\cO_{\overline{x},X'}$.
\end{lemma}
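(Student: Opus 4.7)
My plan is to lift a free basis of $\cF^h_x$ up to an \'etale neighborhood and then use faithful flatness of the Henselianization to conclude that the resulting map of coherent sheaves is an isomorphism on some smaller neighborhood of $\overline{x}$.

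First I would choose an $\cO^h_{\overline{x}}$-basis $e_1,\ldots,e_d$ of $\cF^h_x$. Since the Henselianization is the filtered colimit
$$
\cO^h_{\overline{x}} \;=\; \varinjlim_{(X',x')\to (X,\overline{x})} \cO_{x',X'}
$$
over pointed \'etale neighborhoods $(X',x')\to (X,\overline{x})$ preserving the residue field $K$, and since $\cF$ is of finite type, after passing to a sufficiently small \'etale neighborhood I may assume that each $e_i$ is the germ of a section in $\cF(X')$. These global sections determine a morphism of coherent $\cO_{X'}$-modules
$$
\phi\colon \cO_{X'}^d \longrightarrow \cF|_{X'},\qquad (a_1,\ldots,a_d) \longmapsto \sum_{i=1}^d a_i\,e_i.
$$

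Next I would examine the kernel $\mathcal{K}:=\ker(\phi)$ and cokernel $\mathcal{C}:=\operatorname{coker}(\phi)$, which are both coherent on $X'$. By the choice of the $e_i$, the induced map $\phi^h$ on Henselian stalks at $x'$ is the isomorphism $(\cO^h_{x',X'})^d \isomto \cF^h_{x'}$. Tensoring the exact sequence
$$
0\to \mathcal{K}_{x'}\to \cO_{x',X'}^d \to \cF_{x'}\to \mathcal{C}_{x'}\to 0
$$
with the flat $\cO_{x',X'}$-algebra $\cO^h_{x',X'}$ therefore yields $\mathcal{K}^h_{x'}=0$ and $\mathcal{C}^h_{x'}=0$. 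Since the Henselianization of a local Noetherian ring is faithfully flat, this forces $\mathcal{K}_{x'}=0$ and $\mathcal{C}_{x'}=0$.

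Finally, coherence of $\mathcal{K}$ and $\mathcal{C}$ together with the vanishing of their stalks at $x'$ implies vanishing on some Zariski open neighborhood $U\subset X'$ of $x'$. The restriction $\phi|_U$ is then an isomorphism $\cO_U^d\isomto \cF|_U$, so the composition $U\to X'\to X$ is the desired \'etale neighborhood on which $\cF$ becomes free of rank $d$. The only nontrivial ingredient is the faithful flatness of $\cO_{x',X'}\to \cO^h_{x',X'}$ invoked in the third step; once that is in hand, the rest is routine coherent-sheaf bookkeeping.
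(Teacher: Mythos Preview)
Your proof is correct and follows essentially the same approach as the paper: lift a basis of $\cF^h_x$ to sections over an \'etale neighborhood, form the map $\phi$, show it is an isomorphism on the stalk at $x'$ using properties of the Henselianization, and then spread out by coherence. The only cosmetic difference is that the paper deduces injectivity of $\phi_{x'}$ directly from the injectivity of $\cO_{x',X'}\hookrightarrow \cO^h_{x',X'}$ (and surjectivity from Nakayama), whereas you invoke full faithful flatness to kill both kernel and cokernel at once; either argument works.
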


\begin{proof} The module $\cF_x^h$ is generated by finitely many sections. Passing to an \'etale neighborhood $X'\to X$ we can assume that the generators are in $\cF(X')$. Then there is a surjective morphism $\phi:\cO_{X'}^d\to \cF_x$ of sheaves and the induced epimorphism  $\phi_x: \cO_{x,X'}^d\to \cF_x$ of stalks. Since the homomorphism $\cO_{x,X'}^d\to (\cO^h_{x,X'})^d$ is injective and  $(\cO^h_{x,X'})^d\to \cF_x^h$ is an isomorphism it follows that their  composition $\cO_{x,X'}^d\to \cF_x^h$ is injective. Thus  $\phi_x$ is also injective and is an isomorphism of stalks. The latter implies that $\phi$ is an isomorphism of sheaves in a Zariski neighborhood of $\overline{x}$.
	
\end{proof}

\begin{theorem}\label{NS1} 
Let $\cF$ be a coherent module on a smooth scheme $X$ over a field $K$ of dimension $n$, with a coordinate system $u_1,\ldots,u_n$ at a $K$-rational point $\overline{x}$  defining subschemes $X_i:=V(u_1,\ldots,u_{n-i+1})\ni \overline{x}$, $X_0=\{\overline{x}\}\subset X_1\subset\ldots\subset X_n=X$. 
Denote by $\cO_{\overline{x},X}$ the local sheaf at  $\overline{x}$ and by ${\cO^h_{\overline{x},X}}= K\langle u_1,\ldots,u_n\rangle$ its Henselianization.
Suppose  there exists  a Stanley basis $b_{ij}\in\cF_x$	of
$\cF_x^h:=\cF_x\otimes_{\cO_{\overline{x},X}}\cO_{\overline{x},X}^h$ over  ${\cO^h_{\overline{x},X}}$.

Then  there exist an \'etale  neighborhood $X'\to X$ of $\overline{x}$ preserving the residue field $K$ with induced subschemes $X'_0\subset X'_1\subset \ldots\subset X'_n=X'$ with $\cO(X'_i)\subset K\langle u_{n-i+1},\ldots,u_n\rangle$ and (smooth) projections $\pi_i: X'\to X'_i$ such that
$$
\cF(X')=\bigoplus_{ij} \cO(X'_i)b_{ij}.
$$
\end{theorem}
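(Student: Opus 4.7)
The plan is to induct on $n$, using Corollary \ref{coherent2} (the algebraic Hironaka-Hensel Lemma) to spread the Henselian Stanley decomposition onto an étale neighborhood, together with Theorem \ref{imp3} and the Stabilization Theorem \ref{Stab} to control the direct-sum property. After an initial étale shrinking preserving the residue field $K$, we may assume every $b_{ij}$ is a global section of $\cF$. Iteratively applying Theorem \ref{Mi2} to the coordinates $u_1,\ldots,u_n$ produces a tower of étale neighborhoods $X'_0\subset X'_1\subset\ldots\subset X'_n=X'$ with $\cO(X'_i)\subset K\langle u_{n-i+1},\ldots,u_n\rangle$, smooth projections $\pi_{i,i-1}\colon X'_i\to X'_{i-1}$ of relative dimension one, and compositions $\pi_i\colon X'\to X'_i$ whose restriction to Henselian stalks recovers the inclusion $\cO^h_{\overline{x},X_i}\hookrightarrow\cO^h_{\overline{x},X}$. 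The base case $n=0$ is immediate since $\cF^h_{\overline{x}}$ is finite dimensional over $K$ with basis $\{b_{0,j}\}$.

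For the inductive step, let $M_n\subset\cF^h_{\overline{x}}$ be the submodule generated by $b_{n,1},\ldots,b_{n,k_n}$, which by the Stanley property is free of rank $k_n$ over $\cO^h_{\overline{x},X}$; the quotient $\overline{\cF}^h_{\overline{x}}:=\cF^h_{\overline{x}}/M_n$ inherits a Stanley basis $\{b_{ij}\}_{i<n}$ over the shorter tower $(\cO^h_{\overline{x},X_i})_{i=0}^{n-1}$. Since the $K$-vector space $\overline{\cF}^h_{\overline{x}}/\pi_{n-1}^*(m_{\pi_{n-1}(\overline{x})})\overline{\cF}^h_{\overline{x}}$ is spanned by the finitely many classes of the remaining $b_{ij}$, Corollary \ref{coherent2} applies to the coherent sheaf $\cF/\cM_n$ on $X'$ along the smooth projection $\pi_{n-1}$, producing — after further étale shrinking — a coherent $\cO_{X'_{n-1}}$-module $(\pi_{n-1})_*(\cF/\cM_n)$ whose stalk at $\pi_{n-1}(\overline{x})$ equals $(\cF/\cM_n)_{\overline{x}}$. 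Applying the inductive hypothesis to this pushforward on $X'_{n-1}$ then gives, after one more shrinking,
$$(\cF/\cM_n)(X') \;=\; \bigoplus_{i<n,\,j}\cO(X'_i)\,b_{ij}.$$

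To splice this with a decomposition for $\cM_n$, invoke Lemma \ref{T1}: the Henselian stalk $(\cM_n)^h_{\overline{x}}=M_n$ is free over $\cO^h_{\overline{x},X}$ on the basis $\{b_{n,j}\}$, hence after a further étale shrinking $\cM_n$ is free over $\cO_{X'}$ on the same basis. The short exact sequence $0\to\cM_n\to\cF\to\cF/\cM_n\to 0$ splits on Henselian stalks by the Stanley decomposition, and the splitting assigns to each $b_{ij}$ with $i<n$ the section already in $\cF(X')$, so it descends to a splitting of $\cO(X')$-modules after shrinking, yielding
$$\cF(X') \;=\; \cM_n(X')\oplus(\cF/\cM_n)(X') \;=\; \bigoplus_{ij}\cO(X'_i)\,b_{ij}.$$
The main obstacle is verifying the \emph{direct-sum} rather than mere generation after this spreading: a priori, an $\cO(X')$-linear relation among the $b_{ij}$ on $X'$ could collapse to zero only after passing to the Henselian completion. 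This is controlled by combining Theorem \ref{imp3}, which translates the basis condition into an equivalent graded condition on $\gr(\cF^h_{\overline{x}})$ over the polynomial tower $(R_i)$, with Theorem \ref{Stab}: it suffices to verify the basis property on $X'$ up to a uniform finite degree $d(\cF)$, and this last finite check follows from the corresponding check on the Henselian stalk by shrinking so that the relevant finite determinants (as in the argument of Lemma \ref{open}) remain invertible.
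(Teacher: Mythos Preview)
Your proof follows the same inductive scheme as the paper: split off the free top piece $\cM_n$ (the paper writes $\cG$), push the quotient $\cF/\cM_n$ down along $\pi_{n-1}$ via Corollary~\ref{coherent2}, and apply induction on $X'_{n-1}$; the paper also uses Theorem~\ref{Mi2} to build the projections and (implicitly) Lemma~\ref{T1} to make $\cM_n$ free on an \'etale neighborhood.

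Your final paragraph, however, is unnecessary and somewhat misplaced. Once $\cM_n(X')$ is free on $\{b_{nj}\}$ and the quotient is decomposed as $\bigoplus_{i<n,j}\cO(X'_i)b_{ij}$ by induction, the direct-sum property of the combined map $\phi$ is automatic from the short exact sequence: any element of $\ker\phi$ has zero image in the quotient, hence (by the inductive isomorphism) vanishing $\bigoplus_{i<n}$-part, hence lies in $\cM_n(X')$, which embeds into $\cF(X')$ because it is free. This is exactly the one-line injectivity argument the paper gives. Invoking Theorems~\ref{imp3} and~\ref{Stab} and Lemma~\ref{open} here is overkill and does not obviously apply as stated, since those results concern graded/filtered modules and finite-degree determinant checks, not the coherent-sheaf spreading you have already carried out. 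Also, the splitting you need is not ``descended'' from the Henselian stalk: it is constructed directly on $X'$ by sending the class of $b_{ij}$ in the quotient back to the global section $b_{ij}\in\cF(X')$, so no further shrinking or stalk argument is required at that point.
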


\begin{proof}
The elements $b_{nj}$ are defined in a certain affine \'etale neighborhood $X'$ of $X$, and they generate a coherent free submodule $\cG$ for ``sufficiently small'' $X'$, that is $$\cG(X')=\bigoplus \cO(X')b_{ni}.$$
Then $\cF_{n-1}:=\cF/\cG$ is a coherent  $\cO_{X'}$-module.
By 
%
Theorem \ref{Mi2}, one can modify $X'$ so that there exists the natural projection $\pi_{n-1}: X'\to X'_{n-1}$ on the subscheme $X'_{n-1}$ (defined by $u_1=0$) induced by the inclusions $\cO(X'_{n-1})\subset K\langle u_2,\ldots,u_n\rangle\subset K\langle u_1,\ldots,u_n\rangle$.
Then 
%
\begin{align*}
(\cF_{n-1}^h)_{\overline{x}}/((u_2,\ldots,u_n)\cdot (\cF_{n-1})_{\overline{x}})^h)
&=(\cF_{n-1})_{\overline{x}}\otimes_{\cO_{\overline{x},X}}({\cO_{\overline{x},X}}^h/((u_2,\ldots,u_n)\cdot {\cO_{\overline{x},X}}^h)\\
&=
(\cF_{n-1})_{\overline{x}}\otimes_{\cO_{\overline{x},X}}({\cO_{\overline{x},X}}/((u_2,\ldots,u_n)\cdot {\cO_{\overline{x},X}})\\
&=(\cF_{n-1})_{\overline{x}}/((u_2,\ldots,u_n)\cdot (\cF_{n-1})_{\overline{x}})=\cF_{n-1}/\pi_{n-1}^*(m_{\overline{x},X'_{n-1}})
\end{align*}
is a  finitely generated  vector space over $K$.  Thus, by 
%
Theorem \ref{coherent2}, 
after  possibly passing to an \'etale neighborhood of $X'$, the sheaf $\overline{\cF}_{n-1}:=\pi_{1*}(\cF_{n-1})_{|X'_{n-1}}$  is coherent 
 on $X_{n-1}'$ with $$\overline{\cF}_{n-1}(X'_{n-1})=\cF_{n-1}(X')={\cF}(X')/G(X').$$ Then by the inductive assumption applied to $\overline{\cF}_{n-1}$, after passing to an affine \'etale neighborhood $X''_{n-1}\to X'_{n-1}$, and the induced \'etale neighborhood 
$X''\to X'$, 
we have the natural isomorphism of $K$-vector spaces 
$$
\phi':\bigoplus_{i=0}^{n-1} \bigoplus_j\cO(X''_i)b_{ij}\to \overline{\cF}_{n-1}(X''_{n-1})=\cF_{n-1}(X'')=\cF(X'')/\cG(X''), 
$$ 
	which implies the existence of a natural  isomorphism of $K$-vector spaces
	$$
\phi: \cG(X'')\oplus \bigoplus_{i=0}^{n-1}\bigoplus_j \cO(X'_i)b_{ij}\simeq \bigoplus_{i=0}^{n}\bigoplus_j \cO(X''_i)b_{ij}\to \cF(X'').
$$ 
	The fact that $\phi$ is surjective follows from  surjectivity of $\phi'$, while  injectivity follows from injectivity of  $\phi_{|\cG(X'')}:\cG(X'')\to \cF(X'')$.
\end{proof}

A similar result is valid in the holomorphic setting (with the same proof).

\begin{theorem}\label{NS2} 
Let $\cF$ be a coherent module on a domain $U=U_1\times\ldots \times U_n$ in $\CC^n$ containing $0$, where $U_i\subset \CC$. 
Consider the natural projection  $\overline{\pi}_i: U\to \overline{U}_i:=U_i\times\ldots\times U_n$. If the stalk $\cF_0$	 admits a Stanley basis $b_{ij}$ then there exist   neighborhoods $V_i\subset U_i$ containing $0$ such that  for $\overline{V}_i:=V_{i}\times\ldots\times V_n$, $i=1,\ldots,n$,  $\overline{V}_0=\{0\}$ there is  an isomorphism

$$
\cF(\overline{V}_n)=\bigoplus_{i=0}^{n}\bigoplus_{j} \cO(\overline{V}_i)b_{ij}.
$$
\end{theorem}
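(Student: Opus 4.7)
The plan is to mirror the proof of Theorem \ref{NS1} in the holomorphic category, substituting Theorem \ref{Mi1} (Weierstrass division for holomorphic functions in a neighborhood) for Theorem \ref{Mi2}, and Corollary \ref{GR} (the Grauert--Remmert finiteness theorem) for Corollary \ref{coherent2}. I would induct on $n$. For $n=0$ the stalk $\cF_0$ is a finite-dimensional $\CC$-vector space and the basis $(b_{0j})$ provides the decomposition on $\{0\}$ at once.

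For the inductive step, I isolate the generators $b_{nj}$ of the Stanley basis acted on by the full ring $\cO_0=\CC\{u_1,\ldots,u_n\}$. Being part of a Stanley basis, they span a free $\cO_0$-submodule of $\cF_0$, and coherence of $\cF$ lets me represent these as sections of $\cF$ on some polydisc $V$ around $0$; after shrinking $V$ the induced sheaf map $\bigoplus_j \cO_V \to \cF_{|V}$ remains injective with image a free coherent subsheaf $\cG$ satisfying $\cG(V)=\bigoplus_j \cO(V)\,b_{nj}$. The quotient $\cF':=\cF_{|V}/\cG$ is coherent, and its stalk at $0$ inherits $(b_{ij})_{i<n}$ as a Stanley basis over the truncated ring chain by Corollary \ref{imp} applied in the analytic stalk.

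Next, consider the projection $\pi\colon\CC^n\to\CC^{n-1}$ onto the last $n-1$ coordinates. Since the classes of the $b_{(n-1)j}$ (together with the lower $b_{ij}$) span the finite-dimensional $\CC$-vector space $\cF'_0/(u_1\cdot\cF'_0)$, Corollary \ref{GR} applies to $\pi$ and $\cF'$: after shrinking to a polydisc of the form $W\times V'$ with $W\subset \CC$ and $V'\subset\CC^{n-1}$, the direct image $\pi_*(\cF'_{|W\times V'})$ is coherent on $V'$ and
\[
(\pi_*\cF')(V') \;=\; \cF'(W\times V') \;=\; \cF(W\times V')/\cG(W\times V').
\]
Applying the inductive hypothesis to $\pi_*\cF'$ on $V'$ with Stanley basis $(b_{ij})_{i<n}$ yields, after a further shrinking of $V'$ to suitable nested polydiscs, a direct sum decomposition of its global sections. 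Adjoining the free summand $\cG(W\times V')$ spanned over $\cO(W\times V')$ by the $b_{nj}$ assembles the desired decomposition
\[
\cF(W\times V') \;=\; \bigoplus_{i=0}^{n}\bigoplus_j \cO(\overline{V}_i)\,b_{ij}.
\]

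The main obstacle is the consistent bookkeeping of shrinking polydiscs so that holomorphic Weierstrass division, the Grauert--Remmert pushforward for $\pi$, and the inductive step all live on the same open set; once this is arranged, surjectivity of the assembled map follows from the inductive surjection for $\pi_*\cF'$ together with the surjection onto $\cG$, while injectivity reduces to checking it on stalks at $0$, where Theorem \ref{imp3} identifies the kernel with a submodule of $\bigoplus_j m_0^\infty\cdot b_{ij}=0$ in the analytic category.
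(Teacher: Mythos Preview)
Your plan is essentially the same as the paper's: induct on $n$, split off the free $\cO$-submodule $\cG$ generated by the $b_{nj}$, push the coherent quotient $\cF'=\cF/\cG$ forward along the projection killing $u_1$ using the Grauert--Remmert finiteness theorem (Corollary~\ref{GR}), and apply the inductive hypothesis to $\pi_*\cF'$ on an $(n-1)$-dimensional polydisc. The reference to Theorem~\ref{Mi1} is unnecessary here: unlike in the algebraic setting (where Theorem~\ref{Mi2} is used to \emph{produce} an \'etale projection), in $\CC^n$ the coordinate projection is already at hand, and the only analytic input you need is Corollary~\ref{GR}.

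There is, however, one genuine slip. You write that $\cF'_0/(u_1\cdot\cF'_0)$ is finite-dimensional and that this is the hypothesis of Corollary~\ref{GR}. Both claims are wrong. Corollary~\ref{GR} requires $\cF'_0/(m_y\cdot\cF'_0)$ to be finite-dimensional, where $m_y$ is the maximal ideal at the image point in the \emph{base}; for the projection onto the last $n-1$ coordinates this is $(u_2,\ldots,u_n)$, not $(u_1)$. And $\cF'_0/(u_1\cdot\cF'_0)$ is typically infinite-dimensional (e.g.\ if $\cF'_0\simeq\cO_0$ it is $\CC\{u_2,\ldots,u_n\}$). The correct statement, which is exactly what the paper uses, is that the Stanley basis $(b_{ij})_{i\le n-1}$ exhibits $\cF'_0$ as a finite $\cE_{n-1}=\CC\{u_2,\ldots,u_n\}$-module, whence $\cF'_0/((u_2,\ldots,u_n)\cdot\cF'_0)$ is finite-dimensional and Corollary~\ref{GR} applies. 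With this correction your argument goes through and coincides with the paper's proof.
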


\begin{proof}
As before, we  find a neighborhood $V_1\times \overline{V}_{n-1}$ with the projection $\pi_{n-1}: V_1\times \overline{V}_{n-1}\to \overline{V}_{n-1}$  such that the submodule  generated by $b_{nj}$ is free, and 
$$
\cG(V_1\times \overline{V}_{n-1})=\bigoplus \cO(V_1\times \overline{V}_{n-1})b_{ni}
$$ 
and the induced sheaf $\cF_{n-1}=\cF/\cG$ is coherent, with the finite-dimensional vector space  $\cF_{n-1,\overline{x}}/((u_2,\ldots,u_n)\cdot \cF_{n-1,\overline{x}})$.  Moreover by using Theorem\ref{GR}, we can assume that  
the direct image 
$\overline{\cF}_{n-1}:=\pi_{n-1*}(\cF_{n-1})$ of $\cF_{n-1}$ is coherent on $\overline{V}_{n-1}$, and then use the inductive assumption. Note that by 
 the necessary modification of $\overline{V}_{n-1}$ does not affect the previously constructed  isomorphisms.	
\end{proof}

The following lemma is an immediate consequence of the definition.

\begin{lemma}\label{B1} Let $\cF$ be a  module of finite type on a domain $U$ containing $x$. If there exists a quasi-isomorphism $\cO^d_x\to \cF_x$ then there exists an open neighborhood $U'\subset U$ and a surjection of  $\cO^d(U)$-modules
	$\cO^d(U)\to \cF(U)$ with  kernel contained in $m_x^\infty\cdot \cO^d(U)$.
\end{lemma}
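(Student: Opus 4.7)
The plan is to upgrade the stalk-level quasi-isomorphism $\phi_x\colon\cO^d_x\to\cF_x$ to a sheaf morphism on a neighborhood of $x$, then apply Lemma \ref{ft} to recover the surjection on sections and read off the kernel from the stalk data. Let $e_1,\ldots,e_d$ denote the standard basis of $\cO^d_x$ and set $c_i:=\phi_x(e_i)\in\cF_x$. Since $\phi_x$ is a quasi-isomorphism it is in particular surjective, so the germs $c_1,\ldots,c_d$ generate $\cF_x$ over $\cO_x$. Using that $\cF$ is of finite type, one shrinks $U$ to an open neighborhood $U'$ of $x$ on which every $c_i$ is the restriction of a section, still denoted $c_i\in\cF(U')$.

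The $c_i$ assemble into a morphism of $\cO_{U'}$-modules $\phi\colon\cO^d_{U'}\to\cF|_{U'}$ which by construction agrees with $\phi_x$ on the stalk at $x$; in particular $\phi_x$ is surjective. Both $\cO^d_{U'}$ and $\cF|_{U'}$ are of finite type, so the cokernel of $\phi$ is a sheaf of finite type whose stalk at $x$ vanishes. A finite-type sheaf with vanishing stalk at a point vanishes on some open neighborhood of that point, so after further shrinking we may assume $\phi$ is surjective on all of $U'$. Now Lemma \ref{ft} applies: $\cF|_{U'}$ is generated on $U'$ by the global sections $c_1,\ldots,c_d$, hence the induced map on sections
\[
\phi_{U'}\colon\cO^d(U')\longrightarrow\cF(U'),\qquad (a_1,\ldots,a_d)\longmapsto\sum_{i=1}^d a_ic_i,
\]
is surjective.

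For the kernel, suppose $(a_1,\ldots,a_d)\in\ker(\phi_{U'})$, i.e.\ $\sum a_ic_i=0$ in $\cF(U')$. Passing to germs at $x$ gives $((a_1)_x,\ldots,(a_d)_x)\in\ker(\phi_x)\subset m_x^\infty\cdot\cO^d_x$, so each germ $(a_i)_x$ lies in $m_x^\infty$. By definition this is exactly the assertion $(a_1,\ldots,a_d)\in m_x^\infty\cdot\cO^d(U')$, completing the argument.

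The only real subtlety is the passage from stalk-surjectivity to neighborhood-surjectivity, which is immediate once one knows the cokernel is of finite type; the kernel claim itself is a purely local statement at the single point $x$, extracted directly from the defining property of a quasi-isomorphism. In the algebraic or analytic categories $m_x^\infty=0$ and the conclusion reduces to a local sheaf isomorphism, whereas in the differential category the flat-ideal remainder is genuine and is precisely what the conclusion allows.
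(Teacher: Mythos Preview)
Your proof is correct and is precisely the argument the paper has in mind: the paper provides no proof, stating only that the lemma ``is an immediate consequence of the definition,'' and your write-up unpacks exactly that immediacy (extend the stalk generators to sections, use finite type to get a sheaf surjection near $x$, invoke Lemma~\ref{ft} for surjectivity on global sections, and read the kernel condition off the stalk at $x$).
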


In the differential setting one can prove a weaker result.

\begin{theorem} \label{NS3} 
Let $\cF$ be a  module of finite type on a domain $U=U_1\times\ldots \times U_n$ in $\RR^n$ containing $0$, where $U_i\subset \RR$. 
Consider the natural projection  $\overline{\pi}_i: U\to \overline{U}_i:=U_i\times\ldots\times U_n$. If $\cF_x$	 admits a Stanley basis $b_{ij}$ then there exist neighborhoods $V_i\subset U_i$ containing $0$ such that  for $\overline{V}_i:=V_{i}\times\ldots\times V_n$, $i=1,\ldots,n$,  $\overline{V}_0=\{0\}$ there is  an epimorphism 
%
$$
\bigoplus_{i=0}^{n}\bigoplus_j \cO(\overline{V}_i)\cdot b_{ij}\to \cF(\overline{V}_n)
$$
with kernel contained  in $\bigoplus_{i=0}^{n} m_{0,V_i}^\infty\cdot \cO(\overline{V}_i)b_{ij}$.
\end{theorem}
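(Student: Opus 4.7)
The plan is to mimic the inductive strategy of Theorems \ref{NS1} and \ref{NS2}, replacing the isomorphisms available in the algebraic and analytic settings by quasi-isomorphisms in the smooth category, where Malgrange--Mather division is not unique and its ``error'' lies in $m_0^\infty$.

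I would induct on $n$. When $n=0$ the stalk $\cF_0$ is a finite-dimensional $\RR$-vector space and the conclusion is immediate. For the inductive step, let $\cG\subset \cF$ be the $\cO_U$-submodule generated by the top-layer basis elements $b_{nj}$ of the Stanley decomposition. Lemma \ref{B1} provides, on a sufficiently small neighborhood $V$, a surjection $\bigoplus_j \cO(V)\cdot b_{nj} \to \cG(V)$ with kernel contained in $m_0^\infty\cdot \bigoplus_j \cO(V)b_{nj}$, giving the ``top layer'' of the desired decomposition on $\overline{V}_n$.

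Next, I would pass to the quotient $\cF_{n-1}:=\cF/\cG$. From the Stanley decomposition of $\cF_0$ one sees that
$$
(\cF_{n-1})_0/((u_2,\ldots,u_n)\cdot (\cF_{n-1})_0)
$$
is a finite-dimensional $\RR$-vector space spanned by the classes of $\{b_{ij}\}_{i\le n-1}$. Applying Corollary \ref{GR2} to the projection $\pi_{n-1}: V_1\times \overline{V}_{n-1}\to \overline{V}_{n-1}$, after shrinking $V_1$ and $\overline{V}_{n-1}$, the direct image $\overline{\cF}_{n-1}:=\pi_{n-1*}(\cF_{n-1})$ is an $\cO_{\overline{V}_{n-1}}$-module of finite type with $\overline{\cF}_{n-1}(\overline{V}_{n-1})=\cF_{n-1}(V_1\times \overline{V}_{n-1})$ and stalk at $0$ isomorphic to $(\cF_{n-1})_0$. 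By Corollary \ref{imp}, the induced stalk $(\overline{\cF}_{n-1})_0$ inherits the Stanley basis $\{b_{ij}\}_{i\le n-1}$. The inductive hypothesis applied to $\overline{\cF}_{n-1}$ on $\overline{V}_{n-1}$ then supplies an epimorphism
$$
\bigoplus_{i=0}^{n-1}\bigoplus_j \cO(\overline{V}_i)\cdot b_{ij}\to \cF_{n-1}(\overline{V}_n)
$$
with kernel inside $\bigoplus_{i\le n-1} m_{0,V_i}^\infty \cdot \cO(\overline{V}_i)b_{ij}$.

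Finally, I would assemble the two stages. Any $s\in \cF(\overline{V}_n)$ has image $\overline{s}\in \cF_{n-1}(\overline{V}_n)$ expressible as $\sum_{i<n,j} c_{ij}b_{ij}$; subtracting a lift gives a residue in $\cG(\overline{V}_n)$, which by the first step decomposes as $\sum_j c_{nj}b_{nj}$. Thus the combined map $\bigoplus_{i=0}^{n}\bigoplus_j \cO(\overline{V}_i)\cdot b_{ij}\to \cF(\overline{V}_n)$ is surjective. For the kernel, a relation $\sum c_{ij}b_{ij}=0$ projects to a relation in $\cF_{n-1}(\overline{V}_n)$ whose coefficients $c_{ij}$ with $i<n$ lie in $m_{0,V_i}^\infty\cdot \cO(\overline{V}_i)$ by induction; the remaining terms then give a relation among the $b_{nj}$ in $\cG(\overline{V}_n)$, whose coefficients lie in $m_{0,V_n}^\infty\cdot \cO(\overline{V}_n)$ by Lemma \ref{B1}. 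The main obstacle I expect is ensuring the $m^\infty$ control propagates cleanly through the lifting step, since non-uniqueness of Malgrange--Mather division could a priori inflate the error; however, the ambiguity of Malgrange division is precisely by flat sections, so the two $m^\infty$ error terms combine rather than compound, keeping the kernel within the prescribed band $\bigoplus_{i} m_{0,V_i}^\infty\cdot \cO(\overline{V}_i)b_{ij}$.
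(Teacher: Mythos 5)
Your proposal is correct and follows essentially the same route as the paper: split off the submodule $\cG$ generated by the $b_{nj}$ via Lemma \ref{B1}, pass to the quotient $\cF/\cG$ (of finite type by Lemma \ref{ft2}), push forward along the codimension-one projection using Corollary \ref{GR2}, and induct. The paper's own proof is terser (it defers the assembly and kernel bookkeeping to the arguments of Theorems \ref{NS1}--\ref{NS2}), whereas you spell out the $m^\infty$-control in the kernel explicitly, which is a faithful elaboration rather than a different argument.
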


\begin{proof}
The only difference here is that we consider the sheaf $\cF$  of finite type over the sheaf of differentiable functions. We use the previous lemma to get an epimorphism
 $$
\bigoplus \cO(V_1\times \overline{V}_{n-1})\cdot b_{ni}\to \cG(V_1\times \overline{V}_{n-1})
$$
with kernel in $\bigoplus m_{0,V}^\infty\cdot\cO(V_1\times \overline{V}_{n-1})\cdot b_{ni}$.
By Lemma \ref{ft2},  the sheaf $\cF_{n-1}=\cF/\cG$ is of finite type. Moreover  it has the finite-dimensional vector space  $\cF_{n-1,\overline{x}}/(u_2,\ldots,u_n)$.  Thus, by using Corollary \ref{GR2}, we can assume that 
the direct image 
$\overline{\cF}_{n-1}:=\pi_{n-1}^*(\cF_{n-1})$ is of finite type on $\overline{V}_{n-1}$, and then use the inductive assumption as in the algebraic or analytic cases. 
\end{proof}

\subsection{Diagrams associated with functions}
\label{beta}

Let $\overline{T}$ be  any normalized linear grading.
Let $f_{ij}(u,v)\in \cE_{n+m}$ be a finite set of functions  such that the initial exponents 
 $$
\alpha_1=\expe_{\overline{T}}(f_1(u,0))<_r \alpha_{2,1}=\expe(f_{2,1}(u,0)<\ldots
$$  
define a  diagram of initial exponents $\Delta\subset \NN^n$ of finite type and its decomposition into  disjoint subsets $\Delta_{ij}$. 
 For simplicity we assume that the coefficients $c_{\alpha_{ij}}(f_{ij})$ of $f_{ij}$ are all equal to one.

By the above  any $\beta\in \overline{B}_i=\bigcup \overline{B}_{i,j}$ can be written as $\beta=\alpha_{ij}+\gamma ,$
where  $\gamma\in \overline{\Gamma}_{i+1,j}$ so we set 
$$
f_\beta:=u^\gamma f_{i,j}, \quad F_\beta:=\inn(f_\beta).
$$

By Corollary \ref{imp33} the set 
%
$$
S_1:=\Big\{x^{\beta}\mid \beta \in \bigcup_{i=1}^{n+1}(A_{i}\cup \overline{B}_{i-1})\Big\}
$$ 
is a basis over $(R_i)_{i=0}^n$ of the graded $R_n$-module $R_n$. 

Since $\expe(F_\beta)=\beta$, by Lemma \ref{T}, the set $S_1$ majorizes  the set 

$$
S_2:=\Big\{x^{\beta}\mid \beta \in \bigcup_{i=1}^{n+1} A_{i}\Big\}\cup
\Big\{F_\beta\mid  
\beta\in\bigcup_{i=0}^{n}\overline{B}_i\Big\},
$$
 which is thus  a basis of $R_n$. Consequently,  by Theorem \ref{imp33}, 
$$
\Big\{x^{\beta}\mid \beta \in\bigcup A_{i}\Big\}\cup
\Big\{f_\beta\mid  
 \beta \in\bigcup_{i=1}^{n}\overline{B}_i\Big\}
$$
 is a (Stanley's) basis
of  $\cE_n$ 

over $(\cE_i)_{i=0}^n$.

This leads to the following 

\begin{theorem} \label{imp2}
There is a quasi-isomorphism  over $(\cE_n,\ldots,\cE_1)$ (preserving filtration by the powers $(m_n^k)$) 
$$ 
\phi: \bigoplus_{j=1}^n \bigoplus_{\beta\in \overline{B}_j} \cE_{n-j+1}\cdot f_\beta \oplus \bigoplus_{j=1}^n\bigoplus_{\alpha\in A_j} \cE_{n-j}\cdot x^\alpha\to  \cE_n.
$$
\end{theorem}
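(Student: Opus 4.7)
The strategy is to reduce the theorem to its graded analogue provided by Corollary \ref{imp33} and then invoke the equivalence between filtered and graded bases established by Theorem \ref{imp3}. All the hard analytic work (Malgrange preparation) has already been absorbed into Theorem \ref{imp3}; what remains is a bookkeeping argument on initial forms.

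First, I would upgrade the monomial basis from Corollary \ref{imp33} to a basis involving the initial forms $F_\beta$. The set
\[
S_1 := \{x^{\beta} : \beta \in \textstyle\bigcup_j (A_j \cup \overline{B}_{j-1})\}
\]
is a basis of $R_n$ over $(R_i)_{i=0}^n$ by Corollary \ref{imp33}. I would then compare it with
\[
S_2 := \{x^\alpha : \alpha \in \textstyle\bigcup_j A_j\} \cup \{F_\beta : \beta \in \textstyle\bigcup_j \overline{B}_j\},
\]
noting that there is a natural bijection $S_1 \leftrightarrow S_2$ matching the multiplication ring ($R_{n-j}$ for items indexed by $A_j$, $R_{n-j+1}$ for those indexed by $\overline{B}_j$) and preserving degrees (since $\deg(F_\beta) = |\beta|$ by the normalization of $\overline{T}$). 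Because $\expe_{\overline{T}}(F_\beta) = \beta$ with the coefficient of $x^\beta$ in $F_\beta$ normalized to $1$, Lemma \ref{T} shows that $S_1$ majorizes $S_2$. Hence by the Stabilization Theorem \ref{Stab}, $S_2$ is itself a basis of $R_n$ over $(R_i)_{i=0}^n$.

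Second, I would apply the equivalence (1) $\Leftrightarrow$ (3) of Theorem \ref{imp3} to the finite set
\[
B := \{x^\alpha : \alpha \in A_j\}_{j} \cup \{f_\beta : \beta \in \overline{B}_j\}_{j} \subset \cE_n,
\]
equipped with the standard filtration $m_n^k$. The standard initial forms are $\inn(x^\alpha) = x^\alpha$ and $\inn(f_\beta) = F_\beta$ (the latter by the very definition of $F_\beta$ in Section \ref{beta}), so $\inn(B) = S_2$. Since $S_2$ is a basis of $\gr(\cE_n) = R_n$ over $(\gr(\cE_i))_{i=0}^n = (R_i)_{i=0}^n$, Theorem \ref{imp3} guarantees that $B$ is a basis of $\cE_n$ over $(\cE_i)_{i=0}^n$.

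Finally, by Definition \ref{Stanley} this is exactly the data of a quasi-isomorphism
\[
\phi: \bigoplus_{j=1}^{n}\bigoplus_{\beta \in \overline{B}_j} \cE_{n-j+1}\cdot f_\beta \,\oplus\, \bigoplus_{j=1}^n\bigoplus_{\alpha \in A_j} \cE_{n-j}\cdot x^\alpha \longrightarrow \cE_n,
\]
the kernel sitting inside $\bigoplus \cam_i^\infty \cdot N^i$; condition (4) of that definition, combined with the equality $\ord(f_\beta) = |\beta|$ (which uses the normalization of $\overline{T}$ through Lemma on page 15), furnishes the promised preservation of the $m_n^k$-filtration. The only bookkeeping issue—matching the multiplication rings $R_{n-j}, R_{n-j+1}$ on the graded side with $\cE_{n-j}, \cE_{n-j+1}$ on the filtered side—is automatic from $\gr(\cE_k) = R_k$, so I do not anticipate any genuine obstacle; the theorem is essentially a translation of Corollary \ref{imp33} through Lemma \ref{T}, Theorem \ref{Stab}, and Theorem \ref{imp3}.
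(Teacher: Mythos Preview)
Your proposal is correct and follows essentially the same route as the paper: the argument just preceding Theorem~\ref{imp2} in Section~\ref{beta} uses Corollary~\ref{imp33} to obtain the monomial basis $S_1$, invokes Lemma~\ref{T} to pass to $S_2$, and then applies Theorem~\ref{imp3} to lift the graded basis to a Stanley basis of $\cE_n$. Your explicit appeal to the Stabilization Theorem~\ref{Stab} to upgrade ``majorizes'' to ``basis'' is a harmless clarification (the proof of Lemma~\ref{T} already produces a $K$-linear isomorphism $\Psi$, which is why the paper asserts ``which is thus a basis'' without further comment), and you correctly identify Theorem~\ref{imp3} as the bridge from graded to filtered---where the paper's text has the apparent typo ``Theorem~\ref{imp33}.''
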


Set 
$$
\cE_n(\Gamma):=\{f\in \cE_n\mid \supd(f)\subset \Gamma\}.
$$
The following theorem  extends the original \emph{Hironaka Henselian division theorem} \cite{Hir2} for
algebraic functions to any smooth category.

\begin{theorem}[Hironaka Henselian division theorem] 
Let $\cI\subset \cE_n$ be an ideal generated by  the functions $f_{ij}$. There exists an   epimorphism  
$$
\cE_n(\Gamma)=\bigoplus_{j=1}^n\bigoplus_{\alpha\in A_j} \cE_{n-j}\cdot x^\alpha\to \cE_n/\cI.
$$
\end{theorem}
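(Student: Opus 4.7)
The plan is to derive this theorem as an immediate corollary of the quasi-isomorphism established in Theorem \ref{imp2}. That theorem furnishes a quasi-isomorphism over $(\cE_n,\ldots,\cE_1)$
$$\phi:\; N_1 \oplus N_2 \longrightarrow \cE_n, \qquad N_1 := \bigoplus_{j=1}^{n}\bigoplus_{\beta\in\overline{B}_j}\cE_{n-j+1}\cdot f_\beta, \quad N_2 := \bigoplus_{j=1}^{n}\bigoplus_{\alpha\in A_j}\cE_{n-j}\cdot x^\alpha.$$
In particular $\phi$ is surjective. The key observation is that each generator $f_\beta = u^\gamma f_{i,j}$ of $N_1$ lies in $\cI$ (since $\cI$ is an ideal containing every $f_{i,j}$), so $\phi(N_1)\subset\cI$.

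Composing with the quotient map $\pi:\cE_n\to\cE_n/\cI$, the surjection $\pi\circ\phi$ vanishes on $N_1$ and therefore factors through the projection onto $N_2$, giving a surjection
$$\bigoplus_{j=1}^{n}\bigoplus_{\alpha\in A_j}\cE_{n-j}\cdot x^\alpha \;\twoheadrightarrow\; \cE_n/\cI.$$
This already establishes the epimorphism; it remains to identify the domain with $\cE_n(\Gamma)$.

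For the identification $N_2=\cE_n(\Gamma)$ as submodules of $\cE_n$, one inclusion is straightforward. If $\alpha\in A_j\subset\NN^j\setminus\NN^{j-1}$ and $g\in\cE_{n-j}$ depends only on the last $n-j$ variables, then $\supp(g\cdot x^\alpha)\subset \{\alpha\}\times\NN^{n-j}\subset A_j\times\NN^{n-j}\subset\Gamma$; since $\Delta+\NN^n\subset\Delta$, the set $\Gamma$ is downward closed, and the characterization of $\supd$ in Lemma \ref{support} then forces $\supd(g\cdot x^\alpha)\subset\Gamma$ as well. Hence $N_2\subset\cE_n(\Gamma)$. Directness of the sum is inherited from the corresponding direct sum decomposition of the graded ring $R_n^\Gamma = \bigoplus M^A_{i-1}$ in Corollary \ref{imp33}, since any relation $\sum c_{j\alpha}x^\alpha=0$ with $c_{j\alpha}\in\cE_{n-j}$ would pass, upon taking initial forms, to a relation among the monomials $x^\alpha$ in $R_n^\Gamma$, forcing $c_{j\alpha}\in m_{n-j}^\infty$ and then $c_{j\alpha}=0$ by iterating.

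The reverse inclusion $\cE_n(\Gamma)\subset N_2$ is the main technical point. Given $f\in\cE_n(\Gamma)$, apply $\phi$ to write $f=\sum h_\beta f_\beta+\sum g_{j\alpha}x^\alpha$; one must show the $\sum h_\beta f_\beta$ piece can be absorbed into the second summand. The argument is a filtered-to-graded transfer: at the level of associated graded objects, the initial forms $F_\beta=\inn(f_\beta)$ have support in $\Delta$, disjoint from $\Gamma$, so the corresponding decomposition $R_n=R_n^\Delta\oplus R_n^\Gamma$ of Corollary \ref{imp33} forces $\inn(h_\beta)=0$ in each degree; iterating through the filtration and passing to the limit (using that $\phi$ has kernel in $m^\infty$, together with the completion argument of Corollary \ref{Malg}) shows $\sum h_\beta f_\beta$ can be rewritten purely in terms of the $x^\alpha$-summands. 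This absorption step is where the real work lies; everything else is formal bookkeeping on top of Theorem \ref{imp2}.
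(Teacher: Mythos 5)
Your core argument is exactly the paper's proof: the quasi-isomorphism of Theorem \ref{imp2} is surjective, the summand $M(\cI):=\bigoplus_{j}\bigoplus_{\beta\in\overline{B}_j}\cE_{n-j+1}\cdot f_\beta$ lands inside $\cI$ because each $f_\beta=u^\gamma f_{i,j}\in\cI$, and so the composite with $\cE_n\to\cE_n/\cI$ factors through $\bigoplus_{j}\bigoplus_{\alpha\in A_j}\cE_{n-j}\cdot x^\alpha$ as an epimorphism. The additional work you invest in identifying that domain with $\cE_n(\Gamma)$ (in particular the ``absorption'' step for the reverse inclusion) is not part of the paper's proof, which simply writes the equality as notation following Corollary \ref{imp33}; your sketch of that step is plausible but is not needed to establish the epimorphism, which is all the paper proves.
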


\begin{proof} 
By the above there are  epimorphisms 
$$
\cE_n(\Gamma)=\bigoplus_{j=1}^n\bigoplus_{\alpha\in A_j} \cE_{n-j}\cdot x^\alpha\to \cE_n/M(\cI)\to  \cE_n/\cI,
$$
where 
$M(\cI):=\bigoplus_{j=1}^n \bigoplus_{\beta\in \overline{B}_j} \cE_{n-j+1}\cdot f_\beta \subset \cI. $
\end{proof}

\begin{remark} 
The subspace $M(\cI)\subsetneq \cI$ is usually much smaller than $\cI$.	
\end{remark}

\begin{lemma} \label{66}
Let $I\subset K[x_1,\ldots,x_n]$ be a homogeneous ideal, where $K$ is a field, and such that the diagram $\Delta:=\expe_{\overline{T}}(\cI)$ is of finite type.
Consider any set of homogeneous elements $F_i$ in $I$ such that
$\expe(F_i)=\alpha_i$ are vertices of $\Delta$. Then there is a decomposition
$$
\bigoplus_{j=1}^n \bigoplus_{\beta\in \overline{B}_j} R_{n-j+1}\cdot \overline{F}_{n,\beta} \oplus \bigoplus_{j=1}^n\bigoplus_{\alpha\in A_j} R_{n-j}\cdot x^\alpha=  R_n,
$$
where 
$$
I=\bigoplus_{j=1}^n \bigoplus_{\beta\in \overline{B}_j} R_{n-j+1}\cdot \overline{F}_{n,\beta}.
$$
\end{lemma}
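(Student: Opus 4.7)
The plan is to reduce the lemma directly to the combinatorial/Stanley-basis machinery of Section \ref{beta}, and then pin down the $I$-summand with an initial-exponent argument. First, for each $\beta \in \overline{B}_{i,j}$, I will write $\beta = \alpha_{i,j} + \gamma$ with $\gamma \in \overline{\Gamma}_{i,j}$ and set
$$\overline{F}_{n,\beta} := x^\gamma F_{i,j}.$$
After rescaling each $F_{i,j}$ so that the coefficient of $x^{\alpha_{i,j}}$ equals $1$, the element $\overline{F}_{n,\beta}$ is homogeneous of degree $|\beta|$ with $\expe_{\overline{T}}(\overline{F}_{n,\beta}) = \beta$ and $\overline{T}$-leading coefficient $1$.

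Next I will invoke Corollary \ref{imp33}, which yields the Stanley decomposition
$$R_n \;=\; \bigoplus_{j}\bigoplus_{\alpha \in A_j} R_{n-j}\,x^\alpha \;\oplus\; \bigoplus_{j}\bigoplus_{\beta \in \overline{B}_j} R_{n-j+1}\,x^\beta,$$
so that $\{x^\alpha\}_\alpha \cup \{x^\beta\}_\beta$ is a basis of $R_n$ over $(R_i)_{i=0}^n$. Since $\overline{F}_{n,\beta}$ and $x^\beta$ have the same degree and differ by terms of strictly higher $\overline{T}$-order, Lemma \ref{T} applies to the degree- and ring-preserving bijection $x^\alpha \leftrightarrow x^\alpha$, $x^\beta \leftrightarrow \overline{F}_{n,\beta}$: its proof produces a $K$-linear automorphism $\Psi = I + (\text{higher } \overline{T}\text{-order})$ of $R_n$ that preserves each free summand and is invertible by the Neumann-type series $I - (\Psi - I) + (\Psi - I)^2 - \cdots$. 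Consequently $\{x^\alpha\}_\alpha \cup \{\overline{F}_{n,\beta}\}_\beta$ is itself a Stanley basis of $R_n$ over $(R_i)_{i=0}^n$, establishing the first claimed direct-sum decomposition.

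To identify the $I$-piece, set $M := \bigoplus_j \bigoplus_{\beta \in \overline{B}_j} R_{n-j+1}\,\overline{F}_{n,\beta}$. The inclusion $M \subseteq I$ is immediate from $\overline{F}_{n,\beta} = x^\gamma F_{i,j} \in I$. For the reverse, take a homogeneous $g \in I$ and decompose $g = g_1 + g_2$ via the Stanley basis, with $g_1 \in \bigoplus R_{n-j}\,x^\alpha$ and $g_2 \in M$. Then $g_2 \in I$ forces $g_1 = g - g_2 \in I$, while $\supp(g_1) \subseteq \bigcup_j A_j \times \NN^{n-j} \subseteq \Gamma$ forces $\expe_{\overline{T}}(g_1) \in \Gamma$ whenever $g_1 \neq 0$; this contradicts $\expe_{\overline{T}}(g_1) \in \Delta$ (coming from $g_1 \in I$ and $\Delta = \expe_{\overline{T}}(\cI)$), since $\Gamma \cap \Delta = \emptyset$. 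Therefore $g_1 = 0$ and $g \in M$.

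The most delicate point will be the verification that the bijection $x^\beta \leftrightarrow \overline{F}_{n,\beta}$ respects the $(R_i)$-module structure summand by summand, i.e. that each $R_{n-j+1}\,\overline{F}_{n,\beta}$ is genuinely a free $R_{n-j+1}$-module paralleling $R_{n-j+1}\,x^\beta$; this is packaged inside Lemma \ref{T} once the leading coefficients have been normalized in the first step. Apart from this, the proof is essentially a homogeneous graded repackaging of the Stanley-basis construction underlying Theorem \ref{imp2}.
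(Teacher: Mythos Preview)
Your proof is correct and follows essentially the same approach as the paper: the lemma is stated there without explicit proof, but the argument you give---Corollary \ref{imp33} for the monomial Stanley basis, Lemma \ref{T} to swap $x^\beta$ for $\overline{F}_{n,\beta}$, and the $\Gamma\cap\Delta=\emptyset$ initial-exponent trick to identify the $I$-summand---is exactly the machinery set up in Section \ref{beta} immediately preceding the lemma (cf.\ the passage showing that $S_1$ majorizes $S_2$), together with the standard remainder argument used elsewhere in the paper. Your normalization of the leading coefficients is the same assumption the paper makes at the start of Section \ref{beta}.
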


\begin{theorem}[Existence of a filtered  Stanley decomposition] \label{free2} 
If $K$ is an infinite field that any finite graded $K[x_1,\ldots,x_n]$-module $M$  has a filtered Stanley decomposition  over $(K[x_i,\ldots,x_n])_{i=0}^{n-1}$  (possibly after a generic linear change of coordinates). That is, $M$ can be written as (a vector space) 
$$
M=\bigoplus_{i=0}^{n} N_i,
$$ 
where $N_i\subset M$ are free finite graded $R_i$-submodules. If $K$ is a finite field, such a decomposition exists over a certain finite extension  of $K$.
\end{theorem}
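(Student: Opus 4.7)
The plan is to reduce the general module case to the cyclic case handled by Lemma~\ref{66} via the Galligo--Grauert theorem. Fix a total, normalized, monotone monomial order $\overline{T}$ on $\NN^n$. For a cyclic module $M = R_n/\cI$ with $\cI \subset R_n$ a homogeneous ideal, the Galligo--Grauert theorem of Section~3.3 applies directly: since $K$ is infinite, a generic $K$-linear change of coordinates in $x_1,\ldots,x_n$ makes the diagram $\Delta(\cI)=\expe_{\overline{T}}(\cI)$ monotone, hence of finite type. Lemma~\ref{66} then produces homogeneous generators $\overline{F}_{n,\beta} \in \cI$ with $\expe(\overline{F}_{n,\beta})=\beta$ for vertices $\beta \in \overline{B}_j$, together with the direct sum decomposition $R_n = \bigoplus_{j,\beta} R_{n-j+1}\,\overline{F}_{n,\beta} \oplus \bigoplus_{j,\alpha\in A_j} R_{n-j}\, x^\alpha$, whose first summand is exactly $\cI$. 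Passing to the quotient gives the filtered Stanley decomposition $M = \bigoplus_{j=1}^n \bigoplus_{\alpha \in A_j} R_{n-j}\cdot \overline{x^\alpha}$.

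For arbitrary finite graded $M$, I would extend this machinery to submodules of free modules. Choose a graded free presentation $F = \bigoplus_{i=1}^r R_n(-d_i)\,e_i \twoheadrightarrow M$ with kernel $N$, and extend $\overline{T}$ to the position-over-term order on monomials $x^\alpha e_i \in F$ (declare $x^\alpha e_i > x^\beta e_j$ if $i<j$, or $i=j$ and $\alpha>_{\overline{T}}\beta$). Each nonzero $f \in F$ then has an initial exponent $(\expe(f), i)$, and the diagram of $N$ decomposes as $\Delta(N) = \bigsqcup_{i=1}^r \Delta_i \times \{i\}$, with each $\Delta_i \subset \NN^n$ closed under addition by $\NN^n$. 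The formal division theorem (Theorem~\ref{formal}) and the decomposition analysis of Sections~3.2--3.4 (in particular Corollary~\ref{imp33}) apply verbatim slicewise to each $\Delta_i$, viewed as the diagram of an effective ideal in $R_n$.

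The key step is a simultaneous genericity argument: for each $i$ individually, the coordinate changes in $GL_n(K)$ making $\Delta_i$ monotone form a nonempty Zariski-open subset, and a finite intersection of nonempty Zariski-open subsets of $GL_n(K)$ remains nonempty over an infinite field. Thus a single generic coordinate change monotonizes all $\Delta_i$ simultaneously, making each of finite type. The slicewise version of Lemma~\ref{66} then furnishes elements $F_{n,\beta}^{(i)} \in N$ with $\expe(F_{n,\beta}^{(i)}) = (\beta, i)$ for $\beta \in \overline{B}_j^{(i)}$, and the decomposition
\begin{equation*}
F = \bigoplus_{i,j,\beta} R_{n-j+1}\, F_{n,\beta}^{(i)} \;\oplus\; \bigoplus_{i,j,\,\alpha \in A_j^{(i)}} R_{n-j}\, x^\alpha e_i,
\end{equation*}
whose first summand equals $N$. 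Quotienting yields the filtered Stanley decomposition of $M=F/N$ over $(R_i)_{i=0}^{n}$. For $K$ finite, the generic coordinate change can be realized over a finite separable extension $K'/K$ as in Corollary~\ref{etale}, and the construction descends to $M\otimes_K K'$. The principal technical obstacle is the simultaneous monotonization of the $r$ slices $\Delta_i$, which is settled by the Zariski-open-dense intersection argument in $GL_n(K)$ over an infinite field.
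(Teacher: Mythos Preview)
Your proof is correct and essentially identical to the paper's argument, just phrased in the language of position-over-term orders rather than explicit induction on the rank of the free presentation. The paper writes $M=R_n^k/N$ and peels off one coordinate of $R_n^k$ at a time via the projection $\pi_k\colon R_n^k\to R_n$, setting $\cI_k=\pi_k(N)$, applying Lemma~\ref{66} to $\cI_k$ after a generic coordinate change, lifting the resulting standard basis back to $N$, and then inducting on $N_{k-1}=N\cap R_n^{k-1}$. Your POT order with $e_1>\cdots>e_r$ encodes precisely this filtration: your slice $\Delta_i$ is the diagram of the paper's $\cI_i$ (up to reversing the indexing of the basis of $F$), and your ``slicewise Lemma~\ref{66}'' unwinds to exactly the paper's inductive step. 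One point you make explicit, which the paper glosses over, is that a \emph{single} generic element of $GL_n(K)$ monotonizes all slices $\Delta_i$ simultaneously, by intersecting finitely many nonempty Zariski-open conditions over an infinite field; the paper's phrase ``repeat the procedure by induction'' tacitly relies on the same fact.
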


\begin{proof} 
Write $M$ as the quotient $M=(R_n)^k/ N$, where $N=N_k\subset (R_n)^k$ is a submodule. We prove the theorem by induction on $k$. Consider the projection $\pi_k:(R_n)^k\to R_n$ to the last coordinate,  let $\pi_k(N_k)=\cI_k\subset R_n$, and let $\Delta_k$ be a monotone diagram defined for $\cI_k$ and generic coordinates. Then there exists a standard basis  $\pi_k(F_{k,\beta})=\overline{F}_{k,\beta}\in \cI_k$ of $\cI_k$, as in 
Lemma \ref{66}, such that
 $$
\phi_k: \bigoplus_{j=1}^n \bigoplus_{\beta\in \overline{B}_j} R_{n-j+1}\cdot \overline{F}_{k,\beta} \oplus \bigoplus_{j=1}^n\bigoplus_{\alpha\in A_j} R_{n-j+1}\cdot x^\alpha\to  R_n
$$ 
is an isomorphism.

 Set  $\overline{B}_{k,j}:=\overline{B}_{j}$, and define 
 inductively an isomorphism 
$$ 
\id_{k-1}\oplus\phi_k: R_n^{k-1} \oplus \bigoplus_{j=1}^n \bigoplus_{\beta\in \overline{B}_{k,j}} R_{k-j+1}\cdot {F}_{k,\beta} \oplus \bigoplus_{j=1}^n\bigoplus_{\alpha\in A_{k,j}} R_{k-j}\cdot (0,\ldots,0,x^\alpha)\to  R_n^k.
$$

Now consider the module 

$N_{k-1}=(R_n)^{k-1}\cap N_k$ and repeat the procedure by induction.
We eventually get
an isomorphism
$$ 
\phi:=\phi_1\oplus\ldots\oplus\phi_n: \bigoplus_{s=1}^k\bigoplus_{j=1}^n \bigoplus_{\beta\in \overline{B}_{s,j}} R_{n-j+1}\cdot {F}_{s,\beta} \oplus \bigoplus_{j=1}^n\bigoplus_{\alpha\in A_{s,j}} R_{n-j}\cdot (0,\ldots,x^\alpha(s),0,\ldots, 0)\to  R_n^k
$$
for the relevant $\overline{B}_{s,j}$ occurring in the process.
The latter induces an isomorphism 
$$ 
\overline{\phi}: \bigoplus_{s=1}^n\bigoplus_{j=1}^n\bigoplus_{\alpha\in A_{s,j}} R_{n-j}\cdot (0,\ldots,x^\alpha(s),0,\ldots, 0)\to  R_n^k/\phi\Big(\bigoplus_{s=1}^n\bigoplus_{j}^n \bigoplus_{\beta\in \overline{B}_{s,j}} R_{n-j}\cdot {F}_{s,\beta}\Big)\to R_n^k/N=M. 
$$
Setting $N^j:=\bigoplus_{s=1}^n\bigoplus_{\alpha\in A_{s,j}} R_{n-j}\cdot (0,\ldots,x^\alpha(s),0,\ldots, 0)$ we 
get an isomorphism $\overline{\phi}:\bigoplus N^j\to M$.
\end{proof}

\begin{theorem}[Existence of a filtered  Stanley decomposition 2]  \label{free1}
Let $\cE_n$ be a smooth category over an infinite field $K$.
For any finite filtered $\cE_n$-module $M$, with filtration $(M_i)$ satisfying $m_n^i\cdot M_j\subset M_{i+j}$, there exists  (after a generic linear change of coordinates in $\cE_n$) a filtered Stanley decomposition, that is, 
there exist free finite $\cE_i$-modules $N^i=\cE_i^{k_i}$ for $0\leq i \leq n$, and  a quasi-isomorphism over $(\cE_0,\cE_1,\ldots,\cE_n)$
$$
\overline{\phi}: \bigoplus_{j=0}^n  N^j \to M.
$$  
 In particular, if the category $(\cE_n)$ is reduced then any finite $\cE_n$-module $M$  admits a decomposition 
$$
M=\bigoplus N^i
$$
into a sum of  free  finite $\cE_i$-modules $N^i=\cE_i^{k_i}$ for $0\leq i \leq n$.  
If $K$ is finite then such a decomposition exists after
 passing to a finite extension $K'$ of $K$.
 \end{theorem}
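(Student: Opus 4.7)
The plan is to reduce to the graded case (Theorem~\ref{free2}) and then lift via the equivalence of Theorem~\ref{imp3}. First, I would form the associated graded module $\gr(M) = \bigoplus_{s} M_s/M_{s+1}$, which is a finite graded module over $\gr(\cE_n) = R_n = K[x_1,\ldots,x_n]$ because $M$ is a finite $\cE_n$-module and the filtration satisfies $m_n^i \cdot M_j \subset M_{i+j}$.

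Next, I would invoke Theorem~\ref{free2} to obtain a filtered Stanley decomposition of $\gr(M)$: after a generic linear change of the coordinates $x_1,\ldots,x_n$ of $R_n$ (and, if $K$ is finite, after passing to a finite extension of $K$), there exists a basis $\{\overline{b}_{ij}\}$ of $\gr(M)$, with $\overline{b}_{ij}$ homogeneous of some degree $d_{ij}$, such that $\{\overline{b}_{ij}\}_j$ freely generates a graded $R_i$-submodule $\overline{N}^i$ and $\gr(M) = \bigoplus_{i=0}^n \overline{N}^i$. The linear change of coordinates of $R_n$ lifts to a linear change of coordinates $u_1,\ldots,u_n$ of $\cE_n$ using axiom (10) (the inverse function theorem in the smooth category), and it induces the prescribed change on $\gr(\cE_n) = R_n$.

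Then I would lift each $\overline{b}_{ij} \in M_{d_{ij}}/M_{d_{ij}+1}$ to an element $b_{ij} \in M_{d_{ij}}$, so that $\inn(b_{ij}) = \overline{b}_{ij}$ and $\ord(b_{ij}) = d_{ij}$. Letting $N^i$ be the free $\cE_i$-module on the basis $\{b_{ij}\}_j$, the implication $(3) \Rightarrow (1)$ of Theorem~\ref{imp3} yields directly that $\{b_{ij}\}$ is a basis of $M$ over $(\cE_i)_{i=0}^n$, equivalently that the map
\[
\Psi : \bigoplus_{i=0}^n N^i \longrightarrow M, \qquad \sum c_{ij} e_{ij} \mapsto \sum c_{ij} b_{ij},
\]
is a quasi-isomorphism in the sense of Definition~\ref{Stanley}, with kernel contained in $\bigoplus_i m_i^\infty \cdot N^i$ and preserving the filtrations as required in condition~(4). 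For the reduced case, $m_n^\infty = 0$ forces $m_i^\infty = 0$ for all $i \leq n$ (since $\cE_i \subset \cE_n$ and the inclusion respects the maximal ideals by axiom (8)), so $\ker(\Psi) = 0$ and $\Psi$ is an isomorphism, giving the asserted direct sum decomposition $M = \bigoplus N^i$.

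The main obstacle is ensuring that the hypothesis of Theorem~\ref{imp3}$(3)$ is literally available after the lift: one needs $\{\inn(b_{ij})\}$ to be a basis of $\gr(M)$ over the \emph{graded} rings $(R_i)_{i=0}^n = (\gr(\cE_i))_{i=0}^n$, with the same degrees $d_{ij}$ as those encountered in the filtration of $M$. This is guaranteed by choosing the lifts inside the prescribed filtration level $M_{d_{ij}}$, which is possible precisely because $\overline{b}_{ij}$ was homogeneous of degree $d_{ij}$; the rest of the argument is then a mechanical invocation of the already-proved framework.
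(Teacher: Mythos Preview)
Your approach is correct and follows the same strategy as the paper: reduce to the graded case via Theorem~\ref{free2}, lift the resulting basis to $M$, and invoke Theorem~\ref{imp3}. The paper takes a slightly more roundabout route---it writes $M=\cE_n^k/M_0$, applies the \emph{proof} of Theorem~\ref{free2} to obtain a Stanley basis of $R_n^k=\gr(\cE_n^k)$ compatible with $\gr(M_0)$, and then invokes Corollary~\ref{imp} (itself just ``lift and apply \ref{imp3}'') to pass to $M$---whereas you apply the \emph{statement} of Theorem~\ref{free2} directly to $\gr(M)$; your version is a shade cleaner and avoids the presentation. One small caveat: your assertion that $\gr(M)$ is a finite $R_n$-module is not automatic from the hypotheses as literally stated (an arbitrary filtration satisfying $m_n^i\cdot M_j\subset M_{i+j}$ need not give finitely generated $\gr(M)$), but this finiteness is also assumed in the statement of Theorem~\ref{imp3} and is implicit throughout the paper, so it is not a gap peculiar to your argument.
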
 
 
 \begin{proof} 
As before, one can write  $M$ as the quotient module $M=\cE_n^k/M_0$. By the proof of Theorem \ref{free1} there exists a basis $(\overline{b}_{ij})_{(i,j)\in S}$ of the graded module    
  $R_n^k=\gr(\cE_n^k)$, whose part (for $S_1\subset S$) is a basis $(\overline{b}_{ij})_{(i,j)\in S_1}$ of the submodule $\gr(M_0)$, and $(\overline{b}_{ij})_{(i,j)\in S\setminus S_1}$ defines a basis of 
$\gr(M)=\gr(\cE_n^k)/\gr(M_0)$. Then, by Lemma \ref{imp}, there is a basis  $({b}_{ij})_{(i,j)\in S}$ of $\cE_n^k$ such that $({b}_{ij})_{(i,j)\in S_1}$ is a basis of  $M_0$ and $({b}_{ij})_{(i,j)\in S_1}$ defines a basis of  $M=\cE_n^k/M_0$. We proceed by induction.
 \end{proof}

\section{Implicit function and  Weierstrass-Hironaka division}

\subsection{Generalized Weierstrass-Hironaka division and preparation}

Consider any normalized grading  $\overline{T}$ on $\NN^n$. 
Let $f_{ij}(u,v)\in \cE_{n+m}$ be a finite set of functions  such that the initial exponents (with respect to $\overline{T}$) are defined and generate a diagram $\Delta$ of finite type. Without loss of generality we assume that
$\alpha_{i,j}=\expe(f_{i,j}(u,0))\in \NN^i\setminus \NN^{i-1}$ are ordered with respect to the reverse lexicographic order
 $$
\alpha_1<_r \alpha_{2,1}<_r\ldots<_r \alpha_{2,k_2}<_r\alpha_{3,1}<_r\ldots 
$$ 
and  define a decomposition of  $\Delta\subset \NN^n$ into  disjoint subsets $\Delta_{ij}=\alpha_{ij}+\Gamma_{ij}$. 
By Corollary \ref{12}:
\begin{enumerate}
\item Each $\Delta_{ij}$ can be written as
$\Delta_{ij}=\overline{B_{ij}}+\NN^{*n-i,n}$, where $\overline{B_{ij}}$ is a finite subset of $\NN^{i}\setminus \NN^{i-1}$ and $\NN^{*n-i,n}=\{(0,\ldots,0,x_{i+1},\ldots, x_n\mid x_j\in \NN\}$.
\item (Hironaka) The set $\Gamma=\NN^n\setminus \Delta$ decomposes uniquely as  the union of the sets $A_i\times \NN^{n-i}$, where all  subsets $A_i\subset \NN^{i}\setminus \NN^{i-1}$ are finite.
\end{enumerate}

\begin{theorem}[Generalized Weierstrass-Hironaka division theorem] \label{main0} 
For any  $g\in  \cE_{n+m}$, there exist   $h_{ij}\in  \cE_{n+m}$ and $r(g)=h_{00} \in  \cE_{n+m}$ such that 
$$
g=\sum h_{ij}f_{ij}+ r(g)$$

where  $\supd(h_{ij})\subset \Gamma_{ij}\times \NN^{m}$, 
%
$\supd(r(g))
\subset  \Gamma\times \NN^{m}$. 

Moreover:
\begin{enumerate}
\item If $(\cE_n)$ is reduced the decomposition is unique.
\item If $\ord(f_{ij})=|\alpha_{ij}|$ then $\ord(h_{ij})\geq \ord(g)-|\alpha_{ij}|$.
\item If $\overline{T}$ is total then  $\expe(h_{ij}(u,0))+\alpha_{ij}\geq \expe(g(u,0))$.
\item     $\ord(h_{ij}(u,0))\geq \ord(g(u,0))-|\alpha_{ij}|$.
\end{enumerate}
\end{theorem}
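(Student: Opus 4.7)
The plan is to deduce this theorem from the filtered Stanley decomposition of $\cE_{n+m}$ constructed in Chapter 4, following the same pattern as Section \ref{beta} but enlarged by the extra $v$-variables. I expect the main work to be combinatorial bookkeeping rather than any new analytic input.

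First I would set up the basis as in Section \ref{beta}: for each $\beta \in \overline{B}_{ij}$, write $\beta = \alpha_{ij}+\gamma$ with $\gamma \in \overline{\Gamma}_{ij}$ and define $f_\beta := u^\gamma f_{ij}$, with $F_\beta := \inn(f_\beta(u,0))$. Since $\expe(f_{ij}(u,0)) = \alpha_{ij}$, we get $\expe(F_\beta)=\beta$. By Corollary \ref{imp33}, the monomial set $\{x^\beta : \beta \in \bigcup_i A_i\} \cup \{x^\beta : \beta \in \bigcup_i \overline{B}_i\}$ is a basis of $R_n$ over $(R_i)_{i=0}^n$; applying Lemma \ref{T} to replace each $x^\beta$ ($\beta \in \overline{B}_i$) by $F_\beta$ (they agree up to strictly $\overline{T}$-larger terms) yields a second basis of $R_n$.

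Next I would lift this to $\cE_{n+m}$. By Lemma \ref{restriction3} the same elements form a basis of $R_{n+m}$ over $(R_{i+m})_{i=0}^n$ (the hypothesis $\deg(\inn(F_\beta))=\deg(\inn(F_\beta)(x,0))$ holds because $F_\beta \in R_n$). Theorem \ref{imp3}, together with Lemma \ref{restriction}, then converts this graded statement into a filtered statement for $\cE_{n+m}$: there is a quasi-isomorphism
\begin{equation*}
\Phi : \bigoplus_{j=1}^{n}\bigoplus_{\beta \in \overline{B}_{j}} \cE_{n-j+1+m}\cdot f_\beta \;\oplus\; \bigoplus_{j=1}^{n}\bigoplus_{\alpha \in A_{j}} \cE_{n-j+m}\cdot x^\alpha \;\longrightarrow\; \cE_{n+m}
\end{equation*}
respecting the standard filtration (and an actual isomorphism when $(\cE_n)$ is reduced). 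Given $g$, pick any preimage under $\Phi$, write it as $\sum_{i,j,\beta} g_\beta f_\beta + \sum_{i,j,\alpha} g_\alpha x^\alpha$ with $g_\beta \in \cE_{n-i+1+m}(u_i,\ldots,u_n,v)$, $g_\alpha \in \cE_{n-i+m}(u_{i+1},\ldots,u_n,v)$, and collect the $(i,j)$-terms by setting
\begin{equation*}
h_{ij} := \sum_{\beta \in \overline{B}_{ij}} g_\beta \cdot u^{\beta - \alpha_{ij}}, \qquad r(g) := \sum_{i,j,\,\alpha \in A_{ij}} g_\alpha\, x^\alpha.
\end{equation*}
By Corollary \ref{12}(2), $\Delta_{ij} = \bigcup_{\beta \in \overline{B}_{ij}}(\beta + \NN^{*n-i+1,n})$, hence $\supp(h_{ij}) \subset \Gamma_{ij} \times \NN^m$; passing from support to $\supd$ is immediate from Lemma \ref{support} once one observes that $\Gamma_{ij}$ is upward-closed in the last $n-i+1$ coordinates. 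For $r(g)$, the decomposition $\Gamma = \bigcup_j A_j \times \NN^{n-j}$ of Lemma \ref{c1} gives $\supd(r(g)) \subset \Gamma \times \NN^m$.

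The four moreover-statements then follow quickly: (1) in the reduced setting $\Phi$ is bijective, which forces uniqueness of $(h_{ij},r(g))$; (2) is the filtration-preserving property of $\Phi$ inherited from Theorem \ref{imp3} (the standard filtration on the source is $m^{s-|\alpha_{ij}|}\cdot f_{ij}$-blocks by construction, using $\ord(f_{ij})=|\alpha_{ij}|$); and (3),(4) come from specializing $v=0$ and invoking the order estimates in Theorem \ref{formal}, which apply since the restriction $g(u,0)=\sum h_{ij}(u,0)f_{ij}(u,0)+r(g)(u,0)$ is precisely a formal Weierstrass–Hironaka division of $g(u,0)$. The main obstacle I anticipate is the translation of $\supp$-conditions to $\supd$-conditions and the careful handling of the differential support in positive characteristic via Hasse derivatives; this requires verifying that each $\Gamma_{ij}$ has the stability property $\alpha \in \Gamma_{ij}, \beta \in \NN^{*n-i+1,n} \Rightarrow \alpha+\beta \in \Gamma_{ij}$, which is essentially the content of Corollary \ref{12}.
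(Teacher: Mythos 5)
Your proposal is correct and follows essentially the same route as the paper: the paper's proof simply invokes the quasi-isomorphism of Theorem \ref{imp2} (built exactly as you describe from Corollary \ref{imp33}, Lemma \ref{T}, Lemma \ref{restriction3}, Lemma \ref{restriction} and Theorem \ref{imp3}), and reads off the coefficients $h_{ij}$ and the remainder $r(g)$ from the resulting Stanley basis. The only cosmetic difference is in part (1), where the paper derives uniqueness from the uniqueness of formal division (Theorem \ref{formal}) via the monomorphism $\cE_{n+m}\to K[[u,v]]$, whereas you argue directly from bijectivity of the quasi-isomorphism in the reduced category; both are valid.
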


\begin{proof} This follows from Theorem \ref{imp2}. Uniqueness in (1) follows from uniqueness of formal division (Theorem \ref{formal}), and the fact that there is  a monomorphism
$\cE_n\to K[[u,v]]$.
\end{proof}
\begin{remark} More precisely, one can describe the coeficients in the division theorems as 
$$h_{ij}=\sum_{\beta \in \overline{B_{ij}}} c_\beta\cdot u^\beta,\quad  r(g)=\sum_{j=1}^n\sum_{\alpha\in A_j}c_\alpha\cdot u^\alpha, 
$$  
and
where  $c_\beta(u_i,\ldots,u_n,v) \in \cE_{n-i}$
 and $c_\alpha\in \cE_{n-j}(u_{j+1},\ldots,u_n,v)$,

\end{remark}

\begin{theorem}[Generalized Weierstrass-Hironaka division  theorem 2]\label{main00} 
Let $X$ be a smooth scheme  of finite type over a field $K$ (or a $\CC$-analytic/differentiable manifold) of dimension $n+m$ with a given coordinate system at a  $K$-rational point $x\in X$.  Let $f_{ij}(u,v)\in \cO(X)$ be a finite set of functions  such that the initial exponents (with respect to a certain  normalized  grading $\overline{T}$) form a diagram  $\Delta$ of finite type  at $x\in V$.
Then there exists an \'etale neighborhood  $X'\to X$
preserving the residue field $K$ (respectively an open neighborhood) such that
for any  $g\in  \cO(X')$, there exist   $h_{ij},r(g)\in  \cO(X')$  such that  $\supd(h_{i,j})\subset \Gamma_{i,j}\times \NN^{m}$, 
%
$\supd(r(g))\subset  \Gamma\times \NN^{m}$, and
$$
g=\sum h_{ij}f_{ij}+ r(g).
$$
Moreover this presentation is unique in the algebraic and the analytic situations, and  conditions (1) through (4) of the previous theorem are satisfied at  $x$.
\end{theorem}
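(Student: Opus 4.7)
The plan is to upgrade the local quasi-isomorphism of Theorem \ref{imp2} to a genuine (or, in the differentiable setting, a quasi-) isomorphism on a suitable \'etale or open neighborhood of $x$ by invoking the neighborhood Stanley decomposition (Theorem \ref{NS1}, \ref{NS2}, or \ref{NS3} according to the category), and then to translate the resulting decomposition directly into the claimed division formula.

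First, Theorem \ref{imp2} applied at the stalk, with the $v$-variables treated as free (since the relevant diagram in $\NN^{n+m}$ factors as $\Delta\times\NN^m$), furnishes a Stanley basis of $\cO^h_{X,x}$ in the algebraic case (respectively $\cO_{X,x}$ in the analytic/differentiable cases) consisting of $\{f_\beta:\beta\in\overline{B}_j\}$ and $\{x^\alpha:\alpha\in A_j\}$, where $f_\beta=u^\gamma f_{ij}$ for $\beta=\alpha_{ij}+\gamma$. The element $f_\beta$ sits in the summand multiplied by functions of $u_j,\ldots,u_n,v_1,\ldots,v_m$, and $x^\alpha$ sits in the summand multiplied by functions of $u_{j+1},\ldots,u_n,v_1,\ldots,v_m$. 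I then apply the neighborhood Stanley decomposition to $\cO_X$ with this basis: Theorem \ref{NS1} yields an \'etale neighborhood $X'\to X$ preserving the residue field $K$ on which the decomposition of $\cO(X')$ as a direct sum of free modules over the nested subrings $\cO(X'_j)$ holds; Theorem \ref{NS2} gives the analogous decomposition on a polydisk-type open neighborhood; Theorem \ref{NS3} produces only an epimorphism whose kernel lies in the flat ideals, corresponding to the non-reduced quasi-isomorphism.

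Given such a neighborhood decomposition, any $g\in\cO(X')$ splits uniquely (in the algebraic/analytic cases) or modulo flat functions (in the differentiable case) as $g=\sum_{j,\beta}h_\beta f_\beta+\sum_{j,\alpha}r_\alpha x^\alpha$, with $h_\beta$ and $r_\alpha$ in the appropriate subrings. Regrouping the terms $h_\beta u^\gamma f_{ij}$ indexed by $\beta=\alpha_{ij}+\gamma\in\overline{B}_{ij}$ into $h_{ij}:=\sum_\gamma h_\beta u^\gamma$, and setting $r(g):=\sum_{j,\alpha}r_\alpha x^\alpha$, one obtains $g=\sum h_{ij}f_{ij}+r(g)$. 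The differential-support constraints $\supd(h_{ij})\subset\Gamma_{ij}\times\NN^m$ and $\supd(r(g))\subset\Gamma\times\NN^m$ follow because the summand multiplying $f_\beta$ consists of functions of $u_j,\ldots,u_n,v_1,\ldots,v_m$ alone; combined with Lemma \ref{support} and the combinatorial bookkeeping of Corollary \ref{12}, this gives exactly the prescribed support pattern after the regrouping.

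For the uniqueness in the algebraic and analytic cases, I will use the injectivity of the natural map $\cO(X')\hookrightarrow K[[u,v]]$ from the reduced local ring into its formal completion, together with the uniqueness of the formal Weierstrass--Hironaka division (Theorem \ref{formal}); any two divisions of $g$ coincide after passing to $K[[u,v]]$ and hence coincide in $\cO(X')$. Conditions (1)--(4) at $x$ transfer directly from Theorem \ref{main0} applied to the germ at $x$, since the neighborhood division restricts to the local division. The main technical obstacle is the differentiable case, where Theorem \ref{NS3} produces only a quasi-isomorphism and the kernel consists of flat functions; this prevents uniqueness but still yields all the claimed support conditions, since flat functions have empty support in the formal completion and the combinatorial identifications used to assemble $h_{ij}$ and $r(g)$ are unaffected.
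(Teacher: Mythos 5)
Your proposal is correct and follows essentially the same route as the paper: existence via Theorem \ref{imp2} combined with the neighborhood Stanley decompositions of Theorems \ref{NS1}, \ref{NS2}, \ref{NS3}, and uniqueness via the embedding into the formal completion together with uniqueness of formal division. Your write-up simply spells out the regrouping of the Stanley basis $\{f_\beta\}\cup\{x^\alpha\}$ into the coefficients $h_{ij}$ and the remainder, which the paper leaves implicit.
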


\begin{proof} 
For existence we apply  Theorem \ref{imp2} together with Theorems \ref{NS1}, \ref{NS2}, \ref{NS3} respectively. Uniqueness  follows from uniqueness of extensions in the algebraic and analytic situations, and uniqueness of division in the local ring.	
\end{proof}

\begin{theorem}[Generalized preparation  theorem] \label{main2}
Let $\overline{T}$ be any  normalized order on $\NN^n$.
Let $\{f_1(u,v),\allowbreak\ldots,f_k(u,v)\}$ be a finite set of functions in  $ \cE_{n+m}$ for which the initial exponents with respect to $\overline{T}$ exist and
 $\expe(f_i(u,0))=\alpha^i$ are 
%
the
vertices of 
 a  diagram $\Delta$ of finite type in $\NN^n$. 
Then there is a   set of generators of the form $$\overline{f_i}:=u^{\alpha^i}+r_i$$ of the ideal $(f_1,\ldots,f_k)$ such that 
%
$\expe(\overline{f_i}(u,0))=\alpha^i$, and
${\supd}(r_i(u,0))$ is  contained in $\Gamma\times \NN^n$. 
 Moreover  each $\overline{f_i}$
can be written
 as a finite sum 
 \begin{equation}
 \overline{f_i}=u^{\alpha^i}+\sum^n_{j=1}\sum_{\alpha\in A_j}c_\alpha\cdot u^\alpha, 
 \end{equation}
 where   $c_\alpha\in \cE_{n +m-j}(u_{j+1},\ldots,u_n,v)$ for $j=1,\ldots,n$. Furthermore:
\begin{enumerate}
\item If $(\cE_n)$ is reduced the decomposition is unique.
\item If $\ord(f_{i})=|\alpha_{i}|$ then $\ord(r_i)\geq |\alpha_{i}|$.
\item $\expe(\overline{f}_{i}(u,0))=\alpha_{ij}$.
\end{enumerate}  
 \end{theorem}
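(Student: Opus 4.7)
The strategy is to apply the generalized division theorem to the monomials $u^{\alpha^i}$ themselves and subtract the resulting remainders. First, invoke Theorem \ref{main0} with divisors $\{f_1,\ldots,f_k\}$ (whose diagram is $\Delta$ by hypothesis) and dividend $g := u^{\alpha^i}$, producing
$$u^{\alpha^i} = \sum_{j=1}^{k} h_{ij}\, f_j + \rho_i,$$
with $\supd(h_{ij}) \subset \Gamma_j \times \NN^m$ and $\supd(\rho_i) \subset \Gamma \times \NN^m$. Define $\overline{f_i} := u^{\alpha^i} - \rho_i = \sum_j h_{ij} f_j$. By construction, $\overline{f_i}$ lies in the ideal $(f_1,\ldots,f_k)$, has the desired shape $u^{\alpha^i} + r_i$ with $r_i := -\rho_i$ of differential support in $\Gamma \times \NN^m$, and $\expe(\overline{f_i}(u,0)) = \alpha^i$ since $\alpha^i \in \Delta$ while $r_i(u,0)$ has support in $\Gamma$. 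Conditions (1)--(3) will then transfer directly from the corresponding clauses of Theorem \ref{main0} (uniqueness in the reduced case, the order bound $\ord(r_i) \ge |\alpha^i|$, and the initial exponent identity).

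The explicit expansion $r_i = \sum_{j=1}^n \sum_{\alpha\in A_j} c_\alpha\, u^\alpha$ with $c_\alpha \in \cE_{n+m-j}(u_{j+1},\ldots,u_n,v)$ is read off from the quasi-isomorphism $\phi$ of Theorem \ref{imp2}: its remainder block is precisely the direct sum $\bigoplus_{j=1}^n \bigoplus_{\alpha \in A_j} \cE_{n+m-j} \cdot u^\alpha$, and this block maps quasi-isomorphically onto the subspace $\cE_{n+m}(\Gamma)$ in which $\rho_i$ lives. So the coefficients $c_\alpha$ are determined (uniquely modulo $m^\infty$-ambiguity outside the reduced category) by the direct-sum decomposition, yielding the asserted coefficient structure.

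The remaining and most delicate task is to verify that $\{\overline{f_1},\ldots,\overline{f_k}\}$ actually generates the ideal $I := (f_1,\ldots,f_k)$. The inclusion $(\overline{f_i}) \subset I$ is immediate. For the reverse inclusion, I would show that the transition matrix $H := [h_{ij}]$ is invertible over $\cE_{n+m}$. The point is that $f_j(u,0)$ has initial monomial $u^{\alpha^j}$ while $\rho_i$ has support in $\Gamma$, so if one enumerates the vertices as $\alpha^1 <_{\overline{T}} \alpha^2 <_{\overline{T}} \cdots$ and matches coefficients on both sides of $u^{\alpha^i} = \sum_j h_{ij} f_j + \rho_i$ order-by-order, the contribution to the $u^{\alpha^i}$ monomial forces $h_{ii}(0,0) = 1$ and makes the ``leading part'' of $H$ upper-unitriangular. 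By Nakayama this is enough to invert $H$ in the local ring, giving $f_j \in (\overline{f_i})$.

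The main obstacle is making this invertibility of $H$ entirely rigorous---specifically, identifying the appropriate initial-form piece so that unitriangularity is visible in every setting. In the formal category this is transparent from Theorem \ref{formal}, and the general smooth case follows by completing (Corollary \ref{Malg}) and using that $\cE_{n+m} \to \widehat{\cE_{n+m}}$ is injective modulo $m^\infty$. Uniqueness in item (1) then follows directly from the uniqueness clause of Theorem \ref{main0}, the order bound in (2) from $\ord(u^{\alpha^i}) = |\alpha^i|$ together with clause (2) of Theorem \ref{main0}, and the initial exponent identity in (3) from clause (3) of the same theorem.
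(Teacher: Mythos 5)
Your proposal is correct, and it rests on the same key mechanism as the paper's proof: divide the monomial $u^{\alpha^i}$ by the generators, set $\overline{f_i}:=u^{\alpha^i}-r(u^{\alpha^i})$, and exploit the fact that the initial exponents $\expe(h_{ij}f_j(u,0))$ lie in the pairwise disjoint sets $\Delta_j$ to conclude that the relevant coefficient is a unit. The organization differs, though. The paper proceeds by induction on $i$, dividing $u^{\alpha^i}$ by the partially updated set $\overline{f_1},\ldots,\overline{f_{i-1}},f_i,\ldots,f_k$; at each step only the single scalar $h_i$ must be shown invertible (from $\expe(h_i(u,0))=0$), and the ideal is preserved one replacement at a time, so no matrix ever appears. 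You instead divide all the $u^{\alpha^i}$ by the original set simultaneously and must invert the full transition matrix $H=[h_{ij}]$. Your argument for this does go through: since $\expe(h_{ij}(u,0))+\alpha^j\in\Delta_j$ while $\alpha^i\in\Delta_i$, a nonzero constant term $h_{ij}(0,0)\neq 0$ for $j\neq i$ forces $\alpha^j>_{\overline{T}}\alpha^i$, so $H(0,0)$ is upper triangular in the $\overline{T}$-ordering of the vertices with unit diagonal, and $\det H$ is a unit in the local ring (note $h_{ii}(0,0)$ is only a unit, not necessarily $1$, since the off-diagonal terms $h_{ij}f_j$ may still contribute to the coefficient of $u^{\alpha^i}$ beyond their initial exponents; and invertibility follows from the adjugate formula rather than Nakayama). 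What each approach buys: the paper's induction avoids the triangularity bookkeeping entirely and needs no ordering of the vertices beyond distinctness of the $\Delta_j$, while your simultaneous version produces all the $\overline{f_i}$ from a single application of Theorem \ref{main0} and makes the change of generators explicit as one invertible matrix. Your reading of the coefficient structure of $r_i$ from the decomposition $\cE_{n+m}(\Gamma)=\bigoplus_j\bigoplus_{\alpha\in A_j}\cE_{n+m-j}\cdot u^\alpha$ of Theorem \ref{imp2}, and the transfer of conditions (1)--(3), match the paper.
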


\begin{proof} 
We construct $\overline{f_i}$ and show by induction
that all the sets 
$$
\overline{f_1},\ldots,\overline{f_{i-1}},\overline{f_i},f_{i+1},\ldots, f_k
$$ 
generate the same ideal.
This is true for $i=0$. Suppose it is valid for $i-1$. Consider the division  with remainder of $u^{\alpha_i}$ by $\overline{f_1},\ldots,\overline{f_{i-1}},{f_i},f_{i+1},\ldots, f_k$:
$$
u^{\alpha^i}=\sum_{j=1}^{i-1} h_j\overline{f_j}+\sum_{j=i}^{k} h_jf_j + r(u^{\alpha^i}),
$$
where 
%
${\supd}(r(u^{\alpha^i}(u,0)))
\subset  \Gamma$.
Then set 
$$
\overline{f_i}:=u^{\alpha^i}-r(u^{\alpha^i})=\sum_{j=1}^{i-1} h_j\overline{f_j}+\sum_{j=i}^{k} h_jf_j.
$$
Note that $\expe(h_j\overline{f_j}(u,0))$, and  $\expe(h_jf_j(u,0))$ are in $\Delta_j$, and thus all are distinct for distinct $j$. We have
%
$$
\alpha_i=\expe(\overline{f_i}(u,0))
=\min_j \expe(h_jf_j(u,0))=\expe(h_if_i(u,0))=\expe(h_i(u,0))+\expe(f_i(u,0)).
$$
Consequently,  
$\expe(h_i(u,0))=0$, 
and the functions   $h_i(u,0)$ and  $h_i(u,v)$ are invertible, and thus the ideals  generated by $\overline{f_1},\ldots,\overline{f_{i-1}},\overline{f_i},f_{i+1},\ldots, f_k$ and 
$\overline{f_1},\ldots,
\overline{f_{i-1}},
f_{i},\ldots, f_k$ 
are the same. The other properties follow from the previous theorem.
\end{proof}

Theorem \ref{main2} generalizes the Malgrange- Weierstrass preparation
theorem for a single variable.

We can consider another particularly simple situation of the above theorem.

\begin{corollary} 
Let $\overline{T}$ be any  monomial order on $\NN^n$.
Let  $f_1,\ldots,f_n\in  \cE_{n+m}$ be 
a  
set of functions  for which 
 $\expe(f_i(u,0))=k_i\cdot e_i$, where $i=1,\ldots,n$, $k_i\in \NN$,  $\{e_1,\ldots, e_n\}$ is the standard basis of $\NN^n$. Then:
 \begin{enumerate}
 \item There exists a   set of generators $\overline{f_i}$ of  the ideal $(f_1,\ldots,f_k)$ of the form
 \begin{equation}
\overline{f_i}:=u_i^{k_i}+\sum_{\alpha_i<k_i} c_\alpha(v)\cdot u_1^{\alpha_1}\cdot\ldots\cdot u_k^{\alpha_n},
\end{equation} 
where $c_\alpha\in \cE_{m}(v)$.

\item  For any $g\in \cE_n$ there exist $h_{i}\in  \cE_n$ and $r(g)\in  \cE_n$ such that:  
\begin{enumerate} 
\item $g=\sum h_{ij}f_{ij}+ r(g)$.
\item $r(g)=\sum_{\alpha_i<k_i} c_\alpha(v)\cdot u_1^{\alpha_1}\cdot\ldots \cdot u_k^{\alpha_k}$.
\item $h_i=\sum_{\alpha_i<k_i} c_{i\alpha}(u_{i+1},\ldots,u_k,v)\cdot u_1^{\alpha_1}\cdot\ldots \cdot u_i^{\alpha_i}$.

\end{enumerate}

\end{enumerate}
\end{corollary}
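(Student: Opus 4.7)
The plan is to derive the corollary as an immediate specialization of Theorems \ref{main2} and \ref{main0} to the diagram $\Delta\subset\NN^n$ whose vertices are the coordinate-axis exponents $\alpha^i:=k_i e_i$ for $i=1,\ldots,n$. The key point is that the assumed form of the initial exponents forces a very degenerate combinatorial structure on $\Delta$, and the two theorems then specialize to the stated formulas.

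First I would work out the combinatorics. By definition
\[
\Delta=\bigcup_{i=1}^n (k_i e_i+\NN^n)=\{\alpha\in\NN^n\mid \alpha_j\geq k_j\text{ for some }j\},
\]
so its complement is the finite ``box''
\[
\Gamma=\{\alpha\in\NN^n\mid \alpha_j<k_j\text{ for all }j=1,\ldots,n\};
\]
in particular $\Delta$ is of finite type. For the Hironaka decomposition of $\Gamma$ (Section~\ref{D}), observe that whenever $j<n$ some vertex $k_i e_i$ has $i>j$ and so projects to $0$ under $\pi_{n,j}$; hence $\pi_{n,j}(\Delta)=\NN^j$ and $\overline{\Gamma}^j=\emptyset$. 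Consequently $A_j=\emptyset$ for $j<n$ and $A_n=\Gamma$. Ordering the vertices in reverse-lex gives $k_1e_1<_r k_2e_2<_r\cdots <_r k_ne_n$, so by the recipe of Section~\ref{D},
\[
\Delta_i=\{\alpha\mid \alpha_i\geq k_i,\ \alpha_j<k_j\text{ for }j<i\},\qquad \Gamma_i=\Delta_i-k_ie_i=\{\gamma\mid \gamma_j<k_j\text{ for }j<i\}.
\]

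Next I would plug these into Theorems \ref{main2} and \ref{main0}. For part~(1): the preparation formula from Theorem \ref{main2} reads $\overline{f_i}=u^{\alpha^i}+\sum_{j=1}^n\sum_{\alpha\in A_j}c_\alpha\, u^{\alpha}$ with $c_\alpha\in\cE_{n+m-j}(u_{j+1},\ldots,u_n,v)$; since $A_j=\emptyset$ for $j<n$ and $A_n=\Gamma$, this collapses exactly to $\overline{f_i}=u_i^{k_i}+\sum_{\alpha\in\Gamma}c_\alpha(v)\,u^{\alpha}$ with $c_\alpha\in\cE_m(v)$, which is the claim. For part~(2), Theorem \ref{main0} supplies $g=\sum h_i f_i+r(g)$ with $\supd(r(g))\subset\Gamma\times\NN^m$ and $\supd(h_i)\subset\Gamma_i\times\NN^m$. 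The first of these, combined with $A_n=\Gamma$, yields the stated form $r(g)=\sum_{\alpha\in\Gamma}c_\alpha(v)\,u^{\alpha}$ (i.e.\ (b)). For (c), collecting the support condition $\supd(h_i)\subset\{\gamma:\gamma_j<k_j\text{ for }j<i\}\times\NN^m$ by the first $i{-}1$ exponents of $u$ (which are bounded) and absorbing the remaining variables into the coefficients gives the displayed polynomial-in-$u_1,\ldots,u_i$ expression with coefficient ring $\cE_m(u_{i+1},\ldots,u_n,v)$.

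There is no substantive obstacle in the argument: all the analytic content—existence of the quasi-isomorphism $\Psi$ and of the division/remainder decomposition—has already been established in Theorems \ref{imp2}, \ref{main0}, \ref{main2}. The only work left is the elementary combinatorial computation of $\Delta_i$, $\Gamma_i$, and the $A_j$ for the diagram generated by the axis vertices $k_i e_i$, which is what I sketched above. The mildly delicate point worth double-checking is the reverse-lex ordering of the vertices (so that the $\Delta_i$ are indeed disjoint and match the recursion in Section~\ref{D}); once that is fixed the formulas fall out automatically.
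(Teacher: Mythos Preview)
Your approach is correct and is exactly the paper's own argument: the proof there consists of the single observation that in this situation $\Gamma=A_n=[0,k_1-1]\times\cdots\times[0,k_n-1]$ is finite (so all coefficients $c_\alpha$ lie in $\cE_m(v)$), after which the generalized preparation and division theorems (Theorems \ref{main2} and \ref{main0}) are invoked. You have simply spelled out the combinatorics of $\Delta_i$, $\Gamma_i$, and the vanishing of $A_j$ for $j<n$ in more detail than the paper does.
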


\begin{proof} 
In that case 
$\Gamma=A_n=[0,k_1-1]\times\ldots\times [0,k_n-1]$ is finite  and $c_\alpha\in \cE_{m}(v)$, and we apply the generalized preparation and division theorems.
\end{proof}

\subsection{Hironaka standard basis for algebraic, analytic and smooth functions}

The following theorem extends existence of the Hironaka standard basis theorem for formal analytic functions 
\cite{BM2}. We  note that  Henselian Hironaka-Weierstrass division in \cite{Hir2} gives, in general, no control on the multiplicities  of the remainders.  On the other hand the Hironaka standard basis in the analytic case is convergent (see \cite{Hir2}, \cite{BM}).

\begin{theorem}[Existence of a standard basis]
\label{standard basis}
Assume $(\cE_n)$ is a  smooth category over an infinite  field $K$, and let  $\cI\subset\cE_n$ be any ideal. Consider a monotone  grading $\overline{T}$ on $\NN^n$, and  let  
%
$\Delta=\Delta(\cI)=\{\expe_{\overline{T}}(f)\mid f\in\cI\}$ be   the monotone diagram of initial exponents  
defined for a generic coordinate system. 
  Let $\alpha_1,\ldots,\alpha_k$ be the set of vertices of $\Delta$ ordered by using the reverse lexicographic order.
 Then there exists a standard basis of $\cI$ with respect to $\overline{T}$, that is,  a  set of  functions ${f_i}:=u^{\alpha^i}+r_i\in \cI$, where 
 $\supd(r_i)$ is contained in $\Gamma$ for ${i=1},\ldots,k$, with $\exp(f_i)=\alpha_i$ such that: 
 \begin{enumerate}
 \item  
 $\mon({f_i}):=u^{\alpha_i}$ and
$\ord({f_i})=|\alpha_i|$.
 \item  The elements  $\inn({f_i})$ generate $\inn(\cI)$.
 \item Any function $f\in \cI$ can be  represented 
as $f= \sum h_i{f_i}=r(f)$, where   $r(f)\in m_n^{\infty}$, $\supd(r(f))\subset \Gamma$,   $\supd(h_i)\subset \Gamma_i$, and the functions $h_i\in \cE_n$  satisfy the conditions as in Theorem 
\ref{main0}, and are uniquely defined modulo $m_n^{\infty}$.
\item In the reduced category (of algebraic and analytic functions) the ideal $m_n^{\infty}$ is $0$ and condition $(3)$ can be stated as the equality $f= \sum h_i{f_i}$. In particular $\cI$ is generated by ${f_i}$. Moreover the standard basis is uniquely determined by ${f_i}$.
\end{enumerate}
\end{theorem}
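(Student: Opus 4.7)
The plan is to derive the theorem from Theorem~\ref{main2} (generalized preparation) and Theorem~\ref{main0} (generalized division), using Corollary~\ref{etale} (Galligo--Grauert) to arrange that $\Delta=\Delta(\cI)$ is monotone in a generic coordinate system, which is implicit in the hypothesis.

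First I would pick, for each vertex $\alpha_i$ of $\Delta$, some $g_i\in\cI$ with $\expe_{\overline{T}}(g_i)=\alpha_i$ and leading coefficient $1$; such $g_i$ exist by the very definition of $\Delta(\cI)$. Since $\overline{T}$ is normalized, i.e.\ $T_1=x_1+\cdots+x_n$, the equality $\expe_{\overline{T}}(g_i)=\alpha_i$ forces $|\alpha_i|\le|\beta|$ for every $\beta\in\supp(g_i)$, giving $\ord(g_i)=|\alpha_i|$. Applying Theorem~\ref{main2} (with $m=0$) to $\{g_1,\ldots,g_k\}$ produces the desired $f_i=u^{\alpha_i}+r_i$ with $\supd(r_i)\subset\Gamma$, $\ord(r_i)\ge|\alpha_i|$, and $\expe(f_i)=\alpha_i$, so $\mon(f_i)=u^{\alpha_i}$ and $\ord(f_i)=|\alpha_i|$, proving (1). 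Item (2) follows because the vertex monomials $u^{\alpha_i}=\inn(f_i)$ generate the monomial ideal $\inn(\cI)$ by the very definition of \emph{vertex}.

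For (3), given $f\in\cI$ I would apply Theorem~\ref{main0} to obtain $f=\sum h_if_i+r(f)$ with $\supd(h_i)\subset\Gamma_i$, $\supd(r(f))\subset\Gamma$, and the usual order estimates. Since $f$ and each $f_i$ lie in $\cI$, the remainder $r(f)=f-\sum h_if_i$ lies in $\cI$ as well. Passing to the completion $\widehat{\cE_n}\simeq K[[u_1,\ldots,u_n]]$, the image $\widehat{r(f)}$ belongs to $\widehat{\cI}$ and satisfies $\supp(\widehat{r(f)})\subset\Gamma$. If $\widehat{r(f)}$ were nonzero, its $\overline{T}$-initial exponent would lie in both $\Delta(\widehat{\cI})$ and $\Gamma$; since $\Delta(\widehat{\cI})=\Delta$ and $\Gamma\cap\Delta=\emptyset$, this is a contradiction, so $\widehat{r(f)}=0$, i.e.\ $r(f)\in m_n^\infty$. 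Uniqueness of the $h_i$ modulo $m_n^\infty$ then reduces, after completion, to uniqueness in the formal Weierstrass--Hironaka division Theorem~\ref{formal}. Item (4) is immediate: in a reduced category $m_n^\infty=0$, so $r(f)=0$, the $h_i$ are literally unique, $\cI=(f_1,\ldots,f_k)$, and the standard basis is forced since each $r_i$ is the unique remainder obtained by dividing $u^{\alpha_i}$ by the $f_j$ with $j<i$.

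The main obstacle I anticipate is the equality $\Delta(\widehat{\cI})=\Delta(\cI)$ used above, which is trivial in the reduced setting but requires justification in the general smooth setting. The tool is Corollary~\ref{Malg} (Malgrange preparation for modules) applied to the $\cE_n$-module $\cE_n/\cI$: generators of $\cI/m_n\cI$ correspond bijectively to generators of $\widehat{\cI}/m_n\widehat{\cI}$, and a $\overline{T}$-initial exponent of an element of $\cI$ (resp.\ $\widehat{\cI}$) is detected after reduction modulo a sufficiently high power of $m_n$, so the diagrams built from the two sides agree. A secondary small point to verify is that once $\Delta$ is monotone the vertex set $\{\alpha_1,\ldots,\alpha_k\}$ is finite and in bijection with a canonical set of generators of $\inn(\cI)$; this is standard from the theory of monotone diagrams developed in Chapter~3.
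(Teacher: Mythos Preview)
Your approach is essentially the paper's: build the $f_i$ by dividing $u^{\alpha_i}$ by any chosen $g_j\in\cI$ with $\expe(g_j)=\alpha_j$ (this is exactly the content of the preparation Theorem~\ref{main2} you cite), and then use Theorem~\ref{main0} for the division in~(3). The paper's proof is the same two moves, only it unwraps the preparation step by hand rather than quoting~\ref{main2}.

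Your anticipated obstacle, however, is self-inflicted. You do not need the completion at all, and hence do not need $\Delta(\widehat{\cI})=\Delta$. Work directly in $\cE_n$: the remainder $r(f)=f-\sum h_if_i$ lies in $\cI$, and if $r(f)\notin m_n^{\infty}$ then its image under $T_n$ is nonzero, so $\expe_{\overline{T}}(r(f))$ is defined and lies in $\Delta(\cI)=\Delta$ \emph{by the very definition of the diagram} (not of its completion). On the other hand $\supp(T_n(r(f)))\subset\supd(r(f))\subset\Gamma$, so $\expe_{\overline{T}}(r(f))\in\Gamma\cap\Delta=\emptyset$, a contradiction. This is precisely the paper's one-line argument, and it sidesteps the question you raise; your proposed fix via Corollary~\ref{Malg} is both unnecessary and not obviously adequate (that corollary compares generators of finite modules, not diagrams of ideals in a non-Noetherian ring).

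Two minor slips: first, $\inn(f_i)$ is not literally $u^{\alpha_i}$, since $r_i$ may contribute terms in degree $|\alpha_i|$ (they lie in $\Gamma$, not in higher degree); what you have is $\expe_{\overline{T}}(\inn(f_i))=\alpha_i$, and then the $\inn(f_i)$ generate $\inn(\cI)$ by the standard-basis lemma of \S3.2 applied to the homogeneous ideal $\inn(\cI)$. Second, your uniqueness argument in~(4) should not restrict to $j<i$: if $(f_i)$ and $(f_i')$ are two standard bases, then $f_i-f_i'\in\cI$ has $\supd\subset\Gamma$, hence lies in $m_n^{\infty}=0$ by the argument just given.
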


\begin{proof}
 Let $\overline{f}_i$ be any functions with $\exp(\overline{f}_i)=\alpha_i$. Write $u^{\alpha^i}=\sum h_i\overline{f}_i+r_i$. Then ${f_i}:=u^{\alpha^i}+r_i$ satisfies $\expe(f_i)=\alpha_i$.
This shows that the elements $\expe(f_i)$ generate $\Delta(\cI)$, and the $\inn(f_i)$ generate $\inn(\cI)$. If $f\in \cI$ then by the division theorem we can write $f=\sum h_if_i+r(f)$, where $f\in \cI$, $\supd(r(f))\subset \Gamma$.
Then  we get $\expe(r(f))\subset \Gamma\cap \Delta=\emptyset$ and  $r(f)\not\in m^\infty$. This argument also implies uniqueness of the standard basis in the reduced category. 
\end{proof}

 In a nonreduced category the standard basis need not  generate the ideal $\cI$. One can remedy this  under  stronger assumptions by using the preparation theorem.

\begin{theorem} \label{standard basis 2} 
Let $\Delta$ be a monotone diagram of the initial exponents with respect to a certain total normalized monomial grading $\overline{T}$. Consider the ideal $\cI$ generated by functions $f_i$ such that the elements $\alpha_i:=\expe(f_i)$ are 
%
the 
vertices of $\Delta$. Then there exists a standard basis ${f_i}$ satisfying conditions {\rm (1)--(3)} and generating $\cI$.
\end{theorem}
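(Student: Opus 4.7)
The plan is to apply the generalized preparation theorem (Theorem~\ref{main2}) directly to the given generators $f_i$ and then verify that the resulting normal-form generators satisfy the three standard-basis conditions. First I would observe that, since $\overline{T}$ is normalized, so $T_1 = x_1 + \ldots + x_n$, every $\beta \in \supp(f_i)$ satisfies $|\beta| = T_1(\beta) \geq T_1(\alpha_i) = |\alpha_i|$; combined with $\alpha_i \in \supp(f_i)$ this forces $\ord(f_i) = |\alpha_i|$, which is exactly the hypothesis needed to invoke part~(2) of Theorem~\ref{main2}.

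Next, I would apply Theorem~\ref{main2} (in the case $m=0$) to obtain generators $\overline{f}_i = u^{\alpha_i} + r_i$ of the same ideal $\cI$, with $\supd(r_i) \subset \Gamma$ and $\ord(r_i) \geq |\alpha_i|$. Consequently $\ord(\overline{f}_i) = |\alpha_i|$ and $\mon(\overline{f}_i) = u^{\alpha_i}$, which is condition~(1). For condition~(2), the initial form $\inn(\overline{f}_i)$ is a homogeneous element of degree $|\alpha_i|$ in $\inn(\cI) \subset R_n$ whose $\overline{T}$-initial exponent is the vertex $\alpha_i$ of $\Delta$; since $\Delta(\inn(\cI)) = \Delta(\cI) = \Delta$ (a consequence of $\overline{T}$ being normalized, so that taking $\inn$ preserves $\expe_{\overline{T}}$), Lemma~\ref{66} applied to the homogeneous ideal $\inn(\cI)$ yields that the elements $\inn(\overline{f}_i)$ generate $\inn(\cI)$.

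For condition~(3), I would apply the generalized Weierstrass--Hironaka division (Theorem~\ref{main0}) to the $\overline{f}_i$: any $f \in \cE_n$ admits a representation $f = \sum h_i \overline{f}_i + r(f)$ with $\supd(h_i) \subset \Gamma_i$, $\supd(r(f)) \subset \Gamma$, and multiplicity bounds $\ord(h_i) \geq \ord(f) - |\alpha_i|$. If $f \in \cI$ then $r(f) = f - \sum h_i \overline{f}_i \in \cI$; passing to the formal completion and invoking the uniqueness of formal division (Theorem~\ref{formal}), the image $\widehat{r(f)} \in K[[u]]$ must vanish because $\widehat{f}$ already lies in the ideal generated by the $\widehat{\overline{f}_i}$. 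Hence $r(f) \in m_n^\infty$, and the same uniqueness principle shows the $h_i$ are determined modulo $m_n^\infty$.

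The main obstacle is the tension between the $\overline{T}$-support condition $\supd(r_i) \subset \Gamma$ (which Theorem~\ref{main2} delivers) and the standard-grading estimate $\ord(\overline{f}_i) = |\alpha_i|$ required to identify $\mon$, $\ord$, and $\inn$ in the correct degree; this bridge is furnished by the normalization of $\overline{T}$. It is precisely this bridge that makes the generators produced by Theorem~\ref{main2} genuinely generate $\cI$, distinguishing this situation from the purely existential Theorem~\ref{standard basis}, in which the standard basis may, in a non-reduced smooth category, fall short of generating $\cI$ by elements of $m_n^\infty$.
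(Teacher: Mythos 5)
Your proposal is correct and follows essentially the same route as the paper: the paper's proof simply says the construction is that of Theorem~\ref{standard basis} and that generation of $\cI$ follows from the preparation theorem, which is exactly your use of Theorem~\ref{main2} to produce $\overline{f}_i=u^{\alpha_i}+r_i$ generating $\cI$, followed by the same verification of conditions (1)--(3) via Lemma~\ref{66} and the division theorem. The only quibble is in your closing remark: generation of $\cI$ is part of the statement of Theorem~\ref{main2} itself, not a consequence of the normalization of $\overline{T}$ (which is only needed to identify $\ord(f_i)=|\alpha_i|$ and hence the correct degree of the initial forms).
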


\begin{proof}
The proof and construction of ${f_i}$ are the same. To show that the elements  ${f_i}$ generate $\cI$ we apply the preparation theorem. 
\end{proof}

\begin{theorem}[Existence of a standard basis 2]
\label{standard basis 3}
Let $X$ be a regular scheme  of finite type over a field $K$ (or a complex analytic/differentiable manifold) with a given coordinate system, and $x\in X$ be a $K$-rational point.  Let $\cI$ be a sheaf of ideals  on $X$ of finite type.  Consider a monotone  grading $\overline{T}$ on $\NN^n$, and  let  
$\Delta=\Delta(\cI)=\{\expe_{\overline{T}}(f)\mid f\in\cI\}$ be  the diagram of initial exponents (which is monotone and thus of finite type for a generic coordinate system). 
  Let $\alpha_1,\ldots,\alpha_k$ be the set of vertices of $\Delta$ ordered according to the reverse lexicographic order.
 Then there is an \'etale  neighborhood $X'\to X$ of $x$ preserving the residue field at $x\in X$ (respectively an open neighborhood) and  a standard basis of $\cI$ on $X'$, which is a uniquely determined set of  functions $f_1,\ldots,f_k\in \cI(X')$, of the  form 
 $$
{f_i}:=u^{\alpha^i}+r_i\in \cI,
$$ 
where  
 $\supd(r_i)\subset \Gamma$ for ${i=1},\ldots,k$ and 
 $\ord_x({f_i})=|\alpha_i|$. Moreover (in the algebraic and analytic setting)
  any function $f\in \cI(X')$ can be uniquely represented 
as $f= \sum h_i{f_i}$,  where   $h_i\in \cO(X')$ with $\supd(h_i)\subset \Gamma_i$. (In the differential category 
the function $f\in \cI$ can be uniquely represented 
as $f\equiv \sum h_i{f_i}\ ({\rm mod} \ m_n^{\infty})$.)
\end{theorem}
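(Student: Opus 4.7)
The plan is to bootstrap from the local version (Theorem \ref{standard basis}, or rather its strengthening Theorem \ref{standard basis 2}) to a neighborhood statement by invoking the neighborhood version of generalized Weierstrass--Hironaka division (Theorem \ref{main00}) together with the neighborhood Stanley decomposition results (Theorems \ref{NS1}, \ref{NS2}, \ref{NS3}). Throughout, we may shrink $X$ to assume $\cI$ is generated by finitely many global sections $g_1,\dots,g_m\in\cI(X)$, and after a generic linear change of coordinates over a suitable (separable) finite extension of $K$ if necessary, the diagram $\Delta=\Delta(\cI_x)$ with respect to $\overline{T}$ is monotone (Corollary \ref{etale}), hence of finite type.

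First I would construct the functions $f_i$ in the local ring $\cO_{X,x}$. The vertices $\alpha_1,\dots,\alpha_k$ of $\Delta$ are realized by elements of $\cI_x$, so by running the preparation procedure of Theorem \ref{main2} on these vertex representatives we obtain $f_i = u^{\alpha_i}+r_i \in \cI_x$ with $\supd(r_i)\subset \Gamma$, $\mon(f_i)=u^{\alpha_i}$, and $\ord_x(f_i)=|\alpha_i|$. By Theorem \ref{standard basis 2} (in the reduced category) the $f_i$ generate $\cI_x$, so after passing to an étale (respectively open) neighborhood $X''\to X$ preserving the residue field at $x$, we may assume $f_1,\dots,f_k\in\cI(X'')$ and they generate $\cI$ as a sheaf on $X''$. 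Uniqueness in the local ring is already contained in Theorem \ref{standard basis}(4): the Stanley-type decomposition $u^{\alpha_i}=f_i-r_i$ with $\supd(r_i)\subset\Gamma$ is unique in the reduced category because it coincides with the formal division remainder, and the map $\cO_{X,x}\to\widehat{\cO}_{X,x}=K[[u]]$ is injective.

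Next I would promote the local division statement to a neighborhood one. Applying Theorem \ref{main00} to the functions $f_1,\dots,f_k$ (whose initial exponents $\alpha_i$ form a diagram of finite type at $x$) yields, after a further shrinking to an étale/open neighborhood $X'\to X''$, the existence of a division with remainder
\[
g = \sum_{i=1}^k h_i f_i + r(g)
\]
for every $g\in\cO(X')$, with $\supd(h_i)\subset \Gamma_i$ and $\supd(r(g))\subset \Gamma$. If $g\in\cI(X')$, then in the local ring $\cO_{X',x}$ we have $r(g)\in\cI_{x}$, but also $\supd(r(g))\subset\Gamma$ forces $\expe(r(g))\in \Gamma\cap\Delta=\varnothing$, whence $r(g)=0$ in $\cO_{X',x}$ (in the reduced algebraic/analytic cases). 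Since $\cO_{X'}\hookrightarrow\cO_{X',x}$ for $X'$ a sufficiently small étale/open neighborhood of an irreducible component through $x$, we conclude $r(g)=0$ in $\cO(X')$, so $g=\sum h_i f_i$ with the prescribed support conditions on the $h_i$. In the differential setting the same argument gives $r(g)\in m_x^\infty$, yielding the weaker congruence.

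Uniqueness of the representation $f=\sum h_i f_i$ (respectively the $h_i$ modulo $m_x^\infty$) follows because any two such representations differ by a relation $\sum(h_i-h_i')f_i=0$ with $\supd(h_i-h_i')\subset\Gamma_i$; reducing to the local ring and then to the completion via $T_n$, this is precisely the uniqueness of formal Hironaka division (Theorem \ref{formal}). The main obstacle in this proof is not any single step but ensuring the combinatorial data (the diagram $\Delta$, the decomposition $\NN^n=\Gamma\sqcup\bigsqcup \Delta_{ij}$, and the subordinate rings $\cE_{n-i+1}$) remain compatible as one passes from the local Stanley-type isomorphism of Theorem \ref{imp2} to its sheafified form via Theorems \ref{NS1}--\ref{NS3}; once the coordinate system and the generic monotonicity are fixed, the neighborhood divison of Theorem \ref{main00} does the heavy lifting and the remainder of the argument is bookkeeping.
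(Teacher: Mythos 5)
Your proposal is correct and follows essentially the same route as the paper: the paper's own proof is a one-line citation to Theorems \ref{NS1}, \ref{NS2}, \ref{NS3} and \ref{standard basis}, and your argument simply spells out how those ingredients combine (you route the sheafification through Theorem \ref{main00}, which is itself built on \ref{NS1}--\ref{NS3}, so the logical dependencies coincide). The remainder-vanishing argument via $\Gamma\cap\Delta=\emptyset$ and the uniqueness via formal division are exactly the mechanisms used in Theorem \ref{standard basis}, so nothing further is needed.
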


\begin{proof} 
Follows from  Theorems \ref{NS1},  \ref{NS2}, \ref{NS3} and  \ref{standard basis}.	
\end{proof}

\subsection{Implicit function theorems}

Let $\Delta\subset \NN^n$ be a diagram of finite type with vertices $\alpha_i$ ordered reverse-lexicographically as in   Section \ref{D}. 

Consider the finite decomposition \ref{S3} after Corollary \ref{12}, and write 

$$
\Gamma=\bigcup_{i=1}^n A_i\times \NN^{n-i},\quad \Delta =\bigcup_{i=1}^n  \overline{B}_i+\NN^{*n-i+1,n},
$$ 
Set 
$$
d(\Delta):=\max\Big( \bigcup_i (A_i\cup \overline{B}_i)\Big),\quad \Delta(s):=\{\alpha\in \Delta\mid |\alpha|=s\}.
$$
Let $\{f_1(u,v),\ldots,f_k(u,v)\}$ be a finite set of functions in  $ \cE_{n+m}=\cE_{n+m}(u_1,\ldots,u_n,v_1,\ldots,v_m)=\cE_{n+m}(u,v)$ which is in set-theoretic bijective correspondence with vertices 
$$
f_i\mapsto \alpha_i,\quad \ord(f_i(u,0))
=|\alpha_i|.
$$  
Any $\beta\in \Delta_i$ can be written as $\beta=\alpha_{i}+\gamma_i$,
where  $\gamma_i\in {\Gamma}_{i}$.   Set   
$$
f_\beta:=u^\gamma f_{i},\quad i(\beta):=i. 
$$  
We  define \emph{generalized Jacobians} as
%
$$ 
J^s(f_1,\ldots,f_k: u^{\alpha_1},\ldots,u^{\alpha_k})
:=\det\big[D_{u^{\alpha}}{(f_{\beta})}\big]_{\alpha,\beta\in \Delta(s)}
=\det\big[D_{u^{\alpha-\beta+\alpha_{i(\beta)}}}{(f_{i(\beta)})}\big]_{\alpha,\beta\in \Delta(s)}.
$$
Recall that $D_{u^\alpha}=\frac{1}{\alpha!}\frac{\partial^{|\alpha|}}{\partial u^\alpha}$, and
note that there is an  obvious equality 
$$
D_{u^{\alpha}}{(f_{\beta})}=D_{u^{\alpha}}(u^{\beta-{\alpha_{i(\beta)}}}f_{i(\beta)})=D_{u^{\alpha-\beta+{\alpha_{i(\beta)}}}}{(f_{i(\beta)})},
$$
 which allows us to compute Jacobians in two different ways.
 
\begin{remark}
Here 
%
$D_{{u^{\alpha-\beta+\alpha_{i(\beta)}}}}$ is assumed to be $0$ if $\alpha-\beta+\alpha_{i(\beta)}$ contains a negative component.
\end{remark}

In the case of functions of multiplicity one this notion coincides with the standard Jacobian 
$$ 
J^1(f_1,\ldots,f_k: u_1,\ldots,u_k):=\det\big[D_{u_j}{(f_i)}\big].
$$

\begin{theorem}[Generalized division theorem 3]\label{main01}
If for all  $s\leq d(\Delta)+1$, the Jacobians
$$ 
J^s(f_1,\ldots,f_k: u^{\alpha_1},\ldots,u^{\alpha_k})
$$ 
are invertible
then for any  $g\in  \cE_n$ there exist   $h_{i}\in  \cE_n$ and $r(g)\in  \cE_n$ such that  $\supd(h_{i})\subset \Gamma_{i}$, 
%
$\supd(r(g))
\subset  \Gamma$, 
and
$$
g=\sum h_{i}f_{i}+ r(g).
$$
 Also 
$$
r(g)=\sum_{j=1}^n\sum_{\alpha\in A_j}c_\alpha\cdot u^\alpha, 
$$
 where $c_\alpha\in \cE_{n-j}(u_{j+1},\ldots,u_n,v)$ 

Moreover, if 
%
$\ord(f_{i})=ord(f_i(u,0))
=|\alpha_{i}|$ then $\ord(h_{i})\geq \ord(g)-|\alpha_{i}|$.
\end{theorem}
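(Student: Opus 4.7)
The plan is to reduce the theorem to the Stanley-basis quasi-isomorphism of Theorem \ref{imp2}, replacing the monomial ``initial exponents'' $u^\beta$ that appeared there by the genuine initial forms of the $f_\beta$, and using the generalized Jacobian condition to verify that we still obtain a basis.

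First I would translate the Jacobian hypothesis into linear algebra on initial forms. Since $\ord(f_i(u,0))=|\alpha_i|$ and $f_\beta=u^{\beta-\alpha_{i(\beta)}}f_{i(\beta)}$, the initial form of $f_\beta$ in $u$ (at $v=0$) is homogeneous of $u$-degree $|\beta|$, and $D_{u^\alpha}(f_\beta)(0,0)$ is precisely the coefficient of $u^\alpha$ in that initial form. Arranging the monomial basis of $R_n[s]=K[u]_s$ as $\{u^\alpha:\alpha\in\Delta(s)\}\sqcup\{u^\alpha:\alpha\in\Gamma(s)\}$, the transition matrix to $\{\inn(f_\beta)|_{v=0}:\beta\in\Delta(s)\}\sqcup\{u^\alpha:\alpha\in\Gamma(s)\}$ is block-triangular with lower-right identity block, and its determinant is exactly $J^s(0,0)$. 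Invertibility of $J^s$ for each $s\le d(\Delta)+1$ is therefore equivalent to the latter system being a $K$-basis of $R_n[s]$.

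Next I would upgrade this degree-wise statement to a Stanley basis of $R_n$ over $(R_i)_{i=0}^n$. By Corollary \ref{imp33}, the monomials $\{u^\beta\}_{\beta\in\overline{B}_j}\cup\{u^\alpha\}_{\alpha\in A_j}$ form such a basis. For the proposed replacement $\{\inn(f_\beta)|_{v=0}\}_{\beta\in\overline{B}_j}\cup\{u^\alpha\}_{\alpha\in A_j}$, the key observation is that if $\beta\in\Delta$ decomposes uniquely as $\beta=\beta'+\gamma$ with $\beta'\in\overline{B}_j$ and $\gamma\in\NN^{*n-j+1,n}$, then $\beta$ and $\beta'$ lie in the same cell of $\Delta$, so $i(\beta)=i(\beta')$ and $u^\gamma\cdot\inn(f_{\beta'})|_{v=0}=\inn(f_\beta)|_{v=0}$; an analogous tautology holds on the $\{u^\alpha\}_{\alpha\in A_j}$ side. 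Hence the $(R_i)$-generators of $R_n[s]$ produced by the candidate system coincide in each degree with the set of Step 1, spanning $R_n[s]$ for $s\le d(\Delta)+1$. Since the total degrees match those of the original monomial basis, Theorem \ref{Stab} promotes this degree-wise generation into the statement that $\{\inn(f_\beta)|_{v=0}\}\cup\{u^\alpha\}$ is itself a Stanley basis of $R_n$ over $(R_i)_{i=0}^n$.

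Finally I would lift the basis from $R_n$ back to $\cE_{n+m}$. Because the $u$-degree of $\inn(f_\beta)|_{v=0}$ equals the $u$-order $|\beta|$ of $f_\beta$, Lemma \ref{restriction} (combined with Theorem \ref{imp3}) yields a quasi-isomorphism
$$
\Phi : \bigoplus_{j,\,\beta\in\overline{B}_j}\cE_{n+m-j+1}\cdot f_\beta\;\oplus\;\bigoplus_{j,\,\alpha\in A_j}\cE_{n+m-j}\cdot u^\alpha\longrightarrow\cE_{n+m}.
$$
The decomposition of $g$ under $\Phi^{-1}$ reads $g=\sum_\beta h_\beta f_\beta+\sum_\alpha r_\alpha u^\alpha$; grouping by the vertex index $i(\beta)=i$ gives $g=\sum_i h_i f_i+r(g)$ with $h_i:=\sum_{\beta:\,i(\beta)=i}h_\beta u^{\beta-\alpha_i}$ and $r(g):=\sum_\alpha r_\alpha u^\alpha$. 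The memberships $h_\beta\in\cE_{n+m-j+1}$ and $r_\alpha\in\cE_{n+m-j}$ translate directly into $\supd(h_i)\subset\Gamma_i\times\NN^m$ and $\supd(r(g))\subset\Gamma\times\NN^m$; the explicit form of $r(g)$ in terms of $c_\alpha\in\cE_{n+m-j}$ is immediate. Under the additional assumption $\ord(f_i)=|\alpha_i|$, preservation of the standard order filtration by $\Phi$ yields $\ord(h_i)\geq\ord(g)-|\alpha_i|$. The hard step is the first: identifying $J^s(0,0)$ with the determinant of an explicit block-triangular change-of-basis matrix on $R_n[s]$, thereby converting ``generalized Jacobian invertible'' into a concrete initial-form statement. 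Once this translation is in hand, the remaining steps are bookkeeping within the Stanley-basis machinery of Chapter~4.
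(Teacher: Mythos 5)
Your proposal is correct and follows essentially the same route as the paper's proof: identifying $J^s(0,0)$ with the determinant of the change-of-basis matrix in degree $s$, invoking the stabilization theorem to promote generation up to degree $d(\Delta)+1$ into a Stanley basis of $R_n$, extending to $R_{n+m}$ via Lemma \ref{restriction3}, and lifting to $\cE_{n+m}$ by Theorem \ref{imp3}. Your explicit block-triangular description of the transition matrix and the regrouping of the $f_\beta$ by vertex index only spell out steps the paper leaves implicit.
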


\begin{proof} The theorem is a consequence of Theorem \ref{imp3} and Lemma \ref{restriction}. Consider the rings of polynomials $R_i$ over   $K$.  By Corollary \ref{imp33}, the  set of monomials 
$$
\Big\{u^\alpha \mid \alpha \in \bigcup  (A_{i-1}\cup  \overline{B}_i)\Big\}
$$ 
is a basis of the module $R_n$ over $(R_i)_{i=0}^n$.
 The condition of invertibility of $J^s(f)(0)$ means exactly that  
%
$$
\{\inn(f_\beta(u,0))\mid  \beta\in \Delta(s)\}
 \cup 
\{u^\alpha\mid  \alpha \in  \Gamma(s)\}
$$ 
is a  basis of the $s$ gradation $(R_n)_s$ of $R_n$ (over $K$).

 Consequently,  by the stabilization theorem  (Theorem \ref{imp}), the set  
%
$$
\Big\{\inn(f_\beta(u,0))\mid  \beta\in \bigcup \overline{B}_i\Big\} 
\cup  \Big\{u^\alpha\mid  \alpha \in \bigcup  A_{i-1}\Big\}
$$ 
is a basis of $R_n$   because it generates the module $R_n$ over $(R_i)_{i=0}^n$  up to  degree $d(\Delta)+1$.

Thus, by Lemma \ref{restriction3}, the set  
%
$$
\Big\{\inn(f_\beta(u,v)) \mid  \beta\in \bigcup \overline{B}_i\Big\} 
\cup   \Big\{u^\alpha \mid  \alpha \in \bigcup  A_i\Big\}
$$
is a basis of $R_{n+m}$   because it generates the module $R_{n+m}$ over $(R_i)_{i=0}^{n}$. 

Finally, by Theorem \ref{imp3},
the set  
$$
\Big\{f_\beta(u,v)\mid  \beta\in \bigcup \overline{B}_i\Big\}
 \cup   \Big\{u^\alpha \mid  \alpha \in \bigcup  A_i\Big\}
$$
is a basis of $\cE_{n+m}$ over $(\cE_{i+m})_{i=0}^n$.  
 In the ``moreover'' part  we can use the standard filtration on $\cE_{n+m}$ and $R_{n+m}$ and   Lemma \ref{restriction3}.
\end{proof}

\begin{corollary}\label{main012} Under the previous assumption the set $$
\Big\{\inn(f_\beta(u,v)) \mid  \beta\in \bigcup \overline{B}_i\Big\} 
\cup   \Big\{u^\alpha \mid  \alpha \in \bigcup  A_i\Big\}=
\{\inn(f_\beta(u,v))\mid  \beta\in \Delta\}
 \cup 
\{u^\alpha\mid  \alpha \in  \Gamma\}
 $$
the set generates $R_{n+m}$ over $(R_i)_{i=0}^{n}$.

\end{corollary}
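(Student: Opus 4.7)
The corollary essentially extracts and re-packages what is already the middle step of the proof of Theorem \ref{main01}. My plan is to make that intermediate statement explicit and show how it yields the corollary.

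First, I would unwind what the invertibility of the generalized Jacobian $J^s(f_1,\ldots,f_k:u^{\alpha_1},\ldots,u^{\alpha_k})$ means as a statement in linear algebra. In the $s$-th graded component $(R_n)_s$, the monomials $\{u^\alpha \mid |\alpha|=s\}$ form the standard $K$-basis. The matrix $[D_{u^\alpha}(f_\beta)(0,0)]_{\alpha,\beta\in\Delta(s)}$ is precisely the matrix expressing the homogeneous parts $\{\inn(f_\beta)(u,0) \mid \beta\in\Delta(s)\}$ in terms of $\{u^\alpha \mid \alpha\in\Delta(s)\}$ (after discarding the coordinates indexed by $\Gamma(s)$, where we simply keep the monomials $u^\alpha$). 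So non-vanishing of $J^s$ for all $s\leq d(\Delta)+1$ is equivalent to the assertion that
\[
\{\inn(f_\beta)(u,0) \mid \beta\in\Delta(s)\}\ \cup\ \{u^\alpha \mid \alpha\in\Gamma(s)\}
\]
is a $K$-basis of $(R_n)_s$ for every $s\leq d(\Delta)+1$.

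Second, I would invoke Corollary \ref{imp33}, which gives the reference Stanley basis $\{u^\alpha \mid \alpha\in\bigcup(A_{i-1}\cup\overline{B}_i)\}$ of $R_n$ over $(R_i)_{i=0}^n$, together with the stabilization theorem (Theorem \ref{Stab}) and the majorization lemma (Lemma \ref{T}). Our candidate set $\{\inn(f_\beta)(u,0) \mid \beta\in\bigcup\overline{B}_i\} \cup \{u^\alpha \mid \alpha\in\bigcup A_i\}$ agrees in each degree $s\leq d(\Delta)+1$ with a $K$-basis of $(R_n)_s$ (by step one), and it majorizes the reference Stanley basis up to degree $d(\Delta)+1$ because $\inn(f_\beta)(u,0)$ has initial $\bar T$-exponent $\beta$. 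Hence by stabilization it is itself a basis of $R_n$ over $(R_i)_{i=0}^n$; in particular it generates $R_n$.

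Third, to pass from $R_n$ to $R_{n+m}$, I would apply Lemma \ref{restriction3} to the set $\{\inn(f_\beta)(u,v)\}\cup\{u^\alpha\}$. The hypothesis $\deg(b_{ij})=\deg(b_{ij})(x,0)$ of that lemma is satisfied: for the monomials $u^\alpha$ it is trivial, and for $\inn(f_\beta)(u,v)=u^{\beta-\alpha_{i(\beta)}}\inn(f_{i(\beta)})(u,v)$ the assumption $\ord(f_i)=\ord(f_i(u,0))=|\alpha_i|$ forces $\inn(f_i)(u,0)\neq 0$ and of the correct total degree $|\alpha_i|$, so $\deg \inn(f_\beta)=|\beta|=\deg\inn(f_\beta)(u,0)$. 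Lemma \ref{restriction3} then lifts the basis property established in step two to $R_{n+m}$ over the sequence $(R_{i+m})_{i=0}^n$ (the meaning intended by ``$(R_i)_{i=0}^n$'' in the statement, consistent with the usage at the end of the proof of Theorem \ref{main01}), and in particular gives the asserted generation.

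There is no serious obstacle here: the whole argument is bookkeeping that extracts the intermediate conclusion of Theorem \ref{main01}'s proof. The one point of genuine content, worth spelling out plainly, is the first step — translating ``$J^s$ invertible'' into a change-of-basis statement in $(R_n)_s$ — since all subsequent steps are verbatim applications of the machinery already developed in Chapter 4.
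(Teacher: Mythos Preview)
Your proposal is correct and matches the paper's approach exactly: the corollary is not given a separate proof but is precisely the intermediate conclusion reached inside the proof of Theorem \ref{main01}, obtained via the Jacobian-to-basis translation, the stabilization theorem (Theorem \ref{Stab}), and Lemma \ref{restriction3}. One small slip worth fixing: in your step two it is the reference monomial basis that majorizes the candidate set (not the reverse), and the required majorization up to degree $d(\Delta)+1$ follows directly from your step one (the Jacobian condition gives a $K$-basis in each graded piece), not from any ``initial $\overline{T}$-exponent'' claim---no monomial order is assumed in Theorem \ref{main01}.
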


\begin{theorem}[Generalized division theorem 4]\label{main011}
Let $X$ be a smooth scheme  of finite type over a field $K$ (or a $\CC$-analytic/differentiable manifold) of dimension $n$ with a given coordinate system at a  $K$-rational point $x\in X$. Let   $\Delta\subset \NN^r\subset \NN^n$  be a diagram of finite type, 
 for some $r\leq n$, generated by $\alpha_i\in \NN^r$. Let $f_1,\ldots,f_k\in\cO(X)$ be functions in a bijective correspondence with $\alpha_1,\ldots,\alpha_k$, and  such that $\ord_x(f_i)=|\alpha_i|$.
Assume moreover that
for all  $s\leq d(\Delta)+1$, the Jacobians
$$ 
J^s(f_1,\ldots,f_k: u^{\alpha_1},\ldots,u^{\alpha_k})
$$ 
are invertible at  $x$. Then there exists an 
\'etale neighborhood  $X'\to X$ preserving the residue field $K$ (respectively an open neighborhood)
such that for any  $g\in  \cO(X')$ there exist   $h_{i}\in  \cO(X')$ and $r(g)\in  \cO(X')$ with  $\supd(h_{i})\subset \Gamma_{i}$, 
%
$\supd(r(g))\subset  \Gamma$, and
$$
g=\sum h_{i}f_{i}+ r(g).
$$
Moreover, if $\ord_x(f_{ij})=\ord_x(f_i(u,0)=|\alpha_{i}|$ then $\ord_x(h_{i})\geq \ord_x(g)-|\alpha_{i}|$.
\end{theorem}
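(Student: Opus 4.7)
The plan is to reduce the neighborhood statement to the local statement of Theorem \ref{main01} combined with the neighborhood versions of Stanley decomposition (Theorems \ref{NS1}, \ref{NS2}, \ref{NS3}) already established in Chapter 4. First I would observe that the hypothesis---invertibility at $x$ of the generalized Jacobians $J^s(f_1,\ldots,f_k: u^{\alpha_1},\ldots,u^{\alpha_k})$ for all $s\leq d(\Delta)+1$---is exactly the hypothesis of Theorem \ref{main01} applied to the local ring at $x$ (the Henselianization $\cO^h_{x,X}$ in the algebraic case, the local ring of germs in the analytic/differential cases). Theorem \ref{main01} therefore produces a quasi-isomorphism of modules
\[
\bigoplus_{\beta\in\bigcup\overline{B}_i}\cO^h_{x,X_{n-i(\beta)+1}}\cdot f_\beta \;\oplus\; \bigoplus_{j}\bigoplus_{\alpha\in A_j}\cO^h_{x,X_{n-j}}\cdot u^\alpha \;\longrightarrow\; \cO^h_{x,X},
\]
where $X_i$ denotes the coordinate subschemes cut out by $u_1=\cdots=u_{n-i}=0$ through $x$. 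In other words, the collection $\{f_\beta\}\cup\{u^\alpha\}$ is a Stanley basis of $\cO^h_{x,X}$ over the filtration $(\cO^h_{x,X_i})_{i=0}^n$.

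Next I would invoke the appropriate neighborhood Stanley decomposition theorem to spread this local basis to a neighborhood: Theorem \ref{NS1} in the algebraic case yields an étale neighborhood $X'\to X$ preserving the residue field $K$ on which
\[
\cO(X') \;=\; \bigoplus_{\beta}\cO(X'_{n-i(\beta)+1})\cdot f_\beta \;\oplus\; \bigoplus_j\bigoplus_{\alpha\in A_j}\cO(X'_{n-j})\cdot u^\alpha,
\]
with $\cO(X'_i)\subset K\langle u_{n-i+1},\ldots,u_n\rangle$; Theorem \ref{NS2} gives the analogous polydisc decomposition in the holomorphic category; and Theorem \ref{NS3} gives, in the differential category, an epimorphism of the corresponding direct sum onto $\cO(X')$ whose kernel is contained in $\bigoplus m_{x,X'_i}^\infty\cdot N^i$ (flat tails). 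Applied to any $g\in\cO(X')$, this decomposition writes
\[
g \;=\; \sum_\beta c_\beta\cdot f_\beta \;+\; \sum_{j}\sum_{\alpha\in A_j} c_\alpha\cdot u^\alpha,
\]
with $c_\beta\in \cO(X'_{n-i(\beta)+1})$ and $c_\alpha\in \cO(X'_{n-j})$. Grouping the terms $c_\beta f_\beta=c_\beta u^{\gamma_\beta}f_{i(\beta)}$ by the index $i=i(\beta)$ and setting $h_i:=\sum_{\beta:\, i(\beta)=i} c_\beta u^{\gamma_\beta}$, $r(g):=\sum c_\alpha u^\alpha$, gives precisely the claimed division $g=\sum h_i f_i + r(g)$ with $\supd(h_i)\subset \Gamma_i$ and $\supd(r(g))\subset\Gamma$.

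The order estimate $\ord_x(h_i)\geq \ord_x(g)-|\alpha_i|$ under the assumption $\ord_x(f_i)=\ord_x(f_i(u,0))=|\alpha_i|$ is inherited from Theorem \ref{main01} by passing to stalks at $x$, since the decomposition constructed above refines the local Stanley decomposition; alternatively it follows by applying the ``moreover'' clause of Lemma \ref{restriction} (standard filtration version) to the graded picture. The main obstacle I anticipate is bookkeeping in the differential case: Theorem \ref{NS3} only produces a quasi-isomorphism, so the representation $g=\sum h_if_i+r(g)$ in the neighborhood is unique only modulo a flat module $\bigoplus m_x^\infty\cdot N^i$; however, for existence (which is all that is claimed here) this is enough, and the support conditions on $h_i$ and $r(g)$ survive the passage through the flat kernel because those conditions cut out direct summands. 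Everything else is transport of the already-proven stabilization and Stanley-decomposition machinery from the stalk to a neighborhood.
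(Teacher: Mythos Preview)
Your proposal is correct and follows essentially the same route as the paper: the paper's proof reads simply ``This follows directly from the previous theorem and Theorems \ref{NS1}, \ref{NS2}, \ref{NS3},'' and you have faithfully unpacked that sentence---Theorem \ref{main01} supplies the Stanley basis $\{f_\beta\}\cup\{u^\alpha\}$ at the stalk, and the neighborhood Stanley decomposition theorems (\ref{NS1}/\ref{NS2}/\ref{NS3}) spread it to an \'etale (respectively open) neighborhood, after which regrouping the summands yields the division with the required support conditions. Your handling of the differential case (quasi-isomorphism, flat kernel) and of the order estimate via the ``moreover'' clause of Theorem \ref{main01} is also in line with the paper's conventions.
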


\begin{proof} This follows directly from the previous theorem and
Theorems \ref{NS1}, \ref{NS2}, \ref{NS3}. 	
\end{proof}




Denote by $\Ver(\Delta)$  the set of vertices of $\Delta$, and set  
%
$\Ver(\Delta(s)):=\Ver(\Delta)\cap \Delta(s)$. 
We define  
$$
\Delta^0(s):=\Delta(s)\setminus \Ver(\Delta(s))
$$ 
if $\Ver(\Delta(s))\neq \emptyset$, otherwise $\Delta^0(s):=\emptyset$.
Set 
$$ 
J^0_s(f_1,\ldots,f_k: u^{\alpha_1},\ldots,u^{\alpha_k}):=\det\big[D_{u^{\alpha}}{(f_{\beta})}\big]_{\alpha,\beta\in \Delta^0(s)}=\det\big[D_{u^{\alpha-\beta+i(\beta)}}{(f_{i(\beta)})}\big]_{\alpha,\beta\in \Delta^0(s)}.
 $$
The Jacobians $J^0_s$ occur only for  functions of distinct multiplicities.

\begin{theorem}[Generalized implicit function theorem]\label{im1}
With the notation of Theorem \ref{main01}, assume that for all  $s\leq d(\Delta)+1$, the Jacobians 
$$
J^s(f_1,\ldots,f_k: u^{\alpha_1},\ldots,u^{\alpha_k})\quad \text{and}\quad J^0_s(f_1,\ldots,f_k: u^{\alpha_1},\ldots,u^{\alpha_k})
$$ 
are invertible.
Then there is a   set of generators $\overline{f_1},\ldots,\overline{f_k}$ of the ideal $(f_1,\ldots,f_k)$ of the form  
\begin{equation}
 \overline{f_i}:=u^{\alpha^i}+r_i=u^{\alpha^i}+\sum^n_{j=1}\sum_{\alpha\in A_j}c_\alpha\cdot u^\alpha, 
 \end{equation}
 where   $c_\alpha\in \cE_{n +m-j}(u_{j+1},\ldots,u_n,v)$ for $j=1,\ldots,n$, and ${\supd}(r_i(u))\subset\Gamma\times \NN^k$. 

Moreover, $\ord(\overline{f_i}(u,0))=\ord({f_i}(u,0))=|\alpha_i|$, and if
$\ord({f_i})=|\alpha_i|$ then $\ord(\overline{f_i})=|\alpha_i|$.
\end{theorem}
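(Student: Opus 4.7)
The strategy is to adapt the proof of Theorem \ref{main2}: divide each vertex monomial $u^{\alpha_i}$ by $(f_1,\ldots,f_k)$, set $\overline{f_i}:=u^{\alpha_i}-r(u^{\alpha_i})$, and verify that the resulting transition matrix is invertible so that the $\overline{f_i}$ generate the same ideal as the $f_i$. The Jacobian hypotheses on $J^s$ and $J^0_s$ play the role of the assumption $\expe(f_i(u,0))=\alpha_i$ in Theorem \ref{main2}.

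Since $J^s$ is invertible for all $s\le d(\Delta)+1$, Theorem \ref{main01} applies and yields generalized Weierstrass--Hironaka division; applying it to $g=u^{\alpha_i}$ gives
\[ u^{\alpha_i}=\sum_j h_{ij}f_j+r_i,\qquad \supd(h_{ij})\subset\Gamma_j\times\NN^m,\ \supd(r_i)\subset\Gamma\times\NN^m, \]
together with the order bound $\ord(h_{ij}(u,0))\ge|\alpha_i|-|\alpha_j|$ and the explicit form $r_i=\sum_{j=1}^n\sum_{\alpha\in A_j}c_{i\alpha}u^\alpha$ with $c_{i\alpha}\in\cE_{n+m-j}(u_{j+1},\ldots,u_n,v)$. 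Set $\overline{f_i}:=u^{\alpha_i}-r_i=\sum_j h_{ij}f_j$; this is of the announced form. From $\ord(h_{ij}f_j(u,0))\ge|\alpha_i|$ we get $\ord(r_i(u,0))\ge|\alpha_i|$, hence $\ord(\overline{f_i}(u,0))=|\alpha_i|$; the $(u,v)$-statement follows analogously when $\ord(f_i)=|\alpha_i|$.

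The main step is to show $(\overline{f_1},\ldots,\overline{f_k})=(f_1,\ldots,f_k)$, equivalently that $H=(h_{ij})\in\Mat_k(\cE_{n+m})$ is invertible; for this it suffices that $H(0,0)$ be invertible in $\Mat_k(K)$. The order bound forces $h_{ij}(0,0)=0$ whenever $|\alpha_i|>|\alpha_j|$, so $H(0,0)$ is block upper-triangular when vertices are grouped by multiplicity, with diagonal blocks $H_s=(h_{ij}(0,0))_{|\alpha_i|=|\alpha_j|=s}$. To invert $H_s$, extract the degree-$s$ homogeneous part of $u^{\alpha_i}=\sum_j h_{ij}(u,0)f_j(u,0)+r_i(u,0)$: the contributions with $|\alpha_j|=s$ give $\sum_{|\alpha_j|=s}h_{ij}(0,0)\inn(f_j(u,0))$; those with $|\alpha_j|<s$ have support in $\bigcup_{|\alpha_j|<s}\Delta_j(s)\subset\Delta^0(s)$ (since a vertex of multiplicity $s$ cannot lie in $\alpha_j+\NN^n$ for any $j$ with $|\alpha_j|<s$); and $r_i(u,0)_{\deg s}$ has support in $\Gamma(s)$. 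The combined invertibility of $J^s$ and $J^0_s$ yields a decomposition of $(R_n)_s$ as the direct sum of the vertex-monomial subspace spanned by $u^{\alpha_l}$, $\alpha_l\in\Ver(\Delta(s))$, the initial-form subspace spanned by $\inn(f_\beta(u,0))$, $\beta\in\Delta^0(s)$, and the $\Gamma(s)$-monomial subspace. Projecting the above identity onto the vertex summand annihilates the $\Delta^0(s)$ and $\Gamma(s)$ contributions and produces the matrix equation $H_s\cdot C=I$ with $C=(D_{u^{\alpha_l}}f_j(0,0))_{j,l\in\Ver(\Delta(s))}$ square over $K$; hence $H_s$ is invertible and the proof is complete.
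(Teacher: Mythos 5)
Your proposal is correct and rests on the same mechanism as the paper's proof: divide each vertex monomial $u^{\alpha_i}$ by $f_1,\ldots,f_k$, set $\overline{f_i}:=u^{\alpha_i}-r(u^{\alpha_i})=\sum_j h_{ij}f_j$, and use the pair $J^s$, $J^0_s$ to compare the two resulting bases of the degree-$s$ graded piece. The only organizational difference is that the paper runs an induction on the multiplicity $s'$ and substitutes the already-constructed $\overline{f_j}$ of lower multiplicity back into the expression for $\overline{f_i}$, verifying at each stage that $(\overline{f_i}\mid |\alpha_i|\leq s')+(f_i\mid |\alpha_i|>s')$ is unchanged; you instead observe that it suffices to invert the single matrix $H(0,0)$, which the order bounds make block upper-triangular with the same diagonal blocks $H_s$ that the paper inverts. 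This is an equivalent and arguably cleaner packaging. One small imprecision: once you project onto the vertex summand of the decomposition
$(R_n)_s=\spa\{u^{\alpha_l}\}\oplus\spa\{\inn(f_\beta(u,0))\mid\beta\in\Delta^0(s)\}\oplus\spa\{u^\alpha\mid\alpha\in\Gamma(s)\}$,
the matrix $C$ appearing in $H_sC=I$ is the matrix of these projections of the $\inn(f_j(u,0))$, not the naive coefficient matrix $\bigl(D_{u^{\alpha_l}}f_j(0,0)\bigr)$, since the forms $\inn(f_\beta(u,0))$ with $\beta\in\Delta^0(s)$ may themselves carry nonzero coefficients at vertex monomials; this does not affect the conclusion, as $C$ is still a square matrix over $K$ and $H_sC=I$ forces $H_s$ invertible.
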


\begin{proof} The proof is similar to the proof of the preparation theorem.
We construct $\overline{f_i}$ with  multiplicities $|\alpha_i|$ and show by induction on the multiplicity $s$
that all the ideals   
$$
(\overline{f_i}\mid |\alpha_i|\leq s)+(f_i\mid |\alpha_i|> s)\quad  \text{and}\quad  (\overline{f_i}\mid |\alpha_i|\leq s')+(f_i\mid |\alpha_i|> s')
$$ 
are the same.

This is true for $s=0$. Suppose it is valid for $s$. Let $s'$ be the smallest integer $>s$ for which there exists a subscript $1\leq j\leq k$ such that $s'=|\alpha_j|$.
For  all $j$ such that $\alpha_j=s'$ consider the division  with remainder of $u^{\alpha_j}$  by ${f_1},\ldots, f_k$:
$$
u^{\alpha^i}=\sum_{j=1}^{k} h_{ij}{f_j}+ r(u^{\alpha^i}),
$$
where 
%
${\supd}(r(u^{\alpha^i}))
\subset  \Gamma\times \NN^m$ and $ \supd(h_{ij})\subset \Gamma_j\times \NN^m$.
Then set 
\begin{equation} \label{imm} 
\overline{f_i}:=u^{\alpha^i}-r(u^{\alpha^i})=\sum_{j=1}^{i-1} h_{ij}{f_j}=\sum_{|\alpha_i|< s'} h_{ij}f_j+\sum_{|\alpha_i|= s'} h_{ij}f_j+\sum_{|\alpha_i|> s'} h_{ij}f_j .
\end{equation}
Passing to the initial forms gives
$$
\inn(\overline{f_i}(u,0))=\sum_{|\alpha_i|< s'} \inn(h_{ij}(u,0))\inn(f_j(u,0))+\sum_{|\alpha_i|= s'} \inn(h_{ij}(u,0))\inn(f_j(u,0)).
$$
 We see that if $|\alpha_i|< s'$ then $\ord(h_{ij})>0$.
 (The summation in the above formula is taken  only over terms of degree $s'$.)
 
 The condition of $J^0_s$ being  invertible  can be written as invertibility of 
 $$
\det\big[D_{u^{\alpha}}{(f_{\beta})}\big]_{\alpha,\beta\in \Delta^0(s)},
$$
%
which
 implies that 
%
$$
\{\inn(\overline{f_{i}(u,0)})
\mid |\alpha_i|= s'\} \cup   \{\inn({f_\beta}(u,0))\mid \beta\in  \Delta_0({s'})\}
$$
 forms a basis of $\inn_{s'}(\cI(u,0))$.
 
The invertibility of  $J_s$  means that  
$$
\det\big[D_{u^{\alpha}}{(f_{\beta})}\big]_{\alpha,\beta\in \Delta(s)}
$$
is invertible and  that
$$
\{\inn({f_{i}}(u,0))\mid |\alpha_i|= s'\} \cup  \{\inn({f_\beta}(u,0))\mid \beta\in  \Delta_0({s'}) \}
$$
 is another  basis of $\inn_{s'}(\cI(u,0))$.
Consequently,  
the determinant  
$$
\det\big[\overline{h_{ij}}(0)\big]_{|\alpha_i|=|\alpha_j|= s'}
$$ 
is invertible.

 By the inductive assumption, for any $i'$ with $|\alpha_{i'}|<s'$ we  can write 
$$
f_{i'}=\sum_{|\alpha_{j}|<s'} g_{i'j}\overline{f_j}+\sum_{|\alpha_{j}|\geq s'} g_{i'j}f_j.
$$
 Substituting this formula for $f_{i'}$  in (\ref{imm}) gives 
\begin{align*}
\overline{f_i} &=\sum_{|\alpha_i|< s'} h_{ij'}\Big(\sum_{|\alpha_{j'}|<s'} g_{i'j'}\overline{f_{j'}}+\sum_{|\alpha_{j}|\geq s'} g_{i'j}f_j\Big)+\sum_{|\alpha_i|= s'} h_{ij}f_j+\sum_{|\alpha_i|> s'} h_{ij}f_j 
\\ 
&=\sum_{|\alpha_i|< s'} \overline{h_{ij}}\cdot \overline{f_j}+\sum_{|\alpha_i|= s'}\overline{h_{ij}}f_j+\sum_{|\alpha_i|> s'} \overline{h_{ij}}f_j,
\end{align*}
 where  the matrix  
$$
\big[\overline{h_{ij}}(0)\big]_{|\alpha_i|=|\alpha_j|= s'}=\big[{h_{ij}}(0)\big]_{|\alpha_i|=|\alpha_j|= s'}
$$ 
is invertible.
  The latter implies that  
%
$$
(\overline{f_i}\mid |\alpha|\leq s)+(f_i\mid |\alpha|> s)=(\overline{f_i}\mid |\alpha|\leq s')+(f_i\mid |\alpha|> s'),
$$ 
 of which completes the inductive step.
\end{proof}

Theorems \ref{main01} and \ref{main011} generalize the preparation and division theorems. 
One can easily see that the Jacobian condition generalizes the condition for the initial exponents with respect to a monomial order.

\begin{lemma}\label{easy} 
Let $K$ be any commutative ring. Let $F_1,\ldots, F_k\in K[x_1\ldots,x_n]$ be  forms. Let $\overline{T}$ be a normalized order on $\NN^n$ and  $\Delta$ be a diagram of finite type with vertices  $\alpha_i=\expe_{\overline{T}}(f_i)$. Assume all the highest coefficients are invertible.  Then all the Jacobians 
$$
J^s(F_1,\ldots,F_k: u^{\alpha_1},\ldots,u^{\alpha_k})\quad \text{and}\quad J^s_0(F_1,\ldots,F_k: u^{\alpha_1},\ldots,u^{\alpha_k})
$$ 
are invertible.
\end{lemma}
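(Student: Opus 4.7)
The plan is to arrange the rows and columns of each Jacobian matrix according to the order $\overline{T}$ so that it becomes lower triangular with invertible diagonal entries; the determinant is then a product of units and hence a unit. Since both $J^s$ and $J^s_0$ are of the same shape, the same argument handles both.

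First I would analyse the shifted forms $F_\beta = u^{\beta - \alpha_{i(\beta)}} F_{i(\beta)}$. By hypothesis $F_{i(\beta)}$ is a form whose initial exponent with respect to $\overline{T}$ is $\alpha_{i(\beta)}$, with invertible leading coefficient $c_{i(\beta)}$. Multiplying by the monomial $u^{\beta - \alpha_{i(\beta)}}$ translates the support and preserves the leading coefficient, so $F_\beta$ is a form of degree $|\beta| = s$ with $\expe_{\overline{T}}(F_\beta) = \beta$ and invertible leading coefficient $c_{i(\beta)}$. In particular,
\[
\supp(F_\beta) \subset \{\beta\} \cup \{\gamma \in \NN^n : |\gamma| = s,\ \gamma >_{\overline{T}} \beta\}.
\]

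Next I would compute the entries $D_{u^\alpha}(F_\beta)$ for $\alpha, \beta \in \Delta(s)$. Because $D_{u^\alpha}(u^\gamma) = \binom{\gamma}{\alpha} u^{\gamma - \alpha}$ (the Hasse derivative, zero unless $\gamma \geq \alpha$ componentwise), the result has degree $|\gamma| - |\alpha|$. For $\gamma \in \supp(F_\beta)$ we have $|\gamma| = s = |\alpha|$, so the result is a constant; moreover $\alpha \leq \gamma$ componentwise combined with $|\alpha| = |\gamma|$ forces $\alpha = \gamma$, whence $D_{u^\alpha}(u^\gamma) = \delta_{\alpha\gamma}$. Consequently $D_{u^\alpha}(F_\beta)$ is exactly the coefficient of $u^\alpha$ in $F_\beta$, and vanishes unless $\alpha \in \supp(F_\beta)$, that is, unless $\alpha = \beta$ or $\alpha >_{\overline{T}} \beta$.

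Finally I would order $\Delta(s)$ (resp.\ $\Delta^0(s)$) as $\beta_1 <_{\overline{T}} \beta_2 <_{\overline{T}} \cdots <_{\overline{T}} \beta_N$ and form the matrix $M$ with $M_{ij} := D_{u^{\beta_i}}(F_{\beta_j})$. The vanishing statement just proved gives $M_{ij} = 0$ whenever $\beta_i <_{\overline{T}} \beta_j$, i.e.\ whenever $i < j$, so $M$ is lower triangular; its diagonal entries $M_{ii}$ are the leading coefficients $c_{i(\beta_i)}$, which are units by hypothesis. Hence $\det(M) = \prod_i c_{i(\beta_i)}$ is a unit in $K$. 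The same reasoning, applied to the subset $\Delta^0(s) \subset \Delta(s)$, handles $J^s_0$, since restricting indices to $\Delta^0(s)$ preserves both the triangular structure and the invertibility of diagonal entries (the leading coefficient of $F_\beta$ is $c_{i(\beta)}$ regardless of whether $\beta$ is a vertex). There is no genuine obstacle: the content is entirely in the two observations that $D_{u^\alpha}$ reads off the coefficient of $u^\alpha$ when degrees match, and that $\beta$ is the $\overline{T}$-minimum of $\supp(F_\beta)$.
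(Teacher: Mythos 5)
Your proof is correct and follows exactly the paper's argument: the paper's proof is the one-line observation that ordering the monomials of $\Delta(s)$ by $\overline{T}$ makes each generalized Jacobian matrix lower triangular with the invertible leading coefficients on the diagonal, and your writeup simply supplies the supporting details (that $D_{u^\alpha}$ extracts the coefficient of $u^\alpha$ when degrees match, and that $\beta$ is the $\overline{T}$-minimum of $\supp(F_\beta)$).
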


\begin{proof} 
Order the monomials according to $\overline{T}$. Then each generalized Jacobian
matrix is lower  triangular with invertible entries on the main diagonal.
\end{proof}

\begin{example}\label{Jacobian}
In the original implicit function theorem  the relevant diagram $\Delta$ is generated by the standard basis $e_1,\ldots,e_n\in\NN^n$, and $A_0=\{0\}$, $A_i=\emptyset$, $i>0$, $\overline{B}_i=\{ e_i\}$, so that we get $d(\Delta)=1$. The condition $J^s(f)(x)\neq 0$ in Theorem \ref{im1} is assumed only for $s=1
=d(\Delta)$. In the theorems above we need to verify the Jacobian conditions  for $s\leq d(\Delta)+1=2$. 

The generalized Jacobians depend only on the initial forms of the function, so we  use only the initial forms in the computations below.

Let $F_1=a_{11}x_1+a_{12}x_2$, $F_2=a_{21}x_1+a_{22}x_2$. Then 
$$
J^1=J^1(F_1,F_2:x_1,x_2)=\det\left( \begin{array}{cc} 
                a_{11}&a_{12} \\
                a_{21} & a_{22}  \end{array}\right)
$$ 
is the ordinary Jacobian and 
\[ 
J^2=\det\left( \begin{array}{ccc} 
                a_{11}&a_{12} & 0 \\
                0& a_{11}  &  a_{12}  \\
                0  &a_{21} & a_{22}  \end{array}\right) =a_{11}\cdot J^1 
\] 
We observe the occurrence  of the extra condition of invertibility of $a_{11}$ in the generalized implicit and division theorems when comparing to the condition $J^1\neq 0$ in the classical implicit function theorem. This condition is due to lack of symmetry of a Stanley basis (and division), and  it can be fulfilled by merely swapping coordinates. 
 In that sense the condition is not essential from the point of view of the formulation of the 
classical implicit function theorem  and can be dropped. 

\end{example}

The generalized implicit function theorem  allows us to choose the exponents and corresponding
 monotone diagrams $\Delta$ in many different ways as long as the multiplicities of generators and degrees of the corresponding monomials are equal and the Jacobian conditions hold. Among all those diagrams, particularly useful is the simplest one defined by the $k$ vectors which are multiples of the standard basis or its part. Again such a diagram can be applied to a generic set of $k$ functions.
 
 The invertibility of Jacobians   can be regarded as a generalization of the transversality condition in the smooth case  (see Theorem \ref{imp22}(1)).

Let  $f_1,\ldots,f_k\in  \cE_{n}(u_1,\ldots,u_k,v_1,\ldots,v_{n-k})$, where $k\leq n$, be a  set of functions such that 
%
$\ord(f_i(u,0))
=d_i$, where $d_1\leq \ldots\leq d_n$. Let $\Delta$ be the diagram generated by  $\alpha_i=d_i\cdot e_i$, where $i=1,\ldots,k$.  Then 
$$
\Delta=\{\alpha\in \NN\mid \exists i\  \alpha_i\geq d_i\},\quad\text{and}\quad  i(\alpha):=\min\{i\mid \alpha_i\geq d_i\}
\quad\text{ 
for any $\alpha\in \Delta$}.
$$

\begin{corollary}[Generalized implicit function theorem 2] \label{imp22} 
Let  $f_1,\ldots,f_k\in  \cE_{n}(u_1,\ldots,u_k,v_1,\ldots,v_{n-k})$, where $k\leq n$, be a  set of functions such that 
%
$\ord(f_i(u,0))
=d_i$, where $d_1\leq\ldots\leq d_n$. Denote by $\cI\subset \cE_n$ the ideal generated by $f_1,\ldots,f_k$.
If for all  $s\leq \sum d_i-k+2$,  the Jacobians
$$
\label{spe2} J^s(f_1,\ldots,f_k: u_1^{d_1},\ldots,u_k^{d_k})
$$ 
are invertible  then the following conditions are satisfied:


\begin{enumerate}
\item The module  $\cE_n/\cI$ is free (or quasifree for the nonreduced category) and finite over $\cE_{n-k}(v)$ with a basis  $ u_1^{\alpha_1}\cdot\ldots\cdot u_k^{\alpha_k}$, where ${\alpha_i<d_i}$, and 
$$
\rank_{\cE_{n-k}(v)}(\cE_n/\cI)=d_1\cdot\ldots\cdot d_k.
$$ 
 
\item Each  $f\in\cE_n$ can be written uniquely (up to flat functions) as 
$$
f=\sum c_if_i+r(f),
$$ 
where $\supd(r(f))\subset \Gamma\times \NN^{n-k}$ and  $\supd(c_i) \subset \Gamma_i\times \NN^{n-k}$, that is,
$$
r(f)=\sum_{\alpha_j<d_j} c_\alpha(v)\cdot u_1^{\alpha_1}\cdot\ldots \cdot u_i^{\alpha_j},\quad     
c_i=\sum_{\alpha_j<d_j} c_\alpha(u_{j+1},\ldots,u_n,v)\cdot u_1^{\alpha_1}\cdot\ldots\cdot u_i^{\alpha_j}.
$$ 
\item If additionally $f\in \cI$ then  $r(f)\in m_{n+m}^\infty$.

\item If $\ord(f_i)=d_i$ (or $m=0$) then the Hilbert function of $\cI$ is equal to 
$$
H_{\cI}=H(\Delta)=\sum_{0\leq a_i \leq d_i-1} \phi(n+|a|,d)=d_1\cdot\ldots\cdot d_n\cdot t^{n-k}+ a_{n-k-1}t^{n-k-1}+\ldots+a_0.
$$

\item If additionally 
$$
J_0^s(f_1,\ldots,f_k: u_1^{d_1},\ldots,u_k^{d_k})
$$ 
are invertible for $s\leq \sum d_i-n+2$, then  there exists a set of generators $\overline{f_i}$  of the ideal $\cI$ of the form 
$$
\overline{f_i}:=u_i^{d_i}+\sum_{\alpha_i<d_i} c_\alpha(v)\cdot u_1^{\alpha_1}\cdot\ldots u_k^{\alpha_n},
$$
where $c_\alpha\in \cE_{m}(v)$. 
\item 
%
$\ord(\overline{f_i}(u,0))
=\ord({f_i}(u,0))=|\alpha_i|$, and if
$\ord({f_i})=|\alpha_i|$ then $\ord(\overline{f_i})=|\alpha_i|$.
\item The generators $\overline{f_i}$ satisfy  condition $(2)$ above.

\end{enumerate}
\end{corollary}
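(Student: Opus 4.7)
The plan is to specialize the main results of this section (Theorems~\ref{main01} and~\ref{im1}) to the very concrete ``coordinate box'' diagram $\Delta \subset \NN^k$ generated by the vertices $\alpha_i = d_i e_i$ for $i=1,\ldots,k$. Viewing $\cE_n$ as $\cE_{k+(n-k)}$ with distinguished coordinates $(u,v)$, this fits the setup of the main theorems with their $n$ playing the role of our $k$ and their $m$ playing the role of $n-k$.

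First I would verify the combinatorial hypotheses. Here $\Gamma = \prod_{i=1}^{k}[0,d_i-1]$ is a finite box with $|\Gamma| = d_1\cdots d_k$, and the vertices of $\Delta$ are exactly the $d_i e_i$. Because every vertex $d_j e_j$ with $j>i$ projects to $0$ under $\pi_{k,i}$, one checks that $\pi_{k,i}(\Delta) = \NN^i$ for all $i<k$, so $A_j = \emptyset$ for $j<k$ and $A_k = \Gamma$. Ordering vertices reverse-lexicographically gives $\alpha_1 <_r \cdots <_r \alpha_k$, hence in the notation of Section~\ref{D} the pieces $\Delta_{l,1}$ satisfy $\overline{B}_{l,1} = \{(\beta_1,\ldots,\beta_{l-1},d_l)\mid \beta_j<d_j,\, j<l\}$, which is finite with $|\beta|\leq\sum_{j\leq l}d_j - (l-1)$. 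Therefore $d(\Delta) \leq \sum d_i - k + 1$, and the Jacobian hypothesis for $s \leq \sum d_i - k + 2$ is precisely the hypothesis $s\leq d(\Delta)+1$ required by Theorem~\ref{main01}.

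Next I would invoke Theorem~\ref{imp2}, which in this setting produces the quasi-isomorphism
\[
\bigoplus_{l=1}^{k}\bigoplus_{\beta \in \overline{B}_{l,1}} \cE_{n-l+1}\cdot f_\beta \;\oplus\; \bigoplus_{\alpha \in \Gamma} \cE_{n-k}\cdot u^\alpha \;\longrightarrow\; \cE_n,
\]
where $f_\beta = u_1^{\beta_1}\cdots u_{l-1}^{\beta_{l-1}} f_l \in \cI$. The first summand sits inside $\cI$, so passing to the quotient yields a quasi-isomorphism $\bigoplus_{\alpha \in \Gamma}\cE_{n-k}(v)\cdot u^\alpha \to \cE_n/\cI$, giving part (1) with rank $d_1\cdots d_k$. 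Parts (2) and (3) are then the conclusions of Theorem~\ref{main01} read off in this case: the collapse $A_j=\emptyset$ for $j<k$ forces the remainder $r(f)$ to be a pure $u$-polynomial with coefficients in $\cE_{n-k}(v)$, while $\Gamma_l = \{\gamma : \gamma_j < d_j,\, j<l\}$ controls the shape of each $c_i$; uniqueness of the quasi-isomorphism modulo $m^\infty$ forces $r(f) \in m_{n+m}^\infty$ whenever $f\in \cI$. Part (4) follows from Corollary~\ref{grading22} applied to $\gr(\cE_n/\cI)$: the Stanley basis has $d_1\cdots d_k$ free generators over $R_{n-k}$, producing the claimed leading term $d_1\cdots d_k\cdot t^{n-k}$.

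Finally, under the additional invertibility of the $J^0_s$, Theorem~\ref{im1} supplies generators $\overline{f_i} = u^{\alpha_i} + r_i$ with $\supd(r_i)\subset \Gamma\times \NN^{n-k}$; since $u^{\alpha_i} = u_i^{d_i}$ and the remainder lies in the coordinate box, this is precisely the form asserted in (5). The multiplicity assertion (6) is the ``moreover'' clause of Theorem~\ref{im1}, and (7) is obtained by applying part (2) to the generating set $\{\overline{f_i}\}$, whose initial exponents still span the same vertices of $\Delta$. The only nontrivial technical point is the observation that $A_j = \emptyset$ for $j<k$, which pins down both the shape of the remainder and the freeness rank; once this is established the rest is bookkeeping on top of the already-established theorems.
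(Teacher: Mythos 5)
Your combinatorial setup and your treatment of parts (2), (5), (6), (7) match the paper: you correctly identify $\Gamma=\prod_i[0,d_i-1]$, the pieces $\overline{B}_{l}$, and $d(\Delta)=\sum d_i-k+1$, and then read off these parts from Theorems \ref{main01} and \ref{im1}, which is exactly what the paper does.

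There is, however, a genuine gap in your derivation of part (1) (and hence of (3) and (4)). Theorem \ref{imp2} gives a quasi-isomorphism
$\bigoplus_{\beta}\cE_{n-j+1}f_\beta\oplus\bigoplus_{\alpha\in\Gamma}\cE_{n-k}u^\alpha\to\cE_n$,
and the first summand maps into the submodule $M(\cI):=\sum\cE_{n-j+1}f_\beta$, \emph{not} onto the full ideal $\cI=\sum\cE_n f_i$; the paper explicitly remarks that $M(\cI)\subsetneq\cI$ in general. Passing to the quotient therefore only yields a \emph{surjection} $\bigoplus_{\alpha\in\Gamma}\cE_{n-k}(v)u^\alpha\to\cE_n/\cI$ (generation), not freeness. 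To get injectivity up to $m^\infty$ you must rule out nonflat elements of $\cI$ with differential support in $\Gamma\times\NN^{n-k}$, which is equivalent to knowing that the Hilbert function of $\cE_n/\cI$ equals $H(\Delta)$ --- and this is exactly the content of the Cohen--Macaulay input. The paper closes this gap by observing that the Jacobian hypotheses imply, via Macaulay's theorem (Theorem \ref{Macaulay5}), that the $f_i$ form a Cohen--Macaulay regular sequence, and then invoking the Cohen--Macaulay--Weierstrass isomorphism (Theorem \ref{Rem4}) to obtain the free (quasi-free) module structure of $\cE_n/\cI$ over $\cE_{n-k}(v)$ with basis $\{u^\alpha\mid\alpha\in\Gamma\}$. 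The same input is what forces $r(f)\in m^\infty$ for $f\in\cI$ in part (3); your appeal to ``uniqueness of the quasi-isomorphism'' silently assumes it. Your argument for (4) via Corollary \ref{grading22} is fine once (1) is established, but it inherits the same dependency.
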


\begin{proof}  In this situation 
$\Gamma=A_n=[0,d_1-1]\times\ldots\times [0,d_n-1]$ is finite,  $B_i=d_ie_i+\{(\alpha_1,\ldots,\alpha_{i-1},0,\ldots,0)\mid \alpha_i<d_i\}$ and $d(\Delta)=(\sum d_i)-n+1$.  
Conditions (2) and (5) follow from the previous theorem. 
The conditions on the Jacobians imply, by Theorem \ref{Macaulay5}, that the functions $f_i$ form a Cohen-Macaulay regular sequence.
Condition (1) is a consequence of the more general Theorem \ref{Rem4} below. The other conditions follow from Theorem \ref{imp2}.
\end{proof}

\begin{remark} 
The above theorems are valid with unchanged proofs for homogeneous (and nonhomogeneous) polynomials $F_1,\ldots, F_k$ in $K[x_1\ldots,x_n]$, where  $K$ is a commutative ring with $1$. Under the Jacobian conditions,  $K[x_1,\ldots,x_n]/(F_1,\ldots, F_k)$ is a free module over  $K[x_{k+1},\ldots,x_n]$ of rank $d_1\cdot\ldots\cdot d_n$. 
 
When $k=n$ in the theorem above, and $K$ is a field,  the integer 
%
$\dim_K (K[x_1,\ldots,x_n]/\cI)=d_1\cdot\ldots\cdot d_n$ can be understood as the number of zeroes with multiplicities of the polynomials $f_1,\ldots,f_n$ in the affine space $\bbA^n_K$, as in B\'ezout's theorem.
\end{remark}

\subsection{Cohen-Macaulay Weierstrass isomorphism}

Let us analyze the conditions imposed in Theorem \ref{imp22}, and consider
the following example.

\begin{example} Let $F_1=a_{11}x^2_1+a_{12}x_1x_2+a_{13}x^2_2$ and  $F_2=a_{21}x^2_1+a_{22}x_1x_2+a_{23}x^2_2$. Then $d(\Delta)=2$, and the  computation of Jacobians  will be carried out up to order $s\leq 4$ (starting from $s=2$).
We have
$$
J^2=J^2(F_1,F_2:x^2_1,x^2_2)=\det\left( \begin{array}{cc} 
                a_{11}&a_{13} \\
                a_{21} & a_{23}  \end{array}\right).
$$
The condition $\cJ^2\neq 0$ implies the linear independence of the pure quadratic parts  of the (initial) forms. 
Next,
\[ 
J^3=\det\left( \begin{array}{cccc} 
                a_{11}&a_{12} & a_{13} & 0  \\
                0& a_{11}  &  a_{12} & a_{13}  \\
                 a_{21} & a_{22}&a_{23}&0 \\
                 0 &a_{21} & a_{22}&a_{23}
                 \end{array}\right).  
\] 
Thus $\cJ^3$ is nothing but the resultant of the forms. Its invertibility  implies that the forms $F_1,F_2$ have no common linear factor. A simple computation shows  that $\cJ^4=a_{11}\cJ^3$, and $\cJ^5=a_{11}^2\cJ^3$  as before. The additional  conditions of invertibility of $a_{11}$ and  $\cJ_2$ are due to lack of symmetry of the construction and imply, together with other conditions,  certain asymmetry properties of filtered Stanley decompositions like the division theorem. Once
the forms are in general position, that is, $\cJ^3$ is  invertible, the conditions of invertibility of $a_{11}$ and $\cJ^2$ can be ensured by a  generic change of coordinates, and as such are not essential for many  properties.

Observe that the example above easily generalizes to the case of any two forms $F_1,F_2\in K[x,y]$ of degrees $d_1,d_2$. We get 
$\cJ^d(F_1,F_2)=\Res(F_1,F_2)$, where $d=d(\Delta)=
d_{1}+d_{2}
-k+1$.
\end{example}

In general, the determinants of the matrices $J^s(F_1,\ldots,F_k:x_1^{d_1},\ldots,x_k^{d_k})$ were  first considered by Macaulay, and used in his definition of (projective) resultant for homogeneous polynomials.
He proved in particular the following results:

\begin{theorem}[Macaulay \cite{Macaulay}, \cite{Kalorkoti}] \label{Macaulay5}
\begin{enumerate}

\item $J^s(F_1,\ldots,F_k)= \Res(F_1,\ldots,F_k)\cdot \Delta(F_1,\ldots,F_k,s)$  for all $s\geq d:=\sum d_i-k+1$,   
where $\Delta(F_1,\ldots,F_k,s)$ is 
,so called, {\it an extraneous factor}.
\item The square-free part of $\Delta(F_1,\ldots,F_k,s)$ is equal (up to sign) to the
minor of $M(F_1,\ldots,F_n;s)$ obtained by deleting all rows and columns that are indexed by any
power product, that is, divisible by exactly one $x_{d_i}$ for $1\leq i \leq  n$.
\end{enumerate}
\end{theorem}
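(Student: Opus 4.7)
The plan is to reproduce Macaulay's classical construction from \cite{Macaulay}, adapted to the notation of the excerpt. For $s \geq d := \sum d_i - n + 1$ every $\alpha \in \Delta(s)$ is divisible by some $x_i^{d_i}$, and $i(\alpha)$ selects the smallest such index. The matrix $M(F;s) = J^s$ is precisely the matrix, in the monomial basis of $R_s$, of the restriction of the multiplication map
$$
\Phi : \bigoplus_{i=1}^{n} R_{s-d_i}\cdot \mathbf{e}_i \longrightarrow R_s, \qquad (h_1,\ldots,h_n)\longmapsto \sum h_i F_i,
$$
to the distinguished section sending $\alpha$ to $u^{\alpha - \alpha_{i(\alpha)}}\,\mathbf{e}_{i(\alpha)}$. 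Over the generic coefficient ring $K[a_{ij}]$ this is a square matrix of the right size.

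First I would establish that $\mathrm{Res}(F_1,\ldots,F_n)$ divides $\det M(F;s)$. The universal resultant is the irreducible polynomial in $K[a_{ij}]$ whose vanishing locus in an algebraic closure parametrises systems having a common nontrivial projective zero. At any geometric point of $V(\mathrm{Res})$ choose such a common zero $p \in \mathbb{P}^{n-1}$; then every row $u^{\alpha - \alpha_{i(\alpha)}} F_{i(\alpha)}$ of $M(F;s)$ evaluates to $0$ at $p$, while the evaluation functional $R_s \to \bar K$ at $p$ is not identically zero on the monomial basis. Hence the rows cannot span $R_s$, so $\det M(F;s)$ vanishes on the dense subset $V(\mathrm{Res})$ of an irreducible component. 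Because $\mathrm{Res}$ is an irreducible element of the UFD $K[a_{ij}]$, we conclude $\mathrm{Res} \mid \det M(F;s)$, and the quotient is by definition $\Delta(F;s)$, establishing (1).

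For (2) I would follow Macaulay's partition of $\Delta(s)$ into
$$
\Delta^{\mathrm{red}}(s) = \{\alpha \in \Delta(s) : x^\alpha\ \text{is divisible by exactly one}\ x_i^{d_i}\},\qquad \Delta^{\mathrm{mult}}(s) = \Delta(s) \setminus \Delta^{\mathrm{red}}(s).
$$
Reordering rows and columns to list $\Delta^{\mathrm{red}}(s)$ first, I would show that by suitable elementary operations $M(F;s)$ becomes block-triangular: on the $\Delta^{\mathrm{red}}$-block the unique divisibility $x_i^{d_i} \mid x^\alpha$ forces the row $u^{\alpha - d_i e_i} F_i$ to interact with $\Delta^{\mathrm{mult}}$ only through a specific shape controlled by Macaulay's Koszul-type syzygies, so after clearing those entries the leading principal block contributes the factor $\mathrm{Res}$ to the appropriate power (this is exactly Macaulay's computation of the multiplicity of $\mathrm{Res}$ in $\det M(F;s)$, see \cite{Kalorkoti}). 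The Schur complement that remains is precisely the principal minor of $M(F;s)$ indexed by $\Delta^{\mathrm{mult}}(s)$, and one identifies it with the square-free part of $\Delta(F;s)$: each coefficient $a_{ij}$ appears linearly in that minor, and a repeated irreducible factor would force $\det M(F;s)$ to vanish to order $\geq 2$ along a generic point of $V(\mathrm{Res})$, contradicting the fact that the derivative of $\det M(F;s)$ with respect to a generic coefficient is nonzero along $V(\mathrm{Res})$.

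The chief obstacle is the third step: exhibiting the precise block-triangular reduction and pinning down the exponent of $\mathrm{Res}$ contributed by the $\Delta^{\mathrm{red}}$-block. This combinatorial bookkeeping, together with the verification that no extra irreducible factor is hidden in the Schur complement, is the heart of Macaulay's original argument and would be imported rather than redone from scratch, since carrying it out in detail amounts to re-establishing the entire Macaulay resultant formalism.
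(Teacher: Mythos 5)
The paper gives no proof of this theorem at all: it is quoted as a classical result, with \cite{Macaulay} and \cite{Kalorkoti} supplied as references, so there is no in-paper argument to measure yours against. Your proof of part (1) is the standard one and is correct. For $s\geq d$ the set $\Gamma(s)$ is empty, so $\Delta(s)$ is the full monomial basis of $R_s$ and $J^s$ is exactly Macaulay's square matrix (this identification is worth stating explicitly, since the paper indexes $J^s$ by $\Delta(s)$ rather than by all of $R_s$); at any point of the irreducible hypersurface $V(\Res)$ every row $u^{\alpha-\alpha_{i(\alpha)}}F_{i(\alpha)}$ lies in the kernel of the nonzero evaluation functional at a common projective zero, so $\det J^s$ vanishes on $V(\Res)$, and primality of $(\Res)$ in the universal coefficient ring gives the divisibility, with $\Delta(F_1,\ldots,F_k,s)$ defined as the quotient.

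For part (2) you do not actually produce a proof: you sketch the partition into reduced and non-reduced monomials and then explicitly import the combinatorial core from \cite{Macaulay} and \cite{Kalorkoti}. That is consistent with how the paper itself treats the statement, but the one piece of reasoning you do supply for square-freeness is not sound as written. A repeated irreducible factor of $\Delta(F_1,\ldots,F_k,s)$ need not be supported on $V(\Res)$, so its presence would not force $\det M(F_1,\ldots,F_n;s)$ to vanish to order $\geq 2$ along a generic point of $V(\Res)$; the order-of-vanishing argument controls only the multiplicity of $\Res$ itself in $\det M$, not the square-free part of the extraneous factor. If you want part (2) to stand on its own, the identity relating $\det M$ to $\Res$ times the minor over the non-reduced monomials has to come from Macaulay's explicit elimination bookkeeping; there is no shortcut through derivatives along $V(\Res)$.
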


Moreover it follows from his theorem and 
his
considerations that $\Delta(F_1,\ldots,F_k,s)$ can be made invertible by some coordinate change.

Let $K$ denote a field and $F_1,\ldots, F_k\in R=K[x_1,\ldots,x_k]$ be forms of degree $d_1,\ldots,d_k$ respectively. Set $I=(F_1,\ldots, F_k)$, and denote by $I_s$ and $R_s$ the forms in $I$ and $R$ of degree $s$. Let $t$ be an indeterminate over 
$\ZZ$ 
and  write the Hilbert function as the formal power series 
$$
H(R/I)=\sum_{s=0}^\infty (R_s/I_s)t^s.
$$
The following result is essentially due to Macaulay.

\begin{theorem}[Macaulay] 
The following conditions are equivalent:
\begin{enumerate}
\item The forms $F_1,\ldots, F_k$  form a regular sequence.
\item  The resultant $\Res(F_1,\ldots, F_k)$ does not vanish.
\item  The vector space $K[x_1,\ldots,x_k]/(F_1,\ldots, F_k)$  has  finite dimension.
\item The ideal $I:=(F_1,\ldots, F_k)$ is $(x_1,\ldots, x_k)$-adic.	
\item The forms $F_1,\ldots, F_k$ have no nontrivial solutions over the algebraic closure of $K$.
\end{enumerate}
Moreover, if these conditions are satisfied then:
\begin{enumerate}
\item	
%
$\dim_K(K[x_1,\ldots,x_n]/(F_1,\ldots, F_k))=d_1\cdot\ldots\cdot d_k$. 

 \item $H_{(F_1,\ldots, F_i)}(t)
=(1-t^{d_1})\cdot\ldots\cdot(1-t^{d_i})(1-t)^n=H_{(x_1^{d_1},\ldots, x_i^{d_i})}(t)$ for any $i=1,\ldots, k$.	
 \item If $K$ is an infinite field then after some generic change of coordinates, 
$$
K[x_1,\ldots,x_k]/(F_1,\ldots, F_i)=(K[x_1,\ldots,x_k]/(F_1,\ldots, F_i,x_{i+1},\ldots,x_k))[x_{i+1},\ldots,x_k)]
$$
is finite over $K[x_{i+1},\ldots,x_k]$ of degree  $d_1\cdot\ldots\cdot d_i$.
 \end{enumerate}
\end{theorem}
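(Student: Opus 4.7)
\medskip
\noindent\textbf{Proof plan.} The equivalences divide naturally into two clusters: the purely ideal-theoretic equivalences $(1)\Leftrightarrow(3)\Leftrightarrow(4)\Leftrightarrow(5)$, which I would dispatch by standard commutative algebra, and the bridge $(2)\Leftrightarrow(5)$, which is the defining property of the Macaulay resultant. The ``moreover'' clauses are then deduced from the generalized implicit function theorem (Corollary~\ref{imp22}) once the Jacobian conditions are secured via Theorem~\ref{Macaulay5}.

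For the first cluster: $(3)\Leftrightarrow(4)$ holds because $R/I$ graded and finite-dimensional over $K$ forces $R/I$ to be supported only at the irrelevant maximal ideal, so $\sqrt{I}=(x_1,\ldots,x_k)$ and, by finite generation of the nilradical, some power $(x_1,\ldots,x_k)^N\subset I$, which is the meaning of $(x)$-adic. The equivalence $(4)\Leftrightarrow(5)$ is the graded Nullstellensatz applied over $\overline K$: nontrivial zeros correspond exactly to homogeneous maximal ideals of $R\otimes_K\overline K$ strictly containing $I\otimes\overline K$, which exist iff $I$ is not $(x)$-primary. For $(1)\Leftrightarrow(3)$ I use that in the Cohen--Macaulay ring $R=K[x_1,\ldots,x_k]$ of Krull dimension $k$, any system of $k$ homogeneous elements is a regular sequence iff $\dim(R/I)=0$ iff $R/I$ is Artinian iff $\dim_K(R/I)<\infty$.

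For the bridge $(2)\Leftrightarrow(5)$: this is the content of the projective resultant construction recalled in Theorem~\ref{Macaulay5}. One can either invoke this directly, or argue via Theorem~\ref{Macaulay5}(1): for $s\geq d=\sum d_i-k+1$ the Macaulay determinant $J^s$ factors as $\mathrm{Res}(F_1,\ldots,F_k)\cdot\Delta(F_1,\ldots,F_k,s)$, and $\mathrm{Res}\neq 0$ iff for each $s\geq d$ one may choose coordinates making $J^s$ nonzero; by Corollary~\ref{main012}, this nonvanishing of the Jacobians for all $s\leq d(\Delta)+1$ (the diagram $\Delta$ being generated by $d_1e_1,\ldots,d_ke_k$ of top degree $d$) forces $\{x^\alpha\mid\alpha\in\Gamma\}$ to span $R/I$, giving $\dim_K(R/I)<\infty$, i.e.\ $(3)$, hence $(5)$.

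For the ``moreover'' part, assume the equivalent conditions hold and pass to generic coordinates over an infinite field so that, by Theorem~\ref{Macaulay5}(2) and Macaulay's original argument, each extraneous factor $\Delta(F_1,\ldots,F_k,s)$ is invertible; equivalently, the full Jacobians $J^s(F_1,\ldots,F_k:x_1^{d_1},\ldots,x_k^{d_k})$ and $J^s_0$ are invertible in the range of degrees required by Corollary~\ref{imp22}. Applying Corollary~\ref{imp22}(1) to the homogeneous setting (working directly in $R$ rather than $\cE_n$, as permitted by the remark following the corollary) yields the basis $\{x^\alpha\mid 0\leq\alpha_i<d_i\}$ of $R/I$ over $K$, giving $\dim_K(R/I)=d_1\cdots d_k$. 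For statement~(3) of the ``moreover'' part, apply the same corollary to the truncated sequence $F_1,\ldots,F_i$ (with $k-i$ remaining variables $x_{i+1},\ldots,x_k$ playing the role of $v$), obtaining freeness of $R/(F_1,\ldots,F_i)$ over $K[x_{i+1},\ldots,x_k]$ of rank $d_1\cdots d_i$. The Hilbert series formula in~(2) then follows inductively from the short exact sequences
\begin{equation*}
0\longrightarrow \bigl(R/(F_1,\ldots,F_{i-1})\bigr)(-d_i)\xrightarrow{\ \cdot F_i\ } R/(F_1,\ldots,F_{i-1})\longrightarrow R/(F_1,\ldots,F_i)\longrightarrow 0,
\end{equation*}
which are exact precisely because $F_i$ is a nonzerodivisor modulo $(F_1,\ldots,F_{i-1})$ by the regular sequence property in~(1); multiplying Hilbert series telescopes to $\prod(1-t^{d_j})/(1-t)^k$. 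The main obstacle I anticipate is bookkeeping around Macaulay's extraneous factor: ensuring that after a generic linear change of coordinates \emph{all} the required Jacobians $J^s$ and $J^s_0$ (not just $J^d$) are simultaneously invertible, which is where one must carefully cite Theorem~\ref{Macaulay5}(2) together with the observation that the invertibility condition cuts out a Zariski-open subset of the coordinate-change group.
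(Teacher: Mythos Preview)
The paper does not supply its own proof of this theorem; it is stated as a classical result ``essentially due to Macaulay'' and referenced to \cite{Macaulay}, \cite{Kalorkoti}. So there is no paper proof to compare against, and I will assess your argument on its own merits.

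Your treatment of the equivalences $(1)\Leftrightarrow(3)\Leftrightarrow(4)\Leftrightarrow(5)$ is standard and correct, and invoking the defining property of the projective resultant for $(2)\Leftrightarrow(5)$ is appropriate. Likewise, your derivation of the Hilbert series in the ``moreover'' part (2) via the short exact sequences coming from the regular sequence property is the right argument and needs no change.

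There is, however, a genuine gap in your primary route to the ``moreover'' parts (1) and (3). You propose to secure the Jacobian hypotheses of Corollary~\ref{imp22} by a generic coordinate change, citing Theorem~\ref{Macaulay5}. But Theorem~\ref{Macaulay5}(1) only gives the factorization $J^s=\Res\cdot\Delta(\,\cdot\,,s)$ for $s\geq d=\sum d_i-k+1$; it says nothing about $J^s$ for $s<d$, and Corollary~\ref{imp22} requires invertibility of $J^s$ for \emph{all} $s\leq d(\Delta)+1$. The assertion that a regular sequence (equivalently, $\Res\neq 0$) admits coordinates in which all the $J^s$ are simultaneously invertible is precisely the Conjecture stated in the paper immediately after this theorem, which the paper explicitly ties to the Eisenbud--Green--Harris conjecture and notes is easy only for $k\leq 2$. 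So your ``main obstacle'' is not bookkeeping---it is an open problem, and this route must be abandoned.

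The repair is straightforward, and you already have the ingredients. Once (2) is in hand, (1) follows by evaluating the Hilbert series: $\sum_{s\geq 0}\dim_K(R/I)_s=\prod_j(1-t^{d_j})/(1-t)^k\big|_{t\to 1}=d_1\cdots d_k$. For (3), argue directly from graded commutative algebra rather than via Corollary~\ref{imp22}: after a generic linear change (Noether normalization), $x_{i+1},\ldots,x_k$ form a homogeneous system of parameters for the graded Cohen--Macaulay ring $R/(F_1,\ldots,F_i)$, hence a regular sequence, so $R/(F_1,\ldots,F_i)$ is free over $K[x_{i+1},\ldots,x_k]$; the rank is read off from the Hilbert series in (2). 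This avoids any appeal to the Jacobian conditions or to the conjecture.
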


It follows from the Macaulay definition of resultant that invertibility of the generalized Jacobians implies invertibility of the resultant. It is natural to ask whether the converse is true.

\begin{conjecture} 
Let $K$ be an infinite field, and $F_1,\ldots,F_k$ be homogeneous polynomials in $K[x_1,\ldots,x_k]$. Then the following conditions are equivalent:
\begin{enumerate}

\item $\Res(F_1,\ldots,F_k)$ is invertible (or $F_1,\ldots,F_k$ is a regular sequence).
\item There exists a generic coordinate  change such that
$$
J^s(F_1,\ldots,F_k: x_1^{d_1},\ldots,x_k^{d_k})
$$ 
is invertible for  all $s$. 	
\item There exists a generic linear coordinate 
change
such that 
 $K[x_1,\ldots,x_k]/(F_1,\ldots,F_i,x_{i+1},\ldots,x_k)$ is generated
 by $x^{\alpha}$, with $0\leq \alpha_i<d_i$.
\end{enumerate}
\end{conjecture}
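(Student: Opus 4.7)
The plan is to prove the cycle $(2)\Rightarrow(1)\Rightarrow(3)\Rightarrow(2)$, with the pivot being the following reinterpretation of $J^s$. Writing the monomial basis of $R_s$ as $\Delta(s)\sqcup\Gamma(s)$, the matrix $[D_{x^\alpha}(F_\beta)]_{\alpha,\beta\in\Delta(s)}$ expresses the $\Delta(s)$-components of the polynomials $F_\beta\in I_s$, and its invertibility is equivalent to saying that the ``box'' monomials $\{x^\alpha:0\le\alpha_j<d_j,\,|\alpha|=s\}$ span (and hence form a basis of) $(R/I)_s$. Thus (2) is equivalent to the assertion that $\{x^\alpha:0\le\alpha_j<d_j\}$ is a $K$-basis of $R/I=R/(F_1,\ldots,F_k)$.

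Given this reformulation, the ``easy'' edges of the cycle become straightforward. For $(2)\Rightarrow(1)$, I would apply Macaulay's identity (Theorem~\ref{Macaulay5}): in degree $s=\sum d_i-k+1$ one has $J^s=\Res(F_1,\ldots,F_k)\cdot\Delta(F_1,\ldots,F_k,s)$, so invertibility of $J^s$ forces the resultant to be invertible, which by the Macaulay regularity criterion yields a regular sequence. For $(3)\Rightarrow(1)$, take $i=k$ in (3) to get $\dim_K R/I\le d_1\cdots d_k<\infty$; a homogeneous ideal generated by $k$ forms in $k$ variables with finite-dimensional quotient is automatically a regular sequence. For $(3)\Rightarrow(2)$, combine $i=k$ of (3) with the now-available equality $\dim_K R/I=d_1\cdots d_k$ to conclude that the $d_1\cdots d_k$ box monomials actually form a basis of $R/I$, and therefore every $J^s$ is invertible.

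The substantive direction is $(1)\Rightarrow(3)$, which I would prove by induction on $k$, the base case $k=1$ being trivial since a nonzero form in $K[x_1]$ is $cx_1^{d_1}$. For the inductive step, start from a regular sequence $F_1,\ldots,F_k$. The Macaulay finiteness theorem quoted in the paper supplies, after a generic linear change of coordinates, that for each $i<k$ the reductions $\bar F_1,\ldots,\bar F_i$ modulo $(x_{i+1},\ldots,x_k)$ form a regular sequence of degrees $d_1,\ldots,d_i$ in $K[x_1,\ldots,x_i]$. The induction hypothesis at level $i$ then gives that the box monomials in $x_1,\ldots,x_i$ form a basis of $K[x_1,\ldots,x_i]/(\bar F_1,\ldots,\bar F_i)\cong R/(F_1,\ldots,F_i,x_{i+1},\ldots,x_k)$, which is condition (3) at that level. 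The remaining case $i=k$ of (3) follows by closing the cycle: the established $(3)\Rightarrow(2)$ at the lower levels together with the top-level Macaulay argument gives (2) for the full $F_1,\ldots,F_k$, which is the $i=k$ instance of (3).

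The main obstacle will be the coherence of the genericity in the inductive step: a single $g\in\mathrm{GL}_k(K)$ must simultaneously realize, for every $i$, the Macaulay generic position for $\bar F_1,\ldots,\bar F_i$ and the box-basis property for $R/I$. Each requirement is a Zariski-open condition in $g$, defined by the non-vanishing of certain polynomials in the matrix entries of $g$ built from the coefficients of $F_1,\ldots,F_k$; and each is known to be satisfied at $(F_j)=(x_j^{d_j})$ (with $g$ the identity). The subtle point is that the $\mathrm{GL}_k(K)$-orbit of a given regular sequence is low-dimensional inside the regular-sequence variety, so one cannot automatically conclude that this orbit meets every dense open subset. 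I would handle this by an explicit Weierstrass-style normalization: choose a generic linear form $\ell_1$ so that the coefficient of $\ell_1^{d_1}$ in $F_1$ is nonzero, set $x_1:=\ell_1$; then, working modulo $F_1$, choose a generic $\ell_2$ so that $F_2\bmod F_1$ has nonzero $\ell_2^{d_2}$-coefficient; and iterate. Since $K$ is infinite, each step is possible, and the finitely many resulting non-vanishing conditions can be arranged to hold simultaneously, producing the required generic coordinate change.
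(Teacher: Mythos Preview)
The statement you are attempting to prove is labeled a \emph{Conjecture} in the paper, and the paper gives no proof: immediately after stating it, the author writes ``It is quite easy to show the conjecture for $k\leq 2$. The problem is related to a famous Eisenbud-Green-Harris conjecture.'' So you are not comparing against an existing argument --- you are claiming to settle an open problem, and the burden is correspondingly higher.

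Your implications $(2)\Rightarrow(1)$ and $(3)\Rightarrow(1)$ are fine; these are indeed the easy directions. The gap is in $(1)\Rightarrow(3)$ at the top level $i=k$, and it is exactly the gap that makes the conjecture open. Your inductive hypothesis, applied to the restrictions $\bar F_1,\ldots,\bar F_i$ in $K[x_1,\ldots,x_i]$, gives information about $J^s$ for the \emph{restricted} systems, but says nothing about $J^s(F_1,\ldots,F_k)$ for the full system in the intermediate degrees $s<\sum d_i-k+1$. The Macaulay factorization $J^s=\operatorname{Res}\cdot\Delta_s$ that you invoke is only stated (Theorem~\ref{Macaulay5}) for $s\ge \sum d_i-k+1$, so ``closing the cycle'' via Macaulay covers at most the top two degrees $s=d(\Delta),\,d(\Delta)+1$; all lower $s$ remain unaddressed. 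There is no mechanism in your outline that lifts the box-basis property from $K[x_1,\ldots,x_{k-1}]/(\bar F_1,\ldots,\bar F_{k-1})$ to $K[x_1,\ldots,x_k]/(F_1,\ldots,F_k)$.

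Your Weierstrass-style patch does not repair this. Arranging that each $F_i$ has nonzero $x_i^{d_i}$-coefficient (even ``modulo $F_1,\ldots,F_{i-1}$'') only forces certain diagonal entries of the $J^s$-matrices to be nonzero; it does not force the determinants to be nonzero. Concretely, suppose by induction that $S:=R/(F_1,\ldots,F_{k-1})$ is free over $K[x_k]$ on the small-box monomials. You then want to divide by $F_k$ in $S$ using its $x_k^{d_k}$-term. But $S$ is not a polynomial ring: the product of two small-box basis elements in $S$ must itself be re-expressed in the basis via reduction modulo $F_1,\ldots,F_{k-1}$, and this reduction mixes the $x_k$-degrees in an uncontrolled way. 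The single nonvanishing coefficient you arrange does not control this. Equivalently, you are implicitly assuming that $\{F_1,\ldots,F_k\}$ is (after a linear change) a Gr\"obner basis with leading terms $x_1^{d_1},\ldots,x_k^{d_k}$; for $k\ge 3$ there is no known argument producing such coordinates for an arbitrary regular sequence, and this is precisely where the connection to Eisenbud--Green--Harris enters. You correctly identify the obstacle (``the $\mathrm{GL}_k$-orbit of a given regular sequence is low-dimensional\ldots''), but the proposed normalization does not overcome it.
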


It is quite easy to show the conjecture for $k\leq 2$.
The problem is related to a famous Eisenbud-Green-Harris conjecture.

The following results can be considered as a generalization of Theorem \ref{Rem} on the Weierstrass isomorphism for  Cohen-Macaulay singularities.
 
\begin{theorem}[Cohen-Macaulay-Weierstrass isomorphism] \label{Rem4}
Let $\cE_{n+k}(x,y)/\cI$ be a Cohen-Macaulay local ring (of dimension $k$) with a regular sequence defined by coordinates $y_1,\ldots,y_k$.
Let $\Delta=\expe_T(\cI(x,0))\subset \NN^n$ (for a normalized total order $T$) be the induced diagram. Then there is an isomorphism (quasi-isomorphism in the nonreduced category) of  free  $\cE_k(y)$-modules
$$
\phi: \bigoplus_{\alpha\in \Gamma} \cE_k(y)x^\alpha\to \cE_{n+k}/\cI.
$$	
\end{theorem}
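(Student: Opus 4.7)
The plan is to extract surjectivity of $\phi$ from the generalized Weierstrass--Hironaka division theorem and then upgrade it to an isomorphism via the Cohen--Macaulay hypothesis together with Eisenbud's freeness criterion, handling the non-reduced category separately using the quasi-isomorphism of Theorem \ref{imp2}.

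First I would observe that the hypotheses force $\Gamma$ to be finite. Since $\cE_{n+k}/\cI$ is Cohen--Macaulay of dimension $k$ with regular sequence $y_1,\ldots,y_k$ of length $k$, the quotient $(\cE_{n+k}/\cI)/(y)\cong\cE_n/\cI(x,0)$ has depth zero, hence is Artinian and finite-dimensional over $K$. A standard basis for $\cI(x,0)\subset\cE_n$ (Theorem \ref{standard basis}) exhibits the classes $\{x^\alpha:\alpha\in\Gamma\}$ as a $K$-basis of $\cE_n/\cI(x,0)$, so $|\Gamma|<\infty$. Consequently, in the Hironaka decomposition $\Gamma=\bigcup_j A_j\times\NN^{n-j}$ one must have $A_j=\emptyset$ for $j<n$ and $A_n=\Gamma$, and $\bigoplus_{\alpha\in\Gamma}\cE_k(y)\,x^\alpha$ is a free $\cE_k(y)$-module of finite rank $|\Gamma|$.

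For surjectivity of $\phi$, for each vertex $\alpha_i$ of $\Delta$ I would pick $g_i\in\cI(x,0)$ with $\expe_T(g_i)=\alpha_i$ and choose an arbitrary lift $f_i\in\cI\subset\cE_{n+k}$ with $f_i(x,0)=g_i$. Applying the generalized division theorem (Theorem \ref{main0}) to these $f_i$, every $g\in\cE_{n+k}$ admits $g=\sum h_i f_i+r(g)$ with $\supd(r(g))\subset\Gamma\times\NN^k$; the vanishing of all $A_j$ for $j<n$ forces $r(g)=\sum_{\alpha\in\Gamma}c_\alpha(y)\,x^\alpha$ with $c_\alpha\in\cE_k(y)$. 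Since $f_i\in\cI$, this proves $\phi$ is surjective.

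For injectivity in the reduced (algebraic or analytic) category, set $M:=\cE_{n+k}/\cI$ and apply Malgrange's Corollary \ref{CMal2} to the composition $\cE_k(y)\to\cE_{n+k}\to M$: the finite dimensionality of $M/(y)M=\cE_n/\cI(x,0)$ over $K$ forces $M$ to be a finite $\cE_k(y)$-module. By Eisenbud's freeness theorem \cite{E}, already invoked in the proof of Corollary \ref{I3}, a Cohen--Macaulay module finite over a regular local ring of the same Krull dimension is free; hence $M$ is free of rank $\dim_K M/(y)M=|\Gamma|$. The classes of the $x^\alpha$, $\alpha\in\Gamma$, span $M/(y)M$ over $K$, so by Nakayama they generate $M$ over $\cE_k(y)$, and a generating set of $|\Gamma|$ elements for a free module of rank $|\Gamma|$ must be a basis. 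This gives injectivity.

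The non-reduced (differential) case is handled analogously but with Theorem \ref{imp2} replacing the Cohen--Macaulay--Eisenbud step: that theorem supplies a quasi-isomorphism $\Phi:N_1\oplus\bigoplus_{\alpha\in\Gamma}\cE_k(y)\,x^\alpha\to\cE_{n+k}$ in which $N_1$ is generated over the appropriate coefficient rings by products $f_\beta=u^\gamma f_{i,j}$, all of which lie in $\cI$; passing to the quotient collapses $N_1$ and identifies $\Phi$ with $\phi$, which then inherits the quasi-isomorphism property. The main obstacle is the last step: verifying that in the smooth category the kernel of $\phi$ truly sits inside $m_k(y)^\infty\cdot\bigoplus_\alpha\cE_k(y)\,x^\alpha$, i.e.\ the $m^\infty$-ideal of the base ring $\cE_k(y)$ rather than that of the ambient $\cE_{n+k}$. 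This requires carefully tracking the ``over $(\cE_i)$'' datum of the quasi-isomorphism in Theorem \ref{imp2} through the quotient by $\cI$.
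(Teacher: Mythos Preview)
Your reduced-case argument is correct and genuinely different from the paper's. The paper does not invoke Eisenbud's freeness criterion here; instead it filters $R=\cE_{n+k}/\cI$ by powers of $\cJ_R=(y_1,\ldots,y_k)R$ and uses Matsumura's theorem that for a regular sequence one has $\gr_{\cJ_R}(R)\cong (R/\cJ_R)[y_1,\ldots,y_k]$. The surjection $\phi$ then induces a grading-preserving surjection
\[
\overline{\phi}:\bigoplus_{\alpha\in\Gamma} K[y_1,\ldots,y_k]\cdot x^\alpha\;\longrightarrow\;(R/\cJ_R)[y_1,\ldots,y_k],
\]
which is visibly an isomorphism since $\{x^\alpha:\alpha\in\Gamma\}$ is a $K$-basis of $R/\cJ_R$. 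Any $f\in\ker\phi$ lying in $\cJ^i\setminus\cJ^{i+1}$ would yield a nonzero element of $\ker\overline{\phi}$, a contradiction; hence $\ker\phi\subset\cJ^\infty$. Your route via Auslander--Buchsbaum/Eisenbud is arguably slicker in the reduced category and nicely reuses what was already quoted for Corollary~\ref{I3}; the paper's route has the advantage of treating the reduced and non-reduced cases uniformly in one line, since $\cJ^\infty=m_k^\infty$ is exactly the quasi-isomorphism condition.

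Your non-reduced case, by contrast, has the gap you yourself flag, and it is a real one. Theorem~\ref{imp2} gives a quasi-isomorphism onto $\cE_{n+k}$, but after modding out by $\cI$ the $f_\beta$-summand $N_1$ only maps onto $M(\cI)=\sum\cE_{n+k-j+1}f_\beta$, which is in general strictly smaller than $\cI$ (the paper even remarks this after the Henselian division theorem). So ``collapsing $N_1$'' produces a quasi-isomorphism $\bigoplus_\alpha\cE_k x^\alpha\to\cE_{n+k}/M(\cI)$, not onto $\cE_{n+k}/\cI$, and you still owe the statement that an element of $\bigoplus_\alpha\cE_k x^\alpha$ whose image lies in $\cI/M(\cI)$ must have coefficients in $m_k^\infty$. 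The paper's associated-graded argument above is precisely what fills this gap: it uses the Cohen--Macaulay hypothesis (via the regularity of $y_1,\ldots,y_k$ and Matsumura's theorem) rather than Theorem~\ref{imp2}. Alternatively, you could rescue your approach by passing to completions --- your Eisenbud argument applies verbatim to $K[[x,y]]/\widehat{\cI}$, giving an honest isomorphism there, and then any $\sum c_\alpha x^\alpha\in\ker\phi$ has all $\widehat{c_\alpha}=0$, i.e.\ $c_\alpha\in m_k^\infty$ --- but the paper's direct argument is shorter.
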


\begin{proof} 
Consider the ideal $\cJ=(y_1,\ldots,y_k)\subset \cE_k$, and set $R=\cE_n/\cI$. Let $\cJ_R\subset R$ denote the ideal generated by $y_1,\ldots,y_k$. Then $\cJ_R^i=\cJ^i\cdot R$.
The  homomorphism $\phi$ is surjective by  Weierstrass-Hironaka division, and likewise its restriction
$$
\phi: \bigoplus_{\alpha\in \Gamma} \cJ^i\cdot x^\alpha\to \cJ^i_R.
$$
 By a theorem of Matsumura \cite{Matsumura}, 
$$
\gr_{\cJ_R}(R)=\bigoplus \cJ_R^i/\cJ_R^{i+1}\simeq (R/\cJ_R)[y_1,\ldots,y_k].
$$	
The epimorphism $\phi$ defines a gradation preserving epimorphism
	$$
\overline{\phi}: \bigoplus_{\alpha\in \Gamma} \cJ^i/\cJ^{i+1}\cdot x^\alpha=\bigoplus_{\alpha\in \Gamma} K[y_1,\ldots,y_k]\cdot x^\alpha\to \cJ^i_R=(R/\cJ_R)[y_1,\ldots,y_k],
$$
which is an isomorphism. If a function $f\in \bigoplus_{\alpha\in \Gamma} \cE_k(y)x^\alpha $ is in the kernel of ${\phi}$, and 
$$
f\in \cJ^i\cdot\Big(\bigoplus_{\alpha\in \Gamma} \cE_k(y)x^\alpha\Big)\setminus\cJ^{i+1}\cdot \Big(\bigoplus_{\alpha\in \Gamma} \cE_k(y)x^\alpha\Big),
$$ 
then it defines a nonzero element in the kernel of $\overline{\phi}$, which is impossible. 
 Thus  $f\in \cJ^\infty\cdot(\bigoplus_{\alpha\in \Gamma} \cE_kx^\alpha)$.
\end{proof}

\begin{theorem}[Weierstrass isomorphism 2] \label{Rem2} Let $\pi:X\to Y$ be a smooth of dimension $k$ of smooth schemes of dimension $n+k$ and $n$ over $K$ (respectively $X, Y$ are open subsets  of $\CC^{n+k}$ or 
$\RR^{n+}$ and of $\CC^{n}$ or 
$\RR^{n}$, and  $\pi:X\to Y$ is the restriction of the natural projection $\pi_0:\CC^{n+k}\to \CC^{n}$ or $\pi_0:\RR^{n+k}\to \RR^{n}$.

Let $z=(0,0)$ be  a ($K$-rational) point of $X$, and  $(x,y)=(x_1,\ldots,x_k,y_1,\ldots,y_n$ be a local coordinate system at $z=(0,0)$  with projection $\pi$ defined by $(x_1,\ldots,x_k)$.  Let $\cI\subset \cO_{X}$ be an ideal sheaf of finite type , and suppose   the local ring $\cO_{X,z}/\cI_z$ is Cohen-Macaulay.
Assume that $\cO_X({x},0)/\cI(x,0)$ is a finite $K$-space, with $\cI(x,0)$ defining a finite diagram $\Delta$ in $\NN^n$. 

Then there exist \'etale neighborhoods 
$X'$ of $X$ and $Y'$ of $Y$ preserving the residue field $K$ (respectively open neighborhoods) with the induced projection $X'\to Y'$ and the Weierstrass 
isomorphism (surjection in the differential setting with kernel contained in $m_z^\infty$)
of  free $\cO_{Y'}$-modules 
$$
\bigoplus_{\alpha\in \Gamma} \cO_{Y'}\cdot x^\alpha\to \pi_*(\cO_{X'}/\cI), 
$$
and $\cO({Y'})$-modules 
$$
\bigoplus_{\alpha\in \Gamma} \cO(Y')\cdot x^\alpha\to \cO(X')/\cI, 
$$
\end{theorem}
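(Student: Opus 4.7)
The plan is to reduce this neighborhood statement to its local algebraic counterpart Theorem \ref{Rem4} by combining it with the parallel ``neighborhood'' Weierstrass results proven earlier: Corollary \ref{coherent2} in the algebraic case, Corollary \ref{GR} in the analytic case, and Corollary \ref{GR2} in the differential case. (In the labelling used in the statement, $x=(x_1,\ldots,x_k)$ are the fiber coordinates of $\pi$, the $y=(y_1,\ldots,y_n)$ are base coordinates of $Y$, and $\Delta\subset\NN^k$.) First I verify the hypotheses of Theorem \ref{Rem4} at the stalk: since $\cO_{X,z}/\cI_z$ is Cohen--Macaulay and $\cO_{X,z}/(\cI_z+\pi^*m_{Y,\pi(z)})$ is finite over $K$, the image of $(y_1,\ldots,y_n)$ in $\cO_{X,z}/\cI_z$ is an $m$-primary ideal, which forces $\dim(\cO_{X,z}/\cI_z)=n$ and makes $y_1,\ldots,y_n$ a system of parameters, hence (since the ring is Cohen--Macaulay) a regular sequence. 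Theorem \ref{Rem4} then produces an isomorphism (resp.\ a quasi-isomorphism in the differential setting)
$$
\phi_z:\bigoplus_{\alpha\in\Gamma}\cO_{Y,\pi(z)}\cdot x^\alpha \longrightarrow \cO_{X,z}/\cI_z.
$$

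Next I would spread $\phi_z$ out to a neighborhood. Applying the relevant coherent-sheaf theorem (\ref{coherent2}, \ref{GR}, or \ref{GR2}) to the sheaf $\cO_X/\cI$ and to $\pi$, one obtains \'etale (resp.\ open) neighborhoods $X'\to X$ of $z$ and $Y'\to Y$ of $\pi(z)$ such that $\pi_*(\cO_{X'}/\cI)$ is coherent (resp.\ of finite type) on $Y'$ and is generated over $\cO_{Y'}$ by the global sections $\{x^\alpha:\alpha\in\Gamma\}\subset\cO(X')$; this gives a surjection
$$
\phi:\bigoplus_{\alpha\in\Gamma}\cO_{Y'}\cdot x^\alpha\twoheadrightarrow \pi_*(\cO_{X'}/\cI).
$$
(Part~(4) of each of these corollaries applies because, by Theorem \ref{Rem4}, the $x^\alpha$ already generate the fiber $\cO_{X,z}/(\cI_z+\pi^*m_{Y,\pi(z)})$ over $K$.) After further shrinking $X'$ so that $\pi^{-1}(\pi(z))\cap \supp(\cO_{X'}/\cI)=\{z\}$, Lemma \ref{Rem5} (and its scheme-theoretic analogue) identifies $\pi_*(\cO_{X'}/\cI)_{\pi(z)}\simeq \cO_{X,z}/\cI_z$, and under this identification the induced stalk of $\phi$ at $\pi(z)$ coincides with $\phi_z$, hence is an isomorphism (resp.\ quasi-isomorphism).

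In the algebraic and analytic cases the source of $\phi$ is a free, hence coherent, $\cO_{Y'}$-module, so $\ker(\phi)$ is a coherent subsheaf whose stalk at $\pi(z)$ vanishes; consequently it vanishes on a whole open neighborhood of $\pi(z)$, and after shrinking $Y'$ the map $\phi$ becomes an isomorphism of sheaves. The corresponding isomorphism of $\cO(Y')$-modules of global sections then follows from Lemma \ref{ft} together with Lemma \ref{ft2} (which identifies $(\cO_{X'}/\cI)(X')$ with $\cO(X')/\cI(X')$). In the differential setting, Lemma \ref{B1} applied to the quasi-isomorphism $\phi_z$ yields, after possibly further shrinking, a surjection of $\cO(Y')$-modules whose kernel is contained in $m_z^\infty\cdot \bigoplus_\alpha \cO(Y')\cdot x^\alpha$, while the sheaf-level surjection is retained from the construction of $\phi$. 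The main technical point is precisely this promotion of the stalk-level (quasi-)isomorphism to a neighborhood statement: in the coherent setting this is clean because $\ker(\phi)$ is itself coherent, while in the differential setting only a quasi-isomorphism can be expected, since Lemma \ref{B1} controls the kernel only modulo the flat ideal at $z$.
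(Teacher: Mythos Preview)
Your approach is essentially the same as the paper's: apply Theorem \ref{Rem4} at the stalk and then spread out using the coherence results (Corollaries \ref{coherent2}, \ref{GR}, \ref{GR2}). Your verification that $y_1,\ldots,y_n$ form a regular sequence in $\cO_{X,z}/\cI_z$ is a nice addition that the paper leaves implicit.

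There is, however, one point in the algebraic case that you skate over and that the paper handles explicitly. Theorem \ref{Rem4} is formulated for the smooth category $(\cE_n)$, and in the algebraic setting $\cE_n=K\langle u_1,\ldots,u_n\rangle$ is the \emph{Henselianization}, not the Zariski local ring. So what Theorem \ref{Rem4} actually yields is an isomorphism
\[
\bigoplus_{\alpha\in\Gamma}\cO^h_{Y',\overline{y}}\cdot x^\alpha \;\simeq\; \cO^h_{X',z}/(\cI\cdot\cO^h_{X',z}),
\]
whereas the stalk of your sheaf map $\phi$ at $\overline y=\pi(z)$ lives over the ordinary (Zariski) local ring $\cO_{Y',\overline y}$. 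These do not coincide, so you cannot directly conclude that $\ker(\phi)_{\overline y}=0$. The paper closes this gap as follows: since $\cO_{Y',\overline y}\hookrightarrow \cO^h_{Y',\overline y}$ is injective, the free module $\bigoplus_\alpha \cO_{Y',\overline y}\cdot x^\alpha$ injects into $\bigoplus_\alpha \cO^h_{Y',\overline y}\cdot x^\alpha$, and the latter maps isomorphically to $\cO^h_{X',z}/\cI$; hence the Zariski stalk map $\phi_{\overline y}$ is already injective, and being surjective it is an isomorphism. Once you know the stalk map is an isomorphism, your coherence argument (kernel of a map of coherent sheaves vanishes on an open) finishes the proof. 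In the analytic case this issue does not arise because $\cE_n$ is the convergent power series ring, which \emph{is} the analytic stalk, so your argument there is complete as written; and in the differential case your appeal to Lemma \ref{B1} matches the paper.
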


\begin{proof} Put $\overline{y}:=\pi(z)\in Y$.
The sheaf $\cF:=\cO_X/\cI$ is  of finite type.
Moreover the vector space $\cF_z/(y_1,\ldots,y_k)=\cF_z/m_{\overline{y},Y}$ is of finite dimension.
By Theorems \ref{Mi2} (in the algebraic setting)
\ref{Mi1} (in the analytic situation), and  \ref{projection} (in the differential setting) there exist neighborhoods $X'\times Y'$ of  $X\times Y$ such that $\pi_*(\cO_{X'}/\cI)$ is coherent on $Y$ (or of finite type in the differential situation).

 Then in  the analytic case, by Theorem \ref{Rem4},  $\pi_*(\cO_{X'}/\cI)_z\simeq (\cO_{X'}/\cI)_z $ is a free $\cO_y$-module with  basis $x^\alpha$, $\alpha\in \Gamma$. This implies that  after possibly shrinking $Y'$, $\pi_*(\cO_{X'}/\cI)$ is a free $\cO_{Y'}$-module with the same basis.

 In the algebraic setting, by Theorem \ref{Rem4}, after passing to the Henselianizations of the stalks we get an isomorphism of  free $\cO^h_{Y',\overline{y}}$-modules
\begin{equation}\label{p}  
\bigoplus_{\alpha\in \Gamma} \cO^h_{Y',\overline{y}}\cdot x^\alpha\simeq \cO^h_{X',z}/(\cI\cdot \cO^h_{X',z}) .\end{equation} 

In a certain \'etale neighborhood 
$X'$ of $X$ 
the generators of the coherent modules can be expressed in terms of $x^\alpha$. In other words, by modifying $X'$ and $Y'$ we may assume additionally that $x^\alpha$, $\alpha\in \Gamma$, generate the coherent module $\pi_*(\cO_{X'}/\cI)$. This defines an epimorphism of sheaves
$$
\phi: \bigoplus_{\alpha\in \Gamma}\cO_{Y'}\cdot x^\alpha\to \pi_*(\cO_{X'}/\cI).
$$
The corresponding homomorphism of  stalks
 $$
\phi_z: \bigoplus_{\alpha\in \Gamma}\cO_{Y',\overline{y}}\cdot x^\alpha\to \pi_*(\cO_{X'}/\cI)_z \simeq (\cO_{X'}/\cI)_z= \cO_{X',z}/\cI_z
$$ 
is also injective as $\cO_{Y',\overline{y}}\to \cO^h_{Y',\overline{y}}$ is injective, and we get an injective homomorphism, in fact an isomorphism, of the Henselianizations as in  (\ref{p}). 
 
 Thus $\phi_z$ is an isomorphism of  stalks which defines  an isomorphism of  coherent free $\cO_Y$-modules in an open  Zariski neighborhood. 

 In the differential setting, by shrinking $X$ and $Y$  we can assume that $\phi$ is  a morphisms of sheaves of modules of finite type. 
 By Theorem \ref{Rem4}, it defines an epimorphism $\phi_z$ of stalks with local generators $x^\alpha$, $\alpha\in \Gamma$.
 Since $\pi_*(\cO_{X'}/\cI)$ is of finite type we can assume by further shrinking that they generate $\pi_*(\cO_{X'}/\cI)$, and $\phi$ is epimorphism. Since 
 $\phi_z$ is a quasi-isomorphism of stalks, and  its  kernel is contained in $m_z^\infty$.
 \end{proof}

\section{Marked ideals and  standard basis along Samuel stratum}
In the next chapters we give  some applications of the previous results to  desingularization and description of the Samuel stratum. The main goal of Chapter 6
is to study the  notion  (introduced here) of   standard basis along Samuel stratum (Definition \ref{Dweak2}). It allows one to describe and modify singularities controlled by the Hilbert-Samuel function.

\subsection{Resolution of marked ideals}
Recall the definition of Hilbert-Samuel function of an ideal sheaf $\cI$ at a closed point $x\in X$, where $X$ is a manifold or a smooth scheme is definded as
$$H_{x,\cI}(k)=\dim \cO_{X,x}/(m_{x,X}^{k+1}+\cI).$$
As before the order of $\cI$ at $x\in X$ is denoted by $\ord_x(\cI):=\max\{k\in\NN\mid \cI\subset m_{x,X}^k\}$

\begin{definition}(Hironaka (see \Hi,),  Bierstone-Milman (see \cite{BM1}),Villamayor (see \Vi))
A {\it marked ideal} (respectively an H-marked ideal)   is a collection $(X,{\cI},E,\mu)$, (respectively $(X, {\cI},E,H)$) where $X$ is a smooth scheme  of finite type over a field $K$ (or an analytic/differentiable manifold), ${\cI}$ is a  sheaf of ideals on $X$ of finite type,  $E$ is a totally ordered collection of  divisors with SNC,
whose  irreducible components are smooth pairwise disjoint and all have multiplicity one, and $\mu$ is a nonnegative integer (respectively $H$ is a function $H: \NN \to \NN$ with integral values). Moreover the irreducible components of divisors in $E$ have simultaneously
simple normal crossings.

A collection of marked ideals $\{(X,{\cI_i},E,\mu_i)\}$ will be called a {\it multiple marked ideal}.
Marked functions $(f,\mu)$ are pairs of regular functions on $X$ and $\mu\in \NN$.

\end{definition}
The functions $H$ can be identify with infinite sequence  of nonnegative integers ordered lexicographically.

\begin{definition}(Hironaka (\Hi, ), Bierstone-Milman (see \cite{BM1}),Villamayor (see \Vi))
By the {\it cosupport} (originally {\it singular locus}) of $(X,{\cI},E,\mu)$ we mean
$$\cosupp(X,{\cI},E,\mu):=\{x\in X\mid \ord_x(\cI)\geq \mu\}.$$
Similarly $$\cosupp\{(X,{\cI_i},E,\mu_i)\}:=\{x\in X\mid \ord_x(\cI_i)\geq \mu_i\}=\bigcap_i \cosupp(X,{\cI_i},E,\mu_i).$$

By the {\it cosupport}  of $(X,{\cI},E,H)$ we mean
$$\cosupp(X,{\cI},E,H):=\{x\in X\mid \cH_x(\cI)\geq H \}.$$

\end{definition}

\begin{remarks}
\begin{enumerate}
\item In most of the applications $\cosupp(\cI,H)$ coincides with so called Samuel stratum.
\item For any sheaf of ideals ${\cI}$ on $X$ we have
 $\cosupp({\cI},1)=\cosupp({\cI})$.
\item For any marked ideals $({\cI},\mu)$ on $X$,
 $\cosupp({\cI},\mu)$ is a closed subset of $X$ (Lemma \ref{le: Vi1}).
\end{enumerate}
\end{remarks}
Let $u_1,\ldots,u_n$ be a local coordinate system of  a smooth variety (or an analytic or differentiable manifold) $X$, and  $C\subset X$  be  a closed smooth subspace (submanifold) of $X$ defined by $u_1=\ldots=u_r=0$, with $r\leq n$. Denote by $\PP^{r-1}$ the projective space with homogenous coordinates $y_1,\ldots,y_r$
Recall that {\it the blow-up of $C$} is defined (locally on $X$) as the map $$X':=\{(x,y)\in X\times\PP^{r-1}\mid u_iy_j=u_jy_i\}\to X$$
Then there are open neighborhoods $U_i$ of $X'$ where $y_i\neq 0$ with coordinate system $u_j'=y_j/y_i=u_j/u_i$ for $j\neq i$, $j\leq r$ and $u_i'=u_i$ otherwise.

\begin{definition} The   blow-ups with the smooth centers $C\subset \cosupp(X,\cI,E,\mu)$ (respectively \\$C\subset \cosupp(X,\cI,E,H)$ or $C\subset \cosupp\{(X,{\cI_i},E,\mu_i)\}$ will be  called {\it admissible for} $(X,\cI,E,\mu)$, (respectively for $(X,\cI,E,H)$ or for $\{(X,{\cI_i},E,\mu_i)\})$ if the centers are contained in the cosupport of  marked ideals and have SNC with $E$. Likewise we called the centers {\it admissible } for the marked ideals. 
	
\end{definition}
\begin{lemma} \label{bb}\label{center} Let $C\subset \cosupp({\cI},\mu)$ be a smooth center of the blow-up $\sigma: X\leftarrow X'$ and let $D$ denote the exceptional divisor.  Let ${\cI}_C$ denote the sheaf of ideals defined by $C$. Then
\begin{enumerate}
\item $\cI \subset {\cI}_C^\mu$.
\item  $\sigma^*({\cI})\subset ({\cI}_D)^\mu$.
\end{enumerate}
\end{lemma}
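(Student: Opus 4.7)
The plan is to reduce both statements to a local coordinate computation and then deduce (2) from (1) using the defining property of a blow-up along a smooth center.

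For (1), the argument is local on $X$, so I fix a point $x\in C$ and choose a local coordinate system $u_1,\ldots,u_n$ adapted to $C$, so that $\cI_C$ is locally generated by $u_1,\ldots,u_r$ and $C=V(u_1,\ldots,u_r)$. The goal is to show that every germ $f\in\cI_x$ lies in $(u_1,\ldots,u_r)^\mu$. Applying the generalized Weierstrass--Hironaka division (Theorem \ref{main0}) iteratively with respect to the diagram generated by $e_1,\ldots,e_r\in\NN^r$ (equivalently, Taylor-expanding along $C$ and using the fact that $\cE_n$ splits as a module over $\cE_{n-r}$ via the monomials $u_1^{\alpha_1}\cdots u_r^{\alpha_r}$), I can write
\[
f=\sum_{\alpha\in\NN^r} c_\alpha(u_{r+1},\ldots,u_n)\,u_1^{\alpha_1}\cdots u_r^{\alpha_r},
\]
with $c_\alpha$ a function on $C$ (modulo $m^\infty$ in the differential category). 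The hypothesis $C\subset\cosupp(\cI,\mu)$ says that $\ord_y(f)\geq\mu$ for every $y\in C$ in a neighborhood of $x$; applied to each monomial of lowest total degree in $u_1,\ldots,u_r$ this forces $c_\alpha\equiv 0$ on $C$ whenever $|\alpha|<\mu$. Hence $f\in(u_1,\ldots,u_r)^\mu=(\cI_C)_x^\mu$, establishing (1).

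For (2), I invoke the standard local description of the blow-up $\sigma\colon X'\to X$ of a smooth center given at the beginning of this section: in the chart $U_i\subset X'$ with coordinates $u_j'=u_j/u_i$ for $j\neq i$, $j\le r$, and $u_j'=u_j$ otherwise, the exceptional divisor $D$ is cut out by $u_i'$, while $\sigma^*(u_j)=u_i'u_j'$ for $j\le r$. Consequently $\sigma^{-1}(\cI_C)\cdot\cO_{X'}=(\sigma^*(u_1),\ldots,\sigma^*(u_r))=(u_i')=\cI_D$ on each chart, so $\sigma^*(\cI_C)=\cI_D$ globally on $X'$, and therefore $\sigma^*(\cI_C^\mu)=\cI_D^\mu$. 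Pulling back the inclusion from (1) gives
\[
\sigma^*(\cI)\subset\sigma^*(\cI_C^\mu)=\cI_D^\mu,
\]
which is (2).

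The only potentially delicate point is the coherent coordinate-wise expansion used in (1), since in the $C^\infty$ setting the division is unique only modulo $m^\infty$. This does not affect the conclusion, however, because the inclusion $\cI\subset\cI_C^\mu$ is a statement about ideal sheaves: it suffices that the coefficients $c_\alpha$ with $|\alpha|<\mu$ vanish identically on $C$, which is forced pointwise by the Samuel-type order condition along the (closed) center $C$.
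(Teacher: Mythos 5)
Your proof is correct and follows essentially the same route as the paper's: part (1) is the same local computation (expand $f$ along $C$ in the $\cI_C$-adic sense, take a lowest-degree coefficient with $|\alpha|<\mu$ not vanishing on $C$, and contradict $\ord_y(\cI)\geq\mu$ at a point where it is nonzero), and part (2) is the same one-line pullback via $\sigma^*(\cI_C)=\cI_D$. The only cosmetic difference is that you invoke the generalized division theorem to produce the expansion where a finite Taylor expansion modulo $\cI_C^\mu$ suffices, and your remark about the $m^\infty$ ambiguity in the $C^\infty$ case is a reasonable extra precaution that does not change the argument.
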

\begin{proof} (1) We can assume that the ambient variety $X$ is affine. Let $u_1,\ldots,u_k$ be parameters generating ${\cI}_C$
Suppose $f \in \cI \setminus {\cI}_C^\mu$. Then we can write $f=\sum_{\alpha}c_\alpha u^\alpha$, where either $|\alpha|\geq \mu$ or  $|\alpha|< \mu$ and $c_\alpha\not\in {\cI}_C$.
By the assumption there is $\alpha$ with $|\alpha|< \mu$ such that $c_\alpha\not \in {\cI}_C$. Take  $\alpha$  with  the smallest $|\alpha|$. There is a point $x\in C$ for which $c_\alpha(x)\neq 0$ and in the Taylor expansion of $f$ at $x$ there is a term $c_\alpha(x) u^\alpha$. Thus $\ord_x(\cI)< \mu$. This contradicts to the assumption $C\subset \cosupp({\cI},\mu)$.

(2) $\sigma^*({\cI})\subset \sigma^*({\cI}_C)^\mu=({\cI}_D)^\mu$.
\end{proof}
\begin{definition} Let $\sigma: X'\to X$ be an admissible blow-up  for $(X,\cI,E,\mu)$ with the exceptional divisor $D$ then a marked ideal
$(X',\cI',E',\mu)=\sigma^c(X,\cI,E,\mu)$ is called  {\it the controlled transform} of $(X,\cI,E,\mu)$ if
\begin{enumerate}
\item ${\cI}'=\cI(D)^{-\mu}\sigma^*({\cI})$. 
\item $E'=\sigma^{\rm c}(E)\cup \{D\}$, where  $\sigma^{\rm c}(E)$ is the set of strict transforms of divisors in $E_{i-1}$.
\item The order  on $\sigma^{\rm c}(E)$ is defined by the order on $E$ while $D$ is the maximal element of $E$.	
\end{enumerate}

Similarly {\it the controlled transform} of $\{(X,{\cI_i},E,\mu_i)\}$ is given as the collection of  the controlled transforms of  $(X,\cI_i,E,\mu)$.

\end{definition}

\begin{definition} 
Let $\cI$ be any ideal sheaf of finite type 
 on a smooth variety $X$ over a field $K$ (or an analytic or differentiable manifold) , and let  $C\subset X$ be a smooth subvariety.
 Consider the blow-up
$\sigma: X'\to X$ at a smooth closed center $C\subset X$ contained in the Samuel stratum. By the \emph{strict transform} of $\cI$ we mean 
here the ideal generated locally by 
%
$(1/y^{c(f)})\sigma^*(f)$, 
where $y$ is a local equation of the exceptional divisor, and $c(f)$ is the maximal exponent for which $y^{c(f)}$ divides $\sigma^*(f)$.	

By the {\it the strict transform} of $(X,\cI,E,H)$ under a admissible  blow-up $\sigma: X'\to X$ we mean $H$-marked ideal $(X,\cI,E,H)$, where $\cI'$ is the strict transform  and $E'$ satisfies (2) and (3). 
\end{definition}

By 
   \cite[Proposition 3.13]{BM2} the  condition of the strict transform is equivalent in the analytic and algebraic setting to the following:

{\it The strict transform is generated by all the functions $f\in\cO(U')$  
for which  $y^kf$, for some $k$, is  in the ideal generated by
$\sigma^*({g})$, where $g\in \cO(U)$.}

\begin{definition}(Hironaka (see \Hi,),  Bierstone-Milman (see \cite{BM2}),Villamayor (see \Vi))
By a  {\it admissible sequence of blow-ups} of $(X, {\cI},E,\mu)$ (respectively $(X,{\cI},E,H)$) we mean
a sequence of blow-ups $\sigma_i:X_i\to X_{i-1}$ of  of smooth centers $C_{i-1}\subset
 X_{i-1}$,

$$
X_0=X\buildrel \sigma_1 \over\longleftarrow X_1
\buildrel \sigma_2 \over\longleftarrow X_2 \buildrel
\sigma_3 \over \longleftarrow\ldots
X_i\longleftarrow \ldots \buildrel \sigma_{r}  \over\longleftarrow X_r,$$

\noindent which defines a sequence of  marked ideals
$(X_i,{\cI}_i,E_i,\mu)$ (respectively $(X_i,{\cI}_i,E_i,H)$,  such that the centers $C_{i-1}$ are admissible for $(X_{i-1},{\cI}_{i-1},E_{i_1},\mu)$ (respectively for $(X_{i-1},{\cI}_{i-1},E_{i-1},H)$), and $(X_i,{\cI}_i,E_i,H)$ are controlled transforms of  $(X_{i-1},{\cI}_{i-1},E_{i_1},\mu)$ (respectively $(X_i,{\cI}_i,E_i,H)$ are the strict transforms of $(X_{i-1},{\cI}_{i-1},E_{i_1},H)$). 
If additionally 
$$\cosupp(X_r,{\cI}_r,E_r,\mu)=\emptyset$$ (resp. $\cosupp(X_r,{\cI}_r,E_r,H)=\emptyset$)
then we call the sequence a {\it resolution} of  $(X, {\cI},E,\mu)$.

\end{definition}
The definition of admissible sequence  and a resolution sequence applies also to multiple marked ideals .

\subsection{Ideals of derivatives}\label{Deri}
Ideals of derivatives were first introduced and studied in the resolution context by Giraud.

\begin{definition}(Giraud, Villamayor) Let ${\cI}$ be a  sheaf of ideals of finite type on a smooth variety $X$ (or an analytic/differentiable manifold). For any $i\in \NN$, by  the {\it i-th derivative} ${\cD}^i({\cI})$ of $\cI$ we mean the  sheaf of ideals generated by all functions
$f\in {\cI}$ with their (Hasse) derivatives of $D_{u^\alpha}=\frac{1}{\alpha!}\frac{\partial^{|\alpha|} f_j}{\partial u^\alpha}$ for all multi-indices $\alpha=(\alpha_1,\ldots,\alpha_n)$, where $|\alpha|:=\alpha_1+\ldots+\alpha_n\leq i$.

 If $({\cI},\mu)$ is a marked ideal and $i\leq \mu$ then we define
$${\cD}^i({\cI},\mu):=({\cD}^i({\cI}),\mu-i).$$
\end{definition}

\begin{lemma}(Giraud, Villamayor) \label{le: Vi1}
For any  $i\leq\mu-1$,  $$\cosupp({\cI},\mu)\subset\cosupp({\cD}^i({\cI}),\mu-i).$$
(with equality in characteristic zero).
In particular case $$\cosupp({\cI},\mu)=\cosupp({\cD}^{\mu-1}({\cI}),1)=V({\cD}^{\mu-1}({\cI}))$$ is a closed subspace of $X$.
 \qed
\end{lemma}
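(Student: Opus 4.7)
The plan is to work locally around an arbitrary point $x \in X$ using a regular coordinate system $u_1, \ldots, u_n$ centered at $x$, and to exploit the Hasse-derivative identity $D_{u^\beta}(u^\gamma) = \binom{\gamma}{\beta}\, u^{\gamma - \beta}$ (valid in any characteristic, with the convention that the binomial vanishes unless $\gamma \geq \beta$ componentwise). In particular $D_{u^\beta}(u^\gamma)(x) = \delta_{\beta\gamma}$ when $|\beta| = |\gamma|$, so evaluating a Hasse derivative at $x$ extracts a coefficient from the Taylor expansion of an element of $\cO_{X,x}$.

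For the forward inclusion $\cosupp(\cI, \mu) \subset \cosupp(\cD^i(\cI), \mu - i)$, I will assume $\ord_x(\cI) \geq \mu$. Every $f \in \cI_x$ then has Taylor expansion supported in degrees $\geq \mu$, and the identity above shows that $D_{u^\alpha}(f)$ has Taylor expansion supported in degrees $\geq \mu - |\alpha| \geq \mu - i$ whenever $|\alpha| \leq i$. Since such derivatives generate $\cD^i(\cI)$, we conclude $\cD^i(\cI)_x \subset m_{x,X}^{\mu - i}$, i.e. $x \in \cosupp(\cD^i(\cI), \mu - i)$. This inclusion holds in any characteristic.

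For the reverse inclusion under the Hasse-derivative convention (which gives the equality in any characteristic; over a field of characteristic zero, ordinary partials alone already suffice), I will argue the contrapositive. Suppose $\ord_x(\cI) = k < \mu$ and pick $f \in \cI_x$ with $\ord_x(f) = k$. Expanding $f = \sum_\gamma c_\gamma u^\gamma$, choose $\beta$ with $|\beta| = k$ and $c_\beta \neq 0$. If $k \leq i$, then $D_{u^\beta}(f)(x) = c_\beta \neq 0$ exhibits a generator of $\cD^i(\cI)_x$ outside $m_{x,X}$, so $\ord_x(\cD^i(\cI)) = 0 < \mu - i$. If $k > i$, pick $\alpha \leq \beta$ componentwise with $|\alpha| = i$; then $D_{u^\alpha}(f) = \binom{\beta}{\alpha}\, c_\beta\, u^{\beta - \alpha} + (\text{higher-order terms})$ has order exactly $k - i < \mu - i$. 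In either case $x \notin \cosupp(\cD^i(\cI), \mu - i)$, yielding the desired equality.

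The special case $i = \mu - 1$ is immediate from the above: $\cosupp(\cI, \mu) = \cosupp(\cD^{\mu-1}(\cI), 1) = V(\cD^{\mu-1}(\cI))$, the last expression being the vanishing locus of a coherent sheaf of ideals of finite type and hence a closed subspace of $X$. I expect no serious obstacle; the only delicate point is ensuring that the coefficient-extraction argument functions in positive characteristic, which is exactly why Hasse derivatives (rather than ordinary ones) are used in the definition of $\cD^i$, and is why the ``in particular'' clause can be stated without a characteristic hypothesis.
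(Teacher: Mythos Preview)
Your proof is correct for the lemma as stated and follows essentially the same approach as the paper's (very terse) two-line argument, just expanded with the explicit Taylor/Hasse-derivative bookkeeping.

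One caveat: your parenthetical claim that the equality $\cosupp(\cI,\mu)=\cosupp(\cD^i(\cI),\mu-i)$ holds in \emph{any} characteristic under the Hasse-derivative convention is false. In characteristic $p$, take $\cI=(u_1^p)$, $i=1$, $\mu=p+1$: then $D_{u_1}(u_1^p)=\binom{p}{1}u_1^{p-1}=0$, so $\cD^1(\cI)=(u_1^p)$ and $\cosupp(\cD^1(\cI),p)=V(u_1)\neq\emptyset=\cosupp(\cI,p+1)$. Your case $k>i$ relies on $\binom{\beta}{\alpha}\neq 0$, which can fail in positive characteristic---this is precisely why the paper restricts the general equality to characteristic zero. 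The special case $i=\mu-1$ (handled by your case $k\leq i$, where only $D_{u^\beta}(u^\beta)=1$ is needed) does hold in any characteristic, and that is exactly what the paper's proof establishes.
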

\begin{proof} If $\ord_x(\cI)\geq \mu$ then ${\cD}^i({\cI})\geq \mu-i$. If $\ord_x(\cI)<\mu$ then 
${\cD}^{\mu-1}({\cI})$ is invertible.
	
\end{proof}

\bigskip
We write $({\cI},\mu)\subset ({\cJ},\mu)$ if $\cI\subset {\cJ}$.
\begin{lemma}(Giraud,Villamayor) \label{le: inclusions} Let $({\cI},\mu)$ be a marked ideal
and $C\subset\cosupp ({\cI},\mu)$ be a smooth center and $r\leq \mu$.
Let $\sigma: X\leftarrow
X'$ be a blow-up at $C$. Then $$\sigma^{\rm c}({\cD}^r({\cI},\mu)) \subseteq {\cD}^r(\sigma^{\rm c}({\cI},\mu)).$$
\end{lemma}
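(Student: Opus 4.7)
The inclusion is local on $X'$, so I work in a chart with coordinates $(w_1,\ldots,w_n)$ in which the exceptional divisor is $D=V(y)$ for $y=w_1$, and $\sigma$ takes the form $u_1=y$, $u_j=yw_j$ for $2\leq j\leq k$, $u_j=w_j$ for $j>k$, with $C=V(u_1,\ldots,u_k)$. Since $C\subseteq\cosupp(\cI,\mu)$, Lemma \ref{bb} gives $\cI\subseteq(u_1,\ldots,u_k)^\mu$, hence $\sigma^*(\cI)\subseteq(y^\mu)=\cI(D)^\mu$, and $\cJ:=\sigma^*(\cI)\cdot\cI(D)^{-\mu}$ has local generators $\tilde f:=\sigma^*(f)/y^\mu$ for $f\in\cI$. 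The inclusion $\sigma^{\rm c}(\cD^r(\cI,\mu))\subseteq\cD^r(\sigma^{\rm c}(\cI,\mu))$ is equivalent to $\sigma^*(\cD^r(\cI))\subseteq\cI(D)^{\mu-r}\cdot\cD^r(\cJ)$, which it suffices to verify on generators $D_{u^\alpha}(f)$ with $f\in\cI$ and $|\alpha|=s\leq r$.

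Writing $f$ locally as $\sum_{|\beta|=\mu,\ \supp\beta\subseteq\{1,\ldots,k\}} a_\beta u^\beta$ with $a_\beta\in\cO_X$ and using the Leibniz rule for Hasse derivatives,
\[
D_{u^\alpha}(f)=\sum_\beta\sum_{\alpha'+\alpha''=\alpha,\ \alpha''\leq\beta} D_{u^{\alpha'}}(a_\beta)\binom{\beta}{\alpha''}u^{\beta-\alpha''}.
\]
Applying $\sigma^*$, the factor $\sigma^*(u^{\beta-\alpha''})=y^{|\beta|-|\alpha''|}\prod_{j=2}^k w_j^{\beta_j-\alpha''_j}=y^{\mu-|\alpha''|}\prod w_j^{\beta_j-\alpha''_j}$ carries at least $y^{\mu-r}$, since $|\alpha''|\leq|\alpha|=s\leq r$. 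This immediately yields the divisibility by $\cI(D)^{\mu-r}$ and reduces the problem to showing that the residual sum lies in $\cD^r(\cJ)$.

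To handle the residual, I induct on $r$. The case $r=0$ is the tautology $\sigma^*(\cI)\subseteq\cI(D)^\mu\cdot\cJ$. For $r=1$, the chain rule in our chart gives $\sigma^*(D_{u_1}(f))=D_y(\sigma^*f)-\sum_{j=2}^k(w_j/y)D_{w_j}(\sigma^*f)$, $\sigma^*(D_{u_i}(f))=y^{-1}D_{w_i}(\sigma^*f)$ for $2\leq i\leq k$, and $\sigma^*(D_{u_j}(f))=D_{w_j}(\sigma^*f)$ for $j>k$; combined with the Hasse Leibniz expansion of $D_{w_j}(y^\mu\tilde f)$, each of these lies in $y^{\mu-1}\cdot\cD^1(\cJ)$. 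The inductive step applies this first-order computation to the ideal $\cD^{r-1}(\cI)$, using the hypothesis $\sigma^*(\cD^{r-1}(\cI))\subseteq y^{\mu-r+1}\cdot\cD^{r-1}(\cJ)$ to conclude $\sigma^*(D_{u_i}(g))\in y^{\mu-r}\cdot\cD^r(\cJ)$ for any $g\in\cD^{r-1}(\cI)$.

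The main obstacle will be that in positive characteristic the equality $\cD^r(\cI)=\cD^1(\cD^{r-1}(\cI))$ can fail, for example $D_{u_i}D_{u_i^{p-1}}(f)=p\,D_{u_i^p}(f)=0$, so iterating first-order derivatives does not capture every generator of $\cD^r(\cI)$. To circumvent this, I would verify the inclusion directly on each generator $D_{u^\alpha}(f)$ by matching, after factoring out $y^{\mu-r}$ from the Leibniz expansion above, the residual sum with a $\cO_{X'}$-linear combination of Hasse derivatives $D_{w^\gamma}(\tilde f)$ of order $|\gamma|\leq r$; the binomial factors $\binom{\beta}{\alpha''}$ are absorbed via the Hasse Leibniz expansion
\[
D_{w^\gamma}(\tilde f)=\sum_\beta\sum_{\gamma'+\gamma''=\gamma} D_{w^{\gamma'}}(\sigma^*a_\beta)\prod_{j=2}^k\binom{\beta_j}{\gamma''_j}w_j^{\beta_j-\gamma''_j},
\]
for a $\gamma$ chosen in terms of $\alpha$ and $\beta$, which reduces the general-characteristic case to the same type of bookkeeping used in the inductive step above.
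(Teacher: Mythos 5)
Your characteristic-zero argument is correct and is essentially the ``simple computation using the chain rule'' that the paper itself delegates to the cited references (the paper gives no proof of this lemma beyond that citation). The setup is right: Lemma \ref{bb} gives $\cI\subseteq\cI_C^\mu$, the statement reduces in a chart to $\sigma^*(\cD^r(\cI))\subseteq y^{\mu-r}\cD^r(\cJ)$ with $\cJ=y^{-\mu}\sigma^*(\cI)$, the first-order chain-rule identities you list are exactly right (e.g.\ $\sigma^*(D_{u_1}f)=y^{\mu-1}(\mu\tilde f+yD_y\tilde f-\sum_j w_jD_{w_j}\tilde f)$, which visibly lies in $y^{\mu-1}\cD^1(\cJ)$), and in characteristic zero $\cD^r=(\cD^1)^r$ makes the induction on $r$ close. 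Since the lemma is only invoked in the characteristic-zero desingularization chapters, this is all the paper needs.

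The gap is in your treatment of positive characteristic, and it is a real one: the ``matching'' step at the end does not work as described. After pulling back the Leibniz expansion $D_{u^\alpha}(f)=\sum_\beta\sum_{\alpha'+\alpha''=\alpha}D_{u^{\alpha'}}(a_\beta)\binom{\beta}{\alpha''}u^{\beta-\alpha''}$, the residual involves the functions $\sigma^*(D_{u^{\alpha'}}(a_\beta))$, whereas $D_{w^\gamma}(\tilde f)$ involves $D_{w^{\gamma'}}(\sigma^*(a_\beta))$. These two families are related only through the very chain rule you are trying to establish, and the coefficients $a_\beta$ are not in $\cI$, so there is no choice of a single $\gamma=\gamma(\alpha,\beta)$ that reproduces your residual; the proposed bookkeeping is circular. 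The device that actually closes the argument in all characteristics is the generating-function form of the Hasse derivative: writing $\sigma(y+s,w+t')=\sigma(y,w)+(s,\,yt'_j+sw_j+st'_j,\,\ldots)$ and formally solving $t'_j=(t_j-t_1w_j)/(y+t_1)$, one expands $f(\sigma(y,w)+t)$ in the derivatives $D_{(y,w)^\gamma}(\sigma^*f)$ and reads off the coefficient of $t^\alpha$; the geometric series $(y+t_1)^{-m}$ contributes only integer coefficients and a power $y^{-m-l}$ with $m+l\leq|\alpha|$, and combining this with $|\gamma|\geq\gamma_1+m$ gives $\sigma^*(D_{u^\alpha}f)\in y^{\mu-|\alpha|}\cD^{|\alpha|}(\cJ)$ directly, with no inversion of coefficient matrices (which would fail, since the leading binomials $\binom{p+c}{c}$ can vanish mod $p$). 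If you intend the lemma only in characteristic zero, say so and stop after the induction; if you want it in general, you need this Taylor-expansion argument rather than the sketched matching.
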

\noindent{\bf Proof.}   See simple computations using chain rule in \cite{Vi2}, \cite{Wlodarczyk}.
\begin{lemma}\label{diff} Let $\phi$ be any \'etale morphism. Then $\phi^*({\cD}^a({\cI}))={\cD}^a(\phi^*({\cI}))$ for any $a\in \NN$.
 \end{lemma}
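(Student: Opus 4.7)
The plan is to reduce the statement to a local computation and then exploit the compatibility of Hasse derivatives with \'etale pullback. Since both $\phi^*(\cD^a(\cI))$ and $\cD^a(\phi^*(\cI))$ are sheaves of ideals on $Y$, it suffices to verify the equality on stalks at each $y\in Y$. Fix such $y$, put $x:=\phi(y)$, and choose a local coordinate system $u_1,\ldots,u_n$ on $X$ at $x$. Because $\phi$ is \'etale, the induced map $\widehat{\cO}_{X,x}\to\widehat{\cO}_{Y,y}$ is an isomorphism of formal power series rings, and consequently $v_i:=\phi^*(u_i)$ is a regular system of parameters on $Y$ at $y$.

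The heart of the argument is the compatibility
$$\phi^*\bigl(D_{u^\alpha}(f)\bigr)\;=\;D_{v^\alpha}\bigl(\phi^*(f)\bigr)$$
for every $f\in\cO_{X,x}$ and every multi-index $\alpha$. I would derive this either from the intrinsic description via the sheaf of principal parts $\cP^a_X=\cO_{X\times X}/\cI_{\Delta_X}^{a+1}$, which is preserved under \'etale base change in the sense that $\phi^*\cP^a_X\simeq\cP^a_Y$, or, equivalently, by noting that the Hasse derivatives in a given coordinate system are uniquely characterized by the rule $D_{u^\alpha}(u^\beta)=\binom{\beta}{\alpha}u^{\beta-\alpha}$ together with $\cO_X$-linearity for scalars, and then transporting this characterization through the isomorphism of completions (where $\phi^*(f)=\sum c_\alpha v^\alpha$ whenever $f=\sum c_\alpha u^\alpha$).

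Granted the compatibility, both inclusions are formal. For $\phi^*(\cD^a(\cI))\subseteq \cD^a(\phi^*(\cI))$ one observes that $\cD^a(\cI)$ is generated locally by the sections $D_{u^\alpha}(f)$ with $f\in\cI$ and $|\alpha|\le a$; each of these pulls back to $D_{v^\alpha}(\phi^*(f))$, which lies in $\cD^a(\phi^*(\cI))$ because $\phi^*(f)\in\phi^*(\cI)$. For the reverse inclusion, any local generator of $\cD^a(\phi^*(\cI))$ has the form $D_{v^\beta}(g)$ with $g=\sum_i h_i\phi^*(f_i)$ for some $h_i\in\cO_Y$ and local generators $f_i$ of $\cI$, $|\beta|\le a$. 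Applying the Leibniz rule for Hasse derivatives gives
$$D_{v^\beta}(g)=\sum_i\sum_{\gamma\le\beta}D_{v^\gamma}(h_i)\cdot D_{v^{\beta-\gamma}}\bigl(\phi^*(f_i)\bigr)=\sum_i\sum_{\gamma\le\beta}D_{v^\gamma}(h_i)\cdot\phi^*\bigl(D_{u^{\beta-\gamma}}(f_i)\bigr),$$
which manifestly belongs to $\phi^*(\cD^a(\cI))\cdot\cO_Y=\phi^*(\cD^a(\cI))$ since $|\beta-\gamma|\le a$.

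The main (and essentially only) technical point is the compatibility of $D_{u^\alpha}$ with \'etale pullback. In characteristic zero this is an immediate consequence of the chain rule applied to $v_i=\phi^*(u_i)$, but in positive characteristic it is cleaner to argue via the principal parts sheaf so as to avoid introducing Hasse derivatives on $Y$ relative to an a priori unrelated coordinate system; once the correct choice $v_i=\phi^*(u_i)$ is made, everything reduces to the identity $D_{v^\alpha}(v^\beta)=\binom{\beta}{\alpha}v^{\beta-\alpha}$, and no further obstruction appears.
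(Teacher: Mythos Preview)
Your argument is correct and is the standard way to prove this fact: pull back a coordinate system through the \'etale morphism, use that Hasse derivatives are intrinsically characterized (via principal parts or via completions) so that $\phi^*\circ D_{u^\alpha}=D_{v^\alpha}\circ\phi^*$, and then obtain both inclusions, the nontrivial one via the Leibniz rule. The paper itself states this lemma without proof, so there is nothing to compare against; your write-up would serve as a complete justification.
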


\subsection{Standard basis along Samuel stratum}

The implicit function theorem proven in the previous sections   allows  relaxing the condition of the Hironaka standard basis. This  idea was first used in the papers of Bierstone-Milman \cite{BM2}, \cite{BM22} and applied to  functions in  formal coordinate charts. Our construction is expressed in a different language which is closed to Hironaka's Henselian Theorem. 

Recall that the \emph{Samuel stratum} $S$ through a closed point $x\in X$ on a scheme (or an analytic or differentiable manifold) $X$ is a locally closed subset $S\subset X$ 
%
consisting of  all the closed points  $y\in X$ with the same Hilbert-Samuel function 
$H_{x,X}=H_{y,Y}$,

If $\cI$ is an ideal sheaf of finite type on a smooth  scheme (or an analytic/differentiable  manifold) $X$ then  Samuel stratum of $\cI$  on $X$ is a locally closed subset $S$ of $X$, such that $H_{x,\cI}=H_{y,\cI}$  for any two closed points $x, y \in S$.

By using the singular implicit function theorem we are going to construct a standard  basis of any ideal sheaf of finite type on $X$ along Samuel stratum.

Consider a monotone diagram $\Delta$ with vertices $\alpha_1,\ldots,\alpha_k$. 

\begin{lemma} \label{sub} 
If $\Delta$ is monotone in $\NN^n$ and $\NN^s\subset \NN^n$ is the smallest ``sublattice'' containing all the vertices $\alpha_1,\ldots,\alpha_k$ then the vertices span $\NN^s$. Moreover, if there is a vertex $\alpha_i$ with $s$-coordinate nonzero, then for any $s'\leq s$ there is a vertex $\alpha_j$ with $s'$-coordinate not zero and $|\alpha_j|\leq |\alpha_i|$.
\end{lemma}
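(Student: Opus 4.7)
The whole argument hinges on monotonicity: for $i<j$, the operator $R_{ij}$ sends any element of $\Delta$ to another element of $\Delta$ while preserving the total degree. My plan is to first reduce the ``span'' assertion to the ``moreover'' clause, and then prove that clause directly using $R_{s',s}$ together with the minimality of vertices.

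For the reduction, I would observe that the minimality of $\NN^s$ as a coordinate sublattice containing all the vertices forces the existence of a vertex $\alpha_i$ with $(\alpha_i)_s\neq 0$; otherwise every vertex would lie in a smaller coordinate sublattice (one obtained from $\NN^s$ by dropping the $s$-th axis), contradicting minimality. Granting this, interpreting ``vertices span $\NN^s$'' as the statement that each coordinate $s'\in\{1,\dots,s\}$ occurs nontrivially in some vertex, the moreover clause applied to this $\alpha_i$ delivers the span assertion at once.

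To establish the moreover clause, fix a vertex $\alpha_i$ with $(\alpha_i)_s>0$ and an index $s'\leq s$. If $s'=s$ take $\alpha_j:=\alpha_i$; otherwise set $\beta := R_{s',s}(\alpha_i)$, which by monotonicity lies in $\Delta$ and satisfies
$\beta_{s'}=\alpha_{i,s'}+\alpha_{i,s}>0$, $\beta_s=0$, $\beta_q=\alpha_{i,q}$ for $q\neq s,s'$, and $|\beta|=|\alpha_i|$. Since $\beta\in\Delta$ there is a vertex $\alpha_j$ with $\alpha_j\leq\beta$ componentwise, so $|\alpha_j|\leq|\beta|=|\alpha_i|$.

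The one delicate point --- and the main obstacle --- is ruling out $(\alpha_j)_{s'}=0$. Suppose to the contrary that this holds. Then componentwise $\alpha_j\leq\alpha_i$: in slot $s'$ we have $0\leq\alpha_{i,s'}$; in slot $s$ we have $(\alpha_j)_s\leq\beta_s=0\leq\alpha_{i,s}$; and in every other slot $(\alpha_j)_q\leq\beta_q=\alpha_{i,q}$. Vertices of a diagram are pairwise incomparable (any two of them would otherwise fail to both be minimal generators of $\Delta$), so $\alpha_j=\alpha_i$. This contradicts $(\alpha_j)_s=0$ versus $(\alpha_i)_s>0$, so $(\alpha_j)_{s'}>0$, completing the proof.
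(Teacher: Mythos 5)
Your proof is correct and follows essentially the same route as the paper: both obtain from $\alpha_i$ an element of $\Delta$ of the same degree with positive $s'$-coordinate via monotonicity (you apply a single $R_{s',s}$, the paper collapses all of the coordinates $s',\dots,s$ into slot $s'$ at once), pick a vertex below it, and derive a contradiction with the minimality of vertices if its $s'$-coordinate vanished. If anything, your explicit exclusion of the degenerate case $\alpha_j=\alpha_i$ via the $s$-th coordinate is slightly more careful than the paper's version of that final step.
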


\begin{proof}
  Let $\alpha_i=(\alpha_{i1},\ldots,\alpha_{is})$ be a vertex with $\alpha_{is}\neq 0$. Such a vertex exists by the minimality of $\NN^s$. Then we need to show that for any $s'<s$ there exists a vertex with nonzero $s'$-coordinate.
  It follows by monotonicity that $\beta_{i}:=(\alpha_{i1},\ldots,\alpha_{i,s'-1}, \alpha_{is'}+\ldots+\alpha_{is},0,\ldots,0)\in \Delta.$ 
  Then $\beta_i=\alpha_j+\gamma$ for some vertex $\alpha_j$ with $|\alpha_j|<|\alpha_i|$ and $\gamma\in\NN^n$. If $\alpha_{js'}=0$ it follows that $\alpha_j$ has all coordinates not greater than the  vertex $\alpha_i$. This implies that $\alpha_i=\alpha_j+\gamma'$, where $\gamma'\in \NN^n$, which contradicts the definition of vertex. 
\end{proof}

 Let $u_1,
\ldots, u_n$ be a coordinate system on a smooth scheme (or a manifold) $X$. 
Consider now a monotone diagram $\Delta\subset \NN^s$ , for $s\leq n$ with vertices $\alpha_1,\ldots,\alpha_k$ corresponding  to the functions $f_1,\ldots,f_k$ at a point $x\in X$, and suppose each $f_i$ has the form  $f_i=u^{\alpha_i}+r_i$ (in other words, the monomial $u^{\alpha_i}$ occurs in the expansion of $f_i$ at $x$).

Assume now that $\alpha_1,\ldots,\alpha_k$ span $\NN^s$. For any coordinate $u_i$ consider a vertex $\alpha_{j(i)}$ 
%
with $i$-th coordinate $\alpha_{j(i)i}$ nonzero, and suppose $\alpha_{j(i)i}=p^{k_i}b_i$, where $p$ is the characteristic of $K$, and $p$ does not divide $b_i$. Set 
$$
a_i:=\left\{
	\begin{array}{ll}
		p^{k_i}  & \mbox{if } {\rm char}(K)=p, \\
		1 & \mbox{if } {\rm char}(K)=0,
	\end{array}
\right.  \quad \quad \beta_{i}:=\alpha_{j(i)}-a_ie_i.
$$  
Then $D_{u^{\beta_{i}}}f_{j(i)}$ has the form  $D_{u^{\beta_{i}}}f_{j(i)}=u^{a_i}_i+ \text{other  terms}$.

For a given coordinate system $u_1,
\ldots, u_n$ on a smooth scheme over a field $K$ or on a manifold and a  sequence of natural numbers $\overline{a}=(a_1,\ldots, a_s)$ one can introduce the resultant Jacobian differential operator
which plays the role of the ``main Jacobian'' in the Cohen-Macaulay case: 
%
$$
JR^{\overline{a}}(f_1,\ldots,f_s)(x)
:=\Res\Big(\sum_{\alpha\in \NN^s, |\alpha|=a_1} D_{u^\alpha}(f_i)(x)X^\alpha,\ldots,\sum_{\alpha\in \NN^s, |\alpha|=a_s} D_{u^\alpha}(f_i)(x)\cdot X^\alpha\Big).
$$
(Here $X:=(X_1,\ldots,X_s)$ are formal unknowns, and $\Res$ denotes the resultant as in the previous section. We shall often skip the index ${\overline{a}}$ in $JR^{\overline{a}}=JR$)
	
\begin{example} 
If $\overline{a}=(1,\ldots,1)$ then $JR^{\overline{a}}(f_1,\ldots,f_s)$ is the usual Jacobian determinant.	
\end{example}


\begin{definition} \label{Dweak2} 
Let $x\in X$ be a point on  a smooth scheme of dimension $n$ over a field $K$ (respectively a complex or real analytic or  differentiable manifold) with a coordinate system 
$u_1,
\ldots,u_k $.
Let $\cI$ be an ideal of finite type on $X$.
 Let $\Delta\subset \NN^s\subset\NN^n$ be a monotone diagram  with vertices $\alpha_1,\ldots,\alpha_k$ ordered reverse-lexicographically, which span $\NN^s$. 

A set of functions $f_1,\ldots, f_k\in \cI(U)$ on an \'etale neighborhood $U$ of $x$ will be called a  \emph{ standard basis of $\cI$ at $x$ with respect to $\Delta$} if:
\begin{enumerate}
 \item $H_x(\cI)=H(\Delta\times \NN^{n-s})$. 
  \item $\ord_x(f_i)=|\alpha_i|$. 
 \item $\supd(f_i)\subset \{\alpha_i\}\cup (\Gamma\times \NN^{n-s})$ and $D_{u^\alpha}(f)\equiv 1$ in a neighborhood of $x$.
\item   $J^s(f_1,\ldots,f_k: u^{\alpha_1},\ldots,u^{\alpha_k})(x)\neq 0$ for all  $s\leq d(\Delta)+1$.
\item $JR^{\overline{a}}(D_{u^{\beta_1}}f_{j_1},\ldots,D_{u^{\beta_k}}f_{j_k})(x)\neq 0$.
\end{enumerate}
We shall call $f_1,\ldots, f_k\in \cI(U)$ a \emph{ standard basis of $\cI$ on  $U$} along Samuel stratum $S$ if it is a standard basis at any (closed) point $y\in S$ with Hilbert-Samuel function.
$H_y(\cI)=H(\Delta\times \NN^{n-s})$.

We shall call a coordinate system for which the above conditions hold \emph{compatible} with the standard basis of $\cI$.

In the algebraic situation we call   functions $f_1',\ldots,f_k'$ on an open Zariski neighborhood $V\subset X$ of $x\in X$ a \emph{ standard pre-basis of $\cI$ at $x$} if there is an \'etale neighborhood $U\to V$ of $x$ and invertible functions $c_1,\ldots,c_k$ on $U$ such that 
$$
f_1:=c_1\cdot \sigma^*(f'_1),\ldots,f_k:=c_k\cdot\sigma^*(f'_k)
$$ 
is a  standard basis of $\cI$ at $x$.
\end{definition}

\begin{remark} 
The notion of  standard pre-basis satisfies all the conditions 
of a standard basis 
except for $(3)$ which is replaced with a weaker form. 

It can be easily shown that the standard basis in the above sense determines at any $x\in S$ a (formal analytic) standard basis relative to a diagram of $\widehat{\cI}_x\subset\widehat{\cO_{X,x}}=K[[u_1,\ldots,u_n]]$ in the sense of Bierstone-Milman \cite{BM2},\cite{BM22}. On the other hand the construction is conceived in the language which is related to Hironaka's Henselian approach (\cite{Hir2}). \end{remark}

\begin{example} \label{Weak3}
 If $C$ is  a smooth center on $X$ of codimension $s$, then we consider the diagram $\Delta\subset \NN^s$ generated by the standard basis $e_1,\ldots,e_s$. Then $C$ coincides with the Samuel stratum for $\cI_C$. The    standard basis of $\cI_C$ along $S=C$ with respect $\Delta$ is a set of generators
of the form $f_1,\ldots,f_s\in \cI_C$, where $$
f_i=u_i+h_i(u_{s+1},\ldots,u_n)$$ by condition (3) . In this case  conditions (4) and (5) and are (essentially) equivalent to $\det[\frac{\partial{f_i}}{u_j}(x)]_{i,j=1,\ldots,s}\neq 0.$ (See also Example \ref{Jacobian})
\end{example}

Condition (1) of Definition \ref{Dweak2} together with other conditions implies the following
\begin{lemma}  
There is a natural isomorphism of vector spaces over a field $K$   %
$$
\overline{r}: K[x_1,\ldots,x_n]^{\Gamma\times\NN^{n-s}}=\{f\in K[x_1,\ldots,x_n]\mid \supp(f)\subseteq \Gamma\times \NN^{n-s}\}\to  K[x_1,\ldots,x_n]/\inn_x{\cI_x},
$$
induced  by inclusion 
$K[x_1,\ldots,x_n]^{\Gamma\times\NN^{n-s}}\subset K[x_1,\ldots,x_n]$.

\end{lemma}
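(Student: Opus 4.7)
The plan is to realize $\overline{r}$ as the composition of the inclusion $K[x_1,\ldots,x_n]^{\Gamma\times\NN^{n-s}} \hookrightarrow R_n := K[x_1,\ldots,x_n]$ with the projection $R_n \twoheadrightarrow R_n/\inn_x(\cI_x)$, and to prove it is bijective by combining a surjection coming from the generalized division theorem applied to initial forms, together with a Hilbert function dimension count.

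First I would pass to the homogeneous level. The initial forms $F_i := \inn_x(f_i)$ are homogeneous of degree $|\alpha_i|$ by condition (2) of Definition \ref{Dweak2}, and by condition (3) each $F_i$ equals $u^{\alpha_i}$ plus a $K$-linear combination of monomials indexed by $(\Gamma\times\NN^{n-s})\cap\{|\alpha|=|\alpha_i|\}$. Since the generalized Jacobians at $x$ depend only on the $|\alpha_i|$-th order derivatives of $f_i$ at $x$, and these are determined by $F_i$, condition (4) gives invertibility of $J^s(F_1,\ldots,F_k:u^{\alpha_1},\ldots,u^{\alpha_k})$ for all $s\leq d(\Delta)+1$. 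Thus the homogeneous forms $F_1,\ldots,F_k\in R_n$ satisfy the hypotheses of Corollary \ref{main012}, with the roles of $(n,m)$ there played by $(s,n-s)$ here.

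Next I would apply Corollary \ref{main012} to conclude that
\[
\{\inn(F_\beta)\mid \beta\in\Delta\}\cup\{u^\alpha\mid \alpha\in\Gamma\times\NN^{n-s}\}
\]
generates $R_n$ over the subrings $(R_i)_{i=0}^{s}$, where $F_\beta=u^{\beta-\alpha_{i(\beta)}}F_{i(\beta)}$. In particular, writing $J$ for the ideal $(F_1,\ldots,F_k)\subseteq R_n$, every $g\in R_n$ has a representative modulo $J$ that is supported on $\Gamma\times\NN^{n-s}$. Since each $F_i=\inn_x(f_i)$ with $f_i\in\cI_x$, we have $J\subseteq \inn_x(\cI_x)$; hence the map $R_n^{\Gamma\times\NN^{n-s}}\to R_n/\inn_x(\cI_x)$ is surjective.

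Finally I would establish injectivity by a Hilbert function comparison. Because $\inn_x(\cI_x)$ is a homogeneous ideal, both source and target of $\overline{r}$ are graded by total degree and $\overline{r}$ is degree-preserving. The cumulative Hilbert function of $R_n^{\Gamma\times\NN^{n-s}}$ up to degree $k$ counts monomials in $\NN^n\setminus(\Delta\times\NN^{n-s})$ of total degree at most $k$, i.e.\ equals $H(\Delta\times\NN^{n-s})(k)$. On the other hand, via the standard isomorphism $\gr(\cO_{X,x}/\cI_x)\cong R_n/\inn_x(\cI_x)$ determined by the regular parameters $u_1,\ldots,u_n$, we have
\[
H_x(\cI)(k)=\dim_K\cO_{X,x}/(m_x^{k+1}+\cI_x)=\sum_{d\leq k}\dim_K(R_n/\inn_x(\cI_x))_d.
\]
By condition (1) of Definition \ref{Dweak2}, the two cumulative sums coincide for every $k$, so the graded dimensions agree in every single degree. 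A degree-preserving surjection between graded $K$-vector spaces of equal finite dimension in each degree is an isomorphism, and therefore $\overline{r}$ is an isomorphism.

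The main obstacle is verifying that the invertibility of the Jacobians at $x$ in Definition \ref{Dweak2} really supplies the full hypothesis of Corollary \ref{main012} for the homogeneous polynomials $F_i$, and that the resulting generation statement in $R_n$ descends to a surjection onto $R_n/\inn_x(\cI_x)$ despite the (a priori strict) containment $J\subseteq\inn_x(\cI_x)$; the Hilbert function matching forces the two to coincide \emph{a posteriori} and thus closes the argument.
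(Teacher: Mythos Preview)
Your proposal is correct and follows essentially the same approach as the paper: both invoke Corollary \ref{main012} to see that $\{\inn_x(f_\beta)\mid\beta\in\Delta\times\NN^{n-s}\}\cup\{u^\alpha\mid\alpha\in\Gamma\times\NN^{n-s}\}$ generates $R_n$, deduce surjectivity of $\overline r$ from $\inn_x(f_\beta)\in\inn_x(\cI_x)$, and then use the hypothesis $H_x(\cI)=H(\Delta\times\NN^{n-s})$ for the degree-by-degree dimension match that forces injectivity. Your write-up is simply more explicit than the paper about why condition~(4) of Definition~\ref{Dweak2} passes to the initial forms $F_i$ and about the a priori possibly strict inclusion $(F_1,\ldots,F_k)\subseteq\inn_x(\cI_x)$, but the argument is the same.
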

\begin{proof} By Lemma \ref{main012}, there is a basis of $K[x_1,\ldots,x_n]$ of the form
$$ \{\inn_x(f_\beta)\mid  \beta\in \Delta\times \NN^{n-s}\}
 \cup 
\{u^\alpha\mid  \alpha \in  \Gamma\times \NN^{n-s}\}
$$ Since $\{\inn_x(f_\beta)\mid  \beta\in \Delta\times \NN^{n-s}\}$ is contained in $\inn_x(\cI)$ we get that $\overline{r}$ is an epimorphism of vector spaces preserving the degrees of polynomials. But since $H_x(\cI)=H(\Delta\times \NN^{n-s})$ we see that $\overline{r}$ defines an isomorphism in each gradation and thus it is an isomorphism.

\end{proof}

\begin{corollary}\label{main013} The subset $$\cI_x^{\Gamma\times \NN^{n-s}}:=\{f\in \cI_x\mid \supd(f)\subseteq \Gamma\times \NN^{n-s}\}\subset \cO_{X,x}$$ is zero  in algebraic and analytic setting and is contained in $m_{x,X}^\infty$ in the differential setting.

\end{corollary}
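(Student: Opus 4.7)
The plan is to derive the statement as a direct consequence of the isomorphism $\overline{r}$ established in the lemma immediately preceding the corollary, using the fact from Lemma \ref{support} that $\supp(f)\subseteq \supd(f)$.

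First I would reduce to showing that any $f\in\cI_x$ with $\supd(f)\subseteq\Gamma\times\NN^{n-s}$ lies in $m_{x,X}^\infty$. In the algebraic and analytic settings the local rings are reduced in the sense that $m_{x,X}^\infty=0$ (this is part of the axioms of a smooth category for those examples), so the stronger vanishing follows immediately from the $m^\infty$ statement. Thus the whole corollary reduces to the differential case.

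Next, suppose for contradiction that $f\in\cI_x$ satisfies $\supd(f)\subseteq \Gamma\times\NN^{n-s}$ but $f\not\in m_{x,X}^\infty$. Then $f$ has a well-defined finite order $d=\ord_x(f)$ and a nonzero initial form $\inn_x(f)\in K[x_1,\ldots,x_n]_d$. Since $\supp(f)\subseteq\supd(f)\subseteq\Gamma\times\NN^{n-s}$ by Lemma \ref{support}, and $\supp(\inn_x(f))\subseteq\supp(f)$, we conclude that
\[
\inn_x(f)\;\in\;K[x_1,\ldots,x_n]^{\Gamma\times\NN^{n-s}}.
\]
On the other hand, $f\in\cI_x$ forces $\inn_x(f)\in\inn_x(\cI_x)$. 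Therefore $\inn_x(f)$ is in the kernel of the composition
\[
K[x_1,\ldots,x_n]^{\Gamma\times\NN^{n-s}}\hookrightarrow K[x_1,\ldots,x_n]\twoheadrightarrow K[x_1,\ldots,x_n]/\inn_x(\cI_x),
\]
which is precisely the map $\overline{r}$ of the preceding lemma. By that lemma $\overline{r}$ is an isomorphism, so $\inn_x(f)=0$, contradicting the choice of $d=\ord_x(f)$. Hence $f\in m_{x,X}^\infty$, and in the algebraic and analytic cases $f=0$.

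There is essentially no technical obstacle: the only point to be careful about is the passage from $\supd(f)$ to $\supp(\inn_x(f))$, which is routine given Lemma \ref{support}, and the fact that the preceding lemma genuinely provides an isomorphism (not merely a surjection), which is exactly what is needed for the kernel argument.
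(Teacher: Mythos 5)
Your proof is correct and follows essentially the same route as the paper: pass to the initial form, observe it lies both in $\inn_x(\cI_x)$ and in $K[x_1,\ldots,x_n]^{\Gamma\times\NN^{n-s}}$, and invoke the injectivity of the isomorphism $\overline{r}$ from the preceding lemma to force $\inn_x(f)=0$, hence $f\in m_{x,X}^\infty$ (which vanishes in the reduced algebraic and analytic categories). The only cosmetic difference is that you track $\supp(\inn_x(f))$ via Lemma \ref{support} while the paper works with $\supd$ of the initial form; both are adequate.
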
 \begin{proof}
 Assume that  $f\in \cI_x^{\Gamma\times \NN^{n-s}}$ then $\inn_x(f)\in \inn_x(I_x)$ and $\supd(\inn_x(f)	)\subset {\Gamma\times \NN^{n-s}}$. By the previous Lemma, $\inn_x(f)=0$. Then the germ  $f_x=0$ in algebraic and analytic setting and it 
 is flat in the differential setting.
 \end{proof}
 
 \begin{corollary}\label{main014} Given a coordinate system and a monotone diagram $\Delta$. The standard basis (with respect to $\Delta$ and the coordinate system) of ideal sheaf is unique if exists in algebraic and analytic setting, and it is unique up to flat functions $m_{x,X}^\infty$ at each point $x$ in the Samuel stratum.
 	
 \end{corollary}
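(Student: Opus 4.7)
The plan is to reduce the uniqueness statement directly to Corollary \ref{main013} by examining the differential support of the difference of two candidate standard bases. Fix a point $x$ in the Samuel stratum, and suppose $f_1,\ldots,f_k$ and $g_1,\ldots,g_k$ are two standard bases of $\cI$ at $x$ with respect to the same monotone diagram $\Delta$ and the same coordinate system $u_1,\ldots,u_n$, both bases being labelled by the same ordering of the vertices $\alpha_1,\ldots,\alpha_k$ of $\Delta$. The germs $f_i - g_i$ lie in $\cI_x$ (after passing to a common \'etale/open neighborhood on which both are defined), so it suffices to prove that each difference has differential support contained in $\Gamma \times \NN^{n-s}$ and then apply Corollary \ref{main013}.

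The first step is to bound $\supd(f_i - g_i)$. By condition (3) of Definition \ref{Dweak2}, both $f_i$ and $g_i$ satisfy $\supd(f_i), \supd(g_i) \subset \{\alpha_i\} \cup (\Gamma \times \NN^{n-s})$, and since the differential support of a difference is contained in the union of the differential supports, we obtain $\supd(f_i - g_i) \subset \{\alpha_i\} \cup (\Gamma \times \NN^{n-s})$. The second step is to rule out the vertex $\alpha_i$: the normalization condition in (3) gives $D_{u^{\alpha_i}}(f_i) \equiv 1 \equiv D_{u^{\alpha_i}}(g_i)$ in a neighborhood of $x$, whence $D_{u^{\alpha_i}}(f_i - g_i) \equiv 0$, so $\alpha_i \notin \supd(f_i - g_i)$. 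Combining the two steps yields
\[
\supd(f_i - g_i) \subset \Gamma \times \NN^{n-s}.
\]

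With this inclusion in hand, $f_i - g_i \in \cI_x^{\Gamma \times \NN^{n-s}}$ in the sense of Corollary \ref{main013}. In the algebraic and analytic settings that corollary gives $\cI_x^{\Gamma \times \NN^{n-s}} = 0$, so $f_i = g_i$ as germs at $x$; in the differential setting it gives $f_i - g_i \in m_{x,X}^\infty$, so the two standard bases agree at $x$ modulo flat functions. Since $x$ was an arbitrary point of the Samuel stratum, this establishes the claimed uniqueness. The only delicate point is ensuring that the two bases are matched up via the same ordering of vertices, which is automatic once $\Delta$ and the coordinate system are fixed, since condition (2) of Definition \ref{Dweak2} forces $\ord_x(f_i) = |\alpha_i|$ and condition (3) identifies each $f_i$ by the location of the distinguished monomial $u^{\alpha_i}$ in its differential support. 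No genuine obstacle remains; the whole argument is a short application of Corollary \ref{main013}, and the real work has already been done in its proof via Corollary \ref{main012}.
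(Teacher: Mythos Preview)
Your proof is correct and follows essentially the same approach as the paper's own proof, which simply states that by condition~(3) of Definition~\ref{Dweak2} the difference $f_i - f'_i$ lies in $\cI^{\Gamma\times\NN^{n-s}}$ and then invokes Corollary~\ref{main013}. You have spelled out in more detail the step the paper leaves implicit, namely that the normalization $D_{u^{\alpha_i}}(f_i)\equiv 1\equiv D_{u^{\alpha_i}}(g_i)$ removes the vertex $\alpha_i$ from the differential support of the difference.
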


\begin{proof} If $(f_i)$ and $(f'_i)$ are two standard bases then by condition (3), $f_i-f'_i\in \cI^{\Gamma\times \NN^{n-s}}$ and we can  use the previous corollary.
	
\end{proof}

It follows from Lemma \ref{le: Vi1} that Condition $(2)$ of Definition \ref{Dweak2} is equivalent to
\begin{enumerate}
\item[$(2')$] $D_{u^\alpha}(f_i(x))=0$, $i=1,\ldots,k$, $|\alpha|<|\alpha_i|$.
\end{enumerate}

Condition (3) of Definition \ref{Dweak2}  is coherent in the sense that it is satisfied in a neighborhood. 

\begin{lemma}  Condition $(3)$ is equivalent to each of the following:
\begin{enumerate}
 \item[\rm(3A)]  ${\supd}(f_i-u^{\alpha_i})\subset \Gamma\times \NN^{n-s}$.
 \item[\rm(3B)] $D_{u^{\alpha_i}}(f_i)\equiv 1$ and $D_{u^{\alpha}}(f_i)\equiv 0$ for $\alpha\in \Delta\setminus \{\alpha_i\}$.
\item[\rm(3C)] $f_i=u^{\alpha_i}+r(f_i)$, where $\supp(r(f_i))\in \Gamma\times \NN^{n-s}$ (with respect to a coordinate system vanishing at $x$).
\end{enumerate}
\end{lemma}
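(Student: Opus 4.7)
The plan is to verify the equivalences by translating between conditions on $f_i$ and conditions on its ``tail'' $r(f_i):=f_i-u^{\alpha_i}$, exploiting two basic facts about the monotone diagram $\Delta$: (a) $\alpha_i$ is a \emph{minimal} element of $\Delta$ (by the characterization of vertices), and (b) $\Gamma=\NN^n\setminus\Delta$ is downward closed, i.e.\ if $\alpha\in\Gamma$ and $\beta\leq\alpha$ componentwise, then $\beta\in\Gamma$ (otherwise $\alpha=\beta+(\alpha-\beta)\in\Delta+\NN^n\subseteq\Delta$). These two facts, combined with the standard identity $\supd(u^{\alpha_i})=\{\beta\in\NN^n:\beta\leq\alpha_i\}$ (seen, e.g., from $D_{u^\beta}(u^{\alpha_i})=\binom{\alpha_i}{\beta}u^{\alpha_i-\beta}$), immediately give the key inclusion $\supd(u^{\alpha_i})\subseteq\{\alpha_i\}\cup(\Gamma\times\NN^{n-s})$: any $\beta<\alpha_i$ componentwise which lay in $\Delta\times\NN^{n-s}$ would contradict the minimality of the vertex $\alpha_i$.

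For $(3)\Leftrightarrow(3\mathrm{A})$: assuming $(3)$, linearity of the Hasse derivative gives $\supd(f_i-u^{\alpha_i})\subseteq \supd(f_i)\cup\supd(u^{\alpha_i})\subseteq\{\alpha_i\}\cup(\Gamma\times\NN^{n-s})$, and $D_{u^{\alpha_i}}(f_i-u^{\alpha_i})\equiv D_{u^{\alpha_i}}(f_i)-1\equiv 0$ removes the exponent $\alpha_i$, yielding $(3\mathrm{A})$. Conversely, from $(3\mathrm{A})$ we recover $\supd(f_i)\subseteq\supd(u^{\alpha_i})\cup\supd(r(f_i))\subseteq\{\alpha_i\}\cup(\Gamma\times\NN^{n-s})$, and $D_{u^{\alpha_i}}(f_i)=D_{u^{\alpha_i}}(u^{\alpha_i})+D_{u^{\alpha_i}}(r(f_i))\equiv 1+0=1$.

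For $(3\mathrm{A})\Leftrightarrow(3\mathrm{B})$: this is a direct unpacking of the definition $\supd(g)=\{\alpha:D_{u^\alpha}(g)\not\equiv 0\}$ applied to $g=f_i-u^{\alpha_i}$. The condition $\supd(f_i-u^{\alpha_i})\subseteq\Gamma\times\NN^{n-s}$ means $D_{u^\alpha}(f_i-u^{\alpha_i})\equiv 0$ whenever $\alpha\in\Delta\times\NN^{n-s}$. For $\alpha=\alpha_i$ this becomes $D_{u^{\alpha_i}}(f_i)\equiv 1$; for $\alpha\in(\Delta\setminus\{\alpha_i\})\times\NN^{n-s}$, minimality of $\alpha_i$ forces $\alpha\not\leq\alpha_i$ componentwise, so $D_{u^\alpha}(u^{\alpha_i})=0$ and therefore $D_{u^\alpha}(f_i)\equiv 0$. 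The converse is a symmetric computation.

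For $(3\mathrm{A})\Leftrightarrow(3\mathrm{C})$: in a coordinate system vanishing at $x$, the support of a (formal/analytic) expansion satisfies $\supp(g)\subseteq\supd(g)$ and $\supd(g)=\bigcup_{\alpha\in\supp(g)}\{\beta:\beta\leq\alpha\}$. Applying this to $g=r(f_i)$ and using that $\Gamma\times\NN^{n-s}$ is downward closed (by (b) above), one direction follows from $\supp\subseteq\supd$, and the other from the fact that the downward closure of a subset of $\Gamma\times\NN^{n-s}$ remains inside $\Gamma\times\NN^{n-s}$. I expect the only subtle step to be this last equivalence in the differential setting, where $\supp$ and $\supd$ differ in general; the argument there relies on Corollary~\ref{main014} (uniqueness modulo $m_{x,X}^\infty$) to reduce to the formal expansion, where the two notions are governed cleanly by downward closure of $\Gamma$.
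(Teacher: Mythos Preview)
Your approach is essentially the paper's: both hinge on the single observation that $\NN^n\setminus(\{\alpha_i\}\cup(\Gamma\times\NN^{n-s}))$ is upward closed (your facts (a) and (b) together), which by Lemma~\ref{support} makes the $\supp$ and $\supd$ constraints interchangeable. The paper's proof is a one-line pointer to this; you have unpacked it case by case.

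Three small corrections. First, your equality $\supd(g)=\bigcup_{\alpha\in\supp(g)}\{\beta:\beta\leq\alpha\}$ fails in positive characteristic (take $g=u^p$); only the inclusion $\subseteq$ holds, and that is precisely Lemma~\ref{support}(2) and is all you actually use. Second, your ``symmetric computation'' for $(3\mathrm{B})\Rightarrow(3\mathrm{A})$ is not quite symmetric: condition $(3\mathrm{B})$ is stated only for $\alpha\in\Delta\setminus\{\alpha_i\}\subset\NN^s\times\{0\}$, whereas $(3\mathrm{A})$ requires vanishing for all $\beta=(\beta',\beta'')\in\Delta\times\NN^{n-s}$. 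You need the extra step that $D_{u^{(\beta',\beta'')}}=D_{u^{(0,\beta'')}}\circ D_{u^{(\beta',0)}}$, and $D_{u^{(\beta',0)}}(f_i)$ is either $0$ or the constant $1$, so any further derivative in the last $n-s$ variables kills it. Third, the appeal to Corollary~\ref{main014} is misplaced and unnecessary: the equivalence $(3\mathrm{A})\Leftrightarrow(3\mathrm{C})$ follows directly from Lemma~\ref{support} and downward closure of $\Gamma\times\NN^{n-s}$, exactly as in the paper's proof, which makes no special provision for the differential setting either. (Your worry there is legitimate only if one reads $(3\mathrm{C})$ at a single point and $(3\mathrm{A})$ as a sheaf condition; read both at the formal level at $x$, and the issue disappears.)
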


\begin{proof} 
This follows from the relation between the support and differential support  in Lemma \ref{support}. Observe that  the set $\NN^n\setminus (\{\alpha\}\cup \Gamma)$ is $\NN^n$-invariant and   condition (3) of Lemma \ref{support} is satisfied at $x$. 
\end{proof}

Condition (4) of Definition \ref{Dweak2}  is coherent 
and  can be stated in the form 
$$
J^s(f_1,\ldots,f_k: u^{\alpha_1},\ldots,u^{\alpha_k})(x)
%
=\det\big[D_{u^{\alpha-\beta+\alpha_{i(\beta)}}}{(f_{i(\beta)})}\big]_{\alpha,\beta\in \Delta(s)}\neq 0
$$ 
for all  $s\leq d(\Delta)+1$. It follows from Lemma \ref{easy} that
this condition is satisfied for any standard basis at a
point $x$.

Condition (5) of Definition \ref{Dweak2}  is coherent. It implies that the variables $u_1,\ldots,u_s$ are \emph{essential} for the presentation of the initial forms
%
$F_i=\inn_x(f_i(u_1,\ldots,u_s,0\ldots,0))$
 in the sense that there
are no translations $u_i+a_it$, where $a_i\in K$, preserving $F_i$. In other words, there is no linear coordinate change $u'_1,\ldots,u'_s$ such that all $F_i(u'_1,\ldots,u'_s)=F_i(u'_1,\ldots,u'_{s-1},0)$
depend only on $s-1$ coordinates.
Suppose such a translation exists for $F_i$. Then the forms $D_{u^{\beta_{i}}}F_{j(i)}$
also depend on $u'_1,\ldots,u'_{s-1}$. In particular they have common nontrivial zeroes, and thus their resultant
$\Res(D_{u^{\beta_{j(i)}}}F_i)=JR^{\overline{a}}(D_{u^{\beta_{i}}}f_i)(x)$ is zero.

On the other hand, this condition is automatically satisfied for the standard basis with $\exp(f_i)=\alpha_i$.
In this case $\exp(D_{u^{\beta_{i}}F_i})=a_ie_i$,  which implies that 
%
$K[u_1,\ldots,u_s]/(D_{u^{\beta_{i}}F_i})$ 
is finite as it corresponds to a finite diagram contained in $\Gamma=\prod [0,a_i-1]$. Consequently, the forms have no nontrivial zeroes and their resultant $\Res(D_{u^{\beta_{j(i)}}}F_i)=JR(D_{u^{\beta_{i}}}f_i)(x)$ does not vanish.
 
 This also implies 
 
\begin{lemma} \label{stan} 
The standard basis of $\cI\subset \cE_n$ with respect to a monotone order is a  standard basis with respect to the induced diagram.\qed
\end{lemma}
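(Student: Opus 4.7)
The plan is to verify each of the five conditions in Definition \ref{Dweak2} for the Hironaka standard basis $f_1,\ldots,f_k$ of $\cI$ produced by Theorem \ref{standard basis}, relative to the monotone diagram $\Delta(\cI)$ of initial exponents. First I would apply Lemma \ref{sub} to identify the smallest sublattice $\NN^s\subset\NN^n$ containing every vertex $\alpha_i$, and let $\Delta\subset\NN^s$ denote the diagram generated by these vertices. Since the vertices have their last $n-s$ coordinates equal to zero, the full diagram in $\NN^n$ is $\Delta(\cI)=\Delta\times\NN^{n-s}$, and its complement is $\Gamma\times\NN^{n-s}$ where $\Gamma=\NN^s\setminus\Delta$.

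With this identification, condition (1) is immediate from the Hilbert-function computation associated to a diagram (the corollary following Theorem \ref{standard basis}): $H_x(\cI)=H(\Delta(\cI))=H(\Delta\times\NN^{n-s})$. Condition (2), $\ord_x(f_i)=|\alpha_i|$, is part of the output of Theorem \ref{standard basis}. For condition (3), Hironaka's standard basis is of the form $f_i=u^{\alpha_i}+r_i$ with $\supd(r_i)\subset\Gamma\times\NN^{n-s}$, so $\supd(f_i)\subset\{\alpha_i\}\cup(\Gamma\times\NN^{n-s})$; moreover $D_{u^{\alpha_i}}(f_i)=D_{u^{\alpha_i}}(u^{\alpha_i})+D_{u^{\alpha_i}}(r_i)\equiv 1+0=1$, because $\alpha_i\notin\supd(r_i)$ and this equality holds as an identity of sections, not merely at $x$.

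For condition (4), I would invoke Lemma \ref{easy} applied to the initial forms $F_i=\inn_x(f_i)$: since $\expe_{\overline{T}}(F_i)=\alpha_i$ and $\overline{T}$ is monotone (and therefore compatible with the reverse-lexicographic ordering of vertices used in defining $\Delta(s)$), the matrix $\bigl[D_{u^{\alpha-\beta+\alpha_{i(\beta)}}}(f_{i(\beta)})(x)\bigr]_{\alpha,\beta\in\Delta(s)}$ is triangular with unit diagonal entries, hence its determinant $J^s(f_1,\ldots,f_k:u^{\alpha_1},\ldots,u^{\alpha_k})(x)$ is invertible for every $s\leq d(\Delta)+1$.

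The only condition that needs slightly more care is (5). Here I would use Lemma \ref{sub} to guarantee that for each $i=1,\ldots,s$ there is a vertex $\alpha_{j(i)}$ whose $i$-th coordinate is nonzero; with $\beta_i:=\alpha_{j(i)}-a_ie_i$, the Hasse derivative $D_{u^{\beta_i}}(f_{j(i)})$ has initial form whose initial exponent (with respect to $\overline{T}$) is exactly $a_ie_i$ with coefficient $1$, as already observed in the discussion immediately preceding the lemma. Consequently the system of initial forms $\inn_x(D_{u^{\beta_i}}(f_{j(i)}))$ has diagram $\prod_{i=1}^s[0,a_i-1]$, which is finite, so the forms have no common nontrivial zero over $\overline{K}$; by the Macaulay criterion (Theorem \ref{Macaulay5} and the equivalences following it), the resultant $JR^{\overline{a}}(D_{u^{\beta_1}}f_{j(1)},\ldots,D_{u^{\beta_s}}f_{j(s)})(x)$ is nonzero. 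The main point requiring care is precisely this step, namely checking that the coordinates picked out by Lemma \ref{sub} really do produce an $s$-tuple of Hasse derivatives whose initial exponents span $\NN^s$ in the pure-power pattern $a_ie_i$; once that is in place, the Macaulay resultant criterion closes the argument.
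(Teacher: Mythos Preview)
Your proposal is correct and follows essentially the same approach as the paper: the paper's proof is actually the discussion immediately preceding the lemma, which handles conditions (4) and (5) via Lemma \ref{easy} (triangular Jacobian matrices) and the Macaulay resultant argument (finiteness of $K[u_1,\ldots,u_s]/(D_{u^{\beta_i}}F_{j(i)})$ from the diagram $\prod[0,a_i-1]$), while conditions (1)--(3) are implicit from the construction in Theorem \ref{standard basis}. Your write-up is simply a more explicit unpacking of that same argument.
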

\begin{remark} Note, however that the monotone order defines different diagrams along Samuel stratum.
	
\end{remark}



\subsection{Description of the Samuel stratum}
Let $\cI$ be  a sheaf of ideals of finite type on a smooth scheme $X$ over $K$, or a complex analytic or  differentiable manifold.   Let $\cI^\Gamma$ denote the subsheaf generated by the functions $f\in \cI$  with  $\supd(f)\subset \Gamma$.
Then, by section \ref{Deri},  the set $\cosupp(\cI^\Gamma,\infty)$ is a closed
subspace  described as the vanishing locus 
$V(\cD^\infty(\cI^\Gamma))$.

The  standard basis (and the standard pre-basis) along Samuel stratum  is a coherent notion, and gives a local description of the Samuel stratum.

\begin{theorem}[Existence of a weak standard basis along the Samuel stratum] \label{weak}
   Let $X$ be a smooth scheme  of finite type over a field $K$ (or an analytic/differentiable manifold) with a given coordinate system, and let $x\in X$ be a closed point.  Let $\cI$ be a sheaf of ideals  of finite type on $X$.   
   There  is  an  \'etale (respectively open) neighborhood $U\subset X$ of $x$  and  regular functions $f_1,\ldots,f_k\in \cI(U)$  which form
a  standard basis of $\cI$ along the Samuel stratum through $x$ with respect to a monotone diagram $\Delta$
     and a certain coordinate system $u_1,\ldots,u_n$ on $U$. Moreover:
\begin{enumerate} 
\item The Samuel stratum through $x\in U$  can be described as 
$$
S=S_x=\{y\in U\mid H_{y,\cI}=H_{x,\cI}\}=
\{y\in U\mid \ord_y(f_i)=|\alpha_i| \}.
$$
In the differential setting,
%
$$
S=S_x=
\{y\in U\mid \ord_y(f_i)=|\alpha_i| \} \cap  \cosupp(\cI^\Gamma,\infty).
$$
\item $H(\Delta\times \NN^{n-s})=\max\{H_y(\cI)\mid y\in U\}$ is the maximum value of the Hilbert-Samuel function on $U$.

\item In the algebraic and complex analytic setting any function $f\in \cI(U)$ can be uniquely written as $f=\sum h_if_i$ where $h_i\in \cO(U)^{\Gamma_i}=\{f\in\cO(U)\mid \supd(f)\subset \Gamma_i\}$.
\begin{itemize}
\item In the differential setting  $f=\sum h_if_i+r(f)$, where $r(f)\in m_S^\infty\cap \cI^\Gamma$. 

\item In the real analytic case there is a presentation $f=\sum h_if_i$ in a neighborhood $U_f$ of $S$ (depending on $f$).
\end{itemize}  
\end{enumerate}
   \end{theorem}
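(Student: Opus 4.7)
The plan is to first produce a standard basis at the point $x$ using the Hironaka–Galligo--Grauert theory developed earlier (Theorems \ref{standard basis 3} and \ref{main2}), then verify that the Jacobian-type conditions (4) and (5) of Definition \ref{Dweak2} are open and so propagate to a neighborhood, and finally identify the Samuel stratum as the locus where the orders $\ord_y(f_i)=|\alpha_i|$ are preserved.

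First I would apply the Galligo-Grauert result (Theorem after Corollary \ref{etale}, passing to a finite separable extension if $K$ is finite, and then descending by the genericity of the coordinate change) to the formal completion $\widehat{\cI}_x\subset \widehat{\cO}_{X,x}$, producing a monotone diagram $\Delta=\Delta(\widehat{\cI}_x)$ with vertices $\alpha_1<_r\cdots<_r\alpha_k$ spanning a minimal sublattice $\NN^s\subset\NN^n$ (Lemma \ref{sub}). The neighborhood standard basis theorem \ref{standard basis 3}, together with the preparation theorem \ref{main2}, then yields an \'etale (respectively open) neighborhood $U$ of $x$ and functions $f_i\in\cI(U)$ of the form $f_i=u^{\alpha_i}+r_i$ with $\supd(r_i)\subset\Gamma\times\NN^{n-s}$ and $\ord_x(f_i)=|\alpha_i|$. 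Condition (1) of Definition \ref{Dweak2} is immediate from the corollary before Definition \ref{S1} equating $H_x(\cI)$ with $H(\Delta(\widehat{\cI}_x))$; conditions (2) and (3) hold by construction; conditions (4) and (5) hold at $x$ by Lemma \ref{easy}, since at $x$ the Jacobian matrix is triangular with invertible diagonal and the initial forms $\inn_x(f_i)$ have monomial leading term $u^{\alpha_i}$, so the associated resultant does not vanish.

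Next I would propagate the standard basis to the whole Samuel stratum $S\cap U$ through $x$. Condition (3) is coherent on $U$ as it is cut out by the vanishing of finitely many Hasse derivatives; conditions (4) and (5) are Zariski-open (respectively open in the analytic/differentiable topology) as they are non-vanishing of fixed polynomial expressions in the $D_{u^\alpha}(f_i)$, so after shrinking $U$ we may assume all three hold pointwise on $U$. The heart of the argument is then the matching of conditions (1)–(2) at every $y\in S\cap U$. Apply Corollary \ref{imp22} (and, more generally, the Jacobian-driven reasoning of Theorem \ref{im1}) at a closed point $y\in U$: when the Jacobian conditions hold at $y$ and $\ord_y(f_i)=|\alpha_i|$, one gets the Hilbert function bound $H_y(\cI)\geq H(\Delta\times\NN^{n-s})$ by computing $\rank_K \cO_{X,y}/(m_y^{k+1}+\cI)$ in terms of the complement $\Gamma\times\NN^{n-s}$. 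Combining this with the upper semicontinuity of the Hilbert–Samuel function (Bennett's theorem, which the introduction states follows from the very existence of a standard basis) forces equality $H_y(\cI)=H(\Delta\times\NN^{n-s})=H_x(\cI)$ precisely when the orders $\ord_y(f_i)=|\alpha_i|$ are preserved. This simultaneously gives (2), maximality of $H(\Delta\times\NN^{n-s})$ on $U$, and the description of $S\cap U$ in statement (1). In the differential setting one further needs to intersect with $\cosupp(\cI^\Gamma,\infty)$: a flat function in $\cI^\Gamma$ supported off this set contributes to $\cI_y$ without contributing to any initial form, which strictly drops $H_y(\cI)$; so $y\in S$ forces $\cI^\Gamma_y\subset m_y^\infty$.

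Finally, statement (3) of the theorem—the division with remainder—is a direct application of the generalized division theorem \ref{main011} at points of $S$, where all the Jacobian hypotheses are satisfied; uniqueness in the algebraic and complex-analytic cases comes from uniqueness of division in the local rings (Corollaries \ref{main013}, \ref{main014}) and patching via coherence, and in the differentiable case from Theorem \ref{NS3} with the remainder absorbed into $m_S^\infty\cap\cI^\Gamma$; the real-analytic case requires a neighborhood $U_f$ shrinking with $f$ due to convergence issues. The step I expect to be the main obstacle is the simultaneous control of the Hilbert–Samuel function along the entire Samuel stratum in Step three: one needs the two-sided inequality (lower from the Jacobian/preparation argument and upper from semicontinuity) to coincide precisely on the open locus where the orders of the $f_i$ are preserved, and in the differential setting one must carefully handle flat perturbations that may falsely enlarge $\cI$ without affecting any initial data.
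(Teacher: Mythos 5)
Your steps (a)–(b) — producing the standard basis at $x$ via the Galligo--Grauert generic-coordinates theorem and Theorem \ref{standard basis 3}, and then propagating the coherent/open conditions (3)--(5) of Definition \ref{Dweak2} to a neighborhood $U$ — match the paper's proof. The gap is in your Step three, the identification of the Samuel stratum, and it is twofold. First, you invoke Bennett's upper semicontinuity to get $H_y(\cI)\le H(\Delta\times\NN^{n-s})$; but in this paper Bennett's theorem (in particular its analytic and differentiable versions) is \emph{derived as a corollary of Theorem \ref{weak}}: the proof of the corollary following Theorem \ref{dcc} reads ``It follows from Theorem \ref{weak} that the set $\{x\mid H_{x,\cI}\le H\}$ is open.'' Using it here is circular, and in the differentiable setting there is no independent classical source to fall back on. Second, your two-sided squeeze has the inequalities pointing the wrong way. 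At a point $y$ where the orders are preserved, the division theorem and the basis property give the \emph{upper} bound $H_y(\cI)\le H(\Delta\times\NN^{n-s})$ essentially for free (the monomials $u^\alpha$, $\alpha\in\Gamma\times\NN^{n-s}$, span the quotient; equivalently the independent elements $u^\beta\inn_y(f_i)$ sit inside $\inn_y(\cI)$). The hard direction is equality, i.e.\ that $\inn_y(\cI)$ contains \emph{nothing beyond} the span of $\{u^\beta\inn_y(f_i)\}$. The paper gets this by dividing an arbitrary $g\in\cI(U)$ as $g=\sum h_if_i+h_0$ with $\supd(h_0)\subset\Gamma$, observing that $h_0\in\cI^\Gamma$ vanishes as a germ at $x$ by Corollary \ref{main013} (since condition (1) holds at $x$), hence $h_0\equiv 0$ on $U$ in the algebraic/analytic case, and is flat at $y$ in the differential case precisely when $y\in\cosupp(\cI^\Gamma,\infty)$. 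You never make this remainder-vanishing argument; ``computing the rank in terms of the complement'' does not by itself yield the lower bound on $H_y$.

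You also omit the converse half of statement (1): for $y\notin S_0$ one must show $H_y(\cI)$ \emph{strictly} drops. The paper's argument is direct: if $\ord_y(f_i)=d<|\alpha_i|$ then, because $\supd(f_i)\subset\{\alpha_i\}\cup(\Gamma\times\NN^{n-s})$, the initial form $\inn_y(f_i)$ has differential support entirely in $\Gamma\times\NN^{n-s}$ and is therefore linearly independent of the expected generators $u^\beta\inn_y(f_j)$ in degree $d$, forcing $H_y<H(\Delta\times\NN^{n-s})$ lexicographically. A related sign error appears in your treatment of $\cI^\Gamma$: it is the \emph{non-flat} elements of $\cI^\Gamma$ at $y$ (i.e.\ $y\notin\cosupp(\cI^\Gamma,\infty)$) that contribute extra initial forms supported in $\Gamma$ and hence drop $H_y$; flat functions contribute nothing to any initial form and do not affect $H_y$ at all. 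With the remainder-vanishing argument supplied and the semicontinuity appeal removed, your outline would align with the paper's proof.
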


\begin{proof} 
Consider a monotone order $\overline{T}$ on $\NN^n$
and an \'etale neighborhood, possibly extending the residue field, (respectively an open neighborhood) $U$ of $x$ for which there exists a generic coordinate system $u_1,\ldots,u_n$ 
defining a monotone diagram $\Delta=\exp_x(\cI)$ at $x$ and a standard basis $f_1,\ldots, f_k$ of $\cI_x$ with respect to $\overline{T}$ (see Theorems \ref{etale}, and \ref{standard basis 3})). By Theorem \ref{standard basis 3}, and \ref{standard basis 2}, and Lemma \ref{stan} the functions $f_1,\ldots,f_k$ generate $\cI_x$ satisfy the conditions of Definition \ref{Dweak2} at $x$.
Since $\cI$ is of finite type, by shrinking $U$ if necessary we can 
assume that:
\begin{itemize}
\item	$\cI$ is generated by $f_1,\ldots, f_k\in \cI(U)$ on $U$.
\item $f_1,\ldots,f_k$ satisfy the differential conditions (3) through (5) on $U$ of Definition \ref{Dweak2}. 
\item In the algebraic, complex analytic and differential situation there exists Weierstrass-Hironaka division on $U$ by $f_1,\ldots, f_k\in \cI(U)$. 
\end{itemize}

  Let 
$$
S_0:=\{y\in U\mid \ord_y(f_i)=|\alpha_i| \}.
$$
 In the real analytic case we also assume that $U$ contains  a single connected component of $S_0$.
Let $y$ be a closed point in $X$. 
If $y\not \in S_0$ then  we can find a largest integer $d$  such that $\ord_y(f_j)=|\alpha_j|$ for all $f_j$ with $\ord_y(f_j)<d$.

Then  $d=\ord_y(f_i)<|\alpha_i|$ for a certain  $f_i$.
Consider a coordinate system $\overline{u}_1,\ldots,\overline{u}_n$, where $\overline{u}_i: =u_i-u_i(y)$, vanishing at  $y$ induced by $u_1,\ldots,u_n$ (possibly after passing to an \'etale neighborhood, and extending the base field).
It follows from Theorem \ref{main012}
 that a basis of $\gr(\cO_{X_y})/m^d_{y}\simeq \cO_{X_y}/m^d_{y}$ is given by 
$$
\Psi^d_{y}:=\{\overline{u}^\alpha\mid \alpha \in \Gamma, |\alpha|<d \} \cup \{\overline{u}^\alpha \inn_y(f_i)\mid \alpha \in \Gamma_i, |\alpha|+|\alpha_i|<d \}
$$
 with the subsets $\Psi^d_{y,0}:=\{\overline{u}^\alpha\mid \alpha \in \Gamma, |\alpha|<d \}$ and $\Psi^d_{y,1}:=\{\overline{u}^\alpha \inn_y(f_i)\mid \alpha \in \Gamma_i, |\alpha|+|\alpha_i|<d \}$ being in bijective correspondence with $\Gamma\times\NN^{n-s}$ and $\Delta\times\NN^{n-s}$.
Since 
%
$\supd(f_i)\subset \{\alpha_i\} \cup (\Gamma\times \NN^{n-s}) $ and $\ord_y(f_i)<d$, we conclude that
 $\supd(\inn_y(f_i))$ is contained in  $\Gamma\times \NN^{n-s}$ and thus it is linearly independent of  $\Psi^d_{y,1}$,
  which implies that $H_y(\cI)<H(\Delta\times \NN^{n-s})$ with respect to the lexicographic order. 

If $y\in S_0$  but $y \not\in \cosupp(\cI^\Gamma,\infty)$ (in the differential setting),  then there is  a nonzero $g\in \cI^\Gamma$, of a certain order $e$, with $\supd(\inn_y(g))\subset \Gamma\times\NN^{n-s}$. Then $\inn_y(g)$ is  not in the vector space $\Psi^{e+1}_y$.
 This implies that $H_{y,\inn_y(\cI)}= H_{y,\cI}<H({\Delta})$  (proving condition (2)). 

If $y\in S_0$ (or $y\in S_0\cap \cosupp(\cI^\Gamma,\infty)$ in the differential setting) then in the algebraic and the complex analytic setting $\cI$ is coherent, and for  any $g\in \cI$ (also in the differential setting)  there is Weierstrass-Hironaka division by $f_1,\ldots,f_k$ on $U$, yielding $g=\sum h_if_i+h_0$, where $\supd(h_0)\subset \Gamma$ (by Theorem \ref{main01}), and thus, by Corollary \ref{main014}, $h_0\in \cI^\Gamma$ is flat at $y$ or $h_0\equiv 0$  on $U$ in the algebraic/analytic case (as  $(h_0)_x\equiv 0$). We conclude that $\inn_y(g)=\sum H_i\inn_y(f_i)$. 

In the real analytic situation
the division exists at any point of $y\in S_0$. Using uniqueness of the extension, we can define the  functions $h_i$ in the neighborhood of $S_0$. Since $h_0$ is zero in a  neighborhood of $x\in S_0$, it is  zero in a neighborhood of the  connected component of $S_0$ through $x$, and  again $g=\sum h_if_i$ with $\inn_y(g)=\sum H_i \inn_y(f_i)$.

This shows that the functions in $\Psi^{\infty}_y$ form  a basis of  $\inn_y(\cI)$. And since they are in bijective correspondence with the elements of  $\Delta\times \NN^{n-s}$, we conclude that  
 $H_{y,\cI}=H(\Delta\times \NN^{n-i})$. 
\end{proof}

\begin{remark} 
The theorem implies existence of  a standard pre-basis of $\cI$ on a Zariski open neighborhood $V$ of $x$, as the functions $f_i$ on $U$ define principal divisors whose images determine a pre-basis  on $V$.
\end{remark}

\begin{lemma} \label{center2} 
Let $\cI$ be a sheaf of ideals on $X$, and 
 $C\subset S$ be a smooth center contained in the Samuel stratum $S$ of dimension $c$. Consider a  standard basis $f_1,\ldots,f_k$ in a neighborhood of $x\in S$ defined for a monotone diagram $\Delta\subset \NN^s\subset\NN^n$, with vertices spanning $\NN^s$. Then 
 there exists a  coordinate system 
$v_1,
\ldots,v_n$ which is compatible with $f_1,\ldots,f_k$, and such that $v_1,\ldots,v_{c}$ with $s\leq c\leq n$ describe the ideal $\cI_C$ in a neighborhood of $x\in X$. 
\end{lemma}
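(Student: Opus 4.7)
The plan is to construct $v_1,\ldots,v_n$ in two stages that reflect the decomposition $c=s+(c-s)$, being careful to keep every adjustment of the first $s$ coordinates inside the span of $u_{s+1},\ldots,u_n$ so that the support condition (3) of Definition \ref{Dweak2} survives.

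First I would arrange that $du_1|_x,\ldots,du_s|_x$ lie in the conormal of $S$ at $x$. For each $j\leq s$ Lemma \ref{sub} produces a vertex $\alpha_{i(j)}$ with $\alpha_{i(j),j}\neq 0$; since $S\subseteq\{y\in U:\ord_y(f_i)\geq|\alpha_i|\}$ by Theorem \ref{weak}, the $1$-form $d(D_{u^{\alpha_{i(j)}-e_j}}f_{i(j)})|_x$ vanishes on $T_xS\supseteq T_xC$. Expanded in the basis $du_1|_x,\ldots,du_n|_x$, these $s$ forms make up an $s\times n$ matrix whose first $s$ columns have diagonal entries $\alpha_{i(j),j}\neq 0$ (by condition (3B)); condition (5) of the standard basis is exactly the invertibility of this $s\times s$ block. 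A linear change $u'_j:=u_j-\sum_{k>s}\beta_{j,k}u_k$ for $j\leq s$ (with $u'_j=u_j$ for $j>s$) then aligns $du'_1|_x,\ldots,du'_s|_x$ with conormal vectors of $S$. This change is compatible with the basis: substituting $u_j=u'_j+\sum_{k>s}\beta_{j,k}u'_k$ into $(u')^{\alpha_i}$ produces only monomials whose first-$s$ exponent is $\alpha_i-\beta<\alpha_i$ componentwise (hence in $\Gamma$ by pairwise incomparability of the vertices), and in $r_i$ only monomials whose first-$s$ exponent is $\leq$ the original $\gamma\in\Gamma$ (hence still in $\Gamma$ by downward closure).

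Second, since $T_xC\subset T_xS$ is killed by $du'_1,\ldots,du'_s$, the projection $\pi\colon X\to X':=V(u'_1,\ldots,u'_s)$ restricts to a local immersion of $C$ at $x$, so $\pi(C)$ is a smooth codimension-$(c-s)$ subvariety of $X'$ near $x':=\pi(x)$ and $C$ is the graph over $\pi(C)$ of a section $u'_j=g_j(u'_{s+1},\ldots,u'_n)$, $j\leq s$. I set $v_j:=u'_j-g_j(u'_{s+1},\ldots,u'_n)$ for $j\leq s$; these lie in $\cI_C$. Then I pick local defining equations $\tilde v_{s+1},\ldots,\tilde v_c\in\cO_{X',x'}$ of $\pi(C)$ in $X'$ and complete them to a local coordinate system $\tilde v_{s+1},\ldots,\tilde v_n$ of $X'$ at $x'$, and set $v_j:=\tilde v_j(u'_{s+1},\ldots,u'_n)$ for $j>s$. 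The Jacobian of $(v_1,\ldots,v_n)$ with respect to $(u'_1,\ldots,u'_n)$ at $x$ is block triangular with identity on the first $s$ rows and an invertible $(n-s)\times(n-s)$ block on the rest, so $v_1,\ldots,v_n$ is a coordinate system on an \'etale or open neighborhood of $x$. The $c$ functions $v_1,\ldots,v_c$ lie in $\cI_C$, and their differentials at $x$ span the $c$-dimensional conormal of $C$; smoothness of $C$ therefore yields $\cI_C=(v_1,\ldots,v_c)$ near $x$.

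Finally I would verify that $f_1,\ldots,f_k$ is still a standard basis in the new coordinates. The inverse of the second coordinate change has the shape $u'_j=v_j+H_j(v_{s+1},\ldots,v_n)$ for $j\leq s$ and $u'_j=\Phi_j(v_{s+1},\ldots,v_n)$ for $j>s$, so $v_1,\ldots,v_s$ are not reintroduced through the substitutions for $u'_k$ with $k>s$. Repeating verbatim the support analysis of the first step then shows $\supd(f_i)\subseteq\{(\alpha_i,0)\}\cup(\Gamma\times\NN^{n-s})$ in the $v$-coordinates, which is condition (3); condition (2) is coordinate-independent; and conditions (4), (5) follow by applying Lemma \ref{easy} to the initial forms $\inn_x(f_i)=v^{\alpha_i}+(\text{terms in }\Gamma\times\NN^{n-s}\text{ of total degree }|\alpha_i|)$, since these are in the precise shape to which the lemma applies with invertible leading coefficients. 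The main obstacle is the alignment step: identifying which conormal vectors of $S$ are visible in the standard-basis data, which turns out to be exactly what conditions (3B) and (5) of Definition \ref{Dweak2} record.
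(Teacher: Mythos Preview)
Your construction of the coordinates $v_1,\ldots,v_n$ is valid and arrives at exactly the same shape of coordinate change as the paper's proof, namely $v_j=u_j+(\text{function of }u_{s+1},\ldots,u_n)$ for $j\le s$ and $v_j=v_j(u_{s+1},\ldots,u_n)$ for $j>s$. The route, however, is different. The paper simply takes $v_1,\ldots,v_c$ to be a standard basis of $\cI_C$ with respect to a monotone order, so that $v_i=u_{j_i}+r_i$ with $\supp(r_i)$ avoiding all the coordinates $u_{j_1},\ldots,u_{j_c}$; since each $f_i\in\cI_C^{\mu_i}$, the initial form $\inn_x(f_i)$ is a polynomial in $v_1,\ldots,v_c$, and essentiality of $u_1,\ldots,u_s$ (condition~(5)) then forces $\{1,\ldots,s\}\subset\{j_1,\ldots,j_c\}$, giving the shape above directly. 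Your alignment-plus-graph argument reaches the same shape by a more explicit linear-algebra computation (and in fact your first linear step is not strictly needed: the invertibility of the $s\times s$ block you extract already shows $T_xC\cap\spa(\partial_{u_1},\ldots,\partial_{u_s})=0$, so the graph construction works for the original $u_j$).

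Where your argument has a genuine gap is the verification of conditions~(4) and~(5) via Lemma~\ref{easy}. That lemma requires a single normalized order $\overline{T}$ with $\alpha_i=\expe_{\overline{T}}(\inn_x(f_i))$ for every $i$, and this need not exist for the forms you obtain: for instance with $s=2$ and vertices $\alpha_1=(2,0)$, $\alpha_2=(0,3)$, one has $(1,2)\in\Gamma$, and no normalized $\overline{T}$ can make $(2,0)$ precede $(1,1),(0,2)$ while simultaneously making $(0,3)$ precede $(1,2)$ (the first forces the $x_2$-weight to dominate, the second the opposite). Moreover Lemma~\ref{easy} only addresses $J^s$ and $J^0_s$; it says nothing about the resultant Jacobian $JR^{\overline{a}}$ in condition~(5).

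The repair is the paper's one-line observation, which also renders your support analysis for~(3) unnecessary. Since your combined change satisfies $u_j=v_j+H_j(v_{s+1},\ldots,v_n)$ for $j\le s$ and $u_j=\Phi_j(v_{s+1},\ldots,v_n)$ for $j>s$, the chain rule gives $\partial/\partial v_j=\sum_l(\partial u_l/\partial v_j)\,\partial/\partial u_l=\partial/\partial u_j$ for every $j\le s$, hence $D_{v^\alpha}=D_{u^\alpha}$ for all $\alpha\in\NN^s$. Conditions~(3),~(4),~(5) of Definition~\ref{Dweak2} are expressed entirely through these operators, so they hold in the $v$-coordinates because they held in the $u$-coordinates, with no further computation.
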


\begin{proof} Denote by 
$u_1,
\ldots,u_n$  a given  coordinate system compatible with the standard basis $f_1,\ldots,f_k$. Consider any  monotone order $T$ on $\NN^n$.
Let $v_1,\ldots,v_c$ be  the standard basis of the ideal $\cI_C$ at $x$ defined for $T$. 
Then $v_1,\ldots,v_c$ is a set of parameters describing $\cI_C$, and $\exp(\cI)$ is a diagram spanned by some vectors $e_{j_1}=\exp(v_1),\ldots,e_{j_c}=\exp(v_k)$  which is part of the basis
of $\NN^n$ with $j_1<\ldots<j_c$. That is, we can express $v_i$ as 
$$v
_i=u_{j_1}+r_i(u_{j_1+1},\ldots,u_n).
$$
Denote by $\mu_i=\ord_x(f_i)$ the multiplicity of $f_i$. Then $f_i\in \cI_C^{\mu_i}$, and the initial form 
$$
F=\inn_x(f_i)=\sum c_\alpha v^\alpha
$$ 
at $x$ is a function of $v_1,\ldots,v_k$, where $v_i=v_i(u_1,\ldots,u_n)$. 
Since the coordinates $u_1,\ldots,u_s$ are essential, $$
F(u_1,\ldots,u_s,0,\ldots,0)=\inn_x(f_i(u_1,\ldots,u_s,0,\ldots,0))=\sum c_\alpha v^\alpha(u_1,\ldots,u_s,0,\ldots,0)
$$ 
is a function depending upon the $s$ variables  $u_1,\ldots,u_s$ with no translations. This implies that  $(j_1,\ldots,j_s)=(1,\ldots,s)$,
and since $v_1,\ldots,v_c$ is the standard basis of $\cI_C$, it can be written in the form 
$v_i=u_i+f(u_{s+1},\ldots,u_n)$ for $i=1,\ldots,s$, $v_i=v_i(u_{s+1},\ldots,u_n)$ for $i=s+1,\ldots,c$.
This set can be extended to a coordinate system 
$v_1,
\ldots,v_n$ 
with  $v_i=v_i(u_{s+1},\ldots,u_n)$ for $i=s+1,\ldots,n$.
The coordinate change $u_i\mapsto v_i$ does not change the derivatives $D_{u_i}=D_{v_i}$ for $i=1,\ldots,s$, defining the differential support and used in  conditions (4) and (5) of  a standard basis. Consequently, the conditions for  a standard basis are satisfied for the coordinate system 
$v_1,
\ldots,v_n$.
\end{proof}

We shall refer to such a coordinate system 
$v_1,
\ldots,v_n$ as \emph{compatible} with the center $C$ and the weak standard basis $f_1,\ldots,f_k$.

Let $C\subset X$ be any smooth center on a smooth scheme (or an analytic or differentiable manifold $X$). Suppose $u_1,\ldots,u_c$ are the coordinates describing $C$ on a neighborhood $U$ of $X$. Consider a graded locally free $\cO_C$-sheaf 
$$
\gr_C(\cO_X)=\bigoplus \cI_C^s/\cI_C^{s+1}
$$
over $C$  which can be described over $U\cap C$ as
$$
\gr_C(\cO_U)\simeq \cO_{C\cap U}[u_1,\ldots,u_c].
$$
 Then any ideal $\cI$ defines the ideal of the initial forms 
$$
\inn_C(\cI):=\bigoplus (\cI\cap\cI_C^s)/(\cI\cap\cI_C^{s+1})= \bigoplus ((\cI\cap \cI_C^s)+\cI_C^{s+1})/\cI_C^{s+1} \subset \gr_C(\cO).
$$ 

\begin{theorem}[Hironaka's normal flatness theorem]\label{Flat}
Let $X$ be a smooth  scheme of finite type over a field $K$ (or  an analytic or differentiable  manifold).
The following conditions are equivalent for a smooth subvariety (submanifold)
$C\subset X$:
\begin{enumerate}
\item $H_{x,\cI}$ is locally constant along $C$ (defined for closed points).
\item $\gr_C(O_X)/\inn_C(\cI)$ is a locally free $\cO_C$-module.
\end{enumerate}

Moreover,  if $\{f_1,\ldots,f_k\}$ is a  standard basis of $\cI$ in a neighborhood of $x\in C$ and 
$u_1,
\ldots,u_n$ is a compatible coordinate system   with the  standard basis and the center so that $\cI_C=(u_1,\ldots,u_c)$ for  $c\geq s$,  then the set  
$$
\{u^\alpha \mid \alpha\in \Gamma\times \NN^{c-s}\}
$$ 
is a basis of the free $\cO_C$-module $\gr_C(\cO_X/\cI_C)$  on $U\cap C$ for some \'etale (respectively open) neighborhood $U$ of $x\in X$. On the other hand,  the set 
$$
\{u^\alpha\inn_C(f_i) \mid \alpha\in \Gamma_i\times \NN^{c-s}\}
$$ is a basis of the free $\cO_C$-module $\inn_C(\cI)$ on $U\cap C$.
\end{theorem}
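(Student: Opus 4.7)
My plan is to reduce both the equivalence and the explicit \emph{moreover} assertion to Theorem~\ref{weak} (existence of a standard basis along the Samuel stratum) combined with the graded implicit function theorem in the form of Lemma~\ref{66}; the converse implication then follows from a Bennett-type computation.

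For $(1) \Rightarrow (2)$ and the \emph{moreover} part: under (1), $C$ lies in the Samuel stratum $S$ through $x$, so Theorem~\ref{weak} provides a standard basis $f_1,\dots,f_k$ of $\cI$ on an \'etale/open neighborhood $U$ of $x$ with respect to a monotone diagram $\Delta \subset \NN^s$. By Lemma~\ref{center2} I may adjust the compatible coordinate system so that $\cI_C = (u_1,\dots,u_c)$ with $c \geq s$. Condition (3A) of Definition~\ref{Dweak2} writes $f_i = u^{\alpha_i} + r_i$ with $\alpha_i \in \NN^s$ and $\supp(r_i) \subset \Gamma \times \NN^{n-s}$. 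Since $\ord_y(f_i) = |\alpha_i|$ for every closed $y \in C \subset S$ and $C$ is smooth in the smooth ambient $X$, the symbolic and ordinary powers of $\cI_C$ agree, so $f_i \in \cI_C^{|\alpha_i|}(U)$. Hence $\inn_C(f_i)$ is a well-defined degree-$|\alpha_i|$ element of $\gr_C(\cO_X) \simeq \cO_C[u_1,\dots,u_c]$; isolating the $\cI_C$-degree-$|\alpha_i|$ part of $r_i$ and using $\alpha_i \in \NN^s$ yields
\[
\inn_C(f_i) \;=\; u^{\alpha_i} + \widetilde r_i, \qquad \supp(\widetilde r_i) \subset \Gamma \times \NN^{c-s},
\]
with coefficients in $\cO_C$. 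Conditions (4)--(5) of Definition~\ref{Dweak2}, holding throughout $S$, make the generalized Jacobians attached to the $\inn_C(f_i)$ invertible at every point of $C$. Applying Lemma~\ref{66} fibrewise and lifting by the base-change principle of Lemma~\ref{restriction2} combined with Nakayama gives the $\cO_C$-module direct-sum decomposition
\[
\cO_C[u_1,\dots,u_c] \;=\; \bigoplus_{\alpha \in \Gamma \times \NN^{c-s}} \cO_C \cdot u^\alpha \;\oplus\; \bigoplus_i \bigoplus_{\alpha \in \Gamma_i \times \NN^{c-s}} \cO_C \cdot u^\alpha\, \inn_C(f_i).
\]
To identify the second summand with $\inn_C(\cI)$, Theorem~\ref{weak}(3) writes any $g \in \cI$ as $\sum h_i f_i$ modulo a remainder flat along $S$ --- hence flat along $C$, since $C \subset S$, and so annihilated by $\inn_C$. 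Taking $\inn_C$ of both sides places $\inn_C(g)$ in the ideal $(\inn_C(f_1),\dots,\inn_C(f_k))$, proving $(1) \Rightarrow (2)$ simultaneously with the two explicit basis statements.

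For $(2) \Rightarrow (1)$, I would invoke a Bennett-type comparison. Local freeness of $\gr_C(\cO_X/\cI)$ over $\cO_C$ implies that the graded fibre Hilbert series at each closed $y \in C$ is locally constant; combined with the constant Hilbert series of the regular local ring $\cO_{C,y}$, this recovers $H_{y,\cI}(k) = \dim \cO_{X,y}/(\cI_y + m_{y,X}^{k+1})$ as a universal function of these two locally constant invariants. The main technical obstacle, both here and in the differential setting, is reconciling the $m_{y,X}$-adic and $\cI_C$-adic filtrations via $m_{y,X} = \cI_C + m_{y,C}$; an Artin--Rees-type argument expresses each successive quotient $m_{y,X}^k/m_{y,X}^{k+1}$ of $\cO_{X,y}/\cI_y$ as a finite sum of tensor products of graded pieces of $\gr_C(\cO_X/\cI)$ and $\gr_{m_{y,C}}(\cO_{C,y})$, all locally constant on $C$, yielding constancy of $H_{y,\cI}$.
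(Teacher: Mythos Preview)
Your $(1)\Rightarrow(2)$ follows the paper's setup through Theorem~\ref{weak} and Lemma~\ref{center2}, but the crucial step ``Taking $\inn_C$ of both sides places $\inn_C(g)$ in the ideal $(\inn_C(f_1),\dots,\inn_C(f_k))$'' is a genuine gap. Initial forms along $C$ do not commute with sums: from $g=\sum h_if_i$ you cannot pass to $\inn_C(g)=\sum H_i\,\inn_C(f_i)$ without first knowing $\ord_C(h_if_i)\ge\ord_C(g)$ for every $i$. That order inequality is essentially the content of the ``moreover'' clause you are trying to prove (indeed, the paper later \emph{uses} Theorem~\ref{Flat} for exactly this purpose in the proof of Lemma~\ref{span3}), so as written the argument is circular. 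It can be repaired---once the direct sum $A\oplus B$ is established, the $\cO_C$-linear independence of the $u^\alpha\inn_C(f_i)$ rules out cancellation in the lowest $\cI_C$-degree of $\sum h_if_i$---but this is the heart of the matter and you do not supply it. A secondary issue: Lemma~\ref{66} requires genuine initial exponents for a monomial order, not merely invertible Jacobians; the correct reference for spreading the fibre basis at $x$ to an $\cO_C$-basis is Lemma~\ref{open}, which is what the paper uses.

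The paper handles the identification $\inn_C(\cI)=B$ differently and more directly: having the decomposition $\cO_C[u_1,\dots,u_c]=A\oplus B$ with $B\subseteq\inn_C(\cI)$, it shows $A\cap\inn_C(\cI)=0$ by contradiction. If a nonzero $H_0\in\inn_C(\cI)$ had $\supd(H_0)\subset\Gamma\times\NN^{c-s}$, its evaluation at some $y\in C$ would be a nonzero element of $\inn_y(\cI)$ linearly independent of the span of $\{u^\alpha\inn_y(f_i)\}$, forcing $H_{y,\cI}<H(\Delta\times\NN^{n-s})=H_{x,\cI}$ and contradicting hypothesis~(1). This is where the paper actually \emph{uses} the constancy of the Hilbert--Samuel function; your argument never invokes it.

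Your $(2)\Rightarrow(1)$ sketch via an Artin--Rees comparison of the $m_{y,X}$-adic and $\cI_C$-adic filtrations is the classical Bennett route and is a genuinely different argument from the paper's. The paper stays inside its own machinery: it picks homogeneous lifts $f_1,\dots,f_k\in\inn_C(\cI)$ whose evaluations at a single point $x$ have initial exponents at the vertices of a monotone diagram $\Delta$, uses Lemma~\ref{open} to spread the fibre basis to an $\cO_C$-basis, compares ranks with the locally free quotient to upgrade the resulting epimorphism to an isomorphism, and then reads off $H_{y,\cI}=H(\Delta\times\NN^{n-c})$ at every $y\in C$ directly from the $\gr_C$-decomposition. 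Your approach would work but imports outside technology; the paper's is self-contained and yields the explicit basis statements simultaneously.
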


\begin{proof}
$(1)\Rightarrow (2)$. Let $C\subset S$ be any smooth center contained in the Samuel stratum on  $X$, and let $x\in C$. Let $f_1,\ldots, f_k$ be a  standard basis of $\cI$ in a neighborhood of $x$ corresponding to a certain monotone diagram $\Delta\in \NN^{s}$ (it exists by Theorem \ref{weak}).

By  Lemma \ref{center2}, we can find a coordinate system    $u:=(u_1,\ldots,u_n)$  compatible with  $f_1,\ldots, f_k$ and  such that $(u_1,\ldots,u_c)$   define (locally) the ideal $I_C$ of  $C$. Denote by $v:=(u_{c+1},\ldots,u_n)$ the remaining coordinates.  By  Theorem \ref{weak},  each $f_i$ has a constant  multiplicity $\mu_i$ along $S$ and thus along $C$. Consequently,  by Lemma \ref{center},   $f_i\in \cI^{\mu_i}_C$. Write $\inn_C(f_i)=F_i(u_1,\ldots,u_c)\in \cO_C(v)[u_1,\ldots,u_c]$ as the form of degree $\mu_i$ with coefficients in $\cO_C(v)$.
Thus  $\inn_x(f_i)=\inn_C(f_i)(x)$,  where  $\inn_C(f_i)(x)=F_i(v(x))(u_1,\ldots,u_c)$  denotes  the evaluation  of the $\cO_C$-form $\inn_C(f_i)=F_i(u_1,\ldots,u_c)$ at $x\in C$. 

In particular, $\inn_x(f_i)\in K[u_1,\ldots,u_c]$,
and  $\{u^\alpha\mid \alpha\in \Gamma\times \NN^{c-s}\}\cup \{u^\alpha F_i(v(x))\mid \alpha\in \Gamma_i\times \NN^{c-s}\}$  is a basis of the evaluation $K[u_1,\ldots,u_c]$ of $\gr_C(\cO_X))=\cO_C[u_1,\ldots,u_c]$ at $x\in C$.

By Lemma \ref{open}, the set $\{u^\alpha\mid \alpha\in \Gamma\times \NN^{c-s}\}\cup \{u^\alpha F_i\mid \alpha\in \Gamma_i\times \NN^{c-s}\}$  is a basis over  $\cO_{C\cap U}$ of $\gr_C(\cO_U))=\cO_{C\cap U}[u_1,\ldots,u_c]$   in an open neighborhood $U$ of $x\in X$. Then for any $F\in \inn_C(\cI_{|U})$ we can write uniquely 
$$
F=\sum H_i\inn_C(f_i)+H_0,
$$ 
where the functions $H_i$ are homogeneous in $\gr_C(\cO_U))$, and $\supd(H_i)\subset \Gamma_i\times \NN^{c-s}$. Since $\inn_C(F_i)\in\inn_C(\cI)$, the set $\{u^\alpha \mid \alpha\in \Gamma\times \NN^{c-s}\}$ generates $\cO_{C\cap U}[u_1,\ldots,u_c]/\inn_C(\cI)$, and there is an epimorphism 
$$
\phi: \bigoplus_{\alpha\in \Gamma\times \NN^{c-s}} \cO_{C\cap U}\cdot u^\alpha\to\cO_{C\cap U}[u_1,\ldots,u_c]/\inn_C(\cI).
$$
The kernel of $\phi$  consists of 
%
all
$\cO_C$-forms  $F$ with  $\supd(F) \subset \Gamma\times \NN^{c-s}$.
 If  there is a nonzero $\cO_C$-form $F=\inn_C(f)\in\inn_C(\cI)$ with $\supd(F)\subset \Gamma\times \NN^{c-s}$ then  its evaluation $F(y)$ at  some closed point $y\in C\cap U$ is not zero. But this implies that $\supd(\inn_y(f))\subset \Gamma$ and is linearly independent of 
 $$
\{u^\alpha\cdot  \inn_y(f_i)(u) \mid \alpha\in \Gamma_i\times \NN^{n-k}\},
$$ 
and $H_y(\cI)<H(\Delta\times\NN^{n-k})=H_x(\cI)$, 
 which contradicts the assumption. 
 
  Consequently, the kernel of $\phi$ is trivial, and $\phi$ is an isomorphism. This also implies that any $F\in \inn_C(\cI)$ can be uniquely written as
$F=\sum H_i\inn_C(f_i)$, where $\supd(H_i)\subset \Gamma_i\times \NN^{c-s}$. This proves the implication $(1)\Rightarrow (2)$  and the ``moreover'' part of the theorem. 

$(2)\Rightarrow (1)$. Observe that  $\gr_C(\cO_X)=\cO_c[u_1,\ldots,u_c]$  is a graded $\cO_c$-module (with the standard grading), and its evaluation at $x\in C$ is  a graded $K$-module $K[u_1,\ldots,u_c]$ for a base field $K$. 
Let $\gr_C(\cI)_x\subset K[u_1,\ldots,u_c]$ be the evaluation of the ideal $\gr_C(\cI)$ at $x\in C$.
Consider  a monotone monomial order $ \overline{T}$ such that after a generic change of coordinates the diagram 
%
$\Delta:=\expe_{\overline{T}}(\gr_C(\cI)(x))$
  is  monotone.

Let $f_1,\ldots,f_k \in \inn_C(\cI)$ be the homogeneous polynomials over $\cO_C$ such that for their evaluations  $\overline{f_1},\ldots,\overline{f_k}$  at $x$, the initial exponents  $\expe_{\overline{T}}(\overline{f_i})=\alpha_i$  are 
the 
vertices of $\Delta$. Then the corresponding set  
$$
\{u^\alpha\mid \alpha\in \Gamma\times \NN^{c-s}\}\cup \{u^\alpha \overline{f_i}(x)\mid \alpha\in \Gamma_i\times \NN^{c-s}\}
$$
is a  basis of $K[u_1,\ldots,u_c]$.  Thus, by Lemma \ref{open},
$$
\{u^\alpha\mid \alpha\in \Gamma\times \NN^{c-s}\}\cup \{u^\alpha {f_i}(x)\mid \alpha\in \Gamma_i\times \NN^{c-s}\}
$$ 
is a basis of  the free $\cO_{C\cap U}$-module $\cO_{C\cap U}[u_1,\ldots,u_c]$.
On the other hand, the module $\cO_C[u_1,\ldots,u_c]/\inn_C(\cI)$ is locally free (so we may assume it is free on $U$).
Moreover, as before there is an epimorphism 
$$
\phi: \bigoplus_{\alpha\in \Gamma\times \NN^{c-s}} \cO_{C\cap U}\cdot u^\alpha\to\cO_{C\cap U}[u_1,\ldots,u_c]/\inn_C(\cI),
$$
which is an isomorphism after evaluating at $x$, which means that both the free $\cO_C$-modules have the same rank in each grading. Thus the epimorphism $\phi$ is an isomorphism. Then as before each element in $\gr_C(\cI)$ can be written as 
$f=\sum h_if_i$, where $\supp(h_i)\subset \Gamma_i$. This implies that each function in $\cI$ can be written as $f\simeq \sum h_if_i\ ({\rm mod}\ \cI_C^s)$ up to a power $\cI_C^s$ for any $s\gg 0$.
Consequently, the initial form $\inn_y(f)\in \inn_y(\cI)$ can be written as $\inn_y(f)=\sum \inn_y(h_i)\inn(f_i)$, where $\supd(\inn_y(h_i))\subset \Gamma_i\times \NN^{n-c}$. 

Thus the  Hilbert-Samuel function  $H_y(\cI)=H(\Delta\times \NN^{n-c})$  at any point $y\in C$ is determined by the diagram $\Delta\times \NN^{n-c}$  and is the same for all closed points $y\in U\cap C$.
\end{proof}

\subsection{Hilbert-Samuel function}
To deduce the important Bennett theorem, we shall need a useful  extension of Bierstone-Milman result \cite{BM} Corollary 5.2.2. In the Bierstone-Milman paper \cite{BM} a (stronger) pointwise order is considered for the set of Hilbert-Samuel functions. They show that the set of Hilbert-Samuel functions with pointwise order has d.c.c. property (see below). 
 Here we need a similar result for a (weaker) lexicographic order. The main idea of the proof to  use  diagrams of initial exponents, remains the same.

\begin{theorem}[Descending chain condition of the Hilbert-Samuel function]\label{dcc}
The set of values of the Hilbert-Samuel function (ordered lexicographically) 
%
$$
\cH(n):=\{H_\cI(k)=\dim(K[x_1,\ldots,x_n]/(\cI+m^{k+1}))
 \mid  \cI\subset K[x_1,\ldots,x_n])\}
$$ 
is d.c.c. (satisfies the descending chain condition). In other words, any decreasing sequence of functions 
\begin{equation} \label{HS} 
H_1\geq \ldots \geq H_n\geq\ldots 
\end{equation}
stabilizes: $H_s=H_{s+1}=\ldots$ for sufficiently large $s$.
\end{theorem}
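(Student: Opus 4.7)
The plan is to reduce the descending chain condition on Hilbert--Samuel functions to a well-quasi-ordering statement about diagrams in $\NN^n$ via the Galligo--Grauert correspondence, and then conclude by a Higman-style combinatorial argument.

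First I would translate the problem from ideals to diagrams. For each ideal $\cI\subset K[x_1,\ldots,x_n]$, after a generic linear coordinate change (Corollary \ref{etale}) and with respect to a fixed monotone monomial order, the diagram of initial exponents $\Delta(\cI)=\expe_{\overline{T}}(\cI)$ is monotone and satisfies $H_\cI=H(\Delta(\cI))$ by the corollary identifying the Hilbert--Samuel function with the Hilbert function of the diagram. Since a linear coordinate change does not affect $H_\cI$, we may assume the ideals are monomial, and it suffices to establish DCC under the lex order for the set $\{H(\Delta):\Delta\text{ a diagram in }\NN^n\}$.

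Next, the key combinatorial input is that the set of diagrams in $\NN^n$, ordered by reverse inclusion $\supseteq$, is well-quasi-ordered: for any infinite sequence $\Delta_1,\Delta_2,\ldots$ there exist $i<j$ with $\Delta_i\supseteq\Delta_j$. Each diagram is determined by its finite antichain of vertices $V(\Delta)\subset\NN^n$, and $\Delta_i\supseteq\Delta_j$ is equivalent to the Higman/Hoare-type condition that every $v\in V(\Delta_j)$ componentwise dominates some $v'\in V(\Delta_i)$. Since $(\NN^n,\leq)$ is WQO by Dickson's lemma, Higman's lemma applied to its finite antichains yields the stated WQO of diagrams. Alternatively, one can argue directly by induction on $n$ using the layered decomposition from Lemma \ref{c1} and Corollary \ref{12}, which is in the spirit of the Bierstone--Milman proof of the pointwise version.

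With these tools in place the conclusion is immediate. Suppose, towards a contradiction, that $H_1>_{\lex}H_2>_{\lex}\ldots$ is a strictly lex-descending sequence in $\cH(n)$. Choose monotone diagrams $\Delta_i$ with $H_i=H(\Delta_i)$ and apply the WQO to obtain $i<j$ with $\Delta_i\supseteq\Delta_j$. Then $\NN^n\setminus\Delta_i\subseteq\NN^n\setminus\Delta_j$ gives $H_i=H(\Delta_i)\leq H(\Delta_j)=H_j$ pointwise, hence $H_i\leq_{\lex}H_j$, contradicting the strict descent. The main obstacle is the middle step: establishing the well-quasi-ordering of diagrams, which requires the Higman/Dickson induction on $n$; the reduction via Galligo--Grauert and the concluding contradiction are both immediate from tools already developed in the paper.
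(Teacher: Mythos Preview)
Your overall architecture is sound and genuinely different from the paper's: reduce to diagrams, establish that diagrams under reverse inclusion form a WQO, and conclude by contradiction. The reduction step and the final contradiction are both fine (though the detour through Galligo--Grauert is unnecessary: any diagram of initial exponents, monotone or not, already computes $H_\cI$). The problem is your justification of the WQO step.

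You write that ``Higman's lemma applied to its finite antichains yields the stated WQO of diagrams.'' But Higman's lemma (and its finite-subset variant) equips $\mathcal{P}_f(\NN^n)$ with the \emph{embedding} order: $A\le_H B$ iff there is an injection $f\colon A\to B$ with $a\le f(a)$. What you need is the \emph{domination} (minor) order: $A\le_{\rm min} B$ iff every $b\in B$ dominates some $a\in A$. These are not the same. For example, in $\NN^2$ with $A=\{(1,0)\}$ and $B=\{(2,0),(0,1)\}$ one has $A\le_H B$ (send $(1,0)\mapsto(2,0)$), yet $A\not\le_{\rm min} B$ since $(0,1)$ dominates nothing in $A$; correspondingly $\Delta_A\not\supseteq\Delta_B$. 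The WQO you need \emph{is} true --- it is essentially Maclagan's theorem that monomial ideals admit no infinite antichain, combined with Noetherianity --- but it is not Higman's lemma and needs its own argument, which you have only gestured at.

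The paper avoids this entirely by a direct, self-contained construction. Rather than invoking an abstract WQO on diagrams, it builds a coherent system of truncations $\beta_k\in S_k$ by pigeonhole (each $\beta_k$ is a vertex set in degrees $\le k$ realized as $\res_k(\alpha_i)$ for infinitely many $i$, with $\res_{k-1}(\beta_k)=\beta_{k-1}$), observes that the ascending chain of monomial ideals $\beta_1+\NN^n\subset\beta_2+\NN^n\subset\cdots$ stabilizes by Noetherianity, and then uses a short argument with truncated Hilbert--Samuel functions to force infinitely many $\alpha_{i_j}$ to coincide with the stable $\beta_m$, whence the sequence $H_i$ stabilizes. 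This keeps everything within the elementary combinatorics of diagrams and uses only the Noetherian property of the polynomial ring.
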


\begin{proof} 
Consider any normalized and total order $\overline{T}$ on $\NN^n$.
For any  finite subset $\alpha:=\{a^1,\ldots,a^s\}$ in $\NN^n$ denote  by   $\Delta(\alpha):=\Delta(a^1,\ldots,a^s):=\bigcup a^i+\NN^n$ the  diagram of the initial exponents defined by $a^i$.

Let 
$H(\alpha)(k):=H(\Delta(a^1,\ldots,a^s))$
  denote the corresponding Hilbert-Samuel function. Then let 
%
$$
\cH(n):=\{H(\alpha)(k)\mid \alpha:=(a^1,\ldots,a^s)\in \NN^n\}
$$ 
be the set of all possible Hilbert-Samuel functions obtained that way.
We shall assume that the elements of $\alpha$ are ordered:  $a^1<\ldots<a^s$ with respect to $\overline{T}$, and  that $a^i\not \in \bigcup_{j=1}^{i-1} a^j+\NN^n$.  Denote by $S$ the set of all finite subsets of $\NN^n$ 
%
of that form, and by $S_k$ those subsets in $S$ for which all elements have multiplicity $|a^i|\leq k$.

Write $(H_i)$ from the sequence (\ref{HS}) as $(H(\alpha_i))$, where 
$(\alpha_i)$ is the corresponding sequence  of finite subsets of 
$\NN^n$. For any $\alpha=(a^1,\ldots,a^s)$ set  $\alpha+ \NN^n:=\bigcup a^i+\NN^n$. 

Immediately from the definition we see that if $\alpha\subset \beta$ then $\alpha+ \NN^n\subset \beta+ \NN^n$ and  $H(\alpha)\geq H(\beta)$. Also if $\alpha\subseteq \beta$ then for $b\in \beta\setminus \alpha$, we have  $b\not \in \alpha+ \NN^n$.

For any finite subset $\alpha$ of $\NN$ we define its ``restriction'' to be the subset 
$$
\res_k(\alpha)=\{a\in \alpha\mid |a|\leq k\}\in S_k.
$$
By the \emph{truncated Hilbert-Samuel function} $ H^{\leq k}$ we mean the truncation of $H:\NN\to \NN$ to  the set $\{1,\ldots,k\}$. 

We see that 
$$
H(\res_k(\alpha))\geq  H(\alpha)\quad  \text{and} \quad H^{\leq k}(\alpha)=H^{\leq k}(\res_k(\alpha)).
$$ 
Since each  set $S_k$ is finite, we can construct by induction
a sequence $\beta_k\in S_k$ such that: 
\begin{enumerate}
 \item $\res_{k-1}(\beta_k)=\beta_{k-1}$. 
\item For any $\beta_k$ there exist infinitely many  $\alpha_i$ from the sequence such that $res(\alpha_i)=\beta_k$.
\end{enumerate}
Thus we get a sequence $\beta_1\subset \beta_2\subset\ldots.$ But the relevant sequence 
$$
\beta_1+\NN^n\subset \beta_2+\NN^n\subset \beta_m+\NN^n=\beta_{m+1}+\NN^n=\ldots
$$ 
stabilizes since it corresponds to an increasing chain of  monomial ideals in the Noetherian ring $K[x_1,\ldots,x_n]$. This implies that the sequence of the sets of vertices $\beta_m=\beta_{m+1}=\ldots$ stabilizes. In other words, there is an infinite sequence $\alpha_{i_j}$ such that $\res_j(\alpha_{i_j})=\beta_m$ for $j\geq m$. We show that  $\alpha_{i_j}=\beta_m$ for  $j\geq m$. Observe that if  $\beta_m \subsetneq  \alpha_{i_{j_0}}$ then $\beta_m \subsetneq  \res_j(\alpha_{i_{j}})$ for sufficiently large $j>j_0$ and 
$$
H^{\leq j}(\alpha_{i_{j}})>   H^{\leq j}(\beta_m).
$$ 
Since $\beta_m=\res_j(\alpha_{i_j})$ we have the contradiction:
 $$
H^{\leq j}(\beta_m)=H^{\leq j}(\alpha_{i_j})\geq  H^{\leq j}(\alpha_{i_{j_0}})>    H^{\leq j}(\beta_m).
$$
 This shows that $\alpha_{i_{j_1}}= \alpha_{i_{j_2}}=\ldots $ stabilizes, and consequently $H_i=H(\alpha_i)$ stabilizes for $i\geq i_{j_m}$.
\end{proof}

\begin{corollary}\label{dcc2}  There exist only finitely many diagrams of initial exponents $\Delta$ in $\NN^k$ having the same Hilbert-Samuel function.
	
\end{corollary}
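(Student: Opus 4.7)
The plan is to argue by contradiction, adapting the diagonal/pigeonhole extraction from the proof of Theorem \ref{dcc}. Suppose there were an infinite sequence of pairwise distinct diagrams $\Delta_1, \Delta_2, \ldots$ in $\NN^k$, all with the same Hilbert-Samuel function $H$. Write $V_i := V(\Delta_i)$ for the (finite) vertex set of $\Delta_i$, and for each $s \in \NN$ set $\res_s(V_i) := \{v \in V_i : |v| \leq s\}$, an element of the finite set $S_s$ used in the proof of Theorem \ref{dcc}.

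The first step is to extract, by iterated pigeonholing on $s = 1, 2, \ldots$, a nested sequence of infinite subfamilies of indices $T_1 \supseteq T_2 \supseteq \ldots$ and elements $\beta_s \in S_s$ such that $\res_s(V_i) = \beta_s$ for every $i \in T_s$; by construction the compatibility $\res_{s-1}(\beta_s) = \beta_{s-1}$ is automatic. The monomial ideals $\beta_s + \NN^k$ then form an ascending chain in $K[x_1,\ldots,x_k]$, so by Noetherianity they stabilize at some step $m$. Since each $\beta_s$ is precisely the vertex set of $\beta_s + \NN^k$, stabilization of the ideals forces $\beta_m = \beta_{m+1} = \ldots$; denote the common diagram by $\Delta_* := \beta_m + \NN^k$.

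Next I pick a diagonal subsequence $i_1 < i_2 < \ldots$ with $i_s \in T_s$. For $s \geq m$, the equality $\res_s(V_{i_s}) = \beta_m$ says that the vertices of $\Delta_{i_s}$ of multiplicity at most $s$ are exactly those of $\Delta_*$; hence $\Delta_{i_s}$ and $\Delta_*$ coincide on the slice $\{|\alpha| \leq s\}$. Consequently $H(\Delta_*)(t) = H(\Delta_{i_s})(t) = H(t)$ for all $t \leq s$, and letting $s \to \infty$ through the subsequence yields $H(\Delta_*) = H$ pointwise. On the other hand $\Delta_{i_s} \supseteq \Delta_*$; if the inclusion were ever strict, choosing $\alpha^* \in \Delta_{i_s} \setminus \Delta_*$ puts $\alpha^* \in \Gamma_{\Delta_*} \setminus \Gamma_{\Delta_{i_s}}$, forcing $H(\Delta_{i_s})(|\alpha^*|) < H(\Delta_*)(|\alpha^*|) = H(|\alpha^*|)$ and contradicting $H(\Delta_{i_s}) = H$. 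Hence $\Delta_{i_s} = \Delta_*$ for every $s \geq m$, contradicting the assumed pairwise distinctness of the $\Delta_i$.

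No individual step looks genuinely difficult; the only delicate point is keeping the bookkeeping straight between vertex sets $V_i$, their truncations $\res_s(V_i)$, and the diagrams they generate, but this is essentially the same bookkeeping carried out in the proof of Theorem \ref{dcc}, so it should go through cleanly.
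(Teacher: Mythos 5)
Your argument is correct and follows essentially the same route as the paper's proof: pigeonhole extraction of compatible truncated vertex sets, stabilization of the associated monomial ideals by Noetherianity, and a final appeal to the common Hilbert--Samuel function to rule out extra high-degree vertices. In fact your write-up makes explicit the diagonal subsequence and the last step (forcing $\Delta_{i_s}=\Delta_*$ via $H(\Delta_{i_s})=H(\Delta_*)$), which the paper's own proof leaves largely implicit.
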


\begin{proof} We use the same notation as in the previous proofs. Suppose there exist infinitely many diagrams $\Delta_i$ with the same Hilbert-Samuel function. For each $s$ one can find the subsets $\alpha_s$ of $\NN^k_{\leq s}:=\{x\in \NN^k\mid |x|\leq s\}$, such that $\alpha_s$ is the restriction of infinitely many of $\Delta_i$. 
Each of the subsets generates the diagram  $\beta_s$
Then as before it leads to an infinite sequence of the subsets 
$\beta_1\subset\beta_2\subset\ldots\subset \beta_i\subset\ldots$, such that $res_{i-1}{\beta_i}=\beta_{i-1}$ for all $i=1,\ldots$.
But then the set $\beta:=\bigcup \beta_i$ corresponds to a finitely generated ideal $I_\beta:=\{ x^a\mid a\in\beta\}$ in $K[x_1,\ldots,x_n]$. Thus the sequences $(I_{\beta_i}($ and $(\beta_i)$ stabilize   $\beta_i=\beta_{i+1}=\ldots$ and define the same diagram.
	
\end{proof}

The following theorem extends Bennett's result to the differential (and  analytic) setting.

\begin{corollary}[Bennett \cite{Bennett}]
   Let $X$ be a smooth  scheme of finite type over a field $K$ (or a compact analytic or differentiable manifold).  Let $\cI$ be a sheaf of ideals of finite type on $X$.  Then the Hilbert-Samuel function   $H_{x,\cI}(k)$ of $\cI$ on $X$ is upper semicontinuous and attains only finitely many values. Consequently, there is a finite \emph{Samuel decomposition} into  locally closed strata such that two closed points
are in the same stratum if they have the same Hilbert-Samuel function 
%
$H_{\cI,x}(s)=\dim_{K_x}\cO_X/(m_{x}^{s+1}+\cI_x)$.
\end{corollary}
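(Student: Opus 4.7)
The strategy is to deduce both assertions from Theorem~\ref{weak} together with the descending chain condition Theorem~\ref{dcc}. For upper semicontinuity, fix a closed point $x\in X$. Theorem~\ref{weak} furnishes an \'etale (respectively open) neighborhood $U$ of $x$, a monotone diagram $\Delta$, and a standard basis $f_1,\ldots,f_k\in\cI(U)$ whose condition~(2) asserts that $H_{y,\cI}\leq H_{x,\cI}$ for every closed $y\in U$. This is precisely upper semicontinuity at $x$: for any Hilbert--Samuel function $H$ with $H>H_{x,\cI}$, the neighborhood $U$ is disjoint from $\{y:H_{y,\cI}\geq H\}$, so the complement of this upper level set is open.

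For finiteness of values in the algebraic case, I would argue by contradiction. Suppose the value set $V:=\{H_{y,\cI}:y\in X\text{ closed}\}$ is infinite. By Theorem~\ref{dcc}, $V\subseteq\cH(n)$ satisfies DCC; since it is totally ordered lexicographically, the failure of finiteness together with DCC forces an infinite strictly ascending chain $H_1<H_2<\ldots$ in $V$ (a totally ordered set with DCC and no infinite ascending chain is necessarily finite). Picking closed points $y_i\in X$ with $H_{y_i}=H_i$, the upper level sets $Z_i:=\{y:H_{y,\cI}\geq H_i\}$ are closed by the upper semicontinuity just established, and the memberships $y_i\in Z_i\setminus Z_{i+1}$ yield an infinite strictly descending chain $Z_1\supsetneq Z_2\supsetneq\ldots$ of closed subsets of $X$. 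This contradicts the Noetherian property of $X$, so $V$ is finite.

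In the compact analytic/differentiable case, I would combine compactness with upper semicontinuity and DCC in a transfinite descent. By compactness, finitely many neighborhoods $U_{x_1},\ldots,U_{x_N}$ from Theorem~\ref{weak} cover $X$; since $H_{y,\cI}\leq H_{x_j}$ on $U_{x_j}$, the value $H^{(1)}:=\max_j H_{x_j}$ is a global maximum of $H_{y,\cI}$, attained on the closed (hence compact) stratum $S^{(1)}=\{y:H_{y,\cI}=H^{(1)}\}$. Iterating on the closed subspaces $\{y:H_{y,\cI}\geq H\}$ for successive values of $H$ produces a strictly descending sequence $H^{(1)}>H^{(2)}>\ldots$ in $\cH(n)$, which must terminate by Theorem~\ref{dcc}. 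The main technical obstacle is making each descent step rigorous: the ``next'' maximum must be extracted from the open complement $X\setminus(S^{(1)}\cup\cdots\cup S^{(i)})$, which is not itself compact. I would handle this by covering the complement with neighborhoods from Theorem~\ref{weak} applied at points realizing large HS values there, bounding the sup of HS values on the complement by the finitely many local maxima of such a cover (shrunk into the complement), and then attaining it via upper semicontinuity on the closed set $\{y:H_{y,\cI}\geq H^{(i+1)}\}\subseteq X$.

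Once $V$ is known to be finite, the Samuel decomposition follows at once: the strata $\{y\in X:H_{y,\cI}=H\}$ for $H\in V$ partition $X$ into finitely many locally closed pieces, each being the difference of the closed set $\{y:H_{y,\cI}\geq H\}$ and the closed set $\{y:H_{y,\cI}>H\}=\bigcup_{H'\in V,\,H'>H}\{y:H_{y,\cI}\geq H'\}$ (a finite union, since $V$ is finite).
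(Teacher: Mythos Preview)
Your argument tracks the paper's proof closely. Upper semicontinuity is deduced from Theorem~\ref{weak}(2) in both, and for finiteness in the algebraic case the paper iterates by repeatedly extracting the maximum on the (quasi-compact in the Zariski topology, hence still quasi-compact after removing a closed stratum) complement, producing a strictly descending sequence of Hilbert--Samuel values that terminates by d.c.c.; you instead extract an infinite ascending chain in the value set and convert it into a strictly descending chain of closed subsets, contradicting Noetherianity. These are two packagings of the same idea.

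Two points are worth noting. First, the paper handles the non-perfect field case separately: Theorem~\ref{weak} is established via \'etale neighborhoods and generic coordinate changes that may require extending the residue field, so the paper passes to the algebraic closure, obtains the finite Samuel decomposition there, and descends via the Galois action. Your write-up implicitly assumes Theorem~\ref{weak} applies directly over any $K$; you should flag this reduction. Second, your concern about the compact analytic/differentiable case is well taken---the open complement of the top stratum need not be compact, so one cannot simply re-extract a maximum there. The paper's proof is equally terse on this point (it asserts ``maximum exists, hence by d.c.c.\ finitely many values'' without spelling out the iteration in the non-Noetherian setting). Your proposed workaround is in the right spirit but not yet a proof: to make the descent step rigorous in the analytic case, one typically uses that the upper level sets $Z_H$ are closed \emph{analytic} subsets (from the description in Theorem~\ref{weak}(1)), so that a strictly descending chain of them is controlled by coherence rather than mere compactness. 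In the purely differentiable setting this is more delicate, and neither your sketch nor the paper's proof fully addresses it.
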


\begin{proof} 
Let us consider the case of a perfect field in the algebraic setting.
It follows from  Theorem \ref{weak} that the set  
$$
\{x\in X\mid H_{x,\cI}\leq H\}
$$ 
is open. Since $X$ is quasi-compact in Zariski topology (or in the case of compact manifold),  we conclude that the Hilbert-Samuel function attains its maximum value on $X$, and consequently, by d.c.c., it has finitely many values.
This proves the theorem over a perfect field. 

In general, we can pass to an algebraic closure $\overline{K}$, where we find the finite Samuel decomposition of $\overline{X}:=X\times_{\Spec{K}}\Spec{\overline{K}}$ which is stable under the action of the Galois group $\Ga(\overline{K}/K)$. This implies that it descends to the Samuel decomposition of $X$.
\end{proof}

\begin{corollary}[Bennett \cite{Bennett}] 
Let $X$ be any scheme of finite type over a field $K$ (or a compact analytic space or a differentiable space). Then there exists a finite \emph{Samuel decomposition} of $X$ into  a locally closed Samuel strata such that two closed points
are in the same stratum if they have the same Hilbert-Samuel function 
$H_{X,x}(s)=\dim_{K_x}\cO_X/m_{x}^{s+1}$.
\end{corollary}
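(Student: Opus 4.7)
The plan is to reduce this statement to the previous corollary by using local embeddings into smooth ambient spaces. First, I would cover $X$ by finitely many affine open pieces $U_1,\ldots,U_N$ (in the algebraic case, by quasi-compactness; in the compact analytic or differentiable case, by compactness together with the local definition of these categories). By the very definition of a scheme of finite type, respectively of an analytic/differentiable space recalled in the paper, each $U_i$ admits a closed embedding $j_i\colon U_i\hookrightarrow Y_i$ into a smooth ambient object $Y_i$ (an affine space, a polydisk, or an open subset of $\RR^{n_i}$), with ideal sheaf $\cI_i\subset\cO_{Y_i}$ of finite type supported on $j_i(U_i)$.

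The key observation is that for any closed point $x\in U_i$ with image $y=j_i(x)\in Y_i$, the surjection $\cO_{Y_i,y}\twoheadrightarrow\cO_{X,x}$ has kernel $\cI_{i,y}$ and identifies $m_{x,X}^{s+1}$ with the image of $m_{y,Y_i}^{s+1}$, so that
\[
H_{X,x}(s)=\dim_{K_x}\cO_{X,x}/m_{x,X}^{s+1}=\dim_{K_y}\cO_{Y_i,y}/(m_{y,Y_i}^{s+1}+\cI_{i,y})=H_{y,\cI_i}(s).
\]
Thus the Hilbert--Samuel function of $X$ along $U_i$ coincides, via $j_i$, with the Hilbert--Samuel function of the ideal sheaf $\cI_i$ on $Y_i$ restricted to $j_i(U_i)$. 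Applying the previous corollary to the pair $(Y_i,\cI_i)$ (after replacing $Y_i$ by a relatively compact neighborhood of $j_i(U_i)$ in the analytic/differentiable cases, so that the compactness hypothesis holds) gives a finite stratification of $Y_i$ into locally closed Samuel strata for $\cI_i$, whose intersection with $j_i(U_i)\simeq U_i$ provides a finite locally closed Samuel stratification of $U_i$ for the function $H_{X,\bullet}$.

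Finally I would glue: the common refinement over $i=1,\ldots,N$ of these finite stratifications yields a finite decomposition of $X$ into locally closed subsets on which $H_{X,\bullet}$ is constant, and two closed points lie in the same stratum if and only if they share the same Hilbert--Samuel function (upper semicontinuity on each piece, together with d.c.c.\ from Theorem~\ref{dcc}, guarantees that the finitely many level sets obtained independently on the $U_i$ patch consistently). The main obstacle will be the compact analytic/differentiable setting: one has to verify that after shrinking each $Y_i$ to a relatively compact neighborhood the previous corollary still applies and that the finitely many values of $H_{X,\bullet}$ obtained on different $U_i$ agree globally; this follows from the fact that the Hilbert--Samuel function is a purely local invariant of $\cO_{X,x}$ and therefore does not depend on the choice of local embedding $j_i$.
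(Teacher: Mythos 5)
Your proposal is correct and follows essentially the same route as the paper, whose entire proof is the one-line remark that one can locally embed $X$ into a smooth ambient space and apply the preceding corollary; you simply fill in the details of that reduction (the identification $H_{X,x}=H_{y,\cI_i}$, the finite cover, and the gluing via the embedding-independence of the Hilbert--Samuel function).
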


\begin{proof} We can locally embed $X$ into a smooth scheme over $\bA^n_K$ or into $K^n$, where $K=\RR,\CC$, and apply the previous theorem.
\end{proof}


The following theorem shows that the problem of resolution of singularities controlled by the Hilbert-Samuel function can be reduced to the desingularization of marked ideals $(f_i,\mu_i)$ defined for the standard basis. Thus the  standard basis is a counterpart of Hironaka's distinguished data or  Bierstone-Milman's semicoherent presentation of ideals (see 
\cite{Hir2}, \cite{BM2}, \cite{BM22}).

\begin{theorem}[Stability of  standard basis under blow-ups]\label{weak2}
 Let $\cI$ be any ideal sheaf of finite type on  
  a smooth variety (or an analytic or differentiable manifold) $X$, and let  $C\subset X$ be a smooth subvariety.
 Consider the blow-up
$\sigma: X'\to X$ at a smooth closed center $C\subset X$ contained in the Samuel stratum, and let $\cI'$ be its strict transform. Let $U'\subset \sigma^{-1}(U)$ be an open subset where a coordinate $y$ on $U$ describes the exceptional divisor of $\sigma$.
 Then:
\begin{enumerate}
\item If $f_1,\ldots,f_k$ is a  standard basis  of $\cI$ on an \'etale (respectively open) neighborhood $U$ of $X$ with respect to a monotone  diagram $\Delta$   
  then 
$$
f_1':=\sigma^*(f_1)/y^{\mu_1},\ldots,f'_k:=\sigma^*(f_k)/y^{\mu_k}
$$ 
is a  standard basis of $\cI'$  on $U'$ with respect  to $\Delta$ and the induced coordinate systems. 
  
\item {\rm (Bennett)} $H_{x,\cI}\geq H_{x',(\cI')}.$
\end{enumerate}
\end{theorem}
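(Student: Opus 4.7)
First I would apply Lemma \ref{center2} to choose a coordinate system $v_1,\ldots,v_n$ on $U$ that is simultaneously compatible with the standard basis $f_1,\ldots,f_k$ and with the center, so that $\cI_C=(v_1,\ldots,v_c)$ with $c\ge s$ and the vertices $\alpha_1,\ldots,\alpha_k$ of $\Delta$ lie in $\NN^s\subset\NN^c$. Since $C$ is contained in the Samuel stratum and $\ord_y(f_i)=|\alpha_i|=:\mu_i$ is constant along $S$ by Theorem \ref{weak}, Lemma \ref{bb} yields $f_i\in\cI_C^{\mu_i}$, so the strict transforms $f_i':=\sigma^*(f_i)/y^{\mu_i}$ are regular on every blow-up chart $U'$.

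For part (1), I would work in a chart $y=v_j$ with $s<j\le c$ (which exists provided $c>s$; the case $c=s$ is handled only by part (2), since the Samuel stratum does not then survive the blow-up). Introduce coordinates $w_l:=v_l/y$ for $l\le c$, $l\ne j$, $w_j:=y$, and $w_l:=v_l$ for $l>c$. For each vertex $\alpha_i\in\NN^s$ (so $\alpha_{ij}=0$ since $j>s$), a direct calculation gives $\sigma^*(v^{\alpha_i})/y^{\mu_i}=w^{\alpha_i}$, preserving both the leading monomial and its exponent. For each monomial $v^\beta$ appearing in $r_i=f_i-v^{\alpha_i}$ with $\beta\in\Gamma\times\NN^{n-s}$, the transformed exponent $\beta'$ satisfies $\beta'_l=\beta_l$ for $l\ne j$ and $\beta'_j=\beta_1+\cdots+\beta_c-\mu_i\ge 0$ (because $v^\beta\in\cI_C^{\mu_i}$); in particular its first $s$ components coincide with those of $\beta$, so $\beta'\in\Gamma\times\NN^{n-s}$. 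This verifies conditions (2) and (3) of Definition \ref{Dweak2}. For conditions (4) and (5), the Hasse derivatives $D_{w^\gamma}$ with $\gamma$ supported in the first $s$ indices act on the leading monomials $w^{\alpha_i}$ in the same combinatorial way as $D_{v^\gamma}$ acts on $v^{\alpha_i}$, while contributions from $r_i'$ vanish at any $x'\in\sigma^{-1}(C)\cap U'$ satisfying $w_1=\cdots=w_s=y=0$; hence the generalized Jacobians $J^d$ and $JR^{\overline a}$ at such $x'$ reduce to the corresponding Jacobians for $f_1,\ldots,f_k$ at $x$, which are invertible by hypothesis.

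Part (2) follows by a chart-by-chart dichotomy. In any chart with $s<j\le c$, part (1) identifies the locus where $H_{x',\cI'}=H(\Delta\times\NN^{n-s})=H_{x,\cI}$ as (a subset of) the preserved Samuel stratum, and gives $H_{x',\cI'}<H_{x,\cI}$ off this locus. In any chart with $j\le s$, Lemma \ref{sub} furnishes a vertex $\alpha_i$ with $\alpha_{ij}>0$, and the same blow-up computation gives $\sigma^*(v^{\alpha_i})/y^{\mu_i}=\prod_{l\le s,\,l\ne j}w_l^{\alpha_{il}}$, a monomial of degree $\mu_i-\alpha_{ij}<\mu_i$; so for every $x'\in\sigma^{-1}(x)\cap U'$ (where $w_1=\cdots=w_c=0$) we have $\ord_{x'}(f_i')<\mu_i$ and hence $H_{x',\cI'}<H_{x,\cI}$ strictly. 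The main obstacle is the clean verification of the Jacobian conditions (4) and (5) in the new coordinates; this is a careful but mechanical computation with Hasse derivatives and the blow-up coordinate change, exploiting the transparent behavior of the essential $s$ variables in the chart $j>s$.
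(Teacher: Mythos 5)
Your overall strategy matches the paper's (compatible coordinates via Lemma \ref{center2}, $f_i\in\cI_C^{\mu_i}$ from Lemma \ref{center}, a chart computation, and a dichotomy according to whether the orders $\mu_i$ are preserved), but there are two genuine gaps. First, you only analyze the \emph{coordinate points} of the exceptional fiber: in the chart $y=v_j$ the fiber $\sigma^{-1}(x)$ is an affine piece $\bbA^{c-1}$ of $\PP^{c-1}$, and the locus $w_1=\cdots=w_c=0$ you work at is a single point of it. A general point $x'=[a_1:\cdots:a_c]$ requires recentering first (the paper's substitution $\overline{u}_i=u_i-(a_i/a_r)u_r$ with $r=\max\{i:a_i\neq0\}$), after which $\sigma^*(v^{\alpha_i})/y^{\mu_i}$ is \emph{not} the monomial $w^{\alpha_i}$ but $\overline{F}_i(u'_1,\ldots,1_r,\ldots,u'_c)$ plus a multiple of $y$; the dichotomy is then governed by whether the direction index $r$ exceeds $s$ (which hinges on the essentiality of $u_1,\ldots,u_s$ and on $\overline{F}_i$ not involving $\overline{u}_r$), not by which standard chart you are in. The perturbations $\overline{u}_i-u_i$ and the non-monomial leading terms are controlled precisely because they are divisible by $y$, which is where Lemma \ref{restriction3} enters.

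Second, you never verify condition (1) of Definition \ref{Dweak2}, namely $H_{x'}(\cI')=H(\Delta\times\NN^{n-s})$ at the points where all orders are preserved. This is the technical core of the paper's argument (Lemmas \ref{span4} and \ref{span3}): one must show that \emph{every} element of the strict transform $\cI'_{x'}$ — which is generated by all $\sigma^*(g)/y^k$, not just by the $f'_i$ — has initial form in the span of $\{(u')^\alpha\inn_{x'}(f'_i)\}$, using the division $g=\sum h_if_i$ with $\supd(h_i)\subset\Gamma_i$ and the estimate $\ord_C(h_if_i)\geq k$ from normal flatness. Relatedly, your final step "some $\ord_{x'}(f'_i)$ drops, hence $H_{x',\cI'}<H_{x,\cI}$" is not automatic: a drop in the order of one generator forces a drop in the Hilbert--Samuel function only after one shows $\supd(\inn_{x'}(f'_i))\subset\Gamma\times\NN^{n-s}$, so that this new low-order initial form is independent of the expected spanning set; the paper does this in its Cases 1 and 2 using condition (3) of the standard basis and the chain rule $D_{\overline{u}_i}=(1/y)D_{u'_i}$.
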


\begin{proof}  
Let 
$u_1,
\ldots,u_n$  
be local parameters compatible with the  standard basis  and the center $C$ (Lemma \ref{center2}). Here the coordinates  $u_1,\ldots,u_c$ define $C$ locally.

 Let $\mu_i$ denote the multiplicity of the function $f_i$. It follows from Lemma \ref{center} that $f_i\in \cI^{\mu_i}\setminus \cI_C^{\mu_i+1}$ and correspondingly $\sigma^*(f) \in \cI_D^{\mu_i}\setminus \cI_D^{\mu_i+1}$, where $D=\sigma^{-1}(C)$ is  the exceptional divisor of $\sigma$.
Then $f_i':=(1/y^{\mu_i})\sigma^*(f_i)$. Consider the effect of the blow-up of $C$ near a point $x\in X$. 

The points $x'$  in $\sigma^{-1}(x)$ are defined by the lines in the normal space $N_C$, where $N^*_C=\cI_C/\cI_C^2$. For  the line $t[a_1,\ldots,a_k]$, where $a_i\in K$, we  consider the dual hyperplane in $N^*_C$ and the corresponding linear system. More precisely, let $r=\max\{i: a_i\neq 0\}$ and 
consider the change of coordinates 
\begin{equation} \label{f} 
\overline{u}_1=u_1-(a_1/a_r)u_r,\ldots,\overline{u}_{r-1}=u_{r-1}-(a_{r-1}/a_r)u_r, \overline{u}_r=u_r,\overline{u}_{r+1}=u_{r+1},\ldots,\overline{u}_n=u_n.
\end{equation}
The effect of the blow-up at $x'$ (in new coordinates) is described by 
$$
u'_1=\overline{u}_1/y,\ldots,
u'_{1}/y=\overline{u}_{1}, u'_r=\overline{u}_{r}=y,u'_{r+1}=\overline{u}_{r+1}/y,\ldots,u'_n=\overline{u}_n/y
$$
with the exceptional divisor $y=\overline{u}_{r}$.
Since $f_i\in \cI_C^{\mu_i}\setminus \cI_C^{\mu_i+1}$,  we can write  $f_i=\sum_{|\alpha|\geq \mu_i} c_{i\alpha}(v)\overline{u}^{\alpha}$, where $\overline{u}:=(\overline{u}_1,\ldots,\overline{u}_c)$ and $v:=(\overline{u}_{k+1},\ldots,\overline{u}_n)$.  Then the initial form with respect to the $\cI_C$-grading is 
$$
F_i(v,\overline{u})=\inn_C(f_i)=\sum_{|\alpha|= \mu_i} c_{i\alpha}(v)\overline{u}^{\alpha},
$$
and $f_i=F_i+G_i$, where $G_i\in \cI_C^{\mu_i+1}$ and
$\sigma^*(G_i)=y^{\mu_i+1}G'$. For any 
$\alpha=(\alpha_1,
\ldots,\alpha_c)\in \NN^c$ define 
$$
\alpha':=(\alpha_1,\ldots,0_r,\ldots,\alpha_c)
$$ 
with $0$ as the $r$-th coordinate.
The transformed function will have the form 
$$
f_i':=(1/y^{\mu_i})\sigma^*(f_i)=(1/y^{\mu_i})\sum \sigma^*(c_i)(u')^{\beta'}\cdot y^{|\beta|-\mu_i}=F_i(v)(u'_1,\ldots,1_r,\ldots,u'_c)+yG'_i(v,u'_1,\ldots,u'_c).
$$

 Let 
$$
\overline{F}_i(\overline{u}_1,\ldots,\overline{u}_k):=\inn_x(f_i)=F_i(v(x))=\sum_{|\alpha|= \mu_i} c_{i\alpha}(v)(x)\overline{u}^{\alpha}
$$ 
be the initial form at $x$. 
 Then $\overline{F}_i(\overline{u}_1,\ldots,\overline{u}_c)$ is a form of degree $\mu_i$. 
 Consider the  transform of $\overline{F}_i$ under the blow-up:
$$
F'_i(u'_1,\ldots,u'_c)=(1/y^{\mu_i})\sigma^*(F_i(\overline{u}_1,
\ldots,\overline{u}_c))=\overline{F}_i(u'_1,\ldots,1_r,\ldots,u'_c).
$$
This implies that  
%
$\ord_{x'}(f_i')=\ord_{x'}((1/y^{\mu_i})\sigma^*(f_i))\leq \ord_x(\overline{F}_i(u'_1,\ldots,1_r,\ldots,u'_c))
\leq \mu_i$.

Suppose that $\ord_{x'}(f'_i)= \mu_i$ for all $i$. Then $\deg(F'_i)=\deg(F_i)$ and we conclude that $\overline{F}_i$ does not contain $\overline{u}_r$, that is, 
$$
\overline{F}_i(u'_1,\ldots,0_r,\ldots,u'_c)=\overline{F}_i(\overline{u}_1,\ldots,\overline{u}_r,\ldots,\overline{u}_c)=F_i(\overline{u}_1,\ldots,0_r,\ldots,\overline{u}_c)=\overline{F}_i({u}_1,\ldots,0_r,\ldots,{u}_c).
$$
 Since the variables $u_1,\ldots,u_s$ are essential, this   implies that $r\geq s+1$. The set of coordinates 
$\overline{u}_1,
\ldots,\overline{u}_n$ 
is then compatible with the  standard basis, and the derivatives $D_{u_i}=D_{\overline{u}_i}$ are the same for $i=1,\ldots,s$. Moreover  by the chain rule 
%
$D_{u_i}=D_{\overline{u}_i}=(1/y) D_{u'_i}$ 
(see for instance \cite{Villamayor}). The latter implies that the differential conditions (3) through (5) in the definition of a  standard basis are preserved after the blow-up in the new coordinate system $u'_1,\ldots,u'_n$.  For instance for the 
second
part of  condition (3) we can write 
%
$$
D_{{u'}^{\alpha_i}}(f'_i)=y^{|\alpha_i|}D_{{\overline{u}}^{\alpha_i}}y^{-|\alpha_i|}\sigma^*(f_i)\equiv 1
$$
with the natural identification of $\sigma^*(f_i)$ with $f_i$. The other differential conditions (4) and (5) follow in the same way.
In other words, $\supd(f'_i)\subset \Gamma\times \NN^{n-s}$. To prove that $f'_i$ is a  standard basis of $\cI'$ at $x'$ with respect to the diagram $\Delta$, we need to show that  $H(\Delta)=H_{x',X'}(\cI')$. We show this in a series of lemmas below.

Now suppose  $\ord_{x'}(f'_i)< \mu_{i}$ for some  $i$ and let 
$$
d=\mu_{j}:=\min\{\mu_i:\ord_{x'}(f'_i)\}<\mu_{i}\}.
$$ 
If such a $d$ does not exist, that is,  $\ord_{x'}(f'_i)= \mu_{i}$ for all  $i$, then set $d=\infty$.

\begin{lemma} 
If $r\leq s$ then all the vertices $\alpha_i$ with $|\alpha_i|<d$ are in $\NN^{r-1}$.	
\end{lemma}

\begin{proof}
Let $\alpha_i$ be a vertex of $\Delta$ with $|\alpha_i|=\mu_i<d$. Suppose it has a coordinate $s\geq r$ which is not zero. Then by Lemma \ref{sub} there is 
another vertex $\alpha_{i_1}$ with $r$-th coordinate not zero and $|\alpha_{i_1}|\leq |\alpha_i|<d$.
But since 
%
$D_{u^{\alpha_{i_1}}}(f_{i_1})=1$, the initial form $\overline{F}_{i_1}$ depends on $u_r$, and $\ord(\overline{F}_{i_1}(u'_1,\ldots,1_r,\ldots,u'_c))<\mu_{i_1}<d$, which contradicts the assumption on $d$.
\end{proof}
 
 Consider the monotone diagram $\Delta^{r-1}$ generated by the  vertices with $|\alpha_i|<d$. Then $\Delta^{r-1}$ is contained  in $\NN^{r-1}$, and let $\Gamma^{r-1}=\NN^{r-1}\setminus \Delta^{r-1}$.
By the lemma,  both sets $\Gamma\times \NN^{n-s} \subset \Gamma^{r-1}\times \NN^{n-r+1}$ 
coincide for the exponents $\alpha\in \NN^s$ of degree $|\alpha|<d$.

\begin{lemma} \label{span4}
The set 
$$
\{(u')^\alpha\inn_{x'}(f'_i)\mid  \alpha\in \Gamma^r_i\times \NN^{n-r+1},\,
 |\alpha|+|\alpha_i|<d \}
\cup \{(u')^{\alpha}\mid \alpha \in \Gamma^r\times \NN^{n-r+1},\, |\alpha|<d  \}
$$
is a 
basis  of the $K[u'_r,\ldots,u'_n]$-module 
  $K[u'_1,\ldots,u'_n]/(u'_1,\ldots,u'_{r-1})^d$ if $d$ is finite and of $K[u'_1,\ldots,u'_n]$ if $d=\infty$. 
 \end{lemma}
 
 \begin{proof} 
If $r\leq s$ then for any $i$, the  vertex $\alpha_i$ with $\mu_i<d$ is  in $\NN^{r-1}$. If  $r>s$ then the vertices belong to   $\NN^s$ and we shall simply replace $\Gamma^{r-1}_i\times \NN^{n-r+1}$ with $\Gamma^{s}_i\times \NN^{n-s}$ in the considerations below.
 
 For $\mu_i<d$ we have $\inn_{x'}(f_i)=\overline{F}_i+yG_i$, and 
$$
\inn_{x'}(f_i)(u'_1,\ldots,u'_{r-1},0,u'_{r+1},\ldots,u'_s)
=\overline{F_i}(\overline{u}_1,\ldots,{\overline{u}_{r-1}},0,{\overline{u}_{r+1}},\ldots,\overline{u}_{c})
=\overline{F_i}(\overline{u}_1,\ldots,\overline{u}_{c}).
$$ 
By the assumption,  
$$
\{u^\alpha\overline{F_i}\mid  \alpha\in \Gamma^{r-1}_i\times \NN^{n-r+1},\, |\alpha|+|\alpha_i|<d \}\cup \{u^{\alpha}\mid \alpha \in \Gamma^{r-1}\times \NN^{n-r+1},\, |\alpha|<d  \}
$$ 
is a  basis  of the $K[u_r,\ldots,u_n]$-module  $K[u_1,\ldots,u_n]/(u_1,\ldots,u_{r-1})^d$.

Since $\overline{u}_i-u_i$ is divisible by $y=u_r$ we  conclude, by the proof of Lemma \ref{restriction3}, that  
$$
\{\overline{u}^\alpha\overline{F_i}\mid  \alpha\in \Gamma^{r-1}_i\times \NN^{n-r+1},\, |\alpha|+|\alpha_i|<d \}\cup \{\overline{u}^{\alpha}\mid \alpha \in \Gamma^r\times \NN^{n-r+1},\, |\alpha|<d  \}
$$ 
is also a 
basis  of the $K[u_r,\ldots,u_n]$-module $K[u_1,\ldots,u_n]/(u_1,\ldots,u_{r-1})^d$. 

Similarly the differences
 $$
(u')^\alpha\inn_{x'}(f_i)(u_1',\ldots,u_c')-(u')^\alpha\overline{F}_i(u_1',\ldots,u'_c)
$$ 
are  divisible by $y=u'_r$, and thus  again 
$$
\{(u')^\alpha\inn_{x'}(f'_i)\mid  \alpha\in \Gamma^r_i\times \NN^{n-r+1},\, |\alpha|+|\alpha_i|<d \}\cup \{(u')^{\alpha}\mid \alpha \in \Gamma^r\times \NN^{n-r+1},\, |\alpha|<d  \}
$$
is  a 
basis  of the  $K[u'_r,\ldots,u'_n]$-module   $K[u'_1,\ldots,u'_n]/(u'_1,\ldots,u'_{r-1})^d$.
 \end{proof}
 
\begin{lemma}\label{span3}
The set 
$$
\{u^{\alpha}\inn_{x'}(f'_i)\mid |\alpha|+\mu_i<d,\,  \alpha\in \Gamma_i\times \NN^{n-s}  \}
$$
 is a basis of the $K$-space  $\{\inn_{x'}(f)\mid f\in \cI'_{x'},\, \ord_{x'}(f)<d\}$ if $d$ is finite and of $\inn_{x'}\cI'$ if $d=\infty$. 
 \end{lemma}
 
\begin{proof}
Any function  $f\in \cI'_{x'}$ of order $e$ can be written as a combination   
$f=\sum c_i\sigma^*(g_i)/y^{k_i}$ of $\sigma^*(g_i)/y^{k_i}$, where $g_i\in \cI_x$, $y^{k_i}|\sigma^*(g_i)$ and $c_i\in \cO_{X',x'}$. One can approximate   $c_i$ up to $m_{x'}^{e+1}$ by a polynomial 
$p_i(u_1',\ldots,u_n')$ of the form
$$
p_i(u_1',\ldots,u_n')=\sigma^*(p_i(\overline{u_1}/\overline{u_r},\ldots,\overline{u_r},\ldots,\overline{u_c}/u_r,\ldots,\overline{u_n})
%
=(1/y^{\overline{k_i}})\sigma^*\overline{p}_i(\overline{u_1},\ldots,\overline{u_n})
$$
for suitable polynomials $\overline{p}_i$.

Then $f$ can be  approximated by 
$$
\sum p_i\sigma^*(g_i)/y^{k_i}=\sum  \sigma^*(\overline{p}_i)\sigma^*(g_i)/y^{k_i+\overline{k}_i}
%
=(1/y^k) \sigma^*\Big(\sum \overline{p}_ig_i\overline{u_r}^{k-(k_i+\overline{k}_i)}\Big)
$$
for sufficiently large $k$.
This implies that 
%
$\inn_{x'}(f)=\inn_{x'}((1/y^k)\sigma^*(\overline{f}))$ with $\overline{f}:=\sum \overline{p}_ig_i\overline{u_r}^{k-(k_i+\overline{k}_i)}\in \cI_x$.

Write $\overline{f}=\sum {h_i}{f_i}$ with $\supd({h_i})\subset \Gamma_i$.
By Lemma \ref{Flat},  we get  
$\ord_C(\overline{f})=\min \{\ord_C({h_i}f_i)\}\geq k$.
Consequently, 
$$
\sigma^*(\overline{f})/y^k=\sum \sigma^*(h_i)/y^{k-\mu_i}\sigma^*(f_i)/y^\mu_i= \sum (\sigma^*(h_i)/y^{k-\mu_i})f'_i. 
$$
Note that since $\mu_i<d$ and $\supd({h_i})\subset \Gamma_i\times \NN^{n-s}=\Gamma^{r-1}_i\times \NN^{n-r+1}$,  we get $D_{u^\alpha}(h_i)=0$ for $\alpha\in \Delta_i^{r-1}$, and consequently $D_{(u')^\alpha}(\sigma^*({h_i})/y^{k-\mu_i})=0$ for $\alpha\in \Delta_i^{r-1}$, which means that 
%
$$
\supd(\sigma^*({h_i})/y^{k-\mu_i})
\subset \Gamma^{r-1}_i\times \NN^{n-r+1}
$$ 
as well. Since $\Gamma^{r-1}_i\times \NN^{n-r+1}$ coincides with $\Gamma_i\times \NN^{n-s}$  for the exponents $\alpha<d$, we see
  that for  $d':=\ord_{x'}(f)< d$ we have
$$
\inn_{x'}({f})= \sum_{ \ord_{x'}(h)+\mu_i=d'} \inn_{x'}(\sigma^*({h_i})/y^{k-\mu_i})\inn_x(f'_i)=\sum H_i\inn_x(f'_i)
$$
for some $H_i$ with $\supd(H_i)\subset \Gamma_i\times \NN^{n-s}$, which completes the proof of the lemma.
\end{proof}

Lemma \ref{span3} implies that $H_{x',\cI'}=H(\Delta\times \NN^s)$ if $\ord_{x'}(f'_i)=\mu_i$ for all $i$ 
($d=\infty$).
Since all other properties were proven before, we conclude that $f'_i$ is a  standard basis at such points.

Now suppose that 
%
$d=\min\{\mu_i:\ord_{x'}(f'_i)<\mu_{i}\}$ is finite. We will show that $H_{x',X'}(\cI')<H(\Delta\times \NN^{n-s})$.
The argument splits into two  quite similar cases:

Case 1. Suppose that $\ord_{x'}(\overline{F}_j(u'_1,\ldots,1_r,\ldots,u'_s,0,\ldots,0))<d=\mu_j$.
In this case  $r\leq s$ defines the essential unknown. 
By  condition (3) of Definition \ref{Dweak2}, 
$$
\supd(f_j)\subset \{\alpha_j\}\cup (\Gamma\times \NN^{n-s}) \supset  \Gamma^{r-1}\times \NN^{n-r+1}.
$$
This implies that $D_{u^\alpha}(f_j)=D_{\overline{u}^\alpha}(f_j)=0$ for $\alpha\in \Delta^r\subset\Delta\setminus \{\alpha\}$. 
Then, by the chain rule,  $D_{u'^\alpha}(\inn_{x'}(f'_j))=0$ for $\alpha\in \Delta^r$, $|\alpha|<d$, or equivalently $\alpha\in \Delta$, $|\alpha|<d$. Thus, by Lemmas \ref{span4} and \ref{span3} we see that
 $\supd(\inn_{x'}(f'_j)) \subset \Gamma\times \NN^{n-s}$ and $\inn_{x'}(f'_j)$ is independent of  
$$
\{u^{\alpha}\inn_{x'}(f'_i)\mid\alpha\in \Gamma_i\times \NN^{n-s}\}
$$ 
and $H_{x,X}(\cI)=H(\Delta\times \NN^{n-s})>H_{x',X'}(\cI')$.

Case 2. Suppose that $r\geq s+1$ and  
$\ord(\overline{F}_i(\overline{u}_1,\ldots,\overline{u}_s,0,\ldots,0))
=\mu_i$  
for all $i$, and   let  $\ord_{x'}(f'_j)=d<\mu_j$ for a certain $j$. In this 
situation  $D_{u'^\alpha} f'_j\equiv 0$ for $\alpha \in \Delta\times \NN^{n-s}$, $|\alpha|<\mu_j$ as in Case 1. Then  again $\supd({\inn_{x'}(f'_j)})\subset \Gamma\times \NN^{n-s}$,  and $\inn_{x'}(f'_j)$ is independent of $u^{\alpha}\inn_{x'}(f'_i)$, where $\alpha\in \Gamma_i\times \NN^{n-s}$.
Consequently, $H_{x,X}(\cI)=H(\Delta\times \NN^{n-s})>H_{x',X'}(\cI')$ as in Case 1.

\begin{remark} 
The fact that one can find a basis of an ideal which preserves its optimal form under blow-ups (mainly condition (3) of Definition \ref{Dweak2})  is very important. It reduces a problem of the lowering Hilbert-Samuel function an thus strong desingularization to resolution of marked ideals. To ensure canonicity of the reduction one considers the relevant equivalence relation as in \cite{BM2} (Section 4).

In characteristic zero, however the situation is even simpler. We show that the  standard basis, though not unique, generates a unique canonical Rees Algebra  giving  a canonical reduction of Hironaka desingularization to resolution of marked ideals. Thus the reduction is automatic and requires no additional glueing relations.\end{remark}

\end{proof}

\section{Canonical Rees algebra and  standard basis}
\subsection{First properties of Rees algebras}


\begin{definition} Let $X$ some smooth space  over a field $K$ (or analytic or differentiable manifold).

By {\it  the  Rees Algebra} $$\cR^{\cdot}=\bigoplus_{\mu\in \NN} (\cR^\mu,\mu)$$  we mean a graded algebra satisfying  the conditions

\begin{enumerate}
\item  $\cR^0=\cO(U)$
\item    $\cR^\mu\subset \cO(U)$ is an ideal sheaf of $\cO_X$.
\item $\cR^{\mu}\subset  \cR^{\mu'}$ if $\mu'\geq \mu$
\item $\cR^{\mu}\cdot \cR^{\mu'}\subset \cR^{\mu+\mu'}$

\noindent By {\it  the  Rees Algebra generated by $\{\overline{\cI}\}:=(\cI_i,\mu_i)$}  we mean the  smallest graded algebra  $\cR^{\cdot}=\bigoplus_{\mu\in \NN} (\cR^\mu,\mu)$  such that 
$\cI_i\subseteq  \cR^{\mu_i}$.

\noindent A Rees algebra will be called  {\it  a differential Rees Algebra} if it satisfies
 
\item $\cD^a(\cR^{\mu})\subset \cR^{\mu-a}$ if $a\in \ZZ_{>0}$, $\mu\geq a.$

\end{enumerate}

Similarly   the differential Rees Algebra $\cR^{\cdot}=\cR^{\cdot}(\{\overline{\cI}\})$ is {\it diff-generated by $\overline{\cI}:=\{(\cI_i,\mu_i)\}$} if it is the smallest differential Rees algebra for which  
$\cI_i\in \cR^{\mu_i}$.

\end{definition}

\begin{remark} Different notions of Rees algebras defined by marked ideals were studied in the context  of resolution by Giraud, Hironaka, Oda, and more recently Kawanoue-Matsuki, and Villamayor. The above definition is essentially equivalent to the one used Villamayor's papers.
(See \cite{Giraud2},\cite{H},\cite{O},\cite{Vi3},\cite{Kawanoue}.) 

The differential Rees algebras are  natural extensions of marked ideals. They possess important properties generalizing the notion of coefficient ideals and homognization used in the simple proofs of the (weaker) desingularization in characterist zero \cite{Wlod}, \cite{Kollar}

 In this paper the notion will be  used mainly to study more subtle properties related to the Hilbert-Samuel function, and strong resolution (see Definition \ref{Rees0}).

\end{remark}

It follows from the definition that 
$$\cosupp(\cR^{\cdot}(\overline{\cI})=\bigcap_{\mu\in \NN} \cosupp(\cR^\mu,\mu)=\cosupp(\overline{\cI}),$$
for any multiple marked ideal $\overline{\cI}$.

Moreover, an immediate consequence of the definition is the foloowing:
\begin{proposition} Let $\overline{\cI}=\{(\cI_i,\mu_i)\}$ be  a finite collection of marked ideals on a smooth scheme of finite type $X$ over a field $K$. Let $u_1,\ldots,u_n$ be a system of cooordinates on $X$. Denote by $f_{i1},\ldots,f_{ij_i}$ the finite sets of generators of $\cI_i$. Then the differential Rees algebra $R^\cdot(\overline{\cI})$ is (finitely) generated by marked functions $(D_{u^\alpha}f_{ij},\mu_i-|\alpha|)$, where $\alpha\in \NN^n$, $|\alpha|\leq \mu_i$.

\end{proposition}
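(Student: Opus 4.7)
The plan is to show that the Rees algebra $\cR^{\cdot}_0$ generated by the marked functions $(D_{u^\alpha}f_{ij},\mu_i-|\alpha|)$, $|\alpha|\le\mu_i$, is itself a differential Rees algebra containing $(\cI_i,\mu_i)$; by minimality it then coincides with the differential Rees algebra $\cR^\cdot(\overline{\cI})$. One inclusion, $\cR^\cdot_0\subset\cR^\cdot(\overline{\cI})$, is immediate: each generator $D_{u^\alpha}f_{ij}$ lies in $\cR^\cdot(\overline{\cI})^{\mu_i-|\alpha|}$ by axiom~(5) of a differential Rees algebra, and taking $\alpha=0$ gives $\cI_i\subset (\cR^\cdot_0)^{\mu_i}$.

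The nontrivial step is to verify axiom~(5) for $\cR^\cdot_0$, i.e.\ that $\cD^a((\cR^\cdot_0)^{\mu})\subset (\cR^\cdot_0)^{\mu-a}$. Since $\cD^a$ is generated by the Hasse derivatives $D_{u^\beta}$ with $|\beta|\le a$, it suffices to check $D_{u^\beta}((\cR^\cdot_0)^{\mu})\subset (\cR^\cdot_0)^{\mu-|\beta|}$. A typical element of $(\cR^\cdot_0)^{\mu}$ is a sum of products
$$
F=h\cdot g_{1}\cdots g_{m},\quad g_{\ell}=D_{u^{\alpha_\ell}}f_{i_\ell j_\ell},\quad \sum_{\ell=1}^m(\mu_{i_\ell}-|\alpha_\ell|)\ge\mu,\quad h\in\cO_X.
$$
The Leibniz rule for Hasse derivatives gives
$$
D_{u^\beta}(F)=\sum_{\beta_0+\beta_1+\cdots+\beta_m=\beta} D_{u^{\beta_0}}(h)\cdot\prod_{\ell=1}^m D_{u^{\beta_\ell}}(g_\ell),
$$
and the composition identity $D_{u^{\beta_\ell}}D_{u^{\alpha_\ell}}=\binom{\alpha_\ell+\beta_\ell}{\alpha_\ell}D_{u^{\alpha_\ell+\beta_\ell}}$ expresses each $D_{u^{\beta_\ell}}(g_\ell)$ as a scalar multiple of $D_{u^{\alpha_\ell+\beta_\ell}}f_{i_\ell j_\ell}$.

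Now two cases: if $|\alpha_\ell+\beta_\ell|\le\mu_{i_\ell}$, the factor belongs to $(\cR^\cdot_0)^{\mu_{i_\ell}-|\alpha_\ell|-|\beta_\ell|}$ by definition; otherwise it is just a function in $\cO_X=(\cR^\cdot_0)^0$, and the nominal ``gradation'' $\mu_{i_\ell}-|\alpha_\ell|-|\beta_\ell|$ is negative, so replacing it by $0$ only increases the total weight. In either case, summing over $\ell$ and using $\sum(\mu_{i_\ell}-|\alpha_\ell|)\ge\mu$ and $\sum|\beta_\ell|\le|\beta|$, the product has weight at least $\mu-|\beta|$, and hence lies in $(\cR^\cdot_0)^{\mu-|\beta|}$.

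The main (only real) obstacle is this bookkeeping with the Leibniz/composition rules for Hasse derivatives together with the convention that ``negative gradations'' collapse to $\cR^0=\cO_X$; once that is settled, minimality of $\cR^\cdot(\overline{\cI})$ among differential Rees algebras containing the $(\cI_i,\mu_i)$ forces $\cR^\cdot_0=\cR^\cdot(\overline{\cI})$, which is finitely generated because there are only finitely many pairs $(f_{ij},\mu_i)$ and finitely many multi-indices $\alpha$ with $|\alpha|\le\mu_i$.
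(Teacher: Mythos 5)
Your proof is correct and is exactly the verification the paper leaves implicit (the paper states this as "an immediate consequence of the definition" with no written proof): you show the algebra generated by the Hasse derivatives is already $\cD$-stable via the Leibniz and composition rules, so minimality forces equality. The bookkeeping with negative nominal weights collapsing to $\cR^0=\cO_X$ is handled correctly, and the vanishing of binomial coefficients in positive characteristic is harmless since each graded piece is an ideal.
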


Also, by Lemma \ref{diff}, for any \'etale morphism $\phi: X\to Y$ of smooth varieties over a field $K$ we have that
 $$\phi^*({\cR^\cdot}(\overline{\cI})={\cR^\cdot}(\phi^*(\overline{\cI})).$$

\subsection{Canonical Rees Algebra along Samuel stratum and essential variables}
\bigskip
 Let $F\in K[x_1,\ldots,x_n]$ be a form of degree $d$. For any nonnegative integer $k\leq d$ denote by $\overline{D}^d(F)$ the vector space spanned by
the derivatives of order $d$. This definition does not
depend upon a linear change of coordinates. Using 
this operation one can define a homogenous counterpart of Rees algebra and Rees ideal.

\begin{definition} By {\it  the homogenous Rees Algebra} generated by the homogenous polynomials $F_i\in K[x_1,\ldots,x_n]$ of degree $d_i$  we mean the  smallest graded subalgebra  $${R}^{\cdot}=R^{\cdot}(F_1,\ldots, F_r)=\bigoplus_{d\in \NN} R^d$$ containing 
$F_i\in R^{d_i}$ and which is $\overline{D}^\cdot$-stable, that is
$$\overline{D}^a(\cR^d)\subset R^{d-a}$$ if $a\in \ZZ_{>0}$, $d\geq a.$

  The  graded ideal $$I^{\cdot}=I^{\cdot}(F_1,\ldots, F_r)\subset R^{\cdot}=R^{\cdot}(F_1,\ldots, F_r)$$ generated over $R^{\cdot}$ by $(F_i)$ will be called the {\it Rees ideal} generated by $(F_i)$.
\end{definition}

\begin{definition} Let $\cI=\bigoplus \cI_{a\in \NN} \subset K[x_1,\ldots,x_n]$ be a homogenous ideal. By the {\it essential set of coordinates} we mean a set of lineally independent linear forms $u_1,\ldots, u_k$ such that 
\begin{enumerate}

\item There  exists  a set of homogenous generators $F_1,\ldots, F_r\in \cI$, such that $F_i=F_i(u_1,\ldots, u_k)$.
\item The vector space $V:=\spa(u_1,\ldots, u_k)\subset \spa(x_1,\ldots, x_n)$ is minimal for all sets $u_1,\ldots, u'_{k'}$ satisfying the condition (1).
The vector space $V$ will be called the {\it essential space} of $\cI$.
\end{enumerate}

\end{definition}
The notion of essential coordinates makes sense for homogenous polynomials or their sets.
\begin{lemma}  (\cite{BM}, Lemma 6.2a)
In characteristic $0$ the vector space $V:=\overline{D}^{d-1}(F)$ is essential for $F$.\end{lemma}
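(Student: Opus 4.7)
The plan is to establish the two defining properties of the essential space separately: first that $V := \overline{D}^{d-1}(F)$ is contained in every vector space $W$ such that $F$ can be written using only linear forms in $W$ (minimality from below), and second that $F$ itself can be written in terms of any basis of $V$ (so $V$ is a valid choice). Taken together these show that $V$ coincides with the (unique) essential space.

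First I would verify the easy containment. Suppose $F = F(v_1,\ldots,v_j)$ where $v_1,\ldots,v_j$ are linearly independent linear forms spanning a subspace $W$. By the chain rule, any partial derivative of $F$ (with respect to any linear coordinate system on $K^n$) is a polynomial in $v_1,\ldots,v_j$; iterating, every derivative of $F$ of order $d-1$ is a linear combination of $v_1,\ldots,v_j$. Hence $V \subseteq W$. In particular $\dim V \leq j$, and $V$ is contained in every such $W$.

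The heart of the matter is the reverse statement: if $u_1,\ldots,u_k$ is any basis of $V$ extended to a linear coordinate system $u_1,\ldots,u_n$ on $K^n$, then $F \in K[u_1,\ldots,u_k]$. I would prove this by induction on $d$. The base case $d=1$ is immediate because then $V$ is spanned by $F$ itself. For the inductive step, set $G_i := \partial F/\partial u_i$ for $i=1,\ldots,n$; each $G_i$ is homogeneous of degree $d-1$, and its $(d-2)$-th derivatives are (up to nonzero scalars, here is where characteristic zero enters) the $(d-1)$-th derivatives of $F$, hence lie in $V \subseteq \mathrm{span}(u_1,\ldots,u_k)$. The inductive hypothesis applied to each $G_i$ gives $G_i \in K[u_1,\ldots,u_k]$.

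The final step, which is the main obstacle, is to upgrade this to $G_j = 0$ for $j > k$. For this I would exploit the commutativity of mixed partial derivatives: for $j > k$ and $i \leq k$,
\[
\frac{\partial G_j}{\partial u_i} \;=\; \frac{\partial^2 F}{\partial u_i\,\partial u_j} \;=\; \frac{\partial G_i}{\partial u_j} \;=\; 0,
\]
the last equality because $G_i \in K[u_1,\ldots,u_k]$ has no dependence on $u_j$. So $G_j$ is a polynomial in $u_1,\ldots,u_k$ all of whose partials with respect to $u_1,\ldots,u_k$ vanish, i.e.\ $G_j$ is a constant; since $G_j$ is homogeneous of positive degree $d-1 \geq 1$ (for $d \geq 2$), we conclude $G_j = 0$. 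Hence $F$ is independent of $u_{k+1},\ldots,u_n$, which completes the induction and proves $F \in K[u_1,\ldots,u_k]$. Combining with the first step shows that $V$ is precisely the essential space of $F$, as claimed.
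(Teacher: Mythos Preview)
Your proof is correct. Both you and the paper split the argument into the same two halves: (i) any space of linear forms in which $F$ can be expressed contains $V=\overline{D}^{d-1}(F)$, and (ii) $F$ can itself be written in terms of any basis $u_1,\ldots,u_k$ of $V$. Part (i) is handled identically (chain rule). The paper simply \emph{asserts} part (ii) in one line (``$F$ does not depend upon $u_{k+1},\ldots,u_n$''), whereas you supply a full inductive argument via the $G_i=\partial F/\partial u_i$ and the mixed-partials trick. Your induction works, but is more machinery than needed: the paper's implicit justification is the direct observation that if some monomial $c_\alpha u^\alpha$ with $\alpha_j>0$ (for $j>k$) occurred in $F$, then the $(d-1)$-th derivative $D_{u^{\alpha-e_j}}(F)$ would have $u_j$-coefficient $c_\alpha\alpha_j\neq 0$ (characteristic zero), contradicting $D_{u^{\alpha-e_j}}(F)\in V=\operatorname{span}(u_1,\ldots,u_k)$. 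That one-line computation replaces your entire induction. Your approach does have the small virtue of making explicit exactly where characteristic zero enters at each stage, but the direct argument is shorter and is almost certainly what the paper intends.
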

\begin{proof} If $u_1,\ldots,u_k$ is a basis of $V$ then after extending the set to  a complete coordinate system $u_1,\ldots,u_n$ and $F$ does not depend upon $u_{k+1},\ldots,u_n$ thus $F=F(u_1,\ldots,u_k)$. On the other hand if $u'_1,\ldots,u'_l$ are essential unknowns then $F=F(u'_1,\ldots,u'_l)$ and $\spa(u'_1,\ldots, u'_l)\supseteq V=\overline{D}^{d-1}(F)$. 
\end{proof}

In  characteristic $p$ we shall use homogenous Rees algebras to isolate essential variables.
Let us first reformulate the above lemma first in characteristic zero
\begin{proposition}Let $K$ be  a  field of the characteristic $0$ and $F_1,\ldots, F_r\in K[x_1,\ldots,x_n]$ denote homogenous polynomials.
The homogenous Rees algebra $R^\cdot(F_1,\ldots,F_r)$ is generated by the essential unknowns $u_1,\ldots u_s$ for  $F_1,\ldots, F_r$. That is 
$$R^\cdot:=R^\cdot(F_1,\ldots,F_r)= K[u_1,\ldots,u_k]$$
\end{proposition}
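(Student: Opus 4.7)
The plan is to prove both inclusions of $R^\cdot(F_1,\ldots,F_r) = K[u_1,\ldots,u_s]$ separately, where $u_1,\ldots,u_s$ is a basis of the essential space $V$.

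First I would establish $R^\cdot(F_1,\ldots,F_r) \subseteq K[u_1,\ldots,u_s]$. Extend $u_1,\ldots,u_s$ to a full coordinate system $u_1,\ldots,u_n$ of $K[x_1,\ldots,x_n]$. By definition of essential variables, each $F_i \in K[u_1,\ldots,u_s]$. This subring is preserved by every partial derivative $\partial/\partial u_j$: for $j \le s$ this is obvious, and for $j > s$ the operator annihilates $K[u_1,\ldots,u_s]$. Since in characteristic zero the Hasse operator $\overline{D}^a$ is a $K$-linear combination of iterated partials, $K[u_1,\ldots,u_s]$ is $\overline{D}^{\cdot}$-stable and closed under multiplication, and it contains $F_1,\ldots,F_r$. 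By minimality of the Rees algebra, the inclusion follows.

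Next I would prove the reverse inclusion $K[u_1,\ldots,u_s] \subseteq R^\cdot(F_1,\ldots,F_r)$. The central claim is that
\[
V \;=\; \sum_{i=1}^r \overline{D}^{d_i-1}(F_i),
\]
extending the cited Lemma 6.2a of Bierstone--Milman from a single form to a finite family. Once this is established, each summand $\overline{D}^{d_i-1}(F_i)$ sits in $R^1$ by the defining closure property of $R^{\cdot}$, so $V \subseteq R^1$; because $R^{\cdot}$ is a graded $K$-subalgebra, the polynomial ring $K[V] = K[u_1,\ldots,u_s]$ is contained in $R^{\cdot}$.

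The hard part is the claim $V = \sum \overline{D}^{d_i-1}(F_i)$. The inclusion $\sum \overline{D}^{d_i-1}(F_i) \subseteq V$ is easy: writing the $F_i$ in coordinates $u_1,\ldots,u_n$ with $F_i\in K[u_1,\ldots,u_s]$, every $(d_i-1)$-fold Hasse derivative of $F_i$ is a linear form in $u_1,\ldots,u_s$, hence lies in $V$. The reverse inclusion is where characteristic zero is essential: setting $W := \sum \overline{D}^{d_i-1}(F_i)$, I would pick a basis $w_1,\ldots,w_t$ of $W$, extend it to a full coordinate system $w_1,\ldots,w_n$, and show each $F_i$ depends only on $w_1,\ldots,w_t$. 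Expand $F_i=\sum_\alpha c_\alpha w^\alpha$ with $|\alpha|=d_i$, and suppose some monomial with $\alpha_j>0$ for $j>t$ appears with $c_\alpha\ne 0$. Applying the Hasse derivative $\overline{D}^{d_i-1}_{w^{\alpha-e_j}}$ produces a nonzero scalar multiple of $w_j$ (the multinomial factor is invertible in characteristic zero), forcing $w_j\in W$, contradicting the choice of basis. Hence $F_i\in K[w_1,\ldots,w_t]$, so by minimality of $V$ we get $V\subseteq W$, completing the argument.
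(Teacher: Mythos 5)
Your proof is correct and takes essentially the same route as the paper's: the inclusion $R^\cdot\subseteq K[u_1,\ldots,u_s]$ because that ring is a $\overline{D}^\cdot$-stable graded subalgebra containing the generators, and the reverse inclusion by putting the essential space inside $R^1$ via the $(d_i-1)$-st derivatives. The paper's proof is only two sentences and quietly relies on the preceding single-form lemma for the identification $V=\sum_i\overline{D}^{d_i-1}(F_i)$; you supply that extension to a family explicitly, which is a filling-in of detail rather than a different method.
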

\begin{proof} The result follows from the fact that $u_1,\ldots,u_k\in R^1$ and any form $G\in R^d$ can be expressed as function in  $u_1,\ldots,u_k$. This is true since the generators have this form
and the property is preserved by the derivations, sums, and products. 

\end{proof}

\begin{proposition} Let $K$ be  a perfect field  of $char(K)=p$. Let $F_1,\ldots, F_r\in K[x_1,\ldots,x_n]$ denote homogenous polynomials.
 generating  $R^\cdot=(R^\cdot(F_1,\ldots,F_r)$

Then  the essential space $V$ for $F_1,\ldots,F_r$ is unique.  Moreover there exists a unique filtration $$V_0\subset V_1\subset \ldots\subset V_\ell=V$$
 such that  $V_i^{(p^i)}=\{G \in K[x_1,\ldots,x_n]_1\mid G^{p^i}\in R^{p^i}\}$.
Moreover consider a coordinate system
   $$u_1,\ldots, u_{k_1},u_{k_1+1}^p,\ldots, u_{k_2}^p,\ldots,u_{k_{\ell-1}+1}^{p^\ell},\ldots, u_{k_\ell}^{p^\ell}=\overline{u}_1,\ldots,\overline{u}^{p^\ell}_\ell$$
with $V_i=\spa(u_1,\ldots, u_{k_i})=\spa(\overline{u}_1,\ldots,\overline{u}_i)$
Then 

 \begin{enumerate}
 \item $R^\cdot(F_1,\ldots,F_r)=K[\overline{u}_0,\ldots,\overline{u}^{p^\ell}]$.
\item  $J^\cdot(F_1,\ldots,F_r)=K[\overline{u}_0,\ldots,\overline{u}^{p^\ell}]\cap (F_1,\ldots,F_r)$.
\end{enumerate}
\end{proposition}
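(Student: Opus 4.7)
The plan proceeds in three stages: (1) show the $V_i$ form a stabilizing chain of subspaces; (2) identify $R^\cdot$ with the polynomial subalgebra $A := K[\overline{u}_0, \overline{u}_1^p, \ldots, \overline{u}_\ell^{p^\ell}]$ determined by an adapted basis; (3) deduce uniqueness of the essential space and its identification with $V_\ell$. The main obstacle is verifying $F_j \in A$ in stage (2), which requires a careful Hasse-derivative argument via Lucas/Kummer.

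For stage (1), additive closure of $V_i$ uses the Frobenius identity $(G_1+G_2)^{p^i}=G_1^{p^i}+G_2^{p^i}$ together with $(cG)^{p^i}=c^{p^i}G^{p^i}$; monotonicity $V_i\subset V_{i+1}$ follows from $(G^{p^i})^p\in R^{p^i}\cdot R^{p^i}\cdots R^{p^i}\subset R^{p^{i+1}}$ by multiplicative closure of $R^\cdot$. Since $\dim_K V_i\leq n$, the chain stabilizes at some finite $\ell$.

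For stage (2), fix an adapted basis $\overline{u}_0,\ldots,\overline{u}_\ell$ with $V_i=\spa(\overline{u}_0,\ldots,\overline{u}_i)$ and set $A:=K[\overline{u}_0,\overline{u}_1^p,\ldots,\overline{u}_\ell^{p^\ell}]$. The inclusion $A\subset R^\cdot$ is immediate from the definition of $V_i$ and multiplicative closure. For the reverse inclusion, one first checks that $A$ is Hasse-derivation-stable: Kummer's congruence $\binom{p^i}{a}\equiv0\pmod p$ for $0<a<p^i$ forces the Hasse derivatives of each generator $u_k^{p^i}$ to reduce to $u_k^{p^i}$, a constant, or zero, and Leibniz propagates closure to all products. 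The crux is then to prove $F_j\in A$. Suppose otherwise: some monomial of $F_j$ would involve $u_k^a$ for $u_k\in\overline{u}_i$ and $a=p^b m$ with $\gcd(m,p)=1$ and $b<i$; choose such $a$ minimal. Write $F_j=\sum_{a'}u_k^{a'}f_{a'}(v)$ as a polynomial in $u_k$ with $v$ the other adapted coordinates. Apply first $D_{u_k^{a-p^b}}$ and then $D_{v^\beta}$ with $|\beta|=d_j-a$ chosen so that $D_{v^\beta}(f_a)\in K^\times$; all contributions from $a'>a$ vanish because then $|\beta|>\deg f_{a'}$, and by Lucas $\binom{p^b m}{p^b}\equiv m\not\equiv 0\pmod p$, yielding a nonzero scalar multiple of $u_k^{p^b}$ in $R^{p^b}$. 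This forces $u_k\in V_b\subset V_{i-1}$, contradicting $u_k\in\overline{u}_i\setminus V_{i-1}$. Hence $F_j\in A$ and $R^\cdot=A$.

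For stage (3), let $W=\spa(w_1,\ldots,w_s)$ be any essential subspace, so $F_j\in K[w_1,\ldots,w_s]$; since $K[W]$ is itself Hasse-derivation-closed, $R^\cdot\subset K[W]$. Extending to a linear basis $w_1,\ldots,w_n$ and writing any $G\in V_i$ as $G=\sum a_j w_j$, the Frobenius expansion $G^{p^i}=\sum a_j^{p^i}w_j^{p^i}\in K[W]$ forces $a_j^{p^i}=0$ and hence $a_j=0$ for $j>s$; thus $V_\ell\subset W$. Conversely $V_\ell$ is itself essential since $F_j\in A\subset K[V_\ell]$ by stage (2), so $V_\ell=V$ is the unique minimal essential space, which also gives uniqueness of the filtration. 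The assertion $J^\cdot=A\cap(F_1,\ldots,F_r)$ is then the definition of the ideal of $R^\cdot=A$ generated by the $F_j$, contracted through the inclusion $A\subset K[x_1,\ldots,x_n]$.
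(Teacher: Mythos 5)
Stages (1) and (3) of your plan are sound, and your overall strategy (identify $R^\cdot$ with the subalgebra $A=K[\overline{u}_0,\overline{u}_1^p,\ldots,\overline{u}_\ell^{p^\ell}]$, which is Hasse-derivation-stable by Lucas/Kummer plus Leibniz) is a legitimate reorganization of the statement. The gap is in the crux of stage (2). After applying $D_{u_k^{a-p^b}}$ and then $D_{v^\beta}$ with $|\beta|=d_j-a$, the terms with $a'>a$ do vanish for degree reasons, but the terms with $a-p^b\le a'<a$ do not: each contributes $\binom{a'}{a-p^b}\,u_k^{a'-a+p^b}\,D_{v^\beta}(f_{a'})$, where $D_{v^\beta}(f_{a'})$ is a homogeneous polynomial of positive degree $a-a'$ in the $v$-variables. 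So what you actually produce in $R^{p^b}$ is $c\,u_k^{p^b}+(\text{terms of }u_k\text{-degree}<p^b)$, not a scalar multiple of $u_k^{p^b}$. This is fatal for the conclusion: $u_k\in V_b$ requires the $p^b$-th power of a linear form, namely $u_k^{p^b}$ itself, to lie in $R^{p^b}$, and membership in $R^{p^b}$ is not detected coefficient-by-coefficient, so you cannot discard the cross terms. (Minimality of $a$ does not help: the window $[a-p^b,a)$ can still contain exponents of $u_k$ occurring in $F_j$, e.g.\ multiples of $p^i$ or $0$.) Further differentiation cannot isolate $u_k^{p^b}$ either, since $\binom{p^b}{c}\equiv 0$ for $0<c<p^b$.

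The paper avoids exactly this problem by inducting on Frobenius layers rather than attacking all layers at once. At the base layer one only ever differentiates down to degree one: if some monomial of $G\in R^d$ has $u_j$-exponent prime to $p$, then a suitable $(d-1)$-st Hasse derivative is a \emph{linear form} in $R^1=V_0$ with nonzero $u_j$-coefficient, and since $V_0$ is a linear subspace this single coefficient gives the contradiction regardless of cross terms. Having shown every $G\in R^\cdot$ lies in $K[u_1,\ldots,u_{k_1},u_{k_1+1}^p,\ldots,u_n^p]$, the paper passes to the subalgebra $R_1^\cdot$ of elements killed by all first derivatives, which over a perfect field consists of $p$-th powers, takes $p$-th roots, and applies the inductive hypothesis to the resulting smaller Rees algebra; this is what propagates the argument to the higher $V_i$. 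To repair your proof you would need to replace the one-shot derivative computation by such a layer-by-layer induction (or otherwise show that the cross terms already lie in $R^{p^b}$, which your minimality choice does not provide).
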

\begin{proof}
Consider $V_0:=R^1$. By rearranging coordinates we can assume that  $V_0=\spa(u_1,\ldots,u_{k_1})$. This implies that $D_{u_j}(G)=0$ for any $G\in R^\cdot$ and $j>k_1$.
Thus it can be expressed as $G(u_1,\ldots,u_{k_1}, u^{p_{k_1+1}},\ldots,u^{p_n})$. Consider the canonical subalgebra $R^\cdot_1\subset R^\cdot$ defined by the conditions $D_{u_j}(G)=0$ for all $j$.
Then $R^\cdot_1$ consists of the homogenous polynomials of the forms $G(u^p_1,\ldots,u^p_n)=G'(u_1,\ldots,u_n)^p$. This defines the subalgebra $$\sqrt[p]{R^\cdot_1}:= \{G\in K[u_1,\ldots,u_n]\mid  G^p\in R^\cdot_1\}.$$ By the inductive assumption there exists a unique essential space $V^1$ for $\sqrt[p]{R^\cdot_1}$, with the induced canonical filtration $\overline{V_1}\subset\ldots\overline{V}_{\ell-1}$. Since $V_1\subset \overline{V_1}$, It defines a unique filtration $V_1\subset\ldots {V}_{\ell}$, where $V_i:=\overline{V_{i-1}}$ for $i=2,\ldots,r$.

\end{proof}


The above proposition leads to a more subtle notion than essential space in characteristic $p$.
\begin{definition} Let $K$ be  a perfect field of characteristic $p>0$.
The {\it essential flag} $$V_0\subset V_1\subset \ldots\subset V_\ell=V$$ of a homogenous ideal $\cI\subset 
	[x_1,\ldots,x_n]$ is a filtration of vector subspaces of the essential space $E$ such that
	 for any partitioned basis $$u_1,\ldots,u_{k_0},u_{k_0+1},\ldots,u_{k_1},\ldots,u_{k_\ell}$$ of $E$ such that $u_1,\ldots,u_{k_i}$ is a basis of $V_i$ there exist homogenous generators $$F_i:=F_i(u_1,\ldots,u_{k_0},u^p_{k_0+1},\ldots,u^p_{k_1},\ldots,u^{p^\ell}_{k_\ell}).$$
of $I$. Moreover the sequence of numbers $(k_0,\ldots,k_\ell,0,\ldots)$ is minimal
\end{definition}

\begin{proposition} Let $K$ be a perfect field and  $I\subset K[x_1,\ldots,x_n]$ be a homogenous ideal, and let $F_1,\ldots, F_r\in I$ denote a  standard basis of $\cI$ (at $0$). Then 
\begin{enumerate}
\item $F_1,\ldots, F_r$ are homogenous  and moreover $F_i=F_i(u_1,\ldots, u_k)$, where $u_1,\ldots, u_k$ is essential set of coordinates for $I$.  
\item $R^\cdot(I):=R^\cdot(F_1,\ldots,F_r)$ is independent of the choice of the  standard basis of $I$
\item If $char(K)=0$ then $R^\cdot(I)=K[u_1,\ldots,u_k]$
and $J^\cdot(I)=I\cap K[u_1,\ldots,u_k].$
\item If $char(K)=p$ then $R^\cdot(I)=K[\overline{u}_0,\ldots,\overline{u}^{p^\ell}]$ and $ J^\cdot(I)=I\cap K[\overline{u}_0,\ldots,\overline{u}_\ell^{p^\ell}] $
\item The essential space and the essential flag for $I$ are unique.

\end{enumerate}

\end{proposition}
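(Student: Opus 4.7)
The plan is to leverage the near-uniqueness of the standard basis (Corollary \ref{main014}) in the reduced polynomial ring setting, together with the two computational propositions immediately preceding this one, which already compute $R^\cdot$ for homogeneous generators in essential coordinates. So much of the work is reducing the present statement to those propositions.

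First, for (1), I would observe that for a homogeneous ideal $I$ in $K[x_1,\ldots,x_n]$, each element of the standard basis must itself be homogeneous. Indeed, the diagram of initial exponents $\Delta(I)$ with respect to a monotone order $\overline{T}$ is determined by homogeneous data (vertices $\alpha_i$ of degrees $|\alpha_i|$), and decomposing $F_i = u^{\alpha_i}+r_i$ into graded pieces, one sees that the degree $|\alpha_i|$ component $F_i^{(|\alpha_i|)}$ again satisfies $\supd(F_i^{(|\alpha_i|)}-u^{\alpha_i})\subset\Gamma$ and lies in $I$. Uniqueness of the standard basis in the algebraic (reduced) category from Corollary \ref{main014} then forces $F_i = F_i^{(|\alpha_i|)}$. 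To obtain the ``moreover'' part, I would choose the coordinate system to be the essential one: by definition, there exists a generating set in $K[u_1,\ldots,u_k]$; applying the standard basis algorithm (division) within $K[u_1,\ldots,u_k]$ and using uniqueness, the resulting standard basis lies in $K[u_1,\ldots,u_k]$.

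Next, for (2), the key point is that $R^\cdot(F_1,\ldots,F_r)$ is built from $F_i$ by applying Hasse derivatives, taking sums, and taking products. If $F'_1,\ldots,F'_{r'}$ is another standard basis (with respect to possibly another monotone order/diagram), both sets of initial forms generate the same graded ideal $\inn(I)=I$ (the ideal is homogeneous, so it equals its own initial ideal). Each $F_j'$ is obtained from $F_1,\ldots,F_r$ by $R_n$-linear combinations. Since the Hasse-differential closure is stable under $R_n$-multiplication at the cost of raising degree (here one uses the Leibniz rule for Hasse derivatives and the fact that the Rees algebra contains all the generators up to degree shifts), each $F_j'$ lies in $R^\cdot(F_1,\ldots,F_r)$, and symmetrically. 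So $R^\cdot(I)$ depends only on $I$.

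For (3) and (4), having shown in (1) that one can pick a standard basis inside $K[u_1,\ldots,u_k]$ (in the essential coordinates, resp.\ inside $K[\overline u_0,\ldots,\overline u_\ell^{p^\ell}]$ in characteristic $p$), I would simply apply the two propositions immediately preceding: they compute $R^\cdot(F_1,\ldots,F_r)=K[u_1,\ldots,u_k]$ in characteristic zero, and $K[\overline u_0,\ldots,\overline u_\ell^{p^\ell}]$ in characteristic $p$, and identify $J^\cdot$ as the intersection of $I$ with this subring. Finally, (5) is immediate from (2): since $R^\cdot(I)$ is canonical, its degree-one part $R^1=V_0$ and the filtration $V_i^{(p^i)}=\{G\in K[x]_1\mid G^{p^i}\in R^{p^i}\}$ recovered by the preceding proposition are intrinsic to $I$.

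The main obstacle I expect is step (1): rigorously matching the coordinate system produced by the standard basis algorithm with the coordinate system used in the definition of the essential space/flag, particularly in characteristic $p$ where one must track how Hasse derivatives interact with $p$-th powers and where the essential flag, not just the essential space, has to be respected. One has to verify that once essential coordinates are fixed, the standard basis (which a priori might live in $K[x_1,\ldots,x_n]$ and use all variables) can be chosen so that no $u_j$ with $j>k$ ever appears, using the uniqueness/minimality statement in Corollary \ref{main014} combined with the minimality built into the definition of $V$.
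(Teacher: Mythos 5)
Your argument for part (2) has a genuine gap, and it is the crux of the whole proposition. You write that each element $F_j'$ of a second standard basis is an $R_n$-linear combination $\sum H_iF_i$ and that ``the Hasse-differential closure is stable under $R_n$-multiplication,'' concluding $F_j'\in R^\cdot(F_1,\ldots,F_r)$. But the homogeneous Rees algebra is only a $K$-subalgebra closed under the operators $\overline{D}^a$; its graded pieces are \emph{not} ideals of $R_n=K[x_1,\ldots,x_n]$, and it is not an $R_n$-module. Indeed the conclusion of the proposition is that $R^\cdot(I)=K[u_1,\ldots,u_k]$ is a \emph{proper} subring cut out by the essential variables. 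If your closure claim were true, the same reasoning would show that every element of $I$ lies in $R^\cdot(I)$, which is false: for $I=(x_1^2)\subset K[x_1,x_2]$ one has $R^\cdot(I)=K[x_1]$, yet $x_2^2x_1^2\in I$ is an $R_n$-multiple of the generator and does not lie in $K[x_1]$. So membership of $F_j'$ in the ideal $I$ gives you nothing about membership in the Rees algebra; some special property of standard basis elements must be used. A related, smaller issue affects your part (1): choosing the coordinate system to be the essential one and invoking Corollary \ref{main014} only produces \emph{one} standard basis inside $K[u_1,\ldots,u_k]$, whereas the statement (and what part (2) needs) is that an \emph{arbitrary} standard basis, built from an arbitrary generic coordinate system $(v_i)$, lands there; the uniqueness in Corollary \ref{main014} is only relative to a fixed coordinate system and diagram, so it does not bridge two different coordinate systems.

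The missing idea, which is how the paper argues, is to use condition (3) of Definition \ref{Dweak2} together with Corollary \ref{main013}. One first characterizes $R^\cdot(F_1,\ldots,F_r)=K[u_1,\ldots,u_k]$ (resp.\ $K[\overline{u}_0,\ldots,\overline{u}_\ell^{p^\ell}]$ in characteristic $p$) as the subring annihilated by the derivations $D_{u_j}$ for $j>k$ (resp.\ the $D_{\overline{u}_j^{p^t}}$). Because these derivations kill the generators $F_i$, the Leibniz rule shows they map $I$ into $I$. Now let $G_i$ be an element of an arbitrary standard basis, with diagram $\Delta$ in coordinates $(v_i)$: condition (3) gives $D_{v^\alpha}(G_i)\in\{0,1\}$ for $\alpha\in\Delta$, so commuting derivatives yields $\supd\bigl(D(G_i)\bigr)\subset\Gamma$ for each such derivation $D$; since also $D(G_i)\in I$ and $I\cap K[x_1,\ldots,x_n]^{\Gamma}=0$ by Corollary \ref{main013}, we get $D(G_i)=0$, i.e.\ $G_i$ lies in the essential subring. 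This single computation proves the ``moreover'' of (1) for an arbitrary standard basis and gives $R^\cdot(G_1,\ldots,G_s)\subseteq R^\cdot(F_1,\ldots,F_r)$, whence (2) by symmetry; the analogous induction with $\supd(H_i)\subset\Gamma_i$ handles $J^\cdot$. Your homogeneity argument in (1) (graded components of $F_i$ lie in $I$, and the difference has differential support in $\Gamma$, hence vanishes) is fine, as is the reduction of (3)--(5) to the two preceding propositions once (1) and (2) are secured.
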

\begin{remark} The  parts (1) and (2) are proven for a  standard basis with respect to a certain monotone order in char. $0$ in (\cite{BM}) (Lemma 6.7(2)). 
\end{remark}

 
\begin{proof} We give here a proof in positive characteristic. The case of characteristic zero is the same but sightly simpler.
Let $F_1,\ldots,F_r$ be any basis of $I$. The algebra $R^\cdot(F_1,\ldots,F_r)=K[\overline{u}_0,\ldots,\overline{u}_r^{p^\ell}]$ is characterized uniquely by the properties $D_{\overline{u}_j^{p^t}}(G)=0$ where $0\leq j<t\leq \ell$, or $t>\ell$, and $j$ is arbitrary. This property is valid thus also for generators $F_i$.
This implies that  for any $G=\sum H_iF_i\in I$, $D_{\overline{u}_j^{p^t}}(G)\in I$. Thus the property holds for a  standard basis $(G_i)$ of $I$ associated with some coordinate system $(v_i)$ and diagrams $\Delta$ and $\Gamma$.

But, by the property of the  standard basis $G_i=v^{\alpha_i}+r_i$ with $\supd(r_i)\subset \Gamma$. More precisely, by Condition (3) of Definition \ref{Dweak2}, $D_{v^\alpha}(G_i)=0$ for $\alpha\in \Delta\setminus \{\alpha_i\}$, and $D_{v^{\alpha_i}}(G_i)=1$. In both cases, that is, if $\alpha\in \Delta$ we have $D_{v^\alpha}(D_{\overline{u}_j^{p^t}})(G_i))=D_{\overline{u}_j^{p^t}})(D_{v^\alpha}(G_i)=0$ which means that
 $\supd(D_{\overline{u}_j^{p^t}})(G_i)\subset \Gamma$.

 Since additionally $(D_{\overline{u}_j^{p^t}})(G_i)\in I$  we conclude that  $\supp(D_{\overline{u}_j^{p^t}})(G_i)=0$. The latter implies that $G_i\in R^\cdot(F_1,\ldots,F_r)$, and $R^\cdot(G_1,\ldots,G_s) \subseteq R^\cdot(F_1,\ldots,F_r)$. 
This implies that if $(F_i)$ is another standard basis then
by symmetry we get the equality $R^\cdot(F_1,\ldots,F_r)= R^\cdot(G_1,\ldots,G_s)$. 

This also shows that any standard basis determines a unique essential flag.

Now write $G_j=\sum H_iF_i$, with $\cosupp(H_i)\subset \Gamma_i$. Again we easily see by induction on $i$ that $0=D(G_i)=\sum D(H_iF_i)=\sum D(H_i)F_i$, with $\cosupp(D(H_i))\subset \Gamma_i$. Thus $G_i\in J^\cdot(F_1,\ldots,F_r)$ showing inclusion $J^\cdot(G_1,\ldots,G_m)\subseteq J^\cdot(F_1,\ldots,F_r)$, which implies (by symmetry) the equality of both algebras. One can choose a standard basis with respect to the coordinate system $(u_i)$. This implies that $J^\cdot(I)=I\cap K[\overline{u}_0,\ldots,\overline{u}_r^{p^\ell}].$
\end{proof}

The result on homogenous Rees algebras can be extended to any ideal sheaf. We will show it here  in characteristic zero (Theorem \ref{Rees}). The case of positive characteristic is more subtle and will be dealt in a separate paper.

Let $f\in (\cI,\mu)$ be a marked function, and $x\in \cosupp(f,\mu)$. Then 
we define the {\it initial form} $\inn_x(f)$
of $(f,\mu)$ at $x$ to be the  class  of $f$ in $(\cI+m^{\mu+1}_{x,X})/m^{\mu+1}_{x,X}$. Thus $\inn_x(f)$  can be identified with the initial $\mu$-form of $f$ if $\ord_x(\cI)=\mu$ and is $0$ otherwise (if $\ord_x(\cI)>\mu$).

We shall need the following result:
\begin{lemma}
 Let  $(f_i,d_i)$ be marked functions of maximal orders and consider the generated Rees algebra $\cR^\cdot=\cR^\cdot(f_1,\ldots,f_r)$  and the homogenous Rees algebra $R(\inn_x(f_1),\ldots,\inn_x(f_1))$.
Then $$R^\cdot (\inn_x(f_1),\ldots,\inn_x(f_1))= \inn_x(\cR^\cdot(f_1,\ldots,f_r)$$
\end{lemma}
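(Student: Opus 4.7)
The plan is to prove both inclusions separately, using the explicit description of the differential Rees algebra $\cR^\cdot(f_1,\ldots,f_r)$ in terms of marked derivatives. Recall from the proposition preceding the lemma that $\cR^\cdot$ is generated (as an ordinary Rees algebra, without any further differential closure) by the marked functions $(D_{u^\alpha}f_i,\, d_i-|\alpha|)$ with $|\alpha|\le d_i$. Consequently every element of $\cR^\mu$ is an $\cO$-linear combination of products of the form $\prod_s D_{u^{\alpha_s}}f_{i_s}$ with $\sum_s (d_{i_s}-|\alpha_s|)\ge \mu$.

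The key observation I would isolate first is a compatibility statement between $\inn_x$ and the Hasse derivative for a marked function of maximal order: if $\ord_x(f_i)=d_i$ and $|\alpha|\le d_i$, then $\ord_x(D_{u^\alpha}f_i)\ge d_i-|\alpha|$, with equality achieved in many directions, and
\[
\inn_x(D_{u^\alpha}f_i,\,d_i-|\alpha|) \;=\; \overline{D}_{u^\alpha}\bigl(\inn_x(f_i)\bigr),
\]
where on the right we interpret $\inn_x(f_i)$ as the degree-$d_i$ form in $\gr(\cO_{X,x})=K[u_1,\ldots,u_n]$. This follows from the basic fact that the Hasse derivative of the degree-$d_i$ piece of the Taylor expansion of $f_i$ at $x$ agrees with the degree-$(d_i-|\alpha|)$ piece of the Taylor expansion of $D_{u^\alpha}f_i$ (since $D_{u^\alpha}$ lowers order by at most $|\alpha|$, and when the initial part is nonzero we get equality). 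I would state this as a short lemma and prove it by direct Taylor expansion.

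Given this, the inclusion $R^\cdot(\inn_x(f_i))\subseteq \inn_x(\cR^\cdot)$ is straightforward. The generators $\inn_x(f_i)\in R^{d_i}$ are by definition initial forms of $(f_i,d_i)\in\cR^{d_i}$. The homogeneous Rees algebra $R^\cdot(\inn_x(f_i))$ is obtained by closing under sums, products (with the grading adding degrees), and Hasse derivatives $\overline{D}^a$ lowering degrees. Sums and products of initial forms of elements of $\cR^\cdot$ are clearly initial forms of elements in $\cR^\cdot$, and the key observation above shows that applying $\overline{D}^a$ to an initial form $\inn_x(g,\mu)$ for $g\in\cR^\mu$ of maximal order $\mu$ at $x$ produces $\inn_x(D^a g,\mu-a)$, which lies in $\inn_x(\cR^{\mu-a})$ by the differential property of $\cR^\cdot$.

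Conversely, for $\inn_x(\cR^\cdot)\subseteq R^\cdot(\inn_x(f_i))$, take any $g\in \cR^\mu$ and write it, using the generator description, as a sum of products
\[
g = \sum_k c_k \prod_{s} D_{u^{\alpha_{k,s}}}f_{i_{k,s}},\qquad c_k\in\cO_{X,x},\quad \sum_s (d_{i_{k,s}}-|\alpha_{k,s}|)\ge \mu.
\]
Passing to the initial form at $x$ in gradation $\mu$, only those terms whose total order at $x$ equals $\mu$ contribute; by the key observation, each such surviving product has initial form equal to $c_k(x)\prod_s \overline{D}_{u^{\alpha_{k,s}}}\inn_x(f_{i_{k,s}})$, which visibly lies in $R^\mu(\inn_x(f_1),\ldots,\inn_x(f_r))$.

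The main obstacle I anticipate is the bookkeeping around the inequality $\sum_s(d_{i_{k,s}}-|\alpha_{k,s}|)\ge \mu$: some of these products have order strictly greater than $\mu$ at $x$, in which case their degree-$\mu$ contribution to $\inn_x(g,\mu)$ is zero, and one must check that the nonvanishing contributions align exactly with the $\overline{D}^a$ operations in $R^\cdot$. Making this rigorous requires a careful use of the maximal-order hypothesis on $(f_i,d_i)$, which guarantees that $\inn_x(f_i)\neq 0$ in gradation $d_i$, so that the operations $\overline{D}^{\alpha}$ reproduce the right generators on the graded side rather than collapsing prematurely.
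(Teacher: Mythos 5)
Your proposal is correct and follows essentially the same route as the paper, whose proof consists precisely of the two observations you isolate: that $\inn_x$ commutes with the Hasse derivatives (so $\inn_x(D_{u^\alpha}f)=\overline{D}_{u^\alpha}(\inn_x(f))$ for marked functions of maximal order) and that $\inn_x$ preserves sums and products of marked functions. Your version merely makes explicit the bookkeeping with the generator description $(D_{u^\alpha}f_i,\,d_i-|\alpha|)$ and the case $\sum_s(d_{i_s}-|\alpha_s|)>\mu$, which the paper leaves implicit.
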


\begin{proof} It follows by definition that $\inn_x(Df)=D(\inn_x(f))$, for any $D\in \overline{D}^a$, and $f\in\cR$.  Moreover $\inn_x$ preserves products and sums of the marked functions in Rees algebra $\cR$.

\end{proof}


\begin{theorem} \label{Rees} Assume $X$ is a smooth scheme of finite type over the ground field $K$ of the characteristic $0$ (respectively a complex manifold). Let $\cI$ be a coherent sheaf of ideals on $X$, and $(f'_i,d_i)$ be a  standard pre-basis of $\cI$ along a Samuel Stratum $S$. Then the Rees algebra $\cR^\cdot(\cI)=\cR^\cdot(f'_1,\ldots,f'_k)$ and the Rees ideal $\cJ^\cdot(\cI):=\cJ^\cdot(f'_1,\ldots,f'_k)$ are independent of  choice of   standard basis of $\cI$ in a neighborhood of the Samuel stratum $S$.
\end{theorem}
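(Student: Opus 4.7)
The plan is to reduce the theorem to the two already-proved facts about homogeneous Rees algebras: the lemma stating $R^\cdot(\inn_x(f_1),\ldots,\inn_x(f_r)) = \inn_x(\cR^\cdot(f_1,\ldots,f_r))$, and the preceding proposition asserting that in characteristic zero, $R^\cdot(I)$ (together with $J^\cdot(I)$) is independent of the choice of standard basis of a homogeneous ideal $I$. First I would reduce to the étale (or open) situation where the standard pre-bases $(f'_i)$ and $(g'_i)$ become genuine standard bases $(f_i,d_i)$ and $(g_i,d_i)$ of $\cI$ in the sense of Definition \ref{Dweak2}; multiplication by units does not change the Rees algebra, so it suffices to work with $(f_i)$ and $(g_i)$.

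Next I would establish pointwise equality of initial forms along $S$. At each closed point $x\in S$, by the definition of standard basis and Condition (3) of Definition \ref{Dweak2}, both $(\inn_x(f_i))$ and $(\inn_x(g_i))$ are standard bases of the homogeneous ideal $\inn_x(\cI_x)\subset\gr_x(\cO_X)$. Applying the proposition on canonicity of $R^\cdot$ and $J^\cdot$ for homogeneous ideals yields
\[
R^\cdot(\inn_x(f_i)) = R^\cdot(\inn_x(g_i)),\qquad J^\cdot(\inn_x(f_i)) = J^\cdot(\inn_x(g_i)).
\]
Combining this with the lemma on initial forms of differential Rees algebras produces
\[
\inn_x(\cR^\cdot(f_i)) = \inn_x(\cR^\cdot(g_i)),\qquad \inn_x(\cJ^\cdot(f_i)) = \inn_x(\cJ^\cdot(g_i))
\]
for every $x\in S$ and every graded piece.

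The final and main step is to lift this pointwise, graded equality of initial forms to equality of the whole Rees algebras in a neighborhood of $S$. By symmetry, it suffices to show $\cR^\mu(g_i)\subseteq \cR^\mu(f_i)$ (and similarly for $\cJ^\mu$) stalkwise at each $x\in S$ and for every $\mu$. Given $h\in\cR^\mu(g_i)_x$, one has $h\in m_x^{\mu}$ (since standard-basis generators $f_j, g_j$ have order equal to their weight $d_j$ and the differential Rees structure preserves this), so $\inn_x(h)\in\inn_x(\cR^\mu(g_i))=\inn_x(\cR^\mu(f_i))$. Hence there exists $h_0\in\cR^\mu(f_i)_x$ with $h-h_0\in m_x^{\mu+1}$. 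The key observation needed to iterate is that in characteristic zero, the differential closure of the Rees algebra forces any element of $\cR^\mu$ of order $\mu+k$ to lie in $\cR^{\mu+k}$ modulo $\cR^{\mu+k}\cdot\cR^0$-products; concretely, one constructs the successive corrections by combining the initial-form equalities at degrees $\mu,\mu+1,\ldots$ with the product structure $\cR^{\mu_1}\cdot\cR^{\mu_2}\subseteq\cR^{\mu_1+\mu_2}$ to express $h-h_0$ as an element of $\cR^\mu(f_i)_x + (m_x^{N}\cap\cR^\mu(g_i)_x)$ for arbitrarily large $N$. Since $\cR^\mu(f_i)_x$ is finitely generated over the Noetherian local ring $\cO_{X,x}$ and hence closed in the $m_x$-adic topology by Krull's intersection theorem, we conclude $h\in\cR^\mu(f_i)_x$. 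The identical argument applied to $\cJ^\cdot$ yields the corresponding equality of Rees ideals.

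The hard part will be step three, namely justifying that the successive remainders actually stay in increasing $m_x$-adic powers of the \emph{same} Rees-graded piece rather than merely in higher powers of $m_x$; this is where the differential closure and product structure specific to characteristic zero Rees algebras must be used carefully, exactly as in the homogeneous proof where $D_{\overline{u}_j^{p^t}}(G_i)\subset\Gamma$ forced the vanishing that gave canonicity.
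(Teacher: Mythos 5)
Your first two steps (reducing to genuine standard bases on a common \'etale neighborhood, then using the lemma $\inn_x(\cR^\cdot(f_i))=R^\cdot(\inn_x(f_i))$ together with the canonicity of the homogeneous Rees algebra to get $\inn_x(\cR^\cdot(f_i))=\inn_x(\cR^\cdot(g_i))$ at each $x\in S$) are correct and are exactly how the paper begins. The problem is the third step, and it is a genuine gap, not just a technicality you have deferred. The equality $\inn_x(\cR^{\mu}(g))=\inn_x(\cR^{\mu}(f))$ controls only the degree-$\mu$ initial forms of elements of the $\mu$-th graded piece. After you subtract $h_0$ with matching initial form, the remainder $h-h_0$ lies in $m_x^{\mu+1}$, but it is still only an element of $\cR^\mu(g)_x+\cR^\mu(f)_x$; the ``key observation'' you invoke --- that an element of $\cR^\mu$ of order $\mu+k$ lies in $\cR^{\mu+k}$ --- is an order-saturation property that these Rees algebras are not assumed to have and that does not follow from differential closure or from the product structure. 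Without it, the graded equalities at levels $\mu+1,\mu+2,\ldots$ simply do not apply to your remainders, so the iteration never gets off the ground.

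The paper closes this gap by a different mechanism. First, it does not try to prove $\cR^\mu(g)\subseteq\cR^\mu(f)$ for arbitrary elements: since $\cR^\cdot(f)$ is the smallest differential Rees algebra containing the $f_j$ in degree $d_j$, it suffices to show each generator $f_j$ lies in $\cJ^{d_j}(g)_x$ (in fact in its completion, then descend by faithful flatness). Second, the successive corrections are not produced by matching initial forms degree by degree, but by an explicit formal division: starting from a lift $\widetilde f_j\in\cR^{d_j}(g)$ of the initial form, one subtracts terms of the shape $D_{u_\beta}h\cdot\sum r_{\alpha,\beta}\widetilde f_\alpha$, which stay in $\widehat{\cJ^{d_j}}(g)_x$ for the concrete reason that $D_{u_\beta}h\in\cJ^{d_j-s}$ and $\widetilde f_\alpha\in\cJ^{s}$, so the product lands in $\cJ^{d_j}$; invertibility of the relevant matrices comes from condition (4) of the standard basis and the stabilization theorem. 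Third, the $m$-adic limit $h_\infty=u^{\alpha_j}+r_\infty$ with $\supd(r_\infty)\subset\Gamma\times\NN^{n-s}$ is identified with $f_j$ by the uniqueness of the standard basis (Corollary \ref{main014}) --- a step your outline has no analogue of. If you want to salvage your approach, you should (a) reduce to the generators, and (b) replace the degree-by-degree initial-form matching with a correction scheme in which each correction is exhibited as an explicit product of elements of lower graded pieces, which is precisely what the paper's division algorithm does.
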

\begin{proof} Let $f'_1,\ldots,f'_r$, and $g'_1,\ldots,g'_m$ be two  standard  pre-bases of $\cI$ along $S$ on a Zariski open neighborhood of $x$ corresponding to  two  standard bases  
 $f_1,\ldots,f_r$, and $g_1,\ldots,g_m$ of $\cI$ along $S$ on a common \'etale neighborhood $X'$ preserving the residue field of $x\in X$ (see  Definition \ref{weak2}). Denote by $u_1,\ldots,u_n$ , and $v_1,\ldots,v_n$ the  corresponding compatible coordinates. We can assume here that $u_1,\ldots,u_s$ are distinguished , and $u_1,\ldots,u_k$ are essential, with $s\leq k\leq n$.
By symmetry it suffices to show that $f_i\in \cJ^{\mu_i}(g_1,\ldots,g_s)$. We will show that $f_i$ is in the completion of $\widehat{\cJ^{\mu_i}}(g_1,\ldots,g_s)_x \subset \widehat{\cO_{X',x}}=\widehat{\cO_{X,x}}$ in a  neighborhood of any $x\in S=\cosupp(\cJ^\cdot)$.

First observe that the initial forms $\inn_x(f_1),\ldots,\inn_x(f_r)$, and $\inn_x(g_1),\ldots,\inn_x(g_s)$  form  two different  bases of
the initial ideal $\inn_x(\cI)$. Then we can find the essential linear forms $$\overline{u}_1=\inn_x(u_1)=\inn_x(\widetilde{u}_1),\ldots, \overline{u}_s=\inn_x(\widetilde{u}_s)$$ in the grading $$R^1(\inn_x(I))=R^1(\inn_x(g_1),\ldots,\inn_x(g_s)),$$ 
for a certain coordinates $\widetilde{u}_1,\ldots,\widetilde{u}_k\in \cR^1(g_1,\ldots,g_s)$, and
$\widetilde{f}_i\in \cR^{d_i}(g_1,\ldots,g_s)$, such that $$(\inn_x(\widetilde{f}_i))(\overline{u}_1,\ldots,\overline{u}_s,0,\ldots,0)=\inn_x({f}_i)({u}_1,\ldots,{u}_s,0,\ldots,0).$$
This implies that $(\widetilde{f}_i)$ satisfies (in particular) the condition (4) of the weak standard basis. 
Using it one can perform in the completion ring $\widehat{\cO_{X,x}}$ the following  Euclidean divison algorithm.
Consider the function $\widetilde{f}_j$. 
Its initial form coincides with that of $f_j$. We shall modify $\widetilde{f}_j$ to get $f_j$ with all intermediate steps performed in $\widehat{\cJ^{\mu_i}}(g_1,\ldots,g_s)_x$. We just
need to eliminate all the higher degree monomials	in $\widetilde{f}_j$ which are not in $\Delta\times \NN^{n-s}$.
Set $h_0:=\widetilde{f}_j$

For any natural $s$ consider the vector space $V_s$ spanned by the ordered set of forms $$V_s:=\{ u^\alpha\mid \alpha \in \Delta=\bigcup \Delta_i, |\alpha|=s\},$$
and the natural projection $$\pi_s: K[[u_1,\ldots,u_n]]\to V_s.$$
 
 Let $$T^s:=[t_{\alpha, \beta}]=[\ldots,\pi_s(\widetilde{f}_\alpha),\ldots]$$ be the square matrix whose subscripts are labeled  by  the ordered set $$\Delta^s=:\{ \alpha \in \Delta , |\alpha|=s\}$$ and containing as $\alpha=\alpha'+\alpha_i$- column the vector $\pi_s(\widetilde{f}_\alpha)$, where $\widetilde{f}_\alpha:=\widetilde{u}^{\alpha'}\widetilde{f}_i$, for  $\alpha\in \Delta^s, \alpha=\alpha_i+\alpha'\in \Delta^s_i\subset \Delta^s$, with $\alpha_i$ the vertex of $\Delta_i$.  Then it follows from condition (4) of Definition \ref{weak2} and Stabilization Theorem that the matrices $T^s$ are invertible and let  ${(T^s)}^{-1}:=[r^s_{\alpha, \beta}]$.

For any $h=\sum_{(\alpha,\gamma)\in \NN^s\times \NN^{n-s}} \, \, c_{\alpha,\gamma} u^{\alpha,\gamma}\in K[[u_1,\ldots,u_n]]=\widehat{\cO_{X,x}}$ put $$\mu(h):=\inf\{|\alpha|+|\gamma|, |\gamma|,\gamma) \mid \quad  \alpha \in \Delta, \quad c_{\alpha,\gamma} \neq 0\} = (\beta,\gamma)$$ (with lexicographic order)
and let $s:=|\beta|$, and $t=|\gamma|$. Then for $$\overline{h}_1:=h_0-c_{\beta,\gamma} u^{0,\gamma}\sum_{\alpha\in \Delta^s} \,  r_{\beta, \alpha} \,\, \widetilde{f}_\alpha $$  we have   $\mu(\overline{h_1})> \mu(h)$. 
  
  Likewise since $\inn_x(D_{u_\beta}h_0)=c_{\beta,\gamma} u^{0,\gamma}+ \sum_{\gamma'>\gamma} a_{\gamma'}u^{0,\gamma}$
  the same is true for the function $$h_1:=h_0-D_{u_\beta}h_0\sum_{\alpha\in \Delta^s} \,  r_{\alpha, \beta} \,\, \widetilde{f}_{\alpha}.$$ The latter function remains to be in $\widehat{\cJ^{\mu_i}}(g_1,\ldots,g_s)_x$ since $D_{u_\beta}h_0\in \cJ^{\mu_i-s}$ and  $\widetilde{f}_{\alpha}:=\widetilde{u}^{\alpha'}\widetilde{f}_i\in  \cJ^s(g_1,\ldots,g_s)_x$ for any $\alpha\in \Delta^s$.
  
   This defines a  convergent sequence $(h_n)\to h_\infty\in \widehat{\cJ^{\mu_i}}(g_1,\ldots,g_s)_x$. Moreover the function $h_\infty\in \widehat{\cI_x}$ has a form $$h_\infty=u^{\alpha_j}+r_\infty$$, with $\cosupp(r_\infty)\in \Gamma\times \NN^s$.
By  uniqueness of the  standard basis (Corollary \ref{main014}) we deduce that $$f_j=u^{\alpha_j}+r_\infty=h_\infty\in \widehat{\cJ^{d_j}}(g_1,\ldots,g_s)_x,$$
Likewise $f'_j\in \widehat{\cJ^{d_j}}(g_1,\ldots,g_s)_x=\widehat{\cJ^{d_j}}(g'_1,\ldots,g'_s)_x$
and that $f'_j\in {\cJ^{d_j}}(g'_1,\ldots,g'_s)_x$.

\end{proof}

\begin{definition} \label{Rees0}  Let $\cI$ be a coherent sheaf of ideals on a smooth scheme $X$ over a field $K$. Consider a Samuel stratum $S$ of $\cI$ on $X$, and  a  weak standard pre-basis $(f'_i,d'_i)$ of $\cI$.

 We shall call the (multiple) marked ideal  $\cR^\cdot(\cI)=\cR^\cdot(f'_1,\ldots,f'_k)$ (respectively $\cJ^\cdot(\cI)$) {\it the canonical Rees algebra algebra along Samuel stratum} (respectively {\it canonical Rees ideal}) of $\cI$ along a Samuel stratum $S$. 
\end{definition}

\begin{remark} Different approaches to Rees algebras were considered were considered by Hironaka, Villamayor, and Kawanoue-Matsuki. They considered, in particular, an additional saturation given by the integral closure of Rees algebra.   Bravo-Garcia Escamilla-Villamayor show in \cite{BV} that the integral (and differential) closure determines a unique canonical Rees algebra defined by its equivalence class. This was then applied to Hironaka's construction of distinguished data as in \cite{Hir2}.
On the other hand Hironaka \cite{Hir4}, Bravo-Garcia Escamilla-Villamayor \cite{BV} and also Kawanoue-Matsuki \cite{Kawanoue}, \cite{KM} show that such an algebra is  finitely generated.
Finally Bierstone-Milman \cite{BM2}, \cite{BM22} do not consider any saturation and use equivalence relation instead in their inductive arguments.

\end{remark}

\bigskip
\subsection{Relative Rees algebras} From now on we consider the case of ground field of characteristic $0$.

\begin{theorem}\label{central} Let $\cI$ be any reduced ideal on $(X,E)$. Let $H=H_x(\cI)$ be the maximal value of the Hilbert function, and let $\cR^\cdot(\cI)$ denote  canonical Rees ideal.
 Then  any resolution of $\cR^\cdot(\cI)$ defines a resolution of $(\cI,H)$. Moreover 
 \begin{enumerate}
\item  The ideal $\cR^\cdot(\cI)$ is stable with respect to any \'etale morphisms and field extensions.
\item The centers of the admissible blow-ups of $\cR^\cdot(\cI)$ are contained in the Samuel strata of the ideals of the strict transforms of $\cI$, and thus are normally flat.
\end{enumerate}
	
\end{theorem}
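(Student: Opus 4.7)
The plan is to reduce everything to the local description of $\cR^\cdot(\cI)$ coming from a standard basis along the Samuel stratum, and then invoke the stability result Theorem \ref{weak2} together with Hironaka's normal flatness Theorem \ref{Flat}.

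First I would establish item (1). Since by Definition \ref{Rees0} the canonical Rees algebra is generated by a standard pre-basis $(f'_i, d_i)$ together with all Hasse derivatives $(D_{u^\alpha} f'_i, d_i - |\alpha|)$, and since Lemma \ref{diff} gives $\phi^*(\cD^a(\cI)) = \cD^a(\phi^*(\cI))$ for every \'etale $\phi$, the pullback $\phi^*\cR^\cdot(\cI)$ is generated by the pullback of a standard pre-basis. By Theorem \ref{Rees} this pullback equals $\cR^\cdot(\phi^*(\cI))$, giving \'etale stability. The case of a (separable) field extension is handled identically, using Corollary \ref{etale} to obtain a standard basis after the extension.

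Next I would identify $\cosupp(\cR^\cdot(\cI))$ with the Samuel stratum $S = \{H_{x,\cI} = H\}$. At $x \in S$ Theorem \ref{weak} provides a standard basis $(f_i)$ with $\ord_x(f_i) = d_i = |\alpha_i|$, placing $x$ in every $\cosupp(f_i, d_i)$ and hence in $\cosupp(\cR^\cdot(\cI))$. Conversely, if $y \in \cosupp(\cR^\cdot(\cI))$ then $\ord_y(f_i) \geq d_i$ for each generator, so the differential support argument of Theorem \ref{weak} forces $H_{y,\cI} \geq H$; maximality of $H$ forces equality and $y \in S$. Any admissible center for $\cR^\cdot(\cI)$ is thus contained in $S$, and Theorem \ref{Flat} then yields normal flatness, proving (2).

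The crucial step is to show that after an admissible blow-up $\sigma : X' \to X$ along a smooth $C \subset S$, the controlled transform $\sigma^c(\cR^\cdot(\cI))$ agrees at each point $x' \in \sigma^{-1}(C)$ where the Hilbert--Samuel function is preserved with the canonical Rees algebra $\cR^\cdot(\cI')$ of the strict transform $\cI'$ of $\cI$. By Theorem \ref{weak2}, when $H_{x',\cI'} = H$ the transforms $f'_i := \sigma^*(f_i)/y^{d_i}$ form a standard basis of $\cI'$ at $x'$ with the same diagram $\Delta$; the controlled transforms of the derivative generators $(D_{u^\alpha} f_i, d_i - |\alpha|)$ then lie among (and generate) the analogous derivatives for $(f'_i, d_i)$, and Theorem \ref{Rees} guarantees the resulting Rees algebra is canonical. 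At points where the Hilbert--Samuel function strictly drops, the point is no longer in the cosupport of $(\cI', H)$, so resolution of $(\cI, H)$ has already been achieved locally. Iterating along any resolution of $\cR^\cdot(\cI)$, and noting by Theorem \ref{dcc} (d.c.c. of Hilbert--Samuel values) that the maximum value cannot remain $H$ indefinitely if the cosupport is to be emptied, one concludes that the $H$-Samuel stratum is eventually eliminated, which is exactly resolution of $(\cI, H)$.

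The main obstacle is the compatibility statement in the third paragraph: verifying that the controlled transform of $\cR^\cdot(\cI)$ coincides with $\cR^\cdot(\cI')$ above preserved Samuel points. This rests essentially on two deep results proved earlier, namely Theorem \ref{weak2} (stability of a standard basis under admissible blow-ups, with the same diagram $\Delta$) and the canonicity Theorem \ref{Rees} (independence of the Rees algebra on the chosen standard basis). A subsidiary but more routine issue is checking that the Hasse-derivative generators of the differential Rees algebra transform correctly under the substitution rule $D_{u_i} = y^{-1} D_{u'_i}$ of the blow-up chart; this is standard and is already built into the differential-stability axiom (5) defining a differential Rees algebra.
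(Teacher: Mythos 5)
Your proposal is correct and follows essentially the same route as the paper: local generation of $\cR^\cdot(\cI)$ by a standard basis, identification of its cosupport with the Samuel stratum via Theorem \ref{weak}, and stability under admissible blow-ups via Theorem \ref{weak2}. The only (harmless) overstatement is that the controlled transform of $\cR^\cdot(\cI)$ \emph{coincides} with $\cR^\cdot(\cI')$; the paper only establishes the sandwich $\sigma^c(\{(f_i,d_i)\})\subseteq \sigma^c(\cR^\cdot(\cI))\subseteq \cR^\cdot(\sigma^c(\{(f_i,d_i)\}))$ from Lemma \ref{le: inclusions}, which already forces the cosupports to agree with the Samuel stratum of $\cI'$ and is all that is needed.
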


\begin{proof} The Rees $\cR^\cdot(\cI)$ ideal is generated locally by the  standard basis $(f_i,d_i)$. The support of the standard basis defines the Samuel stratum of $\cI$ for the  value $H$ of the Hilbert-Samuel function. Moreover the controlled transforms $(f'_i,d_i):=\sigma^c(\{(f_i,d_i)\})$ of the  standard basis $(f_i,d_i)$ remain the  standard basis of the strict transform $\cI'$ of $\cI$. This implies that
  resolution of marked ideal $\{(f_i,d_i)\}$ defines
  a resolution of $(\cI,H)$. On the other hand by Lemma \ref{le: inclusions} $$\sigma^c(\{(f_i,d_i)\})\subseteq \sigma^c(\cR^\cdot(\cI))\subseteq \cR^\cdot(\sigma^c(\{(f_i,d_i)\})$$
  The latter implies that the Samuel stratum $S'$ of $\cI'$ can be described as $$S'=\cosupp\{(f'_i,d'_i)\}=\cosupp(\cR^\cdot(f'_i,d'_i))=\cosupp(\sigma^c(\cR^\cdot(\cI))$$
\end{proof}

\begin{definition}  Let $\cI$ be any ideal on a smooth $X$, and 
$\cR^\cdot=\cR^\cdot(I)$ be its Rees Algebra along a Samuel stratum $S$. Let $E=\{D_1,\ldots,D_k\}$ be a set of SNC divisors. By the {\it relative Rees} we shall mean the graded algebra $\cR^\cdot(\cI,E)$ generated by $\cR^\cdot$ and all $(\cI_{D_i},1)$.

	\end{definition}
It follows immediately that
$$\cosupp(\cR^\cdot(\cI,E))=\cosupp (\cR^\cdot(\cI))\cap E$$

Moreover this relation is preserved by the blow-ups with the center in $\cosupp (\cR^\cdot(\cI))\cap E$.

 \begin{proposition} \label{restric} Let  $X_1\hookrightarrow X_2$ be a closed embedding of a smooth schemes over $K$, and $Y\hookrightarrow X_1$ be a closed embedding of  reduced schemes. Assume that there exists a (possibly empty) set $E_2$ of $SNC$ divisors on $X_2$ which is transversal to $X_1$, and denote by $E_1$ its restriction to $X_2$. 

Let $\cI_1:=\cI_{Y,X_1}$, and   $\cI_2:=\cI_{Y,X_2}$ be the sheaves of ideals of $Y$ on $X$ and $X_2$, and suppose that $X_1$ is locally described by the vanishing locus of the set of parameters $u_1,\ldots,u_k$. Then there is a certain \'etale extension of $X_2\supset X_1$  the  such that
\begin{enumerate}
\item There is an inclusion of the sheaves $\cO_{X_1}\subset \cO_{X_2}$

\item There is an inclusion of the relative Rees algebras  $\cR^\cdot_1:=\cR^\cdot(\cI_1,E_1)\subset \cR^\cdot_2:=\cR^\cdot(\cI_2,E_2)$.

\item The Rees algebra $R_2$  is locally generated by $(u_1,1),\ldots,(u_k,1)$ and $R_1\subset R_2$, where $u_1=0,\ldots,u_k=0$ is a set of parameters describing $X_1$ on $X_2$.
 
\item The restriction of Rees algebra $R_2$ to $X_1$ coincides with $R_1$.
\item The relations above are preserved for the controlled transforms of $R_2$, $R_1$ and $u_1,\ldots,u_k$  under admissible blow-ups of   $R_2$.
\end{enumerate}

\end{proposition}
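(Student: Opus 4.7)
The plan is to exploit the fact that a closed embedding of smooth schemes is étale-locally a split inclusion, and then transport standard bases from $X_1$ to $X_2$ via the retraction. First, since both $X_1$ and $X_2$ are smooth and $u_1,\ldots,u_k$ form a regular system of parameters for $X_1$ inside $X_2$, by the relative version of the implicit function theorem (or equivalently, by splitting the smooth morphism $X_2 \to \Spec(K)$ along the immersion using Theorem \ref{Mi2}) there is an étale neighborhood of $X_1$ in $X_2$ admitting a smooth retraction $\pi: X_2 \to X_1$ with $\pi\circ i = \id_{X_1}$, where $i: X_1 \hookrightarrow X_2$ is the inclusion. The pullback $\pi^*: \cO_{X_1} \hookrightarrow \cO_{X_2}$ then provides the inclusion of (1), and locally identifies $X_2$ with $X_1 \times \bbA^k$ with coordinates $u_1,\ldots,u_k$ along the $\bbA^k$-factor. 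The transversality of $E_2$ to $X_1$ (combined with the retraction, after possibly shrinking) lets us arrange that each $D \in E_2$ is of the form $\pi^{-1}(D \cap X_1)$, so that $\pi^*(\cI_{D\cap X_1}) = \cI_{D}$ and thus $\pi^*$ carries the marked ideals generating $\cR^\cdot_1 = \cR^\cdot(\cI_1,E_1)$ into $\cR^\cdot_2 = \cR^\cdot(\cI_2,E_2)$, establishing (2).

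For (3), I would produce a standard basis of $\cI_2$ from one of $\cI_1$. Because $Y \subset X_1$ and $X_1$ is cut out by $u_1,\ldots,u_k$, one has $\cI_2 = \pi^{*}(\cI_1)\cdot\cO_{X_2} + (u_1,\ldots,u_k)$. Take a standard basis $f_1,\ldots,f_r$ of $\cI_1$ along its Samuel stratum $S \subset X_1$ with compatible coordinate system $v_1,\ldots,v_n$ on $X_1$ and monotone diagram $\Delta \subset \NN^s$ (Theorem \ref{weak}). Then $\pi^*(v_1),\ldots,\pi^*(v_n),u_1,\ldots,u_k$ is a coordinate system on $X_2$, and I claim $\pi^*(f_1),\ldots,\pi^*(f_r),u_1,\ldots,u_k$ is a standard basis of $\cI_2$ along its Samuel stratum with diagram $\Delta \cup \{e_{n+1},\ldots,e_{n+k}\}$, the latter being essentially the product diagram. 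Indeed, all the differential conditions (3)--(5) of Definition \ref{Dweak2} follow because $\pi^*(f_j)$ is independent of $u_i$ and $D_{u_i}(u_j) = \delta_{ij}$, so the generalized Jacobian matrices decouple into block-triangular form with the $X_1$-block being invertible by hypothesis. By the canonicity Theorem \ref{Rees}, $\cR^\cdot(\cI_2)$ is generated by the differential closure of this lifted basis together with the divisor generators, and since differentiating $\pi^*(f_j)$ by $u_i$ yields $0$, this differential closure is just the subalgebra generated by $(u_1,1),\ldots,(u_k,1)$ and $\pi^*(\cR^\cdot_1)$.

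For (4), restriction to $X_1$ means tensoring with $\cO_{X_2}/(u_1,\ldots,u_k) = \cO_{X_1}$. Under this quotient, each generator $(u_i,1)$ of $\cR^\cdot_2$ restricts to the zero marked ideal (which contributes nothing as $\cosupp = X_1$), while $\pi^*$ becomes the identity on $\cO_{X_1}$, so the image of $\pi^*(\cR^\cdot_1)$ is just $\cR^\cdot_1$, giving the stated equality. For (5), admissible blow-ups of $\cR^\cdot_2$ have smooth centers $C$ contained in $\cosupp(\cR^\cdot_2) = \cosupp(\cI_2,H_{max}) \subset Y \subset X_1$. By Theorem \ref{weak2}, the controlled transform of the standard basis remains a standard basis of the strict transform of $\cI_2$. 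Since $C \subset X_1$, the strict transform $X_1'$ of $X_1$ is smooth and, by the local product description, still admits a retraction $\pi': X_2' \to X_1'$ with parameters $u_1',\ldots,u_k'$ obtained as the controlled transforms of $u_i$ (indeed, since $u_i$ appear with multiplicity $1$ in the standard basis, they behave exactly as in Theorem \ref{weak2}), and the transform of $\pi^*(f_j)$ equals $\pi'^*$ of the transform of $f_j$. Thus properties (1)--(4) hold inductively for $(X_2',X_1',Y',E_1',E_2')$.

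The main obstacle is justifying the interaction of the differential Rees algebra with the retraction in step (3), specifically: verifying that the generalized Jacobian conditions of Definition \ref{Dweak2} for the extended basis on $X_2$ genuinely follow from those of $f_1,\ldots,f_r$ on $X_1$, and that the differential closure of $\pi^*(f_j)$ inside $\cO_{X_2}$ coincides with $\pi^*(\cR^\cdot_1)$. This requires showing that the essential variables of the initial ideal of $\cI_2$ at points of $Y$ are exactly $\pi^*(v_1),\ldots,\pi^*(v_s),u_1,\ldots,u_k$, which in turn rests on the product structure of the Hilbert–Samuel function along the smooth fibers of $\pi$. Once this is in place, stability under blow-up sequences follows by iterating the single-step argument together with Theorem \ref{central}.
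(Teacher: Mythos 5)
Your proposal follows essentially the same route as the paper: split off the coordinates $u_1,\ldots,u_k$ étale-locally (the paper does this via Weierstrass--Hironaka division by $u_1,\ldots,u_k$, identifying $\cO_{X_1}$ with the subsheaf of $\cO_{X_2}$ killed by the $D_{u_i}$, which is the same retraction you build from Theorem \ref{Mi2}), extend a standard basis of $\cI_1$ to one of $\cI_2$ of the form $u_1,\ldots,u_k,f_1,\ldots,f_r$ with the product diagram, read off the generation of $\cR^\cdot_2$ by $(u_i,1)$ and $\cR^\cdot_1$, and propagate everything through admissible blow-ups via Theorem \ref{weak2}. The only adjustment needed is to place the coordinates $u_1,\ldots,u_k$ \emph{first} (as the paper does, giving the diagram generated by $e_1,\ldots,e_k$ together with the vertices of $\Delta_1$ in the remaining slots) rather than last, since adjoining $e_{n+1},\ldots,e_{n+k}$ at the tail generally breaks the monotonicity required of the diagram in Definition \ref{Dweak2}.
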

\begin{proof}

First we show the proposition in the case $E=\emptyset$. 	

Consider a system of parameters $u_1,\ldots,u_k,u_{k+1},\ldots,u_n$ on $X_2$, where $u_1,\ldots,u_k$ describe $X_1$. By considering a
division by $u_1,\ldots,u_k$ with respect to the standard monotone diagram $\Delta$ generated by the basis $e_1,\ldots,e_k$ we see that in the certain \'etale neighborhood $\cO_{X_2}$ can be identified with $$\cO_{X_1}^\Gamma=\{f\in \cO_{X_1}\mid D_{u_i}(f)=0, i=1,\ldots,k\}\subset \cO_{X_1}.$$ (In the characteristic $p$, we shall use the condition is $D_{u_i^{p_j}}(f)=0$).

Choosing the monotone order for the coordinates $u_{k+1},\ldots,u_n$ one can find a monotone diagrams $\Delta_1\in \NN^{n-k}$ and $\Delta_2=[0,1]^k\times \Delta_1\in \NN^{n}$ defined by $\cI_1$ and $\cI_2$.
Then one can find a  standard basis (passing to \'etale neighborhood)  of $\cI_2$  of the form
$$u_1,\ldots,u_k,f_1(u_{k+1},\ldots,u_n),\ldots,f_r(u_{k+1},\ldots,u_n),$$ with respect to $\Delta$, such that  $f_1(u_{k+1},\ldots,u_n),\ldots,f_r(u_{k+1},\ldots,u_n)$ is a  standard basis of $\cI_1$. Then it follows that the Rees $\cR^\cdot_1:=\cR^\cdot(\cI_2)$ algebra of $\cI_2$ is  generated by $(u_1,1),\ldots,(u_k,1)$ and $\cR^\cdot_1:=\cR^\cdot(\cI_1)\subset \cR^\cdot_2$. Then $\cR^\cdot_2$ is nothing but the restriction of $\cR^\cdot_2$ to $X_1$ with the subscheme $X_1\subset X_2$  descried exactly by the support of $(u_1,1),\ldots,(u_k,1)$. Moreover theses relations between the standard bases and the Rees algebra are  preserved under the blow-ups contained in the Samuel strata. The controlled transform $(u'_1,1),\ldots,(u'_k,1)$ describes the
strict transform $X'_1$ on $X'_2$, and the controlled  transform of the Rees algebra $(\cR^\cdot_1)'$ is the restriction of $(\cR^\cdot_2)'$ to $X'_2$. Moreover  $(\cR^\cdot_2)'$ is generated by $(u'_1,1),\ldots,(u'_k,1)$ and $(\cR^\cdot_1)'$.

The general case follows
since $\cR^\cdot(\cI_i,E_i)$, where $i=1,2$, is generated by $\cR^\cdot(\cI_i)$ and $(\cI_{D_j},1)$. Moreover by   part (1), the functions $x_j\in \cO_{X_2}$ defining $D_j$ on $X_2$ and their restrictions to $X_1$ can be identified under the  inclusion $\cO_{X_1}\subset \cO_{X_2}$.

	\end{proof}

\subsection {Minimal embedding spaces  and Samuel stratum} 
When considering Samuel strata of an ideal $\cI_Y$ on a smooth variety $X$ it is convenient to consider locally minimal embedding spaces. They define  ''optimal embedding'' of the smallest dimension and are canonical in the sense of the following lemma 
\begin{definition} Let $\cI$ be any ideal on a smooth $X$, with a SNC divisor $E$ and defined for a Samuel stratum $S$ and $E$. 

The maximal set  of parameters $u_1,\ldots,u_k\in \cR^\cdot_1$ transversal to $E=\{D_1,\ldots,D_k\}$ defines locally a smooth {\it minimal embedding space} $T$ containing $S$ (of codimension $k$ which is locally constant on $S\cap\bigcap D_i$ ). 	
\end{definition}

The following lemma is a direct extension of Lemma \ref{le: homo}.

\begin{lemma}  \label{le: homo2} Let $\cI$ be any ideal on a smooth $X$,  with SNC divisor $E$ and let 
$\cR^\cdot=\cR^\cdot(I,E)$ be its canonical Rees Algebra along a Samuel stratum $S$. 
	For any point in $x\in S$ consider a maximal set of parameters $u_1,\ldots,u_k\in \cR^1$ which is a part of the  standard basis in a neighborhood of $x$ and is transversal to $E$. Then for any two sets of $u_1,\ldots,u_k\in \cR^1$ and $v_1,\ldots,v_k\in \cR^1$
  there  exist \'etale neighborhoods $\phi_{u},\phi_v: \overline{X}\to X$ of $x=\phi_u(\overline{x})=\phi_v(\overline{x}) \in X$,  where $\overline{x}\in \overline{X}$, such that
\begin{enumerate}
\item  $\phi_{u}^*({\cR^\cdot})=\phi_{v}^*({\cR^\cdot})$.
\item  $\phi_{u}^*(E)=\phi_{v}^*(E)$.
\item  $\phi_{u}^*(u_i)=\phi_{v}^*(v_i)$.

\end{enumerate}
\end{lemma}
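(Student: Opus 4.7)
The plan is to construct $\overline{X}$ as an étale fiber product over $\bbA^n_K$ that identifies the two coordinate systems, and to verify condition~(1) by combining the canonical nature of the Rees algebra (Theorem~\ref{Rees}) with the restriction result Proposition~\ref{restric}.

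First, I would extend both parameter sets to full local coordinate systems on a small neighborhood $U$ of $x$ sharing their last $n-k$ entries: choose $y_{k+1},\ldots,y_n \in \cO(U)$ transversal to both $V(u_\bullet)$ and $V(v_\bullet)$ at $x$ and including local equations of the components of $E$ through $x$ (possible since the $u_i, v_i$ are transversal to $E$). The extended systems $(u_1,\ldots,u_k,y_{k+1},\ldots,y_n)$ and $(v_1,\ldots,v_k,y_{k+1},\ldots,y_n)$ induce étale morphisms $\alpha_u,\alpha_v:U\to\bbA^n_K$ that agree on the last $n-k$ coordinates. Set $\overline{X}:=U\times_{\bbA^n_K}U$ (with $\alpha_u$ on the first factor and $\alpha_v$ on the second), with étale projections $\phi_u,\phi_v:\overline{X}\to U$ and basepoint $\overline{x}:=(x,x)$. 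The fiber-product relation forces $\phi_u^*(u_i)=\phi_v^*(v_i)=:t_i$ for $i\leq k$ and $\phi_u^*(y_j)=\phi_v^*(y_j)=:s_j$ for $j>k$, which establishes conditions~(3) and~(2) simultaneously.

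For condition~(1), by the étale-naturality of the canonical Rees algebra (Lemma~\ref{diff} together with Theorem~\ref{Rees}), one has $\phi_u^*(\cR^\cdot(\cI,E))=\cR^\cdot(\phi_u^*(\cI),\phi_u^*(E))$ and analogously for $\phi_v$. Given~(2), it suffices to show $\cR^\cdot(\phi_u^*(\cI))=\cR^\cdot(\phi_v^*(\cI))$ as differential Rees algebras on $\overline{X}$. Both contain $(t_1,\ldots,t_k)$ in degree one (since $u_i,v_i\in\cR^1$ pull back to $t_i$), and this is the defining ideal of the common smooth subscheme $T_{\overline{X}}:=V(t_\bullet)\subset\overline{X}$. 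Applying Proposition~\ref{restric} to the smooth closed embeddings $V(u_\bullet),V(v_\bullet)\hookrightarrow U$ and pulling back along the étale restrictions $\phi_u|_{T_{\overline{X}}}$ and $\phi_v|_{T_{\overline{X}}}$, the desired equality reduces to equality of the canonical Rees algebras of $\cI|_{V(u_\bullet)}$ and $\cI|_{V(v_\bullet)}$ on $T_{\overline{X}}$.

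The main obstacle is verifying this last equality on $T_{\overline{X}}$. Although the two restricted ideals themselves need not coincide (different minimal embedding spaces can produce ideal-theoretically distinct restrictions), their canonical differential Rees algebras do agree, because the Rees algebra of an ideal captures only intrinsic Samuel-stratum data (multiplicity-weighted derivative closure) and not the specific ideal-theoretic restriction used. Concretely, by Theorem~\ref{Rees} the canonical Rees algebra $\cR^\cdot(\cI,E)$ on $X$ is intrinsic (independent of the chosen standard basis), so its restriction via Proposition~\ref{restric} to either $V(u_\bullet)$ or $V(v_\bullet)$ yields, after pullback to $T_{\overline{X}}$ via the common étale coordinate identification $s_j\leftrightarrow y_j$, the same differential Rees algebra. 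Uniqueness of the standard basis for a fixed Rees algebra and coordinate system (Corollary~\ref{main014}), combined with lifting from $T_{\overline{X}}$ back to $\overline{X}$ via the already-secured degree-one generators $t_\bullet$, then yields condition~(1).
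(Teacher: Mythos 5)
Your construction of $\overline{X}$ as the fiber product $U\times_{\bbA^n}U$ over two \'etale charts sharing their last $n-k$ coordinates is exactly the paper's construction, and your verification of conditions (2) and (3) is fine. The gap is in condition (1). You reduce it to the claim that the canonical Rees algebras of $\cI|_{V(u_\bullet)}$ and $\cI|_{V(v_\bullet)}$ become equal on $T_{\overline{X}}$, and you justify this by saying the Rees algebra ``captures only intrinsic Samuel-stratum data and not the specific ideal-theoretic restriction used.'' That is an assertion of essentially the statement being proved. Theorem \ref{Rees} gives independence of the choice of standard basis for a \emph{fixed} ideal on a \emph{fixed} space; it says nothing about comparing the pullbacks $\phi_u^*(\cR^\cdot)$ and $\phi_v^*(\cR^\cdot)$ of one algebra along two \emph{different} maps. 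Likewise Proposition \ref{restric} relates the Rees algebras on nested smooth spaces for one fixed ideal. The two restricted ideals on $T_{\overline{X}}$ are obtained by setting $u_\bullet=0$ versus $v_\bullet=0$ and are genuinely different functions of the remaining coordinates; even the subschemes $\phi_u^{-1}(Y)$ and $\phi_v^{-1}(Y)$ need not coincide a priori, so there is no ``common identification'' under which the two restrictions visibly agree.

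The missing ingredient is the Taylor-expansion argument that is the actual content of the glueing technique (cf.\ Lemma \ref{le: homo}): set $h_i:=v_i-u_i\in\cR^1$ and observe that for $f\in\cR^s$,
\[
f(u_1+h_1,\ldots,u_k+h_k,u_{k+1},\ldots,u_n)=f+\sum_i \frac{\partial f}{\partial u_i}\,h_i+\sum_{i,j}\frac{1}{2!}\frac{\partial^2 f}{\partial u_i\partial u_j}\,h_ih_j+\cdots,
\]
which lies in $\cR^s+\cR^{s-1}\cdot\cR^1+\cR^{s-2}\cdot\cR^2+\cdots=\cR^s$ precisely because the canonical Rees algebra is differentially closed ($\cD^a(\cR^s)\subset\cR^{s-a}$) and multiplicatively closed ($\cR^a\cdot\cR^b\subset\cR^{a+b}$), and because $h_i\in\cR^1$. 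This computation, performed in the completion and descended, shows directly that the substitution identifying $u_i$ with $v_i$ maps $\cR^s$ into $\cR^s$, whence $\phi_u^*(\cR^\cdot)=\phi_v^*(\cR^\cdot)$ by symmetry. Without this step your argument for (1) does not close; with it, the detour through $T_{\overline{X}}$, Proposition \ref{restric}, and Corollary \ref{main014} becomes unnecessary.
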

\begin{proof} (0) (0) {\bf Construction of  \'etale neighborhoods  ${\phi}_{u}, {\phi}_{v}: U\to X$.}

Let $U\subset X$ be an open subset for which there exist $u_{k+1},\ldots, u_n$ which are transversal to $u_1,\ldots,u_k$ and $v_1,\ldots,v_k$ on $U$ such  that
 $u_1,u_2,\ldots, u_n$  and $v_1,\ldots,v_k,u_{k+1},\ldots, u_n$ form  two sets of parameters on $U$ and divisors in $E$ are described by some $u_i$, where $i\geq 2$.
 Let ${\bf A}^n$ be the affine space with coordinates $x_1,\ldots,x_n$.
 Construct first \'etale morphisms $\phi_1,\phi_2: U\to {\bf A}^n$ with
 \begin{equation} {\phi}^*_{1}(x_i)=u_i \quad \textrm{for all } i \quad\quad \mbox{and} \quad
{\phi}^*_{2}(x_i)=v_i, \quad\mbox{for} \quad i\leq k\quad  {\phi}^*_{2}(x_i)=u_i\quad\mbox{for} \quad i>k.\nonumber \end{equation}

\noindent Then \begin{equation} \overline{X}:=U\times_{{\bf A}^n}U \nonumber \end{equation} \noindent is a fiber product for  the morphisms $\phi_1$ and $\phi_2$. The morphisms $\phi_u$, $\phi_v$ are defined to be the natural projections $\phi_u, \phi_v: \overline{X}\to U$ such that $\phi_1\phi_u=\phi_2\phi_v$. Set
\begin{equation} w_i:=\phi_u^*(u_i)=(\phi_1\phi_u)^*(x_1)=(\phi_2\phi_v)^*(x_i)=\phi_v^*(v_i),\quad  \textrm{for $i\leq k$} \nonumber\end{equation}
 \begin{equation} w_i=\phi_u^*(u_i)=\phi_v^*(u_i)\quad  \textrm{for $i\geq k+1$}. \nonumber\end{equation}

One can extend the ground field algebraically and assume that the residue fields of the points above $x$ and at $x$ are the same. Then we can construct an automorphism $\widehat{\phi}_{uv}=\widehat{\phi}_{uv}\widehat{\phi}^{-1}_{u}$ such that  $\widehat{\phi}_{uv}(u_i)=v_i$ for $i\leq k$. 

(1)  Let $h_i:=v_i-u_i\in {\cR^1({\cI})}$. For any  $f\in \widehat{\cR^s}$,
 \begin{equation} \widehat{\phi}_{uv}^*(f)=f(u_1+h_1,\ldots,u_k+h_k, u_{k+1},\ldots,u_n)= f(u_1,\ldots,u_n)+\sum_{i\leq k} \frac{\partial{f}}{\partial{u_i}}\cdot h_i+ \sum_{ij\leq k}\frac{1}{2!}
 \frac{\partial^2{f}}{\partial{u_iu_j}}\cdot h_ih_j+\ldots + \nonumber \end{equation}
 The latter element belongs to \begin{equation}\widehat{\cR^s}+\widehat{\cR}_{s-1}\cdot \widehat{\cR^1}+\ldots +\widehat{\cR}_{s-2}\cdot \widehat{\cR^2}+ \ldots =\widehat{\cR^s}.\nonumber\end{equation}

Hence $\widehat{\phi}_{uv}^*(\widehat{\cR})\subset {\cH}\widehat{\cR}$. \bigskip
which implies that $\widehat{\phi}_{u}^*(\widehat{\cR})=\widehat{\phi}_{v}^*(\widehat{\cR})$, and locally ${\phi}_{u}^*({\cR})={\phi}_{v}^*({\cR})$.

 (2), (3) follow from the construction

(4) Let $h_i:=v_i-u_i$ for $i\leq k$. By the above the morphisms $\phi_u$ and $\phi_v$ coincide on $\phi_u^{-1}(V(h_1,\ldots,h_k))=\phi_v^{-1}(V(h_1,\ldots,h_k))$.

By (4) the blow-ups of the centers $C\subset \cosupp(\cH(\cI))$  lifts to the  blow-ups at the 
same center $\phi_u^{-1}(C)=\phi_u^{-1}(C)$. Thus (5), (6) follow.
\end{proof}

\subsection{Equivalence  for marked ideals and Capacitors}

Let us introduce the following equivalence relation for marked ideals:
\begin{definition} We say that two (multiple) marked ideals  $\{(X,{\cI}_i,E,\mu_i)\}$ and $\{(X,{\cJ}_j,E,\mu_j)\}$ 
on  a smooth variety $X$ with SNC collection of divisor $E$, are equivalent:
$$\{(X,{\cI}_i,E,\mu_i)\}\simeq \{(X,{\cJ}_j,E,\mu_j)\}\quad \mbox{if}$$ 
\begin{enumerate}
\item $\cosupp\{(X,{\cI}_i,E,\mu_i)\}=\cosupp\{(X,{\cJ}_j,E,\mu_j)\}$
\item The sequences of admissible   blow-ups $(X_i)_{i=0,\ldots,k}$ are the same for both marked ideals and \\
$\cosupp\{(X,({\cI}_i)_k,E,\mu_i)\}=\cosupp\{(X,({\cJ}_j)_k,E,\mu_j)\}$
\end{enumerate}
\end{definition}
\begin{definition}
We write $$\{(X,{\cI}_i,E,\mu_i)\}\subseteq \{(X,{\cJ}_j,E,\mu_j)\}\quad \mbox{if}$$ 

\begin{enumerate}
\item $\cosupp\{(X,{\cI}_i,E,\mu_i)\}\supseteq \cosupp\{(X,{\cJ}_j,E,\mu_j)\}$
\item The sequences of admissible   blow-ups $(X_i)_{i=0,\ldots,k}$ of $\{(X,{\cJ}_j,E,\mu_j)\}$ is admissible for $\{(X,{\cI}_i,E,\mu_i)$  and 
$\cosupp\{(X,({\cI}_i)_k,E,\mu_i)\}\supseteq \cosupp\{(X,({\cJ}_j)_k,E,\mu_j)\}$

\end{enumerate}
\end{definition}

\begin{example}
For any $k\in {\bf N}$, $({\cI},\mu)\simeq ({\cI}^k,k\mu)$.
\end{example}
\begin{remark} The marked ideals considered in this paper satisfy a stronger equivalence condition:
For any smooth morphisms $\phi: X' \to X$, $\phi^*(\cI,\mu)\simeq\phi^*(\cJ,\mu)$.
This condition will follow and is not added in the definition.
 \end{remark}

Assume now that all marked ideals are defined for the smooth variety $X$ and the same
set of exceptional divisors $E$.
Define the following operations of addition and multiplication of marked ideals:
\begin{enumerate}
\item 

$({\cI}_1,\mu_1)+\ldots+({\cI}_m,\mu_m):=({\cI}_1^{\mu_2\cdot\ldots\cdot\mu_m}+
{\cI}_2^{\mu_1\mu_3\cdot\ldots\cdot\mu_m}+\ldots+{\cI}_m^{\mu_1\ldots\mu_{k-1}} ,\mu_1\mu_2\ldots\mu_m).$
\item $({\cI}_1,\mu_1)\cdot\ldots\cdot({\cI}_m,\mu_m):=({\cI}_1\cdot\ldots\cdot{\cI}_m,\mu_1+\ldots+\mu_m)$
\end{enumerate}
It follows from definition 
\begin{lemma} \label{le: operations}
\begin{enumerate} 
\item $({\cI}_1,\mu_1)+\ldots+({\cI}_m,\mu_m))\simeq \{({\cI}_1,\mu_1),\ldots,({\cI}_m,\mu_m))\}$


\item $({\cI}_1,\mu_1)\cdot\ldots\cdot({\cI}_m,\mu_m)\subseteq \{({\cI}_1,\mu_1),\ldots,({\cI}_m,\mu_m))\}$
 
\end{enumerate}
\end{lemma}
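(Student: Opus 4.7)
The plan is to reduce both statements to two elementary identities about the order function, $\ord_x(J_1+J_2)=\min(\ord_x J_1,\ord_x J_2)$ and $\ord_x(J_1\cdot J_2)\ge \ord_x J_1+\ord_x J_2$, together with the observation that the controlled transform commutes with sums, products, and powers of ideal sheaves. Together these reduce the whole lemma to a check on cosupports, which can then be iterated along any admissible sequence.

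\textbf{Step 1 (rescaling lemma).} First I would record the basic equivalence $(\cI,\mu)\simeq(\cI^k,k\mu)$ for any $k\in\NN$, which is already given as an example. Indeed $\ord_x(\cI^k)\ge k\mu\iff \ord_x(\cI)\ge\mu$, so the cosupports agree, and if $\sigma$ is the blow-up at an admissible smooth center $C$ with exceptional divisor $D$, the controlled transform of $(\cI^k,k\mu)$ is $\cI(D)^{-k\mu}\sigma^*(\cI^k)=\bigl(\cI(D)^{-\mu}\sigma^*(\cI)\bigr)^k=(\sigma^c(\cI))^k$, with multiplier rescaled by $k$. Hence the relation $(\cI',\mu)\simeq((\cI')^k,k\mu)$ persists after each admissible blow-up.

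\textbf{Step 2 (proof of (1)).} Set $\mu:=\mu_1\cdots\mu_m$ and $k_i:=\mu/\mu_i$. By Step 1 each $(\cI_i,\mu_i)\simeq(\cI_i^{k_i},\mu)$. Now for marked ideals with a \emph{common} multiplier $\mu$ the cosupport equality
\[
\cosupp\Bigl(\sum_i \cJ_i,\mu\Bigr)=\bigcap_i\cosupp(\cJ_i,\mu)
\]
is immediate from $\ord_x(\sum\cJ_i)=\min_i\ord_x(\cJ_i)$. Moreover the controlled transform of $(\sum_i\cJ_i,\mu)$ at an admissible $C$ is $\cI(D)^{-\mu}\sigma^*(\sum\cJ_i)=\sum_i\sigma^c(\cJ_i)$, again with multiplier $\mu$. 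Thus the relation
\[
\Bigl(\sum_i\cI_i^{k_i},\mu\Bigr)\simeq\bigl\{(\cI_i^{k_i},\mu)\bigr\}\simeq\{(\cI_i,\mu_i)\}
\]
is preserved under every admissible blow-up for either side, which is exactly the definition of $\simeq$.

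\textbf{Step 3 (proof of (2)).} Write $\nu:=\mu_1+\dots+\mu_m$. If $x\in\cosupp\{(\cI_i,\mu_i)\}$ then $\ord_x(\cI_i)\ge\mu_i$, so $\ord_x(\prod\cI_i)\ge\sum\mu_i=\nu$, giving the inclusion of cosupports
\[
\cosupp\{(\cI_i,\mu_i)\}\subseteq\cosupp(\textstyle\prod\cI_i,\nu).
\]
This is precisely the order required in the definition of $\subseteq$. For the second condition, let $\sigma$ be an admissible blow-up for $\{(\cI_i,\mu_i)\}$; then $C\subset\cosupp(\cI_i,\mu_i)$, so by Lemma \ref{center} we have $\cI_i\subset\cI_C^{\mu_i}$, hence $\prod\cI_i\subset\cI_C^{\nu}$ and $\sigma$ is admissible for $(\prod\cI_i,\nu)$. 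The controlled transform of the product is
\[
\cI(D)^{-\nu}\sigma^*\Bigl(\prod_i\cI_i\Bigr)=\prod_i\cI(D)^{-\mu_i}\sigma^*(\cI_i)=\prod_i\sigma^c(\cI_i),
\]
with multiplier $\nu=\sum\mu_i$, so after the blow-up we obtain exactly the product of the transformed marked ideals and the same inclusion of cosupports applies. The claim then follows by induction on the length of an admissible sequence.

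\textbf{Main obstacle.} There is no substantial obstacle beyond bookkeeping: the only point to verify carefully is that taking controlled transforms distributes over sums, products, and $k$-th powers (with the multiplier rescaled in the power case), which boils down to the identity $\cI(D)^{-k\mu}\sigma^*(\cI^k)=(\cI(D)^{-\mu}\sigma^*(\cI))^k$ and its additive and multiplicative analogs. Once this compatibility is recorded, the induction on the length of the admissible sequence is automatic in both parts.
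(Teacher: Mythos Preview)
Your proof is correct and is exactly the approach the paper has in mind: the paper states the lemma with only the remark ``It follows from definition'' and gives no further details, and what you have written is precisely the unpacking of that definition via the rescaling $(\cI,\mu)\simeq(\cI^k,k\mu)$, the identities for $\ord_x$ on sums and products, and the compatibility of the controlled transform with these operations. There is nothing to add.
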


\begin{lemma}\label{le: coeff} For any multiple marked ideal $\overline{\cI}=\{\cI_i,\mu_i)\}$  the induced Rees algebra
 $\cR^\cdot(\overline{\cI})$ is equivalent to $\overline{\cI}$.

\end{lemma}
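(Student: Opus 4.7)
The plan is to verify the two conditions of the equivalence $\overline{\cI}\simeq \cR^\cdot(\overline{\cI})$ separately, and then propagate them through any admissible sequence of blow-ups using the fact that the Rees algebra is built from $\overline{\cI}$ by the three operations (sum, product, derivative) whose compatibility with cosupport and controlled transforms has already been established.

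First I would establish the cosupport equality. Since $\cI_i \subseteq \cR^{\mu_i}$, we have $\cosupp(\cR^{\mu_i},\mu_i)\subseteq \cosupp(\cI_i,\mu_i)$, so $\cosupp(\cR^\cdot(\overline{\cI}))\subseteq \cosupp(\overline{\cI})$. For the reverse inclusion, recall that $\cR^\cdot(\overline{\cI})$ is (locally) diff-generated from the marked functions $(f_{ij},\mu_i)$ by the operations of sum, product, and Hasse derivation, and any point of $\cosupp(\overline{\cI})$ sits in the cosupport of each generator $(f_{ij},\mu_i)$. By Lemma~\ref{le: Vi1} derivation $\cD^a$ preserves the cosupport (the order drops by at most $a$), and by Lemma~\ref{le: operations}(1,2) sums and products preserve containment of cosupports. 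Hence every homogeneous element produced lives in a marked ideal whose cosupport contains $\cosupp(\overline{\cI})$; this gives the reverse inclusion, confirming the observation $\cosupp(\cR^\cdot(\overline{\cI}))=\cosupp(\overline{\cI})$.

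Next I would verify the blow-up compatibility. A smooth center $C$ with SNC with $E$ is admissible for $\overline{\cI}$ iff $C\subseteq \cosupp(\overline{\cI})$, and by the first step this is equivalent to $C\subseteq \cosupp(\cR^\cdot(\overline{\cI}))$, i.e.\ admissibility for $\cR^\cdot(\overline{\cI})$. After such a blow-up $\sigma:X'\to X$ with exceptional divisor $D$, the controlled transforms satisfy
\[
\sigma^c(\cR^\cdot(\overline{\cI}))\ \subseteq\ \cR^\cdot(\sigma^c(\overline{\cI})),
\]
as follows by checking the statement on generators: for a generator of the form $(D_{u^\alpha}f_{ij},\mu_i-|\alpha|)$, Lemma~\ref{le: inclusions} gives the containment $\sigma^c\cD^a(\cI,\mu)\subseteq \cD^a(\sigma^c(\cI,\mu))$, while the controlled transform of a sum/product of marked ideals computes termwise up to the equivalences of Lemma~\ref{le: operations}. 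Conversely the generators $(\sigma^c(f_{ij}),\mu_i)$ of $\cR^\cdot(\sigma^c(\overline{\cI}))$ plainly lie in $\sigma^c(\cR^\cdot(\overline{\cI}))$ (same multiplication by $\cI_D^{-\mu_i}$), so the differential Rees algebras they generate coincide up to enlargement. Taking cosupports and using the first step for both $\sigma^c(\overline{\cI})$ on $X'$ gives
\[
\cosupp(\sigma^c(\cR^\cdot(\overline{\cI})))\ =\ \cosupp(\cR^\cdot(\sigma^c(\overline{\cI})))\ =\ \cosupp(\sigma^c(\overline{\cI})).
\]

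Finally I would iterate: given any sequence of blow-ups $X=X_0\leftarrow X_1\leftarrow\cdots\leftarrow X_k$, admissibility for $\overline{\cI}$ at each step is equivalent (by the previous paragraph, applied inductively) to admissibility for $\cR^\cdot(\overline{\cI})$, and the cosupports of the controlled transforms agree at each stage. This is exactly the equivalence $\overline{\cI}\simeq \cR^\cdot(\overline{\cI})$. The main technical point—and the only place where something more than the bookkeeping of definitions is used—is the step ensuring that differentiation, sum and product all commute (up to the equivalence $\simeq$) with the controlled transform; this is handled entirely by Lemmas~\ref{le: inclusions} and~\ref{le: operations}, so no separate argument is needed.
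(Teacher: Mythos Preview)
Your proposal is correct and follows essentially the same approach as the paper's own proof: both arguments reduce the equivalence to the observation that the operations used to build $\cR^\cdot(\overline{\cI})$ from $\overline{\cI}$ (derivation, product, sum) preserve the order inequalities defining cosupports, and then invoke Lemma~\ref{le: inclusions} to push these inequalities through controlled transforms. The paper's proof is a two-sentence sketch of exactly this; your version simply unpacks it in more detail, including the explicit two-sided cosupport inclusion and the inductive propagation along an admissible sequence.
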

\begin{proof} If $\ord_x(\cI_i)\geq \mu_i$ then $\ord_x(\cR^i)\geq i$, since the operations of differentiation and product preserve the relevant inequalities. Moreover by Lemma \ref{le: inclusions} the inequalities are preserved by the controlled transforms of Rees algebras. 
\end{proof}

One of the disadvantages  of the canonical Rees algebra is that it consists of infinitely many marked ideals which is slightly inconvenient for
our presentation of  the resolution algorithm.

One can easily remedy this by introducing the capacitors of Rees algebras.
\begin{definition}\label{capacitor} Let $R^\cdot=\bigoplus R^i$ be a finitely generated Rees algebra on a smooth scheme $X$. By its {\it capacitor} we mean any gradation ideal $R^i\subset \cO_X$, such that the marked ideal $(R^i,i)$ is equivalent  to $R^\cdot$.
\end{definition}

In practice it means that, when convenient, we can translate the problems of resolution of Rees algebras to their capacitors containing the essential information about Rees algebras.
\begin{proposition} The following equivalence holds true:

{\bf Existence of resolution of finitely generated Rees algebras on smooth schemes}

\centerline{ $\Updownarrow$}

{\bf Existence of resolution of marked ideals  on smooth schemes.}
	
\end{proposition}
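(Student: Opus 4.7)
The plan is to prove both implications by invoking the equivalence relation $\simeq$ on (multiple) marked ideals, which by its very definition is stable under every admissible sequence of blow-ups. The only substantive ingredients will be Lemma~\ref{le: coeff}, Lemma~\ref{le: operations}, Definition~\ref{capacitor}, together with the finite generation of differential Rees algebras recalled at the end of the preceding subsection (Hironaka, Bravo--Villamayor, Kawanoue--Matsuki).

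For the direction $(\Downarrow)$, given a marked ideal $(X,\cI,E,\mu)$ I would pass to its differential Rees algebra $\cR^{\cdot}:=\cR^{\cdot}((\cI,\mu))$, which is finitely generated. By Lemma~\ref{le: coeff} we have $\cR^{\cdot}\simeq (\cI,\mu)$, so every admissible sequence for $\cR^{\cdot}$ is admissible for $(\cI,\mu)$, and cosupports coincide at each step; hence a resolution of $\cR^{\cdot}$ is automatically a resolution of $(\cI,\mu)$.

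For the direction $(\Uparrow)$, let $\cR^{\cdot}$ be finitely generated by marked ideals $(\cI_1,\mu_1),\ldots,(\cI_m,\mu_m)$, and set $N:=\mu_1\cdots\mu_m$. Closure of $\cR^{\cdot}$ under products gives $\cI_j^{N/\mu_j}\subseteq \cR^N$ for each $j$, so the sum marked ideal satisfies
$$
\Bigl(\sum_{j=1}^m \cI_j^{N/\mu_j},\ N\Bigr)\ \subseteq\ (\cR^N,N)\ \subseteq\ \cR^{\cdot}
$$
in the ordering $\subseteq$ on multiple marked ideals. By Lemma~\ref{le: operations}(1) the leftmost term is equivalent to $\{(\cI_j,\mu_j)\}$, and by Lemma~\ref{le: coeff} the latter is equivalent to $\cR^{\cdot}$; hence all three terms lie in the same equivalence class and $(\cR^N,N)\simeq\cR^{\cdot}$, so $\cR^N$ is a capacitor of $\cR^{\cdot}$ in the sense of Definition~\ref{capacitor}. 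A resolution of the single marked ideal $(\cR^N,N)$ therefore yields a resolution of $\cR^{\cdot}$.

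The only nontrivial input is finite generation of the differential Rees algebra used in $(\Downarrow)$; granted this, the proof is purely formal manipulation of the equivalence relation and of the multiplicative/differential structure of Rees algebras. A conceptual takeaway is that the equivalence class of any finitely generated Rees algebra is represented by a single marked ideal, which is precisely what makes the algorithmic resolution of marked ideals an adequate substitute for the resolution of Rees algebras.
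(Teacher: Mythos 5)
Your proposal is correct and follows essentially the same route as the paper: the direction from Rees algebras to marked ideals uses the equivalence $\cR^{\cdot}(\cI,\mu)\simeq(\cI,\mu)$ of Lemma~\ref{le: coeff}, and the converse direction is exactly the capacitor argument (sandwiching $\sum_j\cI_j^{N/\mu_j}\subseteq\cR^N\subseteq\cR^{\cdot}$ between two equivalent multiple marked ideals), which the paper records as the immediately following proposition attributed to Bravo--Garcia Escamilla--Villamayor. The paper leaves the proposition itself without an explicit proof, and your write-up supplies precisely the intended argument.
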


The following rather obvious but useful observation was made, in particular by Bravo-Garcia Escamilla-Villamayor in \cite{BV}:
\begin{proposition}(\cite{BV}) Any finitely generated Rees algebra admits its capacitor.
	
\end{proposition}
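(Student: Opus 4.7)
The plan is to take a common multiple $N$ of the weights appearing in a finite generating set and to verify that the single gradation piece $(\cR^N,N)$ is already a capacitor. Concretely, let $(f_1,d_1),\ldots,(f_m,d_m)$ be a finite generating set of $\cR^\cdot$ as a Rees algebra (if $\cR^\cdot$ is differential, apply the Proposition stating that $R^\cdot(\overline{\cI})$ is generated by $(D_{u^\alpha}f_{ij},\mu_i-|\alpha|)$, so we may assume the generating set is closed under the necessary differentiations and the generation is purely algebraic). Put $N:=\mathrm{lcm}(d_1,\ldots,d_m)$ (the product also works), and set $\cJ:=\cR^N$. I would then show $(\cJ,N)\simeq\cR^\cdot$.

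First I would match cosupports. By Lemma \ref{le: coeff} one has $\cR^\cdot\simeq\{(f_j,d_j)\}$, so it suffices to check $\cosupp(\cJ,N)=\cosupp\{(f_j,d_j)\}$. For the inclusion $\supseteq$: if $\ord_x(f_j)\geq d_j$ for every $j$, then for any local generator $h\cdot\prod_j f_j^{a_j}$ of $\cR^N$ (with $\sum_j a_jd_j\geq N$) one has $\ord_x(h\prod_j f_j^{a_j})\geq \sum_j a_jd_j\geq N$, hence $\ord_x(\cJ)\geq N$. For the inclusion $\subseteq$: since $f_j^{N/d_j}\in\cR^N=\cJ$, any $x\in\cosupp(\cJ,N)$ satisfies $(N/d_j)\ord_x(f_j)=\ord_x(f_j^{N/d_j})\geq N$, i.e.\ $\ord_x(f_j)\geq d_j$.

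Next I would propagate this match through an arbitrary sequence of admissible blow-ups. If $\sigma\colon X'\to X$ is admissible (its center lies in the common cosupport), the controlled transform of $(f_j,d_j)$ is $(f_j',d_j)=(\sigma^*(f_j)/y^{d_j},d_j)$, and the controlled transform of $(\cR^N,N)$ is $(\sigma^*(\cR^N)/y^N,N)$. The transformed Rees algebra $(\cR^\cdot)'$ is still finitely generated by the $(f_j',d_j)$, and its $N$-th gradation piece equals $\sigma^*(\cR^N)/y^N$ (because one can compute the transformed algebra level-by-level from the generators). Therefore the cosupport computation of the previous paragraph applies verbatim on $X'$, yielding $\cosupp((\cR^N)',N)=\cosupp\{(f_j',d_j)\}$. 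Iterating along any admissible sequence shows that $(\cJ,N)$ and $\cR^\cdot$ have the same sequences of admissible blow-ups and matching cosupports after each, which is precisely the equivalence $(\cJ,N)\simeq\cR^\cdot$. Hence $\cR^N$ is a capacitor in the sense of Definition \ref{capacitor}.

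The main obstacle I expect is the stability statement in the second step, namely that the $N$-th gradation piece commutes with the controlled transform, i.e.\ $(\cR^N)'=(\cR')^N$. Modulo this compatibility (which is implicit in the way controlled transforms of Rees algebras are defined in the paper and follows from working with a finite generating set), the rest of the argument is bookkeeping with the elementary equivalences $(\cI,\mu)\simeq(\cI^k,k\mu)$ and $\sum_j(\cI_j,\mu_j)\simeq\{(\cI_j,\mu_j)\}$ from Lemma \ref{le: operations}(1) applied to $\sum_j(f_j^{N/d_j},N)$, which sits inside $(\cR^N,N)$ and equally captures the cosupport data.
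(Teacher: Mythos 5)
Your proof is correct and follows essentially the same route as the paper: the paper also takes $a=\prod_j i_j$ and sandwiches $\cR^\cdot\simeq\sum_j(\cR^{i_j},i_j)\subseteq(\cR^{a},a)\subseteq\cR^\cdot$ using Lemma \ref{le: operations}. The compatibility $(\cR^N)'=(\cR')^N$ that you flag as the main obstacle is in fact not needed: since the relation $\subseteq$ for marked ideals is by definition stable under admissible sequences, the containments $\sum_j(f_j^{N/d_j},N)\subseteq(\cR^N,N)\subseteq\cR^\cdot$ together with $\sum_j(f_j^{N/d_j},N)\simeq\{(f_j,d_j)\}\simeq\cR^\cdot$ already force equality of cosupports after every admissible blow-up.
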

\begin{proof} If $R^\cdot$ is generated by a finitely many marked functions in gradation $i_j$ then it is generated by $R^{i_j}$ and thus equivalent to 
$\sum_j R^{i_j}$. But $\sum_j R^{i_j}\subset R^a$ , where $a$ is the product of of $i_j$. This implies that $R^\cdot$ is equivalent to $R^a$.

\end{proof}

There are various methods of finding some canonical or minimal capacitor of Rees algebras. 
In case of the Rees algebras along strata let $n$ be the dimension of the minimal embedding spaces of $\cI$ along the stratum $S$.
Let $a$ be the maximal multiplicity of all possible vertices of all possible diagrams $\Delta\in \NN^n$. It is finite by Corollary \ref{dcc2}.
Then its {\it canonical capacitor} is defined as $\overline{R}=R^{a!}$.


\section{Strong Hironaka desingularization in characteristic zero}

\subsection{Formulation of Hironaka's resolution theorems}.

We give a proof of the following version of the Hironaka non-embedded resolution

\begin{enumerate}
\item{\bf Strong Canonical Hironaka's Resolution with normally flat centers}
\begin{theorem} \label{th: 3} Let $Y$ be an algebraic variety over a field of characteristic zero.

There exists a canonical desingularization of $Y$ that is
a smooth variety $\widetilde{Y}$ together with a proper birational morphism $\res_Y: \widetilde{Y}\to Y$ such that
\begin{enumerate}
\item  $\res_Y$ is a composition  of blow ups $Y=Y_0\leftarrow Y_1\leftarrow\ldots\leftarrow Y_k=\widetilde{Y}$ with smooth centers $C_i$, and exceptional divisors $E_i$.

\item The centers  $C_i$ are   either contained in the set of  singular  points  $Sing(Y_i)$ of $Y_i$, or  if $Y_i$ are smooth  in the exceptional divisor $D_i$. 

\item The centers $C_i$ are normally flat on $Y_i$ that is are contained in the Samuel stratum of $Y_i$.

\item The variety $\tilde{Y}=Y_k$ is nonsingular and the inverse image of the singular locus $\Sing(Y)$ is a simple normal crossing divisor exceptional divisor is SNC divisor on  $\tilde{Y}$.

\item $\res_Y$ is functorial with respect to smooth morphisms, and the field extensions, and it  is equivariant with respect to any group action not necessarily preserving the ground field.



\end{enumerate}
\end{theorem}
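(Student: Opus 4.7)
The plan is to prove Theorem~\ref{th: 3} by induction on the maximum value $H_{\max}$ of the Hilbert-Samuel function, in each step carrying out a canonical desingularisation of the top Samuel stratum. First one reduces to the local embedded situation: a point of $Y$ has an affine neighbourhood which embeds as a closed subscheme of a smooth scheme $X$, and Proposition~\ref{restric} ensures that the constructed sequence is independent of this auxiliary embedding up to the trivial transverse factors. Fix such an embedding $Y\hookrightarrow X$ with $\cI=\cI_Y$ and $E=\emptyset$ initially. By the descending chain condition and upper semi-continuity of the Hilbert-Samuel function (Theorem~\ref{dcc} and its corollaries, due to Bennett in the algebraic setting), the locus $S=\{x\in Y\mid H_{x,\cI}=H_{\max}\}$ is a closed Samuel stratum, equal to the cosupport of the relative canonical Rees algebra $\cR^{\cdot}(\cI,E)$ of Definition~\ref{Rees0}.

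For one round of the induction, one applies the canonical resolution of marked ideals (built in Chapter~9, available in characteristic zero) to a capacitor of $\cR^{\cdot}(\cI,E)$ in the sense of Definition~\ref{capacitor}. Theorem~\ref{central} guarantees that every admissible blow-up for this Rees algebra has centre contained in the current top Samuel stratum of the strict transform of $Y$, so by the Hironaka normal flatness criterion (Theorem~\ref{Flat}) each such centre is normally flat in $Y_i$; Theorem~\ref{weak2} shows that the Hilbert-Samuel function is non-increasing under such blow-ups and that the standard basis transforms to a standard basis of the strict transform. After finitely many such blow-ups the cosupport of the transformed Rees algebra becomes empty, which means no point of $Y_i$ attains the value $H_{\max}$ any longer; thus the maximum of the Hilbert-Samuel function has strictly dropped in the lexicographic order of Theorem~\ref{dcc}.

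Iterating, the descending chain condition of Theorem~\ref{dcc} forces termination after finitely many rounds with all points of $Y_k$ smooth, yielding a smooth model $\widetilde Y$ together with an accumulated exceptional divisor $E_k$. It remains to arrange that $E_k$ becomes simple normal crossing: one applies the same machinery to the ideal sheaf of $E_k$ inside the (now smooth) $Y_k$, whose non-SNC locus is again a Samuel stratum and hence the cosupport of the associated canonical Rees algebra. The same canonical-Rees-algebra procedure blows up this locus until $E_k$ is SNC; since at this stage $Y_i$ is smooth, the centres lie in the exceptional divisor as required by condition~(2) of the statement, and normal flatness is automatic.

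The genuinely delicate point of this plan is not termination, which is handed to us by the Hilbert-Samuel DCC, but rather \emph{global coherence and functoriality}: one must check that the local canonical Rees algebras on overlapping charts of $Y$, defined through possibly different embeddings $Y\hookrightarrow X$, and indeed under arbitrary smooth base change or group action, produce the \emph{same} sequence of blow-ups. Smooth-functoriality reduces, via the identity $\phi^{*}\cD^{a}=\cD^{a}\phi^{*}$ (Lemma~\ref{diff}), to the equalities $\phi^{*}\cR^{\cdot}(\cI,E)=\cR^{\cdot}(\phi^{*}\cI,\phi^{*}E)$ together with independence of the canonical Rees algebra from the choice of standard basis (Theorem~\ref{Rees}); independence from the ambient embedding is Proposition~\ref{restric}; and equivariance under arbitrary group actions (not necessarily preserving the ground field) then follows formally, since $\cR^{\cdot}(\cI,E)$ is uniquely characterised by the ideal, the divisor set and the Samuel stratum. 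Once this canonicity is granted, the local blow-ups glue to a canonical global sequence, completing the proof.
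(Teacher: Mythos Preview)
Your overall strategy---reduce to the embedded situation, attach the canonical Rees algebra $\cR^{\cdot}(\cI,E)$ along the top Samuel stratum, resolve its capacitor by the Chapter~9 algorithm, and iterate using the DCC of Theorem~\ref{dcc}---coincides with the paper's. But there is a genuine gap in the step you call ``the genuinely delicate point''.

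The canonical resolution of marked ideals constructed in Chapter~9 \emph{does not} commute with closed embeddings of ambient varieties; the paper says this explicitly in the remark following Definition~\ref{th: 4}. Proposition~\ref{restric} tells you only that the Rees algebra on $X_2$ is generated by $\cR^{\cdot}_1$ together with the transverse parameters $(u_1,1),\dots,(u_k,1)$; it does \emph{not} tell you that feeding the capacitor on $X_1$ and the capacitor on $X_2$ into the Chapter~9 algorithm produces the same blow-up sequence. So your sentence ``independence from the ambient embedding is Proposition~\ref{restric}'' is where the argument breaks. The paper's fix is to never run the algorithm on the ambient $X$ at all, but always to restrict the Rees algebra to a \emph{minimal embedding space} $T$ for $Y$ along the stratum (canonical up to the \'etale glueing of Lemma~\ref{le: homo2}) and run the algorithm there. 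This forces an extra preliminary stage you have omitted: the accumulated exceptional divisors $E$ need not be transversal to $T$, so before restricting one must move the $k$-fold, then $(k-1)$-fold, \dots, intersections of $E$ off the Samuel stratum by resolving the relative Rees algebras $\cR^{\cdot}(\cI,E'')$ for the successive subsets $E''\subset E$. Only after this ``Step~1a'' can one restrict to $T$ and invoke the Chapter~9 machine in a way that is independent of the original embedding.

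A secondary discrepancy: your treatment of the SNC condition on the final exceptional divisor (``apply the same machinery to the ideal sheaf of $E_k$ inside the smooth $Y_k$'') is different from the paper's, and is not obviously correct as stated---the non-SNC locus of $E_k$ is not a single Samuel stratum of $\cI_{E_k}$. The paper instead continues the embedded algorithm at the terminal value $H_{\mathrm{smooth}}$ of the Hilbert--Samuel function: the algorithm then prescribes a centre which locally coincides with the strict transform of $Y$, and one stops at the first moment this happens, at which point the strict transform automatically has SNC with the exceptional divisors. This is what yields conditions~(2) and~(4) simultaneously.
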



\item {\bf  Strong Hironaka's Embedded  Desingularization} 
\begin{theorem} \label{th: emde} \label{th: 2} Let $Y$ be a closed subvariety of a smooth variety
$X$ over a field of characteristic zero, and  $E$ be a (possibly zero) SNC divisor on $X$.
There exists a  sequence $$ X_0=X \buildrel \sigma_1 \over\longleftarrow X_1
\buildrel \sigma_2 \over\longleftarrow X_2\longleftarrow\ldots
\longleftarrow X_i \longleftarrow\ldots \longleftarrow X_r=\widetilde{X}$$ of
blow-ups  $\sigma_i:X_{i-1}\longleftarrow X_{i}$ of smooth centers $C_{i-1}\subset
 X_{i-1}$ such that

\begin{enumerate}

\item The union $E_i$ of the exceptional divisor  of the induced morphism $\sigma^i=\sigma_1\circ \ldots\circ\sigma_i:X_i\to X$ and of the strict transform of the divisor $E$ has only  simple normal
crossings and $C_i$ has simple normal crossings with $E_i$.

\item Let $Y_i\subset X_i$ be the strict transform of $Y$.  The centers  $C_i$ are   either contained in the set of  singular  points  $Sing(Y_i)$ of $Y_i$, or if $Y_i$ are smooth, in the exceptional divisor $E_i$.

\item  The strict transform $\widetilde{Y}:=Y_r$ of  $Y$  is smooth and
has only simple normal crossings with the  divisor $E_r$.

\item The morphism $(X,{Y})\leftarrow (\widetilde{X},\widetilde{Y})$ defined by the embedded desingularization commutes with smooth morphisms, field extensions, and embeddings of ambient varieties. It is equivariant with respect to any group action not necessarily preserving the ground $K$.

 \end{enumerate}

\end{theorem}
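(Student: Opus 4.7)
\emph{Strategy.} The plan is to reduce the embedded desingularization of $(X,Y,E)$ to the existence of canonical resolution of marked ideals (Chapter 9, used as a black box), via the canonical Rees algebra attached to the Samuel stratum (Definition~\ref{Rees0}) together with the descending chain condition on Hilbert--Samuel functions (Theorem~\ref{dcc}). Concretely, set $\cI:=\cI_Y\subset\cO_X$. By the Bennett upper semicontinuity theorem the function $x\mapsto H_{x,\cI}$ attains only finitely many values, so there is a maximum $H_{\max}$ with closed Samuel stratum $S\subset X$; I would now attach to $(\cI,E)$ its canonical relative Rees algebra $\cR^{\cdot}:=\cR^{\cdot}(\cI,E)$ along $S$, using a standard pre-basis guaranteed by Theorem~\ref{weak}. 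Theorem~\ref{Rees} makes this assignment intrinsic (independent of the choice of basis), Lemma~\ref{le: homo2} gives its compatibility with étale base change and field extensions, and by construction every $D\in E$ contributes a generator $(\cI_D,1)$ so that admissibility for $\cR^{\cdot}$ forces centres to have SNC with $E$.

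\emph{The inductive step and termination.} Given the resolution of $\cR^{\cdot}$ provided by Chapter~9, Theorem~\ref{central} guarantees that each centre $C_{i-1}$ lies in the successive Samuel stratum of the strict transform of $\cI$ and is therefore normally flat (Theorem~\ref{Flat}), while the controlled transform of the Rees algebra remains the canonical Rees algebra of the new pair $(\cI_{Y_i},E_i)$ — here the key point is Theorem~\ref{weak2}, which says that the standard pre-basis transforms into a standard pre-basis along the new Samuel stratum, so no recomputation is needed. When $\cosupp(\cR^{\cdot})=\emptyset$ the maximal value $H_{\max}$ has been eliminated, and by Theorem~\ref{dcc} the sequence of successive maxima of $H_{\bullet,\cI_{Y_i}}$ stabilises after finitely many such macro-steps; stabilisation of the Hilbert--Samuel function together with normal flatness along the support forces $Y_r$ to be regular, yielding clauses (1) and (3) together with clause~(2) on the nonsmooth stratum. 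A final, purely log phase resolves the marked ideal $(\cI_{Y_r}\cap E_r,1)$ on the smooth pair $(X_r,E_r)$ to make $\widetilde Y$ transverse to $E_r$; its centres lie in $E_r$, giving the remaining half of clause~(2).

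\emph{Canonicity and main obstacles.} Canonicity in clause~(4) is inherited stage by stage: the Samuel stratum is intrinsic, the canonical Rees algebra is intrinsic by Theorem~\ref{Rees}, canonical resolution of marked ideals (the Chapter~9 input) is functorial for smooth morphisms and field extensions, and compatibility with closed embeddings of ambient varieties is precisely Proposition~\ref{restric}, which transplants the relative Rees algebra $\cR^{\cdot}(\cI_Y,E)$ through an ambient embedding and shows that its resolution restricts to the resolution of the smaller ambient space; equivariance under group actions not preserving the ground field follows from Galois descent applied to the canonical construction. The principal obstacle is the Chapter~9 input itself — the existence of canonical resolution of marked ideals on smooth schemes in characteristic zero — which does the real combinatorial work and to which the entire reduction above delegates. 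A subsidiary but nontrivial point is to ensure that the capacitor (Definition~\ref{capacitor}) used to present the canonical Rees algebra as a finitely generated object is itself chosen canonically (e.g.\ the gradation indexed by the bounded multiplicities arising from Corollary~\ref{dcc2}), so that functoriality is not broken when one passes from $\cR^{\cdot}$ to a finitely generated multiple marked ideal; once this is arranged, the proof of Theorem~\ref{th: 3} follows by working locally and invoking Theorem~\ref{th: 2} on ambient charts, gluing by canonicity.
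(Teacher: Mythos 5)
Your overall route coincides with the paper's: reduce to the Chapter~9 input (canonical resolution of marked ideals) via the canonical Rees algebra along the top Samuel stratum, use Theorem~\ref{Rees} for intrinsicness, Theorem~\ref{central} for normal flatness of the centers, Theorem~\ref{dcc} for termination, and Proposition~\ref{restric} together with Lemma~\ref{le: homo2} for the functoriality in clause~(4). However, there is a genuine gap in your treatment of the pre-existing boundary $E$. You assert that because each $D\in E$ contributes a generator $(\cI_D,1)$ to the relative Rees algebra, admissibility for $\cR^{\cdot}(\cI,E)$ ``forces centres to have SNC with $E$.'' But by the paper's own identity $\cosupp(\cR^{\cdot}(\cI,E))=\cosupp(\cR^{\cdot}(\cI))\cap\bigcap_{D\in E}D$, resolving this single object only empties the part of the Samuel stratum lying in the intersection of \emph{all} components of $E$; it says nothing about the points of $S$ lying on fewer components, and it does not by itself make the centers of the subsequent main resolution (the one that actually lowers the Hilbert--Samuel function) have simple normal crossings with the strict transform of $E$, which is clause~(1). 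The paper closes this by a descending induction on the number $s$ of components of $E$ through points of the stratum: for $s=k,k-1,\dots,1$ it resolves $\cR^{\cdot}(\cI,E'')$ for every $s$-element subset $E''$ of $E$, thereby moving the entire old boundary off the Samuel stratum before the main step begins. A second issue is that $E$ need not be transversal to the minimal embedding space of $S$, which is why the paper runs this boundary-clearing phase on restrictions to minimal embedding spaces and invokes Lemma~\ref{le: homo2} for well-definedness; your proposal never confronts this.

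Two smaller points. Your final ``log phase'' on $(\cI_{Y_r}\cap E_r,1)$ is not well formed as written (an ideal intersected with a divisor), and if interpreted as $(\cI_{Y_r}+\cI_{E_r},1)$ it achieves transversality only by disjoining $\widetilde Y$ from $E_r$; the paper instead continues the resolution of $(\cI,H_{\mathrm{smooth}})$ until the algorithm's center coincides with the strict transform of $Y$ itself, which is the step certifying that $Y_r$ meets $E_r$ in simple normal crossings. Finally, your remark that the capacitor must be chosen canonically is correct and is indeed addressed in the paper via the canonical capacitor $R^{a!}$ built from Corollary~\ref{dcc2}; that part of your proposal is fine.
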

\item{\bf Canonical Hironaka's Principalization} 
\begin{theorem} \label{th: 1} Let ${\cI}$ be a sheaf of ideals on a smooth algebraic variety $X$ over a field of characteristic zero, $E$ be a (possibly zero) SNC divisor on $X$, and  $Y\subset X$ be any closed subvariety of $X$.
There exists a principalization of ${\cI}$  that is, a sequence

$$ X=X_0 \buildrel \sigma_1 \over\longleftarrow X_1
\buildrel \sigma_2 \over\longleftarrow X_2\longleftarrow\ldots
\longleftarrow X_i \longleftarrow\ldots \longleftarrow X_r =\widetilde{X}$$

of blow-ups $\sigma_i:X_{i-1}\leftarrow X_{i}$ of smooth centers $C_{i-1}\subset
 X_{i-1}$
such that

\begin{enumerate}

\item The union $E_i$ of the exceptional divisor  of the induced morphism $\sigma^i=\sigma_1\circ \ldots\circ\sigma_i:X_i\to X$ and of the strict transform of the divisor $E$
has only  simple normal
crossings and $C_i$ has simple normal crossings with $E_i$.

\item The
total transform $\sigma^{r*}({\cI})$ is the ideal of a simple normal
crossing divisor
$\widetilde{E}$ which is  a natural  combination of the irreducible components of the divisor ${E_r}$.

\end{enumerate}
The morphism $(\widetilde{X},\widetilde{\cI})\rightarrow(X,{\cI}) $ defined by the above principalization  commutes with smooth morphisms and  embeddings of ambient varieties. It is equivariant with respect to any group action not necessarily preserving the ground field $K$.

\end{theorem}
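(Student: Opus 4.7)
The plan is to prove the Principalization Theorem by double induction: an outer induction on the maximum value of the Hilbert--Samuel function of the ``residual'' (non-monomial) factor of $\cI$, and an inner loop that uses the canonical Rees algebra along the Samuel stratum to reduce each step to a canonical resolution of marked ideals (established in Chapter~9 of the paper). At any stage of the blow-up sequence we have $(X_i, \cI_i, E_i)$. I would factor $\cI_i = \cM(E_i)\cdot \cI_i'$ where $\cM(E_i)$ is the largest monomial ideal in the components of $E_i$ dividing $\cI_i$, and $\cI_i'$ is the residual factor. The principalization is achieved exactly when $\cI_i' = \cO_{X_i}$; then $\sigma^{i*}(\cI) = \cM(E_i)$ is the ideal of an SNC divisor supported on $E_i$.

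Assume $\cI_i' \neq \cO_{X_i}$. By Bennett's theorem (Corollary to Theorem~\ref{dcc}), the Hilbert--Samuel function $H_{x,\cI_i'}$ attains finitely many values and is upper semicontinuous, so its maximum value $H^{\max}_i$ is well defined and the corresponding Samuel stratum $S_i \subset X_i$ is locally closed. Along $S_i$, form the canonical Rees algebra $\cR^{\cdot}(\cI_i')$ of Definition~\ref{Rees0}, canonical by Theorem~\ref{Rees}; equivalently, pass to its canonical capacitor, a well-defined marked ideal. Then incorporate the exceptional history by replacing this with the relative Rees algebra $\cR^\cdot(\cI_i', E_i)$, whose cosupport is $S_i \cap \bigcap_{D\in E_i} D$ and whose admissible blow-ups automatically have SNC with $E_i$ and lie inside the Samuel stratum of $\cI_i'$ (hence are normally flat by Theorem~\ref{Flat} and Theorem~\ref{central}).

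Now apply the canonical resolution of marked ideals (Chapter~9) to this relative Rees algebra. By Theorem~\ref{weak2}, the Hilbert--Samuel function of the strict transform of $\cI_i'$ drops strictly at every point of the old Samuel stratum after the resolution sequence, while points outside $S_i$ have Hilbert--Samuel function $< H^{\max}_i$ by construction. Thus after finitely many such inner loops, $\max_x H_{x,\cI_i'}$ is strictly lowered. The descending chain condition on Hilbert--Samuel functions (Theorem~\ref{dcc}) forces termination: eventually $\cI_i' = \cO_{X_i}$, and we are done. Canonicity, functoriality under smooth morphisms, field extensions, and closed embeddings, and equivariance with respect to group actions follow because every ingredient --- the monomial factorization, the Samuel stratification, the canonical Rees algebra (Theorem~\ref{central}(1) and Proposition~\ref{restric} for embeddings), and the resolution of marked ideals --- has these properties.

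The main technical obstacle will be the correct management of the factorization $\cI_i = \cM(E_i)\cdot \cI_i'$ through the blow-up process: one must check that if $\sigma:X_{i+1}\to X_i$ is an admissible blow-up of the relative Rees algebra $\cR^\cdot(\cI_i',E_i)$ with exceptional divisor $D$, then the new residual factor $\cI_{i+1}'$ is obtained from the strict/controlled transform of $\cI_i'$ by absorbing into $\cM(E_{i+1})$ the maximal power of $\cI_D$ that divides it, and that this operation is compatible with Theorem~\ref{weak2} so that the drop in Hilbert--Samuel value is preserved. Put differently, the delicate point is that the ``residual'' factor $\cI_i'$ is not itself a functor of $(\cI_i,E_i)$ under arbitrary blow-ups, but only under those that are admissible for $\cR^\cdot(\cI_i',E_i)$; the reduction in Chapter~8 (Theorem~\ref{Red1}) is precisely the formal device that guarantees one stays inside the admissible class, and verifying that the inner loop's output remains of this form is the key compatibility to check.
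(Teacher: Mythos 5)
Your architecture does not match the paper's, and it contains a genuine gap. The paper proves Theorem \ref{th: 1} by a direct, almost one-line reduction: principalization of $\cI$ is obtained by running the canonical resolution of the single marked ideal $(X,\cI,E,1)$ constructed in Chapter 9. Once that marked ideal is resolved, its controlled transform $\sigma^{c}(\cI,1)$ has empty cosupport, hence equals $\cO_{\widetilde{X}}$, so the full transform $\sigma^{*}(\cI)$ is a product of ideals of exceptional components, i.e.\ the ideal of an SNC divisor. The invariant that drives Chapter 9 is the order $\ord_{\cN(\cI)}$ of the non-monomial part (via companion ideals), followed by a purely combinatorial treatment of the monomial case. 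The Hilbert--Samuel stratification, the canonical Rees algebra along the Samuel stratum, and Theorem \ref{weak2} are the machinery for the \emph{strong} desingularization theorems \ref{th: 2} and \ref{th: 3} with normally flat centers; they play no role in the paper's proof of principalization.

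The gap in your argument is the termination step. You claim that iterating the inner loop drives $\max_x H_{x,\cI_i'}$ down until $\cI_i'=\cO_{X_i}$, invoking the d.c.c.\ of Theorem \ref{dcc}. But Theorem \ref{weak2} controls the Hilbert--Samuel function of the \emph{strict} transform under normally flat blow-ups, and that mechanism bottoms out as soon as the strict transform becomes smooth: the Hilbert--Samuel function then sits at its minimal value $H_{\mathrm{smooth}}$ and cannot decrease further, while the ideal is still far from monomial. Already for $\cI=(x)\subset\cO_{\bbA^{2}}$ with $E=\emptyset$ your residual factor is $(x)\neq\cO$, its Hilbert--Samuel function is minimal, and no normally flat blow-up lowers it; principalization here requires blowing up the codimension-one locus $V(x)$ itself, a step your invariant cannot see. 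A second, independent problem is that your residual factor $\cI_i'$ (the total transform divided by the maximal monomial factor) is the weak transform, not the strict transform in the paper's sense (the saturation by the exceptional ideal), so Theorem \ref{weak2} does not apply to it in the first place. The correct route is the one the paper takes: resolve $(X,\cI,E,1)$ with $\ord_{\cN(\cI)}$ as the decreasing invariant and finish with the combinatorial monomial case.
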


\end{enumerate}

\begin{remark} Note that the blow-up of codimension one components  is an isomorphism. However it defines a nontrivial transformation of marked ideals. In the actual desingularization process this kind of blow-up may occur for some marked ideals induced on subvarieties of ambient varieties. Though they define isomorphisms of those subvarieties they determine blow-ups of ambient
varieties which are not isomorphisms.
\end{remark}

\begin{remarks} \begin{enumerate}
\item By the exceptional divisor of the blow-up $\sigma: X'\to X$ with a smooth center $C$ we mean the inverse image $E:=\sigma^{-1}(C)$ of the center C. By the exceptional divisor of the composite of blow-ups $\sigma_i$ with  smooth centers $C_{i-1}$ we mean the union of the strict transforms of the exceptional divisors of $\sigma_i$. This definition coincides with the standard definition of the exceptional set of points of the birational morphism in the case when $\codim(C_i)\geq 2$ (as in Theorem  \ref{th: 2}). If $\codim(C_{i-1})=1$ the blow-up of $C_{i-1}$ is an identical isomorphism and defines a formal operation of converting a subvariety $C_{i-1}\subset X_{i-1}$ into a component of the exceptional divisor $E_i$ on $X_i$. This formalism is convenient  for  the proofs. In particular it indicates that $C_{i-1}$ identified via $\sigma_i$ with  a component of $E_i$ has simple normal crossings with other components of $E_i$. 
\item In the Theorem \ref{th: 2} we blow up centers of codimension $\geq 2$ and both definitions coincide.
\end{enumerate}
\end{remarks}



\subsection{Hironaka resolution principle}

The proof of the strong embedded and nonembedded Hironaka resolution with normal crossing centers builds upon the  following theorem connecting resolution of marked ideals and strong desingularization of varieties. 

\begin{definition} \label{th: 4} Let $K$ be a base field. By  a {\it canonical resolution of marked ideals} we mean a functor which  associate with any marked ideal
$(X,\cI,E,\mu)$ over $K$  a  unique resolution $(X_i)$. such that 
\begin{enumerate}
\item For any smooth 
morphism $\phi: X'\to X$ the induced resolution $\phi^*(X_i)$  defines  the canonical resolution of  $\phi^*(X,\cI,E,\mu)$.

\item $(X_i)$ commutes with (separable) ground field extensions. 
\end{enumerate}
\end{definition}
\begin{remark} The canonical resolution of marked ideals constructed here does not commute with embedding of ambient varieties. Such a commutativity holds only for the marked ideals of the special type $(X,\cI,\emptyset,1)$.
\end{remark}

\begin{theorem} \label{Red1} Assume $char(K)=0$. Then the following implications hold true: 
\begin{eqnarray}  &\textbf{(1) Canonical resolution of  marked ideals}\nonumber\\ 
&\textbf{$\Downarrow$}\nonumber\\
&\textbf{(2) Canonical strong  Embedded
Desingularization with normally flat centers}\nonumber\\
&\textbf{$\Downarrow$}\nonumber\\
&\textbf{(3) Canonical strong Desingularization with normally flat centers}\nonumber \end{eqnarray}
\end{theorem}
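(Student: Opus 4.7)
The plan is to prove each implication by reducing the global desingularization problem to a sequence of resolutions of marked ideals, then invoking the canonical resolution hypothesis.

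For $(1) \Rightarrow (2)$, let $Y \subset X$ be a closed subvariety with ideal sheaf $\cI_Y$ and let $E$ be an SNC divisor. By Bennett's theorem (Section 6.4), the Hilbert--Samuel function $H_{x,\cI_Y}$ attains a maximum value $H$ along a locally closed Samuel stratum $S \subset X$. By Theorem \ref{weak}, $\cI_Y$ admits a standard pre-basis $(f'_i, d_i)$ along $S$, and by Theorem \ref{Rees} the associated canonical Rees algebra $\cR^\cdot(\cI_Y)$ along $S$ is intrinsically defined. Form the relative Rees algebra $\cR^\cdot(\cI_Y, E)$ and replace it by a canonical capacitor (Definition \ref{capacitor}), giving a marked ideal $(\cJ,\mu)$ whose cosupport is exactly $S$ intersected with the part of $X$ meeting $E$ transversally. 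Apply (1) to resolve $(\cJ,\mu)$. By Theorem \ref{central}, each admissible center is normally flat (contained in the current Samuel stratum) and has SNC with $E$, and by Theorem \ref{weak2}(2) the maximum value of the Hilbert--Samuel function on $Y$ does not increase along the sequence; once the cosupport is emptied, the stratum where the value is $H$ has been eliminated, so the maximum strictly drops.

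Iterate the construction. By the descending chain condition for Hilbert--Samuel functions (Theorem \ref{dcc}), after finitely many stages the maximum value is that of a smooth point, i.e.\ the strict transform is smooth. A final cleanup makes the smooth strict transform $\widetilde Y$ have SNC with the accumulated exceptional divisor $E_r$: this is again a resolution-of-marked-ideals problem, applied to the marked ideal encoding the failure of transversality between $\widetilde Y$ and the components of $E_r$, and it is handled by the same mechanism. Functoriality with respect to smooth morphisms, ground field extensions, and group actions is automatic, because at each step the Samuel stratum, the canonical Rees algebra (by Theorem \ref{Rees} and Lemma \ref{le: homo2}), and the canonical resolution of marked ideals in (1) all commute with such morphisms.

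For $(2) \Rightarrow (3)$, given an algebraic variety $Y$ over $K$, cover it by affine charts $U_i$ and locally embed each $U_i \hookrightarrow X_i$ into a smooth ambient $X_i$, taking $E = \emptyset$. Apply (2) to obtain a canonical embedded resolution $\widetilde X_i \to X_i$ with strict transform $\widetilde U_i$, whose centers lie in the singular locus of the current strict transform of $U_i$ (or in the exceptional divisor once it is smooth). By Proposition \ref{restric}, the canonical Rees algebra along the Samuel stratum is compatible with closed embeddings into smooth ambient spaces: the Rees algebra on $X_i$ is generated over the Rees algebra on $U_i$ by the parameters cutting out $U_i$, and this relation is preserved under admissible blow-ups. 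Consequently the restriction of the embedded resolution to $\widetilde U_i$ depends only on $U_i$, not on the ambient $X_i$ (two embeddings being locally related by smooth morphisms via a fiber-product trick as in the proof of Lemma \ref{le: homo2}). Therefore the local pieces $\widetilde U_i$ glue to a canonical smooth $\widetilde Y$ with a proper birational morphism $\res_Y : \widetilde Y \to Y$; the centers are normally flat by construction in (2), and the preimage of $\operatorname{Sing}(Y)$ is SNC.

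The main obstacle is the bookkeeping that keeps the SNC-with-$E$ condition and the normal flatness condition intact simultaneously through the induction on the Hilbert--Samuel function. Concretely, one must verify that forming the relative Rees algebra with $E$ does not disturb the identification of its cosupport with the Samuel stratum (meeting $E$ transversally), and that the controlled transforms of a standard basis remain a standard basis after each blow-up --- which is exactly the content of Theorem \ref{weak2}. The independence-of-embedding step in $(2) \Rightarrow (3)$ is also delicate, but it reduces via Proposition \ref{restric} and the functoriality in (1) to the statement that two smooth embeddings of the same $U_i$ are locally comparable, which is standard.
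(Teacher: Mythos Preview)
Your sketch follows the broad strategy of the paper, but there is a genuine gap in the $(1)\Rightarrow(2)$ argument: you do not explain how to guarantee that the admissible centers have simple normal crossings with the ambient divisor $E$. Simply forming $\cR^\cdot(\cI_Y,E)$ once and resolving its capacitor does not suffice. The paper proceeds in stages indexed by the number of divisors in $E$: first resolve the Rees algebra relative to all $k$ divisors (so the cosupport is $S\cap D_1\cap\cdots\cap D_k$, and any center contained therein is automatically transversal to all of $E$); then, for each $(k-1)$-element subset $E''\subset E$, resolve $\cR^\cdot(\cI,E'')$ (the center now lies in $k-1$ divisors and, by the previous step, avoids the remaining one, so SNC is again automatic); and so on down to subsets of size $1$. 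Only after this descent is the Samuel stratum disjoint from all old divisors in $E$, and only then does one resolve the canonical Rees algebra itself (restricted to a minimal embedding space, with the \emph{new} exceptional divisors now transversal by construction). Your single invocation of $\cR^\cdot(\cI_Y,E)$ collapses this whole descent and leaves unverified that the centers at intermediate steps have SNC with $E$.

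A second point: you do not use the minimal embedding space $T$ at all. The paper restricts the Rees algebra to $T$ before resolving, and this is what ultimately (via Proposition~\ref{restric} and Lemma~\ref{le: homo2}) makes the embedded resolution independent of the ambient $X$ --- a property you need for $(2)\Rightarrow(3)$. For that second implication, your appeal to Proposition~\ref{restric} plus a fiber-product trick is morally in the right direction but not what the paper does: it instead compares two closed embeddings $U\hookrightarrow X_1$, $U\hookrightarrow X_2$ by embedding both $X_i$ into a common $\mathbf{A}^{2n}$ via Lemma~\ref{le: l} (the Jelonek-type extension lemma), and uses functoriality of embedded desingularization under closed embeddings of ambient varieties to conclude that the two resolutions of $U$ agree. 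Your route could likely be made to work, but as written it relies on an undeveloped claim that ``two smooth embeddings of the same $U_i$ are locally comparable,'' whereas the paper's argument is concrete.
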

\begin{remark} The proof of the first implication is more straightforward and shorter if we do not impose that the desingularization commutes with embeddings. This property is useful in the proof of the second implication. To ensure this condition  we use the concept of the minimal embedding spaces.
	
\end{remark}

\begin{proof} 
(1)$\Rightarrow$(2) In the considerations below by resolving multiple marked ideal defined by Rees algebra $R^\cdot$ we shall mean the canonical resolution of their canonical capacitors $\overline{R}$. 
Let $Y$ be a reduced closed subscheme of a smooth scheme $X$ over a field $K$, and $E=\{D_1,\ldots,D_k\}$ be SNC divisor on $X$. Denote by $\cI:=\cI_Y\subset \cO_X$ the coherent sheaf of ideals of $Y$. Let $H_1:=H_\cI$ be the maximal value of the Hilbert-Samuel function of $\cI$ on $X$, and let $S_1$ be the corresponding stratum.
 Consider a minimal embedding space $T_1$ for $Y$ on $X$ (corresponding to $S_1$) and  let $R(\cI)$ be the canonical Rees algebra.
 The  divisors $E=\{D_1,\ldots,D_k\}$ on $X'$  might not be transversal to the locally defined minimal embedded space $T_1$ 
We consider the relative Rees algebra $\cR^\cdot(\cI,E)$, and its restriction to its minimal embedding space. Its resolution creates a strict transform $\cI'$ of $\cI$
with $$S'_1\cap \bigcup_{i=1}^k E'_i=\cosupp(\cR^\cdot(I'))\cap \bigcup_{i=1}^k E'_i=\emptyset$$
In other words it moves the intersection of $k$ divisors (the strict transfroms $E'_i$ away from the the Samuel stratum $S_1\cosupp(\cR^\cdot(\cI)$.  
 By Lemma \ref{le: homo2} and canonicity, the resolution is independent of the choice of the minimal embedded space.
The strict transform of $\cI'$ describes the strict transform $Y'$ of $Y$ on $X'$ and $T'$. That is $\cI'=\cI_{Y'}$. In particular all the centers are contained in $Y'\subset T'$.

Now, consider   all possible intersections of $k-1$ divisors in $E'$. Note that those intersections  are disjoint and ordered. Then for each subset of $k-1$ divisors $E''$ of $E'$ apply
 the canonical resolution of the restriction of $\cR^\cdot(\cI,E'')$ to its minimal embedding space.
 After that all the intersection of any $k-1$  strict transforms of the divisors will be moved away from the Samuel stratum.
 Repeating the procedure for $k-2,k-3,\ldots,1$ will result in the variety $X_{1A}$ with the Samuel stratum having no intersection with the strict transforms of $E$.
 Let $T_{1A}$ be the  strict transform of $T_1$ on $X_{1A}$ and $R_{1A}$ be the controlled transform of $R_2$. Note that passing from $X_1$ to $X_{1A}$ produces "new" exceptional  divisors $E_{1A}$ from the centers contained in the strict transform of $T_1$. They are transversal to $T_{1A}$, and their restrictions to $T_{1A}$ define SNC divisors $E_{1A}^T$.
 
 This defines a (multiple) marked ideal
 $(X_{1A}, R_{1A}, E_{1A})$ and its restriction $(T_{1A}, R^T_{1A}, E^T_{1A})$
 Resolving canonically $R_{1A}$ restricted to $T_{1A}$ defines a 
 a canonical resolution of $(X_1,\cI_1,H_1)$, that is 
variety $X_2$ with the maximal value of the Hilbert-Samuel function $H_2<H_1$. The centers are normally flat by Theorem \ref{central}.


After the resolution the maximal value of the Hilbert-Samuel function $H_{\cI_2}$ of the strict transform $\cI_2$ of $\cI_1$ on the variety $X_2$ drops  to the value $H_2<H_1$ with the Samuel stratum $S_2$. 
 
 We repeat the procedure for the the strict transform $\cI_2$ on $X_2$ and $H_2$ with the induced divisor $E_2$ and continue the process until the Hilbert-Samuel attains its minimum on the strict transform $Y'$ (or, in a reducible case on each component of the strict transform). This will be done in finitely many steps since by Theorem \ref{dcc} the Hilbert-Samuel function has a d.c.c. property and the sequence $\ldots<H_3<H_2<H_1$ shall stabilize. In the terminal case the Hilbert-Samuel function will attain the minimal value $H_\infty:=H_{smooth}$ for a generic smooth point of $Y'$ for all points of $Y'$. In other words
 $Y'$ becomes smooth.
 
  To ensure that the strict transform $Y'$ has a SNC with the exceptional divisors we shall continue to run the algorithm for resolution of 
 $(\cI,H_{smooth})$  or alternatively its Rees ideal.
 At some point of the procedure the strict transform of $Y$ will vanish. At this moment the algorithm prescribes to blow up the center which coincides (locally) with the (components) of the strict transform of $Y$. That is the strict transform has only
 SNC with the exceptional divisors. We shall stop the algorithm at this stage, and obtain the smooth subscheme $Y'$ having SNC with the exceptional divisors.
 
 It follows from Proposition \ref{restric}, and Lemma \ref{le: homo2} that the resulting resolution commutes with closed embedding \'etale morphisms and field extensions.

(2)$\Rightarrow$(3) A  nonembedded canonical desingularization is obtained by gluing  defined locally embedded canonical desingualarizations.

We shall need the following:
\begin{proposition} For any affine variety $U$ there is a smooth variety $\widetilde{U}$ along with a birational morphism $\res: \widetilde{U}\to U$ subject to the conditions:
\begin{enumerate}
\item For any closed embedding $U\subset X$ into a smooth affine variety $X$, there is a closed embedding $\widetilde{U}\subset \widetilde{X}$ into a smooth variety $\widetilde{X}$ which is a canonical embedded desingularization of $U\subset X$.
\item For any open embedding $V\hookrightarrow U$ there is an open embedding of resolutions $\widetilde{V}\hookrightarrow \widetilde{U}$ which is a lifting of $V\to U$ such that $\widetilde{V} \to \res_U^{-1}(V)$ is an isomorphism over $V$.

\end{enumerate}
\end{proposition}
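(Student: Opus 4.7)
The strategy is to construct $\widetilde U$ by choosing any closed embedding of $U$ into a smooth affine variety, applying the canonical embedded desingularization of (2), and then showing that the result is canonically independent of the embedding. Concretely: since $U$ is affine, fix some closed embedding $i\colon U\hookrightarrow X$ with $X$ smooth affine (for instance $X=\bbA^{n}$), apply the embedded desingularization of Theorem \ref{th: emde} to the pair $(X,U)$ with $E=\emptyset$, obtain $\widetilde X\to X$ together with the strict transform $\widetilde U\subset \widetilde X$, and let $\res_U\colon \widetilde U\to U$ be the restriction. By construction, $\widetilde U$ sits inside $\widetilde X$ as a smooth closed subvariety meeting the exceptional divisor transversally, so (1) holds automatically for the chosen embedding.

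The heart of the argument is to show that $\widetilde U$ does not depend on the embedding. Given two embeddings $i_1\colon U\hookrightarrow X_1$ and $i_2\colon U\hookrightarrow X_2$, I first reduce to comparing an embedding $U\subset X_1$ with the embedding $U\subset X_1\times X_2$ obtained from the graph $(i_1,i_2)$. Locally on $X_1$ near $i_1(U)$ we can extend $i_2\circ i_1^{-1}$ to a morphism $X_1\to X_2$, and its graph identifies $X_1$ with a smooth closed subvariety of $X_1\times X_2$ containing $U$. Proposition \ref{restric} then applies: the canonical Rees algebras $\cR^\cdot(\cI_U,\emptyset)$ on $X_1$ and on $X_1\times X_2$ are related by adjoining the marked generators $(u_1,1),\ldots,(u_k,1)$ cutting out $X_1$ in $X_1\times X_2$, and these relations are preserved under every admissible blow-up of the algorithm. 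Combined with Lemma \ref{le: homo2} (independence of minimal embedding space up to \'etale equivalence), this shows that the embedded resolution in $X_1\times X_2$ restricts to the embedded resolution in $X_1$, so the two strict transforms of $U$ agree. The analogous statement for $X_2\subset X_1\times X_2$ gives the comparison between the two candidate $\widetilde U$'s, and patching the local identifications (which is possible by the functoriality with respect to \'etale morphisms, field extensions and closed embeddings inherited from Theorem \ref{th: 4}) yields a canonical isomorphism.

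For (2), I would argue as follows. Given an open immersion $V\hookrightarrow U$ and a chosen embedding $U\subset X$, set $X_V:=X\setminus (U\setminus V)$, which is again smooth affine (after shrinking, or by working Zariski-locally), and consider the induced closed embedding $V\hookrightarrow X_V$. The open immersion $X_V\hookrightarrow X$ is a smooth morphism, so by the smooth-morphism functoriality built into Theorem \ref{th: 4} (and inherited from the embedded desingularization constructed in the proof of (1)$\Rightarrow$(2)) the embedded resolution of $(X_V,V)$ is obtained by pulling back the embedded resolution of $(X,U)$. Restricting the strict transforms and exceptional divisors yields an open immersion $\widetilde V\hookrightarrow\widetilde U$ with $\widetilde V=\res_U^{-1}(V)$ and $\res_V=(\res_U)|_{\widetilde V}$ an isomorphism above $V$, as required.

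The main obstacle is the embedding-independence step: the embedded desingularization of (2) was defined by a specific, although canonical, choice of minimal embedding space at each stratum, and its compatibility with closed embeddings is not part of its defining functoriality (Theorem \ref{th: 4} only asserts compatibility with smooth morphisms and field extensions). To promote this to full independence from the ambient smooth affine variety, one must rely precisely on Proposition \ref{restric}, on the invariance of the canonical Rees algebra along the Samuel stratum (Theorem \ref{Rees} and its consequence Theorem \ref{central}), and on the freedom provided by Lemma \ref{le: homo2} in selecting local parameters; it is the interaction of these three ingredients, applied inductively through the sequence of Hilbert--Samuel strata, that furnishes the compatibility of the two resolutions and thereby the canonical nonembedded $\widetilde U$.
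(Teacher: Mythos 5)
Your proposal is correct and follows the paper's overall strategy---fix one closed embedding, take the strict transform of $U$ under the canonical embedded desingularization, and then prove independence of the embedding---but the independence step is handled by a genuinely different reduction. You compare two embeddings $i_1\colon U\hookrightarrow X_1$ and $i_2\colon U\hookrightarrow X_2$ through the graph embedding $U\hookrightarrow X_1\times X_2$, using that $i_2$ extends to a morphism $X_1\to X_2$ (which, since everything is affine and $U$ is closed in $X_1$, works globally, so your ``locally'' is an unnecessary precaution), and you then invoke Proposition \ref{restric} and Lemma \ref{le: homo2} to see that the resolution upstairs restricts to the resolution in each factor. The paper instead embeds both $X_1$ and $X_2$ into a common $\bbA^n$ and invokes Lemma \ref{le: l} (the Jelonek-type co-equalization of two embeddings $U\hookrightarrow\bbA^n$ by embeddings $\bbA^n\hookrightarrow\bbA^{2n}$), after which it simply cites the compatibility of embedded desingularization with closed embeddings of ambient varieties from Theorem \ref{th: emde}. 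The two routes rest on the same underlying ingredient (invariance under enlarging the smooth ambient variety); yours re-derives it from the Rees-algebra machinery, while the paper's black-boxes it. One small inaccuracy in your part (2): $X_V=X\setminus(U\setminus V)$ is smooth and open in $X$ but generally not affine, so it does not directly furnish the required embedding of $V$ into a smooth \emph{affine} variety; the paper avoids this by first treating principal opens $V=U_f$ (where $X_F$ is genuinely affine) and then reducing a general affine open $V$ to a covering by principal opens to check that $\widetilde V\to\res_U^{-1}(V)$ is an isomorphism. Your appeal to smooth functoriality is the right idea, but it should be routed through principal opens rather than through the non-affine $X_V$.
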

\begin{proof}
 (1) 
 Consider a closed embedding of $U$ into a smooth affine variety $X$ (for example $X={\bf A}^n$). The canonical embedded desingularization $\widetilde{U}\subset \widetilde{X}$ of $U\subset X$ defines 
the desingularization $\widetilde{U}\to U$. This desingularization is independent of the ambient variety $X$.
Let $\phi_1: U\subset X_1$ and $\phi_2: U\subset X_2$  be two closed embeddings and
let $\widetilde{U}_i\subset \widetilde{X}_i$ be two embedded desingularizations.
Find  embeddings $\psi_i: X_i\to {\bf A}^n$ into the affine space ${\bf A}^n$. They define the embeddings 
$\psi_i\phi_i: U\to {\bf A}^n$. Recall 
\begin{lemma} \label{le: l}(see \cite{Wlodarczyk} or \cite{Jelonek}). For any closed embeddings $\phi_1,\phi_2: Y\subset {\bf A}^{n}$ there exist closed embeddings $\psi_1,\psi_2: {\bf A}^{n}\to {\bf A}^{2n}$ such that $\psi_1\phi_1=\psi_2\phi_2$.
\end{lemma}
By  Lemma \ref{le: l},
there are embeddings $\Psi_i: {\bf A}^n\to {\bf A}^{2n}$ such that
$\Psi_1\psi_1\phi_1=\Psi_2\psi_2\phi_2: U\to {\bf A}^{2n}$. Since embedded desingularizations commute with closed embeddings of ambient varieties we see that the $\widetilde{U}_i$ are isomorphic over $U$. 

(2) Let $V\to U$ be an open embedding of affine varieties. Assume first that $V=U_f=U\setminus V(f)$, where $f\in K[U]$ is a regular function on $U$.
Let $U\subset X$ be a closed embedding into an affine variety $X$. Then
$U_f\subset X_F$ is a closed embedding into an affine variety $X_F=X\setminus V(F)$ where $F$ is a regular function on $F$ which restricts to $f$.
Since embedded desingularizations commute with smooth morphisms the open embedding
$X_F\subset X$ defines the open embedding of embedded desingularizations $(\widetilde{X_F},\widetilde{U_f})\subset (\widetilde{X},\widetilde{U})$ and the open embedding of desingularizations $\widetilde{U_f}\subset \widetilde{U}$.

Let $V\subset U$ be any open subset which is an affine variety. Then there are desingularizations $\res_V: \widetilde{V}\to V$ and $\res_U:\widetilde{U}\to U$.
Suppose the natural birational map $\widetilde{V} \to \res_U^{-1}(V)$ is not an isomorphism
over $V$. Then we can find an open subset $U_f\subset V$ such that 
$\res_V^{-1}(U_f) \to \res_U^{-1}(U_f)$ is not an isomorphism over $U_f$. But $U_f=V_f$ and by the previous case $\res_V^{-1}(U_f)\simeq \widetilde{U_f}=\widetilde{V_f}\simeq \res_U^{-1}(V)$.
\end{proof}

To finish the proof of the second implication let $Y$ be an algebraic variety over $K$. 
By the compactness of $Y$ we find a cover of affine subsets $U_i$ of $Y$ such that each $U_i$ is embedded in an affine space ${\bf A}^{n}$ for $n\gg 0$. We can assume that the dimension
$n$  is the same for all $U_i$  by taking if necessary embeddings of affine spaces ${\bf A}^{k_i}\subset {\bf A}^n$. 

Let $U_i$ be an open affine cover of $X$. For any two open subsets $U_i$ and $U_j$
set $U_{ij}:=U_i\cap U_j$. For any $U_i$ and $U_{ij}$ we find  canonical resolutions $\widetilde{U_i}$ and $\widetilde{U}_{ij}$ respectively.  By the proposition $\widetilde{U}_{ij}$  can be identified with an open subset of $\widetilde{U}_i$. We define $\widetilde{X}$ to be a variety obtained by glueing $\widetilde{U_i}$ along $\widetilde{U}_{ij}$. Then $\widetilde{X}$
is a smooth variety and $\widetilde{X}\to X$ defines a canonical desingularization independent of the choice of $U_{ij}$.
(For more details see \cite{Wlodarczyk}).
\end{proof}

The proof of principalization follows from the following implication
\begin{theorem} The following implication holds true:
\begin{eqnarray}\nonumber  &\textbf{(1)  (Canonical) Resolution of relative marked ideals $(X,\cI,E,\mu)$}\nonumber\\ &\textbf{$\Downarrow$}\nonumber\\
& \textbf{(2) (Canonical)  Principalization of the sheaves $\cI$ on $X$\nonumber}\nonumber
\end{eqnarray}\nonumber
\end{theorem}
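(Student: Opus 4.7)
The plan is to deduce principalization by iterating resolutions of marked ideals, with the marking $\mu$ set equal to the current maximal order of $\cI$, and stopping once the ideal becomes the whole structure sheaf.

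First I would set $\mu_0 := \max_{x\in X}\ord_x(\cI)$, which is well defined and finite because $\cI$ is of finite type (and by semicontinuity of $\ord$, or equivalently of the Hilbert--Samuel function via Lemma \ref{le: Vi1} applied to $\cD^{\mu-1}(\cI)$). If $\mu_0=0$ then $\cI=\cO_X$ and there is nothing to do, so assume $\mu_0\geq 1$. Form the marked ideal $(X_0,\cI_0,E_0,\mu_0):=(X,\cI,E,\mu_0)$ and apply the given canonical resolution to obtain a sequence of admissible blow-ups
$$X_0 \xleftarrow{\sigma_1} X_1 \xleftarrow{\sigma_2} \cdots \xleftarrow{\sigma_{r_0}} X_{r_0}$$
with centers $C_i\subset \cosupp(X_i,\cI_i,E_i,\mu_0)$ having SNC with $E_i$, after which $\cosupp(X_{r_0},\cI_{r_0},E_{r_0},\mu_0)=\emptyset$. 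By Lemma \ref{bb} this means $\max_{x\in X_{r_0}}\ord_x(\cI_{r_0})<\mu_0$, and by definition of the controlled transform the total transform factors as $(\sigma_{r_0}\circ\cdots\circ\sigma_1)^*(\cI)=\cI(M_0)\cdot \cI_{r_0}$, where $M_0=\sum \mu_0\cdot D_i$ is an effective divisor supported on the exceptional divisors in $E_{r_0}$ (which are SNC).

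Next I would iterate: set $\mu_1:=\max_{x\in X_{r_0}}\ord_x(\cI_{r_0})<\mu_0$ and, if $\mu_1\geq 1$, apply canonical resolution of marked ideals to $(X_{r_0},\cI_{r_0},E_{r_0},\mu_1)$, producing a longer sequence of admissible blow-ups and a further controlled transform $\cI_{r_1}$ with strictly smaller maximal order. After each block the full total transform acquires an extra factor of the form $\cI(M_j)$, with $M_j$ an effective divisor on the (always SNC) exceptional divisors. Because the sequence $\mu_0>\mu_1>\mu_2>\cdots$ of nonnegative integers must terminate, after finitely many blocks we reach a stage $X_N$ where $\mu_N=0$, i.e.\ $\cI_N=\cO_{X_N}$. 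At that point the composite map $\sigma:X_N\to X$ is a sequence of admissible blow-ups satisfying (1) of Theorem~\ref{th: 1}, and the total transform reads $\sigma^*(\cI)=\cO_{X_N}(-\sum M_j)=\cI_{\widetilde E}$, where $\widetilde E$ is an effective integral combination of components of $E_N$, so condition (2) holds as well.

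For the canonicity, functoriality under smooth morphisms/open embeddings, compatibility with field extensions, and equivariance, I would argue inductively: the integer $\mu_j$ is intrinsic (it is the maximal order of the current controlled transform, which is stable under smooth base change), and each resolution block is canonical and functorial by the hypothesis on canonical resolution of marked ideals (together with its compatibility with smooth morphisms and field extensions, Definition~\ref{th: 4}). Since smooth morphisms and field extensions commute with controlled transforms and with $\max\ord_x$, the iteration is uniquely determined and pulls back correctly. The main obstacle, and the place where one must be careful, is ensuring that the ``stopping times'' $r_0,r_1,\ldots$ behave coherently under pullback: a smooth morphism $\phi:X'\to X$ may yield $\max\ord_{x'}(\phi^*\cI_j)<\mu_j$ at some stage if the image $\phi(X')$ misses the Samuel stratum of order $\mu_j$, so one must interpret ``canonical resolution'' of a marked ideal whose cosupport is empty as the empty sequence of blow-ups (which is harmless). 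Once this bookkeeping is set up, the algorithm above is manifestly canonical and satisfies all the equivariance properties of Theorem~\ref{th: 1}.
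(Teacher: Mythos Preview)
Your argument is correct, but it is considerably more elaborate than the paper's. The paper dispenses with any iteration: it simply applies the hypothesized canonical resolution once, to the marked ideal $(X,\cI,E,1)$. Since the resulting controlled transform $(\widetilde{\cI},1)$ has empty cosupport, one has $\ord_x(\widetilde{\cI})<1$ everywhere, hence $\widetilde{\cI}=\cO_{\widetilde{X}}$; unwinding the definition of controlled transform then shows that the total transform $\sigma^*(\cI)$ is the ideal of an effective divisor supported on the exceptional set, which has SNC. All the functoriality statements are inherited from a single invocation of the resolution hypothesis.

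Your descending-order iteration (resolve at $\mu_0=\max\ord$, then at $\mu_1$, and so on) is the classical ``order reduction'' scheme and is perfectly sound; the bookkeeping you flag about pullbacks of monomial factors and empty cosupports under smooth morphisms is handled exactly as you indicate. The trade-off is that your route requires tracking the factorization of the total transform across blocks and arguing termination via the strictly decreasing integer sequence $\mu_0>\mu_1>\cdots$, whereas the paper's choice $\mu=1$ collapses the whole process to a single step and makes the canonicity and functoriality immediate.
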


\begin{proof}
  It follows immediately from the definition that a resolution of $(X,{\cI},E,1)$
determines a principalization of ${\cI}$. Denote by $\sigma: X\leftarrow \widetilde{X}$ the morphism defined by a resolution of $(X,\cI,E,1)$. The controlled transform $(\widetilde{\cI},1):=\sigma^{\rm c}(\cI,1)$ has  empty support. Consequently, $V(\widetilde{\cI})=\emptyset$,  and thus $\widetilde{\cI}$ is equal to the structural sheaf $\cO_{\widetilde{X}}$. This implies that the full transform $\sigma^*(\cI)$ is principal and generated by the sheaf of ideal of a divisor whose components are the exceptional divisors.
The actual process of desingularization is often achieved before $(X,{\cI},E,1)$ has been resolved (see \cite{Wlodarczyk})
\end{proof}

\section{Canonical resolution of marked ideals}

\subsection{Hypersurfaces of maximal contact}

The concept of the {\it hypersurfaces of maximal contact} is one of the key points of this proof. It was
originated by  Abhyankhar, Hironaka, and Giraud and developed in the papers of Bierstone-Milman and Villamayor.

In our terminology we are looking for a smooth hypersurface containing the supports of marked ideals and whose strict transforms under multiple  blow-ups contain the supports of the induced marked ideals. Existence of such hypersurfaces allows a reduction of the resolution problem to  codimension 1.

First we introduce marked ideals which locally admit  hypersurfaces of maximal contact.

\begin{definition}(Villamayor (see \cite{Vi}))
We say that a marked ideal $({\cI},\mu)$  is of {\it maximal order} (originally {\it simple basic object}) if $\max\{\ord_x({\cI})\mid x\in X\}\leq \mu$ or equivalently ${\cD}^\mu({\cI})=\cO_X$. We say that a multiple marked ideal (in particular a Rees algebra) $\overline{\cI}=\{(\cI_i,\mu_i)\}$ is of {\it maximal order} if for any point $x\in X$ at least one of the ideals $(\cI_i,\mu_i)$ is of maximal order in a neighborhood of $x$.
\end{definition}
Any  marked ideal  of maximal order generates or diff-generates the Rees algebra of maximal order.

\begin{lemma}(Villamayor (see \cite{Vi})) Let $({\cI},\mu)$ be a marked ideal of maximal order
and $C\subset\cosupp ({\cI},\mu)$ be a smooth center. Let $\sigma: X\leftarrow
X'$ be a blow-up at $C\subset\cosupp({\cI},\mu)$. Then $\sigma^{\rm c}({\cI},\mu)$ is of maximal order.
\end{lemma}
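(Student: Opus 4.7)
The plan is to reduce this directly to Lemma \ref{le: inclusions}, which says that the operation of taking derivatives commutes (up to inclusion) with controlled transforms: $\sigma^{\rm c}({\cD}^r({\cI},\mu)) \subseteq {\cD}^r(\sigma^{\rm c}({\cI},\mu))$ for any $r \leq \mu$.

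First I would unpack the maximal order hypothesis. By definition, $(\cI,\mu)$ is of maximal order precisely when $\cD^\mu(\cI) = \cO_X$, i.e.\ the marked ideal $\cD^\mu(\cI,\mu) = (\cD^\mu(\cI), 0) = (\cO_X, 0)$ is the trivial (structure-sheaf) marked ideal. The admissibility condition $C \subset \cosupp(\cI,\mu)$ ensures that $\sigma$ is an admissible blow-up and hence the controlled transform $\sigma^{\rm c}$ is defined.

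Next I would apply Lemma \ref{le: inclusions} with $r = \mu$. This gives the inclusion
\[
\sigma^{\rm c}(\cD^\mu(\cI,\mu)) \subseteq \cD^\mu(\sigma^{\rm c}(\cI,\mu)).
\]
The left-hand side is $\sigma^{\rm c}(\cO_X, 0)$. Since the controlled transform of the trivial marked ideal with $\mu = 0$ is simply $(\cI(D)^{0}\cdot \sigma^*(\cO_X),\, 0) = (\cO_{X'}, 0)$, the inclusion yields
\[
(\cO_{X'}, 0) \subseteq \cD^\mu(\sigma^{\rm c}(\cI, \mu)),
\]
so $\cD^\mu(\sigma^{\rm c}(\cI)) = \cO_{X'}$, which is exactly the statement that $\sigma^{\rm c}(\cI,\mu)$ is of maximal order.

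There is no serious obstacle here; the only point requiring a line of verification is that the controlled transform of the marked ideal $(\cO_X, 0)$ is indeed $(\cO_{X'}, 0)$, which is immediate from the formula $\sigma^{\rm c}(\cJ,\nu) = (\cI(D)^{-\nu}\sigma^*(\cJ), \nu)$ once one observes that $\cI(D)^{0} = \cO_{X'}$ and $\sigma^*(\cO_X) = \cO_{X'}$. Thus the whole proof is a one-line application of the inclusion from Section \ref{Deri} specialized to $r = \mu$.
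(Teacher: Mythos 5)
Your proof is correct and is essentially identical to the paper's: both reduce the claim to Lemma \ref{le: inclusions} with $r=\mu$, using $\cD^\mu(\cI)=\cO_X$ and the observation that the controlled transform of the trivial marked ideal $(\cO_X,0)$ is $(\cO_{X'},0)$.
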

\noindent {\bf Proof.} If $({\cI},\mu)$ is a marked ideal of maximal order then ${\cD}^\mu({\cI})=\cO_X$.
Then by  Lemma \ref{le: inclusions},  ${\cD}^\mu(\sigma^{\rm c}({\cI},\mu))\supset \sigma^{\rm c}({\cD}^\mu({\cI}),0)=\cO_X$. \qed

\begin{lemma}(Villamayor (see \cite{Vi})) If $({\cI},\mu)$ is a marked ideal of maximal order and $0\leq i \leq \mu$ then ${\cD}^{i}({\cI},\mu)$
is of maximal order.
\end{lemma}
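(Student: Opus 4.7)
The statement reduces to checking that ${\cD}^{\mu-i}({\cD}^{i}({\cI})) = \cO_X$, since $({\cD}^i({\cI}), \mu-i)$ is of maximal order precisely when its $(\mu-i)$-th derivative ideal is the whole structure sheaf, and the hypothesis that $({\cI}, \mu)$ is of maximal order is exactly ${\cD}^\mu({\cI}) = \cO_X$. So my plan is to prove the inclusion
\[
{\cD}^{\mu-i}({\cD}^{i}({\cI})) \supseteq {\cD}^{\mu}({\cI}),
\]
from which the desired equality follows immediately.

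For the inclusion I would argue locally, choosing an \'etale coordinate system $u_1,\ldots,u_n$ on $X$. By definition ${\cD}^i({\cI})$ is the sheaf of ideals generated by all Hasse derivatives $D_{u^\beta}(f)$ with $f \in {\cI}$ and $|\beta| \leq i$, and likewise ${\cD}^{\mu-i}({\cD}^i({\cI}))$ is generated by all $D_{u^\gamma}(g)$ with $g \in {\cD}^i({\cI})$ and $|\gamma| \leq \mu-i$. Thus it contains, in particular, every element of the form $D_{u^\gamma}(D_{u^\beta}(f))$ for $|\beta| \leq i$, $|\gamma|\leq \mu-i$, $f\in\cI$. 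The key identity is the composition rule for Hasse derivatives,
\[
D_{u^\gamma}\circ D_{u^\beta} \;=\; \binom{\beta+\gamma}{\beta}\, D_{u^{\beta+\gamma}}.
\]
Given any multi-index $\alpha$ with $|\alpha|\leq\mu$, I would split $\alpha = \beta + \gamma$ with $|\beta|\le i$ and $|\gamma|\le \mu-i$; in characteristic zero one may take, e.g., $\beta$ equal to $\alpha$ truncated so that $|\beta|=\min(i,|\alpha|)$, and then $\binom{\alpha}{\beta}$ is an invertible integer. Hence $D_{u^\alpha}(f)$ is a unit multiple of an element of ${\cD}^{\mu-i}({\cD}^i({\cI}))$, which gives the inclusion above and therefore ${\cD}^{\mu-i}({\cD}^i({\cI})) \supseteq {\cD}^\mu({\cI}) = \cO_X$.

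The main (indeed, only) obstacle is the positive-characteristic case, where the coefficient $\binom{\beta+\gamma}{\beta}$ may vanish for some decompositions. This is exactly why the paper works with Hasse derivatives rather than ordinary partials, but one still has to choose the splitting $\alpha = \beta+\gamma$ carefully. I would handle this by performing the splitting coordinate-by-coordinate: write $\alpha = (a_1,\ldots,a_n)$, pick $\beta \le \alpha$ componentwise with $|\beta|\le i$, $|\gamma|=|\alpha|-|\beta|\le \mu - i$, and note that $\binom{\beta+\gamma}{\beta} = \prod_k \binom{a_k}{b_k}$; one may then select the $b_k$'s so each factor $\binom{a_k}{b_k}$ is coprime to $p$ by Lucas' theorem (taking, for each digit of the base-$p$ expansion of $a_k$, either the full digit or zero, adjusting totals to hit the prescribed $|\beta|$). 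Since the paper restricts ultimately to characteristic zero for its resolution results, the reader content with that case may skip this refinement; but the lemma holds in either setting, and once the derivative-composition identity is invoked the remainder of the argument is purely formal.
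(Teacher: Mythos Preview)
Your characteristic-zero argument is correct and is essentially the paper's one-line proof, which simply asserts ${\cD}^{\mu-i}({\cD}^{i}({\cI}))={\cD}^{\mu}({\cI})=\cO_X$; you have merely unpacked the composition identity for Hasse derivatives that underlies this equality.

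Your positive-characteristic refinement, however, is incorrect, and in fact the lemma is \emph{false} there. Take $X=\mathbb{A}^1_K$ with $\operatorname{char}K=2$, $\cI=(u^2)$, $\mu=2$, $i=1$. Since $D_{u^2}(u^2)=\binom{2}{2}=1$ we have $\cD^2(\cI)=\cO_X$, so $(\cI,2)$ is of maximal order. But $D_u(u^2)=2u=0$, so $\cD^1(\cI)=(u^2)$, and $(\cD^1(\cI),1)=((u^2),1)$ has order $2>1$ at the origin, hence is \emph{not} of maximal order. Your Lucas-theorem splitting cannot rescue this case: with $\alpha=(2)$ the constraints $|\beta|\le 1$ and $|\alpha|-|\beta|\le 1$ force $|\beta|=1$, i.e.\ $\beta=(1)$, and $\binom{2}{1}\equiv 0\pmod 2$. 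The ``adjusting totals'' step fails precisely because the base-$p$ digits of $a_k=p^j$ offer only the choices $0$ or $p^j$ for $b_k$, which need not land in the required interval. The paper sidesteps all of this by having already restricted to characteristic zero (Section~7.3 onward), so you should simply delete the positive-characteristic paragraph rather than claim the result holds in either setting.
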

\noindent {\bf Proof.} ${\cD}^{\mu-i}({\cD}^{i}({\cI},\mu))={\cD}^{\mu}({\cI},\mu)=\cO_X$.\qed

\begin{lemma}(Giraud (see \cite{Giraud}))\label{le: Gi} Let $({\cI},\mu)$ be the marked ideal of maximal order.  Let $\sigma: X\leftarrow
X'$ be a blow-up at a smooth center $C\subset\cosupp({\cI},\mu)$. Let $u\in {\cD}^{\mu-1}({\cI},\mu)(U)$ be a function such that, for any $x\in V(u)$, $\ord_x(u)=1$.
Then \begin{enumerate}
\item $V(u)$ is smooth.
\item $\cosupp({\cI},\mu)\cap U \subset V( u)$
\end{enumerate}

Let $U'\subset \sigma^{-1}(U)\subset X'$ be an open
set where the exceptional divisor is described by $y$.  Let $u':=\sigma^{\rm c}(u)=y^{-1}\sigma^*(u)$ be the controlled transform of $u$.
Then
\begin{enumerate}
\item $u'\in {\cD}^{\mu-1}(\sigma^{\rm c}({\cI}_{|U'},\mu)).$
\item  $V(u')$ is smooth.
\item $\cosupp({\cI'},\mu)\cap U' \subset V( u')$
\item $V(u')$ is the restriction of the strict transform of $V(u)$ to $U'$.

\end{enumerate}
\end{lemma}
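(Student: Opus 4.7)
\medskip

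\noindent\textbf{Proof proposal.} The plan is to treat the pre-blow-up statements (1)--(2) first, then use the inclusion $\sigma^{\mathrm c}(\cD^{\mu-1}(\cI,\mu))\subseteq \cD^{\mu-1}(\sigma^{\mathrm c}(\cI,\mu))$ from Lemma \ref{le: inclusions} to bootstrap everything after the blow-up, and finally verify the strict-transform identification by a direct local coordinate computation.

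First I would dispose of the pre-blow-up claims. Since by hypothesis every $x\in V(u)$ satisfies $\ord_x(u)=1$, the function $u$ is part of a regular system of parameters at each such point, so the classical implicit function theorem gives that $V(u)$ is a smooth hypersurface of $U$. For the inclusion $\cosupp(\cI,\mu)\cap U\subset V(u)$, note that $u\in \cD^{\mu-1}(\cI)(U)$ and by Lemma \ref{le: Vi1}
\[
\cosupp(\cI,\mu)\subseteq \cosupp(\cD^{\mu-1}(\cI),1)=V(\cD^{\mu-1}(\cI))\subseteq V(u),
\]
the last inclusion being immediate from $u\in \cD^{\mu-1}(\cI)$.

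Next I would pass to the blow-up. The admissibility $C\subseteq \cosupp(\cI,\mu)\cap U\subseteq V(u)$ from (2) shows in particular that $u\in \cI_C$, so $\ord_C(u)\geq 1$; combined with $\ord_x(u)=1$ on $V(u)\supseteq C$ this forces $\ord_C(u)=1$ and hence $\sigma^{\ast}(u)=y\cdot u'$ with $u'\in\cO(U')$ regular. This is exactly the controlled transform of the marked function $(u,1)$. Applying Lemma \ref{le: inclusions} to $(u,1)\in \cD^{\mu-1}(\cI,\mu)$ yields
\[
u'=\sigma^{\mathrm c}(u,1)\in \sigma^{\mathrm c}\bigl(\cD^{\mu-1}(\cI,\mu)\bigr)\subseteq \cD^{\mu-1}\bigl(\sigma^{\mathrm c}(\cI_{|U'},\mu)\bigr),
\]
which is (1$'$). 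Then (3$'$) follows exactly as in the pre-blow-up case: by Lemma \ref{le: Vi1}
\[
\cosupp(\cI',\mu)\cap U'\subseteq V\bigl(\cD^{\mu-1}(\sigma^{\mathrm c}(\cI,\mu))\bigr)\subseteq V(u').
\]

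Finally, for (2$'$) and (4$'$) I would work in local coordinates. Choose parameters $u_1=u,u_2,\dots,u_k$ cutting out $C$ (possible since $C\subseteq V(u)$ is smooth and $u$ has order $1$ along $C$); in the chart $U'$ with exceptional coordinate $y=u_j$ for some $j$, the blow-up is described by $u_i\mapsto y\cdot u_i'$ for $i\neq j$ and $u_j\mapsto y$, so $u'=y^{-1}\sigma^{\ast}(u)=u_1'$ (or $=1$ in the chart $y=u_1$, where $V(u')$ is empty and there is nothing to check). Hence $u'$ is again part of a regular system of parameters on $U'$, so $V(u')$ is smooth, giving (2$'$); and by construction $V(u')$ coincides with the Zariski closure in $U'$ of $\sigma^{-1}(V(u)\setminus C)$, which is precisely the restriction of the strict transform of $V(u)$, giving (4$'$). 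The only point requiring care—and the step I expect to be the main obstacle—is verifying that the order hypothesis $\ord_x(u)=1$ is inherited by $u'$ at every point of $V(u')$; this is where the admissibility $C\subseteq V(u)$ together with $\ord_C(u)=1$ is essential, since it guarantees that the division by $y$ is exact and does not absorb any factor needed to maintain the order-one condition on $V(u')$.
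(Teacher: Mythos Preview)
Your proposal is correct and follows essentially the same approach as the paper: the paper's proof is a terse three-line version of exactly what you wrote, invoking Lemma~\ref{le: inclusions} for the post-blow-up membership $u'\in\cD^{\mu-1}(\sigma^{\mathrm c}(\cI))$, then noting that $u$ is a local parameter for $C$ so $u'=u/y$ is again a parameter, and claiming the cosupport inclusion as an immediate consequence. Your version is more complete (you actually justify the pre-blow-up claims and the strict-transform identification, which the paper omits), and the ``main obstacle'' you flag is in fact already handled by your own coordinate computation, since $u'=u_1'$ is visibly a coordinate on the chart $U'$.
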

\noindent {\bf Proof.} (1) $u'=\sigma^{\rm c}(u)=u/y\in\sigma^{\rm c}({\cD}^{\mu-1}({\cI}))\subset {\cD}^{\mu-1}(\sigma^{\rm c}({\cI}))$.

(2) Since $u$ was one of the local parameters describing the center of blow-ups, $u'=u/y$ is a parameter, that
is, a function of order one.

 (3) follows from (2). \qed
\begin{definition} We shall call a function $$u\in T({\cI})(U):={\cD}^{\mu-1}({\cI}(U))$$ \noindent of multiplicity one a {\it tangent direction} of $({\cI},\mu)$ on $U$.
\end{definition}
As a corollary from the above we obtain the following lemma:
\begin{lemma}(Giraud)\label{le: Giraud}
Let $u\in T({\cI})(U)$ be a tangent direction of $({\cI},\mu)$ on $U$. Then  for any multiple  blow-up  $(U_i)$ of $({\cI}_{|U},\mu)$ all the supports  of the induced marked ideals  $\cosupp(\cI_i,\mu)$ are contained in the strict transforms $V(u)_i$ of $V(u)$.  \qed
\end{lemma}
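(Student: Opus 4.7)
The plan is to proceed by induction on the length of the multiple blow-up sequence $(U_i)_{i=0,\ldots,r}$, with the inductive hypothesis being the stronger statement that at each stage $i$, the iterated controlled transform $u_i := \sigma_i^{c} \cdots \sigma_1^{c}(u)$ is a tangent direction of the induced marked ideal $(\cI_i,\mu)$ on the relevant open chart, i.e.\ $u_i \in \cD^{\mu-1}(\cI_i)$ and $\ord_x(u_i) = 1$ at every $x \in V(u_i)$, and that $V(u_i)$ coincides with the strict transform of $V(u_{i-1})$ under $\sigma_i$.

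The base case $i=0$ is precisely the content of Lemma \ref{le: Gi}(2)--(3): smoothness of $V(u)$ and the containment $\cosupp(\cI,\mu) \cap U \subset V(u)$ follow from $u \in \cD^{\mu-1}(\cI)(U)$ having order one everywhere on its zero locus. For the inductive step, assume the hypothesis at stage $i-1$, so that in particular the center $C_{i-1}$ of the blow-up $\sigma_i$, being contained in $\cosupp(\cI_{i-1},\mu)$, lies inside $V(u_{i-1})$. On any chart of $U_i$ where the new exceptional divisor is cut out by a coordinate $y$, the controlled transform $u_i = y^{-1}\sigma_i^*(u_{i-1})$ is well defined, and Lemma \ref{le: Gi}(1) applied to $(\cI_{i-1},\mu)$ together with the tangency of $u_{i-1}$ gives $u_i \in \cD^{\mu-1}(\sigma_i^{c}(\cI_{i-1},\mu)) = \cD^{\mu-1}(\cI_i)$. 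Lemma \ref{le: Gi}(2) then says $V(u_i)$ is smooth with $u_i$ of order one on it, and (4) identifies $V(u_i)$ with the strict transform of $V(u_{i-1})$; finally Lemma \ref{le: Gi}(3) yields $\cosupp(\cI_i,\mu) \subset V(u_i)$, which is exactly the inductive claim at stage $i$.

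There is really no serious obstacle here: the entire statement is the iterated form of Lemma \ref{le: Gi}, and the only thing one needs to check carefully is that the property of being a tangent direction (membership in $\cD^{\mu-1}$ plus order-one condition) is preserved under admissible controlled transforms, which is exactly what Lemma \ref{le: Gi} was designed to record. The one minor bookkeeping point, which is the closest thing to a subtlety, is the passage between charts: tangent directions and controlled transforms are defined locally on open sets where the exceptional divisor is principal, so one should phrase the induction chart by chart and observe that the conclusion $\cosupp(\cI_i,\mu) \subset V(u_i)$ is an intrinsic (chart-independent) statement about the strict transform of $V(u)$, since strict transforms glue. Once this is noted, the lemma follows immediately, and no further computation is required.
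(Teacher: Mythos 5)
Your proof is correct and is exactly the argument the paper intends: the paper states this lemma "as a corollary from the above" with the proof omitted, meaning precisely the chart-by-chart induction you describe, iterating Lemma \ref{le: Gi} to propagate the tangent-direction property and the identification of $V(u_i)$ with the strict transform at each stage.
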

\begin{remarks}
\begin{enumerate}
\item Tangent directions are functions defining locally hypersurfaces of maximal contact.
\item  The main problem leading to complexity of the proofs is that of noncanonical choice of
the tangent directions. We overcome this difficulty by introducing {\it homogenized ideals}.
\end{enumerate}
\end{remarks}
\begin{corollary} If $R^\cdot$ is a differential Rees algebra of maximal order then for any point $x\in X$, there is a tangent direction $u\in R^1(U)$ in a  neighborhood $U$ of $x$. 

\end{corollary}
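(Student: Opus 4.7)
The goal is to produce, for any $x \in X$, a Hasse derivative of some element of $R^\cdot$ that lies in $R^1$ and whose only nearby zeros are smooth and of multiplicity one. First I would unwind the hypotheses: by the maximal-order assumption on $R^\cdot$, there exists a marked ideal $(\cI,\mu)$ with $\cI \subset R^\mu$ that is of maximal order on some neighborhood of $x$, i.e.\ $\cD^\mu(\cI) = \cO_X$ there, so $k := \ord_x(\cI) \leq \mu$. Pick $f \in \cI$ realizing this order, $\ord_x(f) = k$, and select a multi-index $\alpha$ with $|\alpha| = \max(k-1,0)$ such that $u := D_{u^\alpha}(f)$ satisfies $\ord_x(u) = 1$ (or is a unit at $x$ when $k = 0$). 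The existence of such $\alpha$ is a direct Taylor expansion computation on a degree-$k$ term of $f$ at $x$: in characteristic zero one differentiates a monomial $u^\beta$ with $|\beta| = k$ exactly $|\beta|-1$ times in a single variable appearing in $\beta$.

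Next I would invoke the differential Rees algebra axiom $\cD^a(R^\mu) \subset R^{\mu - a}$ with $a = |\alpha| \leq \mu - 1$: since $\cI \subset R^\mu$, this forces $u \in R^{\mu - |\alpha|}$, and as the filtration $R^0 \supset R^1 \supset R^2 \supset \cdots$ is decreasing and $\mu - |\alpha| \geq 1$, we conclude $u \in R^1$ on the chosen neighborhood. By upper semicontinuity of the order function, the locus $\{y : \ord_y(u) \geq 2\}$ is closed and does not contain $x$, so I may further shrink $U$ so that $\ord_y(u) \leq 1$ throughout $U$. Then at every $y \in V(u) \cap U$ one has $\ord_y(u) = 1$ (vacuously so when $k = 0$ and $u$ is a unit on $U$), which is exactly the condition defining a tangent direction of $(\cI,\mu)$, and in particular $u \in R^1(U)$ is the desired element.

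The only genuinely delicate point is the extraction of a derivative of order exactly one from a function of order $k$; in characteristic zero this is immediate, but I should remark that the paper's systematic use of Hasse derivatives in defining $\cD^i$ means the same extraction goes through verbatim in positive characteristic, so the argument is independent of the chapter's blanket characteristic-zero assumption. Everything else is a direct invocation of the differential Rees algebra condition together with the definition of tangent direction, so no further obstacles should arise.
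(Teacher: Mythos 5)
Your characteristic-zero argument is sound and is essentially the intended one: locate a constituent $(\cI,\mu)$ of $R^\cdot$ of maximal order near $x$, differentiate an element of $\cI$ of minimal order down to order one, and use the axiom $\cD^a(R^\mu)\subset R^{\mu-a}$ together with upper semicontinuity of the order to produce $u\in R^1(U)$ with $\ord_y(u)=1$ at every $y\in V(u)\cap U$. Two remarks. The smaller one: to pass from $R^{\mu-|\alpha|}$ to $R^1$ you assert that the filtration is decreasing, $R^1\supset R^2\supset\cdots$, whereas condition (3) of the paper's definition as printed reads $\cR^{\mu}\subset\cR^{\mu'}$ for $\mu'\geq\mu$, i.e.\ the opposite inclusion (almost certainly a typo, since as printed it would force the cosupport of $\cR^\cdot$ to be the infinite-order locus of $\cR^1$); you can sidestep this entirely by observing that $\cD^{k-1}(\cI)\subset\cD^{\mu-1}(\cI)$ by the very definition of the derivative ideals, so your $u$ already lies in $\cD^{\mu-1}(\cI)\subset\cD^{\mu-1}(R^{\mu})\subset R^{1}$ with no appeal to inclusions among the graded pieces.

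The serious point is your closing remark that the extraction of an order-one derivative ``goes through verbatim in positive characteristic'' because the paper uses Hasse derivatives. This is false, and it fails at exactly the step you single out as delicate. Over a field of characteristic $p$ take $f=u^p$: then $D_{u^j}(u^p)=\binom{p}{j}u^{p-j}=0$ for $0<j<p$, so $((u^p),p)$ is of maximal order (since $D_{u^p}(u^p)=1$) and yet $\cD^{p-1}((u^p))=(u^p)$ contains no element of order one at the origin. The binomial coefficient $\beta_i$ appearing in your Taylor computation can vanish modulo $p$, and Hasse derivatives do not repair this. This is precisely the failure of hypersurfaces of maximal contact that obstructs Hironaka's method in positive characteristic, so the corollary really does depend on the chapter's blanket characteristic-zero hypothesis; delete that remark and the proof stands.
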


\subsection{Properties of Rees alebra: Graded homogenization}

An important properties of Rees algebras of maximal order is that for any tangent directions it "looks the same". This property allows to run induction and assures that the restriction of  the Rees algebra
to the maximal contact does not depend on the choice of tangent directions. From that perspective it can be considered as a more general "homogenization"- a technique which allows to run induction in a canonical way without using, so called,  Hironaka's trick.
On the other hand the technique of Rees algebra saturation is better suited for the case of multiple generators like in the case of ideals along stratum or  in positive characteristic.
Thus the notion of Rees algebra can be thus understood as graded homogenization. The graded homogenization was considered by Kawanoue in his approach to Rees algebra. Another version of homogenization was considered by Kollar \cite{Kollar}.

The following result is a cosmetic modification of the original "glueing lemma" which was stated for the standard "nongraded" homogenization: 
\begin{lemma} \label{le: homo} ({\bf Glueing Lemma}) (\cite{Wlodarczyk}) 
Let ${R}^\cdot$ be a differential Rees algebra of maximal order on a smooth $X$. Let $u,v\in R^1(X)$ be two tangent directions at  $x\in\cosupp(R^\cdot)$ which are transversal to the set of exceptional divisors $E$.
Then there exist \'etale neighborhoods $\phi_{u},\phi_v: \overline{X}\to X$ of $x=\phi_u(\overline{x})=\phi_v(\overline{x}) \in X$,  where $\overline{x}\in \overline{X}$, such that
\begin{enumerate}
\item  $\phi_{u}^*({R}^\cdot)=\phi_{v}^*({R}^\cdot)$.
\item  $\phi_{u}^{-1}(E)=\phi_{v}^{-1}(E)$.
\item  $\phi_{u}^*(u)=\phi_{v}^*(v)$.

\bigskip
Set $\overline{R}^\cdot:=\phi_{u}^*({R}^\cdot)=\phi_{v}^*({R}^\cdot)
,\quad \overline{E}:=\phi_{u}^{-1}(E)=\phi_{v}^{-1}(E)$

\item  For any $\overline{y}\in \cosupp(\overline{X},\overline{R}^\cdot,\overline{E})$, $\phi_u(\overline{y})=\phi_v(\overline{y})$.
\item  For any multiple  blow-up $(X_i)$ of  $({X},{R}^\cdot,\emptyset,\mu)$ the induced multiple  blow-ups $\phi_u^*(X_i)$ and $\phi_v^*(X_i)$ of $(\overline{X},\overline{\cR}^\cdot,\overline{E})$ are the same (defined by the same centers). 

 Set $(\overline{X}_i):=\phi_u^*(X_i)=\phi_v^*(X_i)$.
 \item For any  $\overline{y}_i\in \cosupp(\overline{X}_i,\overline{R}^\cdot_i,\overline{E}_i)$ and the induced morphisms
$\phi_{u i},\phi_{v i}:\overline{X}_i\to X_i$, $\phi_{u i}(\overline{y}_i)=\phi_{v i}(\overline{y}_i).$

\end{enumerate}
\end{lemma}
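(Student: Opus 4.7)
The plan is to reduce this to a straightforward generalization of the proof of Lemma \ref{le: homo2}, using the differential-closure property of $R^\cdot$ in place of the essential-coordinate argument there. The construction of $\overline{X}$ proceeds by fiber product. Since $u$ and $v$ are tangent directions transversal to $E$ at $x$, I can extend each to a coordinate system. In fact, by shrinking $X$ to an affine neighborhood $U$ of $x$, I can choose parameters $u_2, \ldots, u_n$ such that both $u_1 := u, u_2, \ldots, u_n$ and $v_1 := v, u_2, \ldots, u_n$ are coordinate systems on $U$, with the divisors in $E$ cut out by some of the $u_i$ ($i \geq 2$). Let $\mathbf{A}^n$ have coordinates $x_1, \ldots, x_n$, and define étale maps $\phi_1, \phi_2 : U \to \mathbf{A}^n$ by $\phi_1^*(x_i) = u_i$ for all $i$, and $\phi_2^*(x_1) = v$, $\phi_2^*(x_i) = u_i$ for $i \geq 2$. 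Set $\overline{X} := U \times_{\mathbf{A}^n} U$ with the two projections $\phi_u, \phi_v : \overline{X} \to U$. Then by construction $\phi_u^*(u) = \phi_v^*(v)$ and $\phi_u^*(u_i) = \phi_v^*(u_i)$ for $i \geq 2$, giving (2) and (3) immediately.

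The main step is (1). Extending the ground field if necessary so that the relevant residue fields agree, the fiber product yields a formal automorphism $\widehat{\phi}_{uv}$ of $\widehat{\mathcal{O}}_{X,x}$ with $\widehat{\phi}_{uv}(u) = v$ and $\widehat{\phi}_{uv}(u_i) = u_i$ for $i \geq 2$. Set $h := v - u$; then $h \in R^1$ since both $u, v \in R^1$. For $f \in R^s$, Taylor expansion gives
\[
\widehat{\phi}_{uv}^*(f) = f(u_1 + h, u_2, \ldots, u_n) = \sum_{k \geq 0} \frac{1}{k!}\frac{\partial^k f}{\partial u_1^k} \cdot h^k.
\]
Because $R^\cdot$ is a differential Rees algebra, $\frac{1}{k!}\partial^k f/\partial u_1^k \in R^{s-k}$, and $h^k \in R^k$, so each summand lies in $R^{s-k}\cdot R^k \subset R^s$. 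Hence $\widehat{\phi}_{uv}^*(R^\cdot) \subset \widehat{R}^\cdot$, and by symmetry we obtain equality $\phi_u^*(R^\cdot) = \phi_v^*(R^\cdot)$, which is (1).

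For (4), observe that if $\overline{y} \in \cosupp(\overline{R}^\cdot)$, then both $\phi_u^*(u)$ and $\phi_v^*(v) = \phi_u^*(u)$ vanish at $\overline{y}$ (since $u, v \in R^1 \subset R^\cdot$ and the cosupport is contained in $V(u)$ and $V(v)$). Consequently $\phi_u^*(h) = \phi_v^*(v) - \phi_u^*(u) = 0$ at $\overline{y}$, so $\phi_u^*(u_i)(\overline{y}) = \phi_v^*(u_i)(\overline{y})$ for every $i$, forcing $\phi_u(\overline{y}) = \phi_v(\overline{y})$. This also shows that the subscheme defined by $\phi_u^*(h)$ contains $\cosupp(\overline{R}^\cdot)$.

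The step I expect to be mildly delicate is (5) and (6), for the behavior under iterated blow-ups. The argument proceeds by induction: the first center $C_0 \subset \cosupp(R^\cdot)$ lifts by (4) to the same subscheme of $\overline{X}$ under both $\phi_u$ and $\phi_v$, so $\phi_u^*(X_1) = \phi_v^*(X_1)$; moreover the controlled transforms $R^\cdot_1$, $u_1'$, $v_1'$ satisfy all the same hypotheses — $R^\cdot_1$ is still a differential Rees algebra of maximal order (by Lemma \ref{le: inclusions} and preservation of maximal order under admissible blow-ups), the pullbacks $\phi_{u,1}^*(u_1') = \phi_{v,1}^*(v_1')$ by direct computation on the exceptional chart, and $u_1', v_1'$ remain tangent directions whose vanishing loci contain the next cosupport. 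Iterating, each successive center lies in the common cosupport where the two projections agree, giving (5), and the same argument giving (4) at each stage gives (6). The transversality of $u, v$ to $E$ is used throughout to ensure the extended coordinate systems exist and to keep the exceptional divisor combinatorics identical on the two sides.
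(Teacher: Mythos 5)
Your proposal is correct and follows essentially the same route as the paper, which proves this lemma by the same strategy as Lemma \ref{le: homo2}: the fiber product $\overline{X}=U\times_{\mathbf{A}^n}U$ over the two coordinate charts, the Taylor expansion of $f\in R^s$ in $h=v-u\in R^1$ using $\cD^k(R^s)\subset R^{s-k}$ and $R^{s-k}\cdot R^k\subset R^s$ to get (1), and the observation that $\phi_u$ and $\phi_v$ coincide on $\phi_u^{-1}(V(h))\supset\cosupp(\overline{R}^\cdot)$ to get (4)--(6). No gaps worth noting.
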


\begin{proof}
We use the same  strategy as applied already to Rees algebras in Lemma \ref{le: homo2}. (See more details in \cite{Wlodarczyk})
\end{proof}

\subsection{Properties of Rees alebra: Coefficient ideals}

A differential Rees algebra has another important feature which is used in the inductive scheme.
It has a good restriction properties and can serve as a coefficient ideal. This can be expressed by the following proposition:

\begin{proposition} \label{le: S} Let $(X,R^\cdot,E)$ be a differential Rees algebra of maximal order.  Assume that $S\subset X$ is a smooth subvariety which has only simple normal crossings with $E$. Then
$$\cosupp(R^\cdot)\cap S= \cosupp((\cR^\cdot)_{|S}).$$
Moreover  let  $(X_i)$ be  a multiple blow-up with  centers $C_i$   contained in the strict transforms $S_i\subset X_i$ of $S$. Then

\begin{enumerate}

\item The restrictions  $\sigma_{i|S_i}: S_i\to S_{i-1}$ of the morphisms $\sigma_i: X_i\to X_{i-1}$ define
  a multiple  blow-up $(S_i)$
of ${\cC}({\cI},\mu)_{|S}$.

\item $\cosupp(\overline{\cI}_i,\mu)\cap S_i= \cosupp[((\cR^\cdot(\overline{\cI}))_{|S})]_i.$
\item Every multiple  blow-up $(S_i)$  of $\cosupp((\cR^\cdot(\overline{\cI}))_{|S})$  defines  a multiple  blow-up $(X_i)$ of $({\cI},\mu)$ with  centers $C_i$ contained in the strict transforms $S_i\subset X_i$ of $S\subset X$.

\end{enumerate}

\end{proposition}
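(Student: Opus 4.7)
The plan is to prove the pointwise equality $\cosupp(R^\cdot)\cap S=\cosupp((R^\cdot)_{|S})$ first, then bootstrap it to the blow-up assertions by showing that restriction to $S$ commutes with controlled transform under any admissible blow-up, and finally iterate.

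First I would establish the pointwise equality. The inclusion $\subseteq$ is immediate, since restriction of an ideal can only drop its order at a point. For $\supseteq$, fix $x\in S$ with $\ord_x(R^\mu|_S)\geq\mu$ for every $\mu$, and choose a local coordinate system $u_1,\ldots,u_k,v_1,\ldots,v_{n-k}$ at $x$ in which $S=V(u_1,\ldots,u_k)$ (this is possible since $S$ is smooth; the SNC condition with $E$ guarantees that this choice is compatible with $E$ as well). For any $f\in R^\mu$ expand $f=\sum_{\alpha,\beta}c_{\alpha\beta}u^\alpha v^\beta$. The differential-Rees property $\cD^{|\alpha|}(R^\mu)\subseteq R^{\mu-|\alpha|}$ gives $D_{u^\alpha}(f)\in R^{\mu-|\alpha|}$, and by construction
$$D_{u^\alpha}(f)_{|S}=\sum_\beta c_{\alpha\beta}v^\beta.$$
By hypothesis this last expression vanishes at $x$ to order at least $\mu-|\alpha|$, hence $c_{\alpha\beta}=0$ whenever $|\alpha|+|\beta|<\mu$. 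Thus $\ord_x(f)\geq\mu$ and $x\in\cosupp(R^\cdot)$.

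Next I would handle the single-blow-up case of the ``moreover'' statement. Let $\sigma:X'\to X$ be the blow-up at a smooth $C\subset\cosupp(R^\cdot)\cap S$ having SNC with $E$. Since $C\subset S$ and both have SNC with $E$, the strict transform $S'$ of $S$ is naturally identified with $\mathrm{Bl}_CS$ sitting inside $X'=\mathrm{Bl}_CX$, and $\sigma_{|S'}:S'\to S$ is the blow-up of $C$ inside $S$; moreover $S'$ remains transverse to the new exceptional divisor and has SNC with all of $E'$. In local charts at a point of $C$, the exceptional divisor is cut out by one coordinate $y$ that restricts to the equation of the exceptional divisor of $\sigma_{|S'}$ on $S'$. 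The formula for the controlled transform then gives, for $f\in R^\mu$,
$$(\sigma^c(f))_{|S'}=y^{-\mu}\sigma^*(f)_{|S'}=y^{-\mu}\sigma^*(f_{|S})=\sigma^c(f_{|S}),$$
so restriction commutes with controlled transform. Hence $(R^\cdot)'_{|S'}=(R^\cdot_{|S})'$, and combining with the first part applied on $X'$ yields $\cosupp((R^\cdot)')\cap S'=\cosupp((R^\cdot_{|S})')$. An obvious induction on the length of the multiple blow-up then proves (1) and (2).

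For (3), any admissible multiple blow-up $(S_i)$ of $(S,R^\cdot_{|S},E_{|S})$ has centers $C_i\subset\cosupp((R^\cdot_{|S})_i)$ that have SNC with $E_i\cap S_i$; by (2) and the first part these are also contained in $\cosupp((R^\cdot)_i)$, and by transversality of $S_i$ with $E_i$ (which is preserved throughout by the SNC hypothesis) $C_i$ has SNC with $E_i$ itself, hence is an admissible center on $X_i$. The resulting $(X_i)$ is the desired lifting.

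The main obstacle I expect is the careful bookkeeping of the SNC/transversality hypotheses along the sequence: one must check that $S_i$ remains transverse to $E_i$ and that an SNC center inside $S_i$ with $E_i\cap S_i$ is automatically SNC with $E_i$ in $X_i$. The cleanest way is to pick at each stage local coordinates adapted simultaneously to $S_i$ and to the components of $E_i$ (which is legitimate precisely because of the SNC hypothesis propagated inductively), and then verify the identity $y^{-\mu}\sigma^*(f)_{|S'}=\sigma^c(f_{|S})$ chart by chart, keeping track of which exceptional components pull back nontrivially to $S'$.
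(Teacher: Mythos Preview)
Your proposal is correct and follows essentially the same approach as the paper: pick local coordinates adapted to $S$, expand $f\in R^\mu$ as a power series in the transversal variables, and use the differential-closure property $\cD^{|\alpha|}(R^\mu)\subset R^{\mu-|\alpha|}$ to identify the restricted derivatives with the coefficient functions, from which the equality of cosupports follows. The paper is terser on the blow-up part (referring to \cite{Wlodarczyk} for details) while you spell out the commutation of restriction with controlled transform; one harmless slip of phrasing: restriction can only \emph{raise} the order at a point, not drop it, which is precisely why the inclusion $\subseteq$ is immediate.
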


\begin{proof}
First observe that $\cosupp((\cR^\cdot\cap {S}))\subseteq\cosupp((\cR^\cdot)_{|S}))
$.

 Let $x_1,\ldots,x_k,y_1,\ldots, y_{n-k}$ be local parameters at $x$ such that $\{x_1=0,\ldots,x_k=0\}$ describes $S$. Then any function $f\in {\cI}$ can be written as
$$f=\sum c_{\alpha f}(y) x^\alpha,$$\noindent where $c_{\alpha f}(y)$ are formal power series in $y_i$.

Now $x\in \cosupp(\cR^\cdot\cap S$ iff $\ord_x(c_\alpha)\geq \mu-|\alpha|$ for all $f\in R^\mu$ and $|\alpha|\leq \mu$. Note that $$c_{\alpha f|S}=\bigg(\frac{1}{\alpha!}\frac{\partial^{|\alpha|}(f)}{\partial x^\alpha}\bigg)_{|S}\in {\cD}^{|\alpha|}({\cI})_{|S}$$ and consequently  $\cosupp(R^\mu,\mu)\cap S=\bigcap_{f\in {\cI}, |\alpha|\leq \mu}\cosupp({c_{\alpha f|S}}, \mu-|\alpha|)\supseteq  \cosupp((\cR^\cdot)_{|S}))$.

The above relation is preserved by admissible multiple  blow-ups of $\overline{\cI}$. For the details 
see \cite{Wlodarczyk}.\end{proof}

One can reformulate a useful collorary for the capacitor
\begin{corollary}
 Let $(X,{\cI},E,\mu)$ be a marked ideal of maximal order. 
Assume that $S$ has only simple normal crossings with $E$. Then
$R^c(\cI,\mu)$ is capacitor for $R^\cdot(\cI,\mu)$, where $c:=\mu!$, and 
$$\cosupp({\cI},\mu)\cap S= \cosupp(R^c({\cI},\mu)_{|S}).$$
Moreover  let  $(X_i)$ be  a multiple blow-up with  centers $C_i$   contained in the strict transforms $S_i\subset X_i$ of $S$. Then

\begin{enumerate}

\item The restrictions  $\sigma_{i|S_i}: S_i\to S_{i-1}$ of the morphisms $\sigma_i: X_i\to X_{i-1}$ define
  a multiple  blow-up $(S_i)$
of ${\cC}({\cI},\mu)_{|S}$.

\item $\cosupp({\cI}_i,\mu)\cap S_i= \cosupp[{\cR^c}({\cI},\mu)_{|S}]_i.$
\item Every multiple  blow-up $(S_i)$  of ${\cR^c}({\cI},\mu)_{|S}$  defines  a multiple  blow-up $(X_i)$ of $({\cI},\mu)$ with  centers $C_i$ contained in the strict transforms $S_i\subset X_i$ of $S\subset X$.

\end{enumerate}

\end{corollary}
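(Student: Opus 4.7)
The plan is to derive the corollary directly from Proposition \ref{le: S} applied to the differential Rees algebra $R^\cdot := R^\cdot(\cI,\mu)$ generated by the marked ideal $(\cI,\mu)$, once we show that $R^c$ (with $c = \mu!$) really is a capacitor of $R^\cdot$ in the sense of Definition \ref{capacitor}.

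First I would check that $R^\cdot$ is a differential Rees algebra of maximal order. By construction it is the smallest differential Rees algebra containing $\cI$ in gradation $\mu$, and it is diff-generated by the finitely many Hasse derivatives $(D_{u^\alpha}f,\mu-|\alpha|)$ for local generators $f$ of $\cI$ and multi-indices $|\alpha|\le \mu$; in particular all generators sit in gradations $i$ with $1\le i\le \mu$. Since $(\cI,\mu)$ is of maximal order, so is $R^\cdot$ (each gradation has order at most its weight, as this is preserved by differentiation, multiplication, and sums).

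Next I would verify the capacitor property. Because the generators of $R^\cdot$ all lie in gradations $i\le \mu$, and $c=\mu!$ is divisible by each such $i$, every generator $(g,i)$ satisfies $(g^{c/i},c)\in R^c$; moreover products of generators of total weight $c$ give further elements of $R^c$. By the argument used in the proof that finitely generated Rees algebras admit capacitors, the multiple marked ideal $R^\cdot$ is equivalent to $(R^c,c)$, i.e.\ $R^c$ is a capacitor of $R^\cdot$. Combined with Lemma \ref{le: coeff}, which tells us that $R^\cdot$ is equivalent to the original marked ideal $(\cI,\mu)$, we obtain
\[
(\cI,\mu)\ \simeq\ R^\cdot(\cI,\mu)\ \simeq\ (R^c,c).
\]
In particular their cosupports agree, as do their admissible sequences of blow-ups and the cosupports of their controlled transforms at every stage.

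With these two ingredients in place the conclusions follow by reading off Proposition \ref{le: S}. Transversality of $S$ with $E$ lets us invoke that proposition for the differential Rees algebra $R^\cdot$, giving
\[
\cosupp(R^\cdot)\cap S \;=\; \cosupp(R^\cdot{}_{|S}),
\]
and the analogous identity after any admissible multiple blow-up whose centers lie in the strict transforms $S_i$. Translating via the equivalences $(\cI,\mu)\simeq R^\cdot \simeq (R^c,c)$ (and noting that restriction to $S$ commutes with forming $R^c$ up to the same equivalence) converts each statement into one about $R^c(\cI,\mu)_{|S}$: the cosupport equality at level zero, part (2) after any admissible blow-up, and parts (1) and (3) relating blow-ups of $(\cI,\mu)$ with centers in $S_i$ to blow-ups of the restriction $R^c(\cI,\mu)_{|S}$. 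The main obstacle is the bookkeeping in the second step, namely carefully justifying that passing to $R^c$ does not lose any information about cosupports or admissible centers; once the equivalence $R^\cdot\simeq(R^c,c)$ is established, everything else is a direct transcription of Proposition \ref{le: S}.
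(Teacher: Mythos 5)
Your proposal is correct and follows essentially the same route as the paper: both reduce the corollary to Proposition \ref{le: S} via the capacitor equivalence $(\cI,\mu)\simeq \cR^\cdot(\cI,\mu)\simeq (R^c,c)$ with $c=\mu!$. The one step you defer with ``noting that restriction to $S$ commutes with forming $R^c$ up to the same equivalence'' is in fact where all the content lies, and the paper closes it explicitly: $\cR^\cdot(\cI,\mu)$ is generated as a Rees algebra by the marked ideals $(\cD^i(\cI),\mu-i)$, and in the local computation of Proposition \ref{le: S} the Taylor coefficients $c_{\alpha f}$ restricted to $S$ lie in $\cD^{|\alpha|}(\cI)_{|S}$, so the equality of cosupports $\cosupp(\cI,\mu)\cap S=\bigcap\cosupp(c_{\alpha f|S},\mu-|\alpha|)$ is realized by sections of $\cR^\cdot(\cI,\mu)_{|S}$; since these generators sit in gradations dividing $c$ and restriction preserves gradation and generation, the same divisibility argument you use on $X$ shows $\cosupp(\cR^\cdot_{|S})=\cosupp(R^c_{|S},c)$, and Lemma \ref{le: inclusions} carries this through the controlled transforms. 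With that paragraph added your argument is complete.
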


These properties allow one to control and modify the part of support of $(\cI,\mu)$ contained in $S$ by applying multiple  blow-ups of  $\cosupp((\cR^c(\cI,\mu))_{|S})$.

\begin{proof}
$\cR^c(\cI,\mu)$ is generated as Rees algebra by 
$(\cD^i(\cI),\mu-i)$. The means that $R^c(\cI,\mu)\subset \sum (\cD^i(\cI),\mu-i)$.
As in the precious proof for any $f=\sum c_{\alpha f}(y) x^\alpha \in \cI$, we have that $x\in \cosupp(\cI^\cdot\cap S$ iff $\ord_x(c_\alpha)\geq \mu-|\alpha|$ for all $f\in R^\mu$ and $|\alpha|\leq \mu$.  The rest of the reasonining is the same.

\end{proof}


\subsection{Resolution algorithm}

The presentation of the following Hironaka resolution algorithm  builds upon
 Bierstone-Milman's, Villamayor's and W\l odarczyk's algorithms which are simplifications of the original Hironaka proof. We also use Koll\'ar's trick allowing to completely eliminate the use of invariants.
 As was observed by Kawanoue, it is possible to rewrite the algorithm in the language of Rees algebras. 
 The relevant notion of Rees algebra of marked ideals  replaces the concept of the coefficient and homogenized ideals.
  The direct constructions lead to the multiple fractional marked ideals and Rees algebras with fractional grading.  
  To avoid the technicalities  one can consider, for instance an integral subfiltration of Rees algebras, or simply capacitor as in our case (see Definition \ref{capacitor}).  The latter is  simpler conceptually and allows to work with (single) marked ideals, while using Rees algebras as auxilliary tool for the constructions.

\begin{remarks}\begin{enumerate} \item Note that the blow-up of codimension one components  is an isomorphism. However it defines a nontrivial transformation of marked ideals. The inverse image of the center is still called the exceptional divisor.

\item In the actual desingularization process this kind of blow-up may occur for some marked ideals induced on subvarieties of ambient varieties. Though they define isomorphisms of those subvarieties they determine blow-ups of ambient
varieties which are not isomorphisms. 
\item The  blow-ups of the center $C$ which coincides with the whole variety $X$ is an empty set.
The main feature which characterizes is given by the restriction property:

\bigskip
If $X$ is a smooth variety containing a smooth subvariety $Y\subset X$, which contains the center $C\subset Y$
then the blow-up $\sigma_{C,Y}:\tilde{Y}\to Y$ at $C$ coincides with the strict transform of $Y$ under the blow-up $\sigma_{C,X}:\tilde{X}\to X$, i.e
$$\tilde{Y}\simeq \overline{\sigma_{C,X}^{-1}(Y\setminus C)}$$
\end{enumerate}
\end{remarks}

\begin{theorem}  For any  marked ideal $(X,{\cI},E,\mu)$ such that $\cI $ there is an associated  resolution $(X_i)_{0\leq i\leq m_X}$, called \underline{canonical},
 satisfying the following conditions:
\begin{enumerate}
\item For any  surjective  \'etale morphism $\phi: X '\to X$ the induced sequence 
 $(X'_i)=\phi^*(X_i)$ is  the canonical resolution of $(X',{\cI}',E',\mu):=\phi^*(X,{\cI},E,\mu)$. 
\item Any  \'etale morphism $\phi: X '\to X$  induces the sequence 
 $(X'_i)=\phi^*(X_{i})$ consisting of blow-ups   of the canonical resolution of  $(X',{\cI}',E',\mu):=\phi^*(X,{\cI},E,\mu)$ and (possibly) some identical transformations. 
\end{enumerate}
\end{theorem}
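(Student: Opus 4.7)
The plan is to prove this by induction on the dimension $n = \dim X$. The base case $n=0$ is immediate: a marked ideal $(X,\cI,E,\mu)$ on a $0$-dimensional $X$ either has empty cosupport (no blow-ups needed) or cosupport consisting of finitely many closed points, which can be blown up in a prescribed canonical order. For the inductive step, I would carry out two reductions, in the spirit of the proofs of Villamayor, Bierstone--Milman and W\l odarczyk as adapted here to Rees algebras and their capacitors.

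For the first reduction, I pass from $(X,\cI,E,\mu)$ to its canonical Rees algebra $\cR^\cdot(\cI,\mu)$ with the natural canonical capacitor $\overline{R}=R^{\mu!}$ (Definition \ref{capacitor}). Since $\cR^\cdot(\cI,\mu)$ is diff-generated and equivalent to $(\cI,\mu)$ by Lemma \ref{le: coeff}, any resolution of $\overline{R}$ produces a resolution of $(\cI,\mu)$, and conversely admissible centers for one are admissible for the other. The advantage is that $\cR^\cdot(\cI,\mu)$ is of maximal order at every point of its cosupport: locally there is a tangent direction $u\in R^1(U)=\cD^{\mu-1}(\cI)(U)$ with $\ord_x(u)=1$, by Lemma \ref{le: Gi}.

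For the second reduction, I use $u$ to define a local hypersurface of maximal contact $H=V(u)$, arranged to have SNC with $E$ (possibly after blowing up intersections of strata with $E$ first, as in the proof of the implication (1)$\Rightarrow$(2) in Theorem \ref{Red1}). By Lemma \ref{le: Giraud}, all subsequent cosupports of admissible transforms of $\cR^\cdot(\cI,\mu)$ sit inside the strict transforms of $H$, and by Proposition \ref{le: S} the cosupport of $\cR^\cdot(\cI,\mu)$ on $X$ coincides with that of the restricted differential Rees algebra $\cR^\cdot(\cI,\mu)_{|H}$, and this compatibility is preserved under admissible blow-ups with centers inside successive strict transforms of $H$. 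Since $\dim H = n-1$, the inductive hypothesis yields a canonical resolution of $(H,\cR^\cdot(\cI,\mu)_{|H},E\cap H)$, and lifting it back (each blow-up of $H_i$ with center $C_i\subset H_i$ determines the blow-up of $X_i$ with the same center $C_i$) provides a resolution of $(X,\cI,E,\mu)$.

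The main obstacle is canonicity: the tangent direction $u$ is chosen locally, so I must show that the resulting sequence of blow-ups does not depend on this choice and therefore glues globally. This is where the Glueing Lemma (Lemma \ref{le: homo}) is essential: for any two tangent directions $u,v$ transversal to $E$ at a common point, there exist \'etale neighborhoods $\phi_u,\phi_v:\overline{X}\to X$ identifying $u$ with $v$ and preserving both $\cR^\cdot$ and $E$, so that the canonical resolutions on $V(u)$ and $V(v)$ are identified by the inductive canonicity assumption, yielding identical centers upstairs. The same lemma, together with the fact that all the auxiliary constructions ($\cD^i$, derived Rees algebras, capacitors, restriction to maximal contact) commute with \'etale base change (Lemma \ref{diff}, Corollary \ref{coherent2}), gives the functoriality condition (1) under surjective \'etale $\phi:X'\to X$. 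Condition (2), for non-surjective \'etale $\phi$, then follows by observing that centers $C_i\subset X_i$ whose pullback $\phi_i^{-1}(C_i)$ is empty contribute identical transformations to $\phi^*(X_i)$, while the remaining nonempty pullback centers recover exactly the canonical resolution of $\phi^*(X,\cI,E,\mu)$. Finite extensions of the ground field required, e.g., by Corollary \ref{etale} to obtain monotone diagrams, are handled by descent via the Galois group, using uniqueness of the canonical resolution upstairs.
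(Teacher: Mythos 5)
Your outline reproduces the paper's Step 1 (resolution of a marked ideal \emph{of maximal order}: tangent directions, maximal contact, restriction of the differential Rees algebra, Koll\'ar's disjoint-union trick and the Glueing Lemma for canonicity, plus the preliminary separation from $E$), and that part is essentially the paper's argument. But there is a genuine gap at the very first reduction: you claim that passing to the canonical Rees algebra $\cR^\cdot(\cI,\mu)$ with its capacitor makes the ideal of maximal order, so that a tangent direction $u\in\cD^{\mu-1}(\cI)$ with $\ord_x(u)=1$ exists at every point of the cosupport. This is false for a general marked ideal. If $\ord_x(\cI)>\mu$ at some $x\in\cosupp(\cI,\mu)$ (e.g.\ $\cI=(x^3)$, $\mu=2$), then every element of $\cD^{\mu-1}(\cI)$ still has order $\geq 2$ at $x$, so no tangent direction and no hypersurface of maximal contact exists there; differential saturation and capacitors do not repair this, since $(\cD^{\mu-1}(\cI),1)$ is of maximal order precisely when $(\cI,\mu)$ already was. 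Your induction on dimension therefore cannot even start at such points.

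What is missing is the entire second half of the paper's proof. First, the reduction to the maximal-order case via \emph{companion ideals}: one factors $\cI=\cM(\cI)\cN(\cI)$ into its monomial part along $E$ and its nonmonomial part, forms the maximal-order companion $O(\cI,\mu)=(\cN(\cI),\ord_{\cN(\cI)})+(\cM(\cI),\mu-\ord_{\cN(\cI)})$, resolves it by Step 1, and iterates; this is a secondary induction on the invariant $\ord_{\cN(\cI)}$, which strictly drops at each stage. Second, this iteration terminates not with an empty cosupport but with the \emph{monomial case} $\cI=\cM(\cI)$, where the cosupport is a union of intersections of exceptional divisors and maximal contact is of no use; the paper resolves it by a purely combinatorial procedure (blowing up the maximal locus of the invariant $\rho$ built from the exponents $a_{i_1}+\ldots+a_{i_l}$). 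Neither mechanism appears in your proposal, and without them the argument does not close. Your treatment of functoriality and of descent under field extensions is otherwise in line with the paper.
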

\begin{proof} If $\cI= 0$ and $\mu>0$ then $\cosupp(X, {\cI},\mu)=X$, and the blow-up of $X$ is the empty set and thus it defines a unique resolution. Assume that $\cI\neq 0$.

 We shall use the induction  on the dimension of $X$. If $X$ is $0$-dimensional, $\cI\neq 0$ and $\mu>0$ then $\cosupp(X, {\cI},\mu)=\emptyset$ and all resolutions are trivial. 

{\bf Step 1}  {\bf Resolving a marked ideal $(X,{\cJ},E,\mu)$ of maximal
order.}

Before performing the  resolution algorithm  for the marked ideal $({\cJ},\mu)$ of maximal order in Step 1 we shall replace it with the equivalent  to the capacitor ideal $\cR^c({\cJ},\mu))$.
Resolving the ideal $\cR^c({\cJ},\mu)$ defines a resolution of $({\cJ},\mu)$ at this step.
To simplify notation we shall denote $\cR^c({\cJ},\mu))$ by $(\overline{\cJ},\overline{\mu})$.

 {\bf Step 1a}  {\bf Reduction to the nonboundary case. Moving $\cosupp({\overline{\cJ}},\overline{\mu})$ and $H^s_\alpha$ apart .}
 For any multiple  blow-up $(X_i)$ of $(X,{\overline{\cJ}},E,\overline{\mu})$ we shall identify (for simplicity) strict transforms of $E$ on $X_i$ with $E$.

For any $x\in X_i$, let $s(x)$ denote the number of divisors in $E$ through $x$ and
set $$ s_i=\max\{ s(x) \mid x \in \cosupp({\overline{\cJ}}_i)\}.$$

Let $s=s_0$. By  assumption the intersections of any $s> s_0$ components of the exceptional divisors are disjoint from
$\cosupp({\overline{\cJ}},\overline{\mu})$.
Each intersection of divisors in $E$ is locally defined by intersection of some irreducible components of these divisors.
Find all   intersections $H^s_\alpha, \alpha\in A$, of $s$ irreducible components of divisors $E$ such that $\cosupp({\overline{\cJ}},\overline{\mu})\cap H^s_\alpha\neq\emptyset$. By the maximality of $s$, the supports $\cosupp({\overline{\cJ}}_{|H^s_\alpha})\subset H^s_\alpha$ are disjoint from $H^s_{\alpha'}$, where $\alpha'\neq\alpha$.

Set  $$H^s:=\bigcup_\alpha H^s_\alpha,\quad U^s:=X \setminus H^{s+1}, \quad \underline{H^s}:=H^s\setminus H^{s+1}.$$

Then $\underline{H^s}\subset U_s$ is a smooth closed subset $U_s$. Moreover $\underline{H^s}\cap \cosupp(\cI)={H^s}\cap \cosupp(\cI)$ is closed.

Construct the canonical resolution of ${\overline{\cJ}}_{|\underline{H^s}}$. By Lemma \ref{le: S}, it 
defines a multiple  blow-up of $({\overline{\cJ}},\overline{\mu})$ such that
$$\cosupp(\overline{\cJ}_{j_1},\overline{\mu})\cap{H^s_{j_1}}=\emptyset.$$
In particular the number of the strict tranforms of $E$ passing through a single point of the support drops $s_{j_1}<s$. Now we put $s=s_{j_1}$ and repeat the procedure. We continue the above process till $s_{j_k}=s_r=0$.
Then $(X_j)_{0\leq j\leq r}$  is a multiple  blow-up of $(X,{\overline{\cJ}},E,\overline{\mu})$
such that $\cosupp({\overline{\cJ}}_r,\overline{\mu})$ does not intersect any divisor in $E$.

Therefore $(X_j)_{0\leq j\leq r}$ and further longer multiple  blow-ups $(X_j)_{0\leq j\leq m}$ for any $m\geq r$   can be considered as multiple  blow-ups of $(X,{\overline{\cJ}},\emptyset,\overline{\mu})$ since starting from $X_r$ the strict transforms of $E$ play no further role in the resolution process since they do not intersect  $\cosupp({\overline{\cJ}}_j,\overline{\mu})$ for $j\geq r$. We reduce the situation to the "nonboundary case".

\bigskip
{\bf Step 1b}.   {\bf Nonboundary case}

Let $(X_j)_{0\leq j\leq r}$ be the multiple  blow-up of $(X,{\overline{\cJ}},\emptyset,\overline{\mu})$ defined in Step 1a.

For any $x\in \cosupp({\overline{\cJ}},\overline{\mu}) \subset X$ find a tangent direction $u_\alpha \in {\cD}^{\overline{\mu}-1}({\overline{\cJ}})$  on some neighborhood $U_\alpha$ of $x$.
Then $V(u_\alpha)\subset U_\alpha$ is a hypersurface of maximal contact.
By the quasicompactness of $X$ we can assume that the covering defined by $U_\alpha$ is finite.
Let $U_{i\alpha}\subset X_i$ be the inverse image of $U_{i\alpha}$ and let $H_{i\alpha}:=V(u_\alpha)_i\subset U_{i\alpha}$ denote the strict transform of $H_\alpha:=V(u_\alpha)$.

Set (see also \cite{Kollar}) $$\widetilde{X}:= \coprod U_\alpha\quad \quad  \widetilde{H}:=\coprod H_\alpha\subseteq \widetilde{X}.$$

The closed embeddings $H_\alpha \subseteq U_\alpha$ define the closed embedding $\widetilde{H}\subset \widetilde{M}$ of a hypersurface of maximal contact $\widetilde{H}$.

Consider the surjective \'etale morphism 
$$\phi_U: \widetilde{X}:= \coprod U _\alpha\to X$$
 
 Denote by $\widetilde{J}$ the pull back of the ideal sheaf $\overline{\cJ}$ via $\phi_U$.
The multiple  blow-up  $(X_{i})_{0\leq i\leq r}$ of $\overline{J}$ defines a multiple  blow-up $(\widetilde{X}_{0\leq i\leq r})$ of $\widetilde{J}$  and a multiple  blow-up $(\widetilde{H}_i)_{0\leq i\leq r}$ of $\widetilde{J}_{|H}$.

Construct the canonical resolution of $(\widetilde{H}_{i})_{r\leq i\leq m}$ of the marked ideal ${\widetilde{\cJ}}_{r|\widetilde{H}_{r}}$ on $\widetilde{H}_r$. It defines, by Lemma \ref{le: coeff}, a resolution
$(\widetilde{X}_{r\leq i\leq m})$ of ${\widetilde{\cJ}}_{r}$ and thus also a resolution  $(\widetilde{X}_i)_{0\leq i\leq m}$
of $(\widetilde{X},{\widetilde{\cJ}},\emptyset, \overline{\mu})$. Moreover both resolutions are related by the property $$\cosupp(\widetilde{\cJ}_{i})=\cosupp(\widetilde{\cJ}_{i|\widetilde{H}_i}).$$

Consider a  (possible) lifting  of $\phi_U$: $$\phi_{iU}: \widetilde{X}_i:= \coprod U _{i\alpha}\to X_i,$$ which is a  surjective locally \'etale morphism. 
The lifting is constructed for $0\leq i\leq r$.

 For ${r\leq i\leq m}$ the resolution $\widetilde{X}_i$ is induced by the canonical resolution $(\widetilde{H}_i)_{r\leq i\leq m}$ of ${\overline{\cJ}}_{r|\widetilde{H}_r}$

We show that the resolution $(\widetilde{X_i})_{r\leq i\leq m}$
descends to the resolution $({X_i})_{r\leq i\leq m}$. 

Let $\widetilde{C}_{j_0}=\coprod C_{j_0\alpha}$ be the center of the blow-up $\widetilde{\sigma}_{j_0}:\widetilde{X}_{j_0+1}\to\widetilde{X}_{j_0}$. The closed subset $C_{j_0\alpha}\subset U_{j_0\alpha}$ defines the center of an extension of the canonical resolution $(H_{j\alpha})_{r\leq j\leq m}$.

 If  ${C}_{j_0\alpha}\cap   U_{j_0\beta}\neq \emptyset $ then by the canonicity and condition (2) of the inductive assumption, the subset
${C}_{j_0\alpha\beta}:={C}_{j_0\alpha}\cap   U_{j_0\beta}$ defines  the center  
of an extension of  of the canonical resolution $H_{j\alpha\beta}:=((H_{j\alpha}\cap U_{j\beta}))_{r\leq j\leq m}$.
On the other hand ${C}_{j_0\beta\alpha}:={C}_{j_0\beta}\cap   U_{j\alpha}$ defines
 the center of an extension of  the canonical resolution $((H_{j\beta\alpha}:=H_{j\beta}\cap U_{j\alpha}))_{r\leq j\leq m}$.


By Glueing Lemma \ref{le: homo} for the tangent directions $u_\alpha$ and $u_\beta$ we find 
 there exist \'etale neighborhoods  $\phi_{u_\alpha},\phi_{u_\beta}: \overline{U}_{\alpha\beta}\to {U}_{\alpha\beta}:=U_\alpha\cap U_\beta $ of $x=\phi_u(\overline{x})=\phi_v(\overline{x}) \in X$,  where $\overline{x}\in \overline{X}$, such that
\begin{enumerate}
\item  $\phi_{u_\alpha}^*({\cJ})=\phi_{u_\beta}^*(\cJ)$.
\item  $\phi_{u_\alpha}^*(E)=\phi_{u_\beta}^*(E)$.
\item  $\phi_{u_\alpha}^{-1}(H_{j\alpha\beta})=\phi_{u_\beta}^{-1}(H_{j\beta\alpha})$.
\item $\phi_{u_\alpha}(\bar{x})=\phi_{u_\beta}(\bar{x})$ for $\overline{x}\in \cosupp(\phi_{u_\alpha}^*({\cJ}))$.
\end{enumerate}
Moreover all the properties lift to the relevant \'etale morphisms    $\phi_{u_{\alpha i}},\phi_{u_{\beta i}}: \overline{U}_{\alpha\beta i}\to {U}_{\alpha\beta i}$. 
Consequently, by canonicity $\phi_{u_\alpha j_0}^{-1}({C}_{j_0\alpha\beta})$ and $\phi_{u_\beta j_0}^{-1}(C_{j_0\beta\alpha})$ define both the next center of the extension of the canonical resolution 
$\phi_{u_\alpha}^{-1}(H_{j_0\alpha\beta})=\phi_{u_\beta}^{-1}(H_{j_0\beta\alpha})$ of $\phi_{u_\alpha j_0}^*({\cJ}_{|H_{\alpha\beta}})=\phi_{u_\beta}^*({\cJ}_{|H_{\beta\alpha}})$.

Thus $$\phi_{u_\alpha}^{-1}({C}_{j_0\alpha\beta})=\phi_{u_\beta}^{-1}({C}_{j_0\beta\alpha}),$$ and finally, by property (4),
 $${C}_{j_0\alpha\beta}={C}_{j_0\beta\alpha}.$$
Consequently $\widetilde{C}_{j_0}$ descends to the smooth closed center $C_{j_0}=\bigcup {C}_{j_0\alpha} \subset X_{j_0}$ and
the resolution $(\widetilde{X_i})_{r\leq i\leq m}$
descends to the resolution $({X_i})_{r\leq i\leq m}$.

\bigskip

{\bf Step 2}.   {\bf Resolving of marked ideals $(X,{\cI},E,{\mu})$.}

For any  marked ideal $(X,{\cI},E,\mu)$ write
 $$I=\cM({\cI}){\cN}({\cI}),$$  \noindent where $\cM({\cI})$ is the {\it monomial
part} of ${\cI}$, that is, the product of the principal ideals
defining the irreducible components of the divisors in $E$, and ${\cN}({\cI})$ is a {\it nonmonomial
part} which is not divisible by any ideal of a divisor in $E$.
Let $$\ord_{{\cN}({\cI})}:=\max\{\ord_x({\cN}({\cI}))\mid x\in\cosupp(\cI,\mu)\}.$$

\begin{definition} (Hironaka, Bierstone-Milman,Villamayor, Encinas-Hauser)
By the {\it companion ideal} of $({\cI},\mu)$  where $I={\cN}({\cI})\cM({\cI})$
we mean the marked ideal of maximal order
\begin{displaymath}
O({\cI},\mu)= \left\{ \begin{array}{ll}
({\cN}({\cI}),\ord_{{\cN}({\cI})})\quad + \quad(\cM({\cI}),\mu-\ord_{{\cN}({\cI})})& \textrm{if   $\ord_{{\cN}({\cI})}<\mu$},\\ ({\cN}({\cI}),\ord_{{\cN}({\cI})})&\textrm{if   $\ord_{{\cN}({\cI})}\geq \mu$}.
\end{array} \right.
\end{displaymath}
\end{definition}

In particular $O({\cI},\mu)=({\cI},\mu)$ for ideals $({\cI},\mu)$ of maximal order.

{\bf Step 2a}. {\bf Reduction to the monomial case by using companion ideals}

 By Step 1 we can resolve the marked ideal of maximal order $(\cJ,\mu_\cJ):=O({\cI},\mu)$.  By Lemma \ref{le: operations}, for any multiple  blow-up of  $O({\cI},\mu)$,

\centerline{$
\cosupp(O({\cI},\mu))_i= \cosupp[{\cN}({\cI}),\ord_{{\cN}({\cI})}]_i\,\, \cap \,\, \cosupp[M({\cI}),\mu-\ord_{{\cN}(H{\cI})}]_i =$}
 \centerline{$
 \cosupp[{\cN}({\cI}),\ord_{{\cN}({\cI})} ]_i\,\,\cap \,\, \cosupp({\cI}_i,\mu).$}
 Consequently, such a resolution leads to the ideal $({\cI}_{r_1},\mu)$ such that $\ord_{{\cN}({\cI}_{r_1})}<\ord_{{\cN}({\cI})}$. Then we repeat the procedure for $({\cI}_{r_1},\mu)$.
 We find marked ideals $({\cI}_{r_0},\mu)=(\cI, \mu), ({\cI}_{r_1},\mu), \ldots, ({\cI}_{r_m},\mu)$ such that
$\ord_{{\cN}({\cI}_0)}>\ord_{{\cN}({\cI}_{r_1})}>\ldots>\ord_{{\cN}({\cI}_{r_m})}$.
  The procedure terminates after a finite number of steps when we arrive at the ideal $({\cI}_{r_m},\mu)$ with $\ord_{{\cN}({\cI}_{r_m})}=0$ or with $\cosupp({\cI}_{r_m},\mu)=\emptyset$. In the second case we get the resolution. In the first case $\cI_{r_m}=\cM({\cI}_{r_m})$ is monomial.

{\bf Step 2b}. {\bf Monomial case $\cI=\cM({\cI})$}.

Let $x_1,\ldots,x_k$ define equations of the components ${D}^x_1,\ldots,{D}^x_k\in E$ through $x\in \cosupp(X,{\cI},E,\mu)$ and
 ${\cI}$  be generated by the monomial $x^{a_1,\ldots,a_k}$ at $x$. In particular $$\ord_x(\cI)(x):=a_1+\ldots+a_k.$$

 Let
 $\rho(x)=\{D_{i_1},\ldots,D_{i_l}\}\in \Sub(E)$  be the maximal subset satisfying the properties

\begin{enumerate}
\item  $a_{i_1}+\ldots+a_{i_l}\geq \mu.$
\item For any $j=1,\ldots,l$, $a_{i_1}+\ldots +\check{a}_{i_j}+\ldots +a_{i_l} < \mu. $
\end{enumerate}

Let $R(x)$ denote the subsets in $\Sub(E)$ satisfying the properties (1) and (2).
The maximal components of the $\cosupp({\cI},\mu)$ through $x$ are described by the intersections
$\bigcap_{D\in A} D$ where $A\in R(x)$. The maximal locus of $\rho$ determines at most o one maximal component of $\cosupp({\cI},\mu)$ through each $x$.

  After the blow-up at the maximal locus $C=\{x_{i_1}=\ldots=x_{i_l}=0\}$ of $\rho$, the ideal $\cI=(x^{a_1,\ldots,a_k})$ is equal to $\cI'=({x'}^{a_1,\ldots,a_{i_j-1},a,a_{i_j+1},\ldots,a_k})$ in the neighborhood  corresponding to $x_{i_j}$, where $a=a_{i_1}+\ldots+a_{i_l}-\mu<a_{i_j}$. In particular the invariant $\ord_x(\cI)$ drops for all  points of some maximal components of $\cosupp({\cI},\mu)$. Thus the maximal value of $\ord_x(\cI)$ on the maximal components of  $\cosupp({\cI},\mu)$ which were blown up is bigger than the maximal value of $\ord_x(\cI)$  on the new maximal components of $\cosupp({\cI},\mu)$.
The algorithm
terminates after a finite number of steps. 
\end{proof}


\begin{thebibliography}{CC}


\bibitem{Abad}  C. Abad, {\em On the highest multiplicity locus of algebraic varieties and Rees Algebras}, J. of Algebra, 441 (2015) 294-313.  
\bibitem{AHV}    J. M. Aroca, H. Hironaka, and J. L. Vicente,{\em Theory of maximal contact. }Memo Math. del Inst. Jorge Juan, 29, 1975.

\bibitem{AHV2}    J. M. Aroca, H. Hironaka, and J. L. Vicente, {\em Desingularization theorems.} Memo Math. del Inst. Jorge Juan, 30, 1977.


\bibitem{Vill} A. Benito, O.Villamayor, {\em On elimination of variables in the study of singularities in positive characteristic}

\bibitem{BV}  A, Bravo, M.L.Garcia Escamilla, O. Villamayor, {\em On Rees algebras and invariants for singularities over perfect fields.} 
Indiana Journal of Mathematics. (2012).Indiana University Mathematics Journal (ISSN 0022-2518) 
Issue 3 of Volume 61, (2012) 1201-1251
 
 \bibitem{BV2} Ana Bravo, O.E. Villamayor, {\em On the behavior of the multiplicity on schemes: Stratification and blow ups}.  81-207. 
The Resolution of Singular Algebraic Varieties, Clay Institute Mathematics Proceedings, vol 20, 
D.Ellwood, H.Hauser,S.Mori, and J. Schicho Editors - AMS | CMI, 2014, 340 pp  2014 
 
 
 \bibitem{BM1}   E. Bierstone and P. Milman, {\em A simple constructive proof of canonical resolution of singularities.} In T. Mora and C. Traverso, eds., Effective methods in algebraic geometry, pages 11-30. Birkh\"auser, 1991. 

\bibitem{BM} E. Bierstone and P. Milman, {\it Uniformization of analytic spaces.} J. Amer. Math. Soc. 2 (1989) 801-836.



\bibitem{BM2} E. Bierstone and D. Milman, {\em  Canonical
desingularization in characteristic zero by blowing up the maximum strata of
a local invariant},  Invent. math. 128, 1997, 207--302.


\bibitem{BM22} E. Bierstone and D. Milman, {\em  Standard basis along Samuel Stratum and implicit differentiation}, The Fields Institute for Research in Mathematical Sciences, Vol. 24, 1999

\bibitem{BM4} E. Bierstone and D. Milman, {\em  Desingularization algorithms,  I. Role of exceptional divisors.} IHES/M/03/30.

\bibitem{BM3} E. Bierstone and D. Milman, {\em  Desingularization of toric and binomial varieties} J. Algebraic Geom. 15: pp. 443-486, 2006
\bibitem{B-M3} E. Bierstone and D. Milman, {\em  Desingularization algorithms I. Role of exceptional divisors}, Moscow Math. J. 3, 2003, 751--805. 


\bibitem{B-M5} E. Bierstone and D. Milman, {\em  Functoriality in resolution of singularities}, Publ. RIMS, Vol. 44, No. 2, 2008, 609--639.



\bibitem{Bennett} B.M. Bennett, {\em On the characteristic functions of a local ring}, Ann. of Math. 91 1970 25-87.
\bibitem{Diff} T. Brocker, L.Lander, {\em Differential Germs and Catastrophes}, Cambridge University Press, 1975.
\bibitem{Bo65} N. Bourbaki, {\em Algebre Commutative, Chap. 7}, Hermann, 1965.
\bibitem{BV} A. Bravo and O. Villamayor, {\em A strengthening of resolution of singularities in characteristic zero}. Proc. London Math. Soc. (3) 86 (2003),
 327-357. 

\bibitem{BP}  F. Bogomolov and T. Pantev, {\em Weak Hironaka theorem.} Math. Res. Letters 3 (1996), 299-309. 
\bibitem{Briancon} J.Briancon, {\em Weierstrass prepare a la Hironaka.} Singularites a Cargese 67-73. Asterisque 7-8, Soc. Math. France, Paris, 1973.

\bibitem{C1}    V. Cossart, {\em  Uniformisation et desingularisation des surfaces d'apres Zariski.} Progress in Mathematics, 181:239-258, 2000.


\bibitem{C}  V. Cossart, {\em Desingularization of embedded excellent surfaces. }Tohoku Math. J. 33 (1981), 25--33. 
 \bibitem{Cu } S.D. Cutkosky, {\em Resolution of singularities.} Graduate studies in mathematics, ISSN 1065-7339: v. 63
%

\bibitem{E} D. Eisenbud
{\em Commutative Algebra with a View Toward Algebraic Geometry}  Graduate Texts in Mathematics, Springer, 
 1999


\bibitem{EH} S. Encinas and H. Hauser,  {\em Strong resolution of singularities in characteristic zero. } Comment. Math. Helv. 77 (2002), 821--845.



\bibitem{EV}  S. Encinas and O. Villamayor, {\em Good points and constructive resolution of singularities.} Acta Math. 181 (1998), 109--158. 



\bibitem{EV1}  S. Encinas and O. Villamayor, {\em A course on constructive desingularization and equivariance.} In H. Hauser et al., eds., Resolution of Singularities, A research textbook in tribute to Oscar Zariski, volume 181 of Progress in Mathematics. Birkh\"auser, 2000. 

\bibitem{Galligo} A. Galligo, {\em A propos du theoreme de Weierstrass}, Fonctions de Plusiers Variables Complexes, Seminaire Francois Norguet, Oct. 1970-1973, Lecture Notes in Math, vol. 409, Springer-Verlag, Berlin-Heidelberg, New York, 1974. 

\bibitem{Galligo2} A. Galligo, {\em Th\'eor\'eme de division et stabilite en geometrie analytique locale.} Ann. Inst. Fourier (Grenoble) 29 (1979) 107-184.
\bibitem{GR} H. Grauert, R. Remmert {\em Theory of Stein spaces} Springer-Verlag, Berlin-Heidelberg, New York, 1979.
\bibitem{Giraud}  J. Giraud, {\em Sur la theorie contact maximal.} Math. Zeit. 137 (1974), 285-310
\bibitem{Giraud2}     J. Giraud, Contact maximal en characteristique positive. Ann. Scient. E.N.S.  8 (1975), 201--234. 



\bibitem{Te}    R. Goldin and B. Teissier, {\em Resolving singularities of plane analytic branches with one toric morphism.} Preprint ENS Paris, 1995. 

\bibitem{He1} M. Hermann, S. Ikeda, U. Orbanz, {\em Equimultiplicity and blowing-up.} Springer Verlag, Belin- Heidelberg, 1988.
\bibitem{He2} M. Herrmann, U. Orbanz,{\em Between equimultiplicity and normal flatness.} Algebraic geometry (La Rbida, 1981). Lecture Notes in Math. 961 200-233. Springer, Berlin-New York, 1982.


\bibitem{Hir} H. Hironaka, {\em  Resolution of singularities of an
algebraic variety over a field of characteristic zero},  Annals of Math. vol
79, 1964, p. 109-326.

\bibitem{H}    H. Hironaka, {\em Introduction to the theory of infinitely near singular points.} Memo Math. del Inst. Jorge Juan, 28, 1974.

\bibitem{Hir2}    H. Hironaka, {\em Idealistic exponents of singularity.} In Algebraic Geometry. The Johns Hopkins centennial lectures, pages 52-125. Johns Hopkins University Press, Baltimore, 1977.

\bibitem{Hir4} H. Hironaka,{\em Theory of infinitely ner singular points}, J.Korean Math. Soc. 40 (2003), No 5. 

\bibitem{Hir3} H. Hironaka, {\em Program for resolution of singularities in characteristics p > 0. Notes from lectures at the Clay
Mathematics Institute}, September 2008.

\bibitem{Hormander} L. Hormander, {\em An Introduction to Complex Analysis in Several Variables} (North-Holland Mathematical Library), Third Edition, January 1990.
\bibitem{Jelonek} Z. Jelonek, {\em The extension of regular and rational embeddings.}
Math. Ann. 277 (1987), 113--120. 
\bibitem{de Jong} A. J. de Jong, {\em  Smoothness, semistability, and
alterations.}  Publ. Math. I.H.E.S.  83 (1996),  51-93.
    
\bibitem{Kalorkoti} K. Kalorkoti, {\em On Macaulay Form of the Resultant} http://homepages.inf.ed.ac.uk/kk/Reports/Macaulay.pdf

\bibitem{Kawanoue} H. Kawanoue, {\em Toward resolution of singularities over a field of positive characteristic}
Publications of the Research Institute for Mathematical Sciences   43(3) 819-909   Sep 2007.

\bibitem{KM} H. Kawanoue, K. Matsuki, {\em Resolution of singularities of an idealistic filtration in dimension 3 after Benito-Villamayor}.  arXiv: 1205.4556v1.

\bibitem{Kollar} J. Koll\'ar, {\em Lectures on Resolution of Singularities}, Annals of Mathematics Studies, 2007.

\bibitem{Laf65} J.-P. Lafon, {\it S\'eries formelles alge\'ebriques}, C. R. Acad. Sci. Paris S\'er. A-B, 260, (1965), 3238-3241.

\bibitem{LJ} M. Lejeune-Jalabert, B. Teissier,   {\em Quelques calculs utiles pour la resolution des singularit\'es.}  No. M72.1171. Centre de Mathematique Ecole Polytechnique, Paris, 1971, http://webusers.imj-prg.fr/~bernard.teissier/  ,  hal.archives-ouvertes.fr/hal-01053223

\bibitem{L}     J. Lipman. {\em Introduction to the resolution of singularities.} In Arcata 1974, volume 29 of Proc. Symp. Pure Math, pages 187--229, 1975. 


\bibitem{M} M. Id, M. Manaresi, {\em Some remarks on normal flatness and multiplicity in complex spaces.} Commutative algebra (Trento, 1981), Lecture Notes in Pure and Appl. Math. 84 171–182. Dekker, New York, 1983.
\bibitem{Matsumura} H.Matsumura {\em Commutative algebra} W.A. Benjamin, Inc 1970
\bibitem{Na62} M. Nagata, {\em Local rings}, Interscience Tracts in Pure and Applied Mathematics, 13,
New York, (1962).
\bibitem{Malgrange} B. Malgrange, {\em Le theoreme
de preparation 
en geometrie diffentiable}, Seminaire Cartan 1962/63. 
\bibitem{Malgrange1} B. Malgrange, {\em  Ideals of differentiable Functions}. Oxford Univ. Press (1966).

\bibitem J. N. Mathers, {\em Stability of $C^\infty$- mappings I.} The Division theorem. Ann. Math. 87 (18) 89-104.
\bibitem{Macaulay}  F.S. Macaulay, {\em On some formulas in elimination}, Proc. LMS 3 (1902) 3-27.

\bibitem{Macaulay2} F.S. Macaulay, {\em  The Algebraic Theory of Modular Systems}, Cambridge U. Press (1916). Reprinted
1994.
\bibitem{Milman} P.Milman, {\em The Malgrange-Mather division}, Topology, Vol 16, pp. 395-401. Pergamon Press, 1977.  
\bibitem{Moe} Moerdijk, G. E. Reyes,
{\em Rings of Smooth Functions and their localizations}Journal
of Algebra 9, 324-336 (1986)
\bibitem{O1}  U. Orbanz, L Robbiano  {\em Projective normal flatness and Hilbert functions.} Trans. Amer. Math. Soc. 283 (1984) 33-47.
\bibitem{O2} L. Robbiano  {\em A theorem on normal flatness.} Compositio Math. 38 (1979) 293-298.
1983.
 
 
 \bibitem{O}   T. Oda, {\em Infinitely very near singular points.} Adv. Studies Pure Math. 8 (1986), 363--404. 
 

\bibitem{Raynaud} M. Raynaud, {\em Anneaux locaux Henseliens}, Lecture Notes in Mathematics 169, Springer, Berlin, 1970.

\bibitem{Sa} A. Sankharan, {\em A Theorem on Henselian Rings}, Canad. Math. Bull. 11 (1968), 275-277.

\bibitem{stacks-project}  {\em Stacks project}:  http://stacks.math.columbia.edu/tag/080S
 
\bibitem{Vi} O.  Villamayor, {\em Constructiveness of Hironaka's resolution.} Ann. Scient. Ecole Norm. Sup. 22 (1989), 1--32. 

\bibitem{Villamayor} O. Villamayor, {\em  Patching local uniformizations}. Ann. Scient. Ecole Norm. Sup. 25 (1992), 629-677.
 
 \bibitem{Vi2}   O. Villamayor, {\em Introduction to the algorithm of resolution.} In Algebraic geometry and singularities, La Rabida 1991, pages 123-154. Birkh\"auser, 1996. 

\bibitem{Vi3}  O. Villamayor, {\em Rees algebras on smooth schemes: integral closure and higher differential operator} Rev. Mat. Iberoamericana
Volume 24, Number 1 (2008), 213-242.

\bibitem{Vi4} O. Villamayor, {\em Equimultiplicity, algebraic elimination, and blowing-up}. Advances in Mathematics  (2014), vol. 262  pp. 313-369 

\bibitem{Wlodarczyk} J. W{\l}odarczyk, {\em Simple Hironaka resolution in characteristic zero},J. Amer. Math. Soc. 18 (2005), 779-822
 
\bibitem{Wlod}J. W{\l}odarczyk, {\em Program on resolution of singularities in characteristic p. Notes from lectures at RIMS, Kyoto,
December 2008.}

\bibitem{W} J. W{\l}odarczyk, {\em Resolution of singularities of analytic spaces}, Proceedings of 15th  Gokova
Geometry-Topology Conference 2009,
pp. 31-63
\bibitem{Y} B. Youssin, Newton polyhedra of ideals. Mem. AMS, 433:75-99, 1990.

\end{thebibliography}
\end{document}